\documentclass[twoside,a4paper]{article}
\usepackage{amsmath,amssymb}
\usepackage{mathtools}
\usepackage{amsthm}
\usepackage{thmtools}
\usepackage[many]{tcolorbox}
\usepackage{graphicx}
\usepackage{stmaryrd}
\usepackage[utf8]{inputenc}
\usepackage{ascmac}
\usepackage{fancybox}
\usepackage{color}
\usepackage{mathrsfs}
\usepackage{url}
\usepackage{fancyhdr}
\usepackage{enumitem}
\usepackage{varwidth}
\usepackage{calc} 
\usepackage{appendix}
\usepackage{refcount}
\usepackage{xparse}
\usepackage[T1]{fontenc}

\usepackage[top=30mm, bottom=30mm, left=25mm, right=25mm]{geometry}
\allowdisplaybreaks[1]
\thepage

\numberwithin{equation}{section}

\usepackage{hyperref} 
\usepackage{cleveref}

\crefname{thm}{Theorem}{Theorems}
\crefname{dfn}{Definition}{Definitions}
\crefname{lem}{Lemma}{Lemmas}
\crefname{prp}{Proposition}{Propositions}
\crefname{cor}{Corollary}{Corollaries}
\crefname{claim}{claim}{claims}
\crefname{rmk}{Remark}{Remarks}
\crefname{ex}{Example}{Examples}
\crefname{assume}{Assumption}{Assumptions}
\crefname{cond}{Condition}{Conditions}
\crefname{fact}{Fact}{Facts}
\crefname{note}{Notation}{Notations}

\theoremstyle{plain}
\newtheorem{thm}{Theorem}[section]
\newtheorem{lem}[thm]{Lemma}
\newtheorem{prp}[thm]{Proposition}
\newtheorem{cor}[thm]{Corollary}

\theoremstyle{definition}

\newtheorem{assume}[thm]{Assumption}
\newtheorem{cond}[thm]{Condition}

\newtheorem{dfn}[thm]{Definition}
\newtheorem{rmk}[thm]{Remark}

\newcommand{\bbE}{\mathbb{E}}
\newcommand{\bbF}{\mathbb{F}}

\newcommand{\bbP}{\mathbb{P}}

\newcommand{\calA}{\mathcal{A}}

\newcommand{\calC}{\mathcal{C}}
\newcommand{\calD}{\mathcal{D}}
\newcommand{\calE}{\mathcal{E}}
\newcommand{\calF}{\mathcal{F}}

\newcommand{\calH}{\mathcal{H}}

\newcommand{\calM}{\mathcal{M}}

\newcommand{\calR}{\mathcal{R}}

\newcommand{\calT}{\mathcal{T}}

\newcommand{\calX}{\mathcal{X}}
\newcommand{\calY}{\mathcal{Y}}

\newcommand{\scrF}{\mathscr{F}}

\newcommand{\frK}{\mathfrak{K}}

\newcommand{\frM}{\mathfrak{M}}

\newcommand{\rBCM}{\mathbf{rBCM}}
\newcommand{\BCM}{\mathbf{BCM}}
\newcommand{\Met}{\mathbf{Met}}
\newcommand{\MTop}{\mathbf{MTop}}
\newcommand{\bcm}{\mathsf{bcm}}

\usepackage{xparse}
\usepackage{refcount}

\NewDocumentCommand{\g}{ m >{\SplitArgument{1}{,}} m }{
  \CInternal{#1} #2
}

\newcommand{\CInternal}[3]{
  #1_{\getrefnumber{#2}}
  \IfValueT{#3}{^{(#3)}}
}

\def\rm#1{\mathrm{ #1 }}

\def\floor#1{\lfloor #1 \rfloor}
\def\ceil#1{\lceil #1 \rceil}

\newcommand{\R}{\mathbb{R}}
\newcommand{\Z}{\mathbb{Z}}
\newcommand{\N}{\mathbb{N}}
 
\newcommand{\Q}{\mathbb{Q}}

\newcommand{\E}{\mathcal{E}}

\newcommand{\supp}{\mathrm{supp}}

\newcommand{\diam}{\mathrm{diam}}

\newcommand{\Lip}{\mathrm{Lip}}
\newcommand{\BL}{\mathrm{BL}}

\title{Convergence rate estimates for semigroups and heat kernels associated with resistance forms}
\date{}
\author{Koyo Oishi\thanks{Research Institute for Mathematical Sciences, Kyoto University, Kyoto, 606-8502,
JAPAN. E-mail:koishi@kurims.kyoto-u.ac.jp}}

\begin{document}
\maketitle
\begin{abstract}
In this paper, we derive quantitative convergence rates for stochastic processes associated with resistance forms.
While the qualitative convergence of heat kernels and semigroups under the Gromov-Hausdorff-vague convergence of
underlying measured resistance metric spaces has been investigated previously, 
their quantitative convergence rates have remained unexplored. 
We establish explicit convergence rates for the associated semigroups and heat kernels 
under the assumptions of measure regularity and lower resistance estimates. 
Furthermore, we introduce a new metric that induces 
the Gromov-Hausdorff-vague topology, and is convenient for evaluation.
As applications of our main results, 
we present two illustrative examples.
First, we derive first estimate on the convergence rate 
for the random walk approximation of Brownian motion on the Sierpinski gasket. 
Second, we apply our results to the one-dimensional Bouchaud trap model, 
successfully extending the previously known parameter regime to all cases 
where homogenization occurs
and improving the convergence rate estimates 
in the existing regime by at least a quadratic factor.
\end{abstract}
\tableofcontents
\section{Introduction}

In the study of limit theorems for stochastic processes, 
once convergence is established, 
obtaining a corresponding rate is a natural and important problem 
from both theoretical and applied perspectives.
For instance, the Berry-Esseen theorem \cite{Berry41, Ess42}, which provides a rate estimate for the 
central limit theorem, stands as a monumental achievement in probability theory.
Regarding the assessment of convergence rates for i.i.d. sums, 
the Kolm\'os-Major-Tusn\'ady approximation shown in \cite{KMT1, KMT2} is also widely recognized.
While these represent studies on the convergence rate for sums of i.i.d.\ random variables,
research has also been conducted on the convergence of more complex spaces and the associated random walks,
as well as their convergence rates. 
Examples are shown below:
\begin{itemize}
\item In \cite{BP88, Gol87, Kus87}, Brownian motion on the Sierpinski gasket was 
constructed as a limit of the random walk on the finite approximating graphs. 
Rate estimates for the associated generators and resolvents were obtained in \cite{PS18}.
\item Convergence of a process called slow bond random walk 
and a rate estimate 
were established in \cite{SNOB}.
\item In \cite{kolokoltsov2023rates}, 
a convergence rate estimate for a continuous time random walk converging to 
a fractional diffusion was provided. 
\end{itemize}

One of the powerful frameworks for dealing with the stochastic processes on complex, fractal-like 
is the theory of resistance forms.
We provide a brief introduction to resistance forms here, 
referring the reader to \Cref{resis} for a more details.  
Recall that a resistance form on a set $F$,
as introduced by Kigami \cite{Kig01, Kig03har}, 
 is a pair $(\calE, \calF)$ consisting of 
 a bilinear form $\calE$ and its domain $\calF$ 
 (a function space on $F$).
It is well known that a space $F$ equipped with a resistance form is naturally endowed 
with a metric known as the resistance metric (see \cref{resisform,corrresismet}).
Furthermore, if $F$ is regular in the sense of \cref{regresis}, 
any full-support Radon measure $\mu$ on $F$ induces a corresponding regular Dirichlet 
form $(\calE, \calD)$ on $L^2(F, \mu)$.
Consequently, a Hunt process can be naturally constructed 
on any regular resistance metric space equipped with a fully supported Radon measure.

Convergence results of the stochastic processes associated with resistance forms 
have been well investigated. For instance:
\begin{itemize}
\item In \cite{Crtree}, it was shown that the rescaled random walk on a random graph 
satisfying suitable assumptions converges to the Brownian motion on 
the random tree whose search-depth function is the Brownian
excursion.
\item It was shown in \cite{athreya2017invariance} that if a sequence of measured metric trees $(T_n, \nu_n)$ 
converges to a limiting measured metric tree $(T,\nu)$ in an appropriate sense 
(under certain additional assumptions), 
then the processes with speed measures $\nu_n$ on $T_n$ converge 
to the process with speed measure $\nu$ on $T$.
\item Following recent advancements, including \cite{Miller2,Miller},
 the scaling limit of the random walk on critical percolation on the triangular lattice 
 has been identified. In this paper, the theory of resistance forms is also utilized.
\item Also, there are general convergence results.
In \cite{CHK17, Cr18, Ndloc}, the convergence of the process on a measured 
resistance metric space and its local time were demonstrated.
\end{itemize}
Given the correspondence between measured resistance metric spaces and stochastic processes, 
the continuity of this correspondence, 
namely the convergence of the corresponding probabilistic objects in a suitable sense, 
has been extensively studied. 
For the Sierpinski gasket, 
which is a fundamental example of a resistance metric space, 
Goldstein originally constructed Brownian motion via finite graph approximations. 
For the Sierpinski gasket, which is a fundamental example of a resistance metric space, 
Goldstein’s construction of Brownian motion via finite graph approximations 
serves as a classical example of such convergence. 
In recent years, further qualitative results have been established.
Using the Gromov-Hausdorff-vague topology, several answers to this question have been provided in recent years.
As mentioned above,
Croydon, Hambly, and Kumagai \cite{CHK17} established 
the convergence of the processes and their local times under the uniform volume doubling (UVD) condition.
Relaxing the UVD condition, Croydon \cite{Cr18} proved the convergence of the 
stochastic processes under a non-explosion condition, while Noda \cite{Ndloc}
demonstrated the convergence of the local times under a metric entropy condition combined with the non-explosion condition.
So far, none of these results were quantitative, 
and providing such results is the goal of this article.

In research on stochastic processes on random spaces and fractals, 
including the aforementioned studies, 
the Gromov-Hausdorff type topology now serves as a standard framework.
Notable examples of research that established the foundation 
for applying the GH-type topology to probability theory 
are the work of \cite{abraham2013note, evans2006rayleigh, miermont2009tessellations}.
Subsequently, the framework for the convergence of metric spaces 
with additional structures, including the Gromov-Hausdorff-vague topology, 
was studied and underwent major development in \cite{Khe23,Nd24}.

This study also employs the theory of 
Gromov-Hausdorff-type topologies established 
in these prior works. However, the Prokhorov metric, which is commonly used to define the Gromov-Hausdorff-vague topology,
does not appear to capture the information necessary for the quantitative estimates
that are the goal of this paper. 
Therefore, we first define a distance between finite measures on a metric space $(M,d^M)$, 
which is expected to be better suited for quantitative evaluation, as follows:
\begin{equation}\label{11.1}
  d_{\BL^{\kappa}}^M(\mu,\nu):=\sup\left\{ \left| \int_M f(x)(\mu-\nu)(dx)\right|:\|f\|_{\BL^{\kappa}(M,d^M)}\le 1 \right\},
\end{equation} 
where $\kappa\in (0,1]$, $\mu$ and $\nu$ are finite Borel measures on $M$, and 
\begin{equation*}
  \|f\|_{\BL^{\kappa}(M,d^M)}:=\sup_{x\in M}|f(x)|+\sup_{\substack{x,y\in M\\x\neq y}}\frac{|f(x)-f(y)|}{d^M(x,y)^{\kappa}}\in [0,\infty].
\end{equation*}
Here, $\BL$ is an abbreviation of bounded Lipschitz.
Furthermore, we use this metric to define a Gromov-Hausdorff-type metric.
As shown in \Cref{ghtype}, this metric is complete, and the topology it induces indeed coincides with the Gromov-Hausdorff-vague topology. 
Consequently, this metric serves as a new tool for performing quantitative evaluations.

We also define spaces $\bbF,\bbF_r,$ and $\bbF_c$ as follows:
   \begin{equation*}
  \bbF := \left\{ (F,R,\mu,\rho) : 
  \begin{varwidth}{ 0.65\textwidth } 
    $(F,R)$ is a boundedly compact, and regular resistance metric space, 
    $\mu$ is a full-support Radon measure on $F$, and $\rho \in F$ is a marked point
  \end{varwidth}
  \right\},
\end{equation*}
\begin{equation}\label{bbfr}
  \bbF_r:=\{ (F,R,\mu,\rho)\in \bbF:\textrm{$(F,R)$ is recurrent} \}\subseteq \bbF,
\end{equation}
\begin{equation*}
  \bbF_c:=\{ (F,R,\mu,\rho)\in \bbF:\textrm{$(F,R)$ is compact} \}\subseteq \bbF_r.
\end{equation*}
Here, we say a metric space is boundedly compact ($\bcm$) if any bounded closed set is compact,
and a resistance metric space $(F,R)$ is recurrent
if the following condition is satisfied:
\begin{itemize}
    \item there exists an increasing sequence $(U_n)_{n\geq1}$ 
    of relatively compact open subsets of $F$ such that $\bigcup_{n\geq1}U_n=F$ and
    \begin{align*}
    \lim_{n \to \infty} R(\rho, U_n^c) = \infty  
    \end{align*}
    for some $\rho \in F$.
  \end{itemize}
Recall that the effective resistance between $\rho\in F$ and $A\subseteq F$ is defined using the corresponding resistance form $(\calE,\calF)$
\begin{align}
  R(\rho,A):=\inf\{\calE(f,f):f\in\calF,\;f(\rho)=1,\;f|_A=0\}^{-1}.
\end{align}
Furthermore, $(F,R)$ is said to be regular if $\calF\cap C_c(F)$ is dense in $C_c(F)$ with respect to the supremum norm, where $\calF$ is the domain of the corresponding resistance form. 
Also, we denote the set of all triplets $(F,R,\mu)$ by $\bbF^{\circ}$. Namely,
   \begin{equation}\label{bbfcirc}
  \bbF^{\circ} := \left\{ (F,R,\mu) : 
  \begin{varwidth}{ 0.65\textwidth } 
    $(F,R)$ is a boundedly compact, and regular resistance metric space, 
    and
    $\mu$ is a full-support Radon measure on $F$
  \end{varwidth}
  \right\}.
\end{equation}
Similarly, we define $\bbF_r^{\circ}$ and $\bbF_c^{\circ}$.

Let $(F,R,\mu),$ $(F_n,R_n,\mu_n)$ be elements of $\bbF^{\circ}$.
We denote the corresponding regular Dirichlet form on $L^2(F,\mu)$, semigroup,
Hunt process, and heat kernel by 
$(\calE,\calD),\;(P_t)_{t\ge 0},\;((X_t)_t,(P_x)_{x\in F}),$ and $p=(p(t,x,y))_{x,y\in F,t>0}$.
We also write the semigroup, Hunt process, and heat kernel 
corresponding to $(F_n,R_n,\mu_n)$ by $(P^n_t)_{t\ge 0},$
$((X_t^n)_t,(P^n_x)_{x\in F_n})$ and $p_n=(p_n(t,x_n,y_n))_{x_n,y_m\in F_n,t>0}$.

First, we state main results of this paper for the compact case. 
To this end, we introduce a Gromov-Hausdorff-type topology on $\bbF_c^{\circ}$ briefly. 
See \Cref{nrc} for more details.
\begin{itemize}
  \item For $\calF_i=(F_i,R_i,\mu_i)\in\bbF_c^{\circ},$ $i=1,2$, we define 
  \begin{align*}
  d^{\tau,\BL^{\kappa}}_{\frK}(\calF_1,\calF_2):=\inf_{f,g,Z}\Big\{ d^Z_{\mathrm{H}}(f(F_1),g(F_2))\vee d^Z_{\BL^{\kappa}}(\mu_1\circ f^{-1},\mu_2\circ g^{-1})\Big\},
\end{align*}
where the infimum is taken over all compact metric spaces $(Z,d^Z)$ and isometric embeddings $f:F_1\to Z$ and 
$g:F_2\to Z$, and $d^Z_{\mathrm{H}}$ denotes the Hausdorff distance on $(Z,d^Z)$.
Here we adopt the notation $d^{\tau,\BL^{\kappa}}_{\frK}$
from \cite{Nd24}, and $\tau$ refers to a suitable functor between appropriate categories.

Then $\calF_n=(F_n,R_n,\mu_n)\in\bbF_c^{\circ}$ 
converges to $\calF=(F,R,\mu)\in\bbF_c^{\circ}$ with respect to $d^{\tau,\BL^{\kappa}}_{\frK}$
if and only if there exist a common compact metric space $(M,d^M)$ and isometric embeddings $g_n:F_n\to M,$ $g:F\to M$
such that $g_n(F_n)\to g(F)$ in the Hausdorff topology, and $\mu_{n}\circ g_n^{-1}\to \mu\circ g^{-1}$ weakly (see \cref{nonrootcpt}).  
Therefore $d_{\frK}^{\tau,\BL^{\kappa}}$ induces the compact Gromov-Hausdorff-Prokhorov topology.

\end{itemize}
Note that, by this characterization of convergence, if $\calF_n=(F_n,R_n,\mu_n)\in\bbF_c^{\circ}$ converges to $\calF=(F,R,\mu)\in\bbF_c^{\circ}$ with respect to $d^{\tau,\BL^{\kappa}}_{\frK}$,
we may regard $F_n,$ $n\in\N,$ and $F$ as subsets of a common compact metric space $(M,d^M)$.

Also, writing $B(x,r):=\{y\in F:R(x,y)<r\}$, we consider the following three assumptions for the limiting space $(F,R,\mu)$.
\begin{assume}\label{as11}\quad\par
\begin{itemize}
    \item[\textbf{(A1)}] There exist constants $c_u,\;c_l,\;s_0,\;s_1\in(0,\infty)$ such that \[c_l r^{s_0} \le \mu(B(x,r)) \le c_u r^{s_1}\] for all $x\in F$ and $r<\diam F$.
    \item[\textbf{(A2)}] Assumption $\rm{(A1)}$ holds. Furthermore, there exists a constant $\theta>0$ satisfying $(1+s_0)\theta>1\vee s_1$ and a constant $c_{\rm{LR}}>0$ such that \[R(x,B(x,r)^c)\ge c_{\rm{LR}}r^{\theta}\] for all $B(x,r)\neq F$.
    \item[\textbf{(A3)}] Assumption $\rm{(A1)}$ holds with $s_0,\;s_1$ satisfying $(s_0-s_1)(2+s_0)<1$, the space $(F,R)$ is uniformly perfect, and the corresponding Dirichlet form $(\calE,\calD)$ is local.
\end{itemize}  
\end{assume}
Recall that $(F,R)$ is said to be uniformly perfect if 
there exists a constant $c_{\rm{UP}}\in (0,1)$ 
such that $B(x,r)\setminus B(x,c_{\rm{UP}}r)\neq \emptyset$ 
for any ball with $B(x,r)\neq F$.

Note that condition $\mathrm{(A3)}$ implies $\mathrm{(A2)}$ (see \cref{summ}(i)). 
Finally, we remark that, in this section, $A_n \lesssim B_n$ means that there exists a constant $C\in (0,\infty)$,
depending on the data of spaces such as supremum of their diameters and measures, 
such that $A_n \le CB_n$ for any $n$. 
Whether $C$ is allowed to depend on the spatial data 
or must be a universal constant varies by section. 
The specific convention adopted is explicitly stated 
at the beginning of each section.
With this preparation, the main result for the compact case is given as follows.
Recall the definition of $\bbF_c^{\circ}$ from \eqref{bbfcirc}.

\begin{thm}\label{maincptsg}
Suppose that $(F_n,R_n,\mu_n)\in \bbF_c^{\circ}$ converges to $(F,R,\mu)\in\bbF_c^{\circ}$ 
with respect to $d^{\tau,\BL^{\kappa}}_{\frK}$.
We regard $F_n,$ $n\in\N,$ and $F$ as subsets of a common compact metric space $(M,d^M)$.
Let $x\in F,\;x_n\in F_n,\;t>0$ and $f:M\to\R$, and set
\begin{align*}
  h:=d^M(x,x_n)+d_{\rm{H}}^M(F,F_n)^{\kappa}+d_{\BL^{\kappa}}^M(\mu,\mu_n).
\end{align*}
Here, $d^M_{\rm{H}}$ is the Hausdorff distance on $(M,d^M)$.
Then we have the following.
\begin{itemize}
\item [$\rm{(i)}$]If $(F,R,\mu)$ satisfies Assumption $\rm{(A1)}$ with $s_0-s_1<1$, and $h\leq 1/2$,
then there exists a constant $E=E(s_0,s_1,\kappa)$ (see \cref{sgrw} for an explicit formula), and it holds for $\kappa\in [1/2,2/3)$ that
\begin{align*}
 \Big|E_x[f(X_t)]-E^n_{x_n}[f(X^n_t)]\Big|\lesssim \|f\|_{\BL^{\kappa}(M,d^M)} h^{E},\qquad t>0.
\end{align*}
Moreover, the time-dependence of the constant in $\lesssim$ is $t^{-2}+t,$ $t>0$.
  \item [$\rm{(ii)}$]If $(F,R,\mu)$ satisfies Assumption $\rm{(A2)}$, and $h\leq 1/2$,
then there exists a constant $E=E(s_0,\theta,\kappa)$ (see \cref{gen}$\rm{(i)}$ for an explicit formula), and it holds for $\kappa\in [1/2,1]$ that
\begin{align*}
 \Big|E_x[f(X_t)]-E^n_{x_n}[f(X^n_t)]\Big|\lesssim \|f\|_{\BL^{\kappa}(M,d^M)} h^{E},\qquad t>0.
\end{align*}
Moreover, the time-dependence of the constant in $\lesssim$ is $t^{-[\kappa-\frac{s_1}{2(1+s_0)\theta}(2\kappa-1)]},\;t\in (0,1]$.
\item[$\rm{(iii)}$]If $(F,R,\mu)$ satisfies Assumption $\rm{(A3)}$, and $h$ is sufficiently small,
then there exist constants $E_1=E_1(s_0,s_1,\kappa)$ and $E_2=E_2(s_0,s_1,\kappa)$, and it holds for $\kappa\in [1/2,1]$ that
\begin{align*}
 \Big|E_x[f(X_t)]-E^n_{x_n}[f(X^n_t)]\Big|\lesssim \|f\|_{\BL^{\kappa}(M,d^M)} h^{E_1}(\log(1/h))^{E_2},\qquad t>0.
\end{align*}
Moreover, the time-dependence of the constant in $\lesssim$ is $t^{-[\kappa-\frac{s_1}{2(1+s_0)\Theta}(2\kappa-1)]},\;t\in (0,1]$
for some constant $\Theta=\Theta(s_0,s_1)$. 
See \cref{gen}$\rm{(ii)}$ for explicit formulae defining $E_1,\;E_2,$ and $\Theta$.
\end{itemize}
\end{thm}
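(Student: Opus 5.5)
Our plan is to reduce the semigroup estimate to an estimate on heat kernels integrated against the measures, and to split the error into a ``spatial'' part and a ``measure'' part. Write
\begin{align*}
E_x[f(X_t)]-E^n_{x_n}[f(X^n_t)]=\int_F p(t,x,y)f(y)\,\mu(dy)-\int_{F_n} p_n(t,x_n,y)f(y)\,\mu_n(dy).
\end{align*}
To compare these, we will mollify in space. Fix a scale $\delta>0$ (chosen later as a power of $h$, with logarithmic corrections in case (iii)). We replace the point evaluation at $x,x_n$ by averages over resistance balls of radius $\delta$, i.e.\ by $\mu(B(x,\delta))^{-1}\int_{B(x,\delta)}P_tf\,d\mu$ and its analogue on $F_n$. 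The cost of this replacement is controlled by the H\"older continuity of $P_tf$ in the resistance metric. For this we use the standard resistance-form bound $|u(y)-u(z)|^2\le R(y,z)\calE(u,u)$ with $u=P_tf$, together with $\calE(P_tf,P_tf)\le (2et)^{-1}\|f\|_{L^2}^2$ (and $\mu$ bounded by (A1)). This gives a $\delta^{1/2}t^{-1/2}$ error. The analogous estimate on $F_n$ requires volume bounds on $F_n$, which we transfer from (A1) for $F$ via the $d_{\BL^\kappa}$ closeness. We test the measures against a Lipschitz cut-off of a ball; this costs $h/\delta^{\kappa}$ and is why a lower bound $\delta\gg h^{1/\kappa}$ appears.

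The core step is to compare the resolvents, or equivalently the Dirichlet forms, of the two spaces through the common space $M$. Since the resistance metric determines $\calE$, and $F_n\to F$ isometrically inside $M$, we intend to construct a ``transfer'' of functions. Given $u\in\calF$, we take its harmonic extension or trace onto a finite $\delta$-net $V\subset F$. We pair $V$ with a nearby net $V_n\subset F_n$ (using $d_H^M(F,F_n)$) and extend harmonically on $F_n$. The traces of both forms on the nets are determined by resistances between net points, which agree up to $O(d_H^M(F,F_n))$ relative errors. Combining this with the measure comparison (test functions of $\BL^\kappa$-norm $\lesssim\delta^{-\kappa}$ built from the net) yields a bound on $\|G_\lambda f-G^n_\lambda f\|$ of order $h\delta^{-\kappa}+\delta^{\gamma}$. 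Here $\gamma$ is given by the oscillation of finite-energy functions on $\delta$-cells, which is $\delta^{1/2}$ under (A1). Under (A2) and (A3) it improves through the lower resistance estimate, since exit times of balls are bounded below by $r^{(1+s_0)\theta}$-type quantities.

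We then pass from resolvents to semigroups by a Laplace inversion or functional calculus argument. We write $P_t=\frac{1}{2\pi i}\int_\Gamma e^{\lambda t}(\lambda-\Delta)^{-1}d\lambda$ along a contour, or equivalently approximate $e^{-t\cdot}$ by polynomials in the resolvent. This produces the stated time-dependence: $t^{-2}+t$ in case (i), and a power of $t$ of the form $t^{-[\kappa-\frac{s_1}{2(1+s_0)\theta}(2\kappa-1)]}$ in cases (ii) and (iii). The latter form comes from interpolating between the $L^2$ bound and the $\BL^\kappa$-H\"older bound, using on-diagonal heat kernel upper bounds $p(t,x,x)\lesssim t^{-s_1/(1+s_0)\theta}$ derived from (A1)--(A2). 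Optimizing $\delta$ against $h$ gives the exponents $E$, $E_1$, $E_2$. The log factor in case (iii) arises because locality plus uniform perfectness give (A2) only with the $\Theta$ of \cref{summ}, up to chaining losses.

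The main obstacle is the resolvent/form comparison across different spaces. The approximating $F_n$ need not satisfy (A1) uniformly, so all regularity must be imported from $F$ through the weak $\BL^\kappa$ control. Ensuring that the harmonic-extension maps are almost isometric for energies requires careful use of effective resistances between nets. We would first attempt it with the trace-on-finite-nets approach above.
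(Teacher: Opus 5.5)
There is a genuine gap, and it sits at the step you yourself call the main obstacle: the cross-space resolvent comparison is never established, and the route you sketch does not give it as stated. Knowing $|R(v,w)-R_n(v_n,w_n)|\le 2\,d^M_{\mathrm{H}}(F,F_n)$ for paired net points does \emph{not} imply that the trace energies on $V$ and $V_n$ agree up to $O(d^M_{\mathrm{H}}(F,F_n))$.

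The trace conductances are obtained by inverting the Green matrix $\bigl(\tfrac12(R(v_0,v)+R(v_0,w)-R(v,w))\bigr)_{v,w\in V\setminus\{v_0\}}$. This matrix has size of order $\delta^{-s_0}$, and its inverse (the trace Laplacian) has norm at least of order $\delta^{-1}$. Any perturbation bound therefore carries losses that grow as $\delta\to0$, and it must also control the trace Laplacian on the $F_n$ side, where you have no regularity. You also give no argument for passing from net traces to resolvents on $L^2$, nor for the exponent $\gamma$. So the bound $h\delta^{-\kappa}+\delta^{\gamma}$ is unsupported.

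The paper never compares forms. Following Croydon, it uses that the Green kernel of the process killed at a point is an explicit affine function of the metric, $g_x(y,z)=\tfrac12(R(x,y)+R(x,z)-R(y,z))$. This extends verbatim to $M$ as $\Lambda^{x,y}$, so $G_xf(y)-G^n_{x_n}f(y_n)$ is the difference of integrals of one Lipschitz function against $\mu$ and $\mu_n$ (\cref{killres}). Killed $\alpha$-resolvents then follow from the Neumann series \eqref{killalpha}. The full resolvent follows from the excursion decomposition between two fixed points, with the geometric series controlled by the commute-time bound of \cref{sle}. This is \cref{estres}, valid only for small real $\alpha\in(0,\alpha_n)$.

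The passage to semigroups is a second gap. A contour integral $\frac{1}{2\pi i}\int_\Gamma e^{\lambda t}(\lambda-\Delta)^{-1}d\lambda$, or an exponential formula, needs the cross-space comparison for spectral parameters of arbitrarily large modulus, with integrable control along $\Gamma$. Such a comparison is not available: the one above holds for a single small real $\alpha$, with constants of order $\alpha^{-1}$.

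The missing device is the Duhamel identity of \cref{deriv}. Your ball averaging reappears there as the \emph{linear} transfer operator $J^r_n$: the function $\varphi(s)=P^n_{t-s}G^{\alpha,n}J^r_nP_sG^\alpha f$ satisfies $\varphi'(s)=P^n_{t-s}(J^r_nG^\alpha-G^{\alpha,n}J^r_n)P_sf$. Hence one fixed $\alpha$ suffices for test functions of the form $(G^\alpha)^{\circ2}g$ (\cref{sg1,sg2}). General $f$ is reached by writing $P_\delta f=(G^\alpha)^{\circ2}(\alpha-\Delta)^2P_\delta f$, bounding $\|(\alpha-\Delta)^2P_\delta f\|_{\BL^\kappa}$ via \cref{suff}, and paying $\sup|P_{t+\delta}f-P_tf|$ through exit-time bounds (\cref{12q2}).

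Your heuristics for the constants are also off:
\begin{itemize}
\item The factor $t^{-[\kappa-\frac{s_1}{2(1+s_0)\theta}(2\kappa-1)]}$ comes from the on-diagonal \emph{lower} bound \eqref{lhka2}, which bounds $g^\alpha(x,x)$ from below in \eqref{ub1}--\eqref{ub3} with $\alpha=t^{-1}$. It does not come from an upper bound.
\item The logarithm in (iii) comes from the stretched-exponential exit bound of \cref{summ}(v).
\item Case (i) is obtained in the paper from the heat kernel estimate \cref{hkest} via \cref{sgrw}. That is where $\kappa<2/3$ enters, through integrability of $c_1(t,\kappa,2)\propto t^{-(3\kappa-1)}$.
\end{itemize}
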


\begin{thm}\label{mainhk}
  Suppose that $(F_n,R_n,\mu_n)\in \bbF_c^{\circ}$ converges to $(F,R,\mu)\in\bbF_c^{\circ}$
  with respect to $d^{\tau,\BL^{\kappa}}_{\frK}$.
We regard $F_n,n\in\N,$ and $F$ as subsets of a common compact metric space $(M,d^M)$.
  Fix $t>0,\;x,y\in F,$ and $x_n,y_n\in F_n$.
Suppose that 
\begin{align*}
  h:=d_{\rm{H}}^M(F,F_n)^{\kappa}+d_{\BL^{\kappa}}^M(\mu,\mu_n)+d^M(x,x_n)+d^M(y,y_n)\le \frac{1}{2}.
\end{align*}
Then we have the following.
\begin{itemize}
  \item [$\mathrm{(i)}$]If $(F,R,\mu)$ satisfies Assumption $\rm{(A1)}$ with $s_0-s_1<1$,
then there exists a constant $E=E(s_0,s_1,\kappa)$ (see \cref{hkest}$\rm{(i)}$ for an explicit formula), and it holds for $\kappa\in [1/2,2/3)$ that
\begin{align*}
  |p(t,x,y)-p_n(t,x_n,y_n)|\lesssim h^E.
\end{align*}
Moreover, the time-dependence of the constant in $\lesssim$ is $t^{-2},\;t\in (0,1]$.
\item[$\mathrm{(ii)}$]If $(F,R)$ satisfies Assumption $\mathrm{(A2)}$ with $s_0-s_1<1$, 
then there exists a constant $E=E(s_0,s_1,\kappa)$ (see \cref{hkest}$\rm{(ii)}$ for an explicit formula), and it holds for $\kappa\in [1/2,1]$ that
\begin{align*}
  |p(t,x,y)-p_n(t,x_n,y_n)|\lesssim h^E.
\end{align*}
Moreover, the time-dependence of the constant in $\lesssim$ is $t^{-2},\;t\in (0,1]$.
\item[$\rm{(iii)}$]If $(F,R)$ satisfies Assumption $\mathrm{(A3)}$, 
then there exist constants $E_1=E_1(s_0,s_1,\kappa)$ and $E_2=E_2(s_0,s_1,\kappa)$ (see \cref{hkest}$\rm{(iii)}$ for an explicit formula), 
and it holds for $\kappa\in [1/2,1]$ that
\begin{align*}
  |p(t,x,y)-p_n(t,x_n,y_n)|\lesssim h^{E_1}(\log(1/h))^{E_2}.
\end{align*}
Moreover, the time-dependence of the constant in $\lesssim$ is $t^{-2},\;t\in (0,1]$.
\end{itemize}
\end{thm}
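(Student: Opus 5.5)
The plan is to reduce the heat kernel estimate to the semigroup estimate of \cref{maincptsg}, combined with a Hölder-type modulus of continuity of heat kernels in resistance metric spaces. The key tool is the standard resistance-form estimate: for $u\in\calF$,
\[
|u(x)-u(y)|^2\le R(x,y)\,\calE(u,u).
\]
Applied to $u=p(t,x,\cdot)$, together with $\calE(p_t^x,p_t^x)\le \|p_{t/2}^x\|_2^2/t\lesssim p(t,x,x)/t$ and the on-diagonal bound $p(t,x,x)\lesssim t^{-s_0/(1+s_0)}$ (coming from the lower volume bound in (A1) via the usual resistance argument), this gives
\[
|p(t,x,y)-p(t,x,y')|\lesssim t^{-a}R(x,y')^{1/2}
\]
for some $a$. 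The same holds on $F_n$, with uniform constants; uniformity requires lower volume bounds for $\mu_n$ on balls of radius $\gtrsim h$, which I would deduce from $d_{\BL^{\kappa}}^M(\mu,\mu_n)$ by testing against a bump function of $\BL^\kappa$-norm $\lesssim r^{-\kappa}$.

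\textbf{Approximation of the delta function.} Fix a scale $r\gg h$. Let $\varphi_r(z)=(1-d^M(y,z)/r)^+$, so that $\|\varphi_r\|_{\BL^\kappa}\lesssim r^{-\kappa}$. Write, by the semigroup property,
\[
p(t,x,y)\approx \frac{\int \varphi_r(z)p(t,x,z)\mu(dz)}{\int\varphi_r\,d\mu}=\frac{E_x[\varphi_r(X_t)]}{\int\varphi_r\,d\mu}.
\]
By the Hölder continuity above, the error is $\lesssim t^{-a}r^{1/2}$. Do the same on $F_n$ with $y_n$ and $\mu_n$, using the same test function centered at $y$. The discrepancy between the centers $y$ and $y_n$ costs $d^M(y,y_n)/r$ in the weights, and the mass change costs $d_{\BL^\kappa}^M(\mu,\mu_n)\|\varphi_r\|_{\BL^\kappa}$.

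\textbf{Comparing numerators and denominators.} The numerators differ by at most $\|\varphi_r\|_{\BL^\kappa}\,h^{E'}\lesssim r^{-\kappa}h^{E'}$ by \cref{maincptsg}, where $E'$ is the exponent of the relevant case (i), (ii) or (iii). The denominators are $\gtrsim r^{s_0}$ by (A1), and they differ by at most $r^{-\kappa}h$. Hence
\[
|p-p_n|\lesssim t^{-a}r^{1/2}+r^{-\kappa-s_0}h^{E'}+r^{-\kappa-2s_0}h .
\]
Optimizing in $r$ gives a power $h^E$, and in case (iii) the logarithmic factor is inherited from $E_1,E_2$. The time dependence is obtained by tracking $t^{-a}$ together with the semigroup constants $t^{-2}$ from (i) and $t^{-1}$-type factors from (ii) and (iii) for $t\le1$. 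The claimed uniform $t^{-2}$ suggests using crude bounds here.

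\textbf{Main obstacle.} The hardest step is the uniform-in-$n$ control on $F_n$: the heat kernel regularity and the on-diagonal upper bound for $p_n$ require volume lower bounds for $\mu_n$, which are known only down to scale $\sim h^{\text{power}}$. My first attempt would be to derive them from $d_{\BL^\kappa}$ at scales $r\ge h^{1/(\kappa+s_0)}$. Then I would apply the resistance Hölder estimate in $F_n$ only at distances $\ge$ that scale, or else use the Chapman–Kolmogorov smoothing $p_n(t,x_n,\cdot)=P^n_{t/2}p_n(t/2,x_n,\cdot)$ to avoid small-scale information. The restriction $s_0-s_1<1$ in (i) and (ii) presumably enters precisely at this point.
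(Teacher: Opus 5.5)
\textbf{Gap: case (i) is circular.} You bound the difference of numerators $E_x[\varphi_r(X_t)]-E^n_{x_n}[\varphi_r(X^n_t)]$ by \cref{maincptsg}. Under (A1) alone, however, that semigroup estimate is not available beforehand. \cref{maincptsg}(i) is \cref{sgrw}, which the paper deduces \emph{from} the heat kernel bound \cref{hkest}(i). Only parts (ii) and (iii) of \cref{maincptsg}, i.e.\ \cref{gen}, are proved independently. That proof needs the short-time smoothing error $|P_\delta f-f|$ of \cref{12q2}, which rests on an exit estimate for $P_x(\sigma_{B(x,\varepsilon)^c}\le\delta)$, and hence on a lower resistance estimate that (A1) does not supply.

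What you are missing for (i) is a way to approximate the delta function by a test function for which a semigroup comparison holds with no smoothing error. The paper mollifies at $x$ with $P_\varepsilon 1_{B(x,r)}=(G^\alpha)^{\circ 2}(\alpha-\Delta)^2P_\varepsilon 1_{B(x,r)}$. This lets \cref{sg2} apply, using only condition $1_{\kappa,p}$ (valid under (A1) for $\kappa<2/3$) and the bounds $c_2,c_3$ of \cref{suff}(ii). The tail of this mollifier away from $x$ is controlled by the probability of \emph{hitting a small distant ball}, \cref{4.2-a}(i). That bound needs only $\inf_w\mu(B(w,\delta))\gtrsim\delta^{s_0}$ and no resistance lower bound. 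On $F_n$, the McShane extension of $P_\varepsilon 1_{B(x,r)}$ inherits this tail bound through the H\"older estimate of $1_{\kappa,p_0}$ (see \cref{totyuu}).

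\textbf{Cases (ii) and (iii): a valid alternative, with a different exponent.} Here your reduction is legitimate, since \cref{gen} does not rely on the heat kernel estimate, and it is a genuinely different route from the paper's. You use the full semigroup theorem as a black box together with a normalized bump average at $y$. This avoids the volume-ratio term $L(t)\mu(B(x,4r))r/\mu(B(x,r))\lesssim r^{1+s_1-s_0}$, which is where $s_0-s_1<1$ actually enters the paper's proof; it does not enter through uniform-in-$n$ control, as you guessed.

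The price is the exponent. Optimizing your terms gives roughly $E'/(2\kappa+2s_0+1)$, where $E'$ is the exponent of \cref{gen}. In case (ii) this depends on $\theta$, and it compounds the loss already incurred by the smoothing step inside \cref{gen}. So it does not reproduce the explicit formulas of \cref{hkest}: for $s_0=s_1=\theta=\kappa=1$ you get $1/65$ against the paper's $2/39$.

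\textbf{The obstacle you flagged is not one.} The crude bound $\sup_n\sup p_n(t,\cdot,\cdot)\lesssim 1+t^{-1}$ from \eqref{kig104} with $A=F_n$, inserted into your energy estimate, already gives a uniform $1/2$-H\"older modulus on $F_n$. No lower volume bounds for $\mu_n$ are needed.
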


\begin{rmk}
  Although these estimates are likely not optimal, it is novel that the convergence rate has been proven to be at least polynomial with respect to the distance of the space.
\end{rmk}

\begin{rmk}
  One can find the proof of \cref{maincptsg}(i) in \cref{sgrw}, and those of  
  \cref{maincptsg}(ii) and (iii) in \cref{gen}.
  Also, \cref{mainhk} is shown in \cref{hkest}.
\end{rmk}

\begin{rmk}
\cref{maincptsg} is established via an argument based on estimates for resolvents. 
On the other hand, an alternative estimate for semigroups can be obtained using heat kernel estimates (see \cref{sgrw}). 
However, since this approach generally yields a suboptimal convergence rate compared to the present theorem (see \cref{66}), 
we omit a detailed description of the latter estimate here.
\end{rmk}

Next, we state the results for the non-compact case.
Recall the definitions of $\bbF$ and $\bbF_r$ from \eqref{bbfr}.
For $(F,R,\mu,\rho)\in \bbF$, we define 
$F^{(r)}=\overline{B(\rho,r)},\;R^{(r)}=R|_{F^{(r)}\times F^{(r)}}$ and $\mu^{(r)}=\mu|_{F^{(r)}}$.
First we introduce the Gromov-Hausdorff-vague topology.
We say $(F_n,R_n,\mu_n,\rho_n)\in\bbF$ 
converges to $(F,R,\mu,\rho)\in\bbF$ if and only if the following holds.
\begin{itemize}
  \item There exists a boundedly compact metric space $(M,d^M)$
   and isometric embeddings $g_n:F_n\to M,\;g:F\to M$
  such that $g_n(F_n)\to g(F)$ in the Fell topology, $g_n(\rho_{n})\to g(\rho)$ in $M$,
  and $\mu_n\circ g_n^{-1}\to \mu\circ g^{-1}$ vaguely.
\end{itemize}
Here, recall that the convergence of $A_n\subseteq M,$ $n\in\N$ to $A\subseteq M$ in the Fell topology 
is equivalent to the following condition (see \cite[Theorem 3.9]{Nd24}). 
Also see \cref{fell} for the precise definition of the Fell topology. 
\begin{itemize}
  \item For any sequence $(x_n)\subseteq M$ converging to $x\in X$, it holds that $A_n\cap \overline{B(x_n,r)}\to A\cap \overline{B(x,r)}$
  in the Hausdorff topology for all but countably many $r>0$.
\end{itemize}

Note that, by this characterization of convergence, 
if $(F_n,R_n,\mu_n,\rho_n)\in\bbF$ converges to $(F,R,\mu,\rho)\in\bbF$ 
in the Gromov-Hausdorff-vague topology,
we may regard $F_n,$ $n\in\N,$ and $F$ as subsets of a common boundedly compact metric space $(M,d^M)$.
\begin{rmk}
  Although we omit the details here to avoid cumbersome notation, 
  the Gromov-Hausdorff-vague topology can also be metrized by appropriately modifying the distance defined in \eqref{11.1} for the non-compact case.
  See \cref{232} for details.
\end{rmk}
In non-compact case, we consider the following assumptions.

\begin{assume}\label{mainassump}\quad\par
 \begin{itemize}
  \item[1.]The sequence $(F_n,R_n,\mu_n,\rho_n)\in\bbF_r$ converges to $(F,R,\mu,\rho)\in\bbF_r$ in the Gromov-Hausdorff-vague topology. 
  \item[2.]There exists an $r_0>0$ such that the following hold.
 
\begin{enumerate}[align=left, labelwidth=\widthof{2-a.},
    labelsep=0.5em, 
    leftmargin=!]
  \item [2-a.] There exists an $s>0$ such that $\mu^{(r)}$ is uniformly $s$-Ahlfors regular for $r>r_0$. That is,
   there exist constants $c_u,c_l,s>0$ such that for any $r\in (r_0,\infty),$ $a\in [0,\diam F^{(r)})$ and $x\in F^{(r)}$,
   it holds that $c_l a^s\leq \mu^{(r)}(B(x,a))\leq c_u a^s$.
  \item [2-b.]The sequence $(F,R)$ satisfies the lower resistance estimate with an exponent $\theta>\frac{1\vee s}{1+s}$.
   That is, there exists a constant $c_{\rm{LR}}>0$ such that 
   \begin{equation}\label{qsc}
    c_{\rm{LR}}r^{\theta}\leq R(x,B(x,r)^c),\qquad \forall x\in F,\;r>0.
   \end{equation}
  \item [2-c.]There exists a constant $c_{\rm{LHK}}>0$ such that 
  \begin{align*}
    c_{\rm{LHK}}t^{-\frac{s}{(1+s)\theta}}\leq p^{(r)}(t,x,x),\quad \forall r>r_0,\;x\in F^{(r)},\;t>0,
  \end{align*}
  where $p^{(r)}$ is the heat kernel associated with $(F^{(r)},R^{(r)},\mu^{(r)})$.
  \end{enumerate}
\end{itemize}
\end{assume}

Under this assumption, the following theorem holds.
\begin{thm}\label{maincpt}
    Suppose that \cref{mainassump} is satisfied. 
  We regard $F_n,\;n\in\N,$ and $F$ as subsets of a common $\bcm$ space $(M,d^M)$.
  Let $x\in F$ and $x_n\in F_n$. Take $r_{\infty}\ge r_0$ satisfying 
  $x\in B(\rho,r_{\infty})$ and $x_n\in B(\rho_n,r_{\infty})$.
  Then there exist a constant $E=E(s,\theta,\kappa)$ and a polynomial $C_n(r)=C_n(\mu_n(F_n^{(r)}),\mu(F^{(r)}),\diam F^{(r)}\cup F_n^{(r)},\diam F^{(r)})$ (see \cref{noncpt} for explicit formulae),
  and it holds for any $f:M\to\R,\;\kappa\in [1/2,1],$ and $t>0$ that
  \begin{equation}\label{lolo}
    \begin{aligned}
   \MoveEqLeft \Big|E_{x}[f(X_t)]-E^n_{x_n}[f(X^n_t)]\Big|\\
    \lesssim&\|f\|_{\BL^{\kappa}(M,d^M)}
    \inf\left\{\Psi_1(n,r)+C_n(r)\Psi_2(n,r)^E : r\ge r_{\infty}, \Psi_2(n,r)\le \frac{1}{2}\right\},  
    \end{aligned}
  \end{equation}
  where
  \begin{equation*}
    \Psi_1(n,r)=P_{x}(\sigma_{B(\rho,r)^c}\le t)\vee P_{x_n}^n(\sigma^n_{B(\rho_n,r)^c}\le t),
  \end{equation*}
  \begin{equation*}
    \Psi_2(n,r)=d_{\rm{H}}^M(F^{(r)},F_n^{(r)})^{\kappa}+d_{\BL^{\kappa}}^M(\mu^{(r)},\mu^{(r)}_n)+d^M(x,x_n),
  \end{equation*}
  Moreover, the time-dependence of the constant in $\lesssim$ is $t^{-[\kappa-\frac{s}{2(1+s)\theta}(2\kappa-1)]},\;t\in (0,1]$.
\end{thm}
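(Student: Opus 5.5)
The plan is to reduce the non-compact statement to the compact one, \cref{maincptsg}(ii), by localizing both processes to the balls $F^{(r)}$ and $F_n^{(r)}$ and paying for the localization with exit-time probabilities.

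\textbf{Step 1: localization.} Fix $r\ge r_\infty$ with $\Psi_2(n,r)\le 1/2$. Let $X^{(r)}$ be the process associated with the compact space $(F^{(r)},R^{(r)},\mu^{(r)})$ (the trace resistance form on $F^{(r)}$), and define $X^{n,(r)}$ analogously. Up to the first hitting time $\sigma_{B(\rho,r)^c}$, the original process $X$ and the localized process $X^{(r)}$ can be coupled to agree. I expect this to follow from the trace/time-change description of resistance forms: the process on $F^{(r)}$ is the time change of $X$ by the additive functional of $\mu^{(r)}$, and this time change is the identity before exit from the open ball. Consequently
\begin{align*}
\bigl|E_x[f(X_t)]-E_x[f(X^{(r)}_t)]\bigr|\le 2\sup|f|\,P_x(\sigma_{B(\rho,r)^c}\le t),
\end{align*}
and the same bound holds for the approximating processes with $x_n$. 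Together these produce the $\Psi_1(n,r)$ term in \eqref{lolo}.

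\textbf{Step 2: compact comparison.} Apply the argument behind \cref{maincptsg}(ii) to the compact spaces $F^{(r)}$ and $F^{(r)}_n$, viewed inside the common space $(M,d^M)$. The quantity $h$ there is exactly $\Psi_2(n,r)$. The hypotheses needed are those of (A2), and they must hold with constants uniform in $r>r_0$:
\begin{itemize}
\item Ahlfors regularity comes from 2-a, with $s_0=s_1=s$.
\item The lower resistance estimate for $R^{(r)}$ follows from 2-b, because the trace resistance equals the restricted metric and effective resistances to complements only increase under restriction. This still needs checking.
\item The condition $(1+s)\theta>1\vee s$ is exactly the assumption on $\theta$ in 2-b.
\end{itemize}
The remaining task is to track how the implicit constant in \cref{maincptsg}(ii) depends on the spatial data. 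I would go back to its proof (the resolvent estimates in \cref{gen}) and record every occurrence of total mass, diameter and similar quantities. These should enter only polynomially, which yields $C_n(r)$. Assumption 2-c is where uniformity enters: it should replace the on-diagonal heat kernel or spectral gap inputs, which in the compact proof are derived from (A1)–(A2) with constants depending on $\diam F$. This keeps the time dependence $t^{-[\kappa-\frac{s}{2(1+s)\theta}(2\kappa-1)]}$ free of $r$.

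\textbf{Step 3: combine.} By the triangle inequality,
\begin{align*}
\bigl|E_x f(X_t)-E^n_{x_n}f(X^n_t)\bigr|
\le 2\|f\|_{\BL^\kappa}\Psi_1(n,r)+\bigl|E_xf(X^{(r)}_t)-E^n_{x_n}f(X^{n,(r)}_t)\bigr|,
\end{align*}
and the last term is bounded by $\|f\|_{\BL^\kappa}\,C_n(r)\,\Psi_2(n,r)^E$. Taking the infimum over admissible $r$ gives \eqref{lolo}.

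\textbf{Main obstacle.} The hard part is Step 2's bookkeeping. The compact theorem hides its dependence on $\mu(F)$, $\diam F$ and the spectral quantities in $\lesssim$. Each must be made explicit and shown to be polynomial, with the $t$-dependence uniform in $r$. The key point is using 2-c to control the heat kernel or semigroup smoothing uniformly in $r$, rather than deriving these bounds from the compact space's diameter-dependent estimates.
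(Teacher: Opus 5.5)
Your Step 1 and the overall architecture match the paper: localize to $F^{(r)},F_n^{(r)}$ via the time change available under recurrence, apply \cref{gen}(i) with $h=\Psi_2(n,r)$, verify (A2) uniformly from 2-a and 2-b (your trace argument is exactly \cref{7.3}(ii)), and use 2-c to get $c_{\mathrm{A2}}(r)\ge c_{\mathrm{LHK}}$.

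The gap is the claim in Step 2 that, once the compact proof is unwound, every hidden constant is polynomial in masses and diameters, with 2-c handling the rest. Two constants are of a different nature.
\begin{itemize}
\item The Lipschitz constant of the $\alpha$-potential density from \cref{continuity}(ii), namely $(1-E_{x_0}[e^{-\sigma_{x_0,x_1}}])^{-1}$. It is needed for $F^{(r)}$ and, more importantly, for each $F_n^{(r)}$, and it enters through \cref{suffcond1}, \cref{sg2} and \cref{gen}.
\item The constant of \cref{sle}, which enters \cref{estres}. It depends on the separation $\delta$ of the reference points $x_0,x_1,x_0^n,x_1^n$ used in the resolvent decomposition, and on $\inf_b\mu_n(B(b,\gamma\delta))$.
\end{itemize}
Neither is a function of total mass or diameter, and 2-c says nothing about either. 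Both must be bounded uniformly in $n$ and in $r\ge r_\infty$. Otherwise the implicit constant in \eqref{lolo} depends on $n$ and $r$ in an uncontrolled way, which is exactly what the theorem rules out. The approximating spaces carry no regularity hypotheses, so this uniformity can only come from the convergence in Assumption 1, which your argument never uses.

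The paper supplies the missing uniformity as follows.
\begin{itemize}
\item \emph{Constant of \cref{sle}.} By \cref{bdd}(ii), the reference points can be chosen so that $d(x_i,x_i^n)\le d_{\mathrm{H}}(F^{(r)},F_n^{(r)})$, their mutual separation is at least a fixed $\delta>0$, and they lie in a fixed ball $B(\rho,r_\ast)$ for all $n$ and all large $r$. By \cref{slermk}, the infimum in \cref{sle} may then be restricted to that ball. The bound $\inf_n\inf_{b\in B(\rho,r_\ast)\cap F_n}\mu_n(B(b,\tfrac23\delta))>0$ follows from vague convergence and full support of $\mu$. Hence this constant is bounded below uniformly.
\item \emph{Potential-density constant.} For $r'\le r$, the process on $F^{(r')}$ is a time change of the process on $F^{(r)}$. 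Commute times therefore only shrink as the radius decreases, so the constant is nonincreasing in $r$. At a fixed radius where the localized spaces converge, it is bounded uniformly in $n$ by \cref{1_1.1}, whose proof rests on the weak convergence of hitting times in \cref{hitwkconv}. Alternatively, the lower bound in the proof of \cref{sle}, taken with $\alpha=1$, controls $1-E_{x_0}[e^{-\sigma_{x_0,x_1}}]$ using the same uniform ball-mass bounds.
\end{itemize}
With these two uniformity statements added, your bookkeeping goes through and yields the stated polynomial $C_n(r)$.
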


\begin{rmk}
  To make the right hand side of \eqref{lolo} small, we may choose $r$ in the following way.
  First, we take sufficiently large $r_{\ast}>0$ so that $\Psi_1(n,r_{\ast})$ is small uniformly in $n$.
  Then we may confirm that $(C_n(r_{\ast}))_n$ is bounded for this fixed $r_{\ast}$.
  Finally letting $n\to\infty$, $\Psi_2(n,r_{\ast})$ converges to $0$.
\end{rmk}

\begin{rmk}
Although \cref{maincpt} presents only the convergence rate of the semigroup, 
it is in principle possible to derive quantitative estimates for the heat kernel 
by applying the proof techniques from \Cref{esthk}. 
However, since the estimate terms become extremely complicated,
we prioritize the clarity of 
the main result and restrict our attention to the semigroup estimate in this paper.
\end{rmk}

\begin{rmk}
For sufficient conditions to satisfy \cref{mainassump} used here,
we refer the reader to \cref{cork} and \cref{7.3}.
\end{rmk}

\begin{rmk}
If $F_n,\;n\in\N,$ and $F$ satisfy the lower resistance estimate with an exponent $\theta$ (see \eqref{qsc}),
we have $\Psi_1(n,r)\lesssim r^{-\theta}$ by \cref{4.2-a}(ii).
\end{rmk}

\begin{rmk}
Since the constants in $\lesssim$ of \cref{maincptsg,mainhk,maincpt} 
do not depend on $x,\;x_n,\;y,$ and $y_n$,
it is not difficult to obtain the uniform bound over starting points.
For example, if $F$ is not compact, and $(x^{\lambda})_{\lambda\in\Lambda}\subseteq B(\rho,r_{\infty})\cap F$ and 
$(x^{\lambda}_n)_{\lambda\in\Lambda}\subseteq B(\rho_n,r_{\infty})\cap F_n$ satisfy $d^M(x^{\lambda},x_n^{\lambda})\leq \varepsilon$ for some constant $\varepsilon>0$,
then we have
\begin{align*}
  \sup_{\lambda\in\Lambda}\Big|E_{x^{\lambda}}[f(X_t)]-E^n_{x_n^{\lambda}}[f(X^n_t)]\Big|
    \lesssim\|f\|_{\BL^{\kappa}(M,d^M)}
  \inf\Bigl\{  \Tilde{\Psi}_1(n,r)+C_n(r)\Tilde{\Psi}_2(n,r)^E : r\ge r_{\infty}, \Tilde{\Psi}_2(n,r)\le \frac{1}{2}\Bigr\},
\end{align*}
  where
  \begin{equation*}
    \Tilde{\Psi}_1(n,r)=\sup_{\lambda\in\Lambda}P_{x^{\lambda}}(\sigma_{B(\rho,r)^c}\le t)\vee P_{x_n^{\lambda}}^n(\sigma^n_{B(\rho_n,r)^c}\le t),\quad
    \Tilde{\Psi}_2(n,r)=d_{\rm{H}}^M(F^{(r)},F_n^{(r)})^{\kappa}+d_{\BL^{\kappa}}^M(\mu^{(r)},\mu^{(r)}_n)+\varepsilon.
  \end{equation*}
\end{rmk}

The proof of the main theorem proceeds in the following steps. 
First, we establish error estimates for the resolvent (see \cref{estres}). 
The idea for this part is largely inspired by \cite{Cr18}, 
relying on the fact that the Green function of the process killed at a single point 
can be expressed using the resistance metric, and that the $\alpha$-resolvent 
of the original process can be written in terms of the associated Green operator.

Next, using these resolvent estimates, 
we derive error estimates for the semigroup acting on test functions belonging to the domain of the generator
(more precisely, the domain of its square; see \cref{sg2}). 
To approximate a general bounded H\"older continuous function $f$ by functions in the domain,
we consider the action of a short-time semigroup on $f$, 
which provides the necessary smoothing to obtain the desired semigroup estimates. 
For the heat kernel, the basic idea is fundamentally the same: 
we approximate the Dirac measure (an approximate identity) 
by acting a short-time semigroup on the indicator function of a small open ball.

Regarding the non-compact case, we obtain the results 
by approximating the space by an increasing sequence of compact sets.
\cref{mainassump} ensures that this approximating sequence satisfies the conditions required for the compact case.

To carry out these quantitative estimates rigorously,
precise H\"older continuity estimates for 
the heat kernel, $P_tf$, and the $\alpha$-potential density are indispensable.
The analysis related to these quantities is conducted in \Cref{cont}.
In the process, we also prove convergence of hitting times (see \cref{hitwkconv}),
a result we believe to be of independent interest for various applications.

As applications of our results, 
we present two illustrative examples in this paper: 
the Sierpinski gasket (SG) and the Bouchaud trap model (BTM). 

The Sierpinski gasket is one of the most well-investigated self-similar fractals 
(see \Cref{picsg} below).
While the convergence of random walks on approximating graphs to Brownian motion 
on the gasket is now a well-known fact \cite{BP88, Gol87, Kus87}, 
this paper is the first to rigorously derive an estimate on its quantitative convergence rate. 
As an immediate consequence of our results,
we may confirm that
\begin{align}\label{ohsugo}
  \Big|E_x[f(X_t)]-E_{x_n}[f(X^n_t)]\Big|\lesssim \|f\|_{\BL^{1}}\left( \frac{3}{5} \right)^{E_1 n}n^{E_2},\quad E_1=0.0956756\cdots,\quad E_2=0.86580748\cdots
\end{align}
for a function $f:\R^2\to \R$ which is Lipschitz continuous with respect to the Euclidean distance $d_E$ on $\R^2$,
where $(X_t)$ is a Brownian motion on SG, $(X^n_t)$ is a random walk on the $n$-th approximating graph $V_n$, 
and $x\in\rm{SG}$ and $x_n\in V_n$ are points with $d_E(x,x_n)\lesssim 2^{-n}$.
For more precise description, the reader is referred to \cref{sgrate}. 
\begin{figure}[h]
  \centering
  \begin{minipage}[b]{0.4\textwidth}
    \centering
    \includegraphics[width=40mm]{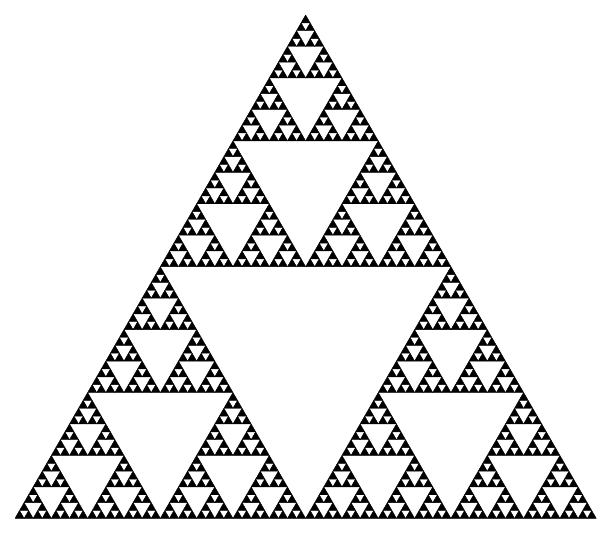}
    \par \vspace{1pt}
  \end{minipage}
  \caption{Sierpinski Gasket}
  \label{picsg}
\end{figure}
\begin{rmk}
We used the open set condition and 
the fact that the resistance metric and the Euclidean metric are comparable 
up to a power to verify \cref{as11} for the Sierpinski gasket.
Note that (A2) or (A3) are satisfied provided that the connectivity of the space or the continuity of the process is established.
Consequently, our results apply to a wider class of fractals satisfying these properties, 
such as nested fractals. 
We also conjecture that our results are applicable 
to the two-dimensional Sierpinski carpet 
under some modifications.
\end{rmk}

BTM is a model of aging, which refers to the phenomenon 
in which a system's dynamics progressively slow down without reaching equilibrium on laboratory time scales, 
so that its current state carries information about its age. 
A rough description of the model is as follows: 
First, for given $\alpha>0$, we introduce i.i.d.\ weights $(\tau_i)_{i\in\Z}$ with $P(\tau_0>u)=u^{-\alpha},\;u>1$ and construct a measure $\mu_n$ on $n^{-1}\Z$ by $\mu_n=\frac{1}{n}\sum_{i\in\Z}\tau_i\delta_{i/n}$.
We regard $(n^{-1}\Z,\mu_n)$ equipped with the usual Euclidean distance $d_E$ as a measured resistance metric space, and investigate the scaling limit of the space and the corresponding process $X^n=(X^n_t)$.
Note that, if $\alpha>1$, $(n^{-1}\Z,d_E,\mu_n)$ converges to $(\R,d_E,E[\tau_0]dx)$ in the Gromov-Hausdorff-vague topology almost surely,
and the limit process $X=(X_t)_t$ of $X^n$ is a Brownian motion scaled by a deterministic factor.

Quantitative estimates for the speed of convergence for the distribution function and the heat kernel were shown in \cite{BTM24}.
Since they used an arguments involving second moment,
the regime of $\alpha$ is restricted to $\alpha>2$.
As an application of our results, we extend the regime of $\alpha$ to $\alpha>1$ in \cref{btmrate}.
Moreover, by virtue of a near-optimal estimate for the $\BL^{\kappa}$ distance between $\mu_n$ and the limiting measure, our result significantly improves the previous estimates from \cite{BTM24},
at least doubling the exponent of the convergence rate.
Figure \ref{fig:exponent_comparison} compares the estimates for the distribution function in \cite{BTM24} and in this paper.
Although the range of $\alpha$ is somewhat restricted, we also obtain annealed estimates for the semigroup and the heat kernel,
which were unknown. See \cref{btmsgQ,btmsgA,annhk} for more details.

\begin{figure}[htbp]
  \centering
  
  \includegraphics[height=0.3\textheight,width=0.8\textwidth]{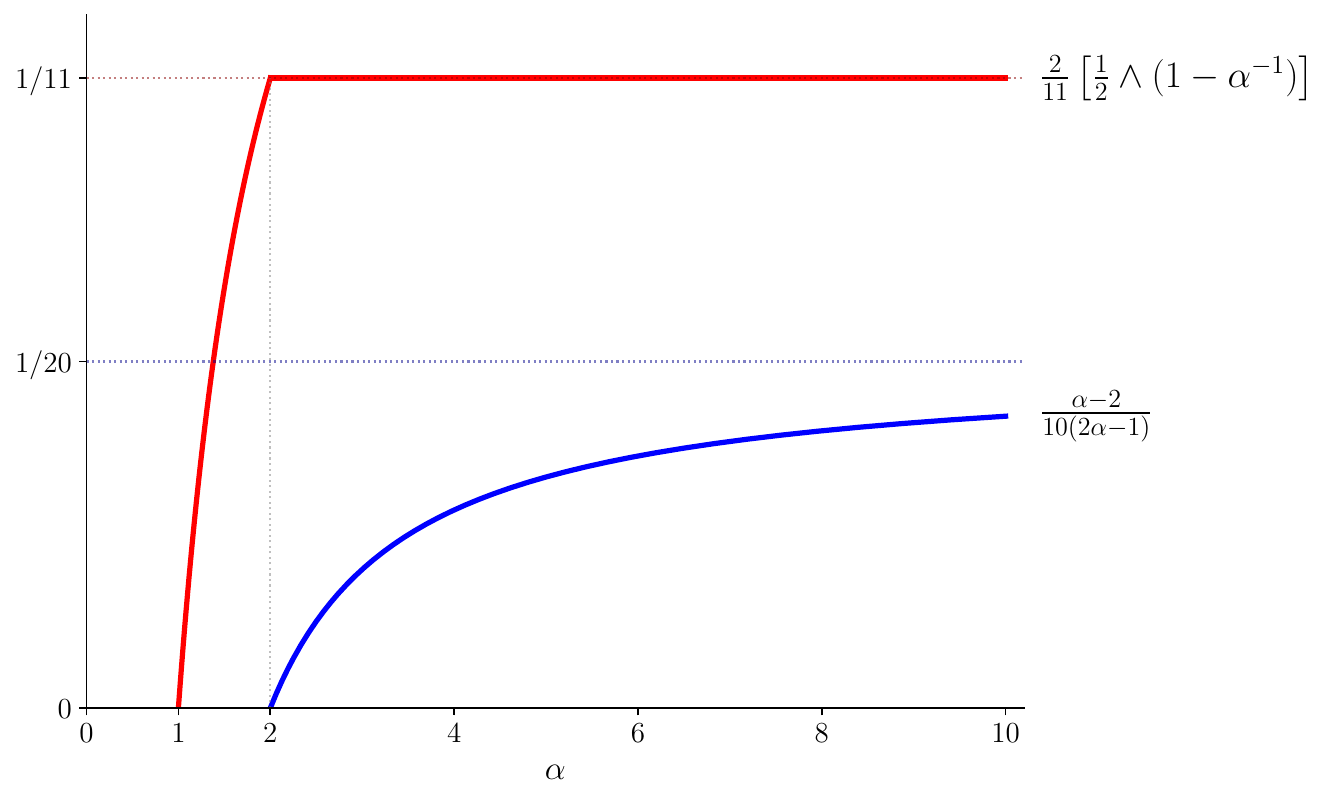}
  \caption{Graph of the exponent $E=E(\alpha)$ appearing in the quenched estimate $\sup_{x\in\R}|P_0(X_1\le x)-P_0(X^n_1\le x)|\lesssim n^{-E+\varepsilon}$. The blue line represents the exponent derived in \cite{BTM24}, and the red line represents the one derived in \cref{btmrate}.}
  \label{fig:exponent_comparison}
\end{figure}

\begin{rmk}
  Our main result is also applicable to the setting considered in \cite{SNOB}. 
  However, since the resulting estimates were weaker than those in \cite[Theorem 2.4]{SNOB}, 
  we omit the details in this paper.
\end{rmk}

The organization of this paper is as follows.
In \Cref{pre}, 
we introduce the metric defined in \eqref{11.1} and the corresponding Gromov-Hausdorff-type distance, 
along with the concept of resistance forms and related known results.
In \Cref{cont}, we investigate the H\"older continuity of 
the heat kernel, $P_tf$ for suitable functions $f$, and the $\alpha$-potential density.
\Cref{efr,efsg,esthk} are devoted to establishing the resolvent estimates,
the semigroup estimates, and the heat kernel estimates, respectively.
In \Cref{ncpt}, we consider the non-compact case.
Finally, in \Cref{app}, we present examples 
illustrating the applications of our results.

Lastly, we make some remarks on notation used in this paper.
\begin{itemize}
  \item We write $a\wedge b:=\min\{a,b\} $ and $a\vee b:=\max\{a,b\}$ for $a,b\in \R\cup\{\pm\infty\}$.
\item We also note that throughout the paper, 
a constant appearing in proposition, lemma, or theorem X 
is denoted by $c_X$.
\end{itemize}

\section{Preliminaries}\label{pre}

\subsection{Bounded Lipschitz metric}\label{undertop}
In this section, we introduce an appropriate metric on a space of measures. 
In \cite{Cr18}, the Gromov-Hausdorff-vague metric (or topology)
is used to formulate the convergence
of root-and-measured metric spaces. 
Although the metric also works in our setting, it is unsuitable for our purpose.
More precisely, we aim to establish estimates for semigroups and heat kernels
using the distance between spaces.
To acheive this, roughly speaking, we need to 
estimate the difference $|\int fd\mu-\int fd\nu|$ by using the metric between $\mu$ and $\nu$.
However, the (suitably adapted to non-compact settings) Prokhorov metric, which is used to define the Gromov-Hausdorff-vague metric,
does not seem to capture the necessary information for this.
Therefore, we first need to define an alternative metric that is better suited to this, and that is the aim of this subsection.   

In this paper, for a metric space $(X,d^X)$, $x\in X$, and $r>0$, we define $B_{d^X}(x,r)$ and $D_{d^X}(x,r)$
as follows.
\begin{align*}
  B_{d^X}(x,r):=\{x\in X:d^X(x,y)<r\},\;\mathrm{and}\;D_{d^X}(x,r):=\{y\in X:d^X(x,y)\leq r\}.
\end{align*}
If the metric $d^X$ is clear in the context, we omit $d^X$ from the notation.

\begin{dfn}
  Let $(X,d^X)$ be a metric space. For a function $f:X\to\R$, a non-empty subset
  $A\subseteq X$ and a real number $\kappa\in (0,1]$, we define $\|f\|_A,$ $\|f\|_{\Lip^{\kappa}(A)}\in [0,\infty],$ and $\|f\|_{\BL^{\kappa}(A)}\in [0,\infty]$ 
  as follows.
  \begin{equation*}
    \|f\|_A:=\sup_{a\in A}|f(a)|,\qquad \|f\|_{\Lip^{\kappa}(A)}:=\sup_{\substack{x,y\in A\\x\neq y}}\frac{|f(x)-f(y)|}{d^X(x,y)^{\kappa}},
  \end{equation*}
  and
  \begin{equation*}
    \|f\|_{\BL^{\kappa}(A)}:=\|f\|_A+\|f\|_{\Lip^{\kappa}(A)}.
  \end{equation*} 
  If $A$ is a singleton, we set $\|f\|_{\Lip^{\kappa}(A)}=0$. 
  The quantity $\|f\|_{\Lip^{\kappa}(A)}$ is the $\kappa$-H\"older constant of $f|_A$, 
  and we call $\|\cdot\|_{\BL^{\kappa}(A)}$ the $\BL^{\kappa}(A)$ norm 
  or simply the $\BL^{\kappa}$ norm if $A$ is clear from the context.
Also, we denote the set of functions with a finite $\BL^{\kappa}(A)$ norm by
$\BL^{\kappa}(A)$. Namely,
  \begin{equation*}
  \BL^{\kappa}(A):=\{f:X\to\R:\|f\|_{\BL^{\kappa}(A)}<\infty\}.
\end{equation*}
\end{dfn}

\begin{rmk}\label{submult}
One can confirm the submultiplicativity of $\|\cdot\|_{\BL^{\kappa}}$:
\begin{align*}
  \|fg\|_{\BL^{\kappa}(A)}
  =&\|fg\|_{A}+\|fg\|_{\Lip^{\kappa}(A)}\\
  \leq&\|f\|_A\|g\|_A+\|f\|_{\Lip^{\kappa}(A)}\|g\|_A+\|f\|_A\|g\|_{\Lip^{\kappa}(A)}\\
  \leq&\|f\|_{\BL^{\kappa}(A)}\|g\|_{\BL^{\kappa}(A)}
\end{align*}
Also, if we define $|f|_{\BL^{\kappa}(A)}:=\|f\|_A\vee \|f\|_{\Lip^{\kappa}(A)}$, then 
we can show $|fg|_{\BL^{\kappa}(A)}\le 3 |f|_{\BL^{\kappa}(A)}|g|_{\BL^{\kappa}(A)}$
by the same argument as above.
\end{rmk}

\begin{rmk}
 Although the notion of $\BL^{\kappa}$ depends on the metric $d^X$, we omit $d^X$ from the notation,
  since we always fix one metric for each space.
\end{rmk}

The following lemma plays an important role, and is known as the McShane's extension theorem.
The lemma is well known, but we provide a proof for the reader's convenience.
\begin{lem}[{\cite[Theorem 1]{MS34}}]\label{mcshane}
  Let $A$ be a non-empty subset of a metric space $(X,d^X)$. For any $f:A\to\R$ with 
  $\|f\|_{\BL^{\kappa}(A)}<\infty$, there exists an extension $\bar{f}:X\to\R$ such that
  $\|\bar{f}\|_X=\|f\|_A$ and $\|\bar{f}\|_{\Lip^{\kappa}(X)}=\|f\|_{\Lip^{\kappa}(A)}$.
\end{lem}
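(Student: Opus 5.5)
Write $L:=\|f\|_{\Lip^{\kappa}(A)}$ and $S:=\|f\|_A$, both finite by assumption. The plan is the classical McShane construction, applied with the snowflaked metric $(d^X)^{\kappa}$, followed by truncation at level $S$ to recover the sup-norm. The degenerate case $L=0$ (which includes $A$ a singleton) is immediate: then $f$ is constant, say $f\equiv c$ with $|c|=S$, and we take $\bar f\equiv c$, so $\|\bar f\|_X=S$ and $\|\bar f\|_{\Lip^{\kappa}(X)}=0$. So assume $L>0$.

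\textbf{Step 1 (McShane extension).} Define
\begin{align*}
  \tilde f(x):=\inf_{a\in A}\bigl\{f(a)+L\,d^X(x,a)^{\kappa}\bigr\},\qquad x\in X.
\end{align*}
\begin{itemize}
\item \emph{Finiteness.} For fixed $a_0\in A$ and any $a\in A$, the triangle inequality together with subadditivity of $u\mapsto u^{\kappa}$ for $\kappa\in(0,1]$ gives $d^X(a_0,a)^{\kappa}\le d^X(a_0,x)^{\kappa}+d^X(x,a)^{\kappa}$. Hence
\[
f(a)+L\,d^X(x,a)^{\kappa}\ge f(a_0)-L\,d^X(a_0,x)^{\kappa}>-\infty .
\]
\item \emph{Extension property.} For $x\in A$, choosing $a=x$ gives $\tilde f(x)\le f(x)$. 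The Hölder bound $f(a)\ge f(x)-L\,d^X(x,a)^{\kappa}$ gives $\tilde f(x)\ge f(x)$. So $\tilde f=f$ on $A$.
\item \emph{Hölder bound.} For $x,y\in X$ and each $a\in A$,
\[
f(a)+L\,d^X(x,a)^{\kappa}\le f(a)+L\,d^X(y,a)^{\kappa}+L\,d^X(x,y)^{\kappa},
\]
again by subadditivity of $u^{\kappa}$. Taking the infimum over $a$ gives $\tilde f(x)\le \tilde f(y)+L\,d^X(x,y)^{\kappa}$, and by symmetry $\|\tilde f\|_{\Lip^{\kappa}(X)}\le L$.
\end{itemize}

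\textbf{Step 2 (truncation).} Set $\bar f:=(-S)\vee(\tilde f\wedge S)$. Since $|f|\le S$ on $A$, we still have $\bar f=f$ on $A$. The map $u\mapsto(-S)\vee(u\wedge S)$ is $1$-Lipschitz on $\R$, so $\|\bar f\|_{\Lip^{\kappa}(X)}\le L$. Clearly $\|\bar f\|_X\le S$.

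\textbf{Step 3 (equality of norms).} The reverse inequalities hold because $\bar f$ extends $f$: a supremum over $X$ dominates the supremum over $A$, and a Hölder quotient over pairs in $X$ dominates the one over pairs in $A$. Hence $\|\bar f\|_X=\|f\|_A$ and $\|\bar f\|_{\Lip^{\kappa}(X)}=\|f\|_{\Lip^{\kappa}(A)}$.

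The only point needing care is that $(d^X)^{\kappa}$ satisfies the triangle inequality for $\kappa\le 1$. This follows from the elementary inequality $(u+v)^{\kappa}\le u^{\kappa}+v^{\kappa}$ for $u,v\ge 0$, combined with monotonicity of $u\mapsto u^{\kappa}$.
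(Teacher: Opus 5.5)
Your proof is correct and takes essentially the same route as the paper: you build a McShane-type envelope from the snowflaked metric $(d^X)^{\kappa}$ and then truncate to recover the sup norm (the paper uses the lower envelope $\sup_{a\in A}(f(a)-L\,d^X(x,a)^{\kappa})$ where you use the upper one). The remaining differences are cosmetic: you clamp at $\pm\|f\|_A$ rather than to $[\inf f(A),\sup f(A)]$, and you treat $L=0$ separately even though the general construction already covers that case.
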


\begin{rmk}
Strictly speaking, it would be more precise to denote the above extension,
for instance, by $\bar{f}^{A}$ rather than $\bar{f}$,
since, for example, the extensions of
$1_{[0,1]}:[0,1]\to\R$ and $1_{[0,1]}:[-2,-1]\cup[0,1]\to\R$
may differ.
Here, we denote the indicator function of $A$ by $1_A$ for a set $X$ and its subset $A$.
Nevertheless, in what follows, we omit the superscript $A$ from the notation,
as no confusion will arise.
\end{rmk}

\begin{proof}
  Once we construct an extension $g:X\to \R$ of $f$ satisfying
   $\|g\|_{\Lip^{\kappa}(X)}=\|f\|_{\Lip^{\kappa}(A)}$, it is easy to obtain the desired extension $\bar{f}$.
   In fact, it is enough to set $\bar{f}:=((\sup f(A))\wedge g)\vee \inf f(A)$.
   Thus, we focus on showing the existence of such a function $g$.

   Let $L:=\|f\|_{\Lip^{\kappa}(A)}\in [0,\infty)$ and define $g:X\to (-\infty,\infty]$ by setting
   \begin{align*}
    g(x)=\sup_{a\in A}(f(a)-Ld^X(x,a)^{\kappa}),\qquad x\in X.
   \end{align*}
   
We first show that $g$ is an extension of $f$. Fix $x\in A$.
Since $f$ is $\kappa$-H\"older continuous with constant $L$, it holds that
\begin{align*}
  f(a)-Ld^X(a,x)^{\kappa}
  =&f(a)-f(x)+f(x)-Ld^X(a,x)^{\kappa}\\
  \leq&Ld^X(x,a)^{\kappa}+f(x)-Ld^X(a,x)^{\kappa}\\
  =&f(x)
\end{align*}
for any $a\in A$. This implies $g(x)\leq f(x)$, and the reverse inequality is obvious.
Thus, $g$ is an extension of $f$.
Next, we will show that $g(x)$ is finite for any $x\in X$.
Take $x_0\in X$ with $g(x_0)<\infty$.
Note that the existence of such an $x_0$ is guaranteed by the fact that $g|_A=f<\infty$ and $A\neq \emptyset$.
Then, for any $x_1\in X$ and $a\in A$, it holds that
\begin{equation}\label{lipineq}
  \begin{aligned}
      f(a)-Ld^X(x_1,a)^{\kappa}-g(x_0)
  =&f(a)-Ld^X(x_1,a)^{\kappa}-\sup_{b\in A}(f(b)-Ld^X(x_0,b)^{\kappa})\\
  \leq&Ld^X(x_0,a)^{\kappa}-Ld^X(x_1,a)^{\kappa}\\
  \leq&Ld^X(x_0,x_1)^{\kappa}.
  \end{aligned}
\end{equation}
Here, we take $b=a$ in the first inequality.
Taking a supremum over $a\in A$, 
we obtain $g(x_1)-g(x_0)\leq Ld^X(x_0,x_1)^{\kappa}$ and thus, 
$g$ is finite at any point of $X$. Finally, by \eqref{lipineq}, 
we can check the H\"older continuity.
\end{proof}

\begin{dfn}[McShane extension]\label{Mcext}
Let $A$ be a non-empty subset of a metric space $(X,d^X)$ and $f$ be an element in $\BL^{\kappa}(A)$.
We say a function $\bar{f}:X\to\R$ is a McShane extension of $f:A\to\R$ (with exponent $\kappa$) 
if $\bar{f}$ satisfies $\bar{f}|_A=f$, $\|\bar{f}\|_X=\|f\|_A$, and $\|\bar{f}\|_{\Lip^{\kappa}(X)}=\|f\|_{\Lip^{\kappa}(A)}$.
\end{dfn}

We now define a metric between measures that will be suitable for our goal.

\begin{dfn}[$\BL^{\kappa}$ metric]
Let $(X,d^X)$ be a metric space and define $\calM(X)_{\mathrm{fin}}$ by
\begin{equation*}
    \calM_{\mathrm{fin}}(X):=\{\mu:\mu\;\mathrm{is\;a\;finite\;Radon\;measure\;on\;}X\}.
\end{equation*}
For $\kappa\in (0,1]$, we define a function $d_{\BL^{\kappa}}^X$ on $\calM(X)_{\mathrm{fin}}\times \calM(X)_{\mathrm{fin}}$ as follows.
\begin{align}\label{metblk}
  d_{\BL^{\kappa}}^X(\mu,\nu):=\sup\left\{ \left| \int_X f d(\mu-\nu)\right|:\|f\|_{\BL^{\kappa}(X)}\leq 1 \right\}\in [0,\infty],\quad \mu,\nu\in \calM(X)_{\mathrm{fin}}.
\end{align}
\end{dfn}

If $\kappa=1$, this function coincides with the metric used in 
\cite[Theorem 2.4]{SNOB} to estimate the convergence rate of 
a scaled random walk on $\Z$ with a ``constant gap'' to 
the snapping out Brownian motion on $\R$.
Additionally, an analogous metric is employed in 
\cite[Theorem 1]{kolokoltsov2023rates} to provide 
convergence rate estimates for a scaled random walk converging to 
a stable subordinator.

\begin{lem}
  The function $d_{\BL^{\kappa}}^X:\calM(X)_{\mathrm{fin}}\times \calM(X)_{\mathrm{fin}}$
  defined in \eqref{metblk}
  is a metric. We call $d_{\BL^{\kappa}}^X$ the $\BL^{\kappa}$ metric. 
\end{lem}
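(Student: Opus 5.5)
We verify the metric axioms directly from \eqref{metblk}: finiteness, symmetry, the triangle inequality, and separation. Only the last needs an argument.

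\textbf{Finiteness.} For $\|f\|_{\BL^{\kappa}(X)}\le 1$ we have $\|f\|_X\le 1$. Hence
\begin{align*}
\left|\int_X f\,d(\mu-\nu)\right|\le \mu(X)+\nu(X)<\infty,
\end{align*}
so $d_{\BL^{\kappa}}^X$ takes values in $[0,\infty)$. Measurability is not an issue, since such $f$ is H\"older continuous and hence Borel.

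\textbf{Symmetry and triangle inequality.} Symmetry holds because the quantity inside the supremum is an absolute value, so swapping $\mu$ and $\nu$ changes nothing. For the triangle inequality, fix any admissible $f$ and write
\begin{align*}
\left|\int f\,d(\mu-\lambda)\right|\le \left|\int f\,d(\mu-\nu)\right|+\left|\int f\,d(\nu-\lambda)\right|\le d_{\BL^{\kappa}}^X(\mu,\nu)+d_{\BL^{\kappa}}^X(\nu,\lambda).
\end{align*}
Taking the supremum over $f$ gives $d_{\BL^{\kappa}}^X(\mu,\lambda)\le d_{\BL^{\kappa}}^X(\mu,\nu)+d_{\BL^{\kappa}}^X(\nu,\lambda)$. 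Clearly $d_{\BL^{\kappa}}^X(\mu,\mu)=0$.

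\textbf{Separation.} This is the main step. Suppose $d_{\BL^{\kappa}}^X(\mu,\nu)=0$. Since $d_{\BL^{\kappa}}^X(\mu,\nu)=0$ is invariant under scaling of $f$, we get $\int f\,d\mu=\int f\,d\nu$ for every $f\in\BL^{\kappa}(X)$.

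Fix a closed set $C\subseteq X$ and, for $\varepsilon>0$, define
\begin{align*}
f_\varepsilon(x):=\bigl(1-d^X(x,C)/\varepsilon\bigr)\vee 0 .
\end{align*}
This function is bounded by $1$ and Lipschitz with constant $1/\varepsilon$. It is also $\kappa$-H\"older, because $f_\varepsilon(x)=\phi(d^X(x,C))$ with $\phi(s)=(1-s/\varepsilon)\vee 0$, and for $|s-t|\le\varepsilon$ one has $|s-t|/\varepsilon\le(|s-t|/\varepsilon)^{\kappa}$, while for $|s-t|>\varepsilon$ the difference $|\phi(s)-\phi(t)|$ is at most $1\le(|s-t|/\varepsilon)^{\kappa}$. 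Combined with the $1$-Lipschitz property of $d^X(\cdot,C)$, this gives $\|f_\varepsilon\|_{\BL^{\kappa}(X)}\le 1+\varepsilon^{-\kappa}<\infty$.

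As $\varepsilon\downarrow 0$, $f_\varepsilon$ decreases pointwise to $1_C$, because $C$ is closed. By dominated convergence, using finiteness of the measures, $\mu(C)=\nu(C)$ for all closed $C$. Closed sets form a $\pi$-system generating the Borel $\sigma$-algebra, and the total masses agree (take $C=X$). Dynkin's $\pi$-$\lambda$ theorem therefore yields $\mu=\nu$ on Borel sets. The Radon assumption is not even needed here.
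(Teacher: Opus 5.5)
Your proof is correct and takes the same route as the paper. The only nontrivial axiom is separation: you approximate $1_C$ for closed $C$ by functions of finite $\BL^{\kappa}$ norm, pass to the limit by dominated convergence, and conclude with the $\pi$-$\lambda$ theorem after matching total masses (via $C=X$ or the constant function $1$). The only difference is the test function. The paper uses $f_n(x)=\bigl(1+n\,d^X(x,C)^{\kappa}\bigr)^{-1}$, which is $\kappa$-H\"older directly. You use the truncated linear bump $\bigl(1-d^X(x,C)/\varepsilon\bigr)\vee 0$ and check its $\kappa$-H\"older bound by hand, which you do correctly.
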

\begin{proof}
  Although this was essentially shown in \cite[Proposition 11.3.2]{Dud02},
  we give a proof for the reader's convenience.
  The finiteness, symmetry and triangle inequality are obvious.
  It remains to show $\mu=\nu$ if $d_{\BL^{\kappa}}^X(\mu,\nu)=0$.
  Since the constant function $1:X\to\{1\}$ satisfies $\|1\|_{\BL^{\kappa}}(X)\le1$, 
  we have $\mu(X)=\nu(X)$. Thus, by $\pi$-$\lambda$ theorem, it is enough to show $\mu(A)=\nu(A)$ for any closed set $A\subseteq X$.
  Fix a closed subset $A\subseteq X$ and set $f_n(x)=\frac{1}{1+nd^X(x,A)^{\kappa}},\;x\in X$ for $n\in\N$.
  Then it is not difficult to check the following:
\begin{align*}
  0\le f_n\le f_{n+1},\qquad \lim_{n\to\infty}f_n(x)=1_A(x),\;x\in X,\qquad \|f_n\|_{\BL^{\kappa}(X)}<\infty.
\end{align*}
Thus we deduce$\int_{X}f_nd\mu=\int_{X}f_nd\nu$, and letting $n\to\infty$, we obtain
$\mu(A)=\nu(A)$. This completes the proof.
 \end{proof}

Recall that we say a metric space $(X,d^X)$ is a $\mathsf{bcm}$ space 
if and only if any closed bounded set of $(X,d^X)$ is compact.
The notation
$\bcm$ is the abbreviation for boundedly compact metric.

\begin{prp}\label{blproperties}
Let $(X,d^X)$ be a $\bcm$ space and $\kappa\in (0,1]$.
Then the following hold.
\begin{itemize}
  \item [$\mathrm{(i)}$]$(\calM(X)_{\mathrm{fin}},d_{\BL^{\kappa}}^X)$ is complete.
  \item [$\mathrm{(ii)}$]$(\calM(X)_{\mathrm{fin}},d_{\BL^{\kappa}}^X)$ is separable.
  \item [$\mathrm{(iii)}$]Convergence with respect to $d_{\BL^{\kappa}}^X$ is equivalent to weak convergence.
  \item [$\mathrm{(iv)}$]Let $(Y,d^Y)$ be a metric space and fix an isometric embedding $f:X\to Y$.
  Then $\calM(X)_{\mathrm{fin}}\ni \mu\mapsto \mu\circ f^{-1}\in \calM(Y)_{\mathrm{fin}}$ is isometric 
  with respect to $d^X_{\BL^{\kappa}}$ and $d^Y_{\BL^{\kappa}}$. 
\end{itemize}
\end{prp}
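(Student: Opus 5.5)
I would treat the four items in the order (iv), (iii), (ii), (i), because the later ones use the earlier ones.

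\emph{Item (iv).} Fix an isometric embedding $f:X\to Y$. If $\|g\|_{\BL^{\kappa}(Y)}\le 1$, then $\|g\circ f\|_{\BL^{\kappa}(X)}\le 1$, which gives $d^Y_{\BL^{\kappa}}(\mu\circ f^{-1},\nu\circ f^{-1})\le d^X_{\BL^{\kappa}}(\mu,\nu)$. For the reverse inequality, take $h$ with $\|h\|_{\BL^{\kappa}(X)}\le 1$ and regard $h\circ f^{-1}$ as a function on $f(X)$. It has the same sup norm and the same $\kappa$-H\"older constant, since $f$ is isometric. By \cref{mcshane} it extends to $\bar h$ on $Y$ with $\|\bar h\|_{\BL^{\kappa}(Y)}=\|h\|_{\BL^{\kappa}(X)}\le 1$. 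Both measures are carried by $f(X)$, so
\[
\int h\,d(\mu-\nu)=\int \bar h\, d(\mu\circ f^{-1}-\nu\circ f^{-1}),
\]
and this proves equality of the two distances.

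\emph{Item (iii).} If $d_{\BL^{\kappa}}(\mu_n,\mu)\to0$, then $\int f\,d\mu_n\to\int f\,d\mu$ for every bounded $\kappa$-H\"older $f$. In particular $\mu_n(X)\to\mu(X)$, and the functions $f_m=(1+m\,d(\cdot,A)^{\kappa})^{-1}$ used in the previous lemma give
\[
\limsup_n\mu_n(A)\le \mu(A)\quad\text{for closed }A.
\]
By the portmanteau theorem for finite measures with converging total mass, this is weak convergence.

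For the converse, assume $\mu_n\to\mu$ weakly. The sequence is tight, because weak convergence to a Radon limit on a $\bcm$ (hence Polish) space gives tightness by Prokhorov's theorem, and the total masses are bounded. Given $\varepsilon>0$, choose a compact $K$ with $\sup_n\mu_n(K^c)+\mu(K^c)<\varepsilon$. On $K$, the unit ball of $\BL^{\kappa}(K)$ is uniformly bounded and equicontinuous, so by Arzel\`a--Ascoli it is totally bounded in sup norm. Take a finite $\varepsilon$-net $g_1,\dots,g_N$ and extend each by \cref{mcshane}. For any $f$ in the unit ball, some $g_i$ satisfies $\|f-\bar g_i\|_K\le\varepsilon$, and globally $\|f-\bar g_i\|_X\le2$. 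This gives
\[
\Bigl|\int f\,d(\mu_n-\mu)\Bigr|\le \max_i\Bigl|\int \bar g_i\,d(\mu_n-\mu)\Bigr|+\varepsilon\,(\mu_n(X)+\mu(X))+4\varepsilon .
\]
The maximum is over finitely many bounded continuous functions, so it tends to $0$ by weak convergence. This uniform-convergence step over the unit ball via tightness and Arzel\`a--Ascoli is the main obstacle; the remaining items are comparatively standard.

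\emph{Item (ii).} Fix a countable dense set $D\subseteq X$. Finite measures of the form $\sum q_i\delta_{x_i}$ with $q_i\in\Q_{\ge0}$ and $x_i\in D$ are weakly dense: approximate $\mu$ by its restriction to a large compact set, partition that set into finitely many pieces of small diameter, and replace each piece by a point mass. By (iii) this countable family is then dense for $d_{\BL^{\kappa}}$.

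\emph{Item (i).} Let $(\mu_n)$ be $d_{\BL^{\kappa}}$-Cauchy. The masses $\mu_n(X)$ form a Cauchy sequence, so they are bounded. Next I would prove tightness directly from the Cauchy property, by the standard argument in Dudley: fix $\varepsilon$ and $m$ with $d_{\BL^{\kappa}}(\mu_n,\mu_m)<\varepsilon$ for $n\ge m$. Pick a compact $K$ that is $\varepsilon$-tight for $\mu_1,\dots,\mu_m$. Using the test function $(1-\delta^{-\kappa}d(\cdot,K)^{\kappa})^+$, whose $\BL^{\kappa}$ norm is at most $1+\delta^{-\kappa}$, bound $\mu_n$ of the complement of the closed $\delta$-neighbourhood $K^\delta$ uniformly in $n$. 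In a $\bcm$ space $K^\delta$ is compact, and iterating over $\varepsilon 2^{-j}$ with shrinking $\delta_j$ yields a totally bounded, hence compact, set carrying almost all the mass of every $\mu_n$. Prokhorov's theorem then gives a weakly convergent subsequence, so by (iii) it converges in $d_{\BL^{\kappa}}$. A Cauchy sequence with a convergent subsequence converges, which proves completeness.
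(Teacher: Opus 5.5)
Your proposal is correct, but it is organized differently from the paper's proof.

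\textbf{How the paper argues.} In the paper, (i) and (ii) are proved without using (iii).
\begin{itemize}
\item For completeness, tightness comes from the McShane extension $\bar f_R$ of $1_{B(x_0,R)}$. This function vanishes off $B(x_0,2R)$ and has H\"older constant at most $R^{-\kappa}$, so bounded compactness is used directly. The paper then extracts a weakly convergent subsequence and concludes by lower semicontinuity:
\[
d_{\BL^{\kappa}}(\mu,\mu_i)\le\limsup_k d_{\BL^{\kappa}}(\mu_{j_k},\mu_i).
\]
This only needs $\int f\,d\mu_{j_k}\to\int f\,d\mu$ for each fixed $f$, not uniformly over the unit ball.
\item For separability, the paper estimates $|\int f\,d(\mu-\nu)|$ directly for rational combinations of Dirac masses.
\item For (iii), it observes that $\BL^{\kappa}$ with respect to $d$ is $\BL^1$ with respect to the snowflake metric $d^{\kappa}$. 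That metric induces the same topology, so Dudley's Theorem 11.3.3 applies.
\end{itemize}

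\textbf{How your route differs.} You prove (iii) from scratch, via tightness, Arzel\`a--Ascoli on a compact set, and McShane extension of a finite net; this is essentially Dudley's own argument. You then route (ii) and the last step of (i) through (iii). This makes your proof self-contained, at the cost of making (i) and (ii) depend on the harder direction of (iii). The paper's version is shorter and keeps (i) and (ii) elementary. Your item (iv) is the same as the paper's.

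\textbf{One correction in your (i).} The iteration over $\varepsilon 2^{-j}$ with shrinking $\delta_j$ is unnecessary here. As written, it would also require the Cauchy tolerance to shrink with $\delta_j$, because your test function only gives
\[
\mu_n\bigl((K^{\delta})^c\bigr)\le (2+\delta^{-\kappa})\varepsilon,\qquad n\ge m.
\]
In a $\bcm$ space a single step with $\delta=1$ already suffices. The set $K^1$ is compact, and
\[
\sup_n\mu_n\bigl((K^1)^c\bigr)\le 3\varepsilon,
\]
using $\mu_n(K^c)<\varepsilon$ for $n<m$. This is in effect what the paper does with the balls $B(x_0,2R)$.
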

\begin{rmk}
  \cref{blproperties} also holds for Polish spaces regardless of the choice of metric, except for (i). 
  For instance, in an open interval $(0,1)$ equipped with the usual Euclidean metric,
  $(\delta_{1/n})_{n\in\N}$ is a Cauchy sequence that does not converge weakly in $\calM((0,1))_{\mathrm{fin}}$.
\end{rmk}

\begin{proof}[Proof of $\mathrm{(i)}$]
  Let $(\mu_j)_j$ be a Cauchy sequence in $(\calM(X)_{\mathrm{fin}},d_{\BL^{\kappa}}^X)$.
  To begin with, we will show the tightness of $(\mu_j)$.
  Fix $x_0\in X$. For $R>1$, let $\bar{f_R}:X\to [0,1]$ be a McShane extension of $1_{B(x_0,R)}:B(x_0,R)\cup B(x_0,2R)^c\to \R$
  with exponent $\kappa$.  Note that
  $\bar{f_R}$ satisfies 
  \begin{align*}
    \bar{f_R}|_{B(x_0,R)}=1,\quad \supp\bar{f_R}\subseteq D(x_0,2R),\;\;and\;\;\|\bar{f_R}\|_{\Lip^{\kappa}}=\|1_{B(x_0,R)}\|_{\Lip^{\kappa}(B(x_0,R)\cup B(x_0,2R)^c)}\leq R^{-\kappa}.
  \end{align*}
  Take $\varepsilon>0$ arbitrarily and choose $K\in \N$ such that 
  $d_{\BL^{\kappa}}^X(\mu_i,\mu_j)<\varepsilon$ holds for all $i,j\geq K$.
  Then, it holds from the finiteness of $\mu_K$ that
  \begin{align*}
    \lim_{R\to\infty}\limsup_{j\to\infty}\mu_j(X\setminus B(x_0,2R))
    \leq&\limsup_{R\to\infty}\limsup_{j\to\infty}\int_X \Big[1-\bar{f_R}\Big]d\mu_j\\
    \leq&\limsup_{R\to\infty}\limsup_{j\to\infty}\int_X \Big[1-\bar{f_R}\Big]d(\mu_j-\mu_K)+\limsup_{R\to\infty}\int_X \Big[1-\bar{f_R}\Big]d\mu_K\\
    \leq&\limsup_{R\to\infty}\limsup_{j\to\infty}(1+R^{-\kappa})d_{\BL^{\kappa}}^X(\mu_j,\mu_K)+\limsup_{R\to\infty}\mu_K(X\setminus B(x_0,R))\\
    \leq&\varepsilon.
  \end{align*}
 Now, the tightness of $(\mu_j)$ follows from the relative compactness of $B(x_0,2R)$.
 Also, we can easily check the boundedness of $(\mu_j(X))_j$ using 
 \begin{align*}
  |\mu_i(X)-\mu_j(X)|=\left| \int_X 1d(\mu_i-\mu_j)\right|\le d_{\BL^{\kappa}}^X(\mu_i,\mu_j),
 \end{align*}
 which shows that $(\mu_j(X))_j$ is a Cauchy sequence.
 Combining these facts, we obtain a weakly convergent sequence $(\mu_{j_k})_k$ and its limit 
 $\mu\in \calM(X)_{\mathrm{fin}}$. For the above $K\in\N$ and $i\geq K$, we have
 \begin{align*}
  d_{\BL^{\kappa}}(\mu,\mu_i)
  =&\sup_{\|f\|_{\BL^{\kappa}(X)}\leq 1}\left| \int_X fd(\mu-\mu_i) \right|\\
  =&\sup_{\|f\|_{\BL^{\kappa}(X)}\leq 1}\lim_{k\to\infty}\left| \int_X fd(\mu_{j_k}-\mu_i) \right|\\
  \leq&\limsup_{k\to\infty}d_{\BL^{\kappa}}^X(\mu_{j_k},\mu_i)\\
  \leq&\varepsilon.
 \end{align*}
Therefore $(\mu_j)$ converges to $\mu$ in $d_{\BL^{\kappa}}^X$.
\end{proof}
\begin{proof}[Proof of $\mathrm{(ii)}$]
  Note that $(X,d^X)$ is separable. Take a countable dense subset $(x_j)\subseteq X$
  and define a countable subset $\calA\subseteq \calM(X)_{\mathrm{fin}}$ by
  \begin{align*}
    \calA:=\left\{ \sum_{j=1}^{n}q_j\delta_{x_j}:q_j\in\Q_{\geq 0},n\in\N \right\}.
  \end{align*}
  We will show the denseness of $\calA$ in $(\calM(X)_{\mathrm{fin}},d_{\BL^{\kappa}}^X)$.\\
  Fix $\mu\in \calM(X)_{\mathrm{fin}}$ and $\varepsilon>0$. We may assume that $\mu\neq 0$ and, thus we can define
  $r=\left(\frac{\varepsilon}{\mu(X)}\right)^{1/\kappa}\in (0,\infty)$. Take a compact set $K\subseteq X$ with $\mu(X\setminus K)<\varepsilon$.
  We can choose $x_1,\dots,x_l$ such that $(B(x_j,r))_j$ covers $K$. 
  When we define $C_1,\dots,C_l$ by setting
  \begin{align*}
    C_1=K\cap B(x_1,r),\quad C_{j+1}=(B(x_{j+1},r)\cap K)\setminus C_j,\quad 1\leq j\leq l-1, 
  \end{align*} 
  then $(C_j)_{j=1}^l$ forms a partition of $K$ satisfying $\sup_{x\in C_j}d^X(x,x_j)\leq r$.
  Choosing $q_1,\dots,q_l\in \Q_{\geq 0}$ such that
  $\sum_{j=1}^l |\mu(C_i)-q_i|<\varepsilon$, we define $\nu\in\calA$ by
  $\nu:=\sum_{j=1}^{l}q_j\delta_{x_j}$. For any $f:X\to\R$ with $\|f\|_{\BL^{\kappa}(X)}\leq 1$, it holds that
  \begin{align*}
    \left| \int_Xfd(\mu-\nu) \right|
    \leq&\int_{X\setminus K}|f|d\mu+\left| \sum_{j=1}^{l}\int_{C_j} \Big[f(x)-f(x_j)\Big]\mu(dx) \right|+\left| \sum_{j=1}^{l}f(x_j)(\mu(C_j)-q_j) \right|\\
    \leq&\varepsilon+r^{\kappa}\mu(K)+\varepsilon\\
    \leq&3\varepsilon.
  \end{align*} 
  Therefore, $\calA$ is dense in $(\calM(X),d_{\BL^{\kappa}}^X)$. 
\end{proof}
\begin{proof}[Proof of $\mathrm{(iii)}$]
It suffices to show the case $\kappa=1$ since the two metrics $d^X(\cdot,\cdot)$ and $d^X(\cdot,\cdot)^{\kappa}$
define the same topology on $X$. For that case, the assertion was shown in \cite[Theorem 11.3.3]{Dud02}.
  
\end{proof}
\begin{proof}[Proof of $\mathrm{(iv)}$]
  Take $\mu,\nu\in \calM(X)_{\mathrm{fin}}$ arbitrarily. Noting $\|g\circ f\|_{\BL^{\kappa}(X)}\leq 1$ for any $g:Y\to\R$ with 
  $\|g\|_{\BL^{\kappa}(X)}\leq 1$, we obtain
  \begin{align*}
    \int_{Y} g d(\mu\circ f^{-1}-\mu\circ f^{-1})=\int_X g\circ fd(\mu-\nu)\leq d^X_{\BL^{\kappa}}(\mu,\nu).
  \end{align*}
 This implies $d^Y_{\BL^{\kappa}}(\mu\circ f^{-1},\nu\circ f^{-1})\leq d^X_{\BL^{\kappa}}(\mu,\nu)$.
 We next fix $h:X\to \R$ satisfying $\|h\|_{\BL^{\kappa}(X)}\leq 1$. Since $f$ is an injection onto its image,
 \begin{align*}
  g_0=h\circ f^{-1}:f(X)\ni f(x)\mapsto h(x)\in\R
 \end{align*}  
 is well defined and satisfies $\|g_0\|_{\BL^{\kappa}(f(X))}\leq 1$.
 By \cref{mcshane}, we can find $g:Y\to \R$ such that it satisfies $\|g\|_{\BL^{\kappa}(Y)}\leq 1$ and $g|_{f(X)}=g_0$.
 It is then the case that
 \begin{align*}
  \int_{X} hd(\mu-\nu)
  =&\int_{f(X)}g_0d(\mu\circ f^{-1}-\nu\circ f^{-1})\\
  =&\int_{Y}gd(\mu\circ f^{-1}-\nu\circ f^{-1})\\
  \leq&d^Y_{\BL^{\kappa}}(\mu\circ f^{-1},\nu\circ f^{-1}).
 \end{align*}
 This completes the proof.
\end{proof}
It is an immediate consequence of the definition of $d^X_{\BL^{\kappa}}$ that the metric 
can be used to estimate the difference between the integral of a function $f\in\BL^{\kappa}(X)$ with respect to two measures.
That is, the following holds.
\begin{align*}
  \int_{X} fd\mu-\int_{X}fd\nu\leq \|f\|_{\BL^{\kappa}(X)}d_{\BL^{\kappa}}^X(\mu,\nu),\quad f\in\BL^{\kappa}(X).
\end{align*}
The metric is also useful for estimating the difference between the measure of a ball with respect to two measures.
\begin{lem}\label{ballineq}
For any $r,\rho>0,\;x\in X$ and $\mu,\nu\in \calM(X)_{\mathrm{fin}}$, it holds that
\begin{align*}
  \nu(B(x,r))\leq\mu(B(x,r+\rho))+(1+\rho^{-\kappa})d_{\BL^{\kappa}}^X(\mu,\nu).
\end{align*}
\end{lem}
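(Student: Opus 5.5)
The plan is to test both measures against a single bump function that equals $1$ on $B(x,r)$ and vanishes outside $B(x,r+\rho)$. We then compare $\int \phi\,d\nu$ with $\int\phi\,d\mu$ through the definition of $d^X_{\BL^{\kappa}}$.

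\emph{Construction.} Set $A:=B(x,r)\cup B(x,r+\rho)^c$ and $\phi_0:=1_{B(x,r)}$ on $A$. For $a\in B(x,r)$ and $b\in B(x,r+\rho)^c$ the triangle inequality gives $d^X(a,b)\ge d^X(x,b)-d^X(x,a)>\rho$. Hence $|\phi_0(a)-\phi_0(b)|/d^X(a,b)^{\kappa}\le \rho^{-\kappa}$, and pairs lying in the same piece contribute $0$. So $\|\phi_0\|_{\Lip^{\kappa}(A)}\le\rho^{-\kappa}$ and $\|\phi_0\|_A\le 1$. If $A$ has only one of the two pieces, the function is constant and the bound is trivial. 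If $A$ is empty, which cannot happen since $x\in B(x,r)$, there would be nothing to do.

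\emph{Extension.} By \cref{mcshane}, take a McShane extension $\phi:X\to\R$ with $\|\phi\|_X=\|\phi_0\|_A\le 1$ and $\|\phi\|_{\Lip^{\kappa}(X)}\le\rho^{-\kappa}$. The truncation in the proof of \cref{mcshane} lies between $\inf\phi_0(A)$ and $\sup\phi_0(A)$, so $0\le\phi\le1$. We also have $\phi=1$ on $B(x,r)$ and $\phi=0$ off $B(x,r+\rho)$. This gives $1_{B(x,r)}\le\phi\le 1_{B(x,r+\rho)}$ and $\|\phi\|_{\BL^{\kappa}(X)}\le 1+\rho^{-\kappa}$.

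\emph{Estimate.} Apply the definition to $\phi/(1+\rho^{-\kappa})$. This yields
\[
\nu(B(x,r))\le\int_X\phi\,d\nu\le\int_X\phi\,d\mu+(1+\rho^{-\kappa})d^X_{\BL^{\kappa}}(\mu,\nu)\le\mu(B(x,r+\rho))+(1+\rho^{-\kappa})d^X_{\BL^{\kappa}}(\mu,\nu).
\]

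The only point needing care is the strict separation $d^X(a,b)>\rho$ between the two pieces of $A$, which gives the Hölder constant $\rho^{-\kappa}$. It holds because the balls are open: $d^X(x,a)<r$ and $d^X(x,b)\ge r+\rho$. Everything else is immediate.
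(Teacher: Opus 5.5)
Your proof is correct and follows the same route as the paper: take a McShane extension of $1_{B(x,r)}$, defined on $B(x,r)$ together with the complement of a larger ball, and test it against $d^X_{\BL^{\kappa}}$. The paper works with $B(x,r+\rho-\varepsilon)^c$ and then lets $\varepsilon\to0$, whereas you observe that the strict separation $d^X(a,b)>\rho$ already holds at $\varepsilon=0$ and that $\phi\le 1_{B(x,r+\rho)}$ pointwise, so the limiting step is unnecessary.
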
 
\begin{proof}
  Take $\varepsilon\in (0,\rho)$ arbitrarily.
  Apply \cref{mcshane} to $1_{B(x,r)}:B(x,r)\cup B(x,r+\rho-\varepsilon)^c\to [0,1]$ 
  and denote the resulting extension by $f_{\varepsilon}:X\to[0,1]$.
  Then $f_{\varepsilon}$ satisfies $\|f_{\varepsilon}\|_{\BL^{\kappa}(X)}\leq 1+(\rho-\varepsilon)^{-\kappa},$ $f_{\varepsilon}|_{B(x,r)}=1$ 
  and $\supp f_{\varepsilon}\subseteq D(x,r+\rho-\varepsilon)$. Therefore, it holds that
  \begin{align*}
    \nu(B(x,r))-\mu(B(x,r+\rho))
    \leq\int_X f_{\varepsilon}d\nu-\int_X f_{\varepsilon}d\mu
    \leq(1+(\rho-\varepsilon)^{-\kappa})d_{\BL^{\kappa}}^X(\mu,\nu).
  \end{align*}
  Letting $\varepsilon\to 0$, we obtain the result.
\end{proof}

For $\bcm$ $(X,d^X)$, set
\begin{align*}
  \calM(X):=\{\mu:\mu\;\mathrm{is\;a\;Radon\;measure\;on\;}X.\}.
\end{align*} 
We next consider an analog of $d_{\BL^{\kappa}}^X$ for this space.
Take $\rho_X\in X$ arbitrarily and define $d^{X,\rho_X}_{\BL^{\kappa}}$ by setting
\begin{align}\label{rootedblkappa}
  d^{X,\rho_X}_{\BL^{\kappa}}(\mu,\nu):=\int_{0}^{\infty}e^{-r}(1\wedge d_{\BL^{\kappa}}^X(\mu^{(r)}_{\rho_X},\nu^{(r)}_{\rho_X}))dr,\quad \mu,\nu\in \calM(X).
\end{align}
Here, $\mu^{(r)}_{\rho_X}\in\calM(X)_{\mathrm{fin}}$ is defined by 
$\mu^{(r)}_{\rho_X}(\cdot)=\mu(\cdot\cap D(\rho_X,r))$ for $\mu\in\calM(X)$.
\begin{prp}\label{rootedblproperties}
  For a boundedly compact metric space $(X,d^X)$ and $\rho_X\in X$, we have the following.
  \begin{itemize}
    \item [$\mathrm{(i)}$]The function $d^{X,\rho_X}_{\BL^{\kappa}}$ is a well-defined complete, separable metric on $\calM(X)$.
    \item [$\mathrm{(ii)}$]The topology of $(\calM(X),d^{X,\rho_X}_{\BL^{\kappa}})$ is independent of $\rho_X$ 
    and the convergence with respect to this topology is equivalent to vague convergence.
    \item [$\mathrm{(iii)}$]Let $(Y,d^Y)$ be a $\bcm$ space and fix an isometric embedding $f:X\to Y$.
  Then $\calM(X)\ni \mu\mapsto \mu\circ f^{-1}\in \calM(Y)$ is isometric 
  with respect to $d^{X,\rho_X}_{\BL^{\kappa}}$ and $d^{Y,f(\rho_X)}_{\BL^{\kappa}}$. 
   \end{itemize}
\end{prp}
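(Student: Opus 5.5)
The plan is to reduce everything to the finite-measure metric $d^X_{\BL^\kappa}$ and \cref{blproperties}, in the spirit of the standard Gromov--Hausdorff--vague constructions. Throughout, write $\mu^{(r)}:=\mu^{(r)}_{\rho_X}$.

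\emph{Well-definedness.} Since $D(\rho_X,r)$ is compact and $\mu$ is Radon, $\mu^{(r)}$ is finite, so the integrand is defined. I would then show that $r\mapsto d^X_{\BL^\kappa}(\mu^{(r)},\nu^{(r)})$ is measurable. For fixed $f$, the map $r\mapsto\int f\,d\mu^{(r)}$ is right-continuous with left limits. The supremum over $f$ can be replaced by a supremum over a countable family, dense in the unit ball of $\BL^\kappa$ on each compact ball (Arzel\`a--Ascoli). Hence the integrand is a countable supremum of measurable functions.

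\emph{Metric axioms.} Symmetry and the triangle inequality hold pointwise in $r$. If the distance vanishes, then $\mu^{(r)}=\nu^{(r)}$ for a.e.\ $r$, hence for a sequence $r\to\infty$, and so $\mu=\nu$.

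\emph{Part (ii).} I would prove two comparison estimates for $\mu^{(r)}$ versus $\mu^{(r')}$.
\begin{itemize}
\item If $\mu_n\to\mu$ vaguely, then $\mu_n^{(r)}\to\mu^{(r)}$ weakly for every $r$ with $\mu(\partial D(\rho_X,r))=0$, which holds for all but countably many $r$. By \cref{blproperties}(iii) the integrand tends to $0$ for a.e.\ $r$, and dominated convergence gives $d^{X,\rho_X}_{\BL^\kappa}(\mu_n,\mu)\to0$.
\item Conversely, if $d^{X,\rho_X}_{\BL^\kappa}(\mu_n,\mu)\to0$, then along a subsequence the integrand tends to $0$ for a.e.\ $r$. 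For each compactly supported continuous $g$, pick such an $r$ large enough that $\supp g\subseteq D(\rho_X,r)$; weak convergence of $\mu_n^{(r)}$ then gives $\int g\,d\mu_n\to\int g\,d\mu$. A subsequence-of-subsequence argument removes the subsequence.
\end{itemize}
This characterises the topology purely through vague convergence, so it is independent of $\rho_X$.

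\emph{Separability.} This follows because finite rational combinations of Dirac masses at points of a countable dense set are vaguely dense.

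\emph{Completeness (main obstacle).} Let $(\mu_j)$ be Cauchy for $d^{X,\rho_X}_{\BL^\kappa}$. The difficulty is that Cauchy-ness only controls the truncations for most $r$, in an averaged sense. The steps are:
\begin{enumerate}
\item Pass to a subsequence with $\sum_j d^{X,\rho_X}_{\BL^\kappa}(\mu_j,\mu_{j+1})<\infty$.
\item By Fubini, for a.e.\ $r$ we get $\sum_j 1\wedge d^X_{\BL^\kappa}(\mu_j^{(r)},\mu_{j+1}^{(r)})<\infty$, so $(\mu_j^{(r)})_j$ is Cauchy in $d^X_{\BL^\kappa}$.
\item By \cref{blproperties}(i), $\mu_j^{(r)}$ converges to some $\nu_r$ for a.e.\ $r$.
\item Consistency: for $r<r'$ in this full-measure set, $\nu_{r'}$ and $\nu_r$ agree on the open ball $B(\rho_X,r)$. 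This holds because for $f$ supported in $B(\rho_X,r)$ we have $\int f\,d\mu_j^{(r)}=\int f\,d\mu_j^{(r')}$, and since $\nu_r$, $\nu_{r'}$ are finite Radon measures, testing against such $f$ determines them on the open ball.
\item Define $\mu$ on $B(\rho_X,r)$ as $\nu_r$. This gives a Radon measure, and $\mu_j\to\mu$ vaguely, hence in $d^{X,\rho_X}_{\BL^\kappa}$ by (ii). A Cauchy sequence with a convergent subsequence converges.
\end{enumerate}

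\emph{Part (iii).} Since $f$ is an isometry, $f(D(\rho_X,r))=D(f(\rho_X),r)\cap f(X)$, so $(\mu\circ f^{-1})^{(r)}_{f(\rho_X)}=\mu^{(r)}\circ f^{-1}$. Applying \cref{blproperties}(iv) to each $r$ shows the integrands coincide, so the map is isometric.
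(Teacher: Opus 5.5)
Your proof is correct, but it takes a genuinely different route from the paper's.

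\textbf{The paper's route.} The paper does almost no work by hand. It verifies that the restriction system $\mu\mapsto\mu(\cdot\cap D(x,r))$ from $\calM(X)$ to $\calM_{\mathrm{fin}}(X)$ satisfies the abstract hypotheses of Noda's framework. These are conditions 2 and 4 and completeness of the restriction system, obtained via \cite[Lemmas A.5, A.6 and 3.18]{Nd24}. It then reads off the metric property, root-independence, completeness and separability from \cite[Corollary 2.14]{Nd24}. It identifies the topology with the vague one through \cite[Theorem 2.11]{Nd24}, which characterises convergence with moving roots $x_n\to x$ and all but countably many radii, combined with \cite[Theorem 3.20]{Nd24}. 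Part (iii) is handled exactly as you do.

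\textbf{Your route.} You derive everything directly from \cref{blproperties}:
\begin{itemize}
\item For one direction of (ii), you use dominated convergence over radii with $\mu(\partial D(\rho_X,r))=0$.
\item For the other direction, you take an a.e.-convergent subsequence in $L^1(e^{-r}dr)$ and add a subsequence-of-subsequence argument.
\item Root-independence follows from the fact that a metric topology is determined by its convergent sequences.
\item Completeness comes from a summable subsequence, Tonelli, and \cref{blproperties}(i) applied to a.e.\ truncation. You then glue the limits $\nu_r$ only on open balls, which correctly avoids the inconsistency caused by mass accumulating on spheres.
\end{itemize}

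\textbf{What each buys.} The paper's route is shorter and keeps $d^{X,\rho_X}_{\BL^{\kappa}}$ inside the restriction-system machinery that it reuses for \cref{srmoftau} and \cref{232}. Yours is self-contained, works with a fixed root, and makes the mechanism explicit.

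\textbf{Two small points.}
\begin{itemize}
\item Measurability is simpler than your Arzel\`a--Ascoli argument. Since $|d^X_{\BL^{\kappa}}(\mu^{(r')},\nu^{(r')})-d^X_{\BL^{\kappa}}(\mu^{(r)},\nu^{(r)})|\le(\mu+\nu)(D(\rho_X,r')\setminus D(\rho_X,r))$ for $r'>r$, the integrand is right-continuous in $r$.
\item For separability, what you feed into (ii) should be, for each $\mu$, a \emph{sequence} of rational Dirac combinations converging vaguely to $\mu$. For example, discretise $\mu^{(k)}$ at mesh $1/k$ as in the proof of \cref{blproperties}(ii). Density in the vague topology alone is not what (ii) takes as input.
\end{itemize}
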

We refer the reader to \cite[Section 2]{Nd24} for the definitions of terminologies used in the following proof.
\begin{proof}
  We first show the claim (i) and the first assertion of (ii)
  by confirming the assumptions of \cite[Corollary 2.14]{Nd24}.
  Since it is shown in \cite[Lemmas A.5 and A.6]{Nd24} that a restriction system $R$ from $\calM(X)$ to $\calM_{\rm{fin}}(X)$
   defined by $R^{(r)}_{x}(\mu)(\cdot):=\mu(\cdot\cap D_{d^X}(x,r))$
  satisfies condition 2 in \cite[p8]{Nd24},
  the function $d^{X,\rho_X}_{\BL^{\kappa}}$ is a metric, and the topology induced by $d^{X,\rho_X}_{\BL^{\kappa}}$
  is independent of $\rho_X$ by \cite[Corollary 2.14]{Nd24}.
  Also, since $(X,d^X)$ is complete, and the restriction system $R$ 
  above satisfies condition 4 and is complete in the sense of \cite[Definition 2.8]{Nd24}
 (see \cite[Lemma 3.18]{Nd24}),   $d^{X,\rho_X}_{\BL^{\kappa}}$ is separable and complete by \cite[Corollary 2.14]{Nd24}.

 We next show the latter assertion of (ii).
 Since convergence with respect to $d^X_{\BL^{\kappa}}$ (in $\calM(X)$) is equivalent to weak convergence,
 using \cite[Theorem 2.11]{Nd24}, we can check that $d^{X,\rho_X}(\mu,\mu_n)\to 0$ is synonymous with the following:
 \begin{itemize}
  \item There exists a sequence $x_n\in X$ converging to an element $x\in X$ such that 
  $\mu_n(\cdot\cap D_{d^X}(x_n,r))$ converges to $\mu(\cdot\cap D_{d^X}(x,r))$ weakly
  for all but countably many $r>0$.
 \end{itemize} 
  Also, it follows from  \cite[Theorem 3.20]{Nd24} that the above condition is equivalent
  to vague convergence. Thus we have completed the proof of (ii).

  The claim (iii) is obvious from $(\mu\circ f^{-1})^{(r)}_{f(\rho_X)}=(\mu^{(r)}_{\rho_X})\circ f^{-1}$ and \cref{blproperties}(iv).
\end{proof}

\begin{rmk}\label{srmoftau}
By the above proposition, we can check that the space-rooted metrization $\tau^{\mathrm{srm}}:\rBCM\to\Met$ of the functor
$\tau:\BCM\to\MTop$ which is defined by:
\begin{itemize}
  \item $\tau(X)=\calM(X)$ equipped with the vague topology,
  \item $\tau_f$ is a pushforward map given by $f$,
\end{itemize}
is given by
\begin{itemize}
  \item $\tau^{\mathrm{srm}}((X,\rho_X))=(\calM(X),d^{X,\rho_X}_{\BL^{\kappa}})$.
\end{itemize}
For precise definitions of these terms, see \cite[Section 6.1]{Nd24}.
\end{rmk}
\subsection{Associated Gromov-Hausdorff-type topology}\label{ghtype}
We now consider a Gromov-Hausdorff type topology defined
using $\BL^{\kappa}$ metric.
The interested reader is referred to \cite{Khe23} or \cite{Nd24}
 for a more general theory of Gromov-Hausdorff-type topologies.
For later use, we introduce terminologies.
\begin{itemize}
  \item A measured metric space is a triplet $(X,d^X,\mu_X)$
such that $(X,d^X)$ is a metric space, and $\mu_X\in \calM(X)$.
  \item A rooted metric space is a triplet $(X,d^X,\rho_X)$
such that $(X,d^X)$ is a metric space, and $\rho_X\in X$ is a distinguished point called a root.
\item A root-and-measured metric space is a quadruple $(X,d^X,\rho_X,\mu_X)$
such that $(X,d^X,\rho_X)$ is a rooted metric space, 
and $\mu_X\in \calM(X)$. 
\item A root-preserving map is a map between two rooted metric spaces such that it maps 
the root of the domain to that of the codomain.
\item We say two measured metric spaces $(X,d^X,\mu_X)$ and $(Y,d^Y,\mu_Y)$
are equivalent if and only if there exists an isometry $f:X\to Y$ 
with $\mu_Y=\mu_X\circ f^{-1}$.
\item We say two rooted metric spaces $(X,d^X,\rho_X)$ and $(Y,d^Y,\rho_Y)$
are equivalent if and only if there exists a root-preserving isometry $f:X\to Y$.
\item We say two root-and-measured metric spaces $(X,d^X,\rho_X,\mu_X)$ and $(Y,d^Y,\rho_Y,\mu_Y)$
are equivalent if and only if there exists a root-preserving isometry $f:X\to Y$
with $\mu_Y=\mu_X\circ f^{-1}$.
\end{itemize}

\subsubsection{Rooted compact case}
We begin with the rooted compact case. 
Note that if $X$ is compact then it holds that $\calM(X)=\calM(X)_{\rm{fin}}$.
\begin{dfn}[Hausdorff distance, {\cite[Section 7.3.1]{BBI01}}]\label{haustop}
Let $(Z,d^Z)$ be a metric space and let $\calC_c(Z)$ be the set of all compact subsets in $Z$.
For $A,B\in\calC_c(Z)$, define the Hausdorff distance between the two $d_{\rm{H}}^Z(A,B)$
by setting
 \begin{equation*}
  d^Z_{\mathrm{H}}(A,B):=\inf\{\varepsilon>0:A\subseteq B^{\varepsilon},B\subseteq A^{\varepsilon}\},
 \end{equation*}
 and
 \begin{equation*}
  C^{\varepsilon}:=\{z\in Z:\exists c\in C,\;d^Z(z,c)\leq \varepsilon\},\quad C\subseteq Z.
 \end{equation*}
 Here, we use a convention $\inf\emptyset:=\infty$.
\end{dfn}
It is known that $d_{\rm{H}}^Z$ is indeed an extended metric on $\calC_c(Z)$, i.e., a metric that can take $\infty$ as its value.
The induced topology is called the Hausdorff topology, and it only depends on the topology of $Z$ (see \cite[Section 4.F]{Ke95}).

\begin{prp}[{\cite[Proposition 6.2]{Nd24}}]
There exists a set $\frK_{\bullet}(\tau)$ satisfying the following:
\begin{itemize}
  \item $\frK_{\bullet}(\tau)$ consists of root-and-measured compact metric spaces,
  \item for any root-and-measured compact metric space $(Y,d^Y,\rho_Y,\mu_Y)$, 
   there exists a unique element $(X,d^X,\rho_X,\mu_X)\in\frK_{\bullet}(\tau)$
   that is equivalent to $(Y,d^Y,\rho_Y,\mu_Y)$.
\end{itemize}
\end{prp}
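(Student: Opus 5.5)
The plan is a standard set-theoretic argument: first find a single \emph{set} that already contains a representative of every equivalence class of root-and-measured compact metric spaces, and then use the axiom of choice to pick exactly one element from each class.

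\textbf{Step 1: a universal ambient space.} Every compact metric space $(Y,d^Y)$ is separable. So the Fr\'echet--Kuratowski embedding
\[
y\mapsto \bigl(d^Y(y,y_j)-d^Y(y_0,y_j)\bigr)_{j\ge 1},
\]
with $(y_j)$ a countable dense subset of $Y$, is an isometric embedding $\iota_Y:Y\to\ell^\infty$. Fix the metric space $(\ell^\infty,\|\cdot\|_\infty)$ once and for all.

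\textbf{Step 2: the candidate set.} Define
\[
\mathcal{S}:=\bigl\{(K,\|\cdot\|_\infty|_{K\times K},\rho,\mu): K\subseteq\ell^\infty \text{ compact},\ \rho\in K,\ \mu \text{ a finite Borel measure on } K\bigr\}.
\]
This is a set and not a proper class, by the separation and power set axioms:
\begin{itemize}
\item $K$ ranges over $\mathcal{P}(\ell^\infty)$;
\item the metric is determined by $K$;
\item $\mu$ is a function from $\mathcal{P}(K)\supseteq\mathcal{B}(K)$ to $[0,\infty)$, so it lies in a fixed power set as well.
\end{itemize}

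\textbf{Step 3: every class meets $\mathcal{S}$.} Let $(Y,d^Y,\rho_Y,\mu_Y)$ be a root-and-measured compact metric space. Then $K:=\iota_Y(Y)$ is compact as a continuous image of a compact set. Put $\rho:=\iota_Y(\rho_Y)$ and $\mu:=\mu_Y\circ\iota_Y^{-1}$. The map $\iota_Y:Y\to K$ is then a root-preserving surjective isometry that pushes $\mu_Y$ forward to $\mu$. Hence $(Y,d^Y,\rho_Y,\mu_Y)$ is equivalent to an element of $\mathcal{S}$. Finiteness of $\mu_Y$ is automatic here, since $\calM(Y)=\calM_{\rm fin}(Y)$ for compact $Y$.

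\textbf{Step 4: choosing representatives.} Equivalence of root-and-measured spaces is an equivalence relation, because inverses and compositions of root-preserving isometries are again root-preserving isometries and push measures forward compatibly. Its restriction to $\mathcal{S}$ therefore partitions $\mathcal{S}$ into classes $\mathcal{S}/\!\sim$, and this quotient is a set. By the axiom of choice, pick one element from each class and let $\frK_{\bullet}(\tau)$ be the resulting set of chosen elements.

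\textbf{Verification.} For any root-and-measured compact metric space, Step 3 gives an element of $\mathcal{S}$ equivalent to it. That element's class contains exactly one chosen representative, so existence holds. Uniqueness also holds: two distinct elements of $\frK_{\bullet}(\tau)$ lie in different classes, so they cannot both be equivalent to the same space, by transitivity.

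The only genuinely delicate point is Step 2, namely making sure we quantify over a set rather than the class of all compact metric spaces. This is exactly why the argument routes everything through the fixed ambient space $\ell^\infty$.
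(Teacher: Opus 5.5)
Your proof is correct, and it is the standard argument behind the result the paper simply cites from \cite[Proposition 6.2]{Nd24} without proof: realize an equivalent copy of every space inside one fixed set (here via the Fr\'echet--Kuratowski embedding into $\ell^\infty$), then choose one representative per class by the axiom of choice. The only imprecision is cosmetic: in Step 2, $\mu$ is a function on $\mathcal{B}(K)$, not on $\mathcal{P}(K)$, so it is a subset of $\mathcal{P}(\ell^\infty)\times[0,\infty)$, which is all the argument needs.
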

\begin{dfn}\label{rcop}
For $\calX=(X,d^X,\rho_X,\mu_X),\;\calY=(Y,d^Y,\rho_Y,\mu_Y)\in\frK_{\bullet}(\tau)$, we define
\begin{align*}
  d^{\tau,\BL^{\kappa}}_{\frK_{\bullet}}(\calX,\calY):=\inf_{f,g,Z}\Big\{ d^Z_{\mathrm{H}}(f(X),g(Y))\vee d^Z(f(\rho_X),g(\rho_Y))\vee d^Z_{\BL^{\kappa}}(\mu_X\circ f^{-1},\mu_Y\circ g^{-1})\Big\},
\end{align*}
where the infimum is taken over all compact metric spaces $(Z,d^Z)$ and isometric embeddings $f:X\to Z$ and 
$g:Y\to Z$.
\end{dfn}
\begin{rmk}
By considering the disjoint union, one can check that the set of admissible triples $(f,g,Z)$ is not empty.
\end{rmk}

The following follows from 
\cite[Theorem 2.6]{Khe23}.
\begin{prp}
The function $d^{\tau,\BL^{\kappa}}_{\frK_{\bullet}}$ is a metric on $\frK_{\bullet}(\tau)$.
\end{prp}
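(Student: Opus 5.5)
The proposition follows from the general framework of \cite[Theorem 2.6]{Khe23}, so the plan is to check its hypotheses for the functor $\tau$ together with the metrization $(X,\rho_X)\mapsto(\calM(X),d^X_{\BL^{\kappa}})$ on compact spaces. There are three properties to check. First, $d^X_{\BL^{\kappa}}$ is a metric on $\calM(X)=\calM(X)_{\rm fin}$; this is the earlier lemma. Second, pushforwards along isometric embeddings are isometries; this is \cref{blproperties}(iv). Third, the metrization is compatible with the topology of weak convergence, which is \cref{blproperties}(iii). Once these are in place, the abstract theorem says that the induced Gromov--Hausdorff-type function is a metric on equivalence classes, hence on the set of representatives $\frK_{\bullet}(\tau)$.

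To keep the proposal self-contained, I would also sketch the direct argument. Symmetry and finiteness are immediate: take $Z=X\sqcup Y$ with a glued metric, and note that the diameters and masses are finite.

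For the triangle inequality, fix $\calX,\calY,\calW$, $\varepsilon>0$, and near-optimal triples $(f_1,g_1,Z_1)$ for $(\calX,\calY)$ and $(f_2,g_2,Z_2)$ for $(\calY,\calW)$. I would glue $Z_1$ and $Z_2$ along the two copies of $Y$, that is, along $g_1(Y)\cong f_2(Y)$, using the standard gluing metric. This produces a compact space $Z$ into which both $Z_i$ embed isometrically, and in which the two copies of $Y$ coincide. Inside $Z$, the Hausdorff distance, the root distance and the $d^Z_{\BL^{\kappa}}$ distance each satisfy the triangle inequality. By \cref{blproperties}(iv), the $\BL^{\kappa}$ distances computed in $Z_i$ are preserved in $Z$. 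Taking the maximum of the three quantities and then letting $\varepsilon\to0$ gives the triangle inequality.

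The main obstacle is positivity: we must show that $d^{\tau,\BL^{\kappa}}_{\frK_{\bullet}}(\calX,\calY)=0$ forces $\calX$ and $\calY$ to be equivalent, and hence equal in $\frK_{\bullet}(\tau)$. The plan has four steps.
\begin{enumerate}
\item Take triples $(f_n,g_n,Z_n)$ whose value is below $1/n$. Replacing $Z_n$ by $f_n(X)\cup g_n(Y)$, these spaces have uniformly bounded diameter and uniformly bounded covering numbers.
\item By Gromov's compactness theorem, embed all $Z_n$ isometrically into one compact space $M$.
\item Using Arzel\`a--Ascoli, pass to a subsequence along which $f_n\to f$ and $g_n\to g$ uniformly, with $f,g$ isometric embeddings.
\item The vanishing Hausdorff distances give $f(X)=g(Y)$, and the root distances give $f(\rho_X)=g(\rho_Y)$. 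For the measures, $\mu_X\circ f_n^{-1}\to\mu_X\circ f^{-1}$ weakly, so the $\BL^{\kappa}$ distances tend to zero by \cref{blproperties}(iii); the same holds for $\mu_Y$. Hence $\mu_X\circ f^{-1}=\mu_Y\circ g^{-1}$.
\end{enumerate}
The map $g^{-1}\circ f$ is then a root-preserving isometry carrying $\mu_X$ to $\mu_Y$, which is exactly the required equivalence.
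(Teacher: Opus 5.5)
Your proposal is correct and matches the paper, which proves this proposition simply by invoking \cite[Theorem 2.6]{Khe23}. Your hypothesis check supplies what that citation implicitly requires; strictly, in the rooted case the root should be carried in the structure, e.g.\ as $X\times\calM(X)$ with the max metric, rather than as $\calM(X)$ alone. Your self-contained sketch is a correct specialization of Khezeli's general argument: gluing along $Y$ for the triangle inequality, and Gromov compactness, Arzel\`a--Ascoli and \cref{blproperties}(iii),(iv) for positivity.
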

Recall that the rooted compact Gromov-Hausdorff-Prokhorov topology
is the topology on $\frK_{\bullet}(\tau)$ induced by the following metric $d_{\frK}^{\tau,P}$:
\begin{align*}
  d^{\tau,P}_{\frK}(\calX,\calY):=\inf_{f,g,Z}\Big\{ d^Z_{\mathrm{H}}(f(X),g(Y))\vee d^Z(f(\rho_X),g(\rho_Y))\vee d^Z_{P}(\mu_X\circ f^{-1},\mu_Y\circ g^{-1})\Big\},
\end{align*}
where the infimum is taken over all compact metric spaces $(Z,d^Z)$ and isometric embeddings $f:X\to Z$ and 
$g:Y\to Z$, and $d^Z_{P}$ is the Prokhorov distance.
\begin{thm}
For $\calX_n=(X_n,d^{X_n},\rho_{X_n},\mu_{X_n}),\;\calX=(X,d^X,\rho_X,\mu_X)\in\frK_{\bullet}(\tau)$,
the following conditions are equivalent to each other.
\begin{itemize}
  \item [$\mathrm{(i)}$]There exist a compact metric space $(M,d^M,\rho_M)$ and root-preserving isometric embeddings $f_n:X_n\to M,\;f:X\to M$
  such that $f_n(X_n)\to f(X)$ in the Hausdorff topology, and $\mu_{X_n}\circ f_n^{-1}\to \mu_{X}\circ f^{-1}$ weakly.  
  \item [$\mathrm{(ii)}$]There exist a compact metric space $(M,d^M)$ and isometric embeddings $f_n:X_n\to M,\;f:X\to M$
  such that $f_n(X_n)\to f(X)$ in the Hausdorff topology, $f_n(\rho_{X_n})\to f(\rho_X)$ in $M$,
  and $\mu_{X_n}\circ f_n^{-1}\to \mu_{X}\circ f^{-1}$ weakly.
  \item [$\mathrm{(iii)}$]$d^{\tau,\BL^{\kappa}}_{\frK_{\bullet}}(\calX_n,\calX)\to 0$
\end{itemize}
In particular, $d^{\tau,\BL^{\kappa}}_{\frK_{\bullet}}$ induces the rooted compact Gromov-Hausdorff-Prokhorov topology.
\end{thm}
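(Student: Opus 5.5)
We will prove the cycle (i)$\Rightarrow$(ii)$\Rightarrow$(iii)$\Rightarrow$(i). The implication (i)$\Rightarrow$(ii) is trivial: if the embeddings are root-preserving, then $f_n(\rho_{X_n})=\rho_M=f(\rho_X)$ for every $n$, so the roots converge trivially.

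For (ii)$\Rightarrow$(iii), fix the common compact space $(M,d^M)$ and the embeddings $f_n,f$ from (ii). For each $n$ we take $Z=M$, $f=f_n$ and $g=f$ in \cref{rcop}. This gives
\begin{align*}
d^{\tau,\BL^{\kappa}}_{\frK_{\bullet}}(\calX_n,\calX)\le d^M_{\rm H}(f_n(X_n),f(X))\vee d^M(f_n(\rho_{X_n}),f(\rho_X))\vee d^M_{\BL^{\kappa}}(\mu_{X_n}\circ f_n^{-1},\mu_X\circ f^{-1}).
\end{align*}
The first two terms tend to $0$ by assumption. The third tends to $0$ by \cref{blproperties}(iii), since $M$ is compact and hence $\bcm$, so weak convergence is equivalent to $d^M_{\BL^{\kappa}}$-convergence.

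For (iii)$\Rightarrow$(i), we reduce to the Prokhorov case, where the equivalence of $d^{\tau,P}_{\frK}$-convergence with (i) is standard; see \cite{Khe23,Nd24}. Suppose $d^{\tau,\BL^{\kappa}}_{\frK_{\bullet}}(\calX_n,\calX)<\varepsilon_n\to0$. For each $n$ choose a compact $(Z_n,d^{Z_n})$ and embeddings $\phi_n:X_n\to Z_n$, $\psi_n:X\to Z_n$ realizing the three quantities up to $\varepsilon_n$. The aim is to show $d^{Z_n}_P(\mu_{X_n}\circ\phi_n^{-1},\mu_X\circ\psi_n^{-1})\to0$, which gives $d^{\tau,P}_{\frK}(\calX_n,\calX)\to0$ and hence (i).

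The main obstacle is this step. The $\BL^{\kappa}$ distance controls the Prokhorov distance only through a modulus that depends on the space, and the ambient spaces $Z_n$ change with $n$. The plan is to bound one-sided Prokhorov deficits directly using \cref{ballineq}-type functions. For a closed set $A\subseteq Z_n$ and $\rho>0$, take the McShane extension from \cref{mcshane} of $1_A$ on $A\cup (A^{\rho})^c$. It has $\BL^{\kappa}$-norm at most $1+\rho^{-\kappa}$, which yields
\begin{align*}
\nu(A)\le\mu(A^{\rho})+(1+\rho^{-\kappa})d_{\BL^{\kappa}}(\mu,\nu).
\end{align*}
This bound holds in any metric space with constants independent of the space. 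Choosing $\rho=\rho_n:=\varepsilon_n^{1/(2\kappa)}$ makes $(1+\rho_n^{-\kappa})\varepsilon_n\le 2\varepsilon_n^{1/2}\to0$, so the Prokhorov distance is at most $\max(\rho_n,2\varepsilon_n^{1/2})\to0$.

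This gives $d^{\tau,P}_{\frK}(\calX_n,\calX)\to0$. The known characterization then produces a common compact $M$ with root-preserving embeddings as in (i). If a root-preserving version is only available in the form (ii), we glue all the $Z_n$ along the copies of $X$ and identify the roots, in the standard way, to pass from (ii) to (i).
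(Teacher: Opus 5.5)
Your proof is correct, but it takes a different route from the paper, which gives no argument of its own. The paper quotes \cite[Lemma 2.5]{Khe23} for (i)$\Leftrightarrow$(iii). That is a general result in Khezeli's framework, applied directly with $d^{Z}_{\BL^{\kappa}}$ on the measure component and without passing through Prokhorov. It quotes \cite[Lemma 3.17, Theorem B.9]{Nd24} for (i)$\Leftrightarrow$(ii).

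You instead close the cycle (i)$\Rightarrow$(ii)$\Rightarrow$(iii)$\Rightarrow$(i):
\begin{itemize}
\item (ii)$\Rightarrow$(iii) comes straight from the definition and \cref{blproperties}(iii).
\item (iii)$\Rightarrow$(i) uses the fact that the \cref{ballineq}-type bound $\nu(A)\le\mu(A^{\rho})+(1+\rho^{-\kappa})d_{\BL^{\kappa}}(\mu,\nu)$ has constants independent of the ambient space. This gives the explicit comparison: the rooted GHP distance is at most $2\varepsilon^{1/2}$ whenever $d^{\tau,\BL^{\kappa}}_{\frK_{\bullet}}(\calX_n,\calX)<\varepsilon\le 1$ (note $\varepsilon^{1/(2\kappa)}\le\varepsilon^{1/2}$). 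You then invoke the classical rooted GHP characterization.
\end{itemize}
Only this one-directional, space-uniform comparison is needed, since weak convergence inside a fixed compact $M$ handles the converse. Your route is more self-contained and yields a quantitative reduction to the Prokhorov setting. The paper's route is shorter and needs no comparison with Prokhorov at all.

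The one step you leave vague is exactly what the paper outsources to Noda: making the embeddings root-preserving. Simply identifying $\phi_n(\rho_{X_n})$ with $\psi_n(\rho_X)$ inside $Z_n$ is not isometric on $X_n$. Instead, take cross distances $\min\{d^{Z_n}(\phi_n x,\psi_n y)+\delta_n,\ d^{X_n}(x,\rho_{X_n})+d^{X}(\rho_X,y)\}$ with $\delta_n\ge d^{Z_n}(\phi_n(\rho_{X_n}),\psi_n(\rho_X))$. These satisfy the triangle inequality, restrict to $d^{X_n}$ and $d^X$, put the two roots at distance $0$, and change the Hausdorff and Prokhorov distances by at most $\delta_n$. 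Gluing these spaces along $X$ then produces the common compact space required in (i).
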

\begin{proof}
  The equivalence between (i) and (iii) follows from \cite[Lemma 2.5]{Khe23}, and
  that between (i) and (ii) follows from \cite[Lemma 3.17 and Theorem B.9]{Nd24}.
\end{proof}

\begin{lem}
For an isometric embedding between two compact metric spaces $f:X\to Z$, define $A_f\subseteq Z\times \calM(Z)_{\mathrm{fin}}$ by setting
\begin{align*}
  A_f:=\{(f(x),\mu\circ f^{-1}):x\in X,\;\mu\in \calM(X)_{\mathrm{fin}}\}.
\end{align*}
Let $f:X\to Z$ and $f_n:X_n\to Z,\;n\in\N,$ be isometric embeddings between compact metric spaces $(X,d^X),\;(X_n,d^{X_n})$ and $(Z,d^Z)$.
Suppose that $\lim_{n\to\infty}d^{Z}_{\mathrm{H}}(f(X),f_n(X_n))=0$. It is then the case that
\begin{align*}
  \lim_{n\to\infty}d_{\mathrm{H}}(A_f,A_{f_n})= 0.
\end{align*}
Here, we equip $Z\times \calM(Z)_{\mathrm{fin}}$ with the maximum metric $d$, which is defined by 
\begin{align*}
  d((z,\lambda),(w,\theta)):=d^Z(z,w)\vee d^Z_{\BL^{\kappa}}(\lambda,\theta),\qquad (z,\lambda),(w,\theta)\in Z\times \calM(Z)_{\mathrm{fin}},
\end{align*}
and we denote the Hausdorff distance between $A_f$ and $A_{f_n}$ with respect to $d$ by $d_{\mathrm{H}}(A_f,A_{f_n})$.
\end{lem}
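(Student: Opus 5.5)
The plan is to bound the Hausdorff distance by explicit transport maps between $f(X)$ and $f_n(X_n)$. Set $\delta_n:=d^Z_{\mathrm H}(f(X),f_n(X_n))$ and fix $\eta>0$. Because $f(X)$ and $f_n(X_n)$ are compact, I will build Borel maps $\varphi_n:f(X)\to f_n(X_n)$ and $\psi_n:f_n(X_n)\to f(X)$ with $d^Z(z,\varphi_n(z))\le\delta_n+\eta$ and $d^Z(w,\psi_n(w))\le \delta_n+\eta$. The construction is to take a finite $\eta$-net of $f(X)$, pick for each net point a point of $f_n(X_n)$ within $\delta_n$, and then use the partition-of-a-compact-set trick from the proof of \cref{blproperties}(ii) to make the selection piecewise constant, hence Borel.

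First I handle a point $(f(x),\mu\circ f^{-1})\in A_f$. Let $w=\varphi_n(f(x))\in f_n(X_n)$, and let $\nu_n:=(\mu\circ f^{-1})\circ\varphi_n^{-1}$. This $\nu_n$ is a finite measure on the compact set $f_n(X_n)$. Since $f_n$ is an isometry onto its image, $\nu_n$ is of the form $\lambda\circ f_n^{-1}$ with $\lambda:=\nu_n\circ f_n$ a finite Radon measure on $X_n$, so $(w,\nu_n)\in A_{f_n}$. For any $g$ with $\|g\|_{\BL^\kappa(Z)}\le1$ we have
\begin{align*}
\Big|\int g\,d(\mu\circ f^{-1})-\int g\,d\nu_n\Big|
=\Big|\int \big(g(z)-g(\varphi_n(z))\big)\,(\mu\circ f^{-1})(dz)\Big|
\le (\delta_n+\eta)^{\kappa}\,\mu(X).
\end{align*}
Combining this with $d^Z(f(x),w)\le\delta_n+\eta$ and letting $\eta\downarrow0$, the distance from $(f(x),\mu\circ f^{-1})$ to $A_{f_n}$ is at most $\delta_n\vee\delta_n^\kappa\mu(X)$. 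The symmetric argument with $\psi_n$ controls the distance from points of $A_{f_n}$ to $A_f$. Taking suprema then gives the Hausdorff estimate, and letting $n\to\infty$ with $\delta_n\to0$ gives the claim.

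The main obstacle is uniformity in the total mass. The transport bound is proportional to $\mu(X)$, and the sets $A_f$, $A_{f_n}$ contain measures of arbitrarily large mass. I therefore need $\sup\mu(X)$ over the measures involved to be controlled. The step I would check first is whether this can be avoided, for example by choosing a better approximating measure than the pushforward. I expect it cannot in general. Suppose $f(x_0)$ lies at distance $\delta_n>0$ from $f_n(X_n)$, and test with the function $c\,(1-d^Z(\cdot,f(x_0))^\kappa/\delta_n^\kappa)_+$, where $c=(1+\delta_n^{-\kappa})^{-1}$ so that its $\BL^\kappa$ norm is at most $1$. This shows that $M\delta_{f(x_0)}$ is at $d_{\BL^\kappa}$-distance at least $cM$ from every measure supported on $f_n(X_n)$.

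So in writing this step out I would make the mass restriction explicit. Concretely, I would prove $d_{\mathrm H}\to0$ for the subsets of pairs with total mass at most $m$, for each fixed $m$, or equivalently replace $\calM(Z)_{\mathrm{fin}}$ by mass-bounded measures; this is exactly what the transport argument gives, with the rate $\delta_n\vee m\delta_n^\kappa$. I would then verify that this mass-bounded form is all that is used when the lemma is applied to Gromov--Hausdorff-type arguments à la \cite[Lemma 2.5]{Khe23}, where the measures in play have bounded mass. For the literal unbounded-mass statement, the proof above goes through unchanged whenever the masses are bounded, and the test function just described identifies the case where it cannot.
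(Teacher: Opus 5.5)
Your diagnosis is right, and your test function does more than block your method: it refutes the lemma as stated.

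Since $f$ is a homeomorphism onto the compact set $f(X)$, $A_f$ is exactly the set of pairs $(z,\lambda)$ with $z\in f(X)$ and $\lambda\in\calM(Z)_{\mathrm{fin}}$ carried by $f(X)$. Hence $A_f=A_{f_n}$ if $f(X)=f_n(X_n)$. Suppose instead that, say, $p\in f(X)\setminus f_n(X_n)$. Put $r:=d^Z(p,f_n(X_n))>0$ and $c:=(1+r^{-\kappa})^{-1}$. Your function $c\,(1-d^Z(\cdot,p)^{\kappa}r^{-\kappa})_+$ then shows that $(p,M\delta_p)\in A_f$ is at $d$-distance at least $cM$ from $A_{f_n}$ for every $M>0$. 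The symmetric argument applies with the roles exchanged. So $d_{\mathrm H}(A_f,A_{f_n})\in\{0,\infty\}$. For example, $Z=[0,1]$, $X=\{0,1\}$, $X_n=\{1/(n+1),1\}$ with the inclusions satisfy the hypothesis, while $d_{\mathrm H}(A_f,A_{f_n})=\infty$ for every $n$.

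The paper's proof consists of citing \cite[Lemma 2.13]{Khe23} together with the fact that the Hausdorff topology depends only on the underlying topology. That fact (\cite[Section 4.F]{Ke95}) concerns the hyperspace of \emph{compact} sets. Here $A_f$ is closed but unbounded, so the transfer to $d^Z_{\BL^{\kappa}}$ is not valid. The mass-insensitivity you isolated is what separates the two metrics. A Borel map that displaces points by at most $s$ moves every finite measure by at most $s$ in the Prokhorov metric, whatever its mass. In $d^Z_{\BL^{\kappa}}$ it moves it by up to $s^{\kappa}$ times its mass.

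Your repair is the correct one, and your transport argument proves it as written. The piecewise-constant selection is Borel, and $(\varphi_n(f(x)),\nu_n)\in A_{f_n}$ with $\nu_n$ of the same mass as $\mu\circ f^{-1}$. You therefore get the Hausdorff distance between the mass-$\le m$ parts of $A_f$ and $A_{f_n}$ bounded by $\delta_n\vee m\delta_n^{\kappa}$.

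Since $d^Z_{\BL^{\kappa}}(\lambda,0)=\lambda(Z)$, the mass-bounded sets are exactly the bounded subsets of $Z\times\calM(Z)_{\mathrm{fin}}$. That space is boundedly compact, by Prokhorov's theorem and the equivalence of $d^Z_{\BL^{\kappa}}$-convergence with weak convergence. So your statement is the natural local (Fell-topology) replacement for the false global one. It is also what a topological transfer from a Prokhorov-metric version would legitimately give.

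The open point is the one you flagged. The companion non-rooted lemma has the same defect. The completeness and separability theorems that follow are deduced from these lemmas via \cite[Theorem 2.12]{Khe23}. Those deductions have to be re-examined with the truncated statement in place of the global Hausdorff one.
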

\begin{proof}
  This follows from \cite[Lemma 2.13]{Khe23} and the fact that the Hausdorff topology on a metric space 
  only depends on the topology of the space (see \cite[Section 4.F]{Ke95}).
\end{proof}
The following theorem is an immediate consequence of above lemma, \cite[Theorem 2.12]{Khe23} and \cref{blproperties}.
\begin{thm}
The metric space $(\frK_{\bullet}(\tau),d^{\tau,\BL^{\kappa}}_{\frK_{\bullet}})$ is complete and separable.
\end{thm}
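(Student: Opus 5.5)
The statement to prove is that $(\frK_{\bullet}(\tau),d^{\tau,\BL^{\kappa}}_{\frK_{\bullet}})$ is complete and separable. The plan is to verify the hypotheses of \cite[Theorem 2.12]{Khe23}. That theorem takes a functor assigning to each compact metric space a metric space of additional structures, pushforward-compatible with isometric embeddings. It yields completeness and separability of the associated Gromov--Hausdorff-type space under two conditions: the structure spaces are complete and separable, and the structure assignment is continuous in the Hausdorff sense described in the preceding lemma. Here the structure space of a compact $X$ is $\calM(X)_{\rm fin}=\calM(X)$ with $d^X_{\BL^{\kappa}}$, and the morphisms act by pushforward.

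First, I would check that the structure metric is compatible with embeddings. For an isometric embedding $f:X\to Z$, the map $\mu\mapsto\mu\circ f^{-1}$ is an isometry from $(\calM(X)_{\rm fin},d^X_{\BL^{\kappa}})$ into $(\calM(Z)_{\rm fin},d^Z_{\BL^{\kappa}})$; this is \cref{blproperties}(iv). This is exactly the compatibility needed for the infimum in \cref{rcop} to be a well-defined metric on equivalence classes, which is already recorded in the previous proposition. Second, since every compact metric space is $\bcm$, \cref{blproperties}(i),(ii) give that $(\calM(X)_{\rm fin},d^X_{\BL^{\kappa}})$ is complete and separable. Third, the preceding lemma supplies the continuity condition: if $f_n(X_n)\to f(X)$ in the Hausdorff topology of $Z$, then $d_{\rm H}(A_f,A_{f_n})\to0$ in $Z\times\calM(Z)_{\rm fin}$.

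With these three ingredients, \cite[Theorem 2.12]{Khe23} applies directly. To make the argument transparent, I would sketch its mechanism. For completeness, take a Cauchy sequence $(\calX_n)$ and pass to a subsequence with $d^{\tau,\BL^{\kappa}}_{\frK_{\bullet}}(\calX_n,\calX_{n+1})<2^{-n}$. Glue consecutive near-optimal ambient spaces to embed all $X_n$ into one space, and complete it to obtain $Z$. The images $f_n(X_n)$ form a Cauchy sequence in the Hausdorff distance, so they converge to a compact $K\subseteq Z$ because the space of compact subsets of a complete space is complete. The roots converge in $Z$. The measures $\mu_n\circ f_n^{-1}$ are $d^Z_{\BL^{\kappa}}$-Cauchy and converge by \cref{blproperties}(i) applied in $Z$. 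Separability follows by approximating any space with finite subsets carrying rational-weighted atomic measures, as in the proof of \cref{blproperties}(ii).

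I expect the main obstacle to be that $Z$ is only complete rather than compact or $\bcm$. To handle this, I would first show that the limit measure is supported on $K$: use the tightness argument from the proof of \cref{blproperties}(i), together with the fact that all the measures sit in a compact neighbourhood of $K$ (the closure of $\bigcup_n f_n(X_n)$, which is compact as a Hausdorff-convergent union). I would then restrict to that compact set before invoking completeness of the measure space. This restriction is exactly the role of the continuity statement of the preceding lemma within \cite{Khe23}.
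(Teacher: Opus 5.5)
Your proposal is correct and follows the paper's route. The paper likewise obtains the theorem as an immediate consequence of \cite[Theorem 2.12]{Khe23}, with \cref{blproperties} supplying completeness, separability and embedding-compatibility of the $\BL^{\kappa}$ structure, and with the preceding Hausdorff-continuity lemma supplying the continuity hypothesis. One small correction: in the rooted setting the structure assigned to a compact $X$ is $X\times\calM(X)_{\mathrm{fin}}$ with the maximum metric, as in that lemma, not $\calM(X)_{\mathrm{fin}}$ alone; its completeness and separability follow at once from the compactness of $X$ together with \cref{blproperties}(i),(ii).
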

 
\subsubsection{Non-rooted compact case}\label{nrc}
We next consider the non-rooted compact case.

Similarly to \cite[Proposition 6.2]{Nd24}, we can show the following proposition.
\begin{prp}
  There exists a set $\frK(\tau)$ satisfying the following:
\begin{itemize}
  \item $\frK(\tau)$ consists of measured compact metric spaces,
  \item for any measured compact metric space $(Y,d^Y,\mu_Y)$, 
   there exists a unique element $(X,d^X,\mu_X)\in\frK(\tau)$
   such that equivalent to $(Y,d^Y,\mu_Y)$.
\end{itemize}
\end{prp}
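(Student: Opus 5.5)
The plan is to mimic \cite[Proposition 6.2]{Nd24}, whose rooted version is quoted just above. The only issue is that the class of all measured compact metric spaces is not a set, so we cannot simply pick one representative per equivalence class. We therefore cut the problem down to a set first.

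The key observation is that every compact metric space is separable. Hence every measured compact metric space $(Y,d^Y,\mu_Y)$ is equivalent, via an isometry, to one whose underlying set is a subset of a fixed set of cardinality at most the continuum, say $\R$. To see this, take a bijection of $Y$ onto a subset $S\subseteq\R$, which is possible since $|Y|\le 2^{\aleph_0}$. Then transport $d^Y$ and $\mu_Y$ along this bijection, so that the bijection becomes an isometry pushing $\mu_Y$ forward to the transported measure.

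Next, define $\mathcal{S}$ to be the collection of all triples $(S,d,\mu)$ such that $S\subseteq\R$, $d$ is a metric on $S$ making $(S,d)$ compact, and $\mu$ is a finite Borel measure on $(S,d)$. This is a set, because it sits inside $\mathcal{P}(\R)\times\mathcal{P}(\R\times\R\times\R)\times\mathcal{P}(\mathcal{P}(\R)\times\R)$. Indeed, metrics are functions $S\times S\to\R$, and measures are functions from subsets of $\mathcal{P}(S)$ to $[0,\infty)$. By the observation above, every measured compact metric space is equivalent to some element of $\mathcal{S}$. Since equivalence of measured metric spaces, via an isometry $f$ with $\mu_Y=\mu_X\circ f^{-1}$, is an equivalence relation, the quotient $\mathcal{S}/\!\sim$ is a set.

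Finally, apply the axiom of choice to choose one representative from each class in $\mathcal{S}/\!\sim$, and let $\frK(\tau)$ be the set of chosen representatives. For existence: given any $(Y,d^Y,\mu_Y)$, it is equivalent to some element of $\mathcal{S}$, and hence to the representative of that element's class. For uniqueness: if two elements of $\frK(\tau)$ were both equivalent to $(Y,d^Y,\mu_Y)$, transitivity would make them equivalent to each other, so they would lie in the same class. Since each class contains exactly one representative, they coincide.

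The only step needing a little care is the cardinality bound together with the measurability bookkeeping. Transporting the Borel $\sigma$-algebra along a bijection which is an isometry by construction yields exactly the Borel $\sigma$-algebra of $(S,d)$, so no real obstacle arises.
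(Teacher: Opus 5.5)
Your argument is correct, and it is essentially the standard one the paper relies on. The paper gives no details beyond saying the claim follows as in \cite[Proposition 6.2]{Nd24}; that argument likewise uses separability to realize every space, up to equivalence, inside a fixed set, and then picks one representative per equivalence class with the axiom of choice. (Your use of finite Borel measures in place of the paper's Radon measures is harmless, since on compact metric spaces the two notions coincide.)
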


\begin{dfn}\label{nrca}
For $\calX=(X,d^X,\mu_X),\;\calY=(Y,d^Y,\mu_Y)\in\frK(\tau)$, we define
\begin{align*}
  d^{\tau,\BL^{\kappa}}_{\frK}(\calX,\calY):=\inf_{f,g,Z}\Big\{ d^Z_{\mathrm{H}}(f(X),g(Y))\vee d^Z_{\BL^{\kappa}}(\mu_X\circ f^{-1},\mu_Y\circ g^{-1})\Big\},
\end{align*}
where the infimum is taken over all compact metric spaces $(Z,d^Z)$ and isometric embeddings $f:X\to Z$ and 
$g:Y\to Z$.
\end{dfn}

The following follows from 
\cite[Theorem 2.6]{Khe23}.
\begin{prp}
The function $d_{\frK}^{\tau,\BL^{\kappa}}$ is a metric on $\frK(\tau)$.
\end{prp}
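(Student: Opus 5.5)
The plan is to reduce the statement to the abstract framework of \cite{Khe23}, exactly as for the rooted case. There, a metric of Gromov--Hausdorff type is built from a family of metrics on ``additional structures''. Concretely, for each compact metric space $Z$ one is given the metric space $(\calM(Z)_{\mathrm{fin}},d^Z_{\BL^{\kappa}})$, and for each isometric embedding $\phi:Z\to Z'$ the pushforward $\mu\mapsto\mu\circ\phi^{-1}$. In this setting \cite[Theorem 2.6]{Khe23} asserts that the resulting infimum over common embeddings is a metric on isometry classes. The non-rooted case is the special case with no root term in the maximum, so the proof consists of verifying the hypotheses of that theorem in this case.

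The verification has three steps.
\begin{itemize}
  \item[1.] For each compact $Z$, $d^Z_{\BL^{\kappa}}$ is a metric on $\calM(Z)_{\mathrm{fin}}$; this is the lemma following \eqref{metblk}.
  \item[2.] Pushforward along an isometric embedding is an isometry for the $\BL^{\kappa}$ metrics, by \cref{blproperties}(iv). This gives functoriality and the compatibility condition required in \cite{Khe23}.
  \item[3.] Finiteness holds: the admissible set of triples $(f,g,Z)$ is nonempty (take a disjoint union with a large constant distance), and $d^Z_{\BL^{\kappa}}(\mu,\nu)\le\mu(Z)+\nu(Z)<\infty$.
\end{itemize}
Symmetry is then immediate. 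The triangle inequality follows by gluing two compact metric spaces along the common middle space $Y$. The glued space is compact, and the Hausdorff term and the $\BL^{\kappa}$ term each obey the triangle inequality after pushing forward via step 2.

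The main obstacle is definiteness. Suppose the distance is $0$. We need a single isometry $X\to Y$ carrying $\mu_X$ to $\mu_Y$, so that $\calX=\calY$ in $\frK(\tau)$. The plan is to follow \cite{Khe23}: take $\varepsilon_n$-optimal embeddings, and extract $\varepsilon_n$-isometries between $X$ and $Y$. By compactness and an Arzel\`a--Ascoli/diagonal argument, pass to a limiting isometry $\phi:X\to Y$ (surjective by symmetry).

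The measure part is controlled by \cref{blproperties}(iii): $d_{\BL^{\kappa}}$ convergence is equivalent to weak convergence. This lets us pass to the limit and obtain $\mu_X\circ\phi^{-1}=\mu_Y$, using that $\BL^{\kappa}$ test functions on $Y$ pull back to near-test functions on $X$ via the approximate isometries. Equivalently, one may cite the general statement that \cite[Theorem 2.6]{Khe23} applies whenever the structure metrics are compatible with pushforward and induce a topology for which the ``$A_f$'' Hausdorff convergence lemma holds. That is exactly what steps 1--2 provide, together with the lemma on $A_f$ stated earlier.
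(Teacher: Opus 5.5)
Your proposal is correct and takes the same route as the paper, which simply invokes \cite[Theorem 2.6]{Khe23}. Your checks are what that citation requires: that $d^Z_{\BL^{\kappa}}$ is a metric, that pushforward along isometric embeddings is isometric by \cref{blproperties}(iv), and that pushforward is continuous under uniform convergence of embeddings, $d^Z_{\BL^{\kappa}}(\mu_X\circ f_n^{-1},\mu_X\circ f^{-1})\le\mu_X(X)\sup_{x\in X}d^Z(f_n(x),f(x))^{\kappa}$, which your definiteness sketch uses implicitly.

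One small correction: the paper uses the lemma on $A_f$ together with \cite[Theorem 2.12]{Khe23} for completeness and separability, not for the metric property.
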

It has similar properties to $d_{\frK_{\bullet}}^{\tau,\BL^{\kappa}}$.
Recall that the compact Gromov-Hausdorff-Prokhorov topology
is the topology on $\frK(\tau)$ induced by the following metric $d_{\frK}^{\tau,P}$:
\begin{align*}
  d^{\tau,P}_{\frK}(\calX,\calY):=\inf_{f,g,Z}\Big\{ d^Z_{\mathrm{H}}(f(X),g(Y))\vee d^Z_{P}(\mu_X\circ f^{-1},\mu_Y\circ g^{-1})\Big\},
\end{align*}
where the infimum is taken over all compact metric spaces $(Z,d^Z)$ and isometric embeddings $f:X\to Z$ and 
$g:Y\to Z$, and $d^Z_{P}$ is the Prokhorov distance.

\begin{thm}\label{nonrootcpt}
For $\calX_n=(X_n,d^{X_n},\mu_{X_n}),\;\calX=(X,d^X,\mu_X)\in\frK(\tau)$,
the following conditions are equivalent to each other.
\begin{itemize}
  \item [$\mathrm{(i)}$]There exist a compact metric space $(M,d^M)$ and isometric embeddings $f_n:X_n\to M,\;f:X\to M$
  such that $f_n(X_n)\to f(X)$ in the Hausdorff topology, and $\mu_{X_n}\circ f_n^{-1}\to \mu_{X}\circ f^{-1}$ weakly.  
  \item [$\mathrm{(ii)}$]$d^{\tau,\BL^{\kappa}}_{\frK}(\calX_n,\calX)\to 0$
\end{itemize}
In particular, $d_{\frK}^{\tau,\BL^{\kappa}}$ induces the compact Gromov-Hausdorff-Prokhorov topology.
\end{thm}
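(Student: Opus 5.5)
The statement to prove is \cref{nonrootcpt}: the two conditions (i) and (ii) are equivalent. I would follow the rooted case and reduce to general results of \cite{Khe23} and \cite{Nd24}, using \cref{blproperties} to handle the measure component.

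\emph{Step 1: (ii) $\Rightarrow$ (i).} Suppose $d^{\tau,\BL^{\kappa}}_{\frK}(\calX_n,\calX)\to0$. For each $n$ choose a compact $(Z_n,d^{Z_n})$ and isometric embeddings $f_n':X_n\to Z_n$, $g_n':X\to Z_n$ whose value in \cref{nrca} is at most $d^{\tau,\BL^{\kappa}}_{\frK}(\calX_n,\calX)+1/n$. Glue the spaces $Z_n$ along the common copy of $X$. Concretely, on the disjoint union $\bigsqcup_n Z_n$ modulo $g_n'(x)\sim g_m'(x)$, set
\[
d(z,w)=\inf_{x\in X}\bigl(d^{Z_n}(z,g_n'(x))+d^{Z_m}(g_m'(x),w)\bigr)
\]
for $z\in Z_n$, $w\in Z_m$ with $n\neq m$. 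Each $Z_n$ then embeds isometrically into this glued space. Replace $M$ by the closure of $X\cup\bigcup_n f_n'(X_n)$. It is compact, because the Hausdorff distances from $f_n'(X_n)$ to $X$ tend to $0$ and $X$ is compact, so $M$ is totally bounded and complete.

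By \cref{blproperties}(iv), the $\BL^{\kappa}$ distances computed in $Z_n$ are unchanged when computed in $M$. Hence $d^M_{\BL^{\kappa}}(\mu_{X_n}\circ f_n^{-1},\mu_X\circ f^{-1})\to0$, and by \cref{blproperties}(iii) this is weak convergence. Hausdorff convergence of $f_n(X_n)$ to $f(X)$ holds by construction.

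\emph{Step 2: (i) $\Rightarrow$ (ii).} Take $Z=M$ in the infimum of \cref{nrca}. Then
\[
d^{\tau,\BL^{\kappa}}_{\frK}(\calX_n,\calX)\le d^M_{\mathrm H}(f_n(X_n),f(X))\vee d^M_{\BL^{\kappa}}(\mu_{X_n}\circ f_n^{-1},\mu_X\circ f^{-1}).
\]
The first term tends to $0$ by assumption. The second tends to $0$ by \cref{blproperties}(iii), since $M$ is compact and therefore $\bcm$.

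\emph{Step 3: identification of the topology.} The same two steps go through verbatim with the Prokhorov metric $d^Z_P$ in place of $d^Z_{\BL^{\kappa}}$, because Prokhorov convergence is also equivalent to weak convergence and is also invariant under isometric embeddings. So $d^{\tau,P}_{\frK}$-convergence is likewise equivalent to (i). Both metrics therefore have the same convergent sequences, and since both topologies are metrizable, they coincide.

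Alternatively, one can cite \cite[Lemma 2.5]{Khe23} directly, as in the rooted theorem; the only input it needs is that $d^Z_{\BL^{\kappa}}$ metrizes weak convergence and is compatible with pushforward under isometric embeddings. The main obstacle is the gluing in Step 1: one must check that the glued metric is a genuine metric and that the closure is compact. I would handle this with the standard amalgamation argument over the common subspace $X$, together with a total boundedness estimate coming from the Hausdorff distances tending to zero.
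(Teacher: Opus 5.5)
Your proof is correct, but it takes a more self-contained route than the paper. The paper proves \cref{nonrootcpt} in one line by citing \cite[Lemma 2.5]{Khe23}. In this setting that lemma only needs two facts: $d^Z_{\BL^{\kappa}}$ metrizes weak convergence on compact $Z$, and it is invariant under pushforward by isometric embeddings (\cref{blproperties}(iii),(iv)). The ``in particular'' clause is left implicit, resting on the same lemma applied to the Prokhorov metric.

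You instead carry out the amalgamation argument behind that lemma by hand. You also make the comparison with $d^{\tau,P}_{\frK}$ explicit, using the fact that a metric topology is determined by its convergent sequences. This frees the proof from Khezeli's general framework, at the cost of verifying the gluing yourself; your triangle-inequality and identification checks for the glued metric do go through.

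Two small points in Step 1 need tightening.
\begin{itemize}
\item Your final space $M$ does not contain all of $Z_n$. The transfer of the $\BL^{\kappa}$ and Hausdorff distances should therefore go through the compact piece $W_n:=f_n'(X_n)\cup g_n'(X)$. Apply \cref{blproperties}(iv) to $W_n\hookrightarrow Z_n$ and to $W_n\hookrightarrow M$; both are allowed since $W_n$ is compact, hence $\bcm$.
\item The glued space need not be complete, so ``totally bounded and complete'' needs one more sentence. You can show directly that $X\cup\bigcup_n f_n'(X_n)$ is sequentially compact: a sequence either visits a single compact $W_n$ infinitely often, or lies in $W_{n_j}$ with $n_j\to\infty$. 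In the second case it is at vanishing distance from the compact set $X$. Alternatively, pass to the completion of the glued space before taking the closure.
\end{itemize}
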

\begin{proof}
  This follows from \cite[Lemma 2.5]{Khe23}.
\end{proof}

\begin{lem}
For an isometric embedding between two compact metric spaces $f:X\to Z$, define $A_f\subseteq \calM(Z)_{\mathrm{fin}}$ by setting
\begin{align*}
  A_f:=\{\mu\circ f^{-1}:\mu\in \calM(X)_{\mathrm{fin}}\}.
\end{align*}
Let $f:X\to Z$ and $f_n:X_n\to Z,\;n\in\N,$ be isometric embeddings between compact metric spaces $(X,d^X),\;(X_n,d^{X_n})$ and $(Z,d^Z)$.
Suppose that $\lim_{n\to\infty}d^{Z}_{\mathrm{H}}(f(X),f_n(X_n))=0$. It is then the case that
\begin{align*}
  \lim_{n\to\infty}(d^{Z}_{\BL^{\kappa}})_{\mathrm{H}}(A_f,A_{f_n})= 0.
\end{align*}
Here, we denote the Hausdorff metric on $\calM(Z)_{\mathrm{fin}}$ with respect to $d^{Z}_{\BL^{\kappa}}$
by $(d^{Z}_{\BL^{\kappa}})_{\mathrm{H}}$.
\end{lem}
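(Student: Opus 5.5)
The plan is to follow the proof of the analogous rooted-case lemma above. I will deduce the statement from \cite[Lemma 2.13]{Khe23}, applied to the functor $\tau$ which sends a compact metric space $X$ to $\calM(X)_{\mathrm{fin}}$ with the weak topology, and sends an embedding $f$ to the pushforward $\mu\mapsto\mu\circ f^{-1}$. In that formalism the set $A_f$ is exactly $\tau_f(\tau(X))\subseteq\tau(Z)$. Khezeli's lemma then says that Hausdorff convergence $f_n(X_n)\to f(X)$ in $Z$ forces $\tau_{f_n}(\tau(X_n))\to\tau_f(\tau(X))$ in the Hausdorff topology on subsets of $\tau(Z)$, for \emph{some} metric compatible with the weak topology. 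So the first step is only to check that $\tau$ satisfies the hypotheses of \cite[Lemma 2.13]{Khe23}: pushforward along isometric embeddings is continuous and injective for the weak topology, and it is functorial.

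The second step transfers the conclusion to $d^Z_{\BL^{\kappa}}$. By \cref{blproperties}(iii), $d^Z_{\BL^{\kappa}}$ metrizes weak convergence on $\calM(Z)_{\mathrm{fin}}$. By \cref{blproperties}(iv), the pushforward is an isometry onto its image, so $A_f$ is an isometric copy of $(\calM(X)_{\mathrm{fin}},d^X_{\BL^{\kappa}})$. The Hausdorff topology depends only on the underlying topology \cite[Section 4.F]{Ke95}, so the convergence obtained in the first step is convergence for $(d^Z_{\BL^{\kappa}})_{\mathrm H}$.

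As a sanity check and a direct route, I would also give the quantitative construction. Let $\delta_n:=d^Z_{\rm H}(f(X),f_n(X_n))$ and fix $\varepsilon>\delta_n$. Partition $f(X)$ into finitely many Borel sets $C_j$, each of diameter below $\varepsilon$, as in the proof of \cref{blproperties}(ii). In each $C_j$ pick a point $y_j\in f_n(X_n)$ with $d^Z(C_j,y_j)\le 2\varepsilon$. For $\mu\circ f^{-1}\in A_f$, define $\nu:=\sum_j\mu(f^{-1}(C_j))\delta_{y_j}$, which lies in $A_{f_n}$. Testing against $g$ with $\|g\|_{\BL^{\kappa}}\le 1$ gives $|\int g\,d(\mu\circ f^{-1}-\nu)|\le \mu(X)(3\varepsilon)^{\kappa}$. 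The reverse direction is symmetric.

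The main obstacle is precisely this last estimate. It is uniform only on mass-bounded classes: for an unrestricted mass $m=\mu(X)$ it degrades like $m\,\delta_n^{\kappa}$. Indeed, already for $X=\{0\}$ and $X_n=\{\delta_n\}$ in $[0,1]$, the distance from $m\delta_0$ to $A_{f_n}$ grows with $m$. So the literal extended-metric Hausdorff distance need not tend to $0$. For this reason the statement has to be read, as in \cite[Lemma 2.13]{Khe23}, in terms of the Hausdorff topology on closed subsets, which is metric-independent by \cite[Section 4.F]{Ke95}, rather than as a uniform metric bound. Concretely, the proof will rely on the topological citation of the first two steps and use the direct construction only for the local (bounded-mass) control that topology requires. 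I expect the delicate point to be verifying that Khezeli's framework (closed, possibly non-compact, sets with this topology) matches the present $A_f$, and I will check that first.
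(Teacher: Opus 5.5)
\textbf{Verdict: there is a genuine gap.} The statement as written is false, so no proof can establish it. Your steps one and two reproduce the paper's own one-line argument, and that argument is invalid here.

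Your counterexample is correct, and it is decisive. Suppose $f_n(X_n)\neq f(X)$, and pick (say) $y\in f(X)$ with $\eta:=d^Z(y,f_n(X_n))>0$. Then $g:=(1+\eta^{\kappa})^{-1}(\eta^{\kappa}-d^Z(y,\cdot)^{\kappa})_+$ satisfies $\|g\|_{\BL^{\kappa}(Z)}\le 1$ and $g|_{f_n(X_n)}=0$. Hence $d^Z_{\BL^{\kappa}}(m\delta_y,A_{f_n})\ge m\eta^{\kappa}/(1+\eta^{\kappa})$ for every $m>0$, and $(d^Z_{\BL^{\kappa}})_{\mathrm H}(A_f,A_{f_n})=\infty$. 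So the lemma fails unless $f_n(X_n)=f(X)$ eventually.

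The concrete gap is your step two, which is exactly the paper's argument (\cite[Lemma 2.13]{Khe23} plus \cite[Section 4.F]{Ke95}). Kechris's result says the Hausdorff metric induces the Vietoris topology on the hyperspace of \emph{compact} sets. Here $A_f$ is a closed, unbounded cone in $\calM(Z)_{\mathrm{fin}}$, not a compact set. On such sets the Hausdorff-metric topology depends on the metric, not only on the topology, and your own example shows this. Set $\delta_n:=d^Z_{\mathrm H}(f(X),f_n(X_n))$. For the Prokhorov metric, which also metrizes weak convergence, push each $\mu$ forward by a Borel map moving points by less than $\delta_n+\varepsilon$. This shows the Hausdorff distance between $A_f$ and $A_{f_n}$ is at most $\delta_n$, whatever the mass. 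For $d^Z_{\BL^{\kappa}}$ the same distance is infinite, and it equals $1$ for the truncation $1\wedge d^Z_{\BL^{\kappa}}$. So the convergence from step one does not transfer. The ``metric-independent Hausdorff topology on closed subsets'' you fall back on does not exist, and your step two contradicts your final paragraph.

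What is true is a truncated version, and your direct construction proves it because it preserves total mass. For every $m>0$,
\[
(d^Z_{\BL^{\kappa}})_{\mathrm H}\bigl(\{\lambda\in A_f:\lambda(Z)\le m\},\{\lambda\in A_{f_n}:\lambda(Z)\le m\}\bigr)\le m(3\delta_n)^{\kappa}\longrightarrow 0 .
\]
Testing with $g\equiv 1$ gives $d^Z_{\BL^{\kappa}}(\lambda,0)=\lambda(Z)$. So these truncations are the intersections of $A_f$ and $A_{f_n}$ with closed balls about the zero measure in the boundedly compact space $(\calM(Z)_{\mathrm{fin}},d^Z_{\BL^{\kappa}})$. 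The display therefore gives $A_{f_n}\to A_f$ in the local Hausdorff metric of \cref{fell}, that is, in the Fell topology, which is the genuinely topological notion.

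Bounded mass is also all that a completeness argument for $d^{\tau,\BL^{\kappa}}_{\frK}$ needs, since total masses along a Cauchy sequence are bounded. So drop the appeal to \cite{Ke95}, restate the lemma in one of these two corrected forms, and make your construction the proof.
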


\begin{proof}
  This follows from \cite[Lemma 2.13]{Khe23} and the fact that the Hausdorff topology on a metric space 
  only depends on the topology of the space (see \cite[Section 4.F]{Ke95}).
\end{proof}
The following theorem is an immediate consequence of above lemma, \cite[Theorem 2.12]{Khe23} and \cref{blproperties}.
\begin{thm}
The metric space $(\frK(\tau),d^{\tau,\BL^{\kappa}}_{\frK})$ is complete and separable.
\end{thm}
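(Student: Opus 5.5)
The plan is to follow the pattern of the rooted compact case: combine the completeness and separability criterion of \cite[Theorem 2.12]{Khe23} with the preceding lemma on Hausdorff convergence of the sets $A_f$, and with \cref{blproperties}. Recall the abstract structure of \cite{Khe23}. One works with a functor $\tau$ assigning to each compact metric space $Z$ a metric space $\tau(Z)$, here $(\calM(Z)_{\mathrm{fin}},d^Z_{\BL^{\kappa}})$, and to each isometric embedding the pushforward. The Gromov--Hausdorff-type metric $d^{\tau,\BL^{\kappa}}_{\frK}$ is then complete and separable provided three conditions hold:
\begin{itemize}
\item[(a)] each $\tau(Z)$ is complete and separable;
\item[(b)] pushforwards along isometric embeddings are isometric embeddings $\tau(X)\to\tau(Z)$;
\item[(c)] a continuity condition: if $f_n(X_n)\to f(X)$ in the Hausdorff topology inside $Z$, then the images $A_{f_n}$ converge to $A_f$ in the Hausdorff sense in $\tau(Z)$.
\end{itemize}
So the proof reduces to checking (a)--(c).

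First, every compact metric space is $\bcm$. Hence \cref{blproperties}(i),(ii) give completeness and separability of $(\calM(Z)_{\mathrm{fin}},d^Z_{\BL^{\kappa}})$, which is (a). Second, \cref{blproperties}(iv) gives that the pushforward along any isometric embedding is isometric, which is (b). Third, the lemma immediately preceding the statement is exactly (c).

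Before invoking the criterion, I would check the formal details. The first is that \cite[Theorem 2.12]{Khe23} is formulated for the non-rooted compact setting, or applies to it after dropping the root coordinate. In the rooted version the root was encoded by the product space $Z\times\calM(Z)_{\mathrm{fin}}$. Here only $\calM(Z)_{\mathrm{fin}}$ appears, which corresponds to the version without root, so it matches the preceding lemma. The second is that $\frK(\tau)$ is a genuine set of representatives, which is given by the proposition above.

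The main (mild) obstacle is this bookkeeping: making sure the hypotheses of the abstract theorem in \cite{Khe23} match the non-rooted definitions exactly. That includes the requirement that the infimum in \cref{nrca} be taken over compact $Z$, where our $\BL^{\kappa}$ metric space is complete.

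If the abstract theorem were unavailable in this form, I would instead argue directly.

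For separability, I would use finite metric spaces with rational distances carrying rational atomic measures, via the density argument in the proof of \cref{blproperties}(ii).

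For completeness, I would take a Cauchy sequence and pass to a subsequence with summable distances. I would then glue the successive optimal ambient spaces into a single space, taking its completion, which is compact by Gromov's compactness criterion because the Hausdorff distances are summable. Finally I would conclude using completeness of $d_{\mathrm{H}}$ on compact sets and of $d_{\BL^{\kappa}}$, from \cref{blproperties}(i).
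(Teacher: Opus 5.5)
Your proposal is correct and follows the paper's own route: the paper gives no separate argument and simply cites the preceding lemma on Hausdorff convergence of the sets $A_f$, \cite[Theorem 2.12]{Khe23}, and \cref{blproperties}. Your conditions (a)--(c) are exactly how those three ingredients enter, so the direct fallback argument is not needed.
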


\subsubsection{Rooted $\bcm$ case}
We finally consider the non-compact case.
The spirit is similar to the compact case, but 
we need to use $d_{\BL^{\kappa}}^{X,\rho_X}$ for the metric on $\calM(X)$ 
since $d_{\BL^{\kappa}}^X$ is no longer available.
\begin{dfn}[The local Hausdorff distance, {\cite[Section 3.1]{Nd24}}]\label{fell}
Let $(Z,d^Z,\rho_Z)$ be a rooted $\bcm$ space and $\calC(Z)$ be the set of all closed sets in $Z$.
We define the local Hausdorff distance $d^{Z,\rho_Z}_{\rm{H}}(A,B)$ for $A,B\in\calC(Z)$ by setting
\begin{equation*}
    d_{\mathrm{H}}^{Z,\rho_Z}(A,B):=\int_{0}^{\infty}e^{-r}(1\wedge d^{Z}_{\mathrm{H}}(A^{(r,\rho_Z)},B^{(r,\rho_Z)}))dr,
\end{equation*}
where
\begin{equation*}
      C^{(r,\rho_Z)}:=C\cap D(\rho_Z,r),\quad C\subseteq Z.
\end{equation*}
\end{dfn}
One can check that $d^{Z,\rho_Z}_{\rm{H}}$
is indeed a metric on $\calC(Z)$, and the induced topology, called 
Fell topology is independent of the root $\rho_Z\in Z$
 (see \cite[Section 3.1]{Nd24} for details).

\begin{prp}[{\cite[Proposition 6.2]{Nd24}}]
There exists a set $\frM_{\bullet}(\tau)$ satisfying the following:
\begin{itemize}
  \item $\frM_{\bullet}(\tau)$ consists of root-and-measured $\bcm$ spaces,
  \item for any root-and-measured $\bcm$ space $(Y,d^Y,\rho_Y,\mu_Y)$, 
   there exists a unique element $(X,d^X,\rho_X,\mu_X)\in\frM_{\bullet}(\tau)$
   that is equivalent to $(Y,d^Y,\rho_Y,\mu_Y)$.
\end{itemize}
\end{prp}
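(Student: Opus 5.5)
The plan is to imitate the proof of the compact analogue (\cite[Proposition 6.2]{Nd24}) and choose a canonical representative in each equivalence class of root-and-measured $\bcm$ spaces. The only real issue is set-theoretic: the collection of all such quadruples is a proper class, so we cannot simply pick one element from each class. We therefore first show that every root-and-measured $\bcm$ space is equivalent to one whose underlying set lies in a fixed set $S$. After that, we apply the axiom of choice to the resulting \emph{set} of equivalence classes.

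First, a $\bcm$ space $(Y,d^Y)$ is separable, being a countable union of compact balls $D(\rho_Y,k)$. Hence $|Y|\le 2^{\aleph_0}$, and there is an injection $\iota:Y\to\R$. Transport the structure along $\iota$: define $d^X(\iota(y),\iota(y')):=d^Y(y,y')$ on $X:=\iota(Y)\subseteq\R$, set $\rho_X:=\iota(\rho_Y)$, and set $\mu_X:=\mu_Y\circ\iota^{-1}$. Then $\iota$ is a root-preserving isometry onto $(X,d^X)$, which is again $\bcm$. The measure $\mu_X$ is Radon on $X$, since Radonness is preserved by isometries. So $(Y,d^Y,\rho_Y,\mu_Y)$ is equivalent to $(X,d^X,\rho_X,\mu_X)$.

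Next, the collection $\mathcal{S}$ of all quadruples $(X,d,\rho,\mu)$ with $X\subseteq\R$, $d$ a boundedly compact metric on $X$, $\rho\in X$ and $\mu\in\calM(X)$ is a set. Indeed, it is contained in $\mathcal{P}(\R)\times\mathcal{P}(\R\times\R\times\R)\times\R\times\mathcal{P}(\mathcal{P}(\R)\times[0,\infty])$, with each measure viewed as its graph on Borel subsets. Equivalence (existence of a root-preserving isometry pushing one measure to the other) is an equivalence relation on $\mathcal{S}$, since compositions and inverses of root-preserving isometries are again such maps and push-forward is functorial. Using the axiom of choice, pick one representative from each class and let $\frM_{\bullet}(\tau)$ be the set of representatives.

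Finally we verify the two properties. Every element of $\frM_{\bullet}(\tau)$ is a root-and-measured $\bcm$ space by construction. For existence, any $(Y,d^Y,\rho_Y,\mu_Y)$ is equivalent to some element of $\mathcal{S}$ by the first step, hence to the chosen representative of its class. For uniqueness, suppose two elements of $\frM_{\bullet}(\tau)$ are both equivalent to $(Y,d^Y,\rho_Y,\mu_Y)$. By transitivity they are equivalent to each other, so they lie in the same class, and therefore they coincide because only one representative was chosen per class. There is no substantial obstacle here. The only points needing care are the cardinality bound, which relies on separability, and checking that the Radon property and bounded compactness transfer under the bijection.
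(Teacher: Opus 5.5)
Your argument is correct. Separability of a $\bcm$ space bounds its cardinality by $2^{\aleph_0}$, so every root-and-measured $\bcm$ space can be transported onto a subset of $\R$. The resulting structures form a set, and choosing one representative per equivalence class gives both existence and uniqueness.

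The paper gives no proof of its own here; it imports the statement directly from \cite[Proposition 6.2]{Nd24}. That is the same citation it uses for $\frK_{\bullet}(\tau)$, so the cited result is not just a compact analogue of this one. Your reduction-plus-choice argument is the standard set-theoretic reasoning behind statements of this kind.
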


Recall the definition of $d^{Z,\rho_Z}_{\BL^{\kappa}}$ from \eqref{rootedblkappa}. 
\begin{dfn}
For $\calX=(X,d^X,\rho_X,\mu_X)$ and $\calY=(Y,d^Y,\rho_Y,\mu_Y)\in\frM_{\bullet}(\tau)$,
set
\begin{align*}
  d^{\tau,\BL^{\kappa}}_{\frM_{\bullet}}(\calX,\calY):=\inf_{f,g,Z}\Big\{ d_{\mathrm{H}}^{Z,\rho_Z}(f(X),g(Y))\vee d^{Z,\rho_Z}_{\BL^{\kappa}}(\mu_X\circ f^{-1},\mu_Y\circ g^{-1})\Big\},
\end{align*}
where the infimum is taken over all $(Z,d^Z,\rho_Z)\in\frM_{\bullet}$ and root-preserving isometric embeddings $f:X\to Z$ and 
$g:Y\to Z$.
\end{dfn}

\begin{thm}\label{232}
For $\calX_n=(X_n,d^{X_n},\rho_{X_n},\mu_{X_n}),\calX=(X,d^X,\rho_X,\mu_X)\in\frM_{\bullet}(\tau)$,
the following conditions are equivalent to each other.
\begin{itemize}
  \item [$\mathrm{(i)}$]There exist a $\bcm$ space $(M,d^M)$ and root-preserving isometric embeddings $f_n:X_n\to M,\;f:X\to M$
  such that $f_n(X_n)\to f(X)$ in the Fell topology, and $\mu_{X_n}\circ f_n^{-1}\to \mu_{X}\circ f^{-1}$ vaguely.  
  \item [$\mathrm{(ii)}$]There exist a $\bcm$ space $(M,d^M)$ and isometric embeddings $f_n:X_n\to M,\;f:X\to M$
  such that $f_n(X_n)\to f(X)$ in the Fell topology, $f_n(\rho_{X_n})\to f(\rho_X)$ in $M$,
  and $\mu_{X_n}\circ f_n^{-1}\to \mu_{X}\circ f^{-1}$ vaguely.
  \item [$\mathrm{(iii)}$]$d^{\tau,\BL^{\kappa}}_{\frM_{\bullet}}(\calX_n,\calX)\to 0$.
  \item [$\mathrm{(iv)}$]The sequence $(\calX_n)_n$ converges to $\calX$ in the Gromov-Hausdorff-vague topology.
\end{itemize}
\end{thm}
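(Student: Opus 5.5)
The plan is to reduce everything to the general theory of Gromov--Hausdorff-type topologies in \cite{Nd24} (and \cite{Khe23}). The key input is \cref{rootedblproperties} together with \cref{srmoftau}: the vague topology functor $\tau$ admits the space-rooted metrization $\tau^{\mathrm{srm}}((X,\rho_X))=(\calM(X),d^{X,\rho_X}_{\BL^{\kappa}})$. This metrization is complete and separable, its topology does not depend on the root, and it is compatible with pushforwards along isometric embeddings by \cref{rootedblproperties}(iii). With these properties checked, the abstract results of \cite[Section 6]{Nd24} apply with $\tau^{\mathrm{srm}}$. In \cite{Nd24} the metric $d^{\tau,\BL^{\kappa}}_{\frM_{\bullet}}$ is exactly the Gromov--Hausdorff-type metric built from a space-rooted metrization: the local Hausdorff distance $d^{Z,\rho_Z}_{\rm H}$ for the space, plus the chosen metric on $\tau(Z)$.

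The equivalences would then go as follows.

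\textbf{(iii)$\Leftrightarrow$(i).} I would invoke the theorem of \cite{Nd24} stating that convergence in the metric built from a space-rooted metrization is equivalent to the existence of a common rooted $\bcm$ space $(M,d^M,\rho_M)$ and root-preserving isometric embeddings. Under these embeddings the images converge in the Fell topology, and the additional structures converge in $\tau(M)$. Here that means vague convergence, by \cref{rootedblproperties}(ii).

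\textbf{(i)$\Leftrightarrow$(ii).} This is the same root-relaxation statement used in the rooted compact case. The direction (i)$\Rightarrow$(ii) is trivial. For (ii)$\Rightarrow$(i), I would cite \cite[Lemma 3.17 and Theorem B.9]{Nd24}, which let one modify $M$ (gluing in short segments joining $f_n(\rho_{X_n})$ and $f(\rho_X)$) so that the embeddings become root-preserving. The Fell and vague convergences are preserved because the modification is local and of vanishing size.

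\textbf{(iv).} Condition (ii) is literally the definition of Gromov--Hausdorff-vague convergence given in the introduction, so (ii)$\Leftrightarrow$(iv) holds by definition.

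The main obstacle is the bookkeeping needed to confirm that the hypotheses of the abstract theorems in \cite{Nd24} are met by $\tau^{\mathrm{srm}}$. Specifically, the restriction system must satisfy the required conditions (conditions 2 and 4, and completeness). It is also necessary that $d^{X,\rho_X}_{\BL^{\kappa}}$ is a legitimate choice within the class of metrizations covered there, since the theorems are stated for a general metrization and not only the Prokhorov-based one. Most of this was already verified in the proof of \cref{rootedblproperties}, so I expect it to reduce to citing those lemmas (A.5, A.6, 3.18) again.
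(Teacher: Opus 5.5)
You handle (i)$\Leftrightarrow$(iii) exactly as the paper does: \cref{srmoftau} identifies the space-rooted metrization $\tau^{\mathrm{srm}}$, the restriction-system hypotheses were already checked in the proof of \cref{rootedblproperties}, and \cite[Theorem 6.18]{Nd24} does the rest.

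The gap is in how you reach (iv). You declare (ii)$\Leftrightarrow$(iv) true by definition, reading the sentence in the introduction as the definition of Gromov--Hausdorff-vague convergence. The introduction does phrase it that way. In the theorem, however, (iv) refers to the Gromov--Hausdorff-vague topology in its standard sense, as used for instance in \cite{Cr18}, and the embedding description is a characterization of that topology. If the description were the definition, item (iv) would merely repeat (ii). The characterization is a genuine theorem, \cite[Theorem 8.10]{Nd24}: embeddings into a common $\bcm$ space with Fell convergence of the images, vague convergence of the measures, and converging or coinciding roots. That is exactly what the paper cites for (i)$\Leftrightarrow$(ii)$\Leftrightarrow$(iv).

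For (i)$\Leftrightarrow$(ii) you reuse the references from the rooted compact case, whereas the paper takes this step from the same \cite[Theorem 8.10]{Nd24}. Your heuristic for this step also does not work as stated. Attaching short segments between $f_n(\rho_{X_n})$ and $f(\rho_X)$ never makes these points coincide, but (i) requires $f_n(\rho_{X_n})=f(\rho_X)$ for all $n$ in a single rooted space. One has to change the metric itself, for instance by identifying all the roots and altering the distances between $f_n(X_n)$ and $f(X)$ by an amount of order $d^M(f_n(\rho_{X_n}),f(\rho_X))$. That is a global modification, not a local one; Fell and vague convergence survive only because the perturbation is uniformly small. Replacing your (i)$\Leftrightarrow$(ii) and (ii)$\Leftrightarrow$(iv) steps by an appeal to \cite[Theorem 8.10]{Nd24} closes the proof.
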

\begin{proof}
  The equivalence between (i), (ii) and (iv) was shown in \cite[Theorem 8.10]{Nd24},
  and that between (i) and (iii) follows from 
  \cref{srmoftau} and \cite[Theorem 6.18]{Nd24}.
\end{proof}
\begin{thm}
The metric space $(\frM_{\bullet}(\tau),d^{\tau}_{\frM_{\bullet}})$ is complete and separable.
\end{thm}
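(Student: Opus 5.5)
This follows the pattern of the two compact cases: reduce to the abstract results in \cite{Nd24} via \cref{srmoftau} and \cref{rootedblproperties}. By \cref{232}, $d^{\tau,\BL^{\kappa}}_{\frM_{\bullet}}$ is the space-rooted-metrization-induced metric attached to the functor $\tau$. In the framework of \cite[Section 6]{Nd24}, completeness and separability of the Gromov--Hausdorff-type space hold once the metrization $\tau^{\mathrm{srm}}$ has three properties. First, each fibre $(\calM(X),d^{X,\rho_X}_{\BL^{\kappa}})$ is complete and separable. Second, pushforward by root-preserving isometric embeddings is isometric. Third, the relevant continuity or compatibility condition holds for the restriction system. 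I would therefore invoke the corresponding theorem of \cite{Nd24} (the analogue of \cite[Theorem 2.12]{Khe23} in the non-compact setting, around \cite[Theorem 6.18 and Section 6.3]{Nd24}) and check its hypotheses one at a time.

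The key steps are as follows.

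\textbf{Step 1.} Fibrewise completeness and separability are exactly \cref{rootedblproperties}(i).

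\textbf{Step 2.} Isometric behaviour under root-preserving isometric embeddings is \cref{rootedblproperties}(iii).

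\textbf{Step 3.} I would verify the Hausdorff-continuity-type condition. This is the statement that if $f_n(X_n)\to f(X)$ in the Fell topology inside a common rooted $\bcm$ space $Z$, then the images $\{\mu\circ f_n^{-1}\}$ converge to $\{\mu\circ f^{-1}\}$ in the local Hausdorff sense in $(\calM(Z),d^{Z,\rho_Z}_{\BL^{\kappa}})$. I expect this to follow from \cite[Lemma 2.13]{Khe23}, applied to the restrictions $D(\rho_Z,r)$ for all but countably many $r$. It also uses the fact that the Hausdorff and Fell topologies depend only on the underlying topology, which by \cref{rootedblproperties}(ii) is the vague topology regardless of $\kappa$. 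In fact, since the topology is the vague one, the required properties for $\tau$ were already checked in \cite{Nd24} for the Prokhorov-type metrization. Those properties are topological, so they transfer to any complete separable metrization of the same topology.

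\textbf{Step 4.} With these hypotheses checked, \cite[Theorem 6.18]{Nd24} (together with its completeness and separability counterpart) gives the conclusion directly. Alternatively, one can combine \cref{232}, which shows the topology is the Gromov--Hausdorff-vague one and hence Polish by \cite{Nd24}, with a Cauchy-sequence argument that embeds everything into a single $\bcm$ space by gluing.

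The main obstacle is completeness: Polishness of the topology does not by itself make this particular metric complete. My first attempt would be to confirm that completeness in \cite{Nd24} is derived from completeness of $\tau^{\mathrm{srm}}$ in each fibre together with the restriction-system completeness from \cite[Lemma 3.18]{Nd24}, both of which were already used in the proof of \cref{rootedblproperties}. If that works, completeness follows formally.

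If not, I would argue by hand. Take a Cauchy sequence with $d^{\tau,\BL^{\kappa}}_{\frM_{\bullet}}(\calX_n,\calX_{n+1})<2^{-n}$, glue the successive optimal ambient spaces into one rooted $\bcm$ space, and obtain Fell-limits of the sets using completeness of $(\calC(Z),d^{Z,\rho_Z}_{\rm{H}})$. Limits of the measures then come from \cref{rootedblproperties}(i). The subtle point in this route is bounded compactness of the glued space, for which I would rely on the uniform local precompactness that comes from Fell convergence.
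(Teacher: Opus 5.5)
Your proposal is correct and follows essentially the same route as the paper. The paper takes continuity of the purely topological functor $\tau$ from \cite[Theorem 8.9]{Nd24} (your Step 3 observation), notes that $\tau^{\mathrm{srm}}$ is complete in the sense of \cite[Definition 5.8]{Nd24} via the completeness of the $\BL^{\kappa}$ metrics (your Step 1), and concludes with \cite[Corollary 6.42]{Nd24}. Corollary 6.42, rather than Theorem 6.18 (which only identifies the topology), is the result to invoke, and your hand-made gluing fallback is not needed.
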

\begin{proof}
  By \cite[Theorem 8.9]{Nd24} the functor $\tau$ defined in \cref{srmoftau} is continuous
  (see \cite[Definitions 6.15, 6.37, and 6.39]{Nd24} for the definitions), and $\tau^{\rm{srm}}$
  defined in \cref{srmoftau} is complete in the sense of \cite[Definition 5.8]{Nd24} by \cref{blproperties}.
  Thus the assertion follows from \cite[Corollary 6.42]{Nd24}.
\end{proof}

\subsection{Resistance forms and associated basic notions}\label{resis}
This section is devoted to the definition of 
resistance forms and their basic properties.
The interested reader is referred to \cite{Kig12}
for details on resistance forms.
Although the term ``resistance form'' does not explicitly appear,
\cite[Chapter 9]{LP17} is also a valuable reference for the graph case.
\begin{dfn}[Resistance form, {\cite[Definition 3.1]{Kig12}}]\label{resisform}
Let $F$ be a non-empty set.
A pair $(\calE,\calF)$ is called a resistance form if it satisfies the following conditions.
\begin{itemize}[leftmargin=3.5em, labelsep=0.3em]
  \item [(RF1)]$\calF$ is a linear subspace of $\R^F$ containing the constant functions.
  $\calE$ is a non-negative symmetric quadratic form on $\calF$ satisfying
  \begin{align*}
    \calE(u,u)=0\iff \text{$u$ is a constant.}
  \end{align*}
  \item[(RF2)]Let $\sim$ be an equivalence relation on $\calF$ defined by $u\sim v$ if and only if $u-v$ is constant.
  Then $(\calF/\sim,\calE)$ is a Hilbert space.
  \item[(RF3)] If $x\neq y\in F$, then there exists $u\in\calF$ such that $u(x)\neq u(y)$.
  \item[(RF4)]For any $x,y\in F$,
  \begin{align*}
    R(x,y)
    :=&\sup\left\{ \frac{|u(x)-u(y)|^2}{\calE(u,u)}:u\in\calF,\calE(u,u)>0 \right\}\\
    =&\inf\{\calE(u,u):u(x)=1,u(y)=0,u\in\calF\}^{-1}
  \end{align*} 
  is finite.
  \item[(RF5)]For $u\in \calF$, set $\bar{u}:=0\vee(u\wedge 1)$. 
  Then $\bar{u}\in\calF$ and $\calE(\bar{u},\bar{u})\leq \calE(u,u)$ hold. 
\end{itemize}
The function $R:F\times F\to[0,\infty)$ defined by (RF4) is a metric on $F$ (see \cite[Theorem 2.3.4]{Kig01}) and called the resistance metric associated with $(\calE,\calF)$.
\end{dfn}
More generally, we define an effective resistance $R(A,B)$ between subsets $A$ and $B$ of $F$ by setting
\begin{align*}
  R(A,B)=\inf\{\calE(u,u):u|_A=1,u|_B=0,u\in\calF\}^{-1},\quad \inf\emptyset:=\infty.
\end{align*}
We write $R(x, B)$ for $R(\{x\}, B)$ and $R(x, y) = R(\{x\}, \{y\})$.

Note that, for any $u\in\calF$ and $x,y\in \calF$, it holds that 
\begin{align}\label{resishol}
  |u(x)-u(y)|^2\leq\calE(u,u)R(x,y)
\end{align}
by (RF4). 
In particular, $u\in\calF$ is $1/2$-H\"older continuous
with respect to the resistance metric $R$,
and satisfies $\|u\|_{\Lip^{1/2}}\leq \calE(u,u)^{1/2}$.

\bigskip

For the following definition, recall the effective resistance on an electrical network with a finite vertex
set from \cite[Section 9.4]{LP17}. See also \cite[Section 2.1]{Kig01}
for the relation between Laplacians and resistance forms on finite sets.

\begin{dfn}[Resistance metric, {\cite[Definition 2.3.2]{Kig01}}]\label{resismet}
Let $F$ be a set. A function $R:F\times F\to[0,\infty)$ is called a resistance metric if and only if, 
for any non-empty finite set $V\subseteq F$, 
there exists an electrical network
$G$ with the vertex set $V$ such that the effective resistance on $G$ coincides with $R|_{V\times V}$.
\end{dfn}
A resistance metric is indeed a metric on $F$ (see \cite[Theorem 2.3.4]{Kig01}).
As explained by the following theorem, the term ``resistance metric'' in the above definition is compatible with 
the term ``resistance metric'' (associated with a resistance form) in \cref{resisform}.
\begin{thm}[{\cite[Theorem 2.3.6]{Kig01}}]\label{corrresismet}
Let $R$ be a resistance metric on a non-empty set $F$ in the sense of \cref{resismet}.
Then there exists a unique resistance form $(\calE,\calF)$ on $F$ such that $R=R_{(\calE,\calF)}$, 
where $R_{(\calE,\calF)}$ is a resistance metric associated with $(\calE,\calF)$. 
\end{thm}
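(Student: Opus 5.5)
The plan is to follow Kigami's construction: build the form from the finite electrical networks by a monotone limit, then prove uniqueness by comparing traces. Let $\mathcal{P}$ be the family of non-empty finite subsets of $F$, directed by inclusion.

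\textbf{Finite sets.} For $V\in\mathcal{P}$, \cref{resismet} gives a network on $V$, i.e.\ a Laplacian and its energy $\calE_V$. Its effective resistance is $R|_{V\times V}$, and $\calE_V$ is a resistance form on $V$. First I would invoke the finite-set inverse-problem fact from \cite[Section 2.1]{Kig01}: a resistance form on a finite set is determined by its effective resistances, since the conductances can be recovered from the Schur complements. Two consequences follow.
\begin{itemize}
\item $\calE_V$ is uniquely determined by $R|_{V\times V}$.
\item The family is compatible: for $V\subseteq W$, the trace $\calE_W|_V(v,v):=\min\{\calE_W(u,u):u|_V=v\}$ is a resistance form on $V$ whose effective resistance is again $R|_{V\times V}$, so $\calE_W|_V=\calE_V$.
\end{itemize}
In particular, for any $u\in\R^F$ the map $V\mapsto\calE_V(u|_V,u|_V)$ is nondecreasing.

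\textbf{Existence.} Set
\[
\calF:=\Bigl\{u\in\R^F:\sup_{V\in\mathcal{P}}\calE_V(u|_V,u|_V)<\infty\Bigr\},
\qquad
\calE(u,u):=\lim_{V}\calE_V(u|_V,u|_V),
\]
and obtain $\calE(u,v)$ by polarization. I would then check (RF1)--(RF5) in turn.
\begin{itemize}
\item (RF1) and (RF5) pass to the monotone limit, because each $\calE_V$ has these properties.
\item For (RF4), take $V\ni x,y$. Finite theory gives $|u(x)-u(y)|^2\le \calE_V(u|_V,u|_V)R(x,y)\le\calE(u,u)R(x,y)$, so $R_{(\calE,\calF)}\le R$. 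For the reverse inequality I need some $u\in\calF$ with $u(x)=1$, $u(y)=0$ and $\calE(u,u)\le R(x,y)^{-1}$. For each $V$, take the equilibrium potential $h_V\in[0,1]^V$ of $\calE_V$ and extend it by $0$. By Tychonoff, the net $(h_V)$ has a cluster point $u$ in $[0,1]^F$. For fixed $V_0$, compatibility gives $\calE_{V_0}(h_W|_{V_0},h_W|_{V_0})\le\calE_W(h_W,h_W)=R(x,y)^{-1}$ whenever $W\supseteq V_0$. Lower semicontinuity of the finite-dimensional form $\calE_{V_0}$ then yields $\calE_{V_0}(u|_{V_0},u|_{V_0})\le R(x,y)^{-1}$, and hence the same bound for $\calE(u,u)$.
\item (RF3) follows from (RF4) together with $R(x,y)>0$ for $x\neq y$.
\item For (RF2), let $(u_k)$ be $\calE$-Cauchy, normalized by $u_k(x_0)=0$. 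The bound above, applied to $u_k-u_l$, gives pointwise convergence $u_k\to u$. For each fixed $V$, $\calE_V((u_k-u)|_V,(u_k-u)|_V)=\lim_l \calE_V((u_k-u_l)|_V,(u_k-u_l)|_V)\le\limsup_l\calE(u_k-u_l,u_k-u_l)$. Taking the supremum over $V$ gives $u\in\calF$ and $\calE(u_k-u,u_k-u)\to0$.
\end{itemize}

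\textbf{Uniqueness.} Let $(\calE',\calF')$ be any resistance form with $R_{(\calE',\calF')}=R$. Its trace on $V\in\mathcal{P}$ is a resistance form on $V$ with resistance $R|_{V\times V}$, so it equals $\calE_V$ by the finite uniqueness. It remains to show that a resistance form is the supremum of its traces:
\[
\calF'=\Bigl\{u:\sup_V\calE_V(u|_V,u|_V)<\infty\Bigr\},
\qquad
\calE'(u,u)=\sup_V\calE_V(u|_V,u|_V).
\]
I expect this to be the main obstacle. The inequality $\le$ is immediate from the definition of the trace.

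For the converse, take $u$ with $S:=\sup_V\calE_V(u|_V,u|_V)<\infty$. For each $V$, let $H_Vu\in\calF'$ be the $\calE'$-harmonic extension of $u|_V$, so that $\calE'(H_Vu,H_Vu)=\calE_V(u|_V,u|_V)$. For $V\subseteq W$, the function $H_Wu-H_Vu$ vanishes on $V$ and is therefore $\calE'$-orthogonal to $H_Vu$, which gives
\[
\calE'(H_Wu-H_Vu,H_Wu-H_Vu)=\calE_W(u|_W,u|_W)-\calE_V(u|_V,u|_V).
\]
Hence $(H_Vu)_V$ is an $\calE'$-Cauchy net. By (RF2) and the pointwise bound \eqref{resishol}, it converges, after normalizing at a point, to some $h\in\calF'$ with $\calE'(h,h)=S$. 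Since $H_Vu=u$ on $V$ and every point lies in some $V$, the limit satisfies $h=u$ pointwise. Therefore $u\in\calF'$ and $\calE'(u,u)=S$.

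The delicate part is controlling this convergence of nets without separability. I would first try to replace nets by sequences $V_1\subseteq V_2\subseteq\cdots$ chosen to make $\calE_{V_n}(u|_{V_n},u|_{V_n})\to S$, and then adjoin an arbitrary point $z$ to each $V_n$. The increment identity shows that the limit is unchanged, which identifies $h(z)=u(z)$.
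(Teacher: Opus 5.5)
Your proposal is correct. It is essentially the standard argument behind \cite[Theorem 2.3.6]{Kig01}, which the paper quotes without proof: compatibility of the finite-set Laplacians via traces, the monotone limit over the directed set of all finite subsets, and uniqueness via the fact that a resistance form is the supremum of its finite traces. The step you flag as delicate is not an obstacle, because a Cauchy net in the complete metric space $(\calF'/{\sim},\calE')$ converges just as Cauchy sequences do, so your direct net argument already works without passing to sequences.
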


In what follows, we assume that a non-empty set $F$ is equipped with a resistance form $(\calE,\calF)$,
 and the metric space $(F,R)$ is complete, separable and locally compact, where $R$ is the resistance metric corresponding to $(\calE,\calF)$.
\begin{dfn}[Regular resistance form, {\cite[Definition 6.2]{Kig12}}]\label{regresis}
The resistance form $(\calE,\calF)$ is called regular if and only if $\calF\cap C_c(F)$ is dense in $C_c(F)$ 
with respect to the supremum norm. Here, we denote the set of compactly supported continuous functions on $F$ by $C_c(F)$.
\end{dfn}

We next introduce related Dirichlet forms and stochastic processes.
Henceforth, we assume the regularity of a resistance form $(\calE,\calF)$ on $F$.
Note that, since all functions lie in $\calF$ is continuous by \eqref{resishol}, 
any compact resistance metric space is regular. 
Let $\mu$ be a (positive) Radon measure on $F$ such that $\supp\mu=F$,
and 
$L^2(F,\mu)$ be the set of (equivalence classes of) Borel measurable functions
$f:F\to[-\infty,\infty]$ such that $\int_F f^2d\mu<\infty$. We denote an inner product on $L^2(F,\mu)$ by 
$(\cdot,\cdot)_{L^2(F,\mu)}$.
Note that, since $\mu$ is of full support, each continuous function on $F$ can be naturally regarded as an element of $L^2(F,\mu)$.
Then $(L^2(F,\mu)\cap\calF,\calE_1),\;\calE_1(u,v):=\calE(u,v)+(u,v)_{L^2(F,\mu)}$ is a Hilbert space (see \cite[Lemma 9.2]{Kig12}).
Define $\calD\subseteq L^2(F,\mu)$ by setting
$\calD:=\overline{\calF\cap C_c(F)}^{\calE_1}\subseteq L^2(F,\mu)\cap\calF$.
By virtue of the regularity of $(\calE,\calF)$, we have from \cite[Theorem 9.4]{Kig12} that 
$(\calE,\calD)$ is a regular Dirichlet form on $L^2(F,\mu)$. (See \cite[Chapter 1.1]{FOT94} for the definition of regular Dirichlet forms.)
Moreover, standard theory gives us the existence of an associated Hunt process $((X_t)_{t\geq 0},(P_x)_{x\in F})$
(e.g. \cite[Theorem 7.2.1]{FOT94}). Note that such a process is, in general, only specified uniquely for starting
points outside a set of zero capacity. However, in this setting, every point has strictly positive capacity
(see \cite[Theorem 9.9]{Kig12}), and so the process is defined uniquely everywhere.
In the rest of this section, we fix a full support Radon measure $\mu$ on a regular resistance metric space $(F,R)$ 
and corresponding Hunt process $(X=(X_t)_{t\geq 0},(P_x)_{x\in F})$.
We denote the expectation with respect to $P_x$ by $E_x[\cdot]$. 

\bigskip

\begin{dfn}[Recurrence of resistance forms, {\cite[Definition 3.13]{Nd25}}]\label{rec}
Let $(\calE, \calF)$ be a resistance form on 
$F$ and write $R$ for the corresponding resistance metric.
We say that $(\calE, \calF)$ and $R$ are recurrent 
if and only if the following condition is satisfied:
\begin{enumerate}[align=left, labelwidth=\widthof{(RRF)},
    labelsep=1em, 
    leftmargin=!]
    \item[(RRF)] there exists an increasing sequence $(U_n)_{n\geq1}$ 
    of relatively compact open subsets of $F$ such that $\bigcup_{n\geq1}U_n=F$ and
    \begin{align*}
    \lim_{n \to \infty} R(\rho, U_n^c) = \infty  
    \end{align*}
    for some $\rho \in F$.
  \end{enumerate}
\end{dfn}
Note that this condition is equivalent to the following:
\begin{itemize}
  \item For any increasing sequence $(U_n)_{n\geq1}$ 
    of relatively compact open subsets of $F$ such that $\bigcup_{n\geq1}U_n=F$ and any $\rho\in F$,
    it holds that
    \begin{align*}
    \lim_{n \to \infty} R(\rho, U_n^c) = \infty.
    \end{align*}
\end{itemize}
In particular, if $(F,R)$ is a $\bcm$ space, then recurrence is equivalent to 
\begin{align}\label{reccc}
\lim_{r\to\infty}R(\rho,B(\rho,r)^c)=\infty  
\end{align}
 for some (or equivalently any) $\rho\in F$.
\begin{rmk}
  The recurrence of resistance forms implies their regularity.
  See \cite[Corollary 3.26]{Nd25} for details.
\end{rmk} 
\begin{rmk}
The recurrence of the resistance form $(\calE,\calF)$ is equivalent to that of the Dirichlet form $(\calE,\calD)$
associated with a measure $\mu$ of full support (see \cite[p55]{FOT94} for the definition of the recurrence of Dirichlet forms).
See \cite[Lemma 2.3]{Cr18} for a proof.
\end{rmk}

Throughout this article, we denote the hitting time of a subset $A\subseteq F$ by $\sigma_A$.
Namely,
\begin{align*}
  \sigma_A:=\inf\{t>0:X_t\in A\}.
\end{align*}
We abbreviate $\sigma_x:=\sigma_{\{x\}}$.
We also write the commute time between $x,y\in F$ by $\sigma_{x,y}:=\inf\{t>0:X_t=x,y\in X_{[0,t]}\}$,
where $X_{[0,t]}:=\{X_s:s\in[0,t]\}$.
(NB. The quantity $\sigma_{x,y}$ is not symmetric in $x,y$.)
\begin{lem}[Commute time identity,{\cite[Proof of Lemma 2.9]{CHK17}}]\label{cti}
If $F$ is compact, then
\begin{align*}
 E_{x}[\sigma_{x,y}]=E_x[\sigma_y]+E_y[\sigma_x]=R(x,y)\mu(F).
\end{align*}
\end{lem}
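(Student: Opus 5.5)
The identity $E_x[\sigma_y]+E_y[\sigma_x]=R(x,y)\mu(F)$ is the continuum analogue of the commute time identity for finite networks. I would prove it through the Green function of the process killed at $y$.

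\textbf{Step 1: Green kernel of the killed process.} Since $F$ is compact, $(F,R)$ is recurrent and every point has positive capacity. The process killed on hitting $y$ therefore has a Green kernel $g_y(\cdot,\cdot)$. I would invoke the standard resistance-form facts (Kigami; see also \cite{Cr18, CHK17}) that
\begin{align*}
 g_y(x,z)=\tfrac12\bigl(R(x,y)+R(z,y)-R(x,z)\bigr),
\end{align*}
and that for every nonnegative bounded Borel $f$,
\begin{align*}
 E_x\Bigl[\int_0^{\sigma_y}f(X_s)\,ds\Bigr]=\int_F g_y(x,z)f(z)\,\mu(dz).
\end{align*}
The resistance identity for $g_y$ is characterized by the variational relation $\calE(g_y(x,\cdot),u)=u(x)$ for all $u\in\calF$ with $u(y)=0$. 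This relation, together with the fact that the Dirichlet form of the killed process is the restriction of $(\calE,\calD)$ to functions vanishing at $y$, gives the occupation formula.

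\textbf{Step 2: Sum the two hitting times.} Taking $f\equiv1$ gives
\begin{align*}
 E_x[\sigma_y]=\int_F g_y(x,z)\,\mu(dz).
\end{align*}
Symmetrically,
\begin{align*}
 E_y[\sigma_x]=\int_F g_x(y,z)\,\mu(dz).
\end{align*}
Adding the two integrands gives
\begin{align*}
 g_y(x,z)+g_x(y,z)=\tfrac12\bigl(R(x,y)+R(z,y)-R(x,z)+R(y,x)+R(z,x)-R(y,z)\bigr)=R(x,y).
\end{align*}
Integrating over $z$ yields $R(x,y)\mu(F)$. Both integrals are finite because $g$ is bounded by $\diam F$ and $\mu(F)<\infty$.

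\textbf{Step 3: The commute time.} Under $P_x$, the time $\sigma_{x,y}$ is the first return to $x$ after visiting $y$. This equals $\sigma_y$ plus $\sigma_x\circ\theta_{\sigma_y}$; the case $x=y$ is trivial. Since $X_{\sigma_y}=y$ by path continuity at hitting times of points, which holds for Hunt processes hitting closed singletons, the strong Markov property gives
\begin{align*}
 E_x[\sigma_{x,y}]=E_x[\sigma_y]+E_y[\sigma_x].
\end{align*}

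\textbf{Main obstacle.} The delicate point is Step 1: identifying the occupation density of the killed process with the resistance expression, and in particular justifying that $g_y(x,\cdot)$ is the $\calE$-representer of evaluation at $x$ on $\{u:u(y)=0\}$. I would first verify this on the finite-dimensional trace to finite sets containing $x,y$ and then pass to the limit. More directly, I would check that $u\mapsto u(x)$ is a bounded functional on the Hilbert space $(\{u\in\calF:u(y)=0\},\calE)$ by \eqref{resishol}, apply Riesz representation, and compute the values via effective resistances.
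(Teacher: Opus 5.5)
Your proposal is correct and follows essentially the standard argument behind the cited identity. It writes $E_x[\sigma_y]=\int_F g_y(x,z)\,\mu(dz)$ using the resistance formula for the killed Green kernel (recorded in the paper as Lemma~\ref{killedgreen}), integrates the pointwise identity $g_y(x,z)+g_x(y,z)=R(x,y)$ against $\mu$, and uses the strong Markov decomposition $\sigma_{x,y}=\sigma_y+\sigma_x\circ\theta_{\sigma_y}$ under $P_x$. One small wording fix: $X_{\sigma_y}=y$ on $\{\sigma_y<\infty\}$ follows from right-continuity of paths and closedness of $\{y\}$, not from path continuity.
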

\begin{lem}[{\cite[Lemma 4.2]{Cr18}}]\label{4.2-a}\quad\par
  \begin{itemize}
    \item[$\mathrm{(i)}$] If $x\in F$, $A$ is a non-empty closed subset of $F\setminus\{x\}$ and $\delta\in (0,R(x,A))$, then
\begin{align*}
  P_x(\sigma_A\leq t)\leq 2\left[ 1-\left(\frac{R(x,A)-\delta}{R(x,A)+\delta}\right)e^{-\frac{ 2t }{\mu(B(x,\delta))(R(x,A)-\delta)}} \right],\qquad \forall t\geq 0.
\end{align*}
In particular, if $x,y\in F$, $\varepsilon\in (0,R(x,y)/2),\delta\in (0,R(x,y)-2\varepsilon)$, then
\begin{align*}
  P_x\left(\sigma_{\overline{B(y,\varepsilon)}}\leq t\right)\leq 4\left[\frac{\delta}{R(x,y)-2\varepsilon}+\frac{t}{\mu(B(x,r))(R(x,y)-2\varepsilon-\delta)}\right],\qquad \forall t\geq 0.
\end{align*}
\item [$\mathrm{(ii)}$]If $\rho\in F$ and $\delta\in (0,R(\rho,B(\rho,r)^c))$, then 
\begin{align*}
  P_{\rho}\left( \sigma_{B(\rho,r)^c}\leq t \right)\leq 4\left[ \frac{\delta}{R(\rho,B(\rho,r)^c)}+\frac{t}{\mu(B(\rho,\delta))(R(\rho,B(\rho,r)^c)-\delta)} \right],\quad t\geq 0.
\end{align*}
NB. If $B(\rho,r)^c=\emptyset$, then we interpret the right-hand side as being equal to zero.
  \end{itemize}
\end{lem}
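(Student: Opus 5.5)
The plan is to prove the first display in (i) by combining the Green function of the process killed on hitting $A$ with an exponential lower bound on the time $\sigma_A$. Both displayed ``in particular'' bounds and (ii) are then obtained by elementary manipulation. Since the lemma is quoted from \cite{Cr18}, I would follow the structure of that argument.

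\emph{Step 1 (Green function and resistance).} Let $g_A(\cdot,\cdot)$ be the Green kernel of $X$ killed at $\sigma_A$; it is symmetric with respect to $\mu$. From the resistance-form theory in \cite{Kig12}, taking $g_A(\cdot,x)$ to be $R(x,A)$ times the equilibrium potential of the pair $(\{x\},A)$, I will use:
\begin{itemize}
\item $g_A(x,x)=R(x,A)$;
\item $g_A(x,x)-g_A(x,y)\le R(x,y)$, which follows from \eqref{resishol} applied to $g_A(x,\cdot)$, whose energy is $R(x,A)$, and from $g_A(x,\cdot)\ge0$.
\end{itemize}
Consequently, for $y\in B(x,\delta)$,
\begin{align*}
P_y(\sigma_x<\sigma_A)=\frac{g_A(y,x)}{g_A(x,x)}\ge \frac{R(x,A)-\delta}{R(x,A)}.
\end{align*}
The same computation with the roles of $x$ and $A$ exchanged should give the symmetric quantity $\frac{R(x,A)-\delta}{R(x,A)+\delta}$. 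That quantity is the probability that, after leaving a $\delta$-neighbourhood of $x$, the process returns to $x$ before reaching $A$.

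\emph{Step 2 (lower-tail bound for $\sigma_A$).} Start at $x$ and decompose the path up to $\sigma_A$ into excursions from $x$. By the strong Markov property, the local time at $x$ accumulated before $\sigma_A$ is exponentially distributed, with mean $R(x,A)$ in the normalisation for which $\int_B g_A(x,y)\,\mu(dy)$ is the expected occupation time of $B$. During this local time the process spends time in $B(x,\delta)$, and the expected amount is at least $\mu(B(x,\delta))(R(x,A)-\delta)$ by Step 1.

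I would compare $\sigma_A$ with an exponential clock. Let $G$ be the event that the process returns to $x$ from the boundary region before hitting $A$; Step 1 gives $P_x(G)\ge\frac{R-\delta}{R+\delta}$, where $R=R(x,A)$. On $G$, $\sigma_A$ stochastically dominates an exponential variable of mean $\tfrac12\mu(B(x,\delta))(R-\delta)$. The factor $\tfrac12$, and the outer factor $2$ in the statement, come from a Markov or Chebyshev step that converts a first-moment bound on the occupation time into a tail bound. Combining these gives
\begin{align*}
P_x(\sigma_A> t)\ge \tfrac12\frac{R-\delta}{R+\delta}e^{-2t/(\mu(B(x,\delta))(R-\delta))},
\end{align*}
which after rearrangement is the first display.

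This step is the main obstacle, in particular producing exactly the constants $2$ and $2t$. My first attempt would use Laplace transforms. Set $u_\lambda(y)=E_y[e^{-\lambda\sigma_A}]$ and write $1-u_\lambda(x)=\lambda E_x\!\left[\int_0^{\sigma_A}e^{-\lambda s}ds\right]$. Bound the right-hand side from below via the killed resolvent and the Green estimate of Step 1. Then use $P_x(\sigma_A\le t)\le e^{\lambda t}u_\lambda(x)$ and optimise over $\lambda$.

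\emph{Step 3 (the corollaries).} For the second display in (i), take $A=\overline{B(y,\varepsilon)}$ and use $R(x,A)\ge R(x,y)-2\varepsilon$. This is where the condition $\varepsilon<R(x,y)/2$ enters; the bound itself would follow from a short series/parallel argument or from the triangle inequality for $R$ applied to the equilibrium potential. Then apply the elementary inequalities $1-ab\le(1-a)+(1-b)$, $1-e^{-s}\le s$, and $1-\frac{R-\delta}{R+\delta}\le \frac{2\delta}{R}$; these yield the stated bound with constant $4$. Part (ii) is the same computation with $x=\rho$ and $A=B(\rho,r)^c$, a closed set not containing $\rho$. When $A=\emptyset$ we have $\sigma_A=\infty$ almost surely, so the probability is zero, consistent with the convention in the statement.
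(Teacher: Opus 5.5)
\textbf{Step 2 has a genuine gap, and its target inequality is the wrong one.} Write $R:=R(x,A)$, $U:=B(x,\delta)$ and $p:=\frac{R-\delta}{R+\delta}e^{-2t/(\mu(U)(R-\delta))}$.

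\emph{The target is not equivalent to the first display.} The first display says $P_x(\sigma_A\le t)\le 2(1-p)$, i.e.\ $P_x(\sigma_A>t)\ge 2p-1$. Your target $P_x(\sigma_A>t)\ge\frac12 p$ only yields $P_x(\sigma_A\le t)\le 1-\frac p2$, which is never below $\frac12$. So Step 3 could never derive the bounds $4[\cdots]$ from it, since those tend to $0$ as $\delta,t\to 0$; that is exactly how they are used, e.g.\ in the proof of \cref{12q2}.

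\emph{The mechanism is unsupported.} The exponential law of the local time at the single point $x$, together with $E_x\int_0^{\sigma_A}1_U(X_s)\,ds=\int_U g_A(x,y)\,\mu(dy)\ge(R-\delta)\mu(U)$, does not control the lower tail of the occupation time. That time is a sum over excursions from $x$, and a lower bound on its mean cannot be turned into an upper bound on $P(\,\cdot\le t)$ by Markov or Chebyshev. Nothing justifies that $\sigma_A$ dominates an exponential on your event $G$; if it did, you would get $P_x(\sigma_A>t)\ge p$ with no factor $\frac12$ at all. Also, $\frac{R-\delta}{R+\delta}$ is not a ``return to $x$'' probability. The Laplace-transform route is not carried far enough to produce these constants.

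\textbf{The missing idea: use the local times $L^y_{\sigma_A}$ at \emph{every} $y\in U$.} The paper gives no proof; it imports the lemma from \cite{Cr18} and only remarks that $R(x,A)>0$ follows from regularity. The following argument yields exactly the stated constants.
\begin{itemize}
\item For $y\in U$, the strong Markov property and memorylessness give $P_x(L^y_{\sigma_A}>s)=P_x(\sigma_y<\sigma_A)e^{-s/R(y,A)}$.
\item By \eqref{1lip} and $R(y,A)\le R+R(x,y)$, we get $P_x(\sigma_y<\sigma_A)=g_A(x,y)/g_A(y,y)\ge\frac{R-\delta}{R+\delta}$. This is where that ratio comes from.
\item Since also $R(y,A)\ge R-\delta$, it follows that $P_x(L^y_{\sigma_A}>s)\ge\frac{R-\delta}{R+\delta}e^{-s/(R-\delta)}$.
\item By the occupation density formula (for a jointly measurable version of the local times), $\int_U L^y_{\sigma_A}\,\mu(dy)\le\sigma_A$. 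Hence on $\{\sigma_A\le t\}$, with $s:=2t/\mu(U)$, the $\mu$-proportion $W$ of points $y\in U$ with $L^y_{\sigma_A}\le s$ is at least $\frac12$.
\item Markov's inequality applied to $W$ gives $P_x(\sigma_A\le t)\le P_x(W\ge\frac12)\le 2E_x[W]\le 2(1-p)$.
\end{itemize}
This is precisely the first display. Both the outer factor $2$ and the $2t$ come from this ``half of the ball'' step.

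\textbf{Two smaller corrections.}
\begin{itemize}
\item In Step 1, \eqref{resishol} only gives $g_A(x,x)-g_A(x,y)\le\sqrt{R(x,A)R(x,y)}$. The $1$-Lipschitz bound you need is \eqref{1lip} in \cref{killedgreen}.
\item In Step 3, $R(x,\overline{B(y,\varepsilon)})\ge R(x,y)-2\varepsilon$ needs a proof. For instance, apply (RF5) to $(g_y(x,\cdot)-\varepsilon)/(R(x,y)-\varepsilon)$, using $g_y(x,\cdot)\le g_y(x,x)=\calE(g_y^x,g_y^x)=R(x,y)$ and $g_y(x,z)\le R(y,z)\le\varepsilon$ on $\overline{B(y,\varepsilon)}$.
\end{itemize}
The remaining elementary estimates in Step 3 are correct.
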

\begin{rmk}
  In the original paper \cite{Cr18}, compactness of $F$ is assumed to show the above lemma.
  However, it is only used to ensure $R(x,A)>0$, but this the positivity follows from the regularity of $F$ (see \cite[Theorems 4.3 and 6.3]{Kig12}).  
\end{rmk}

Finally, we introduce a killed process and its heat kernel. 
For any non-empty open set $U\subseteq F$, 
we can define a process $X^U$ that behaves like $X$ until it exits $U$,
at which point it is killed and sent to a cemetery state. 
Precisely, $X^U$ is defined as follows, and is also a Hunt process (see \cite[Section 3.3]{ChF12}).
\begin{align*}
  X^U_t=
  \begin{dcases}
  X_t & t<\tau_{U},\\
  \partial & t\geq   \tau_{U},
  \end{dcases}
  \qquad  \tau_U:=\inf\{t>0:X_t\not\in U\},
\end{align*}
where $\partial$ is a cemetery state.
Note that if $U=F$ then $X^U=X$.
As shown in the following theorem, if $(F,R)$ is boundedly compact, 
then $X^U$ admits a jointly continuous heat kernel 
$p_U:(0,\infty)\times F\times F\to[0,\infty)$.
\begin{thm}[{\cite[Theorem 10.4]{Kig12}}]\label{104}
  Suppose that $(F,R)$ is boundedly compact and let $U$ be a non-empty open subset of $F$.
  Then there exists $p_U:(0,\infty)\times F\times F\to[0,\infty)$ which satisfies the following conditions:
  \begin{itemize}
    \item The function $p_U$ is continuous on $(0,\infty)\times F\times F$. Define $p^{t,x}_U(y):=p_U(t,x,y)$. Then $p^{t,x}$ lies in the domain of the Dirichlet form corresponding to $X^U$.
    \item It satisfies $p_U(t,x,y)=p_U(t,y,x)$ for any $(t,x,y)\in (0,\infty)\times F\times F$.
    \item For any Borel measurable function $u$ on $F$ and any $x\in X$, it holds that
    \begin{align*}
      E_x[u(X^U_t)]=\int_{X}p_U(t,x,y)u(y)\mu(dy).
    \end{align*}
    \item For any $t,\;s>0$ and $x,\;y\in X$, it holds that 
    \begin{align}\label{chap-kol}
      p_U(t+s,x,y)=\int_{X}p_U(t,x,z)p_U(s,y,z)\mu(dz).
    \end{align}
  \end{itemize}
  Furthermore, let $A$ be a Borel subset of $X$ which satisfies $0<\mu(A)<\infty$. 
  Then
  \begin{align}\label{kig104}
    p_U(t,x,x)\leq \frac{2\sup_{y\in A}R(x,y)}{t}+\frac{\sqrt{2}}{\mu(A)},
  \end{align}
for any $x\in X$ and any $t>0$.
\end{thm}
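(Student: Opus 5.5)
The statement is quoted from \cite[Theorem 10.4]{Kig12}. To prove it, I would build $p_U$ by representing point evaluations of the killed semigroup $(P^U_t)$ via the Riesz representation theorem on $L^2(F,\mu)$. The key input is the resistance inequality \eqref{resishol}.

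\textbf{Step 1: point evaluation is bounded.} Let $(\calE,\calD_U)$ be the part of $(\calE,\calD)$ on $U$, namely functions in $\calD$ vanishing on $F\setminus U$. This is meaningful pointwise, since every point has positive capacity \cite[Theorem 9.9]{Kig12}. Fix $x$ and a Borel set $A$ with $0<\mu(A)<\infty$, and put $D:=\sup_{y\in A}R(x,y)$. Averaging \eqref{resishol} over $y\in A$ and using Cauchy--Schwarz gives, for $u\in\calD_U$,
\begin{align*}
|u(x)|\le \Big|u(x)-\tfrac{1}{\mu(A)}\textstyle\int_A u\,d\mu\Big|+\tfrac{1}{\mu(A)}\Big|\textstyle\int_A u\,d\mu\Big|\le \calE(u,u)^{1/2}D^{1/2}+\mu(A)^{-1/2}\|u\|_{L^2}.
\end{align*}
So $u\mapsto u(x)$ is a bounded functional on $(\calD_U,\calE_1)$. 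By \eqref{resishol}, each $u\in\calD_U$ is also $1/2$-H\"older in $R$.

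\textbf{Step 2: the kernel.} Spectral calculus for the self-adjoint generator gives $P^U_t L^2\subseteq\calD_U$ with
\begin{align*}
\calE(P^U_tf,P^U_tf)\le \frac{1}{2et}\|f\|_{L^2}^2,\qquad \|P^U_tf\|_{L^2}\le\|f\|_{L^2}.
\end{align*}
With Step 1, this shows that $f\mapsto P^U_tf(x)$ is a bounded functional on $L^2$, using the continuous version of $P^U_tf$. The Riesz representation then yields $p^{t,x}_U\in L^2$ with $P^U_tf(x)=(p^{t,x}_U,f)_{L^2}$.

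Define $p_U(t,x,y):=(p^{t/2,x}_U,p^{t/2,y}_U)_{L^2}$.
\begin{itemize}
\item \emph{Symmetry} is immediate from this definition.
\item \emph{The representation formula} and \emph{Chapman--Kolmogorov} \eqref{chap-kol} follow from self-adjointness and the semigroup property $P^U_{t+s}=P^U_tP^U_s$.
\item \emph{Identification with the process:} $E_x[u(X^U_t)]$ agrees with the continuous version of $P^U_tu$ at every $x$, not only quasi-everywhere. This again uses positivity of capacity of points, together with fine continuity of $t\mapsto E_x[\cdot]$.
\end{itemize}

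\textbf{Step 3: the on-diagonal bound \eqref{kig104}.} Set $g=p^{t/2,x}_U$ and $a=p_U(t,x,x)=\|g\|_{L^2}^2$. Applying Step 1 at $x$ to $u=P^U_{t/2}g$ gives
\begin{align*}
a=u(x)\le \Big(\tfrac{D}{et}\Big)^{1/2}a^{1/2}+\mu(A)^{-1/2}a^{1/2}.
\end{align*}
Here I used $u(x)=(p^{t/2,x}_U,g)_{L^2}=\|g\|_{L^2}^2=a$ and $\calE(u,u)\le a/(et)$. Squaring and using $(\alpha+\beta)^2\le 2\alpha^2+2\beta^2$ gives a bound of the shape $2D/t+2/\mu(A)$. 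I expect that a slightly sharper bookkeeping, keeping $\|P^U_{t/2}g\|_{L^2}\le a$ separate from the energy term, recovers exactly the constants $2D/t$ and $\sqrt2/\mu(A)$; if not, the constants only affect later numerics.

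\textbf{Step 4: joint continuity (main obstacle).} Continuity in $y$ for fixed $(t,x)$ comes from the H\"older bound, since $p^{t,x}_U\in\calD_U$ has $|p_U(t,x,y)-p_U(t,x,y')|^2\le \calE(p^{t,x}_U,p^{t,x}_U)R(y,y')$. The energy here is controlled by $p_U(t,x,x)/t$, which is locally bounded by Step 3 together with bounded compactness. Continuity in $t$ follows from strong continuity of $P^U_t$ in $\calE_1$ for $t>0$. The delicate part is making these estimates uniform over $x$ in compacts, so that separate continuity upgrades to joint continuity in $(t,x,y)$. I would handle this via the identity
\begin{align*}
p_U(t,x,y)-p_U(t',x',y')=(p^{t/2,x}_U-p^{t'/2,x'}_U,\,p^{t/2,y}_U)_{L^2}+(p^{t'/2,x'}_U,\,p^{t/2,y}_U-p^{t'/2,y'}_U)_{L^2},
\end{align*}
combined with the $L^2$-continuity of $(s,z)\mapsto p^{s,z}_U$. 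That continuity itself follows from the uniform H\"older estimate of Step 1 applied to $P^U_sf$ with $\|f\|_{L^2}\le1$.
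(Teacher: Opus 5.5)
Your overall construction (bounded point evaluation on $\calD_U$ via \eqref{resishol}, Riesz representation of $f\mapsto P^U_tf(x)$, and $p_U(t,x,y):=(p^{t/2,x}_U,p^{t/2,y}_U)_{L^2}$) is sound. However, Step~3 as written does not prove \eqref{kig104}.

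\textbf{The gap in Step~3.} Squaring $a^{1/2}\le (D/(et))^{1/2}+\mu(A)^{-1/2}$ with $(\alpha+\beta)^2\le 2\alpha^2+2\beta^2$ only gives
\[
a\le \frac{2D}{et}+\frac{2}{\mu(A)} .
\]
This is weaker than the claim whenever $D/t$ is small compared with $1/\mu(A)$. Your suggestion of ``keeping the $L^2$ term separate'' does not by itself produce $\sqrt2$. (Incidentally, the relevant bound is $\|P^U_{t/2}g\|_{L^2}\le a^{1/2}$, not $\le a$.)

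The missing step is to spend the slack factor $1/e$ coming from $\sup_{\lambda\ge0}\lambda e^{-\lambda t}=1/(et)$. Young's inequality with weight $1+\sqrt2$ gives
\[
(\alpha+\beta)^2\le(2+\sqrt2)\,\alpha^2+\sqrt2\,\beta^2 .
\]
Hence $a\le\frac{(2+\sqrt2)D}{et}+\frac{\sqrt2}{\mu(A)}\le\frac{2D}{t}+\frac{\sqrt2}{\mu(A)}$, since $(2+\sqrt2)/e\approx1.26<2$. Your fallback that ``the constants only affect later numerics'' is accurate about how the paper uses \eqref{kig104}, which is always through $\lesssim$. It is not, however, a proof of the statement as written.

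\textbf{Smaller points.}
\begin{itemize}
\item \emph{Identification with the process.} The identity $E_x[u(X^U_t)]=\int p_U(t,x,y)u(y)\,\mu(dy)$ at every $x$ does not come from ``fine continuity of $t\mapsto E_x[\cdot]$''. The mechanism is that $x\mapsto E_x[u(X^U_t)]$ is a quasi-continuous version of $P^U_tu$ for bounded Borel $u\in L^2$ (cf.\ \cite{FOT94}). Alternatively, compare the finely continuous $\alpha$-potentials and invert the Laplace transform. Your Step~1 bound shows that points have capacity bounded below on compact sets, so quasi-continuous means continuous and q.e.\ means everywhere. A monotone class argument then handles general Borel $u$.
\item \emph{Nonnegativity.} You should record $p_U\ge0$, which follows from positivity of $P^U_t$.
\item \emph{Step~4 is easier than you fear.} You have
\[
\|p^{s,z}_U-p^{s,z'}_U\|_{L^2}=\sup_{\|f\|_{L^2}\le1}|P^U_sf(z)-P^U_sf(z')|\le \bigl(R(z,z')/(2es)\bigr)^{1/2}.
\]
For the time variable, fix $s_1\in(0,s_0)$; then $p^{s,z}_U=P^U_{s-s_1}p^{s_1,z}_U$ for $s>s_1$, and strong continuity of $P^U$ suffices. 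Thus $(s,z)\mapsto p^{s,z}_U\in L^2$ is continuous, and joint continuity of $p_U$ is immediate from the inner-product formula.
\end{itemize}

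\textbf{Comparison of routes.} The paper gives no proof and imports the result. In the proof of \cref{continuity}(i) it appeals to \cite[Proof of Theorem 10.4]{Kig12} for the monotone convergence $p_{U\cap B(x_0,k)}\uparrow p_U$. This indicates that the cited argument reaches a general open $U$ by exhausting it with the relatively compact sets $U\cap B(x_0,k)$, which is where bounded compactness enters. Your Riesz-representation argument handles every open $U$ at once and needs only that small balls have finite positive measure. The exhaustion route, in exchange, yields the monotone approximation that the paper later relies on.
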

Finally, we summarize the definitions of the operators associated with the processes $X$
corresponding to $(F,R,\mu)\in\bbF^{\circ}$ 
(recall the definition of $\bbF^{\circ}$ from \eqref{bbfcirc}). 
We denote $(\calE,\calF)$ and $(\calE,\calD)$ the resistance form and the Dirichlet form corresponding to $(F,R,\mu)$, respectively.
\begin{itemize}
  \item The semigroup $(P_t)$ is defined by 
  \begin{align}\label{defsg}
    P_tf(x)=E_{x}[f(X_t)]=\int_{F}p(t,x,y)f(y)\mu(dy) ,\qquad x\in F,\;f\in L^2(F,\mu).
  \end{align}
  \item The resolvent $(G^{\alpha})$ is defined by 
  \begin{align*}
    G^{\alpha}f(x)=\int_{0}^{\infty}e^{-\alpha t}P_tf(x)dt,\qquad x\in F,\;f\in L^2(F,\mu).
  \end{align*}
  \item The generator $\Delta$ is characterized by 
  \begin{align*}
  \calE(f,g)=-(\Delta f,g),\qquad f\in\rm{Dom}(\Delta),\;g\in\calD,  
  \end{align*}
  where $\rm{Dom}(\Delta)\subseteq L^2(F,\mu)$ is the domain of $\Delta$.
  \item Since $-\Delta$ is a non-negative self adjoint operator, there exists a projection-valued measure $E$
   satisfying $A=\int_{0}^{\infty}\lambda E(d\lambda)$. For $u,\;v\in L^2(F,\mu)$, the spectral measure $E_{u,v}(d\lambda)$ of $-\delta$ is a measure on $[0,\infty)$
  defined by $E_{u,v}(A)=(E(A)u,v)_{L^2(F,\mu)}$. We often denote the spectral measure by $E_{\cdot,\cdot}(d\lambda)$. 
  \item The heat kernel is the function described at \cref{104},
   and $\alpha$-potential density $g^{\alpha}:F\times F\to[0,\infty)$ is defined by 
   \begin{align}\label{reshk}
    g^{\alpha}(x,y)=\int_{0}^{\infty}e^{-\alpha t}p(t,x,y)dt.
   \end{align}
   \item For a non-empty open set $U\subsetneq F$ and its complement $A:=F\setminus U$,
   the Green function $g_A:F\times F\to [0,\infty)$ of $X^U$ is characterized as the unique function satisfying the following 
   (see \cite[Theorem 4.1]{Kig12}):
   \begin{itemize}
    \item Set $\calF(A):=\{u\in\calF: u|_A=0\}$. For any $x\in X$, $g_A^x:=g_A(x,\cdot)\in\calF(A)$ and
    \begin{align}\label{repro}
    \calE(g^x_A,u)=u(x),  \qquad u\in\calF(A).
    \end{align}
   \end{itemize} 
\end{itemize}

\section{Continuity of heat kernels and $\alpha$-potential densities}\label{cont}
To establish estimates for semigroups and heat kernels, it is necessary to bound
$|\int f d\mu - \int f d\mu_n|$ for suitable functions $f$ (using $d_{\BL^{\kappa}}(\mu,\mu_n)$).
Typical examples of such $f$ are functions in the image of semigroups or resolvents.
Therefore, it is important to provide sharp bounds for the $\BL^{\kappa}$ norm of such functions.
To this end, this section is devoted to estimating the H\"older regularity of the heat kernel, 
its time derivative, and the $\alpha$-potential density.
While interesting in themselves, 
these estimates are crucial, as their optimality affect the exponents in our main result.

In this section, we fix an element $(F,R,\mu)\in\bbF^{\circ}$ (recall the definition of $\bbF^{\circ}$ from \eqref{bbfcirc}), 
and the Hunt process $(X=(X_t)_{t\geq 0},(P_x)_{x\in F})$ corresponding to the triplet $(F,R,\mu)$.
We denote the corresponding resistance form, generator, semigroup, resolvent, heat kernel and $\alpha$-potential density by
$(\calE,\calF),$ $\Delta,$ $(P_t)_{t},$ $(G^{\alpha})_{\alpha},$ $p,$ and $g^{\alpha}$, respectively (recall the definition of these objects from the end of \Cref{resis}). 
We abbreviate $p(t,x,\cdot)$ to $p^{t,x}$.
For a non-empty open set $U\subsetneq F$ and its complement $A:=F\setminus U$, we also write $\Delta_U,\;p_U,\;G^{\alpha}_A,\;G_A,\;g_A^{\alpha}$, 
and $g_A$
for the generator, heat kernel, resolvent, Green operator, $\alpha$-potential density, and Green function associated with the killed process $X^U$.
If $A$ is a singleton $\{x\}$, we abbreviate the index $\{x\}$ to $x$.
For instance, we write $g_{x}$ for $g_{\{x\}}$.
Finally, we assume throughout this paper that
every metric space under consideration consists of 
at least two distinct points (except for \Cref{undertop}).

In this section, we seek explicit formulas for the constants 
involved in the inequalities that we will prove for the heat kernels and potential densities,
specifically written in terms of the underlying space data. 
Such dependence is of particular importance when considering 
convergence rate estimates in non-compact settings.
We denote constants appearing in lemma (or proposition/theorem) X by $c_X$ or $c_X^{(n)}$, except for \cref{summ}.
Also, we write $A\lesssim B$ if there exists a universal constant $C$ such that $A\leq CB$.

For later use, we introduce various terms and assumptions.
We consider the following for $s_0,s_1,\theta>0$ and a Borel measure $\nu$ on $F$.
\begin{enumerate}[align=left, labelwidth=\widthof{$\rm{LRES}(\theta)$}, 
    labelsep=0.5em, 
    leftmargin=!]
  \item[$\rm{Reg}_{s_0,s_1}$]We say $\nu$ satisfies $\rm{Reg}_{s_0,s_1}$ if and only if there exist constants $\exists c_{u},c_l>0$ such that 
  \begin{align*}
  c_l r^{s_0}\leq \nu(B(x,r))\leq  c_u r^{s_1},  
  \end{align*} 
  for any $ x\in F,r\in [0,\diam F)$.
  \item [$\rm{LRES}(\theta)$]There exists $c_{\rm{LR}}>0$ such that 
  \begin{align*}
  R(x,B(x,r)^c)\geq c_{\rm{LR}}r^{\theta},
  \end{align*}
   for any $B(x,r)\neq F$.
  \item [$\rm{UP}$]There exists $c_{\rm{UP}}\in (0,1)$ such that 
  \begin{align*}
  B(x,r)\setminus B(x,c_{\rm{UP}}r)\neq \emptyset,  
  \end{align*} 
  for any $B(x,r)\neq F$.
\end{enumerate}
The properties in $\rm{Reg}_{s,s}$ and UP
are called $s$-Ahlfors regularity and
uniform perfectness, respectively.
LRES is an abbreviation of 
lower resistance estimate.
\begin{rmk}
Note that every connected metric space is uniformly perfect with any $c_{\rm{UP}}\in (0,1)$.
\end{rmk}

Recall assumptions (A1), (A2), and (A3) from \cref{as11}.
We summarize the relation between the conditions and 
the consequences of the assumptions.
\begin{prp}\label{summ}\quad\par
  \begin{itemize}
    \item [$\rm{(i)}$]Suppose that there exists a measure $\nu$ satisfying $\rm{Reg}_{s_0,s_1}$.
    Then $\rm{UP}$ implies $\rm{LRES}(1+s_0-s_1)$ with 
    $c_{\rm{LR}}=\left[\frac{4c_u(\frac{1}{4}c_{\rm{UP}}+1)^{s_1}}{c_l(\frac{1}{16}c_{\rm{UP}})^{s_0}c_{\rm{UP}}}\right]^{-1}$.
     In particular, $\rm{(A3)}$ implies $\rm{(A2)}$.
\item [$\rm{(ii)}$]The assumption $\rm{(A1)}$ implies 
\begin{align}\label{uhka1}
  p(t,x,x)\lesssim c_{\rm{A1}} t^{-\frac{s_0}{1+s_0}},\qquad t<(\diam F)^{1+s_0},\;x\in F,
\end{align}
where $c_{\rm{A1}}=1+c_l^{-1}$.
\item[$\rm{(iii)}$] The assumption $\rm{(A2)}$ implies 
\begin{align}\label{lhka2}
  c_{\rm{A2}}t^{-\frac{s_1}{(1+s_0)\theta}}\lesssim p(t,x,x),\qquad t>0,\;x\in F.
\end{align}
Moreover, if $t<\frac{D_F^{(1+s_0)\theta}}{5c_{\rm{Exit}}}$, we can take $c_{\rm{A2}}=\frac{1}{100}\left[c_u(5c_{\rm{Exit}})^{\frac{s_1}{(1+s_0)\theta}} \right]^{-1}$.
Here, $D_F$ and $c_{\rm{Exit}}$ are given by
\begin{align*}
  D_F=\frac{1}{2}\diam F\wedge (2c_{\rm{LR}}^{-1}\diam F)^{1/\theta},\qquad c_{\rm{Exit}}=\frac{4^{2+s_0}}{3c_l c_{\rm{LR}}^{1+s_0}}.
\end{align*} 
\item[$\rm{(iv)}$]The assumption $\rm{(A3)}$ implies 
\begin{align}\label{lhka3}
  c_{\rm{A3}}t^{-\frac{\beta s_1}{1+\beta}}\lesssim p(t,x,x),\qquad t>0,\;x\in F,\;\beta=\frac{1-(2+s_0)(s_0-s_1)}{s_1+2(2+s_0)(s_0-s_1)}.
\end{align}
Moreover, if $t<2\left[  \frac{C (\frac{1}{2}\diam F)^{1+\beta}}{\log 4}  \right]^{1/\beta} $, we can take $c_{\rm{A3}}=\frac{1}{4c_u}\left[ \frac{\log 4}{2^{\beta}C}\right]^{\frac{\beta s_1}{1+\beta}}$.
Here, $C$ is given by $C=\frac{c_l(c_{\rm{LR}}/4)^{1+s_0}}{4c_u c_{\rm{UR}}} \left( \frac{1}{256}\frac{[c_l(c_{\rm{LR}}/4)^{1+s_0}]^2}{c_u c_{\rm{UR}}} \right)^{\beta}$. 
\item[$\rm{(v)}$]Assume $\rm{(A3)}$. Then, for any $r<\frac{1}{2}\diam F$,
\begin{align*}
  P_x(\sigma_{B(x,r)^c}\leq t)\leq 2 \exp\left[ -\g{c}{summ}\frac{r^{1+\beta}}{t^{\beta}}\right],
\end{align*}
where $\beta=\frac{1-(2+s_0)(s_0-s_1)}{s_1+2(2+s_0)(s_0-s_1)}>0$ and $\g{c}{summ}=\frac{c_l(c_{\rm{LR}}/4)^{1+s_0}}{4c_u c_{\rm{UR}}} \left( \frac{1}{256}\frac{[c_l(c_{\rm{LR}}/4)^{1+s_0}]^2}{c_u c_{\rm{UR}}} \right)^{\beta}$.
\end{itemize}
\end{prp}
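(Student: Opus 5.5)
The plan is to treat the five items separately. (ii) and (iii) are direct consequences of \cref{104} and \cref{4.2-a}. (i) and (v) require genuine resistance and exit-time arguments, and (iv) follows from (v) by the same mechanism as (iii). Throughout, constants are tracked explicitly, since their form is what appears in the statement.

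For (i), fix a ball $B=B(x,r)\neq F$ and set $\rho:=R(x,B^c)$. I would work with the Green function $g_{B^c}(x,\cdot)$. By \eqref{repro} applied with $u=g_{B^c}^x$ we get $g_{B^c}(x,x)=\calE(g^x_{B^c},g^x_{B^c})=\rho$. Then \eqref{resishol} gives
\[
|g_{B^c}(x,x)-g_{B^c}(x,z)|^2\le \rho\,R(x,z),
\]
so $g_{B^c}(x,z)\ge \rho/2$ whenever $R(x,z)\le\rho/4$. Next I would use the occupation identity $E_x[\tau_B]=\int g_{B^c}(x,z)\mu(dz)$. This yields the upper bound $E_x[\tau_B]\le \rho\,\mu(B(x,r))\le c_u\rho r^{s_1}$. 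For a lower bound of order $\rho\cdot\mu(\text{small ball})$, I would use uniform perfectness: it gives a point $y$ with $c_{\mathrm{UP}}r\le R(x,y)<r$, hence a ball $B(y,c_{\mathrm{UP}}r/16)\subseteq B(x,r)$ whose measure is at least $c_l(c_{\mathrm{UP}}r/16)^{s_0}$. I would combine this with the commute-time identity \cref{cti}, applied to the process on $B$ with $B^c$ collapsed, between $x$ and $y$. The resulting comparison of the two bounds on $\rho$ should give $\rho\gtrsim r\cdot r^{s_0}/r^{s_1}$, i.e.\ $\mathrm{LRES}(1+s_0-s_1)$, with the constant displayed in the statement. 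I expect this bookkeeping to be the main obstacle in (i): choosing which ball carries the mass and which radius ($\tfrac14$, $\tfrac1{16}$) to use. The statement that (A3) implies (A2) is then immediate, because $(1+s_0)(1+s_0-s_1)>1\vee s_1$ holds exactly under $(s_0-s_1)(2+s_0)<1$, after a short computation.

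For (ii), apply \eqref{kig104} with $A=B(x,r)$ and $r=t^{1/(1+s_0)}<\diam F$. Then $\sup_{y\in A}R(x,y)\le r$ and $\mu(A)\ge c_l r^{s_0}$, so
\[
p(t,x,x)\lesssim r/t+c_l^{-1}r^{-s_0}=(1+c_l^{-1})t^{-s_0/(1+s_0)}.
\]

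For (iii), apply \cref{4.2-a}(ii) with $\delta$ a small multiple of $R(x,B(x,r)^c)$. Using $\mathrm{LRES}(\theta)$ and the lower volume bound, $P_x(\sigma_{B(x,r)^c}\le t)\le 1/2$ once $r^{(1+s_0)\theta}\ge 5c_{\rm Exit}t$; this condition is where the constant $c_{\rm Exit}$ and the restriction $r\le D_F$ come from. Then Cauchy--Schwarz together with Chapman--Kolmogorov \eqref{chap-kol} gives
\[
\tfrac14\le P_x(X_{t}\in B)^2\le \mu(B)\,p(2t,x,x)\le c_u r^{s_1}p(2t,x,x),
\]
and substituting $r\asymp t^{1/((1+s_0)\theta)}$ yields the claim.

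For (v), I would run the standard chaining argument for exit times. First, (i) and locality give a one-step bound: from any point, the probability of exiting a ball of radius $\varepsilon$ within time $\varepsilon^{1+\beta'}$ times a small constant is at most $1/2$, by \cref{4.2-a} with the exponent $1+s_0-s_1$. Locality means that exiting $B(x,r)$ forces $N\approx r/\varepsilon$ successive $\varepsilon$-exits. The strong Markov property and a Chernoff bound on the sum of the $N$ exit times then give $P_x(\sigma_{B(x,r)^c}\le t)\le 2\exp(-cN)$, with $N$ optimized against $t$. Matching the powers produces the exponent $\beta$ and the constant displayed in the statement. I expect this optimization to be the hardest step overall. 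Part (iv) then follows exactly as in (iii), using (v) in place of \cref{4.2-a} and choosing $r$ so that $c\,r^{1+\beta}/t^\beta=\log 4$.
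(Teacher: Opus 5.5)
You handle (ii), (iii) and (iv) essentially as the paper does, up to numerical constants: Kigami's bound \eqref{kig104} with $A=B(x,t^{1/(1+s_0)})$, then an exit estimate from \cref{4.2-a} (resp.\ from (v)) followed by $P_x(X_t\in B)^2\le\mu(B)\,p(2t,x,x)$.

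The genuine gap is in (i). Both of your bounds on $E_x[\tau_B]$ are proportional to $\rho=R(x,B^c)$: the upper bound is $c_u\rho r^{s_1}$ and the intended lower bound is $\gtrsim\rho\,r^{s_0}$. Comparing them cancels $\rho$ and yields only $r^{s_0}\lesssim r^{s_1}$, so no lower bound on $\rho$ can come out. The lower bound is also not available. You only know $g_{B^c}(x,\cdot)\ge\rho/2$ on $B(x,\rho/4)$, while $R(x,y)\ge c_{\mathrm{UP}}r$. So $y$ lies far outside that ball precisely in the case $\rho\ll r$ that you must exclude. What an exit-time route really needs is a $\rho$-free bound $E_x[\tau_B]\gtrsim r^{1+s_0}$. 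But such bounds are consequences of $\mathrm{LRES}$ (this is how \cref{exittime} obtains them), so the route is circular. The commute-time identity in the network with $B^c$ shorted concerns $R_{B^c}(x,y)$, not $R(x,B^c)$, so it does not close the loop either.

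The missing idea is a direct energy bound via a test function, as in the paper's proof (\cref{A3}).
\begin{itemize}
\item For $z$ in the annulus $B(x,r)\setminus B(x,c_{\mathrm{UP}}r)$, let $h_z$ be harmonic with $h_z(x)=1$, $h_z(z)=0$. Then $\calE(h_z,h_z)=R(x,z)^{-1}\le(c_{\mathrm{UP}}r)^{-1}$, and by \eqref{resishol} $h_z\le\frac12$ on $B(z,\frac{c_{\mathrm{UP}}}{4}r)$.
\item Take a maximal $\frac{c_{\mathrm{UP}}}{4}r$-separated family $z_0,\dots,z_N$ in the annulus. The balls $B(z_i,\frac{c_{\mathrm{UP}}}{16}r)$ are disjoint and lie in $B(x,(1+\frac{c_{\mathrm{UP}}}{4})r)$, so $\mathrm{Reg}_{s_0,s_1}$ gives $N+1\le\frac{c_u(\frac14c_{\mathrm{UP}}+1)^{s_1}}{c_l(\frac1{16}c_{\mathrm{UP}})^{s_0}}r^{s_1-s_0}$.
\item The test function $h=2(\min_ih_{z_i}-\frac12)^+1_{B(x,r)}$ then has $\calE(h,h)\le4(N+1)(c_{\mathrm{UP}}r)^{-1}$. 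This gives exactly the stated $c_{\mathrm{LR}}$.
\end{itemize}

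Two smaller points.

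First, $(1+s_0)(1+s_0-s_1)>1\vee s_1$ holds whenever $s_0\ge s_1$, which $\mathrm{Reg}$ forces. It does not hold ``exactly'' under $(s_0-s_1)(2+s_0)<1$; that condition only serves to make $\beta>0$.

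Second, given (i), your chaining for (v) is a valid alternative route, but it does not produce the displayed $\beta$.
\begin{itemize}
\item The paper's one-step bound comes from the mean exit-time estimates of \cref{exittime}, which use $\mathrm{URES}$ too: $P_x(\sigma_{B(x,r)^c}\le t)\le1-c\,r^{(2+s_0)(s_0-s_1)}+t/(c_uc_{\mathrm{UR}}r^{1+s_1})$. Its non-exit probability degenerates as $r\to0$. Feeding this into the Barlow--Bass lemma is what produces $\beta$ and $c_{\mathrm{summ}}$.
\item Your one-step bound from \cref{4.2-a} keeps the non-exit probability $\ge\frac12$ at time scale $\varepsilon^{(1+s_0)(1+s_0-s_1)}$. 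Optimizing it gives $\exp(-c\,r^{1+\tilde\beta}/t^{\tilde\beta})$ with $\tilde\beta^{-1}=(1+s_0)(1+s_0-s_1)-1$.
\item Hence $\tilde\beta\ge\beta$, with equality only when $s_0=s_1$. Your estimate is stronger, but it yields the statement as written only after adjusting constants. The same applies to (iv).
\end{itemize}
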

\begin{proof}
  Except for the explicit expressions of the constants, 
  these are shown in \cite{Cr07,Kum04}. See \Cref{appB} for the proof.
\end{proof}
\begin{rmk}\label{le1}
Note that $\frac{\beta s_1}{1+\beta}\leq\frac{s_1}{(1+s_0)(1+s_0-s_1)}<1$, and 
equality holds if and only if $s_0=s_1$.
Also, we have $\frac{\beta s_1}{1+\beta}=\frac{s_1}{(1+s_0)\Theta}$ for $\Theta:=\frac{1+s_0-s_1}{1-(s_0-s_1)(2+s_0)}$.
\end{rmk}

We say a function $f:X\to\R$ on a metric space $(X,d)$ is 
$L$-Lipschitz if it satisfies $\|f\|_{\Lip^1}\leq L$. 
Recall the definitions of heat kernels, potential densities, 
and Green functions from the end of \Cref{resis}. 
\begin{thm}\label{continuity}The following hold.
  \begin{itemize}
    \item [$\mathrm{(i)}$]For any $x\in F$, a non-empty open set $U\subsetneq F$ and its complement $A:=F\setminus U$, 
    $g_A^{\alpha}(x,\cdot)$ is $2$-Lipschitz. 
    \item [$\mathrm{(ii)}$]For any $x\in F$, $g^{\alpha}(x,\cdot)$ is a Lipschitz continuous function.
    Moreover, there exists $\g{c}{continuity,1}\in (0,\infty)$ such that
    \begin{align}\label{lipres}
      \|g^{\alpha}(x,\cdot)\|_{\Lip^1}\lesssim 
      \g{c}{continuity,1}(1+\alpha^{-1}),\qquad x\in F,\;\alpha\in (0,\infty),
    \end{align}
    and $\g{c}{continuity,1}$ is given by 
    \begin{align}\label{rescont}
    \g{c}{continuity,1}=\frac{1}{1-E_{x_0}[e^{-\sigma_{x_0,x_1}}]}  
    \end{align} 
    for any distinct points $x_0,x_1\in F$.
    \item [$\mathrm{(iii)}$]Let $t_0\in (0,\infty]$. Suppose that the heat kernel $p$ satisfies
    \begin{equation*}
      \begin{aligned}
        c_{\rm{LHK}}t^{-\theta_l}\leq p(t,x,x),\quad t\in (0,\infty),\;
        \rm{and}\;\;
         p(t,x,x)\leq c_{\rm{UHK}}t^{-\theta_u},\quad t\in (0,t_0).
      \end{aligned}
    \end{equation*}
    with constants $c_{\rm{LHK}},c_{\rm{UHK}},\theta_u>0$, and $\theta_l\in (0,1)$.
    Then $p(t,x,\cdot)$ is Lipschitz continuous for $t<t_0$, and it holds that
    \begin{align}\label{liphk}
       \|p(t,x,\cdot)\|_{\Lip^1}\lesssim \g{c}{continuity,2}(1+t)t^{-\left(\frac{\theta_u-\theta_l}{2}+1\right)},\qquad x\in F,\;t\in (0,t_0),
    \end{align}
    where $\g{c}{continuity,2}=\g{c}{continuity,1}\left( \frac{c_{\rm{UHK}}}{c_{\rm{LHK}}\Gamma(1-\theta_l)} \right)^{1/2}$.
  \end{itemize} 
\end{thm}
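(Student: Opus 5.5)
For (i), I would use the standard description of the Green function of the killed process in terms of the resistance metric and work directly in the resistance metric $R$, with no regularity assumptions. The reproducing property \eqref{repro} together with \eqref{resishol} gives
\[
|g_A(x,y)-g_A(x,z)| = |\calE(g^x_A, g^y_A - g^z_A)|.
\]
Instead of estimating this by Cauchy--Schwarz, I would use the classical identity for resistance forms
\[
g_A(x,y)=\tfrac12\big(R(x,A)+R(y,A)-R_A(x,y)\big),
\]
where $R_A$ is the resistance metric with $A$ shorted to a point (Kigami). Both $y\mapsto R(y,A)$ and $y\mapsto R_A(x,y)$ are $1$-Lipschitz with respect to $R$, so the Green function $g_A(x,\cdot)$ is $1$-Lipschitz. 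For the $\alpha$-potential density I would write the resolvent identity $g^\alpha_A(x,y)=g_A(x,y)-\alpha\int g_A(x,z)g^\alpha_A(z,y)\mu(dz)$. A cleaner route is to use the strong Markov property at $\sigma_y$. For $y,z\in F$,
\[
g^\alpha_A(x,y)=E_x[e^{-\alpha\sigma_y};\sigma_y<\tau_U]\,g^\alpha_A(y,y),
\]
and similar formulas give $|g^\alpha_A(x,y)-g^\alpha_A(x,z)|\le |g^\alpha_A(y,\cdot)|$ differences. This reduces to the diagonal, $g^\alpha_A(y,y)-g^\alpha_A(y,z)\le \calE_\alpha$-energy bounds that are controlled by $R(y,z)$. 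The factor $2$ comes from doing this once at each endpoint.

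For (ii), I would apply (i) with $A=\{x_0\}$ and compare $g^\alpha$ with $g^\alpha_{x_0}$ via the first-passage decomposition
\[
g^\alpha(x,y)=g^\alpha_{x_0}(x,y)+E_x[e^{-\alpha\sigma_{x_0}}]\,g^\alpha(x_0,y).
\]
This reduces the problem to bounding the Lipschitz constant of $g^\alpha(x_0,\cdot)$. Decomposing again at $x_1$ and using the commute time gives a renewal equation. Writing $L$ for the Lipschitz constant of $g^\alpha(x_0,\cdot)$, one obtains $L\le 2(\ldots)+E_{x_0}[e^{-\alpha\sigma_{x_0,x_1}}]L$. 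Bounding $E_{x_0}[e^{-\alpha\sigma_{x_0,x_1}}]$ by its value at $\alpha=1$ when $\alpha\ge1$, and rescaling time when $\alpha<1$ (which produces the factor $1+\alpha^{-1}$), gives \eqref{rescont}.

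For (iii), I would write $p(t,x,\cdot)$ as a resolvent image, $p^{t,x}=G^1(I-\Delta)p^{t,x}$, so that
\[
p(t,x,y)=\int g^1(y,z)\,(p^{t,x}-\partial_t p^{t,x})(z)\,\mu(dz).
\]
By (ii), the Lipschitz constant is at most $c_{\ref{continuity,1}}\,\|(1-\partial_t)p^{t,x}\|_{L^1}$. Cauchy--Schwarz together with the spectral bound $\|\partial_t p^{t,x}\|_2^2\lesssim t^{-2}p(t,x,x)$ controls this in $L^2$. The main obstacle is passing from $L^2$ to $L^1$, which is where the lower bound on $p$ is needed.

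To handle it, I would instead use $g^\alpha$ with $\alpha=1/t$ and bound $\int g^\alpha(y,z)^2\mu(dz)=-\partial_\alpha g^\alpha(y,y)$ through the on-diagonal heat kernel, where $\int_0^\infty te^{-\alpha t}p(t,y,y)\,dt$ is evaluated using the upper bound $t^{-\theta_u}$ on small times. This term is paired with $\|\partial_t p^{t,x}\|_2\le t^{-1}p(t/2,x,x)^{1/2}\lesssim t^{-1-\theta_u/2}$. The lower bound $c_{\rm{LHK}}t^{-\theta_l}$ with $\theta_l<1$ is then used, through $\Gamma(1-\theta_l)$, in the normalization $g^1(y,y)\ge c_{\rm{LHK}}\Gamma(1-\theta_l)$. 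This is what produces the ratio $(c_{\rm{UHK}}/(c_{\rm{LHK}}\Gamma(1-\theta_l)))^{1/2}$ and the exponent $\frac{\theta_u-\theta_l}{2}+1$. Checking that the exponents combine exactly into this form is the step I expect to be delicate.
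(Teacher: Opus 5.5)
Your part (ii) is essentially the paper's argument. The paper quotes Croydon's excursion decomposition of $G^\alpha$ at $x_0$, which yields the same linear relation you obtain by decomposing first at $x_0$ and then at $x_1$. Solve that relation exactly,
\[
\bigl(1-E_{x_0}[e^{-\alpha\sigma_{x_0,x_1}}]\bigr)\,g^\alpha(x_0,y)=g^\alpha_{x_1}(x_0,y)+E_{x_0}[e^{-\alpha\sigma_{x_1}}]\,g^\alpha_{x_0}(x_1,y),
\]
rather than writing an inequality in $L$, since the inequality presupposes $L<\infty$. The factor $1+\alpha^{-1}$ comes from $1-e^{-\alpha s}\ge\alpha(1-e^{-s})$ for $\alpha\in(0,1]$.

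In (i), the route you commit to does not give a Lipschitz bound. Your reduction to the diagonal is fine. However, energy estimates via \eqref{resishol} only give
\[
|g^\alpha_A(y,y)-g^\alpha_A(y,z)|^2\le\calE(g^\alpha_A(y,\cdot),g^\alpha_A(y,\cdot))\,R(y,z)\le g^\alpha_A(y,y)\,R(y,z),
\]
which is only $1/2$-H\"older. The resolvent identity you wrote down and then dropped is what works. Exchange $x$ and $y$ using the symmetry of $g_A$ and $g^\alpha_A$, so that the free variable sits on $g_A$:
\[
g^\alpha_A(x,y)=g_A(x,y)-\alpha\int_F g^\alpha_A(x,w)\,g_A(w,y)\,\mu(dw).
\]
Now use that $g_A(w,\cdot)$ is $1$-Lipschitz for every $w$, and that $\alpha\int_F g^\alpha_A(x,w)\mu(dw)\le 1$. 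This gives the constant $1+1=2$; the $2$ does not come from ``one application at each endpoint''. The paper derives the identity from \eqref{repro} for relatively compact $U$, where $\calD_U=\calF(A)$, and then passes to general $U$ by monotone convergence of $p_{U_k}$.

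Part (iii) has a genuine gap. Write
\[
p(t,x,y)-p(t,x,y')=\int_F D(z)\,(\alpha-\Delta)p^{t,x}(z)\,\mu(dz),\qquad D:=g^\alpha(y,\cdot)-g^\alpha(y',\cdot).
\]
After Cauchy--Schwarz you need $\|D\|_{L^2(F,\mu)}\lesssim R(y,y')$. The quantity you propose to bound, $\int_F g^\alpha(y,z)^2\mu(dz)=-\partial_\alpha g^\alpha(y,y)$, controls the size of $g^\alpha(y,\cdot)$, not its variation in $y$. From (ii) you get $\|D\|_\infty\lesssim(1+\alpha^{-1})R(y,y')$. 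Combined with the trivial bound $\|D\|_{L^1}\le 2\alpha^{-1}$, this only gives $\|D\|_{L^2}\lesssim R(y,y')^{1/2}$, i.e.\ $1/2$-H\"older again. This is also why your exponent bookkeeping cannot close: you would use the upper bound twice and the lower bound in no essential way.

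The missing idea is a pointwise domination of $D$ by $R(y,y')$ times normalized potentials. Since $E_z[e^{-\alpha\sigma_y}]=g^\alpha(z,y)/g^\alpha(y,y)$, the strong Markov property (pass through $y$ before hitting $y'$) gives $g^\alpha(y',z)\ge E_z[e^{-\alpha\sigma_y}]\,g^\alpha(y,y')$. Hence, with $c$ the constant of (ii),
\[
g^\alpha(y,z)-g^\alpha(y',z)\le E_z[e^{-\alpha\sigma_y}]\bigl(g^\alpha(y,y)-g^\alpha(y,y')\bigr)\lesssim c\,(1+\alpha^{-1})\,\frac{g^\alpha(z,y)}{g^\alpha(y,y)}\,R(y,y'),
\]
and symmetrically with $y$ and $y'$ exchanged. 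Integrate in $z$, using $\int_F g^\alpha(y,z)\mu(dz)\le\alpha^{-1}$ and $g^\alpha(y,y)\ge c_{\mathrm{LHK}}\Gamma(1-\theta_l)\alpha^{\theta_l-1}$. This is the only place where the lower bound and $\theta_l<1$ enter. It gives $\|D\|_{L^1}\lesssim c(1+\alpha^{-1})\alpha^{-\theta_l}R(y,y')/(c_{\mathrm{LHK}}\Gamma(1-\theta_l))$, and therefore
\[
\|D\|_{L^2}^2\le\|D\|_\infty\|D\|_{L^1}\lesssim c^2(1+\alpha^{-1})^2\alpha^{-\theta_l}R(y,y')^2/(c_{\mathrm{LHK}}\Gamma(1-\theta_l)).
\]
Then use $\|(\alpha-\Delta)P_{t}\|\le t^{-1}e^{\alpha t-1}$ and $\|p^{t,z}\|_{L^2}^2=p(2t,z,z)\le c_{\mathrm{UHK}}(2t)^{-\theta_u}$; this is the single use of the upper bound. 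Setting $\alpha=1/t$ gives $(1+t)\,t^{-(\frac{\theta_u-\theta_l}{2}+1)}$ with exactly the stated constant.
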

\begin{rmk}
The estimate in \cref{continuity}(iii) can not be improved in general for small $t$.
For instance,
if $p(t,x,y)=\frac{1}{\sqrt{2\pi t}}e^{-\frac{|x-y|^2}{2t}}$ is 
the heat kernel of Brownian motion on $\R$,
it holds that $\|p(t,x,\cdot)\|_{\Lip^1}=\sup_{y\in \R}|\partial_y p(t,x,y)|=ct^{-1}$. 
Note that Brownian motion is the process corresponding to $\R$ equipped with 
usual Euclidean metric and half the Lebesgue measure.
\end{rmk}
\begin{rmk}
Since we are typically concerned with the short-time behavior 
of the heat kernel, the term $1+t$ can be regarded 
as a bounded factor. For the large-time case ($t\gg1$), 
the estimate becomes $\|p(t,x,\cdot)\|_{\Lip^1}\lesssim\g{c}{continuity,2}t^{-\theta_u/2}$.
See the proof for details.
\end{rmk}

\begin{proof}[Proof of $\mathrm{(i)}$]
  We first deal with the case where $U$ is relatively compact.
  Write $\calD_U$ for the domain of the Dirichlet form 
  associated with the killed process $X^U$.
  Then it holds that $\calD_U=\calF(A)$ (see the line below \cite[Definition 10.2]{Kig12}).
  Note that we have $g^A(x,\cdot)\in \calD_U$ and 
  $\calE(G^{\alpha}_Af,g_A(x,\cdot))=G^{\alpha}_Af(x)$ for any $x\in F$ 
  and $f\in L^2(U,\mu|_U)$
  (see \eqref{repro}). 
  Combining these observations and $G^{\alpha}_A=(\alpha-\Delta_U)^{-1}$ (see \cite[Lemma 1.3.2]{FOT94}),
  it holds that
  \begin{equation}\label{kill}
    \begin{aligned}
      G^{\alpha}_Af(x)
    =&\E(G_A^{\alpha}f,g_A(x,\cdot))\\
      =&(-\Delta_UG_A^{\alpha}f,g_A(x,\cdot))\\
      =&((I-\alpha G_A^{\alpha})f,g_A(x,\cdot))\\
      =&\int_U g_A(x,z)f(z)\mu(dz)-\alpha \int_U G_A^{\alpha}(g_A(x,\cdot))(z)f(z)\mu(dz)
    \end{aligned}
  \end{equation}
Fix $y\in F$. 
Let $(f_n)$ be a non-negative sequence in $C_c(F)$ 
such that $f_n d\mu $ converges weakly to $\delta_y$. 
Substituting $f_n$ into $f$ in \eqref{kill},
we deduce that 
\begin{align*}
  g_A^{\alpha}(x,y)
      =g_A(x,y)-\alpha G_A^{\alpha}(g_A(x,\cdot))(y)
      =g_A(x,y)-\alpha G_A^{\alpha}(g_A(y,\cdot))(x)
\end{align*}
from the symmetry of $g_A$ and $g_A^{\alpha}$.
Therefore, it follows that
\begin{align*}
  |g_A^{\alpha}(x,y)-g_A^{\alpha}(x,z)|
      \leq&|g_A(x,y)-g_A(x,z)|+\alpha \int_F g_A^{\alpha}(x,w)|g_A(y,w)-g_A(z,w)|\mu(dw)\\
      \leq&R(y,z)\left(1+\alpha\int_{F} g_A^{\alpha}(x,w)\mu(dw)\right)\\
      \leq&2R(y,z).
\end{align*}
Here we used $1$-Lipschitz continuity of $g_A(x,\cdot)$ (see \cite[Theorem 4.1]{Kig12}).

We next consider the general case. Take $x_0\in U$ and set $U_k:=U\cap B(x_0,k)$.
Then $p_{U_k}$ is increasing in $k$ and enjoys pointwise convergence to $p_U$ 
(see \cite[Proof of Theorem 10.4]{Kig12}).
Then the result follows from a formula $g_{F\setminus U_k}^{\alpha}(x,y)=\int_{0}^{\infty}e^{-\alpha t}p_{U_k}(t,x,y)dt$ (see \eqref{reshk}) and monotone convergence theorem.
\end{proof}

\begin{proof}[Proof of $\mathrm{(ii)}$]
  Fix distinct points $x_0,x_1\in F$, and take an arbitrary $x\in F$.
  Then \cite[p1956]{Cr18} and the strong Markov property yield that
  \begin{align*}
    G^{\alpha}f(y)
    =G^{\alpha}_{x_0}f(y)
    +E_y[e^{-\alpha \sigma_0}]\Big\{ G^{\alpha}_{x_1}f(x_0)+ E_{x_0}[e^{-\alpha \sigma_{x_1}}] G^{\alpha}_{x_0}f(x_1) \Big\}\sum_{i\geq 0}E_{x_0}[e^{-\alpha \sigma_1}]^i,
  \end{align*}
  for any $y\in F$,
  where
  \begin{align*}
  \sigma_0=\inf\{t>0:X_t=x_0\},\qquad \sigma_{i+1}=\inf\{ t>\sigma_i:X_t=x_0,\;x_1\in X_{[\sigma_i,t]} \}.
\end{align*}
Similarly to (i), we obtain 
\begin{equation}\label{lipresolvent}
  \begin{aligned}
  g^{\alpha}(x,y)
  =&g_{x_0}^{\alpha}(x,y)
  +E_y[e^{-\alpha \sigma_0}]\Big\{ g^{\alpha}_{x_1}(x,x_0)+ E_{x_0}[e^{-\alpha \sigma_{x_1}}] g^{\alpha}_{x_0}(x,x_1) \Big\}\sum_{i\geq 0}E_{x_0}[e^{-\alpha \sigma_1}]^i\\
  =&g_{x_0}^{\alpha}(x,y)
  +E_x[e^{-\alpha \sigma_0}]\Big\{ g^{\alpha}_{x_1}(y,x_0)+ E_{x_0}[e^{-\alpha \sigma_{x_1}}] g^{\alpha}_{x_0}(y,x_1) \Big\}\sum_{i\geq 0}E_{x_0}[e^{-\alpha \sigma_i}]^i.  
  \end{aligned}
\end{equation}
Here we used the symmetry of Green's functions.
Therefore, by (i), $g^{\alpha}(x,\cdot)$ is Lipschitz continuous.
We next bound its Lipschitz constant.
Noting that $(0,\infty)\ni\alpha\mapsto \frac{1-e^{-\alpha \sigma_{x_0,x_1}}}{\alpha}$ is decreasing
(recall the definition of $\sigma_{x_0,x_1}$ from \Cref{resis}.),
we obtain that 
\begin{align*}
  \frac{1}{E_{x_0}\left[1-e^{-\alpha \sigma_{x_0,x_1}}\right]}
  \leq&
  \begin{dcases}
  \frac{1}{\alpha E_{x_0}[1-e^{-\sigma_{x_0,x_1}}]},  & \alpha\in (0,1]\\
  \frac{1}{E_{x_0}[1-e^{-\sigma_{x_0,x_1}}]},& \alpha\in(1,\infty)   
  \end{dcases}\\
  \leq&\frac{1}{1-E_{x_0}[e^{-\sigma_{x_0,x_1}}]}\left(1+\frac{1}{\alpha}\right).
\end{align*}
By the above inequality, (i), and \eqref{lipresolvent}, it follows that
\begin{align*}
  \|g^{\alpha}(x,\cdot)\|_{\Lip^1}
  \lesssim&1+\sum_{i\geq 0}E_{x_0}[e^{-\alpha \sigma_{x_0,x_1}}]^i\\
=&1+\frac{1}{1-E_{x_0}[e^{-\alpha \sigma_{x_0,x_1}}]}\\
\lesssim&\frac{1}{1-E_{x_0}[e^{-\sigma_{x_0,x_1}}]}\left(1+\frac{1}{\alpha}\right).
\end{align*}
This completes the proof.
\end{proof}

\begin{proof}[Proof of $\mathrm{(iii)}$]
The idea of this proof is mainly based on \cite[Theorem 3.40]{DoF} and \cite[Lemma 2.3]{Zero}.

  Firstly, note that we have  
\begin{equation*}
  E_x[e^{-\alpha \sigma_y}]=\frac{g^{\alpha}(x,y)}{g^{\alpha}(y,y)},\qquad x,y\in F,\;\alpha\in (0,\infty),
\end{equation*}
(see \cite[Theorem 3.6.5]{MR06}).
Hence, if we put $\sigma_{x,y}:=\inf\{t>\sigma_x:X_t=y\}$ for $x,y\in F$, then
we have 
\begin{equation*}
  E_z[e^{-\alpha\sigma_y}]
  \geq E_z[e^{-\alpha\tau_{x,y}}]
  =E_z[e^{-\alpha\sigma_x}]E_x[e^{-\alpha\sigma_y}]
  =E_z[e^{-\alpha\sigma_x}]\frac{g^{\alpha}(x,y)}{g^{\alpha}(y,y)}.
\end{equation*}
This and symmetry yield that
\begin{align}\label{ub0}
  g^{\alpha}(y,z)
  =g^{\alpha}(z,y)
  =E_{z}[e^{-\alpha\sigma_y}]g^{\alpha}(y,y)
  \geq E_z[e^{-\alpha\sigma_x}]g^{\alpha}(x,y),\quad x,y,z\in F.
\end{align}
Therefore, we deduce from \eqref{ub0} and (ii) that
\begin{align*}
  g^{\alpha}(x,z)-g^{\alpha}(y,z)\leq E_z[e^{-\alpha \sigma_x}](g^{\alpha}(x,x)-g^{\alpha}(x,y))
  \leq \g{c}{continuity,1} (1+\alpha^{-1})E_z[e^{-\alpha \sigma_x}]R(x,y).
\end{align*}
Thus, by symmetry, it holds that
\begin{equation}\label{ub1}
  \begin{aligned}
  |g^{\alpha}(x,z)-g^{\alpha}(y,z)|
  \leq&\g{c}{continuity,1}(1+\alpha^{-1})(E_z[e^{-\alpha \sigma_x}]+E_z[e^{-\alpha \sigma_y}])R(x,y)\\
  =&\g{c}{continuity,1}(1+\alpha^{-1}) \left(\frac{g^{\alpha}(z,x)}{g^{\alpha}(x,x)}+\frac{g^{\alpha}(z,y)}{g^{\alpha}(y,y)}\right) R(x,y).  
  \end{aligned}
\end{equation}
Using our heat kernel estimate, we obtain that
\begin{align*}
  \inf_{x\in F}g^{\alpha}(x,x)
  =\inf_{x\in F}\int_{0}^{\infty}e^{-\alpha t}p(t,x,x)
  \geq c_{\rm{LHK}}\int_{0}^{\infty}e^{-\alpha t}t^{-\theta_l}dt
  =c_{\rm{LHK}}\Gamma(1-\theta_l)\alpha^{\theta_l-1}.
\end{align*}
Note that, since $\theta_l<1$, the integral $\Gamma(1-\theta_l)$ is finite.
Combining these observations and  
$\int_{F}g^{\alpha}(x,z)\mu(dz)\leq \alpha^{-1}$ (see \eqref{reshk}), we conclude that
\begin{align}\label{ub2}
  \int_{F}|g^{\alpha}(x,z)-g^{\alpha}(y,z)|\mu(dz)\leq \g{c}{continuity,1}(c_{\rm{LHK}}\Gamma(1-\theta_l))^{-1} (1+\alpha^{-1})\alpha^{-\theta_l}R(x,y).
\end{align}
Then it immediately follows from \eqref{ub1} and \eqref{ub2} that
\begin{align}\label{ub3}
  \int_{F}|g^{\alpha}(x,z)-g^{\alpha}(y,z)|^2\mu(dz)\leq 2(\g{c}{continuity,1})^2(c_{\rm{LHK}}\Gamma(1-\theta_l))^{-1}(1+\alpha^{-1})^2\alpha^{-\theta_l}R(x,y)^2.
\end{align}
Now it should be noted that for $f\in L^2(F,\mu)$ and the spectral measure $E_{\cdot,\cdot}(d\lambda)$ of $-\Delta$ (recall the definition from the end of \Cref{resis}),
\begin{equation}\label{norm}
  \begin{aligned}
 \|(\alpha-\Delta)P_tf\|_{L^2(F,\mu)}^2
  =&\int_{[0,\infty)}(\alpha+\lambda)^2e^{-2t\lambda}E_{f,f}(d\lambda)\\
  \leq&\left[ \sup_{\lambda\ge 0}(\alpha+\lambda)^2e^{-2t\lambda}\right]E_{f,f}([0,\infty))\\
  \leq&t^{-2} e^{2(\alpha t-1)}\|f\|_{L^2(F,\mu)}^2.    
  \end{aligned}
\end{equation}
In particular, we have $\|(\alpha-\Delta)P_t\|^2\le t^{-2} e^{2(\alpha t-1)}$.
Setting 
$h_0=(\alpha-\Delta)P_th$ for $h\in L^2(F,\mu)$,
we get the following by \eqref{ub3}:
\begin{equation}\label{p=2}
\begin{aligned}
  |P_th(x)-P_th(y)|^2
  =&|G^{\alpha}h_0(x)-G^{\alpha}h_0(y)|^2\\
  =&\left|\int_{F}(g^{\alpha}(x,z)-g^{\alpha}(y,z))h_0(z)\mu(dz)\right|^2\\
  \lesssim&(\g{c}{continuity,1})^2(c_{\rm{LHK}}\Gamma(1-\theta_l))^{-1}(1+\alpha^{-1})^2\alpha^{-\theta_l}R(x,y)^2\| (\alpha-\Delta)P_t \|^2\|h\|_{L^2(F,\mu)}^2\\
  \lesssim&(\g{c}{continuity,1})^2(c_{\rm{LHK}}\Gamma(1-\theta_l))^{-1}(1+\alpha^{-1})^2\alpha^{-\theta_l}R(x,y)^2t^{-2}e^{2(\alpha t-1)}\|h\|_{L^2(F,\mu)}^2.
\end{aligned}
\end{equation}
Here we used Cauchy-Schwarz and \eqref{ub3} in the first inequality.
In particular, taking $h=p^{t,z}=p(t,z,\cdot),\;t<t_0/2$, we obtain that
\begin{align*}
  |p(2t,x,z)-p(2t,y,z)|
  =&|P_th(x)-P_th(y)|\\
  \lesssim&\g{c}{continuity,1}(c_{\rm{LHK}}\Gamma(1-\theta_l))^{-1/2}\alpha^{-\theta_l/2}R(x,y)t^{-1}e^{\alpha t-1}\|p^{t,z}\|_{L^2(F,\mu)}\\
  =&\g{c}{continuity,1}(c_{\rm{LHK}}\Gamma(1-\theta_l))^{-1/2}(1+\alpha^{-1})\alpha^{-\theta_l/2}R(x,y)t^{-1}e^{\alpha t-1}\sqrt{p(2t,z,z)}\\
  \lesssim&\g{c}{continuity,1}\left( \frac{c_{\rm{UHK}}}{c_{\rm{LHK}}\Gamma(1-\theta_l)} \right)^{1/2}(1+\alpha^{-1})\alpha^{-\theta_l/2}t^{-\theta_u/2}t^{-1}e^{\alpha t-1}R(x,y).
\end{align*}
Here, we used the Chapman-Kolmogorov equation \eqref{chap-kol} in the second equality.
Substituting $\alpha=1/t$, we obtain the result.
\end{proof}

For $(F_n,R_n,\mu_n)\in \bbF^{\circ}$, 
we denote the process corresponding to $(F_n,R_n,\mu_n)$ by $(X^n=(X^n_t)_t,(P^n_x)_{x\in F_n})$,
 the commute time of $X^n$ between $x_0^n,x_1^n\in F_n$ by $\sigma^n_{x_0^n,x_1^n}:=\inf\{t>0:X_t^n=x_0^n,x_1^n\in X^n_{[0,t]}\}$,
  and the hitting time for $A\subseteq F_n$ by $\sigma^n_A:=\inf\{t>0:X_t^n\in A\}$.
We also abbreviate $\sigma^n_x:=\sigma^n_{\{x\}}$. For $r>0$ and $(F,R,\mu,\rho)\in\bbF$, we define 
$(F^{(r)},R^{(r)},\mu^{(r)},\rho^{(r)})\in\bbF_c$ by setting
\begin{align}\label{rversion}
  F^{(r)}=\overline{B(\rho,r)},\quad R^{(r)}=R|_{F^{(r)}\times F^{(r)}},\quad \mu^{(r)}=\mu|_{F^{(r)}},\;\;\textrm{and}\;\;\rho^{(r)}=\rho.
\end{align} 
Note that, since $\mu_n^{(r)}$ is of full support as a measure on $F_n^{(r)}$, and $F^{(r)}$ is compact,
$(F^{(r)},R^{(r)},\mu^{(r)},\rho^{(r)})$ lies in $\bbF_{c}$.
Also, we denote the process corresponding to 
$(F^{(r)},R^{(r)},\mu^{(r)})\in \bbF_c^{\circ}$ 
by $(X^{(r)}=(X^{(r)}_t)_t,(P_x)_{x\in F^{{(r)}}})$.
\begin{cor}\label{1_1.1}
Suppose that $(F_n,R_n,\mu_n,\rho_n)\in\bbF_r$ converges to $(F,R,\mu,\rho)\in\bbF$ in the Gromov-Hausdorff-vague topology.
Then there exists a constant $\g{c}{1_1.1}\in (0,\infty)$ such that 
\begin{align}\label{unipot}
  \sup_{\substack{n\in\N\\x\in F_n}}\|g^{\alpha,n}(x,\cdot)\|_{\Lip^1}\vee \sup_{x\in F}\|g^{\alpha}(x,\cdot)\|_{\Lip^1}\lesssim \g{c}{1_1.1} \left(1+\frac{1}{\alpha}\right),\quad \alpha\in (0,\infty),
\end{align} 
where $g^{\alpha,n}$ is the $\alpha$-potential density associated with $(F_n,R_n,\mu_n)$.
\end{cor}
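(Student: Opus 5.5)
The plan is to apply \cref{continuity}(ii) to each space $(F_n,R_n,\mu_n)$ and to $(F,R,\mu)$ separately. The point is that the constant in \eqref{rescont} may be computed with \emph{any} pair of distinct points. So it suffices to choose pairs $x_0^n\neq x_1^n$ in $F_n$ for which
\[
\sup_n E^n_{x_0^n}\bigl[e^{-\sigma^n_{x_0^n,x_1^n}}\bigr]<1.
\]
The bound for $F$ itself is just \cref{continuity}(ii) with a fixed pair $x_0\neq x_1$, and $\g{c}{1_1.1}$ is then the maximum of the resulting finitely many constants.

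First I regard all spaces as embedded in a common $\bcm$ space $(M,d^M)$, as allowed by \cref{232}. I fix distinct $x_0,x_1\in F$ and set $R_\ast:=R(x_0,x_1)>0$. By the characterisation of Fell convergence, there are $x_0^n,x_1^n\in F_n$ with $x_i^n\to x_i$ in $M$. Then $R_n(x_0^n,x_1^n)=d^M(x_0^n,x_1^n)\to R_\ast$, so $R_n(x_0^n,x_1^n)\ge R_\ast/2$ for large $n$ and the points are distinct.

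Next comes the key elementary inequality. The commute time satisfies $\sigma^n_{x_0^n,x_1^n}\ge \sigma^n_{x_1^n}\ge \sigma^n_{\overline{B(x_1^n,\varepsilon)}}$. Hence, for any $t>0$,
\[
E^n_{x_0^n}\bigl[e^{-\sigma^n_{x_0^n,x_1^n}}\bigr]\le P^n_{x_0^n}\bigl(\sigma^n_{\overline{B(x_1^n,\varepsilon)}}\le t\bigr)+e^{-t}.
\]
By \cref{4.2-a}(i), with $\varepsilon=R_\ast/16$ and $\delta\le R_\ast/16$, the probability is at most
\[
4\left[\frac{\delta}{R_\ast/4}+\frac{t}{\mu_n(B(x_0^n,\delta))\,R_\ast/8}\right].
\]
This lemma only needs regularity, which holds since elements of $\bbF$ are regular.

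The main step is a uniform lower bound on $\mu_n(B(x_0^n,\delta))$. Vague convergence $\mu_n\to\mu$ in $M$ together with the portmanteau theorem for the open set $B_M(x_0,\delta/2)$ gives
\[
\liminf_n \mu_n\bigl(B_M(x_0,\delta/2)\bigr)\ge \mu\bigl(B(x_0,\delta/2)\bigr)>0,
\]
where positivity uses that $\mu$ has full support. For large $n$ we have $d^M(x_0,x_0^n)<\delta/2$, so $B_M(x_0,\delta/2)\cap F_n\subseteq B(x_0^n,\delta)$. Thus $\mu_n(B(x_0^n,\delta))\ge m:=\tfrac12\mu(B(x_0,\delta/2))$ for all $n\ge N$. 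Alternatively, one could work with the truncations $\mu^{(r)}_n$, use \cref{ballineq} and the $\BL^\kappa$ convergence, and apply the same shift of centres.

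Now I choose $\delta$ so small that $16\delta/R_\ast\le 1/8$, and then $t$ so small that $32t/(mR_\ast)\le 1/8$. This gives, for every $n\ge N$,
\[
E^n_{x_0^n}\bigl[e^{-\sigma^n_{x_0^n,x_1^n}}\bigr]\le \tfrac14+e^{-t}<1.
\]
For the finitely many $n<N$, I pick any distinct pair in $F_n$. There the quantity $E[e^{-\sigma}]$ is strictly less than $1$, because the commute time is a.s.\ positive (the process must travel a positive distance; alternatively apply \cref{4.2-a}(i) to that single space). Taking the maximum over all these constants and feeding it into \cref{continuity}(ii) yields \eqref{unipot}. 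The expected obstacle is only the measure lower bound and the matching of centres, which is handled as above.
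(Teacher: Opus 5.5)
Your route is genuinely different from the paper's and is sound, except that the final inequality fails as written. Write $\sigma:=\sigma^n_{x_0^n,x_1^n}$. You bound $E^n_{x_0^n}[e^{-\sigma}]\le P^n_{x_0^n}(\sigma\le t)+e^{-t}$ and then take $t$ \emph{small}. But $\tfrac14+e^{-t}<1$ requires $t>\log(4/3)$. Your constraint $32t/(mR_\ast)\le 1/8$ forces $t\le mR_\ast/256$, which can be far smaller, since $m$ may be tiny; as $t\downarrow 0$ the bound $\tfrac14+e^{-t}$ tends to $5/4$. The fix is to keep the factor you dropped:
\[
E^n_{x_0^n}\bigl[e^{-\sigma}\bigr]\le P^n_{x_0^n}(\sigma\le t)+e^{-t}P^n_{x_0^n}(\sigma>t)=1-\bigl(1-e^{-t}\bigr)P^n_{x_0^n}(\sigma>t)\le 1-\tfrac34\bigl(1-e^{-t}\bigr).
\]
This is $<1$ for every $t>0$, in particular for $t=mR_\ast/256$. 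With this change the rest of your argument goes through: the choice of $x_i^n$ via Fell convergence, $\sigma\ge\sigma^n_{\overline{B(x_1^n,\varepsilon)}}$, the parameter checks in \cref{4.2-a}(i), and $\liminf_n\mu_n(U)\ge\mu(U)$ for open $U$ under vague convergence.

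The paper argues at the level of processes instead.
\begin{itemize}
\item In the compact case it invokes \cref{hitwkconv}, the weak convergence of hitting-time laws. That proof rests on the process convergence of \cite{Cr18}, a Skorokhod coupling, quasi-left-continuity and the commute time identity. It yields $E^n_{x_0^n}[e^{-\sigma^n_{x_0^n,x_1^n}}]\to E_{x_0}[e^{-\sigma_{x_0,x_1}}]<1$.
\item It reduces the general case to compact truncations via the time change $X^{n,(r)}$ of $X^n$. This uses recurrence of $F_n$ and $\sigma^{n,(r)}_{x_0^n,x_1^n}\le\sigma^n_{x_0^n,x_1^n}$.
\end{itemize}

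Your argument avoids all process-level convergence. It needs only the single-space exit estimate \cref{4.2-a}(i) and the mass of one small ball. This is essentially the mechanism of the paper's own \cref{sle}, localised at a single centre, so you need no uniform lower bound $\inf_b\mu_n(B(b,\gamma\delta))$ and no appeal to \cite{ALW16}.

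What your route buys is an explicit constant. For all $n\ge N$ you get $1/(1-E^n_{x_0^n}[e^{-\sigma}])\le \frac{4}{3(1-e^{-t})}$ with $t=mR_\ast/256$ and $m=\tfrac12\mu(B(x_0,R_\ast/256))$. The paper, by contrast, remarks that an explicit formula is hard to give. The same computation on the truncations $F^{(r)}$ also makes transparent the $r$-uniform bound on the constant of \cref{continuity}(ii) used in \Cref{ncpt}. The paper's route, in exchange, gives actual convergence of the Laplace transforms and reuses \cref{hitwkconv}, which is of independent interest.
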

\begin{rmk}
It is difficult to give an explicit formula using the data of spaces
for the constant $\g{c}{1_1.1}$. However, the proof gives us some information about it.
\end{rmk}

Towards showing this corollary, we start by presenting the following theorem.
The proof is left to \Cref{appA}.
Recall the definition 
of the Fell topology from \cref{fell}
and a characterization of convergence with respect to the Gromov-Hausdorff-vague topology from \cref{232}.
\begin{thm}\label{hitwkconv}
Let $(F_n,R_n,\mu_n,\rho_n)\in \bbF_r$ converge to $(F,R,\mu,\rho)\in \bbF_r$ in the Gromov-Hausdorff-vague topology.
We regard $F_n,\;n\in\N,$ and $F$ as subsets of a common $\bcm$ space $(M,d^M)$.
Suppose that a non-empty closed subset $A_n\subseteq F_n$ 
converges to a non-empty closed subset $A\subseteq F$ with respect to the Fell topology in $\calC (M)$, 
and that the following non-explosion condition \eqref{non-exp} holds. 
\begin{align}\label{non-exp}
 \lim_{r\to\infty}\liminf_{n\to\infty}R_n(\rho_n,B_{R_n}(\rho_n,r)^c)=\infty 
\end{align}
Then
 $P^n_{\rho_n}(\sigma_{A_n}^n\in \cdot)$ converges weakly to  $P_{\rho}(\sigma_{A}\in \cdot)$.
\end{thm}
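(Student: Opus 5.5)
We plan to reduce the statement to Croydon's process convergence theorem \cite{Cr18} together with a continuity property of hitting times. First, by \cite[Theorem 1.2]{Cr18}, the non-explosion condition \eqref{non-exp} and Gromov--Hausdorff-vague convergence give $P^n_{\rho_n}(X^n\in\cdot)\to P_\rho(X\in\cdot)$ weakly on the Skorohod space $D([0,\infty),M)$. By Skorohod's representation theorem we realize $X^n\to X$ almost surely in the $J_1$ topology on a common probability space. It then suffices to show two things: $\liminf_n \sigma^n_{A_n}\ge\sigma_A$ and $\limsup_n\sigma^n_{A_n}\le \sigma_A$ in probability, each up to an arbitrarily small error.

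\emph{Lower bound.} Fix $t<\sigma_A$ with $\sigma_A>0$. Note that $\sigma_A>0$ a.s.\ when $\rho\notin A$; if $\rho\in A$, then $\sigma_A=0$ a.s.\ by regularity of points, and only the upper bound is needed. The path $X_{[0,t]}$ is compact, lies in a bounded region, and stays at positive distance $\eta$ from the closed set $A$. Fell convergence of $A_n$ to $A$, applied to balls containing $X_{[0,t]}$ (using the ball characterization after \cref{fell}), shows that $A_n$ eventually avoids the $\eta/2$-neighbourhood of $X_{[0,t]}$. Since $J_1$ convergence forces $X^n_{[0,t']}$ to lie within $\eta/4$ of $X_{[0,t]}$ for slightly smaller $t'$, we obtain $\sigma^n_{A_n}>t'$ eventually.

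\emph{Upper bound.} This is the main obstacle, because a path may approach $A$ without $X^n$ hitting $A_n$. Fix $\varepsilon>0$ and $t>\sigma_A$. By right continuity there is $s\in(\sigma_A,\sigma_A+\varepsilon)$ with $X_s$ within a small distance $\delta$ of some $a\in A$. By Fell convergence pick $a_n\in A_n$ with $a_n\to a$, and hence $d^M(X^n_{s_n},a_n)\le 2\delta$ for suitable $s_n\to s$. We then control the probability that $X^n$ started at $X^n_{s_n}$ hits $a_n$ within time $\varepsilon$, uniformly in $n$, via the strong Markov property at $s_n$. By the commute time identity \cref{cti}, applied to the compact truncations $F_n^{(r)}$ (legitimate since exit from a large ball is unlikely by \cref{4.2-a}(ii) and \eqref{non-exp}), together with Markov's inequality, we get
\[
P^n_y(\sigma^n_{a_n}>\varepsilon)\le \varepsilon^{-1}R_n(y,a_n)\,\mu_n(F_n^{(r)})+(\text{exit probability}).
\]
This is small once $2\delta\ll\varepsilon/\sup_n\mu_n(F_n^{(r)})$, and the mass $\mu_n(F_n^{(r)})$ is bounded by vague convergence. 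The truncation needs care: the trace process on $F_n^{(r)}$ agrees with $X^n$ until the exit time, so the hitting estimate transfers. Combining the two bounds and letting $\delta,\varepsilon\to0$ gives $\sigma^n_{A_n}\to\sigma_A$ in probability. Weak convergence of the laws follows, where the value $\infty$ is allowed, with the topology on $[0,\infty]$.

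Finally, we handle the case where the path $X$ hits $A$ only at an accumulation point, such as the first entrance time into a closed set approached from outside. The upper bound above already covers this case, since it only uses points of $X$ at times slightly after $\sigma_A$ that are close to $A$. The lower bound uses only the positive distance on $[0,t]$ for $t<\sigma_A$. Hence no further regularity of $A$ is needed.
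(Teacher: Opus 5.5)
Your proposal has a genuine gap in the upper bound, and a smaller one in the lower bound.

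\textbf{Upper bound (the main gap).} You apply the strong Markov property at the time $s_n$, but this step fails. The time $s_n$ is built from the limit path $X$ and the $J_1$ time change inside the Skorokhod coupling, so it is not a stopping time for $X^n$. In the coupled space the future of $X^n$ after $s_n$ is strongly correlated with $X$, since the two paths are $J_1$-close. Consequently your uniform bound on $P^n_y(\sigma^n_{a_n}>\varepsilon)$ over starting points $y$ near $a_n$ tells you nothing about the conditional law of $X^n$ after $s_n$.

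The repair is to stop $X^n$ at a stopping time of its own, namely the hitting time $\sigma_U(X^n)$ of an open neighbourhood $U$ of $A$, intersected with a large ball so that your commute-time bound is uniform. The strong Markov property at $\sigma_U(X^n)$, combined with that bound, makes $P^n_{\rho_n}(\sigma^n_{A_n}>\sigma_U(X^n)+\varepsilon)$ small uniformly in $n$. You then pass to the limit at the level of laws, not in the coupling. Since $\sigma_U$ is upper semicontinuous on $D$ for open $U$ (\cref{upsemicont}), the set $\{\sigma_U<s\}$ is open. The Portmanteau theorem then gives $\liminf_n P^n_{\rho_n}(\sigma_U(X^n)<s)\ge P_\rho(\sigma_U(X)<s)\ge P_\rho(\sigma_A(X)<s)$. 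This is exactly how the paper proves \eqref{low}.

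\textbf{Lower bound (a smaller gap).} Your claim that $X_{[0,t]}$ is compact and at positive distance from $A$ whenever $t<\sigma_A$ is not justified. The range of a c\`adl\`ag path on $[0,t]$ is not closed in general: its closure also contains the left limits $X_{u-}$. Because resistance forms need not be local, $X$ can jump. So the path could creep up to $A$ as $u\uparrow s$ and then jump away at time $s$. In that case $t<\sigma_A$ holds but there is no $\eta>0$.

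This scenario is excluded only almost surely, by the quasi-left-continuity of the Hunt process $X$. The paper records this in \cref{quasi}: $\sigma_{U_k}(X)\uparrow\sigma_A(X)$ $P_\rho$-a.s., where $U_k$ is the $1/k$-neighbourhood of $A$. With that in hand, your Fell/$J_1$ argument goes through; it is essentially the paper's \eqref{eq4}. So, contrary to your final paragraph, this regularity input is needed, precisely in this direction.

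Your direct use of Croydon's non-compact convergence theorem, in place of the paper's route of proving the compact case and then truncating, is fine. The commute-time hitting estimate on the truncations $F_n^{(r)}$ is also fine once it is applied at a genuine stopping time.
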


\begin{proof}[Proof of \cref{1_1.1}]
  Take distinct points $x_0,x_1\in F$ and $x_0^n,x_1^n\in F_n$ such that $x_k^n$ converges to $x_k$ for $k=0,1$.
  By \cref{continuity}, it is enough to show that $\sup_{n\in \N}E_{x_0^n}^n[e^{-\sigma^n_{x_0^n,x_1^n}}]<1$.
  
  We first consider the case where all $F_n$ and $F$ are compact. 
  In this case, it follows from \cref{hitwkconv} and $\sigma_{x_0,x_1}>0$ a.s.  that 
  \begin{align*}
    \lim_{n\to\infty}E_{x_0^n}^n\left[e^{-\sigma^n_{x_0^n,x_1^n}}\right]=E_{x_0}[e^{-\sigma_{x_0,x_1}}]<1.
  \end{align*}
  Thus we obtain the result.

  We next show the general case. By our assumption,
  we may find $r\in (0,\infty)$ such that $(F_n^{(r)},R_n^{(r)},\mu_n^{(r)},\rho_n^{(r)})$
  converges to $(F^{(r)},R^{(r)},\mu^{(r)},\rho^{(r)})$ 
  in the Gromov-Hausdorff-vague topology with $x_0^n,x_1^n\in F_n^{(r)}$ and $x_0,x_1\in F^{(r)}$.
  Let $X^{n,(r)}=(X^{n,(r)}_t)_t$ be the process corresponding to 
  $(F_n^{(r)},R_n^{(r)},\mu_n^{(r)})$.
  Then, by recurrence, $X^{n,(r)}$ is the time change of $X^n$ with respect to $\mu_n^{(r)}$ (see \cite[Lemma 2.6]{CHK17}).
  That is,  $X^{n,(r)}_t$ is given by
  \begin{equation*}
    A^{n,(r)}_t:=\int_{0}^t 1_{\{X^n_s\in F^{(r)}_n\}}ds,\quad \tau^{n,(r)}_t:=\inf\{s>0:A^{n,(r)}_s>t\},\quad X^{n,(r)}_t:=X^n_{\tau^{n,(r)}_t}.
  \end{equation*}
Then we observe that $\sigma^{n,(r)}_{x_0^n,x_1^n}:=\inf\{ t>0:X^{n,(r)}_t=x_0^n,x_1^n\in X^{n,(r)}_{[0,t]} \}\leq \sigma_{x_0^n,x_1^n}^n$.
Therefore, we deduce that
\begin{align*}
  \limsup_{n\to\infty}E_{x_0^n}^n\left[e^{-\sigma^n_{x_0^n,x_1^n}}\right]\leq \limsup_{n\to\infty}E_{x_0^n}^n\left[e^{-\sigma^{n,(r)}_{x_0^n,x_1^n}}\right]<1
\end{align*}
from the compact case. This yields the result.
\end{proof}

While the following lemma is basic, it is used repeatedly in the rest of this section, and plays an important role.
\begin{lem}\label{mix}
  Let $f:X\to\R$ be a function on a metric space $(X,d)$
  with $\|f\|_{\Lip^{1/2}},\|f\|_{\Lip^{1}}<\infty$.
  Then it holds that
  \begin{align*}
    \|f\|_{\Lip^{\kappa}}\leq \|f\|_{\Lip^{1/2}}^{2(1-\kappa)}\|f\|_{\Lip^{1}}^{2\kappa-1}
  \end{align*}
  for $\kappa\in [1/2,1]$.
\end{lem}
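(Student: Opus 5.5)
The plan is to prove the inequality pointwise, for each pair of distinct points, by interpolating between the two available Hölder bounds. Write $L_{1/2}:=\|f\|_{\Lip^{1/2}}$ and $L_1:=\|f\|_{\Lip^{1}}$, and fix distinct $x,y\in X$ with $d:=d(x,y)>0$. By definition of the Hölder constants we have the two bounds
\begin{align*}
|f(x)-f(y)|\le L_{1/2}\,d^{1/2},\qquad |f(x)-f(y)|\le L_1\,d .
\end{align*}

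For $\kappa\in[1/2,1]$ set $\lambda:=2(1-\kappa)\in[0,1]$, so that $1-\lambda=2\kappa-1$. Then
\begin{align*}
\lambda\cdot\tfrac12+(1-\lambda)\cdot 1=(1-\kappa)+(2\kappa-1)=\kappa .
\end{align*}
We write $|f(x)-f(y)|=|f(x)-f(y)|^{\lambda}\,|f(x)-f(y)|^{1-\lambda}$ and apply the first bound to the first factor and the second bound to the second. This gives
\begin{align*}
|f(x)-f(y)|\le L_{1/2}^{\lambda}L_1^{1-\lambda}\,d^{\lambda/2+(1-\lambda)}=L_{1/2}^{2(1-\kappa)}L_1^{2\kappa-1}\,d^{\kappa}.
\end{align*}
Dividing by $d^\kappa$ and taking the supremum over $x\neq y$ yields the claim.

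The only point needing care is the endpoint convention. When $\kappa=1/2$ or $\kappa=1$, one exponent is $0$, and if the corresponding constant is also $0$ we meet the expression $0^0$. We interpret $0^0=1$, which is consistent with $a^0=1$ for $a\ge 0$. With this convention the argument above covers the endpoints directly, since the factor $|f(x)-f(y)|^0=1$ and the estimate reduces to the trivial one. Singleton $X$ is trivial because both sides vanish. There is no substantive obstacle; the proof is just this convexity (log-interpolation) of Hölder exponents.
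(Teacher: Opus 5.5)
Your proof is correct and follows the paper's argument: the paper likewise factors $|f(x)-f(y)|=|f(x)-f(y)|^{2(1-\kappa)}|f(x)-f(y)|^{2\kappa-1}$, bounds the two factors by the $\Lip^{1/2}$ and $\Lip^{1}$ estimates, and collects the power $d(x,y)^{\kappa}$. Your remark on the convention $0^0=1$ at the endpoints $\kappa\in\{1/2,1\}$ is a harmless clarification that the paper leaves implicit.
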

\begin{proof}
  This follows from $2(1-\kappa),\;2\kappa-1\ge 0$, and following inequality:
  \begin{align*}
  |f(x)-f(y)|
  =&|f(x)-f(y)|^{2(1-\kappa)}\times |f(x)-f(y)|^{2\kappa-1}\\
  \leq&\Bigl\{ \|f\|_{\Lip^{1/2}}d(x,y)^{1/2} \Bigr\}^{2(1-\kappa)}\times \Bigl\{ \|f\|_{\Lip^{1}}d(x,y) \Bigr\}^{2\kappa -1}\\
  =&\|f\|_{\Lip^{1/2}}^{2(1-\kappa)}\|f\|_{\Lip^{1}}^{2\kappa-1} d(x,y)^{\kappa}.  
  \end{align*}
\end{proof}

The following proposition ensures 
the H\"older continuity of the heat kernel on 
a compact resistance metric space $F$, without 
requiring any regularity of the measure.
\begin{prp}\label{unihk}
If $(F,R,\mu)\in \bbF_c^{\circ}$, then the associated heat kernel $p(t,x,\cdot)$ is $\kappa$-H\"older continuous for any $t>0,\;\kappa\in [1/2,1]$, and it holds that
\begin{align*}
  \sup_{x\in F}\|p(t,x,\cdot)\|_{\Lip^\kappa}\lesssim \g{c}{unihk,1} (t^{-(1-\kappa)}+t^{-2\kappa}) ,\qquad t\in (0,\infty),\;\kappa\in [1/2,1],
\end{align*}
where $\g{c}{unihk,1}=(\g{c}{continuity,1})^{2\kappa-1}(1+\mu(F)\diam(F))^{2\kappa-1}(\mu(F)^{-1/2}+\diam(F)^{1/2})^{2(1-\kappa)}$.\\
In particular, if
$(F_n,R_n,\mu_n,\rho_n)\in \bbF_c$ converges to $(F,R,\mu,\rho)\in \bbF_c$ in the Gromov-Hausdorff-vague topology,
then there exists constant $\g{c}{unihk,2}\in (0,\infty)$ such that
\begin{align}
  \sup_{\substack{n\in\N\\x\in F_n}}\|p_n(t,x,\cdot)\|_{\Lip^{\kappa}}\vee\sup_{x\in F}\|p(t,x,\cdot)\|_{\Lip^\kappa} \leq \g{c}{unihk,2}\left(t^{-(1-\kappa)}+t^{-2\kappa}\right),\quad t\in (0,\infty),\;\kappa\in [1/2,1],
\end{align}
where $p_n$ is the heat kernel associated with $(F_n,R_n,\mu_n)$.
\end{prp}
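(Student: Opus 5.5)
The plan is to apply \cref{mix} to $f=p(t,x,\cdot)$. This requires separate bounds for $\|p(t,x,\cdot)\|_{\Lip^{1/2}}$ and $\|p(t,x,\cdot)\|_{\Lip^{1}}$. Interpolation with exponents $2(1-\kappa)$ and $2\kappa-1$ then gives the claim. Throughout, the only input on the on-diagonal heat kernel is \eqref{kig104} with $U=F$ and $A=F$:
\begin{align*}
p(t,x,x)\le \frac{2\diam F}{t}+\frac{\sqrt 2}{\mu(F)}.
\end{align*}
No measure regularity is needed.

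\emph{Step 1 ($\Lip^{1/2}$ bound).} By \eqref{resishol}, $\|u\|_{\Lip^{1/2}}\le\calE(u,u)^{1/2}$. By Chapman--Kolmogorov \eqref{chap-kol}, $p^{t,x}=P_{t/2}p^{t/2,x}$. The spectral bound $\sup_{\lambda\ge0}\lambda e^{-t\lambda}\le (et)^{-1}$ then gives
\begin{align*}
\calE(p^{t,x},p^{t,x})\le \frac{1}{et}\,\|p^{t/2,x}\|_{L^2}^2=\frac{p(t,x,x)}{et}.
\end{align*}
Combining this with the on-diagonal bound yields
\begin{align*}
\|p^{t,x}\|_{\Lip^{1/2}}\lesssim (\diam F)^{1/2}t^{-1}+\mu(F)^{-1/2}t^{-1/2}\le (\mu(F)^{-1/2}+\diam(F)^{1/2})(t^{-1/2}+t^{-1}).
\end{align*}

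\emph{Step 2 ($\Lip^1$ bound).} I repeat the resolvent trick from the proof of \cref{continuity}(iii), but replace the $L^2$-Cauchy--Schwarz step (which needed the lower heat kernel bound) by the uniform Lipschitz bound \eqref{lipres}. Write $p^{t,x}=G^{\alpha}h_0$ with $h_0=(\alpha-\Delta)P_{t/2}p^{t/2,x}$. Then
\begin{align*}
|p(t,x,y)-p(t,x,z)|\le \sup_w\|g^{\alpha}(w,\cdot)\|_{\Lip^1}\,R(y,z)\,\|h_0\|_{L^1}.
\end{align*}
Since $F$ is compact, $\|h_0\|_{L^1}\le\mu(F)^{1/2}\|h_0\|_{L^2}$. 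By \eqref{norm} (at time $t/2$), $\|h_0\|_{L^2}\lesssim t^{-1}e^{\alpha t}\sqrt{p(t,x,x)}$. Taking $\alpha=1/t$ and using \eqref{lipres} gives
\begin{align*}
\|p^{t,x}\|_{\Lip^1}\lesssim c_{\ref{continuity},1}(1+t)\,t^{-1}\bigl(\mu(F)^{1/2}\diam(F)^{1/2}t^{-1/2}+1\bigr).
\end{align*}
Using $\mu(F)^{1/2}\diam(F)^{1/2}\le 1+\mu(F)\diam F$ and $t^{-1/2},t^{-1},t^{-3/2}\lesssim 1+t^{-2}$, this becomes
\begin{align*}
\|p^{t,x}\|_{\Lip^1}\lesssim c_{\ref{continuity},1}(1+\mu(F)\diam F)(1+t^{-2}).
\end{align*}

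\emph{Step 3 (interpolation and uniformity).} By \cref{mix},
\begin{align*}
\|p^{t,x}\|_{\Lip^\kappa}\lesssim c_{\ref{unihk},1}\,(t^{-1/2}+t^{-1})^{2(1-\kappa)}(1+t^{-2})^{2\kappa-1}.
\end{align*}
Checking the two regimes separately identifies the time factor with $t^{-(1-\kappa)}+t^{-2\kappa}$: for $t\le1$ the product is of order $t^{-2(1-\kappa)-2(2\kappa-1)}=t^{-2\kappa}$, and for $t\ge1$ it is of order $t^{-(1-\kappa)}$.

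For the uniform statement, $\mu_n(F_n)\to\mu(F)>0$ and $\diam F_n\to\diam F$ under GHP convergence, so these quantities are bounded above and away from zero. It remains to bound $\sup_n c^{(n)}_{\ref{continuity},1}$, which I would take from \cref{1_1.1}: its compact-case proof, via \cref{hitwkconv}, gives $\sup_n E^n_{x^n_0}[e^{-\sigma^n_{x_0^n,x_1^n}}]<1$.

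I expect Step 2 to be the main obstacle. One must make sure the Lipschitz bound for $g^\alpha$ can be used with an $L^1$ norm of $h_0$, and that $\sqrt{p(t,x,x)}$ from \eqref{kig104} produces exactly the stated $\mu(F)\diam(F)$ dependence.
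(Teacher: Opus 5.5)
Your proposal is correct and follows essentially the paper's route. You interpolate via \cref{mix} between the energy bound $\|p^{t,x}\|_{\Lip^{1/2}}^2\le\calE(p^{t,x},p^{t,x})\lesssim t^{-1}p(t,x,x)$ and a $\Lip^1$ bound from $p^{t,x}=G^{\alpha}(\alpha-\Delta)p^{t,x}$ together with \eqref{lipres}, using \eqref{kig104} with $A=F$ for the on-diagonal term and \cref{1_1.1} for the uniform statement. The only difference is in bounding the $L^1$ norm of $(\alpha-\Delta)p^{t,x}$. The paper takes $\alpha=1$ and uses $1+\mu(F)\sup_{x,y}|\partial_t p(t,x,y)|$, whereas you take $\alpha=1/t$ and pass through $\mu(F)^{1/2}\|\cdot\|_{L^2(F,\mu)}$ and \eqref{norm}. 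Your route retains only $\sqrt{p(t,x,x)}$, so it actually gives the sharper small-time rate $t^{-3/2}$ for the Lipschitz constant before you relax it to $1+t^{-2}$.
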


\begin{proof}
  We write the spectral measure associated with $-\Delta$ as $E_{\cdot,\cdot}(d\lambda)$ (recall the definition from the end of \Cref{resis}). 
  We first deal with the case where $\kappa=1$. 
  By \cref{continuity}, we have 
  \begin{equation}\label{1_1.1ii1}
    \begin{aligned}
    \sup_{x\in F}\|p(t,x,\cdot)\|_{\Lip^1}
    =&\sup_{\substack{x\in F}}\|G^{1}(1-\Delta)p^{t,x}\|_{\Lip^1}\\
    \lesssim&\g{c}{continuity,1}\sup_{\substack{x\in F}}\left\| \left(1-\frac{\partial}{\partial t}\right)p^{t,x} \right\|_{L^1(F,\mu)}\\
    \leq&\g{c}{continuity,1}+\g{c}{continuity,1}\mu(F)\sup_{\substack{x,y\in F}}\left|\frac{\partial}{\partial t} p(t,x,y) \right|.  
    \end{aligned}
  \end{equation}
On the other hand, for any fixed $t_0\in (0,t/2)$, we observe that
\begin{equation}\label{1_1.1ii3}
  \begin{aligned}
    \sup_{\substack{x,y\in F}}\left| \frac{\partial}{\partial t}p(t,x,y) \right|
    =&\sup_{\substack{x,y\in F}}\left| \frac{\partial}{\partial t}(P_{t/2-t_0}p^{t_0,x},P_{t/2-t_0}p^{t_0,y}) \right|\\
    =&\sup_{\substack{x,y\in F}}\left| \frac{\partial}{\partial t}\int_{\R_{\geq 0}} e^{-\lambda (t-2t_0)}E_{p^{t_0,x},p^{t_0,y}}(d\lambda)\right|\\
    =&\sup_{\substack{x,y\in F}}\left|\int_{\R_{\geq 0}} \lambda e^{-\lambda (t-2t_0)}E^n_{p^{t_0,x},p^{t_0,y}}(d\lambda)\right|\\
    \leq&\sup_{\lambda\geq 0}\lambda e^{-\lambda (t-2t_0)}\times \sup_{\substack{x,y\in F}}|E_{p^{t_0,x},p^{t_0,y}}|(\R_{\geq 0})\\
    \lesssim&\frac{1}{t-2t_0}\sup_{\substack{x\in F}}\|p^{t_0,x}\|_{L^2(F,\mu)}^2\\
    =&\frac{1}{t-2t_0}\sup_{\substack{x\in F}}p(2t_0,x,x).
  \end{aligned}
\end{equation}
Here, we used the Chapman-Kolmogorov equation \eqref{chap-kol} in the last equality.
Substituting $t_0=t/4$ and using the estimate $\sup_{x\in F}p(t,x,x)\lesssim \mu(F)^{-1}+\diam(F)t^{-1}$ (see \eqref{kig104}),
we obtain the result for $\kappa=1$.

Next we consider the case where $\kappa=1/2$.
Note that from \eqref{chap-kol} and \eqref{defsg},
it holds that $p^{t,x}(y)=P_{t/2}p^{t/2,x}(y)$.
Thus, by \eqref{resishol}, we have  
that
\begin{equation}\label{hfholhk}
\begin{aligned}
  \sup_{\substack{x\in F}}\|p^{t,x}\|_{\Lip^{1/2}}^2
  \leq&\sup_{\substack{x\in F}}\calE(P_{t/2}p^{t/2,x},P_{t/2}p^{t/2,x})\\
  =&\sup_{\substack{x\in F}}(-\Delta P_{t/2}p^{t/2,x},P_{t/2}p^{t/2,x})_{L^2(F,\mu)}\\
  =&\sup_{\substack{x\in F}}\int_{\R\geq 0}\lambda e^{-t\lambda}E_{p^{t/2,x},p^{t/2,x}}(d\lambda)\\
  \lesssim&t^{-1}\sup_{\substack{x\in F}}p(t,x,x)\\
  \lesssim&t^{-1}(\mu(F)^{-1}+\diam(F)t^{-1}).
\end{aligned}  
\end{equation} 
This implies the result for $\kappa=1/2$.
The rest is clear from \cref{mix}.

For the latter assertion, it is enough to replace $\g{c}{continuity,1}$
in the above argument with $\g{c}{1_1.1}$.
\end{proof}

\begin{rmk}\label{derhk}
  An argument similar to \eqref{1_1.1ii3} yields the following estimate:
  \begin{align}\label{derihk}
    \sup_{x,y\in F}\left|\frac{\partial^k}{\partial t^k}p(t,x,y)\right|\lesssim k^k \left(\frac{2}{e}\right)^k t^{-k}\sup_{x\in F}p(t/2,x,x),\qquad k\in\N.
  \end{align} 
  In fact, it holds for any fixed $t_0\in (0,t)$ that
  \begin{align*}
    \sup_{\substack{x,y\in F}}\left| \frac{\partial^k}{\partial t^k}p(t,x,y) \right|
    =&\sup_{\substack{x,y\in F}}\left| \frac{\partial^k}{\partial t^k}(P_{t/2-t_0}p^{t_0,x},P_{t/2-t_0}p^{t_0,y}) \right|\\
    =&\sup_{\substack{x,y\in F}}\left| \frac{\partial^k}{\partial t^k}\int_{\R_{\geq 0}} e^{-\lambda (t-2t_0)}E_{p^{t_0,x},p^{t_0,y}}(d\lambda)\right|\\
    =&\sup_{\substack{x,y\in F}}\left|\int_{\R_{\geq 0}} \lambda^k e^{-\lambda (t-2t_0)}E^n_{p^{t_0,x},p^{t_0,y}}(d\lambda)\right|\\
    \leq&\sup_{\lambda\geq 0}\lambda^k e^{-\lambda (t-2t_0)}\times \sup_{\substack{x,y\in F}}|E_{p^{t_0,x},p^{t_0,y}}|(\R_{\geq 0})\\
    \leq&\frac{k^k e^{-k}}{(t-2t_0)^k}\sup_{\substack{x\in F}}\|p^{t_0,x}\|_{L^2(F,\mu)}^2\\
    =&\frac{k^k e^{-k}}{(t-2t_0)^k}\sup_{\substack{x\in F}}p(2t_0,x,x).
  \end{align*}
  Taking $t_0=t/4$, we obtain the desired inequality \eqref{derihk}.
\end{rmk}

We now extend the first assertion of \cref{unihk} to the non-campact setting,
providing improved estimates under the assumption that $\rm{Reg}_{s_0,s_1}$ holds.

\begin{prp}\label{c_1}Suppose that $\mu$ satisfies $\rm{Reg}_{s_0,s_1}$.
  \begin{itemize}
    \item [$\mathrm{(i)}$]If $F$ is compact, then there exists a constant $\g{c}{c_1,1}=c_{\rm{A1}}^{2(1-\kappa)}(\g{c}{unihk,1})^{2\kappa-1}\in (0,\infty)$ such that
  \begin{align*}
    |p(t,x,y)-p(t,x,z)|\lesssim \g{c}{c_1,1} t^{-\frac{2\kappa s_0 + 3\kappa - 1}{1+s_0}}R(y,z)^{\kappa},\quad x,y,z\in F,\;t\in (0,1],\;\kappa\in [1/2,1].
  \end{align*}
    \item [$\mathrm{(ii)}$]If $(F,R,\mu)$ satisfies $\mathrm{(A2)}$, then there exists a constant 
    $\g{c}{c_1,2}=c_{\rm{A1}}^{2(1-\kappa)}(\g{c}{continuity,2})^{2\kappa-1}\in (0,\infty)$ such that
  \begin{align*}
    |p(t,x,y)-p(t,x,z)|\lesssim \g{c}{c_1,2} t^{-\frac{2[\theta(1+s_0)-s_1]\kappa+s_0\theta+s_1}{2\theta(1+s_0)}}R(y,z)^{\kappa},\quad x,y,z\in F,\;t\in (0,1],\;\kappa\in [1/2,1].
  \end{align*}
    \item [$\mathrm{(iii)}$]If $(F,R,\mu)$ satisfies $\mathrm{(A3)}$, then there exists a constant $\g{c}{c_1,3}=c_{\rm{A1}}^{2(1-\kappa)}(\g{c}{continuity,2})^{2\kappa-1}\in (0,\infty)$ such that
  \begin{align*}
    |p(t,x,y)-p(t,x,z)|\lesssim \g{c}{c_1,3} t^{-\frac{2[(1+s_0)\Theta-s_1]\kappa+s_0\Theta+s_1}{2(1+s_0)\Theta}}R(y,z)^{\kappa},\quad x,y,z\in F,\;t\in (0,1],\;\kappa\in [1/2,1],
  \end{align*}
  where $\Theta=\frac{1+s_0-s_1}{1-(s_0-s_1)(2+s_0)}$
  \end{itemize}
\begin{rmk}
  Since $\kappa\le 1$, the estimate in \cref{c_1}(i) is better that that in \cref{unihk}.
\end{rmk}
  \begin{rmk}
  The constant $\g{c}{continuity,2}$ in $\g{c}{c_1,2}$
  does not necessarily coincide with
  that in $\g{c}{c_1,3}$ since the constants and exponent in the heat kernel estimate may differ.
  We shall continue to use this abuse of notation throughout this section. 
  \end{rmk}
\end{prp}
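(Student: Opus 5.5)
The plan is to interpolate, via \cref{mix}, between a $\Lip^{1/2}$ bound and a $\Lip^1$ bound for $p(t,x,\cdot)$. This gives
\[
\|p^{t,x}\|_{\Lip^{\kappa}}\le \|p^{t,x}\|_{\Lip^{1/2}}^{2(1-\kappa)}\|p^{t,x}\|_{\Lip^{1}}^{2\kappa-1},
\]
so all three parts reduce to two estimates: a common $\Lip^{1/2}$ estimate coming from the measure lower bound, and a $\Lip^1$ estimate that differs from case to case.

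\textbf{The $\Lip^{1/2}$ estimate.} I will use the computation \eqref{hfholhk}, which gives
\[
\|p^{t,x}\|_{\Lip^{1/2}}^2\lesssim t^{-1}\sup_{x}p(t,x,x).
\]
I then insert the on-diagonal upper bound \eqref{uhka1} from \cref{summ}(ii), valid under (A1) (hence under (A2) and (A3)) for $t<(\diam F)^{1+s_0}$. This yields
\[
\|p^{t,x}\|_{\Lip^{1/2}}\lesssim c_{\rm{A1}}^{1/2}\, t^{-\frac12-\frac{s_0}{2(1+s_0)}}=c_{\rm{A1}}^{1/2}\,t^{-\frac{1+2s_0}{2(1+s_0)}}.
\]
For $t\in(0,1]$ with $t\ge(\diam F)^{1+s_0}$, I bound $p(t,x,x)$ by $p(t',x,x)$ at a smaller time $t'$, using monotonicity of $t\mapsto p(t,x,x)$, and absorb the resulting factor into the constant. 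This is a minor bookkeeping point.

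\textbf{The $\Lip^1$ estimate, case by case.}
\begin{itemize}
\item In case (i), $F$ is compact, and \cref{unihk} with $\kappa=1$ gives $\|p^{t,x}\|_{\Lip^1}\lesssim c_{\ref{unihk},1}\,t^{-2}$ for $t\le1$.
\item In case (ii), I apply \cref{continuity}(iii) with $\theta_u=\frac{s_0}{1+s_0}$ from \eqref{uhka1} and $\theta_l=\frac{s_1}{(1+s_0)\theta}$ from \eqref{lhka2}. The assumption $(1+s_0)\theta>s_1$ ensures $\theta_l<1$. This gives
\[
\|p^{t,x}\|_{\Lip^1}\lesssim c_{\ref{continuity},2}\,t^{-1-\frac{\theta_u-\theta_l}{2}}\qquad (t\le1).
\]
\item Case (iii) is the same as case (ii), with $\theta_l=\frac{\beta s_1}{1+\beta}=\frac{s_1}{(1+s_0)\Theta}$ taken from \eqref{lhka3} and \cref{le1}. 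Again $\theta_l<1$.
\end{itemize}

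\textbf{Combining the exponents.} The last step is to multiply the two bounds with weights $2(1-\kappa)$ and $2\kappa-1$ and collect exponents.

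In case (i) the exponent is
\[
(1-\kappa)\frac{1+2s_0}{1+s_0}+2(2\kappa-1)=\frac{2\kappa s_0+3\kappa-1}{1+s_0},
\]
which matches the statement.

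In case (ii) the exponent is
\[
(1-\kappa)\frac{1+2s_0}{1+s_0}+(2\kappa-1)\Bigl(1+\frac{s_0}{2(1+s_0)}-\frac{s_1}{2(1+s_0)\theta}\Bigr).
\]
Expanding over the common denominator $2\theta(1+s_0)$ should give
\[
\frac{2[\theta(1+s_0)-s_1]\kappa+s_0\theta+s_1}{2\theta(1+s_0)}.
\]
Case (iii) follows by replacing $\theta$ with $\Theta$.

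\textbf{Constants and the main obstacle.} The constants come out as $c_{\rm{A1}}^{(1-\kappa)}$ times the $(2\kappa-1)$-th power of the $\Lip^1$ constant. This is bounded by the stated $c_{\rm{A1}}^{2(1-\kappa)}(\cdot)^{2\kappa-1}$ since $c_{\rm{A1}}\ge1$.

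The step I expect to be the most delicate is the exponent bookkeeping. I need to check that the $(1+t)$ factor in \eqref{liphk} and the time restrictions $t<t_0$ or $t<(\diam F)^{1+s_0}$ are harmless on $t\in(0,1]$. If they are not, I will use that $p^{t,x}=P_{t-s}p^{s,x}$ contracts Lipschitz norms only up to constants, so that small-time bounds carry over to larger times.
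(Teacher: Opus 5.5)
Your proposal is correct and is essentially the paper's proof. You interpolate via \cref{mix} between the $\Lip^{1/2}$ bound from \eqref{hfholhk} and \eqref{uhka1} and the $\Lip^1$ bound from \cref{unihk} in case (i), or from \cref{continuity}(iii) with $\theta_u=\frac{s_0}{1+s_0}$ and $\theta_l=\frac{s_1}{(1+s_0)\theta}$, respectively $\frac{s_1}{(1+s_0)\Theta}$, in cases (ii) and (iii), and your exponent arithmetic checks out. Your monotonicity argument for $t\mapsto p(t,x,x)$ already removes the restriction $t<(\diam F)^{1+s_0}$ (at the cost of a $\diam F$-dependent factor that the paper silently ignores), so you do not need the fallback claim that $P_{t-s}$ preserves Lipschitz norms up to constants, which is not justified in this setting.
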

\begin{proof}[Proof of $\rm{(i)}$]
  By the computation in \eqref{hfholhk} and the heat kernel estimate \eqref{uhka1},
  we deduce that 
  \begin{align}\label{a1kap}
    |p(t,x,y)-p(t,x,z)|
    \leq \|p^{t,x}\|_{\Lip^{1/2}}R(y,z)^{1/2}
    \lesssim c_{\rm{A1}}t^{-\left(\frac{1}{2}+\frac{s_0}{2(1+s_0)}\right)}R(y,z)^{1/2}.
  \end{align}
   Combining this and \cref{unihk} with $\kappa=1$,
   and arguing as in \cref{mix}, we obtain the result.
\end{proof}
\begin{proof}[Proof of $\rm{(ii)}$ and $\rm{(iii)}$]
  We first show (ii).
  Note that, by \cref{summ}(ii), the estimate \eqref{a1kap} holds.
  It remains to obtain an estimate for $\kappa=1$. In fact, if we have that estimate, we may apply \cref{mix} to 
  complete the proof. 
  We can confirm from $(1+s_0)\theta>s_1$ and \cref{summ}(ii) 
  that the assumptions of \cref{continuity}(iii) are satisfied.
  Thus we deduce that
  \begin{align*}
    \|p^{t,x}\|_{\Lip^{1}}\lesssim \g{c}{continuity,2}t^{-\left(\frac{\theta_u-\theta_l}{2}+1\right)},\qquad t\le 1,
  \end{align*} 
  where $\theta_u=\frac{s_0}{1+s_0}$ and $\theta_l=\frac{s_1}{(1+s_0)\theta}$.

  Regarding (iii), the same argument follows from \cref{summ}(iv), \cref{le1}, and \cref{continuity}(iii).
\end{proof}

Let $(F,R,\mu)$ be an element in $\bbF^{\circ}_c$. 
For each $\kappa\in [1/2,1]$ and $p,q\in [1,\infty]$,
consider the following conditions $1_{\kappa,p}$ and $2_{\kappa,p,q}$,
which play an crucial role in later sections.
\begin{cond}\label{cond12}\quad\par
\begin{enumerate}[align=left, labelwidth=\widthof{$2_{\kappa,p,q}$},
    labelsep=0.5em, 
    leftmargin=!]
  \item [$1_{\kappa,p}$]There exists a constant $c_1=c_1(t,\kappa,p)\in (0,\infty)$ such that, 
  for any $t>0$ and a bounded measurable function $h:F\to\R$, 
  \begin{equation*}
    \|P_th\|_{\Lip^{\kappa}}\leq c_1\|h\|_{L^p(F,\mu)},
  \end{equation*}
  and
  \begin{equation*}
    \forall t\in(0,\infty),\quad c_1([0,t],\kappa,p):=\int_{0}^t c_1(s,\kappa,p)ds<\infty. 
  \end{equation*}
  \item[$2_{\kappa,p,q}$]There exist constants $c_2=c_2(\alpha,t,\kappa,p),c_3=c_3(\alpha,t,\kappa,q)\in (0,\infty)$ such that, 
  for any $\alpha,t\in (0,1]$ and a bounded measurable function $h:F\to\R$, 
  \begin{equation*}
    \|(\alpha-\Delta)^2 P_th\|_{F}\leq c_2\|h\|_{L^p(F,\mu)},\quad \mathrm{and}\quad\|(\alpha-\Delta)^2 P_th\|_{\Lip^{\kappa}}\leq c_3 \|h\|_{L^q(F,\mu)}.
  \end{equation*}
\end{enumerate}
\end{cond}
\begin{rmk}
  These conditions are always satisfied for appropriate $\kappa, p, q$ (see \cref{suff}(i)). 
  Therefore, we refer to them as conditions rather than assumptions. 
\end{rmk}
The parameters $p$ and $q$ are typically assumed to be $1$ or $2$.

The following lemma is used to show \cref{suffcond1}.
\begin{lem}\label{uuu}
Let $(F,R,\mu)$ be in $\bbF^{\circ}$.
Fix $\alpha\in (0,1]$. Suppose that there exists a constant $C=C(\alpha)\in (0,\infty)$ such that $\int_F |g^{\alpha}(x,z)-g^{\alpha}(y,z)|^2\mu(dz)\leq CR(x,y)^2$ for any $x,y,z\in F$.
Then it holds that
\begin{align}\label{uqw}
  \|(\alpha-\Delta)^2P_th\|_{\Lip^{1}}\leq C^{1/2}t^{-3}\|h\|_{L^2(F,\mu)}, \qquad t\in (0,1],\;h\in L^2(F,\mu).
\end{align}
In particular, we have the following:
\begin{itemize}
  \item [$\rm{(i)}$]If $F$ is compact, then it holds that
  \begin{align}\label{uqw1}
     \|(\alpha-\Delta)^2P_th\|_{\Lip^{1}}\lesssim \mu(F)^{1/2}\g{c}{continuity,1}\alpha^{-1} t^{-3}\|h\|_{L^2(F,\mu)},
  \end{align}
  for $\alpha,\;t\in (0,1]$ and $h\in L^2(F,\mu)$.
  \item [$\rm{(ii)}$]If $(F,R,\mu)$ satisfies $\rm{(A2)}$, then it holds that
  \begin{align}\label{uqw2}
    \|(\alpha-\Delta)^2P_th\|_{\Lip^{1}}\lesssim \g{c}{continuity,1} \left[c_{\rm{A2}}\Gamma\left(1-\frac{s_1}{(1+s_0)\theta}\right)\right]^{-1/2}\alpha^{-(1+\frac{s_1}{(1+s_0)\theta})}t^{-3}\|h\|_{L^2(F,\mu)},
  \end{align}
  for $\alpha,\;t\in (0,1]$ and $h\in L^2(F,\mu)$.
  \item [$\rm{(iii)}$]If $(F,R,\mu)$ satisfies $\rm{(A3)}$, then it holds that
  \begin{align}\label{uqw3}
     \|(\alpha-\Delta)^2P_th\|_{\Lip^{1}}\lesssim \g{c}{continuity,1} \left[c_{\rm{A3}}\Gamma\left(1-\frac{s_1}{(1+s_0)\theta}\right)\right]^{-1/2}\alpha^{-(1+\frac{s_1}{(1+s_0)\Theta})}t^{-3}\|h\|_{L^2(F,\mu)}, 
  \end{align}
  for $\alpha,\;t\in (0,1]$ and $h\in L^2(F,\mu)$. Here, $\Theta=\frac{1+s_0-s_1}{1-(s_0-s_1)(2+s_0)}$.
\end{itemize}
\end{lem}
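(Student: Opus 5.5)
The plan is to copy the argument used for \eqref{p=2} in the proof of \cref{continuity}(iii), with the operator $(\alpha-\Delta)P_t$ replaced by $(\alpha-\Delta)^3P_t$.

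\textbf{Representation.} For $h\in L^2(F,\mu)$ put $h_0:=(\alpha-\Delta)^3P_th$; this lies in $L^2(F,\mu)$ because $P_t$ maps into the domain of every power of $\Delta$. Since $G^{\alpha}=(\alpha-\Delta)^{-1}$ and spectral functions of $-\Delta$ commute,
\begin{align*}
(\alpha-\Delta)^2P_th=G^{\alpha}h_0=\int_F g^{\alpha}(\cdot,z)h_0(z)\mu(dz).
\end{align*}
Cauchy--Schwarz together with the hypothesis on $g^{\alpha}$ then gives
\begin{align*}
|(\alpha-\Delta)^2P_th(x)-(\alpha-\Delta)^2P_th(y)|\le C^{1/2}R(x,y)\|h_0\|_{L^2(F,\mu)}.
\end{align*}

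\textbf{Operator norm.} As in \eqref{norm},
\begin{align*}
\|(\alpha-\Delta)^3P_t\|\le\sup_{\lambda\ge0}(\alpha+\lambda)^3e^{-t\lambda}.
\end{align*}
For $\alpha,t\in(0,1]$ we use $(\alpha+\lambda)^3\le 4(\alpha^3+\lambda^3)$ and $\sup_{\lambda\ge0}\lambda^3e^{-t\lambda}=27e^{-3}t^{-3}$. This bounds the supremum by a universal constant times $t^{-3}$, which gives \eqref{uqw} up to a universal constant. The statement as written has constant $1$, so I would either read the bound as $\lesssim$ or track the universal factor. This bookkeeping is the only delicate point in the main inequality.

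\textbf{The three cases.} Each case only requires producing $C(\alpha)$ in the hypothesis.

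For (ii) and (iii) I take \eqref{ub3}, which is already proved in \cref{continuity}(iii) under a lower heat kernel bound with exponent $\theta_l<1$:
\begin{align*}
C\lesssim(\g{c}{continuity,1})^2(c_{\rm LHK}\Gamma(1-\theta_l))^{-1}(1+\alpha^{-1})^2\alpha^{-\theta_l}.
\end{align*}
For $\alpha\le1$ we have $1+\alpha^{-1}\le2\alpha^{-1}$, so $C^{1/2}\lesssim\alpha^{-(1+\theta_l/2)}$, which is at most $\alpha^{-(1+\theta_l)}$.
\begin{itemize}
\item In case (ii), \cref{summ}(iii) supplies $\theta_l=s_1/((1+s_0)\theta)<1$ (since $(1+s_0)\theta>s_1$) with constant $c_{\rm A2}$.
\item In case (iii), \cref{summ}(iv) and \cref{le1} supply $\theta_l=s_1/((1+s_0)\Theta)<1$ with constant $c_{\rm A3}$.
\end{itemize}

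For (i) no lower heat kernel bound is assumed, so I would bypass \eqref{ub3}. By \cref{continuity}(ii), $g^{\alpha}(\cdot,z)$ is Lipschitz with constant $\lesssim\g{c}{continuity,1}(1+\alpha^{-1})\lesssim\g{c}{continuity,1}\alpha^{-1}$, uniformly in $z$ by symmetry. Hence $|g^{\alpha}(x,z)-g^{\alpha}(y,z)|^2\lesssim(\g{c}{continuity,1})^2\alpha^{-2}R(x,y)^2$, and integrating over the compact $F$ gives
\begin{align*}
C\lesssim\mu(F)(\g{c}{continuity,1})^2\alpha^{-2}.
\end{align*}
Taking square roots yields \eqref{uqw1}.

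I expect no genuine obstacle beyond tracking these constants.
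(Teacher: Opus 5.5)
Your proposal is correct and is essentially the paper's proof. The paper uses the same steps: $h_0=(\alpha-\Delta)^3P_th$, Cauchy--Schwarz against the hypothesis, and a spectral bound on $\|(\alpha-\Delta)^3P_t\|$. It then obtains $C(\alpha)$ from the Lipschitz bound on $g^{\alpha}$ in case (i) (the paper cites \eqref{ub1}, which is the same estimate) and from \eqref{ub3} with \cref{summ}(iii),(iv) in cases (ii),(iii).

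Your hesitation about the constant $1$ is justified. The paper's step $\sup_{\lambda\ge0}(\alpha+\lambda)^6e^{-2t\lambda}\le t^{-6}e^{2(\alpha t-1)}$ carries over the single-power computation of \eqref{norm} and is false for the cube: the supremum equals $3^6e^{2\alpha t-6}t^{-6}$. The argument therefore gives \eqref{uqw} with the universal factor $27e^{-2}$ in place of $1$. This is harmless, because every later use is only up to $\lesssim$.
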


\begin{proof}
  Set $h_0=(\alpha-\Delta)^3P_th$. Then, similarly as in \eqref{p=2}, we have the following from $\alpha,t\le 1$:
  \begin{align*}
    |(\alpha-\Delta)^2P_th(x)-(\alpha-\Delta)^2P_th(y)|^2
    =&|G^{\alpha}h_0(x)-G^{\alpha}h_0(y)|^2\\
    =&\left| \int_F (g^{\alpha}(x,z)-g^{\alpha}(y,z))h_0(z) \mu(dz)\right|^2\\
    \leq&CR(x,y)^2\left[\sup_{\lambda\ge 0}(\alpha+\lambda)^6e^{-2t\lambda}\right]\|h\|_{L^2(F,\mu)}^2\\
    \leq&CR(x,y)^2t^{-6}e^{2(\alpha t-1)}\|h\|_{L^2(F,\mu)}^2\\
    \leq&CR(x,y)^2t^{-6}\|h\|_{L^2(F,\mu)}^2.
  \end{align*}
  This shows \eqref{uqw}.

  We next prove (i). Note that the inequality \eqref{ub1} holds without any assumptions.
  By the finiteness of $\mu$, $\alpha\le 1$, and \eqref{ub1}, we observe that
  \begin{align*}
    \int_F |g^{\alpha}(x,z)-g^{\alpha}(y,z)|^2\mu(dz)
    \mu(F) (4\g{c}{continuity,1}\alpha^{-1})^2R(x,y)^2.
  \end{align*}
  Now the result follows from \eqref{uqw}.

  Finally we show (ii) and (iii). 
  Under assumption $\rm{(A2)}$, we may take $\theta_l=\frac{s_1}{(1+s_0)\theta}$ in the assumptions of \cref{continuity}(iii) from \cref{summ}(iii).
  Thus we obtain from \eqref{ub3} that
  \begin{align*}
     \int_F |g^{\alpha}(x,z)-g^{\alpha}(y,z)|^2\mu(dz)
   \leq 8(\g{c}{continuity,1})^2 \left[c_{\rm{A2}}\Gamma\left(1-\frac{s_1}{(1+s_0)\theta}\right)\right]^{-1}\alpha^{-2-\frac{s_1}{(1+s_0)\theta}}R(x,y)^2
  \end{align*}
  for $\alpha,t\in (0,1]$.
  Applying \eqref{uqw}, we complete the proof for (ii).
  The final assertion (iii) can be shown by replacing 
  $\theta$ and $c_{\rm{A2}}$ with $\Theta$ and $c_{\rm{A3}}$, respectively.
\end{proof}

The following estimates will be used in \Cref{esthk} 
to derive the convergence rate for the heat kernel.
\begin{prp}\label{suffcond1}
  Let $(F,R,\mu)$ be in $\bbF^{\circ}$.
  Set $\g{c}{suffcond1,1}=\mu(F)^{-1}+\diam F$ and $\g{c}{suffcond1,2}=\left[\mu(F)^{1/2}\g{c}{continuity,1}\right]^{2\kappa-1}$.
\begin{itemize}
  \item [$\mathrm{(i)}$]Suppose that $F$ is compact. Then it holds that
  \begin{equation*}
     \|(\alpha-\Delta)^2P_th\|_{F}\lesssim \g{c}{suffcond1,1}t^{-3}\|h\|_{L^1(F,\mu)},
    \quad \|(\alpha-\Delta)^2P_th\|_{F}\lesssim (\g{c}{suffcond1,1})^{1/2}t^{-5/2}\|h\|_{L^2(F,\mu)},
  \end{equation*}
  \begin{equation*}
  \|(\alpha-\Delta)^2P_th\|_{\Lip^{\kappa}}\lesssim \g{c}{suffcond1,2}\alpha^{-( 2\kappa-1 )} t^{-(\kappa+2)}\|h\|_{L^1(F,\mu)},
  \end{equation*}
  and
  \begin{align}\label{alw}
  \|(\alpha-\Delta)^2 P_t h \|_{\Lip^{\kappa}}\lesssim (\g{c}{suffcond1,1})^{1/2} \g{c}{suffcond1,2} \alpha^{-( 2\kappa-1 )} t^{-(\kappa+5/2)}\|h\|_{L^2(F,\mu)},
  \end{align}
  for any $\alpha,\;t\in (0,1],\;\kappa\in [1/2,1],$ and $h\in L^2(F,\mu)$. 
     \item [$\mathrm{(ii)}$]
     If $(F,R,\mu)$ satisfies assumption $\rm{(A1)}$, and is compact, 
     then it holds that
  \begin{equation*}
     \|(\alpha-\Delta)^2P_th\|_{F}\lesssim c_{\rm{A1}}t^{-\left(2+\frac{s_0}{1+s_0}\right)}\|h\|_{L^1(F,\mu)},
    \quad \|(\alpha-\Delta)^2P_th\|_{F}\lesssim c_{\rm{A1}}^{1/2}t^{-\left(2+\frac{s_0}{2(1+s_0)}\right)}\|h\|_{L^2(F,\mu)},
  \end{equation*}
  and
  \begin{equation*}
  \|(\alpha-\Delta)^2P_th\|_{\Lip^{\kappa}}\lesssim c_{\rm{A1}}^{1/2}\g{c}{suffcond1,2}\alpha^{-( 2\kappa-1 )} t^{-(\kappa+2+\frac{s_0}{2(1+s_0)})}\|h\|_{L^1(F,\mu)},
  \end{equation*}
  for any $\alpha,\;t\in (0,1],\;\kappa\in [1/2,1],$ and $h\in L^2(F,\mu)$.

   \item [$\mathrm{(iii)}$]If $(F,R,\mu)$ satisfies assumption $\rm{(A2)}$, 
     then it holds that
  \begin{equation*}
  \|(\alpha-\Delta)^2P_th\|_{\Lip^{\kappa}}\lesssim c_{\rm{A1}}^{\kappa}\g{c}{suffcond1,3}\alpha^{-(1+\frac{s_1}{2(1+s_0)\theta})( 2\kappa-1 )} t^{-(\kappa+2+\frac{s_0}{2(1+s_0)})}\|h\|_{L^1(F,\mu)},
  \end{equation*}
  and
  \begin{equation*}
  \|(\alpha-\Delta)^2P_th\|_{\Lip^{\kappa}}\lesssim \g{c}{suffcond1,3}\alpha^{-(1+\frac{s_1}{2(1+s_0)\theta})(2\kappa-1)}t^{-(\kappa+2)}\|h\|_{L^2(F,\mu)},
  \end{equation*}
  for any $\alpha,\;t\in (0,1],\;\kappa\in [1/2,1],$ and $h\in L^2(F,\mu)$, where
  \begin{align*}
     \g{c}{suffcond1,3}= \left\{ \g{c}{continuity,1}\left[ c_{\rm{A2}}\Gamma\left(1-\frac{s_1}{(1+s_0)\theta}\right) \right]^{-1/2} \right\}^{2\kappa-1}.
  \end{align*}

     \item [$\mathrm{(iv)}$]If $(F,R,\mu)$ satisfies assumption $\rm{(A3)}$, 
     then it holds that
  \begin{equation*}
  \|(\alpha-\Delta)^2P_th\|_{\Lip^{\kappa}}\lesssim c_{\rm{A1}}^{\kappa}\g{c}{suffcond1,4}\alpha^{-(1+\frac{s_1}{2(1+s_0)\Theta})( 2\kappa-1 )} t^{-(\kappa+2+\frac{s_0}{2(1+s_0)})}\|h\|_{L^1(F,\mu)},
  \end{equation*}
  and
  \begin{equation*}
  \|(\alpha-\Delta)^2P_th\|_{\Lip^{\kappa}}\lesssim \g{c}{suffcond1,4}\alpha^{-(1+\frac{s_1}{2(1+s_0)\Theta})(2\kappa-1)}t^{-(\kappa+2)}\|h\|_{L^2(F,\mu)},
  \end{equation*}
  for any $\alpha,t\in (0,1],\;\kappa\in [1/2,1],$ and $h\in L^2(F,\mu)$, where
  \begin{align*}
     \g{c}{suffcond1,4}= \left\{ \g{c}{continuity,1}\left[ c_{\rm{A3}}\Gamma\left(1-\frac{s_1}{(1+s_0)\Theta}\right) \right]^{-1/2} \right\}^{2\kappa-1},\quad \Theta=\frac{1+s_0-s_1}{1-(s_0-s_1)(2+s_0)}.
  \end{align*}
\end{itemize}
\end{prp}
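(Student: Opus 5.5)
The plan is to treat the two kinds of bounds separately: the sup-norm bounds come from the spectral calculus plus the on-diagonal heat kernel estimates, and the Hölder bounds come from interpolating, via \cref{mix}, between a $\Lip^{1/2}$ bound (from \eqref{resishol}) and a $\Lip^1$ bound (from \cref{uuu}).

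\textbf{Sup-norm bounds.} Write $(\alpha-\Delta)^2P_th=P_{t/2}\bigl[(\alpha-\Delta)^2P_{t/2}h\bigr]$. Then
\[
|(\alpha-\Delta)^2P_th(x)|=\Bigl|\int_F (\alpha-\Delta)^2P_{t/2}\,p^{t/2,x}(z)\,h(z)\,\mu(dz)\Bigr| .
\]
For the $L^1$ version, bound the kernel $(\alpha-\Delta)^2p(t,x,y)$ uniformly in $x,y$. Expand it in time derivatives of $p$ and use \eqref{derihk} with $k=0,1,2$ together with $\alpha,t\le1$. This gives a bound $t^{-2}\sup_x p(t/2,x,x)$. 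Inserting \eqref{kig104} yields $\g{c}{suffcond1,1}t^{-3}$; inserting \eqref{uhka1} instead yields $c_{\rm A1}t^{-2-\frac{s_0}{1+s_0}}$.

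For the $L^2$ version, use Cauchy--Schwarz and the spectral bound $\sup_\lambda(\alpha+\lambda)^2e^{-t\lambda/2}\lesssim t^{-2}$. This gives $\|(\alpha-\Delta)^2P_{t/2}p^{t/2,x}\|_{L^2}\lesssim t^{-2}\,p(t,x,x)^{1/2}$, which produces the square-root constants and the exponents $t^{-5/2}$ and $t^{-2-\frac{s_0}{2(1+s_0)}}$.

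\textbf{Hölder bounds at $\kappa=1/2$.} Set $u=(\alpha-\Delta)^2P_th$. By \eqref{resishol}, $\|u\|_{\Lip^{1/2}}^2\le \calE(u,u)=\int\lambda(\alpha+\lambda)^4e^{-2t\lambda}\,E_{h,h}(d\lambda)$. For $L^1$ data, first factor $P_t=P_{t/2}P_{t/2}$. Then use $\|P_{t/2}h\|_{L^2}\le \sup_x p(t,x,x)^{1/2}\|h\|_{L^1}$ and $\sup_\lambda \lambda(\alpha+\lambda)^4e^{-t\lambda}\lesssim t^{-5}$. This gives $\|u\|_{\Lip^{1/2}}\lesssim t^{-5/2}\sup_xp(t,x,x)^{1/2}\|h\|_{L^1}$. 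For $L^2$ data the same computation gives $t^{-5/2}\|h\|_{L^2}$ directly.

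\textbf{Hölder bounds at $\kappa=1$.} Use \cref{uuu}. Part (i) of it gives \eqref{uqw1} for $L^2$ data. For $L^1$ data, apply \eqref{uqw} to $P_{t/2}h$, again using $\|P_{t/2}h\|_{L^2}\le\sup p^{1/2}\|h\|_{L^1}$. Under (A2) and (A3), use \eqref{uqw2} and \eqref{uqw3} in place of \eqref{uqw1}.

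\textbf{Interpolation.} Apply \cref{mix}: $\|u\|_{\Lip^\kappa}\le\|u\|_{\Lip^{1/2}}^{2(1-\kappa)}\|u\|_{\Lip^1}^{2\kappa-1}$. The $\alpha$-power is $\alpha^{-(2\kappa-1)}$ in the compact case, and $\alpha^{-(1+\frac{s_1}{2(1+s_0)\theta})(2\kappa-1)}$ (respectively with $\Theta$) under (A2)/(A3). The constant $(\mu(F)^{1/2}\g{c}{continuity,1})^{2\kappa-1}$ appears as $\g{c}{suffcond1,2}$.

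The time powers combine as $t^{-\frac52\cdot2(1-\kappa)-3(2\kappa-1)}=t^{-(\kappa+2)}$. The extra factor $\sup p^{1/2}$ contributes $t^{-1/2}$ from \eqref{kig104}, giving $t^{-(\kappa+5/2)}$ in \eqref{alw}, or $t^{-\frac{s_0}{2(1+s_0)}}$ from \eqref{uhka1}. In parts (iii) and (iv) the $c_{\rm A1}^{\kappa}$ factor for $L^1$ data likewise comes from \eqref{uhka1}. For the $L^1$ Hölder bound in (i), whose claimed power is $t^{-(\kappa+2)}$, the $\sup p^{1/2}$ factor should be handled differently. I would first try placing the $\mu(F)^{1/2}$ from \eqref{uqw1} so that the diagonal bound $\sup p\lesssim\mu(F)^{-1}+\diam F\, t^{-1}$ is used only through its $\mu(F)^{-1}$ part, or else absorb the factor into the constant via $t\le1$.

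\textbf{Main obstacle.} The delicate step is exactly this bookkeeping: splitting $P_t$ so that the $L^1\to L^2$ smoothing costs only $\sup p^{1/2}$, and making the $t$-exponents and constants match those claimed. I expect to need care with the $L^1$ case in (i), and possibly to weaken the exponent slightly if matching the stated power fails.
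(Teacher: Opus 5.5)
Your strategy is the paper's: sup-norm bounds from \cref{derhk} and the spectral calculus, $\Lip^{1/2}$ bounds from \eqref{resishol}, $\Lip^{1}$ bounds from \cref{uuu}, interpolation by \cref{mix}, and $L^1$ data handled by paying a factor $\sup_x p(t,x,x)^{1/2}$. Your splitting $P_t=P_{t/2}P_{t/2}$ is equivalent to the paper's device of applying \cref{uuu} and \eqref{w3e} to $h=p^{t,x}$ and using symmetry of the kernel. The sup-norm bounds in (i) and parts (ii)--(iv) go through as you describe. One minor point: to get the $\alpha$-exponent $1+\frac{s_1}{2(1+s_0)\theta}$ you must take the square root of the bound $C\lesssim\alpha^{-2-\frac{s_1}{(1+s_0)\theta}}$ from the proof of \cref{uuu}(ii). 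As printed, \eqref{uqw2} and \eqref{uqw3} drop the factor $\frac12$, so citing them literally gives a worse power of $\alpha$.

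The step that fails is your handling of the two H\"older bounds in (i). What your computation actually proves is
\begin{gather*}
\|(\alpha-\Delta)^2P_th\|_{\Lip^{\kappa}}\lesssim \g{c}{suffcond1,2}\alpha^{-(2\kappa-1)}t^{-(\kappa+2)}\|h\|_{L^2(F,\mu)},\\
\|(\alpha-\Delta)^2P_th\|_{\Lip^{\kappa}}\lesssim (\g{c}{suffcond1,1})^{1/2}\g{c}{suffcond1,2}\alpha^{-(2\kappa-1)}t^{-(\kappa+5/2)}\|h\|_{L^1(F,\mu)}.
\end{gather*}
This is exactly what the paper's proof establishes. In the statement of (i), the $L^1$ and $L^2$ labels of these two bounds are interchanged; compare (ii), where the $L^1$ bound is the one carrying $\sqrt{p(t,x,x)}$. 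Your sentence ``giving $t^{-(\kappa+5/2)}$ in \eqref{alw}'' silently makes this swap, since \eqref{alw} has $\|h\|_{L^2(F,\mu)}$ on the right.

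Your proposed repairs for an $L^1$ bound with $t^{-(\kappa+2)}$ cannot work. First, $\sup_x p(t,x,x)$ is in general unbounded as $t\downarrow 0$, so you cannot use only the $\mu(F)^{-1}$ part of \eqref{kig104}. Second, $t\le 1$ lets you absorb positive powers of $t$, not $t^{-1/2}$. In fact the printed $L^1$ bound is false. On $F=[0,1]$ with Lebesgue measure, let $h\to\delta_{1/2}$ with $\kappa=\frac12$; then $\|\partial_t^2p(t,\frac12,\cdot)\|_{\Lip^{1/2}}\asymp t^{-11/4}\gg t^{-5/2}$ as $t\downarrow0$, which is the rate of (ii) with $s_0=1$.

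Likewise, \eqref{alw} as printed does not follow from your $L^2$ bound, because $(\g{c}{suffcond1,1}/t)^{1/2}$ need not be bounded below. Take $F=\{a,b\}$ with $R(a,b)=2/M$ and $\mu(\{a\})=\mu(\{b\})=M$, and let $h$ be the $L^2$-normalized eigenfunction of $-\Delta$ (eigenvalue $1$). At $\kappa=\frac12$, $t=\alpha=1$, the left-hand side is $4/e$ while the right-hand side is $\sqrt{5/(2M)}$. So the right conclusion is to prove the two bounds displayed above and record the swap, not to try to remove the $\sup_x p(t,x,x)^{1/2}$ factor.
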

\begin{rmk}
The inequality \eqref{alw} with $\kappa=1/2$ holds without any assumptions.
\end{rmk}

\begin{proof}[Proof of $\rm{(i)}$ and $\rm{(ii)}$]
  We first derive the estimates for the $\sup$ norm. 
  These follow 
  from  \cref{derhk}, 
  and we obtain the following:
    \begin{align*}
      \|(\alpha-\Delta)^2P_th\|_{F}
      \leq \|h\|_{L^1(F,\mu)}\sup_{x,y\in F}\left|\left( \alpha-\frac{\partial}{\partial t}\right)^2p(t,x,y) \right|
      \lesssim t^{-2}\sup_{x\in F}p(t/2,x,x)\|h\|_{L^1(F,\mu)},
    \end{align*}
    \begin{align*}
      \|(\alpha-\Delta)^2P_th\|_{F}
      \leq \|h\|_{L^2(F,\mu)}\sup_{x\in F}\|(\alpha-\Delta)^2p^{t,x}\|_{L^2(F,\mu)}
      \lesssim t^{-2}\sup_{x\in F}\sqrt{p(t,x,x)}\|h\|_{L^2(F,\mu)}
    \end{align*}
    
We next show the inequalities for $\kappa$ H\"older constants with the $L^2$ norm.
An estimate for $\kappa=1$ is shown in \cref{uuu}(i).
Also, using $\|f\|_{\Lip^{1/2}}^2\leq \calE(f,f),\;f\in\calF$ (see \eqref{resishol}),
we obtain 
that
\begin{equation}\label{pui}
  \begin{aligned}
  \|(\alpha-\Delta)^2P_t h\|_{\Lip^{1/2}}
  \leq&\calE((\alpha-\Delta)^2P_t h,(\alpha-\Delta)^2P_t h)^{1/2}\\
  =&(-\Delta (\alpha-\Delta)^2P_t h,(\alpha-\Delta)^2P_t h)_{L^2(F,\mu)}^{1/2}\\
  \leq&\|(-\Delta)^{1/2}(\alpha-\Delta)^2P_th\|_{L^2(F,\mu)}\\
  \lesssim&t^{-5/2}\|h\|_{L^2(F,\mu)}.
  \end{aligned}
\end{equation}
Here, the last inequality follows from a similar calculation as in \eqref{norm}.
Thus we now have estimates for $\kappa=1/2$ and $\kappa=1$. By \cref{mix}, we obtain the desired inequality.

Finally, we prove the inequalities for $\kappa$ H\"older constants with the $L^1$ norm.
Taking $h=p^{t,x}$ in \cref{uuu}(i), we deduce that
\begin{align*}
  \|(\alpha-\Delta)^2p^{t,x}\|_{\Lip^1}\lesssim \mu(F)^{1/2}\g{c}{continuity,1}\alpha^{-1}t^{-3}\sqrt{p(t,x,x)}.
\end{align*}
On the other hand, similar as in \eqref{pui}, we have
\begin{align}\label{w3e}
   \|(\alpha-\Delta)^2p^{t,x}\|_{\Lip^{1/2}}\leq 
   \|(-\Delta)^{1/2}(\alpha-\Delta)^2P_{t/2}p^{t/2,x}\|_{L^2(F,\mu)}^{1/2}
   \lesssim t^{-5/2}\sqrt{p(t,x,x)}.
\end{align}
Combining above arguments, $p(t,x,x)\lesssim \g{c}{suffcond1,1} t^{-1},\;t\leq 1$ (see \eqref{kig104} and take $A=F$), and 
\cref{summ}(ii), we obtain the result. 
\end{proof}

\begin{proof}[Proof of $\rm{(iii)}$]
  We first deal with the inequality with the $L^2$ norm.
  An estimate for $\kappa=1$ is shown in \cref{uuu}(ii).
  On the other hand, we have the estimate \eqref{pui} for $\kappa=1/2$.
  By these two and \cref{mix}, we conclude the result.

  We next show the inequality with the $L^1$ norm.
  We observe that 
  \begin{align*}
    \|(\alpha-\Delta)^2p^{t,x}\|_{\Lip^{1/2}}\lesssim c_{\rm{A1}}^{1/2} t^{-(\frac{5}{2}+\frac{s_0}{2(1+s_0)})}
  \end{align*}
  from \eqref{w3e} and \cref{summ}(ii).
  Also, by substituting $h=p^{t,x}$ in \cref{uuu}(ii) and 
  applying Chapman-Kolmogorov equation \eqref{chap-kol}, it follows that
  \begin{align*}
    \|(\alpha-\Delta)^2p^{t,x}\|_{\Lip^{1}}\lesssim c_{\rm{A1}}^{1/2}\g{c}{continuity,1}\left[c_{\rm{A2}}\Gamma\left(1-\frac{s_1}{(1+s_0)\theta}\right)\right]^{-1/2}\alpha^{-(1+\frac{s_1}{2(1+s_0)\theta})}t^{-(3+\frac{s_0}{2(1+s_0)})}.
  \end{align*}
  To deduce the inequality for general $\kappa$, we may use \cref{mix}.
\end{proof}
\begin{proof}[Proof of $\rm{(iv)}$]
  It suffices to replace $c_{\rm{A2}}$ and $\theta$ in the above proof with $c_{\rm{A3}}$ and $\Theta$, respectively.
\end{proof}

The following estimates are used in \Cref{esthk} to derive the convergence rate estimates for the semigroup.

\begin{prp}\label{suffcond2}
  Let $(F,R,\mu)$ be in $\bbF^{\circ}$.
  Set $\g{c}{suffcond2,1}=\left[\mu(F)^{1/2}\g{c}{unihk,1}\right]^{2\kappa-1}$ and $\g{c}{suffcond2,2}=\left[\mu(F)^{1/2}\g{c}{c_1,1}\right]^{2\kappa-1}$.
\begin{itemize}
  \item [$\mathrm{(i)}$]If $F$ is compact, then it holds that
  \begin{align}\label{aa}
    \|P_th\|_{\Lip^{\kappa}}\lesssim \g{c}{unihk,1} t^{-2\kappa}\|h\|_{L^1(F,\mu)},
  \end{align}
  and
  \begin{align}\label{bb}
    \|P_th\|_{\Lip^{\kappa}}\lesssim \g{c}{suffcond2,1} t^{-(3\kappa-1)}\|h\|_{L^2(F,\mu)},
  \end{align}
  for any $t\in(0,1],\;\kappa\in [1/2,1],$ and $h\in L^2(F,\mu)$.

  \item [$\mathrm{(ii)}$]If $F$ is compact, and satisfies assumption $\rm{(A1)}$,
  then it holds that
  \begin{equation}\label{cc}
    \|P_th\|_{\Lip^{\kappa}}\lesssim\g{c}{c_1,1}t^{-\frac{2\kappa s_0+3\kappa-1}{1+s_0}}\|h\|_{L^1(F,\mu)},
  \end{equation}
  and
  \begin{equation}\label{dd}
    \|P_th\|_{\Lip^{\kappa}}\lesssim \g{c}{suffcond2,2} t^{-(3\kappa-1)}\|h\|_{L^2(F,\mu)},
  \end{equation}
  for any $t\in(0,1],\;\kappa\in [1/2,1],$ and $h\in L^2(F,\mu)$.
  \item [$\mathrm{(iii)}$]If $(F,R,\mu)$ satisfies assumption $\rm{(A2)}$,
  then it holds that
  \begin{equation}\label{ee}
    \|P_th\|_{\Lip^{\kappa}}\lesssim \g{c}{c_1,2}t^{-\frac{2[(1+s_0)\theta-s_1]\kappa+s_0\theta+s_1}{2(1+s_0)\theta}}\|h\|_{L^1(F,\mu)},
  \end{equation}
  and
  \begin{equation}\label{ff}
     \|P_th\|_{\Lip^{\kappa}}\lesssim  \g{c}{suffcond1,3} t^{-[\kappa-\frac{s_1}{2(1+s_0)\theta}(2\kappa-1)]}\|h\|_{L^2(F,\mu)},
  \end{equation}
  for any $t\in(0,1],\;\kappa\in [1/2,1],$ and $h\in L^2(F,\mu)$.
  \item [$\mathrm{(iv)}$]If $(F,R,\mu)$ satisfies assumption $\rm{(A3)}$,
  then it holds that
  \begin{equation}\label{gg}
    \|P_th\|_{\Lip^{\kappa}}\lesssim \g{c}{c_1,3}t^{-\frac{2[(1+s_0)\Theta-s_1]\kappa+s_0\Theta+s_1}{2(1+s_0)\Theta}}\|h\|_{L^1(F,\mu)},
  \end{equation}
  and
  \begin{equation}\label{hh}
     \|P_th\|_{\Lip^{\kappa}}\lesssim  \g{c}{suffcond1,4}t^{-[\kappa-\frac{s_1}{2(1+s_0)\Theta}(2\kappa-1)]} \|h\|_{L^2(F,\mu)},
  \end{equation}
  for any $t\in(0,1],\;\kappa\in [1/2,1],$ and $h\in L^2(F,\mu)$. Here, $\Theta=\frac{1+s_0-s_1}{1-(s_0-s_1)(2+s_0)}$.
\end{itemize}
\end{prp}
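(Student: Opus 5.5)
The plan is to write $P_th(x)=\int_F p(t,x,z)h(z)\mu(dz)$. The $L^1$ bounds then follow from pointwise Hölder bounds on the heat kernel. The $L^2$ bounds follow from interpolating, via \cref{mix}, between a $\Lip^{1/2}$ bound coming from the energy and a $\Lip^1$ bound coming from the $\alpha$-potential density.

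\emph{$L^1$ estimates \eqref{aa}, \eqref{cc}, \eqref{ee}, \eqref{gg}.} For any $y,z'\in F$,
\[
|P_th(y)-P_th(z')|\le \|h\|_{L^1(F,\mu)}\sup_{z\in F}|p(t,z,y)-p(t,z,z')|,
\]
so $\|P_th\|_{\Lip^\kappa}\le \|h\|_{L^1}\sup_{z}\|p(t,z,\cdot)\|_{\Lip^\kappa}$.
\begin{itemize}
\item For \eqref{aa}, insert \cref{unihk}; for $t\le1$ we have $t^{-(1-\kappa)}\le t^{-2\kappa}$, since $\kappa\ge 1/2$ gives $2\kappa\ge 1-\kappa$.
\item For \eqref{cc}, \eqref{ee}, \eqref{gg}, insert \cref{c_1}(i), (ii), (iii) respectively; these already have exactly the stated exponents for $t\in(0,1]$.
\end{itemize}
This part is routine.

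\emph{$L^2$ estimates, $\kappa=1/2$ endpoint.} Using \eqref{resishol} and the spectral calculus as in \eqref{hfholhk},
\[
\|P_th\|_{\Lip^{1/2}}^2\le \calE(P_th,P_th)=\int\lambda e^{-2t\lambda}E_{h,h}(d\lambda)\lesssim t^{-1}\|h\|_{L^2}^2 ,
\]
so $\|P_th\|_{\Lip^{1/2}}\lesssim t^{-1/2}\|h\|_{L^2}$ with no assumptions at all.

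\emph{$L^2$ estimates, $\kappa=1$ endpoint.} Write $P_th=G^\alpha(\alpha-\Delta)P_th$ with $\alpha=1/t$, as in \eqref{p=2}. By Cauchy–Schwarz,
\[
|P_th(x)-P_th(y)|\le \|g^\alpha(x,\cdot)-g^\alpha(y,\cdot)\|_{L^2}\,\|(\alpha-\Delta)P_t\|\,\|h\|_{L^2},
\]
and \eqref{norm} gives $\|(\alpha-\Delta)P_t\|\lesssim t^{-1}$ when $\alpha=1/t$.
\begin{itemize}
\item In the compact cases (i), (ii), bound the $L^2$ difference of $g^\alpha$ using \eqref{ub1} and $\mu(F)<\infty$, exactly as in \cref{uuu}(i). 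This gives $\lesssim \mu(F)^{1/2}\g{c}{continuity,1}(1+\alpha^{-1})R(x,y)$, hence $\|P_th\|_{\Lip^1}\lesssim \mu(F)^{1/2}\g{c}{continuity,1}t^{-1}\|h\|_{L^2}$ for $t\le1$.
\item Interpolating with \cref{mix} yields the exponent $\tfrac12\cdot2(1-\kappa)+(2\kappa-1)=\kappa$, i.e. a bound $t^{-\kappa}$.
\end{itemize}
This is at least as good as the stated $t^{-(3\kappa-1)}$, since $3\kappa-1\ge\kappa$ for $\kappa\ge1/2$ and $t\le 1$. The stated constants $\g{c}{suffcond2,1}$ and $\g{c}{suffcond2,2}$ can be reached instead by using $\|P_th\|_{\Lip^1}\le\|h\|_{L^1}\sup_z\|p(t,z,\cdot)\|_{\Lip^1}$ from \cref{unihk} or \cref{c_1}(i) with $\kappa=1$, together with $\|h\|_{L^1}\le\mu(F)^{1/2}\|h\|_{L^2}$. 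That route gives $t^{-2}$ (respectively $t^{-\frac{2s_0+2}{1+s_0}}=t^{-2}$), and interpolation gives $t^{-(1-\kappa)-2(2\kappa-1)}=t^{-(3\kappa-1)}$. I will follow this second route so the constants match the statement.

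\emph{Cases (iii) and (iv).} Here use \eqref{ub3}: the lower heat kernel bound \eqref{lhka2} (respectively \eqref{lhka3}) lets us take $\theta_l=\frac{s_1}{(1+s_0)\theta}$ (respectively $\frac{s_1}{(1+s_0)\Theta}$, by \cref{le1}). With $\alpha=1/t$ this gives
\[
\|P_th\|_{\Lip^1}\lesssim \g{c}{continuity,1}\bigl[c_{\rm A2}\Gamma(1-\theta_l)\bigr]^{-1/2}(1+t)\,t^{\theta_l/2}\,t^{-1}\|h\|_{L^2}.
\]
Interpolating with the $t^{-1/2}$ bound gives the exponent
\[
-(1-\kappa)-(2\kappa-1)\Bigl(1-\tfrac{\theta_l}{2}\Bigr)=-\Bigl[\kappa-\tfrac{\theta_l}{2}(2\kappa-1)\Bigr],
\]
matching \eqref{ff} and \eqref{hh}, with constant $\g{c}{suffcond1,3}$ (respectively $\g{c}{suffcond1,4}$).

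The main obstacle is the bookkeeping in this last step. One must check that \eqref{ub3} applies with the lower bound valid for all $t>0$, and that $\theta_l<1$, which holds by \cref{le1} and $(1+s_0)\theta>s_1$. One must also track the factor $(1+\alpha^{-1})\le 2$ with $\alpha=1/t\ge1$.
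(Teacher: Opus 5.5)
Your proposal is correct and follows the paper's proof. The $L^1$ bounds are read off from \cref{unihk} and \cref{c_1}. Each $L^2$ bound interpolates via \cref{mix} between the energy estimate $\|P_th\|_{\Lip^{1/2}}\lesssim t^{-1/2}\|h\|_{L^2(F,\mu)}$ and a $\Lip^1$ estimate. In (i)--(ii) that $\Lip^1$ estimate comes from \eqref{aa} or \eqref{cc} together with $\|h\|_{L^1(F,\mu)}\le\mu(F)^{1/2}\|h\|_{L^2(F,\mu)}$, and in (iii)--(iv) from \eqref{p=2} with $\alpha=1/t$.

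Your side observation is also correct. In the compact case, the uniform Lipschitz bound on $g^{\alpha}$ from \cref{continuity}(ii), used with $\alpha=1/t$, already gives $\|P_th\|_{\Lip^1}\lesssim\mu(F)^{1/2}t^{-1}\|h\|_{L^2(F,\mu)}$. Hence \eqref{bb} and \eqref{dd} in fact hold with the better rate $t^{-\kappa}$ in place of $t^{-(3\kappa-1)}$, with a slightly different constant.
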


\begin{rmk}\label{arigato}
The inequality \eqref{bb} with $\kappa=1/2$ holds without any assumptions.
\end{rmk}

\begin{proof}
  The inequalities \eqref{cc}, \eqref{ee}, and \eqref{gg} 
  are immediate consequences of \cref{c_1}.
  Also, \eqref{aa} follows from \cref{unihk}. 
  We next show \eqref{bb}. If $\kappa=1/2$, similar as in \eqref{pui}, we can check this from
  \begin{equation}\label{1098}
    \begin{aligned}
  \|P_th\|_{\Lip^{1/2}}
    \leq&\calE(P_th,P_th)^{1/2}\\
    =&\|(-\Delta)^{1/2}P_th\|_{L^2(F,\mu)}\\
    \lesssim&t^{-1/2}\|h\|_{L^2(F,\mu)}.    
    \end{aligned}
  \end{equation}
  And, the inequality for $\kappa=1$ follows from \eqref{aa} and $\|h\|_{L^1(F,\mu)}\leq \mu(F)^{1/2}\|h\|_{L^2(F,\mu)}$.
  For general $\kappa\in [1/2,1]$, we deduce the inequality from \cref{mix}.
  The inequality \eqref{dd} also follows from a similar argument.
  In fact, if $\kappa=1/2$, we have shown the desired inequality at \eqref{1098}, and 
  if $\kappa=1$, we may use \eqref{cc} and $\|h\|_{L^1(F,\mu)}\leq \mu(F)^{1/2}\|h\|_{L^2(F,\mu)}$ to derive
  \begin{align*}
    \|P_th\|_{\Lip^1}\lesssim \g{c}{c_1,1}t^{-2}\|h\|_{L^1(F,\mu)}\leq \mu(F)^{1/2}\g{c}{c_1,1}t^{-2}\|h\|_{L^2(F,\mu)}.
  \end{align*}
  We next deal with \eqref{ff}. 
  By taking $\alpha=t^{-1}$
  in \eqref{p=2}, we obtain
  \begin{align*}
    \|P_t h\|_{\Lip^{1}}\lesssim \g{c}{continuity,1}\left[ c_{\rm{A2}}\Gamma\left(1-\frac{s_1}{(1+s_0)\theta}\right) \right]^{-1/2}t^{-\left( 1-\frac{s_1}{2(1+s_0)\theta}\right)}\|h\|_{L^2(F,\mu)},
  \end{align*}
  which is the desired inequality for $\kappa=1$.
  Using \cref{mix} and \eqref{bb} with $\kappa=1/2$,
  we deduce \eqref{ff}.
  Finally, \eqref{hh} can be shown by replacing $\theta$ with $\Theta$.
\end{proof}

Finally we summarize results in this section.
Since 
these are direct consequences of \cref{suffcond1,suffcond2},
we omit proofs.

\begin{cor}\label{suff}
Suppose that $F$ is compact.
\begin{itemize}
  \item [$\rm{(i)}$]The condition $1_{\kappa,2}$ holds for $\kappa\in \left[ \frac{1}{2},\frac{2}{3}\right)$ with 
    \begin{align*}
      c_1(t,\kappa,2)=\g{c}{suffcond2,1}t^{-(3\kappa-1)},\quad t\in (0,1],
    \end{align*}
    and $2_{\kappa,p,q}$ holds for $p,q=1,2,$ and $\kappa\in [1/2,1]$ with the following constants:
    \begin{align*}
      c_2(\alpha,t,\kappa,1)=\g{c}{suffcond1,1}t^{-3},\quad c_2(\alpha,t,\kappa,2)=(\g{c}{suffcond1,1})^{1/2}t^{-5/2},
    \end{align*}
     \begin{align*}
       c_3(\alpha,t,\kappa,1)= \g{c}{suffcond1,2} \alpha^{-(2\kappa-1)}t^{-(\kappa+2)},
      \quad c_3(\alpha,t,\kappa,2)= (\g{c}{suffcond1,1})^{1/2}\g{c}{suffcond1,2} \alpha^{-(2\kappa-1)}t^{-(\kappa+5/2)}.
     \end{align*}

  \item [$\rm{(ii)}$]Assume that the assumption $\rm{(A1)}$ is satisfied. 
  Then the condition $1_{\kappa,p}$ holds for $p=1,2$ with the following constants:
   \begin{align*}
    c_1(t,\kappa,1)=\g{c}{c_1,1}t^{-\frac{2\kappa s_0+3\kappa-1}{1+s_0}},\quad t\in (0,1],\kappa\in \left[\frac{1}{2},\frac{s_0+1}{2s_0+3}\right),
   \end{align*}
      \begin{align*}
    c_1(t,\kappa,2)=\g{c}{suffcond2,2}t^{-(3\kappa-1)},\quad t\in (0,1],\kappa\in \left[\frac{1}{2},\frac{2}{3}\right).
   \end{align*}
   Also, the condition $2_{\kappa,p,q}$ is satisfied for $p,q=1,2,$ and $\kappa\in [1/2,1]$ with the following.
     \begin{equation*}
     c_2(\alpha,t,\kappa,1)=c_{\rm{A1}}t^{-\left(2+\frac{s_0}{1+s_0}\right)},\quad 
    c_2(\alpha,t,\kappa,2)=c_{\rm{A1}}^{1/2}t^{-\left(2+\frac{s_0}{2(1+s_0)}\right)},
  \end{equation*}
  \begin{equation*}
  c_3(\alpha,t,\kappa,1)= c_{\rm{A1}}^{1/2}\g{c}{suffcond1,2}\alpha^{-( 2\kappa-1 )} t^{-(\kappa+2+\frac{s_0}{2(1+s_0)})}.
  \end{equation*}
  The constant $c_3(\alpha,t,\kappa,2)$ is same as in $\mathrm{(i)}$.

  \item [$\rm{(iii)}$]
  Assume that the assumption $\rm{(A2)}$ is satisfied. 
  Then the condition $1_{\kappa,p}$ holds for $p=1,2$ with the following constants:
   \begin{align*}
    c_1(t,\kappa,1)=\g{c}{c_1,2}t^{-\frac{2[(1+s_0)\theta-s_1]\kappa+s_0\theta+s_1}{2(1+s_0)\theta}},\quad t\in (0,1],\kappa\in \left[\frac{1}{2},  \frac{(2+s_0)\theta-s_1}{2[(1+s_0)\theta-s_1]}\right),
   \end{align*}

   \begin{align*}
    c_1(t,\kappa,2)=\g{c}{suffcond1,3} t^{-[\kappa-\frac{s_1}{2(1+s_0)\theta}(2\kappa-1)]} \quad t\in (0,1],\kappa\in\left[\frac{1}{2},1\right].
   \end{align*}
   Also, the condition $2_{\kappa,p,q}$ is satisfied for $p,q=1,2$ and $\kappa\in [1/2,1]$ with the following.
     \begin{equation*}
     c_3(\alpha,t,\kappa,1)=c_{\rm{A1}}^{\kappa}\g{c}{suffcond1,3}\alpha^{-(1+\frac{s_1}{2(1+s_0)\theta})( 2\kappa-1 )} t^{-(\kappa+2+\frac{s_0}{2(1+s_0)})},
  \end{equation*}
  \begin{equation*}
    c_3(\alpha,t,\kappa,2)=\g{c}{suffcond1,3}\alpha^{-(1+\frac{s_1}{2(1+s_0)\theta})(2\kappa-1)}t^{-(\kappa+2)}.
  \end{equation*}
  The constants $c_2(\alpha,t,\kappa,1)$ and $c_2(\alpha,t,\kappa,2)$ are same as in $\rm{(ii)}$.

  \item [$\rm{(iv)}$]
    Assume that the assumption $\rm{(A3)}$ is satisfied. 
  Then the condition $1_{\kappa,p}$ holds for $p=1,2$ with the following constants:
   \begin{align*}
    c_1(t,\kappa,1)=\g{c}{c_1,3}t^{-\frac{2[(1+s_0)\Theta-s_1]\kappa+s_0\Theta+s_1}{2(1+s_0)\Theta}},\quad t\in (0,1],\kappa\in \left[\frac{1}{2},  \frac{(2+s_0)\Theta-s_1}{2[(1+s_0)\Theta-s_1]}\right),
   \end{align*}

   \begin{align*}
    c_1(t,\kappa,2)=\g{c}{suffcond1,4} t^{-[\kappa-\frac{s_1}{2(1+s_0)\Theta}(2\kappa-1)]} \quad t\in (0,1],\kappa\in\left[\frac{1}{2},1\right].
   \end{align*}
   Also, the condition $2_{\kappa,p,q}$ is satisfied for $p,q=1,2$ and $\kappa\in [1/2,1]$ with the following constants:  
     \begin{equation*}
     c_3(\alpha,t,\kappa,1)=c_{\rm{A1}}^{\kappa}\g{c}{suffcond1,4}\alpha^{-(1+\frac{s_1}{2(1+s_0)\Theta})( 2\kappa-1 )} t^{-(\kappa+2+\frac{s_0}{2(1+s_0)})},
  \end{equation*}
  \begin{equation*}
    c_3(\alpha,t,\kappa,2)=\g{c}{suffcond1,4}\alpha^{-(1+\frac{s_1}{2(1+s_0)\Theta})(2\kappa-1)}t^{-(\kappa+2)}.
  \end{equation*}
  Here, $\Theta=\frac{1+s_0-s_1}{1-(s_0-s_1)(2+s_0)}$.
  The constants $c_2(\alpha,t,\kappa,1)$ and $c_2(\alpha,t,\kappa,2)$ are same as in $\rm{(ii)}$.
\end{itemize}
\end{cor}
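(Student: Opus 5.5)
The plan is to read off each item of \cref{suff} from \cref{suffcond1,suffcond2}. Two extra things need checking. First, the integrability requirement $c_1([0,t],\kappa,p)<\infty$ in condition $1_{\kappa,p}$. Second, times $t>1$ in $1_{\kappa,p}$, which the propositions do not cover.

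\textbf{Condition $2_{\kappa,p,q}$.} This condition only involves $\alpha,t\in(0,1]$, so the constants can be copied directly.
\begin{itemize}
\item In (i), $c_2(\cdot,1)$, $c_2(\cdot,2)$, $c_3(\cdot,1)$ and $c_3(\cdot,2)$ are the four bounds of \cref{suffcond1}(i).
\item In (ii), the sup-norm bounds and the $L^1$--H\"older bound are those of \cref{suffcond1}(ii), and $c_3(\cdot,2)$ is again \eqref{alw}.
\item In (iii) and (iv), the H\"older bounds are those of \cref{suffcond1}(iii),(iv). The sup-norm bounds are reused from (ii), since (A2) and (A3) contain (A1).
\item In the (A1) cases, the on-diagonal bound \eqref{uhka1} is only stated for $t<(\diam F)^{1+s_0}$. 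For larger $t\le1$ I would use \eqref{kig104} and absorb the difference into $c_{\rm{A1}}$, at the cost of a constant depending on $\diam F$ and $\mu(F)$.
\end{itemize}

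\textbf{Condition $1_{\kappa,p}$ on $(0,1]$.} The Lipschitz bounds for $t\in(0,1]$ come from \cref{suffcond2}: \eqref{bb} gives (i), \eqref{cc} and \eqref{dd} give (ii), \eqref{ee} and \eqref{ff} give (iii), and \eqref{gg} and \eqref{hh} give (iv). The remaining task is to check that $s\mapsto c_1(s,\kappa,p)$ is integrable near $0$, i.e.\ that each time exponent is $<1$.
\begin{itemize}
\item For $t^{-(3\kappa-1)}$ this holds exactly when $\kappa<2/3$.
\item For $t^{-\frac{2\kappa s_0+3\kappa-1}{1+s_0}}$ it holds when $\kappa<\frac{s_0+2}{2s_0+3}$. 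This is implied by the stated restriction $\kappa<\frac{s_0+1}{2s_0+3}$.
\item For \eqref{ee} the condition $2[(1+s_0)\theta-s_1]\kappa+s_0\theta+s_1<2(1+s_0)\theta$ rearranges to the stated range $\kappa<\frac{(2+s_0)\theta-s_1}{2[(1+s_0)\theta-s_1]}$. Here $(1+s_0)\theta>s_1$ by (A2).
\item For \eqref{ff} the exponent is $\kappa-\frac{s_1}{2(1+s_0)\theta}(2\kappa-1)$. It is at most $1-\frac{s_1}{2(1+s_0)\theta}<1$, with the maximum at $\kappa=1$, so every $\kappa\in[1/2,1]$ is allowed.
\item Case (iv) is identical with $\theta$ replaced by $\Theta$. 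Here $(1+s_0)\Theta>s_1$ by \cref{le1}.
\end{itemize}

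\textbf{Condition $1_{\kappa,p}$ for $t>1$.} I would define $c_1(t,\kappa,p):=c_1(1,\kappa,p)$. This is justified by the semigroup property together with $L^p$-contractivity of $P_{t-1}$ for $p=1,2$ (the semigroup is Markovian and symmetric):
\[
\|P_th\|_{\Lip^{\kappa}}=\|P_1(P_{t-1}h)\|_{\Lip^{\kappa}}\le c_1(1,\kappa,p)\|P_{t-1}h\|_{L^p(F,\mu)}\le c_1(1,\kappa,p)\|h\|_{L^p(F,\mu)}.
\]
Boundedness of $P_{t-1}h$ is preserved, so the hypothesis of \cref{suffcond2} still applies. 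With this choice $\int_0^t c_1(s,\kappa,p)\,ds<\infty$ for every $t$.

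The main obstacle is only bookkeeping. One must make sure each constant is matched to the right inequality. One must also confirm that the (A1)-based on-diagonal estimates used in \cref{suffcond1}(ii)--(iv) remain valid on all of $t\in(0,1]$, and not only below $(\diam F)^{1+s_0}$; if needed this is done by enlarging the constant through \eqref{kig104}.
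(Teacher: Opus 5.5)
Your proposal is correct and is essentially the paper's argument. The paper omits the proof as a direct consequence of \cref{suffcond1,suffcond2}, and your additional checks fill in points it leaves implicit: integrability of each $c_1(\cdot,\kappa,p)$ near $0$, the extension $c_1(t,\kappa,p):=c_1(1,\kappa,p)$ for $t>1$ via $P_t=P_1P_{t-1}$ and $L^p$-contractivity, and enlarging $c_{\rm{A1}}$ through \eqref{kig104} when $(\diam F)^{1+s_0}<1$.

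Two caveats concern the statement rather than your reasoning. First, since $\frac{s_0+1}{2s_0+3}<\frac12$, the $\kappa$-range given for $c_1(t,\kappa,1)$ in (ii) is empty as written; your computation shows the intended range is $[\frac12,\frac{s_0+2}{2s_0+3})$. Second, the entry $c_3(\alpha,t,\kappa,1)$ in (i) is copied from the $L^1$ H\"older bound of \cref{suffcond1}(i). That proof uses only $p(t,x,x)\lesssim(\mu(F)^{-1}+\diam F)\,t^{-1}$, so it actually yields the same constant as \eqref{alw}, with $t^{-(\kappa+5/2)}$ rather than $t^{-(\kappa+2)}$.
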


\section{Estimates for resolvents}\label{efr}
The aim of this section is to establish estimates for convergence rates of the resolvents.
To this end, we assume that $(F_n,R_n,\mu_n)\in \bbF_c^{\circ}$ converges to 
$(F,R,\mu)\in \bbF_c^{\circ}$ as described in \cref{nonrootcpt}. 
Let $(M,d)$ be a compact metric space in which all $F_n$ and $F$ are isometrically embedded.
In this setting, our main result is the following \cref{estres},
which gives a bound the difference $|G^{\alpha}f(x)-G^{\alpha,n}f(x_n)|$ in terms of the Hausdorff distance,
$\BL^{\kappa}$ metric, and $d(x,x_n)$ for $f\in\BL^{\kappa}(M)$.
 Note that we assume that any metric space
contains at least two distinct points. Also, we write $A\lesssim B$ if there exists a universal constant $C$ 
such that $A\leq CB$.

We fix $x_0,x_1\in F$ and $x_0^n,x_1^n\in F_n$ satisfying the following:
    \begin{itemize}
      \item [(i)]$x_0\neq x_1,\;x_0^n\neq x_1^n$,
      \item [(ii)]$d(x_0,x_0^n),d(x_1,x_1^n)\leq d_{\mathrm{H}}(F,F_n)$.
    \end{itemize}
    We also set 
    \begin{align}\label{iroiro}
    \delta_n:=d(x_0,x_1)\wedge d(x_0^n,x_1^n),\quad 
    \quad D_n=(\mu(F)\diam F)\vee (\mu_n(F_n)\diam F_n),\quad \alpha_n=\frac{1}{2}\wedge\frac{1}{2D_n^2},
    \end{align}
    and $K_n=F\cup F_n$. Note that $D_n$, $n\in\N$, is a bounded sequence.
    In the statement of \cref{estres}, we refer to constants $\g{c}{itkill}$ and $\g{c}{sle,n}(\delta_n)$,
    which are defined below in the statement of \cref{itkill} and \cref{sle}, respectively.
    More generally, as in the previous section, 
    we denote constants appearing in proposition, lemma, or theorem X by $c_X$ or $c^{(n)}_X$.  

    Then the following holds. Note that we do not need convergence here.
\begin{thm}\label{estres}
For $\alpha\in (0,\alpha_n),\;\kappa\in (0,1],\;y\in F,\;y_n\in F_n$ and $f\in\BL^{\kappa}(M)$, we have that
\begin{equation*}
  \begin{aligned}
    |G^{\alpha}f(y)-G^{\alpha,n}f(y_n)|
  \lesssim&\g{c}{estres,n}\Bigl\{ \|f\|_{K_n}  (d(y,y_n)+d_{\mathrm{H}}(F,F_n))+\|f\|_{\BL^{\kappa}(K_n)}d_{\BL^{\kappa}}(\mu,\mu_n)\Bigr\}\\
  \leq&\g{c}{estres,n}\|f\|_{\BL^{\kappa}(K_n)}(d(y,y_n)+d_{\mathrm{H}}(F,F_n)+d_{\BL^{\kappa}}(\mu,\mu_n)).
  \end{aligned}
\end{equation*}
Here, 
$\g{c}{estres,n}=\g{c}{estres,n}(\alpha,\delta_n)$ is defined by setting
\begin{align*}
  \g{c}{estres,n}(\alpha,\delta_n)=\g{c}{itkill}\left( 1+\frac{\mu(F)\diam F}{\g{c}{sle,n}(\delta_n)} \right)\left(1+\frac{1}{\alpha \g{c}{sle,n}(\delta_n)}\right).
\end{align*}
\end{thm}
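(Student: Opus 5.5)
The plan is to follow Croydon's decomposition of the $\alpha$-resolvent through the Green operator of the process killed at a single point, as recorded in the proof of \cref{continuity}(ii). Killing at $x_0$, the strong Markov property gives
\begin{align*}
G^{\alpha}f(y)=G^{\alpha}_{x_0}f(y)+E_y[e^{-\alpha\sigma_{x_0}}]\,G^{\alpha}f(x_0),
\end{align*}
and $G^{\alpha}f(x_0)$ is obtained by solving this identity along excursions between $x_0$ and $x_1$. In the compact case it is cleaner to work with the unkilled Green function $g_{x_0}(y,z)=\frac12\bigl(R(y,x_0)+R(z,x_0)-R(y,z)\bigr)$. This function is explicit in the resistance metric and hence $1$-Lipschitz in each variable on $M$ once the metrics are compared through the embedding.

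The first step is an iteration/killing lemma (this is what $c_{\rm itkill}$ refers to). The resolvent identity $G^{\alpha}=G_{x_0}-\alpha G_{x_0}G^{\alpha}$, with a correction for the value at $x_0$, expresses $G^{\alpha}f(y)$ through the finitely many quantities $\int g_{x_0}(y,z)f(z)\mu(dz)$, $\int g_{x_0}(y,z)\,G^{\alpha}f(z)\mu(dz)$, $\mu(F)$ and $\int f\,d\mu$. The same expression holds on $F_n$ with $x_0^n$ in place of $x_0$. The difference of the two expressions is then a sum of terms of the following three types.
\begin{itemize}
\item \emph{Point shifts.} Moving $y\to y_n$ and $x_0\to x_0^n$ changes the Green kernel by at most a constant times $d(y,y_n)+d_{\rm H}(F,F_n)$, using the Lipschitz bounds of \cref{continuity}(i).
\item \emph{Measure changes.} Replacing $\mu$ by $\mu_n$ in integrals of $z\mapsto g_{x_0}(y,z)f(z)$ costs at most $\|g\,f\|_{\BL^{\kappa}}\,d_{\BL^{\kappa}}(\mu,\mu_n)$. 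By submultiplicativity (\cref{submult}) and the McShane extension (\cref{mcshane}), which extends the functions from $K_n$ to $M$, this is bounded by a constant times $(1+\diam F)\,\|f\|_{\BL^{\kappa}(K_n)}\,d_{\BL^{\kappa}}(\mu,\mu_n)$.
\item \emph{Resolvent-composition terms.} These involve $G^{\alpha}f$ itself, whose Hölder norm is controlled by \cref{continuity}(ii).
\end{itemize}
Where the two spaces differ pointwise, I would transfer between $F$ and $F_n$ by nearest-point maps supplied by the Hausdorff distance.

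The second step handles the self-referential terms. Writing the difference $e:=\sup|G^{\alpha}f-G^{\alpha,n}f\circ\pi|$, the resolvent identity leads to an inequality of the form
\begin{align*}
e\le \text{error}+\alpha\,D_n^{2}\,e .
\end{align*}
Choosing $\alpha<\alpha_n=\tfrac12\wedge\frac{1}{2D_n^2}$ makes the coefficient at most $\tfrac12$, so the term can be absorbed into the left-hand side. This is where the restriction $\alpha<\alpha_n$ enters. The remaining prefactors come from the denominator $1-E_{x_0}[e^{-\alpha\sigma_{x_0,x_1}}]$, which is bounded below through the commute time identity \cref{cti} by a quantity $c_{\rm sle,n}(\delta_n)$ depending on $\delta_n$. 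This produces the factors $\bigl(1+\mu(F)\diam F/c_{\rm sle,n}\bigr)\bigl(1+1/(\alpha c_{\rm sle,n})\bigr)$.

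The main obstacle is obtaining a uniform lower bound for $1-E_{x_0}[e^{-\alpha\sigma}]$ on both spaces without assuming convergence. I would try $E[e^{-\alpha\sigma}]\le 1-\alpha E\sigma+\alpha^2E\sigma^2/2$, together with $E\sigma=R\mu(F)\ge\delta_n\mu(F)$ and a second-moment bound $E\sigma^2\lesssim (R\mu(F)\diam F)^2$ obtained from the Green function. Taking $\alpha D_n^2$ small then gives a lower bound of order $\alpha\delta_n\mu(F)$. The final inequality in the statement follows from $\|f\|_{K_n}\le\|f\|_{\BL^{\kappa}(K_n)}$.
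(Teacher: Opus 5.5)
Your skeleton is the paper's. You decompose at $x_0$ with excursions to $x_1$, compare killed operators across spaces through the explicit kernel $\Lambda^{x,y}(z)=\frac12(d(x,y)+d(x,z)-d(y,z))$ on $M$ (\cref{killres}), and divide by an order-$\alpha$ lower bound for $1-E_{x_0}[e^{-\alpha\sigma_{x_0,x_1}}]$.

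The genuinely different ingredient is that lower bound. The paper's \cref{sle} derives it from the hitting estimate \cref{4.2-a}, so $\g{c}{sle,n}(\delta_n)$ carries the ball-mass infimum $m_n(\gamma)$. Your moment route avoids this and is sharper, but the second-moment bound you wrote is false. It is not scale-invariant: together with $E\sigma^2\ge(E\sigma)^2$ it would force $\diam F\gtrsim 1$. The correct bound comes from Kac's moment formula together with $\sup_z E_z[\sigma_x]=\sup_z\int g_x(z,w)\mu(dw)\le\mu(F)\diam F$ (\cref{killedgreen}). It is $E_{x_0}[\sigma_{x_0,x_1}^2]\le 4\mu(F)\diam F\,E_{x_0}[\sigma_{x_0,x_1}]$, and hence $1-E_{x_0}[e^{-\alpha\sigma_{x_0,x_1}}]\ge\alpha R(x_0,x_1)\mu(F)\,(1-2\alpha\mu(F)\diam F)$.

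Since the theorem covers every $\alpha<\alpha_n$, you cannot ``take $\alpha$ small''. Instead, use this bound for $\alpha\le\alpha_n/2$, where $2\alpha\mu(F)\diam F\le\alpha_n D_n\le\frac12$. Above that, use monotonicity of $\alpha\mapsto 1-E[e^{-\alpha\sigma}]$. This gives $\ge\frac14\alpha\delta_n(\mu(F)\wedge\mu_n(F_n))$ on both spaces. Because $\g{c}{sle,n}(\delta_n)\le T\lesssim m_n(\gamma)\delta_n\le\delta_n(\mu(F)\wedge\mu_n(F_n))$, this dominates the paper's bound. The paper's three-item bookkeeping then yields a constant $\lesssim\g{c}{estres,n}$ with no dependence on $m_n(\gamma)$. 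By contrast, the paper must control $m_n(\gamma)$ separately in the non-compact case through \cref{estresrmk} and \cref{bdd}.

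Three smaller repairs.
\begin{itemize}
\item Your absorption argument for $G^\alpha_{x_0}=G_{x_0}-\alpha G_{x_0}G^\alpha_{x_0}$ is a valid substitute for the Neumann series \eqref{killalpha} with \cref{itkill}. Its coefficient, coming from the last term of \eqref{4-1}, is $\alpha\mu(F)\diam F<\frac12$, not $\alpha D_n^2$.
\item Do not route the H\"older norms through \cref{continuity}(ii), since that puts $\g{c}{continuity,1}$ into the constant. By \eqref{1lip}, $\|G^\alpha_{x_0}f\|_{\Lip^1}\le\|f-\alpha G^\alpha_{x_0}f\|_{L^1(F,\mu)}\le 2\|f\|_{L^1(F,\mu)}$. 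The same bound holds for $G^\alpha f$ via $G^\alpha f-G^\alpha f(x_0)=G_{x_0}(f-\alpha G^\alpha f)$.
\item Commit to one way of fixing $G^\alpha f(x_0)$. Via excursions, as in your opening, you are on the paper's route. Via $\mu$-invariance, $\int G^\alpha f\,d\mu=\alpha^{-1}\int f\,d\mu$, as your mention of $\mu(F)$ and $\int f\,d\mu$ suggests, no denominator appears and $x_1,\delta_n$ drop out. The prefactor is then $1/(\alpha\mu(F))\lesssim\g{c}{itkill}/(\alpha\g{c}{sle,n}(\delta_n))$, so your ``main obstacle'' disappears.
\end{itemize}
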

\begin{rmk}\label{estresrmk}
If $x_0,x_1,x_0^n,x_1^n\in B(\rho,r)$
for some $\rho\in M,\;r<\infty$, then we can replace $m_n(\gamma)$ in the definition of $\g{c}{sle,n}(\delta)$ by 
$\displaystyle \inf_{b\in B(\rho,r)\cap F}\mu(B(b,\gamma\delta))\wedge \inf_{b\in B(\rho,r)\cap F_n}\mu_n(B(b,\gamma\delta))$.
Since the spaces considering in this section are all compact, this remark is unimportant.
However, it is useful when we consider the non-compact case.
See \cref{slermk} and the proof of \cref{estres}.
\end{rmk}

\begin{rmk}
For applications to the non-compact case, all coefficients of the estimates in this section depend on $n$. 
However, it is possible to establish the $n$-independent inequalities under our assumption that $(F_n,R_n,\mu_n)\to (F,R,\mu)$.
In fact, most of constants appearing in this section are written in terms of $\mu(F_n)$ or $\diam F_n$, and this is bounded in $n$.   
\end{rmk}

Here, we use the following notation.
\begin{itemize}
  \item As in \Cref{cont}, we denote the corresponding Hunt process, resistance form, generator, semigroup, resolvent, heat kernel and $\alpha$-potential density by
$(X=(X_t)_t,(P_x)_{x\in F}),\;(\calE,\calF),\;\Delta,\;(P_t)_{t},\;(G^{\alpha})_{\alpha},\;p,$ and $g^{\alpha}$, respectively. 
We abbreviate $p(t,x,\cdot)$ to $p^{t,x}$.
\item We also denote the $n$-version by 
$(X^n=(X^n_t)_t,(P^n_x)_{x\in F_n}),\;(\calE_n,\calF_n),\;\Delta_n,\;(P_t^n)_{t},\;(G^{\alpha,n})_{\alpha},\;p_n,$ and 
$g^{\alpha,n}$, respectively. 
\end{itemize}

We mainly adapt ideas from \cite[Section 5]{Cr18}.

To begin with, we introduce the basic properties of a killed Green operator.
For a non-empty closed subset $A\subseteq F$ and 
measurable function $f:F\to\R_{\geq 0}$, define $G_Af:F\to\R$ by setting
\begin{align*}
  G_Af(y)=E_y\left[ \int_{0}^{\sigma_A}f(X_t)dt \right].
\end{align*} 
Recall that $\sigma_A$ is a hitting time of $A$, and so
$G_A$ is the Green operator
of the process $X$ killed on hitting $A$. 
For $x\in F$, we will use the abbreviation $G_x:=G_{\{x\}}$.
\begin{lem}[{\cite[Lemma 3.1]{Cr18}}]\label{killedgreen}
Let $A$ be a non-empty open subset of $F$. 
For any measurable $f:F\to\mathbb{R}_{\geq 0}$, it holds that
\begin{equation*}
    G_A f(y)=\int_F g_A(y,z)f(z)\mu(dz), \quad\forall y\in F,
\end{equation*}
where
\begin{equation*}
    g_A(y,z):=\frac{R(y,A)+R(z,A)-R_A(y,z)}{2},
\end{equation*}
with
\begin{equation*}
    R_A(y,z):= \sup\{\calE(f,f)^{-1} : f \in \calF, f(y)=1, f(z)=0, f|_{A}\;\;\rm{constant} \}
\end{equation*}
being the resistance between $y$ and $z$ in the network with $A$  `fused'. 
Note that the function $g_A$ satisfies $0 \le g_A(y,z)=g_A(z,y) \le g_A(y,y) = R(y,A)$ and
\begin{equation}\label{1lip}
    |g_A(y,z)-g_A(y,w)|\leq R(w,z), \quad \forall y, z, w \in F.
\end{equation}
Furthermore, $g_{x}$ is given by
\begin{equation*}
    g_{x}(y,z):=\frac{R(x,y)+R(x,z)-R(y,z)}{2}.
\end{equation*}
\end{lem}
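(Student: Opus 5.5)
The plan is to reduce everything to the single-point case by \emph{fusing} $A$ into one point. The ``open'' in the statement should be read as ``closed'', since $\sigma_A$ is a hitting time and $g_A$ is defined via $\calF(A)$. Then I would identify the kernel of $G_A$ with the Green function characterised in \eqref{repro}. Finally I would read off the stated properties from the resulting explicit formula.

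\emph{Step 1: kernel representation.} By the discussion at the end of \Cref{resis} and \cite[Theorem 4.1]{Kig12}, the Green function $g_A$ of $X^{F\setminus A}$ is the unique function with $g_A^y\in\calF(A)$ and $\calE(g_A^y,u)=u(y)$ for $u\in\calF(A)$. The Dirichlet form of the killed process has domain $\calF(A)$ (the line below \cite[Definition 10.2]{Kig12}). For $f\ge0$ bounded with compact support, both $G_Af$ and $\int_F g_A(\cdot,z)f(z)\mu(dz)$ therefore lie in $\calF(A)$ and satisfy $\calE(\,\cdot\,,u)=(f,u)_{L^2}$ for all $u\in\calF(A)$. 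Uniqueness in the Hilbert space $(\calF(A),\calE)$ identifies them. I would then extend to general measurable $f\ge0$ by monotone convergence. If $F\setminus A$ is not relatively compact, I would first work on $U_k=(F\setminus A)\cap B(x_0,k)$ and let $k\to\infty$, as in the proof of \cref{continuity}(i).

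\emph{Step 2: explicit formula.} First take $A=\{x\}$. The function $g_x^y-g_x^z$ satisfies $\calE(g_x^y-g_x^z,u)=u(y)-u(z)$ for every $u\in\calF(x)$. Since both sides are invariant under adding constants, this identity holds for all $u\in\calF$, so $g_x^y-g_x^z$ is the unit-current potential from $y$ to $z$. Its energy is therefore $R(y,z)$. Expanding that energy with the reproducing property gives
\[
g_x(y,y)-2g_x(y,z)+g_x(z,z)=R(y,z).
\]
Also $g_x(y,y)=\calE(g_x^y,g_x^y)=R(x,y)$. This yields $g_x(y,z)=\tfrac12\bigl(R(x,y)+R(x,z)-R(y,z)\bigr)$. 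For general closed $A$, I would pass to the fused space $F_A:=(F\setminus A)\cup\{a\}$. On it one takes the resistance form given by restricting $\calF$ to functions constant on $A$. Its resistance metric is $R_A$, with $R_A(y,a)=R(y,A)$. Moreover $\calF(A)$ corresponds exactly to the functions vanishing at $a$. So $g_A$ is the point-killed Green function $g_a$ on $F_A$, and the single-point formula gives the claimed expression. The main obstacle is checking that this fused object really is a resistance form, in particular the completeness condition (RF2) and the Markov property (RF5). I would verify this directly from closedness of the constraint ``constant on $A$'' in $\calF/\!\sim$, or else cite Kigami's treatment of shorted networks \cite{Kig12}.

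\emph{Step 3: properties.} Write $g_A^y=R(y,A)h_y$, where $h_y$ is the equilibrium potential with $h_y(y)=1$ and $h_y|_A=0$. By (RF5) and uniqueness of the minimiser, $0\le h_y\le1$. This gives $0\le g_A(y,z)\le g_A(y,y)=R(y,A)$, and symmetry is evident from the formula. For \eqref{1lip} I would use
\[
2\bigl(g_A(y,z)-g_A(y,w)\bigr)=\bigl(R_A(z,a)-R_A(w,a)\bigr)-\bigl(R_A(y,z)-R_A(y,w)\bigr).
\]
By the triangle inequality for the metric $R_A$, each bracket is at most $R_A(z,w)$ in absolute value. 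Finally, $R_A\le R$ because the supremum defining $R_A$ runs over a smaller class of functions than that defining $R$. Together these give $|g_A(y,z)-g_A(y,w)|\le R(z,w)$.
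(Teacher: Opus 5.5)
Your proposal is correct. It is essentially the standard argument behind the citation: the paper gives no proof of its own, importing the lemma from Croydon, whose proof rests on Kigami's Green function on $(\calF(A),\calE)$, its identification with the Green operator of the killed process, and the shorted-network formula. You are also right to read ``open'' as ``closed''.

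One caveat: $F$ is compact in this section, so only your relatively compact case is needed. The proposed extension via $U_k\uparrow F\setminus A$ should be dropped, because it can fail in general. The limit $\lim_k g_{F\setminus U_k}(y,\cdot)$ is the reproducing kernel of the $\calE$-closure of $\bigcup_k\calF(F\setminus U_k)$. On transient non-compact spaces (e.g.\ a binary real tree with $A=\{\rho\}$), that closure is a proper subspace of $\calF(A)$, so the limit need not agree with the formula in terms of $R$.
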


Owing to this lemma, we can naturally extend the domain of $G_x,x\in F$ and $G_xf$ 
by setting 
\begin{align*}
  G_xf(y)=\int_F \Lambda^{x,y}(z)f(z)\mu(dz),\quad\mathrm{and}\quad \Lambda^{x,y}(z)=\frac{d(x,y)+d(x,z)-d(y,z)}{2},\quad x,y,z\in M
\end{align*}
for bounded measurable function $f:M\to\R$.
Similarly, we extend the domain of $G_x^n$.
Note that $\Lambda^{x,y}$ satisfies $0\leq \Lambda^{x,y}\leq d(x,y)$ and $\|\Lambda^{x,y}\|_{\Lip^1}\leq 1$,
and thus, it holds that 
\begin{align}\label{4-6}
  |\Lambda^{x,y}(z)-\Lambda^{x,y}(w)|
  =|\Lambda^{x,y}(z)-\Lambda^{x,y}(w)|^{1-\kappa}|\Lambda^{x,y}(z)-\Lambda^{x,y}(w)|^{\kappa}
  \leq 2d(x,y)^{1-\kappa}d(z,w)^{\kappa}
\end{align}
for $z,w\in M$ and $\kappa\in (0,1]$.

In this section, we fix $\kappa\in (0,1]$, 
and denote the Hausdorff distance on $\calC_c(M)$ and $\BL^{\kappa}$ metric on $\calM(M)_{\rm{fin}}=\calM(M)$
by $d_{\mathrm{H}}$ and $d_{\BL^{\kappa}}$, respectively (recall the definitions from \cref{haustop} and \eqref{metblk}).
\begin{lem}\label{killres}
For any $f,\psi,\varphi\in\BL^{\kappa},x,y,x_n,y_n\in M$ and a non-empty subset $A\subseteq M$,
the following hold.
  \begin{align}\label{4-1}
     |G_x\psi(y)-G_{x_n}^n\varphi(y_n)|
  \leq&\|\varphi\|_{L^1(F_n,\mu_n)}(d(x,x_n)+d(y,y_n))\notag\\
    &+(2d(x,y)^{1-\kappa}+d(x,y))\|\varphi\|_{\BL^{\kappa}(K_n)}d_{\BL^{\kappa}}(\mu,\mu_n)\\
    &+d(x,y)\|\varphi-\psi\|_{L^1(F,\mu)}\notag
  \end{align}
  \begin{equation}\label{4-4}
    \begin{aligned}
    \|G_xf-G^n_{x_n}f\|_F
    \leq&\|f\|_{L^1(F_n,\mu_n)} d(x,x_n)\\
    &+3(1+\diam(F))\|f\|_{\BL^{\kappa}(K_n)}d_{\BL^{\kappa}}(\mu,\mu_n)  
    \end{aligned}
  \end{equation}
  \begin{equation}\label{4-2}
   \|G_xf\|_{A} \leq\sup_{a\in A}d(x,a)\|f\|_{L^1(F,\mu)}
  \end{equation}
  \begin{equation}\label{4-3}
    \|G_xf\|_{\Lip^{\kappa}(A)}\leq\diam(A)^{1-\kappa}\|f\|_{L^1(F,\mu)}
  \end{equation}
\end{lem}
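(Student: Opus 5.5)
The plan is to work directly with the kernel representation $G_x\psi(y)=\int_F \Lambda^{x,y}(z)\psi(z)\mu(dz)$ and its $n$-analogue with $\mu_n$ on $F_n$. Each of the four inequalities should then reduce to elementary properties of $\Lambda^{x,y}$:
\begin{itemize}
\item $0\le\Lambda^{x,y}\le d(x,y)$;
\item $\|\Lambda^{x,y}\|_{\Lip^1}\le 1$, together with the mixed H\"older bound \eqref{4-6};
\item the Lipschitz dependence on the parameters, $|\Lambda^{x,y}(z)-\Lambda^{x',y'}(z)|\le d(x,x')+d(y,y')$.
\end{itemize}
The last property follows from the triangle inequality. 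Changing $x$ to $x'$ alters the numerator $d(x,y)+d(x,z)-d(y,z)$ by at most $2d(x,x')$, and similarly for $y$. I also record the variant $|\Lambda^{x,a}(z)-\Lambda^{x,b}(z)|\le d(a,b)$.

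I will take the easy bounds first. For \eqref{4-2}, I bound $|G_xf(a)|\le \sup_z\Lambda^{x,a}(z)\,\|f\|_{L^1(F,\mu)}\le d(x,a)\|f\|_{L^1(F,\mu)}$ and take the supremum over $a\in A$. For \eqref{4-3}, the variant bound gives $|G_xf(a)-G_xf(b)|\le d(a,b)\|f\|_{L^1(F,\mu)}$ for $a,b\in A$. I then write $d(a,b)=d(a,b)^{1-\kappa}d(a,b)^{\kappa}\le\diam(A)^{1-\kappa}d(a,b)^\kappa$.

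For \eqref{4-1} I split the difference into three terms:
\begin{align*}
G_x\psi(y)-G^n_{x_n}\varphi(y_n)
=&\int_F\Lambda^{x,y}(\psi-\varphi)\,d\mu
+\Bigl[\int_F\Lambda^{x,y}\varphi\,d\mu-\int_{F_n}\Lambda^{x,y}\varphi\,d\mu_n\Bigr]\\
&+\int_{F_n}(\Lambda^{x,y}-\Lambda^{x_n,y_n})\varphi\,d\mu_n .
\end{align*}
The first term is at most $d(x,y)\|\varphi-\psi\|_{L^1(F,\mu)}$. The third term is at most $(d(x,x_n)+d(y,y_n))\|\varphi\|_{L^1(F_n,\mu_n)}$ by the parameter-Lipschitz property.

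The middle term is the only point needing care. The definition of $d_{\BL^\kappa}$ requires a test function on all of $M$, while $\varphi$ is only controlled on $K_n$. Since both integrals see only values on $K_n=F\cup F_n$, I replace $\Lambda^{x,y}\varphi|_{K_n}$ by its McShane extension (\cref{mcshane}). This leaves the integrals unchanged and preserves the $\BL^\kappa(K_n)$ norm. By the product rule of \cref{submult}, with \eqref{4-6} controlling the H\"older constant of $\Lambda^{x,y}$,
\begin{align*}
\|\Lambda^{x,y}\varphi\|_{\BL^\kappa(K_n)}
&\le \|\Lambda^{x,y}\|_{K_n}\|\varphi\|_{\BL^{\kappa}(K_n)}+\|\Lambda^{x,y}\|_{\Lip^\kappa}\|\varphi\|_{K_n}\\
&\le (d(x,y)+2d(x,y)^{1-\kappa})\|\varphi\|_{\BL^\kappa(K_n)}.
\end{align*}
This yields the middle term of \eqref{4-1}.

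Finally, \eqref{4-4} is the specialisation $\psi=\varphi=f$, $y_n=y\in F$, followed by a supremum over $y\in F$. The $\|\varphi-\psi\|_{L^1}$ term vanishes and $d(y,y_n)=0$. The prefactor $2d(x,y)^{1-\kappa}+d(x,y)$ is bounded by $3(1+\diam F)$ using $d(x,y)\le\diam F$ (taking $x\in F$, as in the intended applications) and $r^{1-\kappa}\le 1+r$. I expect no genuine obstacle here. The one subtle step is the extension argument in the middle term, which ensures that only $\|\varphi\|_{\BL^\kappa(K_n)}$ rather than a norm on all of $M$ enters.
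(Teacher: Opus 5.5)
Your proposal is correct and follows the paper's proof essentially verbatim. It uses the same three-term split of $G_x\psi(y)-G^n_{x_n}\varphi(y_n)$, the McShane extension of $\Lambda^{x,y}\varphi|_{K_n}$ together with \cref{submult} and \eqref{4-6} for the $\mu-\mu_n$ term, and obtains \eqref{4-4} by specialising \eqref{4-1}. Your caveat that \eqref{4-4} needs $x\in F$, so that $d(x,y)\le\diam F$, is exactly what the paper's one-line deduction implicitly uses, and it is how the lemma is applied later.
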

\begin{proof}
  The inequality \eqref{4-4} follows from \eqref{4-1}.
  Also, we can verify \eqref{4-2} and \eqref{4-3} from
  the definition of $G_xf$ and \eqref{1lip}.
  It remains to show \eqref{4-1}.
  Note that, by considering McShane extension of $g|_{\supp\nu\cup\supp\lambda}$ (recall its definition from \cref{Mcext}),
  we obtain 
  \begin{align*}
    \left| \int_M gd(\nu-\lambda) \right|\leq \|g\|_{\BL^{\kappa}(\supp\nu\cup\supp\lambda)}d_{\BL^{\kappa}}(\nu,\lambda)
  \end{align*}
  for $g\in\BL^{\kappa}(M)$ and $\nu,\lambda\in \calM(M)$.
   Using this, the submultiplicativity of $\|\cdot\|_{\BL^{\kappa}}$ (see \cref{submult}), and \eqref{4-6},
   we conclude that
      \begin{align*}
    |G_x\psi(y)-G_{x_n}^n\varphi(y_n)|
    =&\left|\int_M \Lambda^{x,y}\psi d\mu-\int_M\Lambda^{x_n,y_n}\varphi d\mu_n  \right|\\
    \leq&\left|\int_{F_n} (\Lambda^{x,y}-\Lambda^{x_n,y_n})\varphi d\mu_n\right|
    +\left| \int_{K}\Lambda^{x,y}\varphi d(\mu-\mu_n) \right|
    +\left| \int_{F}\Lambda^{x,y}(\varphi-\psi)d\mu \right|\\
    \leq&\|\varphi\|_{L^1(F_n,\mu_n)}(d(x,x_n)+d(y,y_n))\\
    &+(2d(x,y)^{1-\kappa}+d(x,y))\|\varphi\|_{\BL^{\kappa}(K_n)}d_{\BL^{\kappa}}(\mu,\mu_n)\\
    &+d(x,y)\|\varphi-\psi\|_{L^1(F,\mu)}.
  \end{align*}
\end{proof}

We next prove an inequality for the iteration of the killed Green operator.

\begin{lem}\label{itkill}
For any $x,y\in F,\;x_n,y_n\in F_n,\;f\in \BL^{\kappa}(M)$ and $i\in\N$, we have that
\begin{align*}
  |G^{\circ i}_xf(y)-(G^n_{x_n})^{\circ i}f(y_n)|
  \leq &\g{c}{itkill}\left( 1+D_n^i +D_n^{i-1}\sum_{j=0}^{i-2}D_n^j\right)\\
  &\qquad\qquad\qquad\times \Bigl\{\|f\|_{K_n}(d(x,x_n)+d(y,y_n))+\|f\|_{\BL^{\kappa}(K_n)}d_{\BL^{\kappa}}(\mu,\mu_n)\Bigr\}, 
\end{align*}
where $\g{c}{itkill}=\diam K_n\mu_n(F_n)^2+(1+\diam K_n)^2\mu_n(F_n)+\diam F+1$.
\end{lem}

\begin{proof}
  We may assume $i\geq 2$.
  We first observe that
  \begin{equation}\label{1st}
    \begin{aligned}
    |G^{\circ i}_xf(y)-(G^n_{x_n})^{\circ i}f(y_n)|
     \leq&\mu_n(F_n)\|(G^n_{x_n})^{\circ (i-1)}f\|_{F_n}(d(x,x_n)+d(y,y_n))\\
    &+2(1+d(x,y))\|(G^n_{x_n})^{\circ (i-1)}f\|_{\BL^{\kappa}(K_n)}d_{\BL^{\kappa}}(\mu,\mu_n)\\
    &+\mu(F)d(x,y)\|G^{\circ (i-1)}_xf-(G^n_{x_n})^{\circ (i-1)}f\|_{F}  
    \end{aligned}
  \end{equation}
  by \eqref{4-1}.
  Iterating \eqref{4-2} and \eqref{4-3}, we obtain 
\begin{equation}\label{8}
  \begin{aligned}
    \|(G_{x_n}^n)^{\circ (i-1)}f\|_{K_n}\leq& \diam K_n \mu_n(F_n)D_n^{i-2}\|f\|_{F_n},\\
    \|(G_{x_n}^n)^{\circ (i-1)}f\|_{\Lip^{\kappa}(K_n)}\leq& (\diam K_n)^{1-\kappa} \mu_n(F_n)D_n^{i-2}\|f\|_{F_n}.
  \end{aligned}
\end{equation}
Substituting \eqref{8} into \eqref{1st} and setting $a_n=d(x,x_n)+d(y,y_n)+d_{\BL^{\kappa}}(\mu,\mu_n)$, it follows that
\begin{equation}\label{11}
  \begin{aligned}
    |G^{\circ i}_xf(y)-(G^n_{x_n})^{\circ i}f(y_n)|
  \leq& \diam K_n \mu_n(F_n)^2 D_n^{i-2}\|f\|_{F_n} (d(x,x_n)+d(y,y_n))\\
    &+2(1+\diam K_n)^2\mu_n(F_n)D_n^{i-2}\|f\|_{F_n}d_{\BL^{\kappa}}(\mu,\mu_n)\\
    &+D_n \|G^{\circ (i-1)}_xf-(G^n_{x_n})^{\circ (i-1)}f\|_{F}\\
    \leq&4 (\diam K_n \mu_n(F_n)^2+(1+\diam K_n)^2\mu_n(F_n) )D_n^{i-2}\|f\|_{F_n}a_n\\
    &+D_n \|G^{\circ (i-1)}_xf-(G^n_{x_n})^{\circ (i-1)}f\|_{F}.
  \end{aligned}
\end{equation}
We put $C_n=4 (\diam K_n \mu_n(F_n)^2+(1+\diam K_n)^2\mu_n(F_n) )$ and $A_n=C_n D_n^{i-2}\|f\|_{F_n}a_n$.
Hence, it suffices to bound $\|G^{\circ (i-1)}_xf-(G^n_{x_n})^{\circ (i-1)}f\|_F$ for $i\geq 2$.
A similar argument yields that
\begin{align}\label{9}
  \|G^{\circ i}_xf-(G^n_{x_n})^{\circ i}f\|_{F}
  \leq A_n
  +D_n \|G^{\circ (i-1)}_xf-(G^n_{x_n})^{\circ (i-1)}f\|_{F}
\end{align}
for $i\geq 2$.
A repeated application of \eqref{9} and \eqref{4-4} yields that
\begin{equation}\label{10}
  \begin{aligned}
   \|G^{\circ i}_xf-(G^n_{x_n})^{\circ i}f\|_{F}
  \leq&A_n\sum_{j=0}^{i-2}D_n^{j}
  +D_n^{i-1}\|G_xf-G^n_{x_n}f\|_{F}\\
  \leq &A_n\sum_{j=0}^{i-2}D_n^{j}
  +D_n^{i-1}[\mu_n(F_n) \|f\|_{F}d(x,x_n)\\
  &\qquad\qquad\qquad +2(1+\diam F) \|f\|_{\BL^{\kappa}(K_n)}d_{\BL^{\kappa}}(\mu,\mu_n)]. 
  \end{aligned}
\end{equation}
We write $B_n$ for $\mu_n(F_n) \|f\|_{F}d(x,x_n)+2(1+\diam F) \|f\|_{\BL^{\kappa}(K_n)}d_{\BL^{\kappa}}(\mu,\mu_n)$.
Note that this inequality is still valid for $i=1$.
Combining \eqref{11} and \eqref{10}, we deduce that
\begin{align*}
  |G^{\circ i}_xf(y)-(G^n_{x_n})^{\circ i}f(y_n)|
  \leq & A_n+A_n D_n \sum_{j=0}^{i-2}D_n^{j}+D_n^i B_n\\
  =&C_n D_n^{i-2}\|f\|_{F_n}a_n+C_n D_n^{i-2}\|f\|_{F_n}a_nD_n \sum_{j=0}^{i-2}D_n^{j}+D_n^i B_n\\
  \leq&\left( 1+D_n^i +D_n^{i-1}\sum_{j=0}^{i-2}D_n^j\right)(C_n\|f\|_{K_n}a_n+B_n)\\
  \lesssim&\g{c}{itkill}\left( 1+D_n^i +D_n^{i-1}\sum_{j=0}^{i-2}D_n^j\right)\\
  &\qquad\times\left[ \|f\|_{K_n}(d(x,x_n)+d(y,y_n))+\|f\|_{\BL^{\kappa}(K_n)}d_{\BL^{\kappa}}(\mu,\mu_n) \right],
\end{align*}
where $\g{c}{itkill}=\diam K_n\mu_n(F_n)^2+(1+\diam K_n)^2\mu(F_n)+\diam F+1$.
This completes the proof.
\end{proof}

To prove the following lemma, 
we recall the following identity (see \cite[p1955]{Cr18}).
 For any bounded measurable function $f:M \to \R$ and $\alpha \in \left(0, \frac{1}{\text{diam}(F)\mu(F)}\right)$, it holds that
\begin{align} \label{killalpha}
G^{\alpha}_xf(y)=\sum_{i\in\N}(-\alpha)^{i-1}G_x^{\circ i}f(y)
\end{align}
for all $x,y\in F$.

Recall the definition of $\alpha_n$ from 
\eqref{iroiro}.

\begin{lem}\label{killedalpha}
For all $\alpha\in (0,\alpha_n),\;x,y\in F,\;x_n,y_n\in F_n$, and $f\in\BL^{\kappa}(M)$,
it holds that
\begin{align*}
  |G^{\alpha}_xf(y)-G^{\alpha,n}_{x_n}f(y_n)|\lesssim \g{c}{itkill}\Bigl\{ \|f\|_{K_n}(d(x,x_n)+d(y,y_n))+\|f\|_{\BL^{\kappa}(K_n)}d_{\BL^{\kappa}}(\mu,\mu_n) \Bigr\}.
\end{align*}
\end{lem}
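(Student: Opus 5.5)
We expand both killed resolvents with the Neumann series \eqref{killalpha} and compare them term by term using \cref{itkill}. Since $\alpha<\alpha_n\le \frac{1}{2D_n}\wedge\frac{1}{2}$ (indeed $\alpha_n=\frac12\wedge\frac{1}{2D_n^2}$, so $\alpha D_n\le \frac12\vee\frac{1}{2D_n}$; we handle the two regimes $D_n\ge1$ and $D_n<1$ separately), and since $D_n\ge \mu(F)\diam F$ and $D_n\ge\mu_n(F_n)\diam F_n$, the identity \eqref{killalpha} holds simultaneously for $F$ and for $F_n$. This gives
\begin{align*}
|G^{\alpha}_xf(y)-G^{\alpha,n}_{x_n}f(y_n)|\le \sum_{i\in\N}\alpha^{i-1}\,|G_x^{\circ i}f(y)-(G^n_{x_n})^{\circ i}f(y_n)|.
\end{align*}

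Next we insert the bound of \cref{itkill}. The $i$-th term is then at most $\g{c}{itkill}\,\alpha^{i-1}\bigl(1+D_n^i+D_n^{i-1}\sum_{j=0}^{i-2}D_n^j\bigr)$ times the common bracket $\{\|f\|_{K_n}(d(x,x_n)+d(y,y_n))+\|f\|_{\BL^{\kappa}(K_n)}d_{\BL^{\kappa}}(\mu,\mu_n)\}$. The lemma therefore reduces to showing that
\begin{align*}
S:=\sum_{i\ge1}\alpha^{i-1}\Bigl(1+D_n^i+D_n^{i-1}\sum_{j=0}^{i-2}D_n^j\Bigr)
\end{align*}
is bounded by a universal constant. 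The first piece is $\sum_i\alpha^{i-1}\le 2$, since $\alpha<1/2$.

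The main obstacle is the growth of $D_n^i$ and of $D_n^{i-1}\sum_j D_n^j$, which must be absorbed. We split into the two regimes for $D_n$.

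\emph{Case $D_n\ge1$.} Here $\sum_{j=0}^{i-2}D_n^j\le (i-1)D_n^{i-2}$, so the last piece is at most $(i-1)D_n^{2i-3}$. Then $\alpha^{i-1}D_n^{2i-3}\le (2D_n^2)^{-(i-1)}D_n^{2i-3}=2^{-(i-1)}D_n^{-1}\le 2^{-(i-1)}$, and $\sum_i (i-1)2^{-(i-1)}<\infty$. Similarly $\alpha^{i-1}D_n^i\le D_n\,2^{-(i-1)}D_n^{-(i-1)}\le 2^{1-i}D_n^{2-i}$. For $i\ge2$ this is at most $2^{1-i}$. For $i=1$ it gives the single term $D_n$, which must be dealt with separately.

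\emph{Case $D_n<1$.} Every power of $D_n$ is at most $1$, so $S\le \sum_i \alpha^{i-1}(2+i)<\infty$ by $\alpha<1/2$.

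The isolated $i=1$ term, $D_n$, is bounded because $D_n$ is bounded in terms of $\mu(F)\diam F$ and $\mu_n(F_n)\diam F_n$. We absorb it into $\g{c}{itkill}$, which already contains $\diam K_n\,\mu_n(F_n)^2+(1+\diam K_n)^2\mu_n(F_n)+\diam F+1$ and so dominates $\mu_n(F_n)\diam F_n$ up to a universal factor. To also cover $\mu(F)\diam F$, we would check whether the $i=1$ bound from \eqref{4-4} carries only the factor $\mu_n(F_n)$ and $1+\diam F$, rather than $D_n$. Indeed, \eqref{4-4} gives the $i=1$ term directly with constant $\lesssim\g{c}{itkill}$, so we use it in place of \cref{itkill} for $i=1$. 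With this, $S$ is universally bounded and the lemma follows.
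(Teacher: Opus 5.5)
Your proof is correct and follows the paper's route. You expand both killed resolvents by \eqref{killalpha}, compare them term by term with \cref{itkill}, and sum the resulting series using $\alpha<\alpha_n$; you carry this out correctly in both regimes $D_n\ge 1$ and $D_n<1$.

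Your separate treatment of $i=1$ is the one detail the paper's one-line proof leaves implicit. The factor $1+D_n$ that \cref{itkill} gives at $i=1$ is not universally bounded, and it cannot simply be absorbed into $\g{c}{itkill}$, since it multiplies that constant. For this term, use \eqref{4-1} with $\psi=\varphi=f$ rather than \eqref{4-4}. The latter compares $G_xf$ and $G^n_{x_n}f$ at the same point and so omits the $d(y,y_n)$ contribution. By contrast, \eqref{4-1} yields the constant $\mu_n(F_n)+3(1+\diam F)\lesssim\g{c}{itkill}$, which is what you need.
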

\begin{proof}
  This easily follows from \eqref{killalpha} and \cref{itkill}.
\end{proof}

We prepare an elementary lemma for later use.
Recall the definition of the Hausdorff topology from\cref{haustop}.
\begin{lem}\label{eee}
  Let $A,A_n,$ and $B$ be compact subsets of a metric space
  $(X,d^X)$ consisting of at least two elements.
  \begin{itemize}
    \item [$\mathrm{(i)}$]There exist $a_1,a_2\in A$ and $b_1,b_2\in B$ satisfying the following.
    \begin{itemize}
      \item $a_1\neq a_2,b_1\neq b_2$
      \item $d^X(a_1,b_1),d^X(a_2,b_2)\leq d_{\mathrm{H}}^X(A,B)$
    \end{itemize}
    \item [$\mathrm{(ii)}$]If $A_n$ converges to $A$ in the Hausdorff topology,
    then there exists $a_1^n,a_2^n\in A$ and $b_1^n,b_2^n\in A_n$ satisfying the following.
    \begin{itemize}
      \item $a_1^n\neq a_2^n,b_1^n\neq b_2^n$
      \item $d^X(a_1^n,b_1^n),d^X(a_2^n,b_2^n)\leq d_{\mathrm{H}}^X(A,A_n)$
      \item $\inf_n d^X(a_1^n,a_2^n),\inf_n d^X(b_1^n,b_2^n)>0$
    \end{itemize}
  \end{itemize} 
\end{lem}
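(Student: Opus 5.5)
We prove (i) by a nearest-point argument that uses only compactness and the definition of $d_{\mathrm{H}}^X$. We obtain (ii) by applying one fixed pair in $A$ for all large $n$, and (i) for the finitely many remaining $n$.

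\emph{Step 1 (nearest points exist).} Put $h:=d_{\mathrm{H}}^X(A,B)$, which is finite since $A,B$ are compact. For $a\in A$ set $N(a):=\{b\in B:d^X(a,b)\le h\}$, and for $b\in B$ set $M(b):=\{a\in A:d^X(a,b)\le h\}$. For every $\varepsilon>h$ we have $A\subseteq B^{\varepsilon}$, so $d^X(a,B)\le h$. Since $B$ is compact, this infimum is attained, hence $N(a)\neq\emptyset$. Symmetrically, $M(b)\neq\emptyset$.

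\emph{Step 2 (the union of the $N(a)$ is all of $B$).} Let $b\in B$. Pick $a\in M(b)$; then $b\in N(a)$. Hence $\bigcup_{a\in A}N(a)=B$, and this set contains at least two points.

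\emph{Step 3 (choosing the pairs).} Choose distinct $b_1,b_2\in B$ and points $a_1,a_2\in A$ with $b_i\in N(a_i)$.
\begin{itemize}
\item If $a_1\neq a_2$, these two pairs already satisfy (i).
\item If $a_1=a_2=:a$, pick any $a'\in A$ with $a'\neq a$ and any $b'\in N(a')$. Then $b'$ differs from at least one of $b_1,b_2$, say $b'\neq b_1$. The pairs $(a,b_1)$ and $(a',b')$ satisfy (i).
\end{itemize}

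\emph{Proof of (ii).} Fix distinct $a,a'\in A$ and put $\delta:=d^X(a,a')>0$. Choose $n_0$ such that $h_n:=d_{\mathrm{H}}^X(A,A_n)<\delta/3$ for all $n\ge n_0$.
\begin{itemize}
\item For $n\ge n_0$, Step 1 (applied with $B=A_n$) gives $b,b'\in A_n$ with $d^X(a,b),d^X(a',b')\le h_n$. Then $d^X(b,b')\ge \delta-2h_n>\delta/3$, so in particular $b\neq b'$. Set $a_1^n=a$, $a_2^n=a'$, $b_1^n=b$, $b_2^n=b'$.
\item For the finitely many $n<n_0$, apply (i) to $A$ and $A_n$.
\end{itemize}
Each of the finitely many pairs from the second case has strictly positive separation in $A$ and in $A_n$. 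The pairs from the first case have separation $\delta$ in $A$ and more than $\delta/3$ in $A_n$. Taking the minimum of these finitely many positive numbers and $\delta/3$ gives $\inf_n d^X(a_1^n,a_2^n)>0$ and $\inf_n d^X(b_1^n,b_2^n)>0$.

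The only delicate point is the degenerate case in (i), where all nearest points might appear to collapse onto one point. Step 2 rules this out, because the sets $N(a)$ together cover all of $B$, which has at least two elements.
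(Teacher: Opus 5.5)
Your proof is correct and follows essentially the same route as the paper. In (i) you take nearest points for two distinct points of $B$, and in the degenerate case where they share a partner in $A$ you swap in a second point of $A$ together with its nearest point in $B$; in (ii) you fix one pair in $A$ at distance $\delta$, use $d_{\mathrm{H}}^X(A,A_n)<\delta/3$ for large $n$, and invoke (i) for the finitely many remaining indices. Your Step 1 only adds an explicit justification that $d^X(a,B)\le d_{\mathrm{H}}^X(A,B)$ is attained, which the paper takes directly from the max-formula for the Hausdorff distance.
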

\begin{proof}[Proof of $\mathrm{(i)}$]
  Since $B$ is not a singleton, we may take distinct points $y_1,y_2\in B$.
  Using the formula $d^X_{\mathrm{H}}(A,B)=\max\{ \max_{a\in A}d^X(a,B),\max_{b\in B}d^X(b,A)\}$,
  we can find $x_1,x_2\in A$ with $d^X(x_i,y_i)\leq d^X_{\mathrm{H}}(A,B)$ for $i=1,2$.
  If $x_1\neq x_2$, we are done. 
  Suppose that $x_1=x_2=:x$. Then there exists $x'\in A\setminus\{x\}$, and 
  it holds that $d^X(x',y')\le d_{\rm{H}}^X(A,B)$ for some $y'\in B$. Since $y_1$ and $y_2$ are distinct,
  we may assume that $y'\neq y_1$. Then $(x,y_1)$ and $(x',y')$ are what we desired.
\end{proof}
\begin{proof}[Proof of $\mathrm{(ii)}$]
  Fix distinct points $x,y\in A$ and put $\varepsilon:=d^X(x,y)>0$.
  By assumption, there exists $N\in\N$ such that $d^X_{\mathrm{H}}(A,A_n)<\varepsilon/3$ for all $n\geq N$.
  If we take $b_1^n,b_2^n\in A_n$ such that $d^X(x,b_1^n),d^X(y,b_2^n)\leq d^X_{\mathrm{H}}(A,A_n)$,
  then it is the case that
  \begin{align*}
    d^X(b_1^n,b_2^n)\geq d^X(x,y)-d^X(x,b_1^n)-d^X(y,b_2^n)\geq \varepsilon/3>0,\quad n\geq N.
  \end{align*}
  Thus, we may take $a_1^n=x,a_2^n=y$ for $n\geq N$.
  The first claim (i) ensures the desired pairs for $n<N$. 
\end{proof}

In the proof of \cref{estres}, the following lemma also plays an important role.
\begin{lem}\label{sle}
  For any $\delta>0$ and $n\in\N$,
there exists a constant $\g{c}{sle,n}=\g{c}{sle,n}(\delta)\in (0,\infty)$ such that
\begin{align*}
  \g{c}{sle,n}\alpha\leq \inf_{\substack{x,y\in F\\d(x,y)\geq \delta}}E_{x}[1-e^{-\alpha \sigma_{x,y}}]\wedge \inf_{\substack{z,w\in F_n\\d(z,w)\geq \delta}}E_{z}^n[1-e^{-\alpha \sigma^n_{z,w}}],\quad \alpha\in (0,\alpha_n).
\end{align*} 
Here, $\sigma_{x,y}$ and $\sigma^n_{z,w}$ are the commute time of $X$ between $x$ and $y$, 
and that of $X^n$ between $z$ and $w$ (recall its definition from \Cref{resis}, respectively).
Moreover, $\g{c}{sle,n}$ is given by
\begin{align*}
  \g{c}{sle,n}=\int_{0}^{T}(Ae^{-Bs}-1)^2e^{-\alpha_n s}ds.
\end{align*}
The definitions of $A,\;B,$ and $T$ are given below at \eqref{abtdef}.
\end{lem}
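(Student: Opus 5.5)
The plan is to reduce the commute time to a single hitting time and then use the hitting-time tail bound of \cref{4.2-a}(i). Fix $x,y\in F$ with $d(x,y)=R(x,y)\ge\delta$; the case of $F_n$ is identical with $\mu_n$, $R_n$ in place of $\mu$, $R$.

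First I rewrite the Laplace functional as an integral of the survival function. By Fubini,
\begin{align*}
  E_x[1-e^{-\alpha\sigma_{x,y}}]=\alpha\int_0^\infty e^{-\alpha s}P_x(\sigma_{x,y}>s)\,ds .
\end{align*}
Since $\sigma_{x,y}\ge\sigma_y$, this is at least $\alpha\int_0^T e^{-\alpha_n s}P_x(\sigma_y>s)\,ds$ for any $T>0$ and $\alpha<\alpha_n$. So it suffices to bound $P_x(\sigma_y>s)$ from below on $[0,T]$, by a quantity that does not depend on the pair $(x,y)$.

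For this I apply \cref{4.2-a}(i) with $A=\{y\}$ and auxiliary radius $\delta'=\gamma\delta$, for a fixed $\gamma\in(0,1)$ (say $\gamma=1/4$). It gives
\begin{align*}
  P_x(\sigma_y>s)\ge 2\,\frac{R(x,y)-\gamma\delta}{R(x,y)+\gamma\delta}\,e^{-\frac{2s}{\mu(B(x,\gamma\delta))(R(x,y)-\gamma\delta)}}-1 .
\end{align*}
The ratio $\frac{r-\gamma\delta}{r+\gamma\delta}$ is increasing in $r$, and the rate $\frac{2}{\mu(B(x,\gamma\delta))(r-\gamma\delta)}$ is decreasing in $r$. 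Hence I can replace $R(x,y)$ by $\delta$ in both places. I also replace $\mu(B(x,\gamma\delta))$ by
\begin{align*}
  m_n(\gamma):=\inf_{b\in F}\mu(B(b,\gamma\delta))\wedge\inf_{b\in F_n}\mu_n(B(b,\gamma\delta)).
\end{align*}
This infimum is positive: the measures have full support, and a compactness argument (finitely many balls of radius $\gamma\delta/2$) gives a uniform lower bound for each of the two spaces. This yields
\begin{align}\label{abtdef}
  P_x(\sigma_y>s)\ge Ae^{-Bs}-1,\quad A=\frac{2(1-\gamma)}{1+\gamma},\quad B=\frac{2}{m_n(\gamma)(1-\gamma)\delta},\quad T=\frac{\log A}{B}.
\end{align}
Here $A\in(1,2)$, so $T>0$ and $Ae^{-Bs}-1\in[0,1)$ on $[0,T]$. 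On that interval the quantity dominates its own square, so
\begin{align*}
  \int_0^T e^{-\alpha_n s}P_x(\sigma_y>s)\,ds\ge\int_0^T(Ae^{-Bs}-1)^2e^{-\alpha_n s}\,ds=c_{\mathrm{sle},n}.
\end{align*}
This constant is positive, finite, and independent of $x,y$ and of $\alpha\in(0,\alpha_n)$.

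Taking infima over admissible pairs in $F$ and in $F_n$ gives the claimed bound. The main point requiring care is the monotonicity in $R(x,y)$, which makes the bound uniform over all pairs at distance at least $\delta$. The remaining ingredient is the positivity of $m_n(\gamma)$, which is exactly where compactness of the spaces enters (cf.\ \cref{estresrmk}).
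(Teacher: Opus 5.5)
Your argument is correct, but it handles the commute time differently from the paper. The paper splits $\sigma_{x,y}$ at $\sigma_y$ and uses the strong Markov property to get $P_x(\sigma_{x,y}>s)\ge P_x(\sigma_y>s/2)\,P_y(\sigma_x>s/2)$. It then bounds each factor below by $(Ae^{-Bs}-1)\vee 0$ via \cref{4.2-a}(i) with auxiliary radius $\gamma R(x,y)$. With that radius the ratio is exactly $\frac{1-\gamma}{1+\gamma}$, and only $R(x,y)\ge\delta$ and $\mu(B(x,\gamma R(x,y)))\ge m_n(\gamma)$ are needed. The square in the constant comes from this product, and the time split $s/2$ is why the paper's $B=\frac{1}{(1-\gamma)m_n(\gamma)\delta}$ is half of yours.

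You use only $\sigma_{x,y}\ge\sigma_y$, which needs no Markov property and gives a lower bound linear in $Ae^{-Bs}-1$. The square is then inserted artificially through $u\ge u^2$ on $[0,1)$ to match the announced form. Your fixed radius $\gamma\delta$ is handled correctly by the monotonicity in $R(x,y)$. Your constant differs from the paper's, but that is harmless. The later uses (\cref{estres} and the non-compact case) only need positivity and a lower bound uniform in $n$ through $m_n(\gamma)$ and $\alpha_n$, and both versions provide this.

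One detail: $A>1$, hence $T>0$, holds only for $\gamma<1/3$, so ``$\gamma\in(0,1)$'' should read $\gamma\in(0,1/3)$; your concrete choice $\gamma=1/4$ is fine. The paper's argument needs the same restriction. As printed, $\gamma\in(1/3,1)$ gives $A<1$, so $(Ae^{-Bs}-1)\vee 0\equiv 0$, and moreover $Ae^{-BT}=A^2\neq 1$. The intended definitions are $\gamma\in(0,1/3)$ and $T=B^{-1}\log A$; the same slip recurs with $\gamma=2/3$ in the non-compact section. Your $A$ and $T$ are of exactly this corrected form.
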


\begin{rmk}
By the proof of this lemma and \cite[Corollary 5.7]{ALW16}, the constant $\g{c}{sle,n}$ is bounded below uniformly in $n$ for a convergent sequence $(F_n,R_n,\mu_n),\;n\in\N$.
\end{rmk}
\begin{proof}
  Define $m(\gamma),\;T,\;A$, and $B$ for $\gamma\in (1/3,1)$ by setting as follows:
  \begin{equation}\label{abtdef}
     \begin{aligned}
  m_n(\gamma)=&\inf_{b\in F}\mu(B(b,\gamma \delta))\wedge \inf_{\substack{b\in F_n}}\mu_n(B(b,\gamma \delta))\\
  T=&(1-\gamma)m_n(\gamma)\delta\log\left(\frac{1+\gamma}{2(1-\gamma)}\right)\\
  A=&\frac{2(1-\gamma)}{1+\gamma}\\
  B=&\frac{1}{(1-\gamma)m_n(\gamma)\delta}
\end{aligned}    
  \end{equation}
By compactness (or \cite[Corollary 5.7]{ALW16}), these are well-defined and positive.
It follows from \cref{4.2-a} that, whenever $d(x,y)\geq \delta$,
\begin{align*}
  P_{x}(\sigma_y\leq t)
  \leq&2\left\{ 1-\frac{1-\gamma}{1+\gamma}\exp\left(-\frac{2t}{ (1-\gamma)\mu(B_{F}(x,\gamma d(x,y)))d(x,y) }\right) \right\}\\
  \leq&2\left\{ 1-\frac{1-\gamma}{1+\gamma}\exp\left(-\frac{2t}{ (1-\gamma)m_n(\gamma)\delta}\right) \right\}.
\end{align*}
Thus we obtain
\begin{align*}
  P_x(\sigma_y\geq t/2)
  \geq&\left\{\frac{2(1-\gamma)}{1+\gamma}\exp\left(-\frac{t}{ (1-\gamma)m_n(\gamma)\delta}\right)-1\right\}\vee 0\\
  =&\left( Ae^{-Bt}-1\right)\vee 0.
\end{align*}
Noting that $Ae^{-BT}-1=0$, we deduce that
\begin{align*}
  E_{x}[1-e^{-\alpha \sigma_{x,y}}]
  =&\alpha\int_{0}^{\infty}P_{x}(\sigma_{x,y}>s )e^{-\alpha s}ds\\
  \geq&\alpha\int_{0}^{\infty}P_{x}(\sigma_{y}>s/2)P_{y}(\sigma_{x}>s/2)e^{-\alpha s}ds\\
  \geq&\alpha\int_{0}^{T}(Ae^{-Bs}-1)^2e^{-\alpha s}ds\\
  \geq&\alpha\int_{0}^{T}(Ae^{-Bs}-1)^2e^{-\alpha_n s}ds.
\end{align*}
The same inequality holds for the $n$-version.
\end{proof}

\begin{rmk}\label{slermk}
If one is interested in the points $x,y\in F$ and $x_n,y_n\in F_n$ with $x,y,x_n,y_n\in B(\rho,r)$ 
for some $\rho\in M,r<\infty$, then one can replace $m_n(\gamma)$ below by 
$\displaystyle \inf_{b\in B(\rho,r)\cap F}\mu(B(b,\gamma\delta))\wedge \inf_{b\in B(\rho,r)\cap F_n}\mu_n(B(b,\gamma\delta))$.
Since the spaces considered in this section are all compact, this remark is not important here.
However, it is useful when we consider the non-compact case.
\end{rmk}

We can now complete the proof of \cref{estres}.

\begin{proof}[Proof of \cref{estres}]
  We first fix $x_0,x_1\in F$ and $x_0^n,x_1^n\in F_n$ satisfying the following.
    \begin{itemize}
      \item [(i)]$x_0\neq x_1,x_0^n\neq x_1^n$
      \item [(ii)]$d(x_0,x_0^n),d(x_1,x_1^n)\leq d_{\mathrm{H}}(F,F_n)$
    \end{itemize}
    We also set $\delta:=d(x_0,x_1)\wedge d(x_0^n,x_1^n)$.
    The existence of these points are guaranteed by \cref{eee}.
    We define $\sigma_i,\sigma^n_i$ by setting
    \begin{align*}
      \sigma_0=\inf\{t>0:X_t=x_0\},\quad \sigma_{i+1}=\inf\{t>\sigma_i:X_t=x_0,\;x_1\in X_{[\sigma_i,t]}\},\quad i\geq 0,
    \end{align*}
    and
    \begin{align*}
      \sigma^n_0=\inf\{t>0:X_t^n=x_0^n\},\quad &\sigma_{i+1}^n=\inf\{t>\sigma^n_i:X^n_t=x_0^n,x_1^n\in X_{[\sigma^n_i,t]}\},\quad i\geq 0.
    \end{align*}
    From the strong Markov property and \cite[p1956]{Cr18}, it follows that
    \begin{equation*}
  G^{\alpha}f(y)
  =G^{\alpha}_{x_0}f(y)+\sum_{i\geq 0}E_y[e^{-\alpha\sigma_0}]E_{x_0}[e^{-\alpha \sigma_i}]E_{x_0}\left[ \int_{0}^{\sigma_1}e^{-\alpha s}f(X_s)ds \right],
\end{equation*}
\begin{equation}\label{hit}
  E_y[e^{-\alpha\sigma_0}]=1-\alpha G^{\alpha}_{x_0}1(y),
\end{equation}
\begin{equation*}
    E_{x_0}[e^{-\alpha \sigma_i}]=\left\{ (1-\alpha G^{\alpha}_{x_1}1(x_0))(1-\alpha G^{\alpha}_{x_0}1(x_1)) \right\}^i,
\end{equation*}
and 
\begin{equation}\label{decompp}
  E_{x_0}\left[ \int_{0}^{\sigma_1}e^{-\alpha s}f(X_s)ds \right]=G^{\alpha}_{x_1}f(x_0)+(1-\alpha G^{\alpha}_{x_1}1(x_0))G^{\alpha}_{x_0}f(x_1)
\end{equation}
for any bounded measurable function $f:M\to\R$.
Hence, we have that
\begin{equation}\label{a}
  \begin{aligned}
    |G^{\alpha}f(y)-G^{\alpha,n}f(y_n)|
  \leq&|G^{\alpha}_{x_0}f(y)-G^{\alpha,n}_{x_0^n}f(y_n)|\\
  &+\left| \sum_{i\geq 0}E_y[e^{-\alpha \sigma_0}]E_{x_0}[e^{-\alpha\sigma_i}]E_{x_0}\left[ \int_{0}^{\sigma_1}f(X_s) ds\right]\right. \\
  &\qquad\qquad\qquad\qquad\qquad\left.-E_{y_n}^n[e^{-\alpha \sigma^n_0}]E_{x_0^n}[e^{-\alpha    \sigma^n_i}]E_{x_0^n}\left[ \int_{0}^{\sigma^n_1}f(X_s^n) ds\right]  \right|.
  \end{aligned}
\end{equation}
We abbreviate the expectations as follows:
\begin{equation*}
  A=E_y[e^{-\alpha \sigma_0}]E_{x_0}\left[ \int_{0}^{\sigma_1}f(X_s) ds \right],\qquad A_n=E_{y_n}^n[e^{-\alpha \sigma_0^n}]E_{x_0^n}^n\left[ \int_{0}^{\sigma^n_1}f(X^n_s) ds \right],
\end{equation*}
\begin{equation*}
  T(z,w)=
  E_z[e^{-\alpha \sigma_w}],\quad z,w\in F,
  \qquad
  T_n(z,w)=
  E_z^n[e^{-\alpha \sigma_w^n}],\quad z,w\in F_n.
\end{equation*}
It is then the case that
\begin{align*}
  \MoveEqLeft \left| \sum_{i\geq 0}E_y[e^{-\alpha \sigma_0}]E_{x_0}[e^{-\alpha\sigma_i}]E_{x_0}\left[ \int_{0}^{\sigma_1}f(X_s) ds\right] 
  -E_{y_n}^n[e^{-\alpha \sigma^n_0}]E_{x_0^n}[e^{-\alpha    \sigma^n_i}]E_{x_0^n}\left[ \int_{0}^{\sigma^n_1}f(X_s^n) ds\right]  \right|\\
  =&\left| \sum_{i\geq 0}A(T(x_0,x_1)T(x_1,x_0))^i-A_n(T_n(x_0^n,x_1^n)T_n(x_1^n,x_0^n))^i \right|\\
  \leq&\left| \sum_{i\geq 0}A\Bigl\{ (T(x_0,x_1)T(x_1,x_0))^i-(T_n(x_0^n,x_1^n)T_n(x_1^n,x_0^n))^i \Bigr\} \right|
  +\left| \sum_{i\geq 0}(A-A_n)(T_n(x_0^n,x_1^n)T_n(x_1^n,x_0^n))^i \right|\\
  \leq&|A|\frac{|T(x_0,x_1)T(x_1,x_0)-T_n(x_0^n,x_1^n)T_n(x_1^n,x_0^n)|}{(1-T(x_0,x_1)T(x_1,x_0))(1-T_n(x_0^n,x_1^n)T_n(x_1^n,x_0^n))}
  +\frac{|A-A_n|}{1-T_n(x_0^n,x_1^n)T_n(x_1^n,x_0^n)}\\
  \leq&|A|\frac{|T(x_0,x_1)-T_n(x_0^n,x_1^n)|+|T(x_1,x_0)-T_n(x_1^n,x_0^n)|}{(1-T(x_0,x_1)T(x_1,x_0))(1-T_n(x_0^n,x_1^n)T_n(x_1^n,x_0^n))}
  +\frac{|A-A_n|}{1-T_n(x_0^n,x_1^n)T_n(x_1^n,x_0^n)}.
\end{align*}
It is necessary to address the following three points:
\begin{itemize}
  \item [1.]bound $|T(x_0,x_1)-T_n(x_0^n,x_1^n)|$;
  \item [2.]estimate $1-T(x_0,x_1)T(x_1,x_0)$;
  \item [3.]bound $|A-A_n|$.
\end{itemize}
Using \eqref{hit}, \cref{killedalpha}, and (ii), $1$ is done as follows:
\begin{align}\label{hitting}
  |T(x_0,x_1)-T_n(x_0^n,x_1^n)|
  =\alpha |G^{\alpha}_{x_0}1(x_1)-G^{\alpha,n}_{x_0^n}1(x_1^n)|
  \lesssim \alpha \g{c}{itkill} (d_{\mathrm{H}}(F,F_n)+d_{\BL^{\kappa}}(\mu,\mu_n))
\end{align}
We next address $2$. Noting the definition of $\delta$, 
we may apply \cref{eee} to obtain
\begin{align}\label{b}
  \g{c}{sle,n}(\delta)\alpha\leq 1-T(x_0,x_1)T(x_1,x_0),1-T_n(x_0^n,x_1^n)T_n(x_1^n,x_0^n).
\end{align} 
Lastly, we deal with $3$.
By \eqref{hit} and \eqref{decompp},
it holds that
\begin{align*}
  E_{x_0}\left[ \int_{0}^{\sigma_1}f(X_s)ds \right]=G^{\alpha}_{x_1}f(x_0)+T( x_0,x_1 )G^{\alpha}_{x_0}f(x_1).
\end{align*}
Thus, we deduce from \cref{killedalpha} and 
\eqref{hitting}
that
\begin{align*}
  \MoveEqLeft \left| E_{x_0}\left[ \int_{0}^{\sigma_1}f(X_s)ds \right]-E_{x_0^n}^n\left[ \int_{0}^{\sigma^n_1}f(X_s^n)ds \right] \right|\\
\leq&|G^{\alpha}_{x_1}f(x_0)-G^{\alpha,n}_{x_1^n}f(x_0^n)|
+T_n(x_1^n,x_0^n)|G^{\alpha}_{x_0}f(x_1)-G^{\alpha,n}_{x_0^n}f(x_1^n)|
+|G^{\alpha}_{x_0}f(x_1)|\times |T(x_0,x_1)-T_n(x_0^n,x_1^n)|\\
\lesssim&\g{c}{itkill}\Bigl\{ \|f\|_{K_n}d_{\mathrm{H}}(F,F_n)+\|f\|_{\BL^{\kappa}(K_n)}d_{\BL^{\kappa}}(\mu,\mu_n)\Bigr\}
+\alpha \g{c}{itkill}\mu(F)\diam F\|f\|_F (d_{\mathrm{H}}(F,F_n)+d_{\BL^{\kappa}}(\mu,\mu_n)) .
\end{align*}
Therefore, it is the case that
\begin{equation}\label{c}
\begin{aligned}
  |A-A_n|
  \leq&E_{y_n}^n[e^{-\alpha \sigma_0^n}]\left| E_{x_0}\left[\int_{0}^{\sigma_1}e^{-\alpha s}f(X_s)ds\right]-E_{x_0^n}^n\left[\int_{0}^{\sigma_1^n}e^{-\alpha s}f(X_s^n)ds\right] \right|\\
  &+\left|E_{x_0}^n\left[\int_{0}^{\sigma_1}e^{-\alpha s}f(X_s)ds\right] \right|\left|E_y[e^{-\alpha \sigma_0}]-E_{y_n}^n[e^{-\alpha \sigma_0^n}]\right|\\
  \lesssim&\g{c}{itkill}\Bigl\{ \|f\|_{K_n}d_{\mathrm{H}}(F,F_n)+\|f\|_{\BL^{\kappa}(K_n)}d_{\BL^{\kappa}}(\mu,\mu_n)\Bigr\}\\
&+\alpha \g{c}{itkill}\mu(F)\diam F \|f\|_F (d(y,y_n)+d_{\mathrm{H}}(F,F_n)+d_{\BL^{\kappa}}(\mu,\mu_n)).
\end{aligned}
\end{equation}
Combining \eqref{a},\eqref{hitting},\eqref{b}, and \eqref{c}, 
we obtain the result.
\end{proof}

\section{Estimates for semigroups}\label{efsg}
The aim of this section is to transfer our previous estimates on the resolvent 
to corresponding ones on the semigroup. Our main result is
\cref{gen} below. 
We use the same notation and assumptions as in \Cref{efr}.
In particular, we assume that $(F_n,R_n,\mu_n)\in \bbF_c^{\circ}$ converges to $(F,R,\mu)\in\bbF_c^{\circ}$ as described in 
\cref{nonrootcpt} (recall the definition of $\bbF_c^{\circ}$ from \eqref{bbfcirc}), and 
regard $F_n,$ $n\in\N,$ and $F$ as subsets of a common compact metric space $(M,d)$. 
In the statement of \cref{gen}, for instance, we refer to constant $\g{c}{12q2}$,
    which is defined below in the statement of \cref{12q2}.
    More generally, as in the previous section, 
    we denote constants appearing in proposition, lemma, or theorem X by $c_X$ or $c^{(n)}_X$.  
Note that we assume that any metric space contains at least two distinct points. 
Also, we write $A\lesssim B$ if there exists a universal constant $C$ 
such that $A\leq CB$

Recall the assumptions $\rm{(A2)},$ $\rm{(A3)}$ and $\alpha_n$ from \cref{as11} and \eqref{iroiro}.
Then our main result in this section is the following.
\begin{thm}\label{gen}
Let $x\in F,\;x_n\in F_n,\;\kappa\in [1/2,1],\;t>0$ and $f\in \BL^{\kappa}(M)$ and set
\[
h:=d(x,x_n)+d_{\rm{H}}(F,F_n)^{\kappa}+d_{\BL^{\kappa}}(\mu,\mu_n).
\]
\begin{itemize}
  \item [$\rm{(i)}$]If $(F,R,\mu)$ satisfies the assumption $\rm{(A2)}$, and $h\leq 1/2$,
then it holds that
\begin{align*}
 \Big|E_x[f(X_t)]-E^n_{x_n}[f(X^n_t)]\Big|\lesssim \g{c}{gen,n}\|f\|_{\BL^{\kappa}} h^{E},
\end{align*}
where
\begin{align*}
  E=\frac{\kappa}{\kappa+(1+s_0)(2+\kappa)(\kappa+\theta)},
\end{align*}
and
\begin{align*}
  \g{c}{gen,n}=\frac{1}{1-(1/2)^E}\left[ \g{c}{12q2}
    +\g{c}{sg2,n}\mu(F)^{1/2}\Bigl\{ c_{\rm{A1}} +\g{c}{suffcond1,3}\left(\frac{\alpha_n}{2}\right)^{-(1+\frac{s_1}{2(1+s_0)\theta})(2\kappa-1)}\Bigr\}
    +\g{c}{suffcond1,3}\mu(F)^{1/2} \right],
\end{align*}
and the $p$ in $\g{c}{sg2,n}$ is $2$.
Moreover, the time-dependence of $\g{c}{gen,n}$ is $t+t^{-[\kappa-\frac{s_1}{2(1+s_0)\theta}(2\kappa-1)]},\;t\in (0,1]$.
\item[$\rm{(ii)}$]If $(F,R,\mu)$ satisfies the assumption $\rm{(A3)}$, and $h$ is sufficiently small,
then it holds that
\begin{align*}
 \Big|E_x[f(X_t)]-E^n_{x_n}[f(X^n_t)]\Big|\lesssim \g{C}{gen,n}\|f\|_{\BL^{\kappa}} h^{E_1}(\log(1/h))^{E_2},
\end{align*}
where
\begin{equation*}
  E_1=\frac{\kappa}{\kappa+(1+s_0)(2+\kappa)\Theta},\qquad E_2=\frac{\kappa (2+\kappa)[ (1+s_0)\Theta-1 ]}{\kappa+(1+s_0)(2+\kappa)\Theta},
  \qquad
  \Theta=\frac{1+s_0-s_1}{1-(s_0-s_1)(2+s_0)},
\end{equation*}
\begin{equation*}
  \begin{aligned}
    \g{C}{gen,n}=&\left\{ 1+\mu(F)^{1/2}\g{c}{suffcond1,4}+\g{c}{sg2,n}\mu(F)^{1/2}\Bigl[ c_{\rm{A1}}^{1/2} +\g{c}{suffcond1,4}\left(\frac{\alpha_n}{2}\right)^{-(1+\frac{s_1}{2(1+s_0)\Theta})(2\kappa-1)}\Bigr] \right\}\\
  &\times\left\{ 1+\left[\g{c}{summ}^{1/\beta}(1\wedge\frac{1}{2}\diam F)^{1+1/\beta}\left(\frac{\kappa}{\kappa+(1+s_0)(2+\kappa)\Theta}\right)^{-\frac{1}{\beta}}\right]^{-(\kappa+2)}\right\},
  \end{aligned}
\end{equation*}
and $\beta=\frac{1-(2+s_0)(s_0-s_1)}{s_1+2(2+s_0)(s_0-s_1)}$,
and the $p$ in $\g{c}{sg2,n}$ is $2$.
Moreover, the time-dependence of $\g{C}{gen,n}$ is $t+t^{-[\kappa-\frac{s_1}{2(1+s_0)\Theta}(2\kappa-1)]},\;t\in (0,1]$.
\end{itemize}
\end{thm}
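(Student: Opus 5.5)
The plan is to reduce the semigroup estimate to the resolvent estimate of \cref{estres} in two stages. First I handle test functions that are smooth, meaning of the form $P_s f$, which lie in the domain of $\Delta^2$. Then I remove the smoothing at a cost controlled by the Hölder regularity of $f$ and short-time exit bounds.

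Concretely, for a smoothing time $s\in(0,1]$ to be chosen as a power of $h$, I will split
\begin{align*}
E_x[f(X_t)]-E^n_{x_n}[f(X^n_t)]
=&\bigl(P_tf(x)-P_{t+s}f(x)\bigr)+\bigl(P_{t+s}f(x)-P^n_{t+s}f(x_n)\bigr)\\
&+\bigl(P^n_{t+s}f(x_n)-P^n_tf(x_n)\bigr).
\end{align*}

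\textbf{Outer terms.} I write them as $P_t(P_sf-f)$ and bound $\|P_sf-f\|_F$ pointwise:
\[
|P_sf(z)-f(z)|\le\|f\|_{\BL^{\kappa}}\Bigl(r^{\kappa}+2P_z(\sigma_{B(z,r)^c}\le s)\Bigr),\qquad r>0.
\]
Under (A2), \cref{4.2-a}(ii) combined with $\mathrm{LRES}(\theta)$ and the lower volume bound $\mu(B(z,\delta))\ge c_l\delta^{s_0}$ gives $P_z(\sigma_{B(z,r)^c}\le s)\lesssim (s/r^{(1+s_0)\theta})^{1/(1+s_0)}$ after optimizing $\delta$. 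This yields a bound of the form $s^{a}$ with an explicit $a=a(\kappa,s_0,\theta)$. Under (A3), I use instead the sub-Gaussian-type tail in \cref{summ}(v), choosing $r\asymp (s^{\beta}\log(1/s))^{1/(1+\beta)}$. This is where the $\log(1/h)$ factor and the constants involving $\g{c}{summ}$ and $\beta$ enter.

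For the $n$-version I must check that the same tail bound holds uniformly in $n$. If the approximants do not inherit (A2), I will instead transfer $f$ through the limit: I compare $P^n_{t+s}f(x_n)$ with $P^n_t f(x_n)$ via the middle-term machinery applied at both times $t$ and $t+s$, so that only limit-space regularity is used.

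\textbf{Middle term.} Set $g=P_sf$, so the quantity is $P_tg(x)-P^n_tg(x_n)$. I invoke the lemma yielding $\g{c}{sg2,n}$ and $\g{c}{12q2}$ (the passage from resolvents to semigroups on $\mathrm{Dom}(\Delta^2)$). It writes $P_t$ as a limit of iterated resolvents $(\frac{k}{t}G^{k/t})^k$ with error $\lesssim \frac{t^2}{k}\|\Delta^2 g\|$. Each resolvent difference is controlled by \cref{estres} with $\alpha<\alpha_n$; I will shift by $\alpha$ using $(\alpha-\Delta)$ as in the paper.

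The needed norms $\|(\alpha-\Delta)^2P_sf\|_F$ and $\|(\alpha-\Delta)^2P_sf\|_{\BL^{\kappa}}$ are supplied by Condition $2_{\kappa,2,2}$ with the constants from \cref{suff}(iii),(iv). These give powers $s^{-(\kappa+2)}$ and $\alpha^{-(1+\frac{s_1}{2(1+s_0)\theta})(2\kappa-1)}$, with $\alpha=\alpha_n/2$. The factor $t^{-[\kappa-\frac{s_1}{2(1+s_0)\theta}(2\kappa-1)]}$ comes from \eqref{ff} (resp.\ \eqref{hh}), used to bound $\|P_tf\|_{\Lip^{\kappa}}$. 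Since $d_{\rm H}$ enters \cref{estres} linearly while $f$ is only $\kappa$-Hölder, replacing $d_{\rm H}$ by $d_{\rm H}^{\kappa}\le h$ is harmless. The middle term is then $\lesssim s^{-(2+\kappa)}h$ up to the iteration error.

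\textbf{Optimization and main obstacle.} Equating $s^{a}$ with $s^{-(2+\kappa)(1+s_0)(\kappa+\theta)/\kappa}\,h$, or with the analogous expression, should reproduce
\[
E=\frac{\kappa}{\kappa+(1+s_0)(2+\kappa)(\kappa+\theta)}.
\]
The factor $\frac{1}{1-(1/2)^E}$ suggests an additional dyadic summation: I expect to telescope over smoothing times $s, s/2, s/4,\dots$ to handle the $t$-dependence or the iteration. The main obstacle is the outer-term bookkeeping. The difficulty is getting the exit-time estimate with exactly the right exponents, uniformly for the approximating processes, and tracking the $t$-dependence $t+t^{-[\cdots]}$ through the choice $s\le t\wedge 1$.
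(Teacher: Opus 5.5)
Your decomposition has a genuine gap. You take the middle term to be $P_{t+s}f(x)-P^n_{t+s}f(x_n)$ and then, with $g=P_sf$, identify it with $P_tg(x)-P^n_tg(x_n)$. That identification holds only on the limit side. In fact $P^n_{t+s}f(x_n)=P^n_t(P^n_sf)(x_n)$, and $P^n_sf\neq\overline{P_sf}$ on $F_n$; controlling their difference is the original problem at time $s$. So your middle term is the original problem again, at time $t+s$ with the non-smooth $f$, and \cref{sg2} does not apply to it.

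Your third term $P^n_t(P^n_sf-f)(x_n)$ has the same defect from the other side. It needs a short-time exit bound for $X^n$ uniform in $n$, but (A2) and (A3) are hypotheses on $(F,R,\mu)$ only, and nothing of that kind is assumed for $F_n$. Your fallback, comparing $P^n_{t+s}f(x_n)$ with $P^n_tf(x_n)$ through the middle-term machinery at both times, is circular. That machinery only handles test functions $(G^{\alpha})^{\circ 2}\phi$ built from the limit resolvent, never $f$ itself.

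The missing idea is to smooth only on the limit side and transport the smoothed function to $F_n$ without ever using $P^n_s$. Split instead
\[
P_tf(x)-P^n_tf(x_n)=\bigl(P_tf-P_{t+s}f\bigr)(x)+\bigl(P_tP_sf(x)-P^n_t\overline{P_sf}(x_n)\bigr)+P^n_t\bigl(\overline{P_sf}-f\bigr)(x_n),
\]
where $\overline{P_sf}$ is the McShane extension (exponent $1$) of $P_sf=(G^{\alpha})^{\circ 2}(\alpha-\Delta)^2P_sf$.
\begin{itemize}
\item The middle term is exactly what \cref{sg2} controls, together with Condition $2_{\kappa,2,2}$ from \cref{suff}(iii) (resp.\ (iv)). This gives a bound $\lesssim s^{-(2+\kappa)}h$.
\item For the last term, given $y\in F_n$ choose $z\in F$ with $d(y,z)\le d_{\mathrm{H}}(F,F_n)$. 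Then
\[
|\overline{P_sf}(y)-f(y)|\le\|P_sf\|_{\Lip^{1}(F)}d_{\mathrm{H}}(F,F_n)+\|P_sf-f\|_F+\|f\|_{\Lip^{\kappa}}d_{\mathrm{H}}(F,F_n)^{\kappa}.
\]
This uses only limit-space regularity. By \eqref{ff} (resp.\ \eqref{hh}) with $\kappa=1$, the first summand is $\lesssim s^{-[1-\frac{s_1}{2(1+s_0)\theta}]}h^{1/\kappa}$, which is of lower order. \cref{12q2} bounds the second summand.
\end{itemize}
The paper does the same thing via the averaging operator $J^r_n$ (terms $III$ and $IV$ in \eqref{IIIIII}). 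With this repair, your exit-time bookkeeping and the balancing $s^{\kappa/((1+s_0)(\kappa+\theta))}\asymp s^{-(2+\kappa)}h$ do reproduce $E$.

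Two smaller corrections:
\begin{itemize}
\item \cref{sg2} (via \cref{sg1}) does not go through iterated resolvents $(\frac{k}{t}G^{k/t})^k$. That route would need \cref{estres} for large $\alpha$, but that estimate is only available for $\alpha<\alpha_n$. Instead the paper uses the Duhamel interpolation of \cref{deriv} at the fixed $\alpha=\alpha_n/2$.
\item The factor $\frac{1}{1-(1/2)^E}$ does not come from a dyadic sum. It comes from $(\varepsilon^{\theta}-\eta^{\theta})^{-1}$ in \cref{12q2}, with $\varepsilon=h^a$, $\eta=h^{a+c}$, $\theta c=E$ at the optimum, and $h\le 1/2$.
\end{itemize}
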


\begin{rmk}\label{su}
The word ``sufficiently small'' in the assertion of \cref{gen}(ii) means
that the $\varepsilon$ in \eqref{ep} is less than $1$.
\end{rmk}

In estimating the resolvent, roughly speaking,
since we have an explicit formula to
write $G_{\alpha}$ in terms of $G_x$, 
it is enough to estimate $|G_xf(y)-G_{x_n}^{n}f(y_n)|$.
This can be done by extending the domain of $g_x$ in a natural way.
On the other hand, the heat kernel does not have such a natural 
extension. Although we have a McShane extension,
it has a shortcoming that the operation of extension is not linear.
To overcome these obstacles, we introduce the following
operator $J_n^r$.

Fix $r>d_{\mathrm{H}}(F,F_n)$. For a bounded measurable function $f:F\to\R$,
we define $J_n^rf:F_n\to\R$ by setting
\begin{align}\label{defjn}
  J_n^rf(x)=\frac{1}{\mu(B(x,r))}\int_{B(x,r)}fd\mu,\qquad x\in F_n.
\end{align}
Note that this is well-defined.
In fact, for each $x\in F_n$, we can take $y\in F$ such that $d(x,y)\leq d_{\mathrm{H}}(F,F_n)$, 
and it holds that $0<\mu(B(y,r-d_{\mathrm{H}}(F,F_n)))\leq\mu(B(x,r))$. 
While $J^r_nf$ is no longer an extension of $f$, 
we may estimate the difference between $f$ and $J_n^rf$ for suitable $f$, 
and the operator $J_n^r$ is linear. 
Note that $J_n^rf$ is bounded. In particular $J_n^rf\in L^2(F_n,\mu_n)$ since $F_n$ is compact.

\begin{rmk}
For applications to the non-compact case, all coefficients of the estimates in this section depend on $n$. 
However, it is immediate to establish $n$-independent inequalities 
in the case when we are considering a convergent sequence of compact space.
\end{rmk}

The following lemma plays an essential role 
in connecting the resolvent estimates 
with those for the semigroup.

\begin{lem}\label{deriv}
For any bounded measurable function $f:M\to\R,\;t>0$ and $x\in F_n$, define $\varphi:[0,t]\to\R$ by setting
\begin{align*}
 \varphi(s)=P^n_{t-s}G^{\alpha,n}J_n^rP_sG^{\alpha}f(x),\quad s\in [0,t]. 
\end{align*}
Then $\varphi$ is of class $C^1$ and satisfies
\begin{align*}
  \varphi'(s)=P^n_{t-s}(J_n^rG^{\alpha}-G^{\alpha,n}J_n^r)P_sf(x).
\end{align*} 
\end{lem}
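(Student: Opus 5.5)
The plan is to differentiate $\varphi$ as the pointwise evaluation of an $L^2(F_n,\mu_n)$-valued $C^1$ curve, and then simplify the derivative with the resolvent identities $-\Delta G^{\alpha}=I-\alpha G^{\alpha}$ and $-\Delta_n G^{\alpha,n}=I-\alpha G^{\alpha,n}$.

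\textbf{Step 1: $J_n^r$ is bounded.} First I check that $J_n^r$ is a bounded linear operator from $L^2(F,\mu)$ to $L^2(F_n,\mu_n)$, and also on bounded functions. By Jensen, $|J_n^rg(x)|^2\le J_n^r(g^2)(x)\le \|g\|^2_{L^2(F,\mu)}/\inf_{x\in F_n}\mu(B(x,r))$. The infimum is positive: for $x\in F_n$ pick $y\in F$ with $d(x,y)\le d_{\mathrm{H}}(F,F_n)$, so $\mu(B(x,r))\ge\inf_{y\in F}\mu(B(y,r-d_{\mathrm H}(F,F_n)))>0$ by compactness and full support. Since $\mu_n(F_n)<\infty$, the bound follows.

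\textbf{Step 2: a $C^1$ curve in the graph norm.} Set $u(s):=P_sG^{\alpha}f$. Since $G^{\alpha}f\in\mathrm{Dom}(\Delta)$, the map $s\mapsto u(s)$ is $C^1$ on $[0,t]$ in $L^2(F,\mu)$, with $u'(s)=\Delta P_sG^{\alpha}f=P_s(\alpha G^{\alpha}-I)f$. Next set $w(s):=G^{\alpha,n}J_n^ru(s)\in\mathrm{Dom}(\Delta_n)$. By Step 1, $w$ is $C^1$ in $L^2(F_n,\mu_n)$ with $w'(s)=G^{\alpha,n}J_n^ru'(s)$. Because $\Delta_nG^{\alpha,n}=\alpha G^{\alpha,n}-I$ is bounded, $w$ is in fact $C^1$ in the graph norm of $\Delta_n$.

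\textbf{Step 3: differentiating $v(s)=P^n_{t-s}w(s)$.} The standard product-rule argument for strongly continuous contraction semigroups applies: split the difference quotient as $P^n_{t-s-h}\frac{w(s+h)-w(s)}{h}+\frac{P^n_{t-s-h}-P^n_{t-s}}{h}w(s)$ and use the uniform boundedness of $P^n$ together with $w(s)\in\mathrm{Dom}(\Delta_n)$. This shows $v$ is $C^1$ in $L^2(F_n,\mu_n)$, including at the endpoints, with
\[
v'(s)=P^n_{t-s}\bigl(-\Delta_nw(s)+w'(s)\bigr).
\]
This is the step I expect to need the most care. The difficulty is that $P^n_{t-s}$ cannot be differentiated on a general $L^2$ function, and in particular $J_n^ru(s)$ need not lie in $\mathrm{Dom}(\Delta_n)$. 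That is exactly why the resolvent $G^{\alpha,n}$ is kept inside $w$ rather than paired off against the evaluation functional.

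\textbf{Step 4: passing to pointwise values.} Since $F_n$ is compact, $g^{\alpha,n}(x,\cdot)$ is bounded and continuous by \cref{continuity}(ii), hence lies in $L^2(F_n,\mu_n)$. Using that $P^n_{t-s}$ commutes with $G^{\alpha,n}$,
\[
\varphi(s)=G^{\alpha,n}P^n_{t-s}J_n^ru(s)(x)=\bigl(g^{\alpha,n}(x,\cdot),\,P^n_{t-s}J_n^ru(s)\bigr)_{L^2(F_n,\mu_n)}.
\]
The same formula holds with any $L^2$-continuous curve in place of $P^n_{t-s}J_n^ru(s)$. Equivalently, $\varphi(s)$ is the continuous version of $v(s)$ evaluated at $x$. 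Pairing against the fixed vector $g^{\alpha,n}(x,\cdot)$ and using the $C^1$ property from Step 3 then shows that $\varphi$ is $C^1$ on $[0,t]$.

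\textbf{Step 5: the formula.} Substituting $-\Delta_nG^{\alpha,n}=I-\alpha G^{\alpha,n}$ and $u'(s)=P_s(\alpha G^{\alpha}-I)f$ into the expression for $v'(s)$ gives
\[
v'(s)=P^n_{t-s}\bigl[J_n^ru(s)-\alpha G^{\alpha,n}J_n^ru(s)+\alpha G^{\alpha,n}J_n^rP_sG^{\alpha}f-G^{\alpha,n}J_n^rP_sf\bigr].
\]
The two $\alpha$-terms cancel. Since $P_s$ and $G^{\alpha}$ commute, $J_n^ru(s)=J_n^rG^{\alpha}P_sf$, so
\[
\varphi'(s)=P^n_{t-s}\bigl(J_n^rG^{\alpha}-G^{\alpha,n}J_n^r\bigr)P_sf(x).
\]
The evaluation at $x$ is again justified by the resolvent-kernel representation of Step 4, since the bracket is $G^{\alpha,n}$ applied to an $L^2$ function plus $J_n^r$ of a bounded continuous function.
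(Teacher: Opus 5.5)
Your Steps 1--3 and the algebra in Step 5 are correct, but Step 4 does not do what it claims, and it is where the obstruction you flagged at the end of Step 3 comes back. Your identity $\varphi(s)=(g^{\alpha,n}(x,\cdot),P^n_{t-s}J_n^ru(s))_{L^2(F_n,\mu_n)}$ pairs $g^{\alpha,n}(x,\cdot)$ against $P^n_{t-s}J_n^ru(s)=(\alpha-\Delta_n)v(s)$, not against $v(s)$. Pairing it with the curve $v$ whose $C^1$ property you actually proved gives $G^{\alpha,n}v(s)(x)$, which has one resolvent too many. The other reading, that $\varphi(s)$ is the continuous version of $v(s)$ evaluated at $x$, is true but does not let you differentiate, because point evaluation is not continuous on $L^2(F_n,\mu_n)$. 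It is continuous for the graph norm of $\Delta_n$, via $k(x)=(g^{\alpha,n}(x,\cdot),(\alpha-\Delta_n)k)$. So what you would need is that $v$ is $C^1$ in the graph norm, i.e.\ that $s\mapsto P^n_{t-s}J_n^ru(s)$ is $C^1$ in $L^2$. That is exactly the statement your remark about $J_n^ru(s)\notin\mathrm{Dom}(\Delta_n)$ warns against. Pulling $G^{\alpha,n}$ out to form a kernel pairing undoes the device of Step 2. Hence the passage from the $L^2$ derivative $v'(s)$ to the derivative of the number $\varphi(s)$ is unproved, and the same interchange is assumed again at the end of Step 5 (where, moreover, $J_n^r$ of a continuous function need not be continuous).

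The missing ingredient is the smoothing of the self-adjoint semigroup. For $\tau>0$, $P^n_\tau$ maps $L^2(F_n,\mu_n)$ into $\mathrm{Dom}(\Delta_n)$ with $\|\Delta_nP^n_\tau\|\le(e\tau)^{-1}$ (the spectral computation of \eqref{norm}), and $\tau\mapsto P^n_\tau$ is norm-differentiable on $(0,\infty)$. Hence $s\mapsto P^n_{t-s}J_n^ru(s)$ is $C^1$ in $L^2$ on $[0,t)$. Pair it with $g^{\alpha,n}(x,\cdot)$ and use $(g^{\alpha,n}(x,\cdot),(\alpha-\Delta_n)P^n_\tau h)=P^n_\tau h(x)$; both sides are continuous versions, since $p_n$ is jointly continuous and $\mu_n$ has full support. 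This gives your Step 5 formula for $s\in[0,t)$, and Steps 2--3 become unnecessary. That is a valid alternative to the paper's proof. The paper instead writes $\varphi(s)$ as an explicit iterated integral of $e^{-\alpha u}$, $p_n$, $p$ and the kernel of $J_n^r$, and differentiates under the integral sign at each $s\in(0,t)$. It dominates the integrand by $Ce^{-\alpha u}$, using the bounds on $p$ and $\partial_tp$ for times bounded away from $0$ (\eqref{kig104} and \cref{derhk}). Neither argument reaches $s=t$. There the formula involves the point value $J_n^rG^{\alpha}P_tf(x)$, but when $\mu_n(\{x\})=0$ the quantity $P^n_\tau h(x)$ does not depend on $h(x)$. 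So the endpoint would require continuity of $J_n^rG^\alpha P_tf$ at $x$. This is harmless: the only use of the lemma, $|\varphi(t)-\varphi(0)|\le\int_0^t|\varphi'(s)|\,ds$, needs only a bounded derivative on the open interval and continuity of $\varphi$ on $[0,t]$. Continuity at $s=t$ holds because $G^{\alpha,n}J_n^ru(s)$ is continuous and converges uniformly as $s\to t$.
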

\begin{proof}
  Put $h=G^{\alpha}f$. Then a direct computation yields that
  \begin{align*}
    &\varphi(s)\\
    =&\int_{0}^{\infty}\int_F\int_F\int_{F_n}\int_{F_n}e^{-\alpha u}\frac{1_{B(z,r)}(w)}{\mu(B(z,r))}p_n(t-s,x,y)p_n(u,y,z)p(s,w,v)h(v)\mu_n(dy)\mu_n(dz)\mu(dw)\mu(dv)du.
  \end{align*}
Thus, if we define 
\[\psi(s)=e^{-\alpha u}\frac{1_{B(z,r)}(w)}{\mu(B(z,r))}p_n(t-s,x,y)p_n(u,y,z)p(s,w,v)h(v)\]
 for each $(u,v,w,z,y)\in \R_{\geq 0}\times F^2\times F_n^2$,
it suffices to find a dominating function $\Psi(u,v,w,z,y)$ of $\varphi'(s)$ on a neighborhood of each $s\in (0,t)$ to show differentiability.
Note that, since $F$ is compact, it holds from \eqref{kig104} and \cref{derhk} that
\begin{align*}
  \sup_{\substack{t\geq \delta\\x,y\in F}}p(t,x,y)<\infty,\quad \textrm{and}\quad \sup_{\substack{t\geq \delta\\x,y\in F}}\left|\frac{\partial}{\partial t}p(t,x,y)\right|<\infty
\end{align*}
for each $\delta>0$. Hence,
we may take $\Psi=C e^{-\alpha u}$.
Therefore we deduce that
\begin{equation*}
  \varphi'(s)=\int_{0}^{\infty}\int_F\int_F\int_{F_n}\int_{F_n}(\psi_1-\psi_2)\mu_n(dy)\mu_n(dz)\mu(dw)\mu(dv)du.
\end{equation*} 
Here, $\psi_k=\psi_k(u,v,w,z,y)$ is defined by
\begin{equation*}
  \psi_1=e^{-\alpha u}\frac{1_{B(z,r)}(w)}{\mu(B(z,r))}p_n(t-s,x,y)p_n(u,y,z)p'(s,w,v)h(v),
\end{equation*}
and
\begin{equation*}
  \psi_2=e^{-\alpha u}\frac{1_{B(z,r)}(w)}{\mu(B(z,r))}p_n'(t-s,x,y)p_n(u,y,z)p(s,w,v)h(v).
\end{equation*}
By the formula $\Delta p^{t,x}=\frac{\partial}{\partial t}p^{t,x}$ and direct computation,
we observe that
\begin{align}\label{psi1}
  \int_{0}^{\infty}\int_F\int_F\int_{F_n}\int_{F_n}\psi_1\mu_n(dy)\mu_n(dz)\mu(dw)\mu(dv)du
  =P^n_{t-s}G^{\alpha,n}J_n^r\Delta P_sh(x).
\end{align}
Similarly, it holds that
\begin{align}\label{psi2}
  \int_{0}^{\infty}\int_F\int_F\int_{F_n}\int_{F_n}\psi_2\mu_n(dy)\mu_n(dz)\mu(dw)\mu(dv)du
  =P^n_{t-s}\Delta_n G^{\alpha,n}J_n^r P_sh(x).
\end{align}
Combining \eqref{psi1}, \eqref{psi2}, and $\Delta G^{\alpha}=\alpha\Delta-I$ (see \cite[Lemma 1.3.2]{FOT94}), we conclude that
\begin{align*}
  \psi'(s)
  =&P^n_{t-s}G^{\alpha,n}J_n^r\Delta P_sh(x)-P^n_{t-s}\Delta_n G^{\alpha,n}J_n^r P_sh(x)\\
  =&P^n_{t-s}G^{\alpha,n}J_n^r\Delta G^{\alpha}P_sf(x)-P^n_{t-s}\Delta_n G^{\alpha,n}J_n^r G^{\alpha}P_sf(x)\\
  =&P^n_{t-s}G^{\alpha,n}J_n^r(\alpha G^{\alpha}-I)P_sf(x)-P^n_{t-s}(\alpha G^{\alpha,n}-I)J_n^r G^{\alpha}P_sf(x)\\
  =&P^n_{t-s}G^{\alpha,n}J_n^rP_sf(x)-P^n_{t-s}J_n^r G^{\alpha}P_sf(x)\\
  =&P^n_{t-s}(G^{\alpha,n}J_n^r-J_n^r G^{\alpha})P_sf(x),
\end{align*}
which completes the proof.
\end{proof}

We recall the condition $1_{\kappa,p}$
from \cref{cond12}.
We write $\overline{g}$ for 
McShane extension of $g\in\BL^{\kappa}(A),A\subseteq M$ 
(recall the definition of McShane extension from \cref{Mcext}).

While the class of test functions is limited, 
the following theorem provides an estimate 
for the semigroup without any assumptions.
Also,
it is expected that the heat kernel estimates and the results in later examples could be improved 
if the test functions in the following theorem 
could be extended to, for instance, those of the form $g=G^{\alpha}f$. 
Recall the definition of
$K_n$ and $\alpha_n$ from \eqref{iroiro},
and that of $c_1(t,\kappa,p)$ and $c_1([0,t],\kappa,p)$
from \cref{cond12}.
\begin{thm}\label{sg1}
Let $x\in F,x_n\in F_n,\kappa\in [1/2,1],p\in [1,\infty],\alpha\in (0,\alpha_n),t>0$ and $f\in\BL^{\kappa}(M)$.
Then, under the condition $1_{\kappa,p}$,
we have that
\begin{align*}
  |P_t(G^{\alpha})^{\circ 2}f(x)-P_t^n(G^{\alpha,n})^{\circ 2}f(x_n)|
  \lesssim&\g{c}{sg1,n}
    \Bigl\{ \|f\|_{K_n}(d_{\rm{H}}(F,F_n)+d_{\rm{H}}(F,F_n)^{\kappa}+d(x,x_n))\\
    &\quad\quad\quad\quad\quad\qquad\qquad\qquad\qquad +\|f\|_{\BL^{\kappa}(K_n)}d_{\BL^{\kappa}}(\mu,\mu_n)\Bigr\}\\
    \leq&\g{c}{sg1,n}\|f\|_{\BL^{\kappa}(K_n)}( d_{\rm{H}}(F,F_n)+d_{\rm{H}}(F,F_n)^{\kappa}+d(x,x_n)+d_{\BL^{\kappa}}(\mu,\mu_n) ),
\end{align*}
where 
\begin{align*}
  \g{c}{sg1,n}:=\g{c}{estres,n}(t+\alpha^{-1}+\alpha^{-1}c_1(t,\kappa,p)\mu(F)^{1/p})+\alpha^{-1}\mu(F)^{1/p}( c_1(t,\kappa,p)+c_1([0,t],\kappa,p)).
\end{align*}
\end{thm}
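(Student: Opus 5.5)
The plan is to interpolate between the two processes with the averaging operator $J_n^r$ and to use \cref{deriv} to trade the semigroup difference for a time integral of resolvent commutators. Throughout, $r$ is taken slightly larger than $d_{\rm{H}}(F,F_n)$ and we let $r\downarrow d_{\rm{H}}(F,F_n)$ at the end. The basic estimate for $J_n^r$ is the following. For $y\in F_n$ and any $y'\in F$ with $d(y,y')\le d_{\rm{H}}(F,F_n)$,
\begin{align*}
  |J_n^rg(y)-g(y')|\le \|g\|_{\Lip^{\kappa}(F)}(r+d_{\rm{H}}(F,F_n))^{\kappa},
\end{align*}
because $J_n^rg(y)$ is an average of $g$ over a ball of radius $r$ about $y$. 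Comparing $J_n^rg$ with a McShane extension $\bar g$ (\cref{Mcext}) in the same way, we also get $\|J_n^rg-\bar g\|_{F_n}\lesssim \|g\|_{\Lip^{\kappa}}(r+d_{\rm{H}})^{\kappa}$. This is the only source of the $d_{\rm{H}}(F,F_n)^{\kappa}$ term.

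Set $\varphi(s)=P^n_{t-s}G^{\alpha,n}J_n^rP_sG^{\alpha}f(x_n)$. By \cref{deriv},
\begin{align*}
  \varphi(t)-\varphi(0)=\int_0^tP^n_{t-s}\bigl(J_n^rG^{\alpha}-G^{\alpha,n}J_n^r\bigr)P_sf(x_n)\,ds .
\end{align*}
Using $P_t(G^{\alpha})^{\circ 2}=G^{\alpha}P_tG^{\alpha}$, I would decompose
\begin{align*}
  P_t(G^{\alpha})^{\circ2}f(x)-P^n_t(G^{\alpha,n})^{\circ2}f(x_n)
  =&\bigl[G^{\alpha}P_tG^{\alpha}f(x)-G^{\alpha,n}J_n^rP_tG^{\alpha}f(x_n)\bigr]+\bigl[\varphi(t)-\varphi(0)\bigr]\\
  &+P^n_tG^{\alpha,n}\bigl(J_n^rG^{\alpha}f-G^{\alpha,n}f\bigr)(x_n).
\end{align*}
Each of the three brackets will be controlled by \cref{estres} together with the $J_n^r$ estimate. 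The needed H\"older bounds come from condition $1_{\kappa,p}$, Hölder's inequality $\|\cdot\|_{L^p(F,\mu)}\le\mu(F)^{1/p}\|\cdot\|_{F}$, and the contraction bounds $\|G^{\alpha}\|_{\infty\to\infty},\|G^{\alpha,n}\|_{\infty\to\infty}\le\alpha^{-1}$, $\|P^n_t\|_{\infty\to\infty}\le 1$.

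\emph{First bracket.} Put $g=P_tG^{\alpha}f$. Then $\|g\|_{F}\le\alpha^{-1}\|f\|_F$ and, by $1_{\kappa,p}$, $\|g\|_{\Lip^{\kappa}}\le c_1(t,\kappa,p)\alpha^{-1}\mu(F)^{1/p}\|f\|_F$. Split the bracket as $G^{\alpha}g(x)-G^{\alpha,n}\bar g(x_n)$ plus $G^{\alpha,n}(\bar g-J_n^rg)(x_n)$. The first piece is bounded by \cref{estres}, with $\|\bar g\|_{\BL^{\kappa}(K_n)}$ controlled by the two bounds on $g$ just stated. The second piece is at most $\alpha^{-1}\|g\|_{\Lip^{\kappa}}(2d_{\rm{H}})^{\kappa}$. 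Together these produce the terms $\g{c}{estres,n}\alpha^{-1}c_1(t,\kappa,p)\mu(F)^{1/p}$ and $\alpha^{-1}\mu(F)^{1/p}c_1(t,\kappa,p)$ in $\g{c}{sg1,n}$.

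\emph{Second bracket (integral term).} For fixed $s$, put $h_s=P_sf$. At $y\in F_n$ with a nearby $y'\in F$, write
\begin{align*}
  J_n^rG^{\alpha}h_s(y)-G^{\alpha,n}J_n^rh_s(y)
  =&\bigl[J_n^rG^{\alpha}h_s(y)-G^{\alpha}h_s(y')\bigr]+\bigl[G^{\alpha}h_s(y')-G^{\alpha,n}\bar h_s(y)\bigr]\\
  &+G^{\alpha,n}\bigl(\bar h_s-J_n^rh_s\bigr)(y).
\end{align*}
The middle term is \cref{estres} applied to $\bar h_s$. Here it suffices to use $\|\bar h_s\|_{F}\le\|f\|_F$ together with $d(y,y')\le d_{\rm{H}}$, and integrating this over $s\in[0,t]$ yields the term $\g{c}{estres,n}\,t$. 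The last term is at most $\alpha^{-1}\|P_sf\|_{\Lip^{\kappa}}(2d_{\rm{H}})^{\kappa}\le\alpha^{-1}c_1(s,\kappa,p)\mu(F)^{1/p}\|f\|_F(2d_{\rm{H}})^{\kappa}$. Integrating in $s$ is exactly where the integrability requirement $c_1([0,t],\kappa,p)<\infty$ in $1_{\kappa,p}$ enters.

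\emph{Third bracket.} Since $P^n_t$ is a contraction, it suffices to bound $G^{\alpha,n}J_n^rG^{\alpha}f-G^{\alpha,n}G^{\alpha,n}f$ on $F_n$. I would compare $J_n^rG^{\alpha}f(y)$ with $G^{\alpha}f(y')$ and then with $G^{\alpha,n}f(y)$ using \cref{estres}, giving a contribution of order $\alpha^{-1}\g{c}{estres,n}$.

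The step I expect to be the main obstacle is the first $J_n^r$ comparison in the integral term, $J_n^rG^{\alpha}h_s(y)-G^{\alpha}h_s(y')$, and likewise the analogous step in the third bracket. These need a uniform-in-$s$ H\"older bound for $G^{\alpha}P_sf$ (respectively for $G^{\alpha}f$). My first attempt would be to write $G^{\alpha}P_sf=\int_0^\infty e^{-\alpha u}P_{u+s}f\,du$ and use $1_{\kappa,p}$ on $[0,t]$ together with monotone bounds for large $u$. If that fails to integrate cleanly, I would instead absorb this term into the \cref{estres} comparison by regarding $J_n^rG^{\alpha}h_s$ as an average of values of $G^{\alpha}h_s$ at points of $F$ within distance $r+d_{\rm{H}}$, whose deviation is governed by the $\|f\|_{K_n}d$ part of \cref{estres}. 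Collecting the three brackets and letting $r\downarrow d_{\rm{H}}(F,F_n)$ gives the stated bound with $\g{c}{sg1,n}$; the second inequality in the statement then follows from $\|f\|_{K_n}\le\|f\|_{\BL^{\kappa}(K_n)}$.
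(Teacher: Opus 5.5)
Your decomposition is the paper's. Its four terms are your three brackets, with the first bracket split through the McShane extension of $P_tG^{\alpha}f$ exactly as you do, and the commutator is handled by \cref{deriv} and condition $1_{\kappa,p}$.

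The ``main obstacle'' exists only because you pass through an auxiliary point $y'\in F$. The paper never does this, and your fallback is precisely its argument. For $y\in F_n$ it writes
\begin{align*}
  J_n^rG^{\alpha}h(y)-G^{\alpha,n}\tilde h(y)=\frac{1}{\mu(B(y,r))}\int_{B(y,r)}\bigl(G^{\alpha}h(z)-G^{\alpha,n}\tilde h(y)\bigr)\mu(dz),
\end{align*}
taking $h=P_sf$, $\tilde h=\overline{P_sf}$ in the commutator and $h=\tilde h=f$ in your third bracket. It then applies \cref{estres} to each pair $(z,y)\in F\times F_n$, which is allowed because \cref{estres} holds for arbitrary $y\in F$, $y_n\in F_n$. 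Since $d(z,y)<r$, this costs only $\|\tilde h\|_{K_n}(r+d_{\rm{H}}(F,F_n))$ plus the $d_{\BL^{\kappa}}$ term. No H\"older bound on $G^{\alpha}P_sf$ or $G^{\alpha}f$ is ever needed. You should drop your first option: it needs a large-time bound that $1_{\kappa,p}$ does not supply (you would have to derive it from $L^p$-contractivity of $P_u$), and it would not reproduce the constant $\g{c}{sg1,n}$.

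There are two bookkeeping corrections; neither affects the argument, only the exact form of the constant.
\begin{itemize}
\item In the commutator, the $d_{\BL^{\kappa}}(\mu,\mu_n)$ part of \cref{estres} needs $\|\overline{P_sf}\|_{\BL^{\kappa}(K_n)}\le\|f\|_F+c_1(s,\kappa,p)\|f\|_{L^p(F,\mu)}$, not just the sup norm. After integrating in $s$ you therefore also pick up $\g{c}{estres,n}\,c_1([0,t],\kappa,p)\mu(F)^{1/p}$. This term is present in the paper's \eqref{II5}, although it does not appear in the displayed $\g{c}{sg1,n}$.
\item Your own bound $\|P_tG^{\alpha}f\|_{\Lip^{\kappa}}\le\alpha^{-1}c_1(t,\kappa,p)\|f\|_{L^p(F,\mu)}$ makes the piece $G^{\alpha,n}(\bar g-J_n^rg)(x_n)$ of order $\alpha^{-2}c_1(t,\kappa,p)\mu(F)^{1/p}$, not $\alpha^{-1}$. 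The paper's estimate of the same term loses this factor as well.
\end{itemize}
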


\begin{proof}We fix $r>d_{\rm{H}}(F,F_n)$ and set $J_n^r$ as in \eqref{defjn}.
  Note that we have that
   \begin{align*}
    \MoveEqLeft |P_t^n(G^{\alpha,n})^{\circ 2}f(x_n)-P_t(G^{\alpha})^{\circ 2}f(x_n)|\\
    \leq&|P_t^n(G^{\alpha,n})^{\circ 2}f(x_n)-P_t^nG^{\alpha,n}J_n^rG^{\alpha}f(x_n)|
     +|P_t^nG^{\alpha,n}J_n^rG^{\alpha}f(x_n)-G^{\alpha,n}J_n^rP_tG^{\alpha}f(x_n)|\\
     &+|G^{\alpha,n}J_n^rP_tG^{\alpha}f(x_n)-G^{\alpha,n}\overline{P_tG^{\alpha}f}(x_n)|
    +|G^{\alpha,n}\overline{P_tG^{\alpha}f}(x_n)- G^{\alpha}P_tG^{\alpha}f(x)|\\
    =&:I+II+III+IV.
  \end{align*}
  Here,  $\overline{P_tG^{\alpha}f}:M\to\R$ is a McShane extension of $P_tG^{\alpha}f:F\to\R$ with exponent $\kappa$.
We estimate each term separately.

First, using \cref{estres}, $I$ can be bounded
as following: 
\begin{equation}\label{I}
  \begin{aligned}
  I
  \leq&\sup_{y\in F_n}| (G^{\alpha,n})^{\circ 2}f(y)-G^{\alpha,n}J_n^rG^{\alpha}f(y)|\\
  \leq&\alpha^{-1}\sup_{y\in F_n}| G^{\alpha,n}f(y)-J_n^rG^{\alpha}f(y)|\\
  \leq&\alpha^{-1}\sup_{y\in F_n}\frac{1}{\mu(B(y,r))}\int_{B(y,r)}|G^{\alpha,n}f(y)-G^{\alpha}f(z)|\mu(dz)\\
  \lesssim&\alpha^{-1}\g{c}{estres,n}\{\|f\|_{K_n}(d_{\mathrm{H}}(F,F_n)+r)+\|f\|_{\BL^{\kappa}(K_n)}d_{\BL^{\kappa}}(\mu,\mu_n)\}  .
  \end{aligned}
\end{equation}
We next estimate $II$.
Since, for the function $\varphi$ defined in \cref{deriv},
we observe $II=|\varphi(t)-\varphi(0)|\leq \int_{0}^{t}|\varphi'(s)|ds$,
it suffices to estimate $|\varphi'(s)|$.
We write $\overline{P_sf}$ for a McShane extension of $P_sf$ with exponent $\kappa$. 
It follows from condition $1_{\kappa,p}$ and \cref{estres} that
\begin{align*}
  |\varphi'(s)|
  =&|P_{t-s}^n(J_n^rG^{\alpha}-G^{\alpha,n}J_n^r)P_sf(x_n)|\\
  \leq&\sup_{y\in F_n}|(J_n^rG^{\alpha}-G^{\alpha,n}J_n^r)P_sf(y)|\\
  \leq&\sup_{y\in F_n}\frac{1}{\mu(B(y,r))}\int_{B(y,r)}|G^{\alpha}P_sf(z)-G^{\alpha,n}J_n^rP_sf(y)|\mu(dz)\\
  \leq&\sup_{y\in F_n}\frac{1}{\mu(B(y,r))}\int_{B(y,r)}|G^{\alpha}P_sf(z)-G^{\alpha,n}\overline{P_sf}(y)|\mu(dz)\\
  &+\sup_{y\in F_n}\frac{1}{\mu(B(y,r))}\int_{B(y,r)}|G^{\alpha,n}\overline{P_sf}(y)-G^{\alpha,n}J_n^rP_sf(y)|\mu(dz)\\
  \lesssim&\g{c}{estres,n}\{ \|P_sf\|_F(d_{\mathrm{H}}(F,F_n)+r)+\|P_sf\|_{\BL^{\kappa}(F)}d_{\BL^{\kappa}}(\mu,\mu_n) \}\\
    &+\alpha^{-1}\sup_{w\in F_n}|J_n^rP_sf(w)-\overline{P_sf}(w)|\\
    \leq& \g{c}{estres,n}\{ \|f\|_F(d_{\mathrm{H}}(F,F_n)+r)+(\|f\|_F+c_1(s,\kappa,p)\|f\|_{L^p(F,\mu)})d_{\BL^{\kappa}}(\mu,\mu_n) \}\\
    &+\alpha^{-1}c_1(s,\kappa,p)\|f\|_{L^p(F,\mu)}r^{\kappa}.
\end{align*}
Hence we deduce that
\begin{equation}\label{II5}
  \begin{aligned}
    II
  \lesssim& 
  \g{c}{estres,n}\{ \|f\|_F(d_{\mathrm{H}}(F,F_n)+r)t+(t\|f\|_F+c_1([0,t],\kappa,p)\|f\|_{L^p(F,\mu)})d_{\BL^{\kappa}}(\mu,\mu_n) \}\\
    &+\alpha^{-1}c_1([0,t],\kappa,p)\|f\|_{L^p(F,\mu)}r^{\kappa}.
  \end{aligned}
\end{equation}
See \cref{cond12} for the definition if $c_1([0,t],\kappa,p)$. 

Using \cref{continuity}(ii) and the fact that $\|G^{\alpha}f\|_{L^p(F,\mu)}\leq \alpha^{-1}\|f\|_{L^p(F,\mu)}$, $III$ is bounded as follows:
\begin{equation}
  \begin{aligned}\label{III}
    III
  \leq&\alpha^{-1}\sup_{y\in F_n}|J_n^rP_tG^{\alpha}f(y)-\overline{P_tG^{\alpha}f}(y)|\\
  \leq&\alpha^{-1}\sup_{y\in F_n}\frac{1}{\mu(B(y,r))}\int_{B(y,r)}|P_tG^{\alpha}f(z)-\overline{P_tG^{\alpha}f}(y)|\mu(dz)\\
  \leq&\alpha^{-1}c_1(t,\kappa,p) \|f\|_{L^p(F,\mu)}r^{\kappa}.
  \end{aligned}
\end{equation}
Finally, we deal with $IV$. 
By \cref{estres} and \cref{continuity}(ii), we have that
\begin{equation}\label{IV}
  \begin{aligned}
    IV
  \lesssim&\g{c}{estres,n}\{\|P_t G^{\alpha}f\|_{F}(d_{\mathrm{H}}(F,F_n)+d(x,x_n))+\|P_tG^{\alpha}f\|_{\BL^{\kappa}(F)}d_{\BL^{\kappa}}(\mu,\mu_n)\}\\
  \lesssim&\alpha^{-1}\g{c}{estres,n}\{\|f\|_{F}(d_{\mathrm{H}}(F,F_n)+d(x,x_n))+(\|f\|_{F}+c_1(t,\kappa,p)\|f\|_{L^p(F,\mu)})d_{\BL^{\kappa}}(\mu,\mu_n)\}.
  \end{aligned}
\end{equation}
Combining \eqref{I}, \eqref{II5}, \eqref{III}, and \eqref{IV}, using $\|f\|_{L^p(F,\mu)}\leq \mu(F)^{1/p}\|f\|_{F}$, and letting $r\to d_{\rm{H}}(F,F_n)$,
we obtain the desired inequality.
\end{proof}

Note that, by \cref{continuity}(ii), $G^{\alpha}\psi$ is a Lipschitz continuous 
function with 
\[
\|G^{\alpha}\psi\|_{\Lip^1}\lesssim \alpha^{-1} \g{c}{continuity,1}\|\psi\|_{L^1(F,\mu)}\leq \alpha^{-1} \g{c}{continuity,1}\mu(F)\|\psi\|_{F}
\]
for any bounded measurable function $\psi:F\to\R$ and $\alpha\le 1$.
Also, we write the constant $\g{c}{continuity,1}$ for the $n$-version as
$\g{c}{continuity,1.n}$.

While the semigroup has been denoted by $P_t$ above,
we denote it using expectation in statements 
for the sake of intuitive clarity.

\begin{lem}\label{sg2}
For $f\in \BL^{\kappa}(M)$, define $g:M\to\R$ by $g=\overline{(G^{\alpha})^{\circ 2}f}$ 
with the exponent of the McShane extension being $1$.
Then, for any 
$x\in F,\;x_n\in F_n,\;\kappa\in [1/2,1],\;p\in [1,\infty],\;\alpha\in (0,\alpha_n),$ and $t>0$,
under the condition $1_{\kappa,p}$,
it holds that
\begin{align*}
  \Big|E_x[g(X_t)]-E_{x_n}^n[g(X^n_t)]\Big|
  \lesssim &\g{c}{sg2,n}
    \Bigl\{ \|f\|_{K_n}(d_{\rm{H}}(F,F_n)+d_{\rm{H}}(F,F_n)^{\kappa}+d(x,x_n))+\|f\|_{\BL^{\kappa}(K_n)}d_{\BL^{\kappa}}(\mu,\mu_n)\Bigr\}\\
    \leq&\g{c}{sg2,n}\|f\|_{\BL^{\kappa}(K_n)}
    \Bigl\{d_{\rm{H}}(F,F_n)+d_{\rm{H}}(F,F_n)^{\kappa}+d(x,x_n)+d_{\BL^{\kappa}}(\mu,\mu_n)\Bigr\},
\end{align*}
where 
\begin{align*}
  \g{c}{sg2,n}:=\g{c}{sg1,n}+\alpha^{-1}\g{c}{estres,n}(1+\mu_n(F_n)\g{c}{continuity,1.n})+\alpha^{-2}\Big[\mu(F)\g{c}{continuity,1}+\mu_n(F_n)\g{c}{continuity,1.n}\Big].
\end{align*}
\end{lem}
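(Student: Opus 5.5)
The plan is to reduce Lemma \ref{sg2} to Theorem \ref{sg1} by comparing $g$ with the functions $(G^{\alpha})^{\circ 2}f$ on $F$ and $(G^{\alpha,n})^{\circ 2}f$ on $F_n$. Since $g$ is the McShane extension of $(G^{\alpha})^{\circ 2}f|_F$, we have $g|_F=(G^{\alpha})^{\circ 2}f$, so $E_x[g(X_t)]=P_t(G^{\alpha})^{\circ 2}f(x)$ exactly. On the $n$-side we write
\begin{align*}
E^n_{x_n}[g(X^n_t)]=P^n_t(G^{\alpha,n})^{\circ 2}f(x_n)+P^n_t\bigl(g-(G^{\alpha,n})^{\circ 2}f\bigr)(x_n),
\end{align*}
and the first term is compared with $P_t(G^{\alpha})^{\circ 2}f(x)$ by Theorem \ref{sg1}; this contributes $\g{c}{sg1,n}$. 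Since $P^n_t$ is a sub-Markov contraction in the sup norm, it then remains to bound $\sup_{y\in F_n}|g(y)-(G^{\alpha,n})^{\circ 2}f(y)|$.

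Fix $y\in F_n$ and choose $z\in F$ with $d(y,z)\le d_{\rm H}(F,F_n)$. We split
\begin{align*}
|g(y)-(G^{\alpha,n})^{\circ 2}f(y)|\le |g(y)-g(z)|+|(G^{\alpha})^{\circ 2}f(z)-(G^{\alpha,n})^{\circ 2}f(y)|.
\end{align*}
For the first piece, the McShane extension with exponent $1$ preserves the Lipschitz constant. By Theorem \ref{continuity}(ii) applied to $\psi=G^{\alpha}f$, together with $\|G^\alpha f\|_{L^1}\le\alpha^{-1}\mu(F)\|f\|_F$, the Lipschitz constant of $(G^{\alpha})^{\circ 2}f$ is at most of order $\alpha^{-2}\mu(F)\g{c}{continuity,1}\|f\|_F$ (using $\alpha\le1$). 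So this piece is at most of order $\alpha^{-2}\mu(F)\g{c}{continuity,1}\|f\|_F\, d_{\rm H}(F,F_n)$.

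For the second piece, write $(G^{\alpha})^{\circ 2}f(z)-(G^{\alpha,n})^{\circ 2}f(y)$ as
\begin{align*}
\bigl[G^{\alpha}(G^{\alpha}f)(z)-G^{\alpha,n}\overline{G^{\alpha}f}(y)\bigr]+G^{\alpha,n}\bigl[\overline{G^{\alpha}f}-G^{\alpha,n}f\bigr](y),
\end{align*}
where $\overline{G^\alpha f}$ is again a Lipschitz McShane extension.
\begin{itemize}
\item The first bracket is controlled by Theorem \ref{estres}. We need $\|\overline{G^{\alpha}f}\|_{\BL^{\kappa}(K_n)}$; by Theorem \ref{continuity}(ii) this is of order $\alpha^{-1}(1+\mu(F)\g{c}{continuity,1})\|f\|_F$. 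The $\diam^{1-\kappa}$ factor from passing between $\Lip^1$ and $\Lip^\kappa$ is absorbed into constants. This term also yields $\alpha^{-1}$ times the resolvent rate.
\item The second term is at most $\alpha^{-1}\sup_{w\in F_n}|\overline{G^{\alpha}f}(w)-G^{\alpha,n}f(w)|$. For $w\in F_n$, pick $w'\in F$ within $d_{\rm H}$ of $w$. Then $|\overline{G^\alpha f}(w)-G^\alpha f(w')|$ is controlled by the Lipschitz constant times $d_{\rm H}$, and $|G^{\alpha}f(w')-G^{\alpha,n}f(w)|$ is controlled by Theorem \ref{estres}.
\end{itemize}

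Alternatively, the $n$-side Lipschitz constant $\mu_n(F_n)\g{c}{continuity,1.n}$ enters if one instead extends from $F_n$ symmetrically. This matches the stated constant $\alpha^{-1}\g{c}{estres,n}(1+\mu_n(F_n)\g{c}{continuity,1.n})+\alpha^{-2}[\mu(F)\g{c}{continuity,1}+\mu_n(F_n)\g{c}{continuity,1.n}]$. Collecting all terms, each is bounded by a constant times $\|f\|_{K_n}(d_{\rm H}+d(x,x_n))+\|f\|_{\BL^\kappa(K_n)}d_{\BL^\kappa}(\mu,\mu_n)$, which gives the claim.

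The main obstacle I expect is the bookkeeping that keeps every term linear in $\|f\|$ and matches the stated constant. In particular, one must check that the Lipschitz-$1$ McShane extension of $G^\alpha f$ has the $\BL^\kappa(K_n)$ norm needed to apply Theorem \ref{estres}. I would handle this via $\|u\|_{\Lip^\kappa}\le(\diam K_n)^{1-\kappa}\|u\|_{\Lip^1}$, with the diameter absorbed since $K_n$ is compact and bounded uniformly in $n$.
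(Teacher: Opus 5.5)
Your argument is correct and is essentially the paper's proof. You reduce to \cref{sg1} using $g|_F=(G^{\alpha})^{\circ 2}f$ and the sup-norm contractivity of $P^n_t$, then bound $\sup_{y\in F_n}|g(y)-(G^{\alpha,n})^{\circ 2}f(y)|$ through a nearby $z\in F$, a Lipschitz McShane extension of the inner resolvent, \cref{estres} and \cref{continuity}(ii). The only difference is that the paper inserts $G^{\alpha}\overline{G^{\alpha,n}f}(z)$ rather than $G^{\alpha,n}\overline{G^{\alpha}f}(y)$, i.e.\ it uses your ``symmetric'' variant. That ordering is exactly what produces the $\mu_n(F_n)\g{c}{continuity,1.n}$ terms in $\g{c}{sg2,n}$, whereas your main ordering gives the same bound with $\mu(F)\g{c}{continuity,1}$ in their place.

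Also, you do not need to absorb a diameter factor into a constant when passing from $\Lip^1$ to $\Lip^{\kappa}$, since $\|u\|_{\Lip^{\kappa}(K_n)}\le(2\|u\|_{K_n})^{1-\kappa}\|u\|_{\Lip^1(K_n)}^{\kappa}\le 2\|u\|_{K_n}+\|u\|_{\Lip^1(K_n)}$.
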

\begin{proof}
  All McShane extensions in the following proof are with exponent $1$.

  We first observe that
  \begin{align*}
    \Big|E_x[g(X_t)]-E_{x_n}^n[g(X^n_t)]\Big|
    \leq&| P_t(G^{\alpha})^{\circ 2}f(x)-P_t^n(G^{\alpha,n})^{\circ 2}f(x_n)|
    +\sup_{y\in F_n}| (G^{\alpha,n})^{\circ 2}f(y)-\overline{(G^{\alpha,n})^{\circ 2}f}(y)|\\
    =&:I+II.
  \end{align*}
  The term $I$ can be estimated by \cref{sg1}, and we have
  \begin{align*}
    &|P_t(G^{\alpha})^{\circ 2}f(x)-P_t^n(G^{\alpha,n})^{\circ 2}f(x_n)|\\
  \lesssim&\g{c}{sg1,n}
    \Bigl\{ \|f\|_{K_n}(d_{\rm{H}}(F,F_n)+d_{\rm{H}}(F,F_n)^{\kappa}+d(x,x_n))+\|f\|_{\BL^{\kappa}(K_n)}d_{\BL^{\kappa}}(\mu,\mu_n)\Bigr\}.
  \end{align*}
  We next bound $II$. Fix $y\in F_n$ and take $z\in F$ such that
  $d(y,z)\leq d_{\rm{H}}(F,F_n)$.
  Then we observe from \cref{continuity}(ii) and \cref{estres} that
  \begin{equation*}\label{2II}
    \begin{aligned}
    II
    \leq& | \overline{(G^{\alpha})^{\circ 2}f}(y)-(G^{\alpha})^{\circ 2}f(z)|
    +|(G^{\alpha})^{\circ 2}f(z)-G^{\alpha}\overline{G^{\alpha,n}f}(z)|
    +| G^{\alpha}\overline{G^{\alpha,n}f}(z)-(G^{\alpha,n})^{\circ 2}f(y)|\\
    \lesssim&\|(G^{\alpha})^{\circ 2}f\|_{\Lip^1}d_{\rm{H}}(F,F_n)
    +\alpha^{-1}\sup_{w\in F}| G^{\alpha}f(w)-\overline{G^{\alpha,n}f}(w)|\\
    &+\g{c}{estres,n}\Bigl\{\|G^{\alpha,n}f\|_{F_n}d_{\mathrm{H}}(F,F_n)+(\|G^{\alpha,n}f\|_{F_n}+\|G^{\alpha,n}f\|_{\Lip^{\kappa}})d_{\BL^{\kappa}}(\mu,\mu_n)\Bigr\}\\
    \leq&\alpha^{-2}\mu(F)\g{c}{continuity,1}\|f\|_{F}d_{\mathrm{H}}(F,F_n)
    +\alpha^{-1}\sup_{w\in F}| G^{\alpha}f(w)-\overline{G^{\alpha,n}f}(w)|\\
    & +\g{c}{estres,n}\Bigl\{\alpha^{-1}\|f\|_{F_n}d_{\mathrm{H}}(F,F_n)+(\alpha^{-1}+\alpha^{-1}\mu_n(F_n)\g{c}{continuity,1.n})\|f\|_{F_n}d_{\BL^{\kappa}}(\mu,\mu_n)\Bigr\}.
  \end{aligned}
  \end{equation*}
  It remains to bound $\sup_{w\in F}| G^{\alpha}f(w)-\overline{G^{\alpha,n}f}(w)|$.
  Fix $w\in F$. We may find $w_n\in F_n$ with $d(w,w_n)\leq d_{\rm{H}}(F,F_n)$.
  It is then the case that
  \begin{equation*}\label{wwn}
    \begin{aligned}
      \MoveEqLeft | G^{\alpha}f(w)-\overline{G^{\alpha,n}f}(w)|\\
    \leq&|G^{\alpha}f(w)-G^{\alpha,n}f(w_n)|+|G^{\alpha,n}f(w_n)-\overline{G^{\alpha,n}f}(w)|\\
    \lesssim&\g{c}{estres,n}(\|f\|_{K_n}d_{\mathrm{H}}(F,F_n)+\|f\|_{\BL^{\kappa}(K_n)}d_{\BL^{\kappa}}(\mu,\mu_n))
    +\alpha^{-1}\g{c}{continuity,1.n}\mu_n(F_n)\|f\|_{F_n} d_{\rm{H}}(F,F_n).
    \end{aligned}
  \end{equation*}
Combining the above estimates, we deduce the result.
\end{proof}

The following lemma provides an estimate for the short-time error of the semigroup.
This is useful for understanding the influence of the smoothing effect in the proof of \cref{gen}.
Recall the assumptions $\rm{(A2)},\;\rm{(A3)}$ and parameters $s_0,\;s_1,$ and $\theta$ 
from \cref{as11}.
\begin{lem}\quad\par\label{12q2}
  \begin{itemize}
    \item [$\rm{(i)}$]Suppose that $(F,R,\mu)$ satisfies assumption $\rm{(A2)}$.
  Then
for any $f\in\BL^{\kappa},\;\delta\in (0,1],\;\varepsilon\in (0,1]$, and $\eta\in (0,\varepsilon)$, we have 
\begin{align*}
  \sup_{\substack{t\geq 0\\x\in F}}|P_{t+\delta}f(x)-P_tf(x)|\lesssim
  \g{c}{12q2}\left\{\|f\|_{\Lip^{\kappa}}\varepsilon^{\kappa}
    +\|f\|_{F}\left[\frac{\eta^{\theta}}{\varepsilon^{\theta} }+\frac{\delta}{\eta^{s_0 \theta}(\varepsilon^{\theta}-\eta^{\theta})}\right]\right\} ,
\end{align*}
where
\begin{align*}
  \g{c}{12q2}=1+\frac{1}{c_l \left[ \frac{c_{\rm{LR}}(1\wedge \frac{1}{2}\diam F)^{\theta}}{1+(1\vee \frac{1}{\diam F})c_{\rm{LR}}(1\wedge \frac{1}{2}\diam F)^{\theta}} \right]^{1+s_0}}.
\end{align*}
\item[$\rm{(ii)}$]Suppose that $(F,R,\mu)$ satisfies assumption $\rm{(A3)}$.
  Then
for any $f\in\BL^{\kappa},\;\delta>0,\;\varepsilon\in (0,1]$, and $\eta\in (0,\varepsilon)$, we have 
\begin{align*}
  \sup_{\substack{t\geq 0\\x\in F}}|P_{t+\delta}f(x)-P_tf(x)|\lesssim
  \|f\|_{\Lip^{\kappa}}\varepsilon^{\kappa}
    +\|f\|_{F}\exp \left[ -\g{c}{summ}(1\wedge \frac{1}{2}\diam F)^{1+\beta}\frac{\varepsilon^{1+\beta}}{\delta^{\beta}}\right],
\end{align*}
where $\beta=\frac{1-(2+s_0)(s_0-s_1)}{s_1+2(2+s_0)(s_0-s_1)}>0$.
\end{itemize}
\end{lem}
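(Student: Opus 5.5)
The plan is to write the difference via the Markov property and split on whether the process leaves a ball of radius $\varepsilon$ during the short time $\delta$. By the Markov property, for $t\ge 0$ and $x\in F$,
\begin{align*}
P_{t+\delta}f(x)-P_tf(x)=E_x\bigl[P_\delta f(X_t)-f(X_t)\bigr],
\end{align*}
so it suffices to bound $\sup_{y\in F}|E_y[f(X_\delta)]-f(y)|$. For fixed $y$, we split according to whether the process exits $B(y,\varepsilon)$ before time $\delta$:
\begin{align*}
|E_y[f(X_\delta)-f(y)]|\le \|f\|_{\Lip^{\kappa}}\varepsilon^{\kappa}+2\|f\|_F\,P_y\bigl(\sigma_{B(y,\varepsilon)^c}\le \delta\bigr).
\end{align*}
On the event of no exit, $R(X_\delta,y)<\varepsilon$, which gives the first term. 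Everything then reduces to an exit-probability estimate.

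For (i), we apply \cref{4.2-a}(ii) with $\rho=y$, $r=\varepsilon$, and the auxiliary radius chosen as $\eta$ (scaled appropriately). The task is to lower bound $R(y,B(y,\varepsilon)^c)$ by $c_{\rm{LR}}\varepsilon^{\theta}$ via (A2), and $\mu(B(y,\cdot))$ by $c_l(\cdot)^{s_0}$ via (A1). Concretely, take $\delta_0:=c\,\eta^{\theta}$ in \cref{4.2-a}(ii), where $c$ is a constant comparable to $c_{\rm{LR}}$ ensuring $\delta_0<R(y,B(y,\varepsilon)^c)$. This yields
\begin{align*}
P_y(\sigma_{B(y,\varepsilon)^c}\le\delta)\lesssim \frac{\eta^{\theta}}{\varepsilon^{\theta}}+\frac{\delta}{\mu(B(y,c\eta^{\theta}))\,(\varepsilon^{\theta}-\eta^{\theta})}.
\end{align*}
The volume lower bound from (A1) gives $\mu(B(y,c\eta^\theta))\ge c_l c^{s_0}\eta^{s_0\theta}$, valid while $c\eta^\theta<\diam F$. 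This produces the claimed shape, and the constant $\g{c}{12q2}$ encodes $c_l$ and $c$.

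The main obstacle is the bookkeeping at the boundary cases, which I would handle as follows.
\begin{itemize}
\item When $B(y,\varepsilon)=F$, the exit probability is zero.
\item When $\varepsilon$ is comparable to $\diam F$, the restriction in (A2) to balls with $B(y,r)\neq F$ and the range $r<\diam F$ in (A1) force us to cap radii at $1\wedge\frac12\diam F$. This is where the expression $\frac{c_{\rm{LR}}(1\wedge\frac12\diam F)^\theta}{1+(1\vee\frac1{\diam F})c_{\rm{LR}}(1\wedge\frac12\diam F)^\theta}$ in $\g{c}{12q2}$ should arise: it is a safe choice of the resistance scale guaranteeing both admissibility and the volume bound.
\item When $\varepsilon>1\wedge\frac12\diam F$, I would replace $\varepsilon$ by this cap, losing only constants.
\end{itemize}

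For (ii), the same decomposition applies, and the exit probability is now controlled directly by \cref{summ}(v):
\begin{align*}
P_y(\sigma_{B(y,r)^c}\le\delta)\le 2\exp\bigl[-\g{c}{summ}\,r^{1+\beta}\delta^{-\beta}\bigr]\quad\text{for } r<\tfrac12\diam F.
\end{align*}
We apply it with $r=\varepsilon\cdot(1\wedge\frac12\diam F)$, since $\varepsilon\le1$; this accounts for the factor $(1\wedge\frac12\diam F)^{1+\beta}$. Because $B(y,r)\subseteq B(y,\varepsilon)$, the bound on the non-exit term still holds with $\varepsilon^\kappa$. In this case $\eta$ plays no role.
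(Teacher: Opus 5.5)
Your proposal is correct and follows the paper's proof. Both arguments reduce to $t=0$ via sup-norm contraction of the semigroup, then split according to whether $X$ leaves a ball by time $\delta$. In (i) the exit probability is bounded by \cref{4.2-a}(ii) with an auxiliary radius proportional to $\eta^{\theta}$, and in (ii) by \cref{summ}(v) with $r=(1\wedge\frac12\diam F)\varepsilon$.

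The only difference is cosmetic. The paper rescales to $\varepsilon_0=(1\wedge\frac12\diam F)\varepsilon$ and builds that factor into $\eta_0$, so that $B(x,\varepsilon_0)\neq F$ holds automatically. In your version of (i) no cap is needed: treat $B(y,\varepsilon)=F$ separately and take the auxiliary radius $c\eta^{\theta}$ with $c<c_{\rm{LR}}\wedge\diam F$. Choosing $c$ equal to the bracketed quantity in the stated constant reproduces that constant exactly. Drop the remark about capping $\varepsilon$: literally capping it would break $\eta<\varepsilon$.
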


\begin{proof}
  It is enough to consider the case $t=0$ since the semigroup is a contraction with respect to the supremum norm.
  Fix $0<\eta<\varepsilon\leq 1$ and set 
  \[
  \varepsilon_0=(1\wedge\frac{1}{2}\diam F)\varepsilon,\qquad \eta_0=\frac{c_{\rm{LR}}(1\wedge \frac{1}{2}\diam F)^{\theta}}{1+(1\vee \frac{1}{\diam F})c_{\rm{LR}}(1\wedge \frac{1}{2}\diam F)^{\theta}}\eta^{\theta}.
  \]
  Then we have that $B(x,\varepsilon_0)\neq F$ and $\eta_0< R(x,B(x,\varepsilon_0)^c)\wedge\diam F$.
  We first confirm $\eta_0< R(x,B(x,\varepsilon_0)^c)$.
  Since it holds that $B(x,\varepsilon_0)\neq F$ from $\varepsilon_0<\frac{1}{2}\diam F$, we have 
  $c_{\rm{LR}}\varepsilon_0^{\theta}$. Thus it is enough to prove 
  \begin{align*}
    \frac{1}{1+(1\vee \frac{1}{\diam F})c_{\rm{LR}}(1\wedge \frac{1}{2}\diam F)^{\theta}}\eta^{\theta}
    <\varepsilon^{\theta},
  \end{align*}
  to confirm $\eta_0< R(x,B(x,\varepsilon_0)^c)$, and this follows from $\eta<\varepsilon$ and $\theta>0$.
  Next we show $\eta_0<\diam F$. Using $\frac{x}{1+ax}<\frac{1}{a}$ for $a,x>0$, we observe that
  \begin{align*}
    \eta_0<\frac{1}{1\vee \frac{1}{\diam F}}\eta^{\theta}<1\vee \diam F\le\diam F.
  \end{align*}
  By \cref{4.2-a}(ii), we have that
  \begin{align}
    |P_{\delta}f(x)-f(x)|
    \leq&E_x[|f(X_{\delta})-f(x)|:R(x,X_{\delta})\leq \varepsilon_0]+E_x[|f(X_{\delta})-f(x)|:R(x,X_\delta)>\varepsilon_0]\notag\\
    \lesssim&\|f\|_{\Lip^{\kappa}}\varepsilon_0^{\kappa}+\|f\|_F P_x(R(x,X_{\delta})>\varepsilon_0)\notag\\
    \lesssim&\|f\|_{\Lip^{\kappa}}\varepsilon_0^{\kappa}+\|f\|_F P_x(\sigma_{B(x,\varepsilon_0)^c}\leq \delta)\label{expa}\\
    \lesssim&\|f\|_{\Lip^{\kappa}}\varepsilon_0^{\kappa}+\|f\|_F\left[ \frac{\eta_0}{R(x,B(x,\varepsilon_0)^c)}+\frac{\delta}{\mu(B(x,\eta_0))(R(x,B(x,\varepsilon_0)^c)-\eta_0)} \right]\notag\\
    \leq&\|f\|_{\Lip^{\kappa}}\varepsilon^{\kappa}\notag\\
    &+\|f\|_F\left[\frac{\eta^{\theta}}{\varepsilon^{\theta} }+\frac{1}{c_l \left[ \frac{c_{\rm{LR}}(1\wedge \frac{1}{2}\diam F)^{\theta}}{1+(1\vee \frac{1}{\diam F})c_{\rm{LR}}(1\wedge \frac{1}{2}\diam F)^{\theta}} \right]^{1+s_0}}\frac{\delta}{\eta^{s_0 \theta}(\varepsilon^{\theta}-\eta^{\theta})} \right]\notag,
  \end{align}
  which implies the result of (i).
  That of (ii) follows from \eqref{expa} and \cref{summ}(v).
\end{proof}

Finally, we are now ready to prove the main result of this section, 
which provides an estimate for semigroups with H\"older continuous 
test functions under suitable assumptions.

\begin{proof}[Proof of \cref{gen}$\rm{(i)}$]
  Take $\alpha:=\frac{1}{2}\alpha_n\in (0,\alpha_n)$ and $\delta\in (0,1]$.
  To begin with, we observe that 
  \begin{equation}\label{IIIIII}
  \begin{aligned}
    |P_tf(x)-P_{t}^nf(x_n)|
    \leq& |P_tf(x)-P_{t+\delta}f(x)|\\
    &+|P_t(G^{\alpha})^{\circ 2}(\alpha-\Delta)^2P_{\delta}f(x)-P_t^n \overline{(G^{\alpha})^{\circ 2}(\alpha-\Delta)^2P_{\delta}f}(x_n)|\\
    &+| P_t^n \overline{P_{\delta}f}(x_n)-P^n_tJ_n^rP_{\delta}f(x_n)|
    +|P^n_tJ_n^rP_{\delta}f(x_n)-P^n_tf(x_n)|\\
    =:&I+II+III+IV.
  \end{aligned}
  \end{equation}
  Here, $\overline{(G^{\alpha})^{\circ 2}(\alpha-\Delta)^2P_{\delta}f}$ is a MacShane extension with exponent $1$.
  By \cref{12q2}, we have that
  \begin{align*}
    I\lesssim  \g{c}{12q2}\|f\|_{\BL^{\kappa}}\left[\varepsilon^{\kappa}
    +\frac{\eta^{\theta}}{\varepsilon^{\theta} }+\frac{\delta}{\eta^{s_0 \theta}(\varepsilon^{\theta}-\eta^{\theta})} \right],
  \end{align*}
  for any $\varepsilon\in (0,1]$ and $\eta\in (0,\varepsilon)$.
  Next, it follows from \cref{sg2} and \cref{suff}(iii) with $p=q=2$ that
  \begin{align*}
    II
    \lesssim&\g{c}{sg2,n}\|(\alpha-\Delta)^2 P_{\delta}f\|_{\BL^{\kappa}}h\\
    \leq& \g{c}{sg2,n}\mu(F)^{1/2}\|f\|_F\Bigl\{ c_{\rm{A1}}^{1/2} +\g{c}{suffcond1,3}\alpha^{-(1+\frac{s_1}{2(1+s_0)\theta})(2\kappa-1)}\Bigr\}\delta^{-\left(2+\kappa\right)}h.
  \end{align*}
  Here we used that $2+\frac{s_0}{2(1+s_0)}<2+\frac{1}{2}\leq 2+\kappa$.
  Using \cref{suff}(iii), we deduce that
  \begin{align*}
    III
    \leq \sup_{y\in F_n}\frac{1}{\mu(B(y,r))}\int_{B(y,r)}| P_{\delta}f(z)-\overline{P_{\delta}f}(y)|\mu(dz)
    \leq\|P_{\delta}f\|_{\Lip^1}r^{\kappa}.
  \end{align*}
  Taking the limit as $r\downarrow d_{\rm{H}}(F,F_n)$,
  we obtain  
\begin{align*}
  \limsup_{r\to d_{\rm{H}}(F,F_n)}III\leq
  \g{c}{suffcond1,3}\mu(F)^{1/2}\|f\|_{F}\delta^{-[1-\frac{s_1}{2(1+s_0)\theta}]}d_{\rm{H}}(F,F_n)
    \leq\g{c}{suffcond1,3}\mu(F)^{1/2}\|f\|_{F}\delta^{-[1-\frac{s_1}{2(1+s_0)\theta}]}h^{1/\kappa}.
\end{align*}
  Finally, by \cref{12q2}, we deduce that
  \begin{align*}
    IV
    \leq&\sup_{y\in F_n}\frac{1}{\mu(B(y,r))}\int_{B(y,r)}|P_{\delta}f(z)-P_{\delta}f(y)|+|P_{\delta}f(y)-f(y)|\mu(dz)\\
    \lesssim&\g{c}{suffcond1,3}\mu(F)^{1/2}\|f\|_{F}\delta^{-[\kappa-\frac{s_1}{2(1+s_0)\theta}(2\kappa-1)]}r
    +\g{c}{12q2}\|f\|_{\BL^{\kappa}}\left[\varepsilon^{\kappa}
    +\frac{\eta^{\theta}}{\varepsilon^{\theta} }+\frac{\delta}{\eta^{s_0 \theta}(\varepsilon^{\theta}-\eta^{\theta})} \right].
  \end{align*}
Similarly for $III$, we obtain
\begin{align*}
  \limsup_{r\to d_{\rm{H}}(F,F_n)}IV\leq
  \g{c}{suffcond1,3}\mu(F)^{1/2}\|f\|_{F}\delta^{-[\kappa-\frac{s_1}{2(1+s_0)\theta}(2\kappa-1)]}h
    +\g{c}{12q2}\|f\|_{\BL^{\kappa}}\left[\varepsilon^{\kappa}
    +\frac{\eta^{\theta}}{\varepsilon^{\theta} }+\frac{\delta}{\eta^{s_0 \theta}(\varepsilon^{\theta}-\eta^{\theta})} \right].
\end{align*}
  Altogether, we conclude that
  \begin{equation}\label{kkkkk}
    \begin{aligned}
  |P_tf(x)-P_{t}^nf(x_n)|
    \lesssim\g{C'}{gen}\|f\|_{\BL^{\kappa}}\left[\varepsilon^{\kappa}
    +\frac{\eta^{\theta}}{\varepsilon^{\theta} }+\frac{\delta}{\eta^{s_0 \theta}(\varepsilon^{\theta}-\eta^{\theta})} 
    +\delta^{-\left( 2+\kappa \right)}h+\delta^{-[1-\frac{s_1}{2(1+s_0)\theta}]}h^{1/\kappa}\right],    
    \end{aligned}
  \end{equation}
  where
  \begin{align*}
    \g{C'}{gen}=\g{c}{12q2}
    +\g{c}{sg2,n}\mu(F)^{1/2}\Bigl\{ c_{\rm{A1}} +\g{c}{suffcond1,3}\alpha^{-(1+\frac{s_1}{2(1+s_0)\theta})(2\kappa-1)}\Bigr\}
    +\g{c}{suffcond1,3}\mu(F)^{1/2}.
  \end{align*}
  Substituting $\varepsilon=h^a,\;\delta=h^b,$ and $\eta=h^{a+c}$, where $a,b,c\in (0,\infty)$ into \eqref{kkkkk}, 
  we obtain that
  \begin{align*}
    |P_tf(x)-P_{t}^nf(x_n)|\lesssim\frac{\g{C'}{gen}}{1-h^{\theta c}}\|f\|_{\BL^{\kappa}}\sum_{k=1}^4 E_k.
  \end{align*}
  Here, $E_k=E_k(a,b,c),k=1,\dots ,5$ are defined by
  \begin{align*}
    E_1=\kappa a,\quad E_2=\theta c,\quad E_3=-(1+s_0)\theta a+b-s_0\theta c,
    \;\;\rm{and}\;\;E_4=-(2+\kappa) b+1.
  \end{align*}
  Thus, it is enough to compute
  $E:=\sup_{a,b,c>0}\min_{1\leq k\leq 4}E_k$.
  By direct computation, we can verify that $E=\frac{\kappa}{\kappa+(1+s_0)(2+\kappa)(\kappa+\theta)}$, and the supremum is attainable.
  This completes the proof.
\end{proof}
\begin{proof}[Proof of \cref{gen}$\rm{(ii)}$]
  The strategy is the same and we bound the four terms in \eqref{IIIIII}.
  By \cref{suff}(iv), \cref{12q2}(ii), and the same argument as in the proof of (i),
  we deduce the following:
  \begin{equation*}
    I+IV\lesssim\left(1+\g{c}{suffcond1,4}\mu(F)^{1/2}\right)\|f\|_{\BL^{\kappa}}\left[\delta^{-[\kappa-\frac{s_1}{2(1+s_0)\Theta}(2\kappa-1)]}h+\varepsilon^{\kappa}+\exp\left(-A\frac{\varepsilon^{1+\beta}}{\delta^{\beta}}\right)\right],
  \end{equation*}
  \begin{equation*}
    II\lesssim\g{c}{sg2,n}\mu(F)^{1/2}\|f\|_F\Bigl\{ c_{\rm{A1}}^{1/2} +\g{c}{suffcond1,4}\alpha^{-(1+\frac{s_1}{2(1+s_0)\Theta})(2\kappa-1)}\Bigr\}\delta^{-\left(2+\kappa\right)}h,
  \end{equation*}
  \begin{equation*}
    III\lesssim\g{c}{suffcond1,4}\mu(F)^{1/2}\|f\|_{F}\delta^{-[1-\frac{s_1}{2(1+s_0)\Theta}]}h^{1/\kappa},
  \end{equation*}
  where $A=\g{c}{summ}(1\wedge \frac{1}{2}\diam F)^{1+\beta},\beta=\frac{1-(2+s_0)(s_0-s_1)}{s_1+2(2+s_0)(s_0-s_1)},$ and $\Theta=\frac{1+s_0-s_1}{1-(s_0-s_1)(2+s_0)}$.
  Combining these estimates, we deduce that
  \begin{align*}
    |P_tf(x)-P^n_tf(x_n)|
    \lesssim \g{C''}{gen}\|f\|_{\BL^{\kappa}}\left[\varepsilon^{\kappa}+\exp\left(-A\frac{\varepsilon^{1+\beta}}{\delta^{\beta}}\right)+ \delta^{-\left(2+\kappa\right)}h \right],
  \end{align*}
  where
  \begin{align*}
    \g{C''}{gen}=1+\mu(F)^{1/2}\g{c}{suffcond1,4}+\g{c}{sg2,n}\mu(F)^{1/2}\Bigl[ c_{\rm{A1}}^{1/2} +\g{c}{suffcond1,4}\left(\frac{\alpha_n}{2}\right)^{-(1+\frac{s_1}{2(1+s_0)\Theta})(2\kappa-1)}\Bigr].
  \end{align*}
  Taking 
  \begin{equation}\label{ep}
    \varepsilon=h^{\frac{1}{\kappa+(1+s_0)(2+\kappa)\Theta}}(\log(1/h))^{\frac{2+\kappa}{ (1+\beta)(2+\kappa)+\kappa\beta }},
  \end{equation}
  \begin{equation*}
    \delta=A^{1/\beta}\left(\frac{\kappa}{\kappa+(1+s_0)(2+\kappa)\Theta}\right)^{-\frac{1}{\beta}}h^{\frac{(1+s_0)\Theta}{ \kappa+(1+s_0)(2+\kappa)\Theta }}(\log(1/h))^{-\frac{\kappa}{(1+\beta)(2+\kappa)+\kappa\beta }},
  \end{equation*}
  and using the relation $1+\beta^{-1}=(1+s_0)\Theta$, we obtain the result.
\end{proof}

\section{Estimates for heat kernels}\label{esthk}
In this section, 
we derive the convergence rate for heat kernels by utilizing the corresponding estimates for semigroups. 
As a corollary, we also establish a semigroup estimate under assumption $\rm{(A1)}$. 
See \cref{sgrw} for more details.
We use the same notation and assumptions as in \Cref{efsg}.
In particular, we assume that $(F_n,R_n,\mu_n)\in \bbF_c^{\circ}$ converges to $(F,R,\mu)\in\bbF_c^{\circ}$ 
as described in 
\cref{nonrootcpt} (recall the definition of $\bbF_c^{\circ}$ from \eqref{bbfcirc}), and 
regard $F_n,$ $n\in\N,$ and $F$ as subsets of a common compact metric space $(M,d)$. 
Also, while we have pursued the constants to handle the non-compact cases,
we do not do so in this section. 
Thus we write $A\lesssim B$ if there exists a time-independent constant (not necessarily universal) $C\in (0,\infty)$ such that $A\leq CB$.

Recall the assumptions $\rm{(A1)},$ $\rm{(A2)},$ $\rm{(A3)}$ and parameters $s_0,\;s_1,\;\theta$ from \cref{as11}.
Then our main results in this section are the following \cref{hkest} and \cref{sgrw}.

\begin{thm}\label{hkest}
  Fix $t>0,\;x,y\in F,$ and $x_n,y_n\in F_n$.
Suppose that $(F,R,\mu)$ satisfies $\rm{(A1)}$
for $s_0,s_1>0$ with $s_0<s_1+1$,
and that
\begin{align*}
  h:=d_{\rm{H}}(F,F_n)^{\kappa}+d_{\BL^{\kappa}}(\mu,\mu_n)+d(x,x_n)+d(y,y_n)
\end{align*}
is less than $1/2$.
\begin{itemize}
  \item [$\mathrm{(i)}$]For each $\kappa\in [1/2,2/3)$, it holds that
\begin{align*}
  |p(t,x,y)-p_n(t,x_n,y_n)|\lesssim h^E,
\end{align*}
where
\begin{align*}
  E=\frac{2(s_1-s_0+1)}{\left\{ (2\kappa+5)s_0+2\kappa+4 \right\}(s_1+2)+2(s_1-s_0+1)}.
\end{align*}
Moreover, the time-dependence of the constant in $\lesssim$ is $t^{-2},\;t\in (0,1]$.
\item[$\mathrm{(ii)}$]If $(F,R,\mu)$ satisfies the assumption $\mathrm{(A2)}$, then it holds for $\kappa\in [1/2,1]$ that
\begin{align*}
  |p(t,x,y)-p_n(t,x_n,y_n)|\lesssim h^E,
\end{align*}
where
\begin{align*}
  E=\frac{s_1-s_0+1}{(\kappa+2)(1+s_0)(s_1+2)+\frac{s_0}{2}+(s_1-s_0+1)}.
\end{align*}
Moreover, the time-dependence of the constant in $\lesssim$ is $t^{-2},\;t\in (0,1]$.
\item[$\rm{(iii)}$]If $(F,R,\mu)$ satisfies the assumption $\mathrm{(A3)}$, then it holds for $\kappa\in [1/2,1]$ that
\begin{align*}
  |p(t,x,y)-p_n(t,x_n,y_n)|\lesssim  h^{E_1}(\log(1/h))^{E_2},
\end{align*}
where
\begin{align*}
  E_1=\frac{s_1-s_0+1}{(2+\kappa)(1+s_0)\Theta+\frac{s_0}{2}+(s_1-s_0+1)},
  \quad E_2=\frac{(2+\kappa)\{s_1+2(2+s_0)(s_0-s_1)\}}{1-(2+s_0)(s_0-s_1)},
\end{align*}
and $\Theta=\frac{1+s_0-s_1}{1-(s_0-s_1)(2+s_0)}$.
Moreover, the time-dependence of the constant in $\lesssim$ is $t^{-2},\;t\in (0,1]$.
\end{itemize}
\end{thm}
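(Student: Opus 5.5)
The plan is to approximate the heat kernel by the semigroup acting on a normalized indicator of a small ball, as announced in the introduction. By Chapman--Kolmogorov, $p(t,x,y)=P_{t/2}p^{t/2,y}(x)$, and $p^{t/2,y}$ is in turn approximated by $\varphi_\varepsilon:=\mu(B(y,\varepsilon))^{-1}1_{B(y,\varepsilon)}$. Concretely, for $\varepsilon\in(0,1)$ we set $u_\varepsilon:=P_{t}\varphi_\varepsilon(x)=\mu(B(y,\varepsilon))^{-1}\int_{B(y,\varepsilon)}p(t,x,z)\mu(dz)$, and we define $u^n_\varepsilon$ analogously on $F_n$ with $B_n(y_n,\varepsilon)$ and $\mu_n$. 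The error then splits into three terms,
\[
|p(t,x,y)-p_n(t,x_n,y_n)|\le |p(t,x,y)-u_\varepsilon|+|u_\varepsilon-u^n_\varepsilon|+|u^n_\varepsilon-p_n(t,x_n,y_n)|,
\]
which we handle separately and then optimize over $\varepsilon$.

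The first term is controlled by the H\"older regularity of $p(t,x,\cdot)$. Under (A1) we use \cref{c_1}(i) with $\kappa=1/2$ (or \cref{unihk}); under (A2) and (A3) we use \cref{c_1}(ii) and (iii). In each case the term is $\lesssim t^{-c}\varepsilon^{1/2}$, or $\varepsilon^\kappa$. The third term is harder, because the regularity of $p_n$ must be uniform in $n$. \cref{unihk} supplies this uniformity in the compact convergent setting, with the constant $c_{\ref{unihk},2}$, and it gives at least $\varepsilon^{1/2}$ together with the time factor $t^{-1}$. This is where the $t^{-2}$ time dependence originates.

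For the middle term we write $u_\varepsilon-u^n_\varepsilon$ as a semigroup difference applied to a common test function. The indicator is not H\"older, so we replace it by a McShane-type cutoff $\psi$ that equals $1$ on $B(y,\varepsilon)$, vanishes off $B(y,\varepsilon+\rho)$, and satisfies $\|\psi\|_{\BL^\kappa}\lesssim\rho^{-\kappa}$. The swap from $1_B$ to $\psi$ costs an annulus-measure error $\mu(B(y,\varepsilon+\rho))-\mu(B(y,\varepsilon))$. We bound it, and also the corresponding $\mu_n$ quantity, using \cref{ballineq}, (A1), and the $d_{\BL^\kappa}$ distance; shifting the centre from $y$ to $y_n$ costs $d(y,y_n)$. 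The normalizations $\mu(B(y,\varepsilon))^{-1}\lesssim\varepsilon^{-s_0}$ are compared with $\mu_n(B(y_n,\varepsilon))^{-1}$ through \cref{ballineq} once more. The semigroup difference $|P_t\psi(x)-P^n_t\psi(x_n)|$ is then estimated by rerunning the proof of \cref{gen}. That means the decomposition \eqref{IIIIII}, with \cref{sg2} for term II, applied with the $L^1$ versions of the constants in \cref{suff}: since $\|\psi\|_{L^1}\lesssim\varepsilon^{s_1}$, the $L^1$ versions trade the $\varepsilon$ powers favourably.

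The smoothing time $\delta$ is no longer needed. Because $t>0$ is fixed and $\psi$ is replaced by $P_{t/2}\psi$, the semigroup already smooths, so \cref{sg2} gives a bound of the form $t^{-(2+\kappa)}\rho^{-\kappa}\varepsilon^{-s_0}\cdot h$. In case (i) the $L^2$ condition $1_{\kappa,2}$ forces $\kappa<2/3$, whereas in (ii) and (iii) \cref{suff}(iii),(iv) allow all $\kappa\le 1$.

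Collecting the pieces gives
\[
|p(t,x,y)-p_n(t,x_n,y_n)|\lesssim \varepsilon^{1/2}+\frac{\rho\,\varepsilon^{s_1-1}\ \text{(annulus)}}{\varepsilon^{s_0}}+\varepsilon^{-s_0}\rho^{-\kappa}\cdots h.
\]
Here the annulus term uses the upper bound $c_ur^{s_1}$ against the lower bound $c_lr^{s_0}$, so it behaves like $(\varepsilon+\rho)^{s_1}\varepsilon^{-s_0}$ minus a comparable quantity. We handle it by taking $\rho$ as a power of $h$ and optimizing, which produces the exponent involving $s_1-s_0+1$ and $s_1+2$. In case (iii) the $\log$ factor comes from the exponential exit bound in \cref{summ}(v), exactly as in \cref{gen}(ii).

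The main obstacle is the annulus and normalization estimate. Since only $c_lr^{s_0}\le\mu(B)\le c_ur^{s_1}$ is available, with no doubling, the swap is handled by comparing $u_\varepsilon$ with $P_t\psi(x)/\int\psi\,d\mu$. The first step is to bound the difference by $\sup_{z\in B(y,\varepsilon+\rho)}|p(t,x,z)-p(t,x,y)|$ through H\"older continuity, rather than through annulus measure; this removes the need for any measure-of-annulus bound. The remaining normalization $|\int\psi\,d\mu-\int\psi\,d\mu_n|\le\|\psi\|_{\BL^\kappa}d_{\BL^\kappa}(\mu,\mu_n)$ divided by $\varepsilon^{s_0}$ is what yields the stated exponents.
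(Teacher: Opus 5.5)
\textbf{The gap is the middle term.} Your scheme reduces everything to bounding $|P_t\psi(x)-P^n_t\psi(x_n)|$ for a fixed cutoff $\psi\in\BL^\kappa(M)$. That is a semigroup estimate for a general H\"older test function, and your claim that ``the smoothing time $\delta$ is no longer needed'' does not hold.

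\cref{sg2} only compares $P_s$ and $P^n_s$ acting on $\overline{(G^\alpha)^{\circ2}g}$, the McShane extension of a function lying in the range of $(G^\alpha)^{\circ 2}$ \emph{on $F$}. Writing $P_{t/2}\psi=(G^\alpha)^{\circ2}(\alpha-\Delta)^2P_{t/2}\psi$, \cref{sg2} controls $|P_t\psi(x)-P^n_{t/2}\overline{P_{t/2}\psi}(x_n)|$. To reach $P^n_t\psi(x_n)=P^n_{t/2}P^n_{t/2}\psi(x_n)$ you would still need $\sup_{F_n}|\overline{P_{t/2}\psi}-P^n_{t/2}\psi|$, which is the uniform semigroup estimate you set out to prove.

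This is exactly why \cref{gen} smooths only for a short time $\delta\to0$. It pays for this with the terms $|P_{t+\delta}f-P_tf|$ and $|P^n_tJ^r_nP_\delta f-P^n_tf|$, which are controlled by \cref{12q2}, and that lemma needs the lower resistance estimate of (A2) or the exit bound of (A3). Consequently:
\begin{itemize}
\item In case (i), where only (A1) is assumed, ``rerunning \cref{gen}'' is not available at all. The (A1) semigroup bound \cref{sgrw} is itself deduced from the theorem you are proving.
\item In (ii) and (iii), a direct use of \cref{gen} as a black box, divided by $\int\psi\,d\mu\gtrsim\varepsilon^{s_0}$, only gives $h^{E_{\mathrm{gen}}/(1+\kappa+s_0)}$, where $E_{\mathrm{gen}}$ is the exponent of \cref{gen}. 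This is weaker than the stated exponent: for $s_0=s_1=\theta=\kappa=1$ it gives $1/39$ versus $2/39$ in (ii), and $1/21$ versus $2/15$ in (iii).
\item If your step were valid, balancing $L(t)\varepsilon$ against $\varepsilon^{-s_0}\rho^{-\kappa}h$ would give an exponent of order $1/(1+s_0+\kappa)$. That is far above the stated $E$, and the factors $s_1-s_0+1$ and $s_1+2$ you attribute to the optimisation would never appear.
\end{itemize}

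\textbf{The missing idea} is the paper's way of avoiding any estimate of the form $P_\delta f\approx f$. In \cref{totyuu}, the indicator $f_r=1_{B(x,r)}$ of a small ball is mollified for a \emph{short} time $\varepsilon=h^a$, which is optimised rather than fixed at $t/2$. On the $F_n$ side, the approximate identity is the McShane extension $\overline{P_\varepsilon f_r}$ itself, not an indicator of a ball in $F_n$.

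With this choice, $P_tP_\varepsilon f_r=P_t(G^\alpha)^{\circ2}(\alpha-\Delta)^2P_\varepsilon f_r$ is compared with $P^n_t\overline{P_\varepsilon f_r}$ exactly by \cref{sg2}. The norm $\|(\alpha-\Delta)^2P_\varepsilon f_r\|_{\BL^\kappa}$ is controlled through $2_{\kappa,p,q}$ in terms of $\|f_r\|_{L^p}$. This is where the power $\varepsilon^{-\vartheta_1}$, with $\vartheta_1=\kappa+2+\frac{s_0}{2(1+s_0)}$, enters $E$.

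The price is that $P_\varepsilon f_r$ and $\overline{P_\varepsilon f_r}$ are not localised in $B(x,r)$. The paper pays it by splitting at distance $3r$ (on $F$), respectively $4r$ (on $F_n$), from $x$:
\begin{itemize}
\item Near $x$, it uses the Lipschitz constant $L(t)\lesssim 1+t^{-2}$ of $p$ and $p_n$. This is the source of the $t^{-2}$ and, after dividing by $\mu(B(x,r))$, of the term $r^{1+s_1-s_0}$.
\item Far from $x$, it uses the small-ball hitting bound of \cref{4.2-a}(i), which needs only the lower volume bound in (A1). This is transferred to $\overline{P_\varepsilon f_r}$ on $F_n$ by the H\"older bound $1_{\kappa,p_0}$.
\end{itemize}
Under (A3) the hitting bound is replaced by \cref{summ}(v), which is where the logarithm comes from. \cref{hkpara} then handles the normalisation $\int P_\varepsilon f_r\,d\mu=\mu(B(x,r))$ versus $\int\overline{P_\varepsilon f_r}\,d\mu_n$. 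Optimising over the free parameters gives the stated exponents.
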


\begin{rmk}
  Since $\frac{1}{2}[(2\kappa+5)s_0+2\kappa+4]
  =(\kappa+2)(1+s_0)+\frac{s_0}{2}$,
  the estimate in (ii) is better than that in (i).
  Similarly, by $\Theta< 2+s_1$ if and only if $s_0-s_1<\frac{1+s_1}{1+(2+s_0)(2+s_1)}$, 
  the estimate in (iii) is sharper than that in (ii) if $s_0$ is sufficiently close to $s_1$. 
\end{rmk}

In \cref{gen}, we obtained the semigroup estimate only under $\rm{(A2)}$ and $\rm{(A3)}$, 
as $\rm{(A1)}$ alone was insufficient to derive the evaluation. 
However, by virtue of \cref{hkest}, we are now able to establish 
the estimate under the weaker assumption $\rm{(A1)}$.

\begin{cor}\label{sgrw}
Let $\kappa,\;E,\;E_1,$ and $E_2$ be in \cref{hkest}, and $x\in F,\;x_n\in F_n$ be points satisfying
\begin{align*}
  h:=d_{\rm{H}}(F,F_n)^{\kappa}+d_{\BL^{\kappa}}(\mu,\mu_n)+d(x,x_n)\leq \frac{1}{4}.
\end{align*}
Then, for any $t>0$ and $f\in \BL^{\kappa}$,
it holds that
\begin{align*}
  |P_tf(x)-P_t^nf(x_n)|\lesssim 
  \begin{dcases}
    \|f\|_{\BL^{\kappa}}h^E, & \text{if the assumption $\rm{(A1)}$ or $\rm{(A2)}$ holds.}\\
    \|f\|_{\BL^{\kappa}}h^{E_1}(\log (1/h))^{E_2}, & \text{if the assumption $\rm{(A3)}$ holds.}
  \end{dcases}
\end{align*}
\end{cor}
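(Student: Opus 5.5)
The plan is to write the semigroup as an integral of the heat kernel and use \cref{hkest} pointwise, after handling the mismatch between the two reference measures. We have
\begin{align*}
P_tf(x)-P_t^nf(x_n)=\int_F p(t,x,y)f(y)\,\mu(dy)-\int_{F_n}p_n(t,x_n,y_n)f(y_n)\,\mu_n(dy_n).
\end{align*}
The two integrals are over different spaces, so we insert the intermediate quantity $\int_{F_n}\overline{p^{t,x}f}\,d\mu_n$. Here $\overline{p^{t,x}f}$ denotes the McShane extension (with exponent $\kappa$) of $p(t,x,\cdot)f$ from $F$ to $M$.

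This splits the difference into two terms. The first is $|\int \overline{p^{t,x}f}\,d(\mu-\mu_n)|$. We bound it by $\|p^{t,x}f\|_{\BL^{\kappa}(F)}\,d_{\BL^{\kappa}}(\mu,\mu_n)$, using submultiplicativity (\cref{submult}), the sup bound $p(t,x,x)\lesssim 1+t^{-1}$ from \eqref{kig104}, and the H\"older bounds from \cref{unihk} or \cref{c_1}. The result is $\lesssim \|f\|_{\BL^{\kappa}}h$ with a power of $t^{-1}$ as constant.

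The second term is $\int_{F_n}|\overline{p^{t,x}f}(y_n)-p_n(t,x_n,y_n)f(y_n)|\,\mu_n(dy_n)$. For each $y_n\in F_n$ we pick $y\in F$ with $d(y,y_n)\le d_{\rm H}(F,F_n)$. Then
\begin{align*}
|\overline{p^{t,x}f}(y_n)-p_n(t,x_n,y_n)f(y_n)|\le \|p^{t,x}f\|_{\Lip^\kappa}d(y,y_n)^\kappa+|p(t,x,y)f(y)-p_n(t,x_n,y_n)f(y_n)|.
\end{align*}
The last piece is at most $\|f\|_F|p(t,x,y)-p_n(t,x_n,y_n)|+\|p_n\|_{F_n}\|f\|_{\Lip^\kappa}d(y,y_n)^\kappa$. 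The quantity $d(y,y_n)^\kappa\le d_{\rm H}(F,F_n)^\kappa$ is controlled by $h$.

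We then apply \cref{hkest} with the quadruple $(x,y,x_n,y_n)$. Its parameter is $h':=d_{\rm H}^\kappa+d_{\BL^\kappa}+d(x,x_n)+d(y,y_n)\le h+d_{\rm H}(F,F_n)^\kappa\le 2h\le 1/2$. This is exactly why the hypothesis here is $h\le 1/4$ rather than $1/2$. The constants in \cref{hkest} do not depend on the points, so the bound $|p(t,x,y)-p_n(t,x_n,y_n)|\lesssim h^{E}$ (respectively $h^{E_1}(\log(1/h))^{E_2}$) is uniform in $y_n$. Integrating against $\mu_n$ costs a factor $\mu_n(F_n)$, which is bounded along the convergent sequence.

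Since $h\le 1$ and $E\le 1$ (similarly $E_1<1$, with the logarithmic factor at least of order one for small $h$), the linear terms $h$ are dominated by $h^E$. We also need a uniform bound on $\sup_{F_n}p_n(t,\cdot,\cdot)$ and a uniform H\"older bound on $p^{t,x}$. These come from \eqref{kig104} and the latter part of \cref{unihk}, since $\mu_n(F_n)$ and $\diam F_n$ are bounded.

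The main obstacle is bookkeeping rather than substance. The McShane extension is nonlinear, which is why we extend the product $p^{t,x}f$ directly instead of extending the factors separately. The time-dependence of the constants must also be tracked so that they match those of \cref{hkest}. Beyond that, the argument is a direct consequence of \cref{hkest}.
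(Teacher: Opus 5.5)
Your proposal is correct and follows essentially the paper's route. You compare $\mu$ and $\mu_n$ against an extension of $p^{t,x}f$ through $d_{\BL^{\kappa}}$, and you control the kernel mismatch on $F_n$ pointwise by \cref{hkest}, matching each $y_n\in F_n$ to nearby points of $F$ and using $h\le 1/4$ to keep the parameter of \cref{hkest} at most $1/2$. The differences are cosmetic: the paper extends $p$ itself (exponent $1$) rather than the product $p^{t,x}f$, and matches $y_n$ to $F$ by averaging over $B(y_n,r)$ with $J_n^r$, whereas your single nearest point merges the paper's terms $II$ and $III$.
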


\begin{rmk}\label{66}
Under the assumption $\rm{(A2)}$, the estimate in \cref{gen} is sharper than that of \cref{sgrw} if and only if 
\begin{align*}
  \theta\leq \frac{\kappa}{s_1-s_0+1}\left( 1+s_0+\frac{s_0}{2(1+s_0)(2+\kappa)} \right)=:\theta_0.
\end{align*} 
Recall that we may take $\theta=1+s_0-s_1$ if $(F,R)$ is uniformly perfect (see \cref{summ}(i)).
We do not give general formula (for $\kappa$) when $1+s_0-s_1\leq \theta_0$ since it will be complicated.
However, if $\kappa=1$, we always have $1+s_0-s_1\leq \theta_0$. In fact, 
if we set $\delta=s_1-s_0+1\in (0,1]$, we have 
\begin{align*}
  1+s_0-s_1=\frac{\delta (2-\delta)}{\delta}\leq \frac{1}{\delta}\leq\theta_0.
\end{align*}
Moreover, if $s_0\geq 1$, we also have $1+s_0-s_1\leq \theta_0$ from
$1+s_0-s_1\leq \delta^{-1}$ and $1\leq \frac{1}{2}\times 2\leq \kappa (1+s_0)$.
Similarly, we may confirm that, under the assumption $\rm{(A3)}$,
the estimate in \cref{gen} is better than that of \cref{sgrw} if $\kappa=1$.
\end{rmk}

Note that, since $p_n(t,\cdot,\cdot):F_n\times F_n\to\R$ is Lipschitz continuous 
from \cref{unihk}
(with respect to the metric $d_1$ on $M\times M$ defined by $d_1((x_1,x_2),(y_1,y_2))=d(x_1,y_1)+d(x_2,y_2)$),
there exists a McShane extension of $p_n(t,\cdot,\cdot)$
with exponent $1$ denoted by $\overline{p_n}(t,\cdot,\cdot)$. 

Set $L(t),M(t)<\infty$ and $\alpha_0>0$ as follows:
\begin{equation*}
  L(t):=\sup_{x\in F}\|p^{t,x}\|_{\Lip^1}\vee \sup_{\substack{n\in\N\\x\in F_n}}\|p_n^{t,x}\|_{\Lip^1}
  \lesssim 1+t^{-2},
\end{equation*}
\begin{equation*}
  M(t):=\sup_{x,y\in F}|p(t,x,y)|\vee \sup_{\substack{n\in\N\\x,y\in F_n}}|p_n(t,x,y)|
  \lesssim 1+t^{-1},
\end{equation*}
\begin{equation*}
  \alpha_0=\inf_{n\in\N}\alpha_n=\frac{1}{2}\wedge \frac{1}{2}\left[\sup_n\mu(F_n)\diam F_n\right]^{-2}.
\end{equation*}
See \cref{unihk} and \eqref{kig104} for the bound of $L(t)$ and $M(t)$.

Our fundamental strategy for proving \cref{hkest} is based on the approximation of the delta function.
The first step is the following lemma.
Recall the definitions of conditions $1_{\kappa,p}$ and $2_{\kappa,p,q},$ and
the constants $c_1(t,\kappa,p),c_2(\alpha,t,\kappa,p),$ and $c_3(\alpha,t,\kappa,q)$
from \cref{cond12}.

\begin{lem}\label{totyuu}
Fix $x,y\in F,\;y_n\in F_n,\;t>0,\;\kappa\in [1/2,1],\;p_0,p,q\in [1,\infty],$ and $\alpha\in (0,\alpha_0)$.
\begin{itemize}
  \item [$\rm{(i)}$]Under the conditions $1_{\kappa,p_0}$ and $2_{\kappa,p,q}$, we have that
\begin{align*}
  \MoveEqLeft\left| \left(\int P_{\varepsilon}f_{r}d\mu\right)p(t,x,y)-\left( \int \overline{P_{\varepsilon}f_r}d\mu_n \right)\overline{p_n}(t,x,y_n) \right|\\
  \lesssim&L(t)\mu\left( B\left(x,4r\right)\right)r+M(t)\left\{ \frac{\delta}{r}+\frac{\varepsilon}{m(\delta)(r-\delta)} \right\}\\
  &+\g{c}{sg2,n}\Bigl\{c_2(\alpha,\varepsilon,\kappa,p)\mu(B(x,r))^{1/p} +c_3(\alpha,\varepsilon,\kappa,q) \mu(B(x,r))^{1/q}\Bigr\}\\
  &\quad\times \Bigl\{ d_{\rm{H}}(F,F_n)+d_{\rm{H}}(F,F_n)^{\kappa}+d(y,y_n)+d_{\BL^{\kappa}}(\mu,\mu_n)\Bigr\}\\
  &+L(t)\Bigl\{\mu(B(x,4r+\rho))+(1+\rho^{-\kappa})d_{\BL^{\kappa}}(\mu,\mu_n)\Bigr\}r\\
  &+M(t)c_1(\varepsilon,\kappa,p_0)\mu(B(x,r))^{1/p_0}d_{\rm{H}}(F,F_n)^{\kappa},
\end{align*}
where $f_r:=1_{B(x,r)}:F\to [0,1]$ and $\overline{P_{\varepsilon}f_r}$ is a McShane extension with exponent $\kappa$, and 
  $\varepsilon\in (0,1],\;\rho\in (0,\infty),\;r\in [d_{\rm{H}}(F,F_n),\infty),$ and $\delta\in (0,r)$.
  \item [$\rm{(ii)}$]Moreover, if the assumption $\rm{(A3)}$ is additionally satisfied, then it holds 
  for $\varepsilon,\rho\in (0,1),\;\xi\ge d_{\rm{H}}(F,F_n),$ and $0<r<\eta<1$ that
\begin{align*}
  \MoveEqLeft\left| \left(\int P_{\varepsilon}f_{r}d\mu\right)p(t,x,y)-\left( \int \overline{P_{\varepsilon}f_r}d\mu_n \right)\overline{p_n}(t,x,y_n) \right|\\
  \lesssim&L(t)\mu\left( B\left(x,\eta\right)\right)\eta+M(t)\exp[-c(\eta-r)^{1+\beta}/\varepsilon^{\beta}]\\
  &+\g{c}{sg2,n}\Bigl\{c_2(\alpha,\varepsilon,\kappa,p)\mu(B(x,r))^{1/p} +c_3(\alpha,\varepsilon,\kappa,q) \mu(B(x,r))^{1/q}\Bigr\}\\
  &\quad\times \Bigl\{ d_{\rm{H}}(F,F_n)+d_{\rm{H}}(F,F_n)^{\kappa}+d(y,y_n)+d_{\BL^{\kappa}}(\mu,\mu_n)\Bigr\}\\
  &+L(t)\Bigl\{\mu(B(x,\eta+\xi+\rho))+(1+\rho^{-\kappa})d_{\BL^{\kappa}}(\mu,\mu_n)\Bigr\}(\eta+\xi)\\
  &+M(t)c_1(\varepsilon,\kappa,p_0)\mu(B(x,r))^{1/p_0}d_{\rm{H}}(F,F_n)^{\kappa}.
\end{align*}
\end{itemize}
\end{lem}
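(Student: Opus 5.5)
The plan is to insert two intermediate quantities and use the triangle inequality. Replace $(\int P_\varepsilon f_r\,d\mu)\,p(t,x,y)$ by $P_t P_\varepsilon f_r(y)=\int p(t,y,z)P_\varepsilon f_r(z)\mu(dz)$. On the approximating side, replace $(\int\overline{P_\varepsilon f_r}\,d\mu_n)\,\overline{p_n}(t,x,y_n)$ by $P^n_t\overline{P_\varepsilon f_r}(y_n)=\int_{F_n}p_n(t,y_n,z)\overline{P_\varepsilon f_r}(z)\mu_n(dz)$. This gives three differences:
\begin{itemize}
\item[(a)] a "delta-approximation" error on $F$;
\item[(b)] a semigroup comparison between $F$ and $F_n$;
\item[(c)] the analogous delta-approximation error on $F_n$.
\end{itemize}
Part (ii) will follow by the same scheme, with the annulus radii $r,4r$ replaced by $r,\eta$ and the exit estimate replaced by \cref{summ}(v).

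For (a), write the error as $\int P_\varepsilon f_r(z)\{p(t,x,y)-p(t,z,y)\}\mu(dz)$ and split $F$ into $B(x,4r)$ and its complement. On $B(x,4r)$, $0\le P_\varepsilon f_r\le1$ and the Lipschitz bound $L(t)$ give $L(t)\mu(B(x,4r))r$. Off the ball, the integrand is at most $2M(t)$ times $\int_{B(x,4r)^c}P_\varepsilon f_r\,d\mu$. By symmetry this equals $\int f_r P_\varepsilon 1_{B(x,4r)^c}\,d\mu$, which is controlled by $\sup_{z\in B(x,r)}P_z(\sigma_{B(z,3r)^c}\le\varepsilon)$. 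I would bound that probability by \cref{4.2-a}(ii) with parameter $\delta$, which yields $\delta/r+\varepsilon/(m(\delta)(r-\delta))$ after using the lower resistance bound between $z$ and $B(z,3r)^c$, with $m(\delta)$ an infimum of ball masses. In (ii) I would instead use the exit bound $\exp[-c(\eta-r)^{1+\beta}/\varepsilon^\beta]$ of \cref{summ}(v).

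For (b), I write $P_\varepsilon f_r=(G^\alpha)^{\circ2}\psi$ with $\psi:=(\alpha-\Delta)^2P_\varepsilon f_r$. Condition $2_{\kappa,p,q}$ gives $\|\psi\|_{\BL^\kappa}\le c_2\|f_r\|_{L^p}+c_3\|f_r\|_{L^q}=c_2\mu(B(x,r))^{1/p}+c_3\mu(B(x,r))^{1/q}$. Applying \cref{sg2} to $g=\overline{(G^\alpha)^{\circ2}\psi}$ (exponent $1$) then produces exactly the $\g{c}{sg2,n}\{\cdots\}\{d_{\rm H}+d_{\rm H}^\kappa+d(y,y_n)+d_{\BL^\kappa}\}$ term. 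It remains to replace $g$ on $F_n$ by $\overline{P_\varepsilon f_r}$, the exponent-$\kappa$ extension. The two functions agree on $F$, and every point of $F_n$ lies within $d_{\rm H}(F,F_n)$ of $F$. Hence their difference on $F_n$ is at most a constant multiple of $\|P_\varepsilon f_r\|_{\Lip^\kappa}d_{\rm H}^\kappa\le c_1(\varepsilon,\kappa,p_0)\mu(B(x,r))^{1/p_0}d_{\rm H}^\kappa$ by condition $1_{\kappa,p_0}$. Integrating against $p_n(t,y_n,\cdot)$, bounded by $M(t)$, gives the last term.

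For (c), I repeat (a) on $F_n$ with $\overline{p_n}$, whose Lipschitz constant is at most $L(t)$. The $\mu_n$-mass of the relevant ball around $x$ is transferred to $\mu$ by \cref{ballineq}, which yields $\mu(B(x,4r+\rho))+(1+\rho^{-\kappa})d_{\BL^\kappa}(\mu,\mu_n)$. I expect the main obstacle to be the tail of $\overline{P_\varepsilon f_r}$ off the ball under $\mu_n$, since no process argument is available for the McShane extension. I would first try to choose the extension data so that $\overline{P_\varepsilon f_r}$ is dominated by its values on $F$ up to an error of order $d_{\rm H}$. Then the outer region of $F_n$ is paired with nearby points of $F$ at distance $\le\xi$, where $P_\varepsilon f_r$ is already small by (a). This pairing is the source of the $(\eta+\xi)$ factor in (ii), and I would absorb the remaining discrepancy into the $d_{\rm H}^\kappa$ term from (b).
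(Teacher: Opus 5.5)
Your three-term decomposition matches the paper's $I+II+III$, and (b) is essentially the paper's use of \cref{sg2} together with $2_{\kappa,p,q}$. The gap is in (a) for part (i). By symmetrizing, you turn the quantity that actually appears into a different one. The original quantity is $P_z(X_\varepsilon\in B(x,r))$ for $z$ far from $x$. You replace it by the exit probability $P_z(\sigma_{B(z,3r)^c}\le\varepsilon)$ for $z\in B(x,r)$, and \cref{4.2-a}(ii) bounds this only in terms of $R(z,B(z,3r)^c)$.

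Part (i) assumes no lower resistance estimate: only $1_{\kappa,p_0}$ and $2_{\kappa,p,q}$, and it is applied in \cref{hkest}(i) under (A1) alone. In a general resistance metric space, $R(z,B(z,3r)^c)$ can be much smaller than $r$, e.g.\ at branch points of large degree. So the bound $\delta/r+\varepsilon/(m(\delta)(r-\delta))$ does not follow.

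The missing idea is to keep the far point as the starting point and estimate the \emph{hitting} probability of a small ball. If $R(x,z)>3r$, then $P_z(X_\varepsilon\in B(x,r))\le P_z(\sigma_{\overline{B(x,r)}}\le\varepsilon)$. The second estimate in \cref{4.2-a}(i) has denominators $R(x,z)-2r>r$ and $R(x,z)-2r-\delta>r-\delta$. It therefore gives $4[\delta/r+\varepsilon/(m(\delta)(r-\delta))]$ uniformly in such $z$, with $m(\delta)=\inf_{w\in F}\mu(B(w,\delta))>0$, and no resistance lower bound is needed.

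This \emph{pointwise} bound is also exactly what (c) needs, and your symmetrized estimate cannot substitute for it. That estimate controls $\int_{B(x,4r)^c}P_\varepsilon f_r\,d\mu$, an integral against $\mu$, whereas the tail in (c) is $\int_{F_n\setminus D(x,4r)}\overline{P_\varepsilon f_r}\,d\mu_n$.

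With the pointwise bound, take $z\in F_n\setminus D(x,4r)$ and pick $w\in F$ with $d(z,w)\le d_{\mathrm{H}}(F,F_n)\le r$. Then $R(x,w)>3r$, and the McShane property together with $1_{\kappa,p_0}$ gives $0\le\overline{P_\varepsilon f_r}(z)\le P_\varepsilon f_r(w)+c_1(\varepsilon,\kappa,p_0)\mu(B(x,r))^{1/p_0}d_{\mathrm{H}}(F,F_n)^\kappa$. Integrating against $|p_n(t,z,y_n)-\overline{p_n}(t,x,y_n)|\le 2M(t)$, with $\mu_n(F_n)$ bounded, produces the last term of the statement. No special choice of extension is needed, and the error is of order $d_{\mathrm{H}}^\kappa$, not $d_{\mathrm{H}}$. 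For (ii), argue pointwise in the same way, using $P_w(X_\varepsilon\in B(x,r))\le P_w(\sigma_{B(w,\eta-r)^c}\le\varepsilon)$ for $R(x,w)\ge\eta$ and \cref{summ}(v).

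A smaller point in (b): the exponent-$1$ and exponent-$\kappa$ McShane extensions of $P_\varepsilon f_r$ can differ on $F_n$ by order $\|P_\varepsilon f_r\|_{\Lip^1}d_{\mathrm{H}}$. That is not dominated by $\|P_\varepsilon f_r\|_{\Lip^\kappa}d_{\mathrm{H}}^\kappa$, so your replacement step is not justified as written. The clean fix is to note that the proof of \cref{sg2} goes through with exponent-$\kappa$ extensions: the $d_{\mathrm{H}}^\kappa$ term is already present, and $\|\cdot\|_{\Lip^\kappa}\le(\diam)^{1-\kappa}\|\cdot\|_{\Lip^1}$. Then the function compared in (b) is $\overline{P_\varepsilon f_r}$ itself, and no replacement is needed.
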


\begin{proof}[Proof of $\rm{(i)}$]
  We first have for fixed $\alpha\in (0,\alpha_0)$ that
  \begin{equation}\label{decomp}
    \begin{aligned}
      \MoveEqLeft\left|\left(\int P_{\varepsilon}f_{r}d\mu\right)p(t,x,y)-\left(\int \overline{P_{\varepsilon}f_{r}}d\mu_n\right)\overline{p_n}(t,x,y_n)\right|\\
  \leq&\left|\left(\int P_{\varepsilon}f_{r}d\mu\right)p(t,x,y)-P_t(G^{\alpha})^{\circ 2}(\alpha-\Delta)^2P_{\varepsilon}f_{r}(y)\right|\\
  +&|P_t(G^{\alpha})^{\circ 2}(\alpha-\Delta)^2P_{\varepsilon}f_{r}(y)-P^n_t\overline{(G^{\alpha})^{\circ 2}(\alpha-\Delta)^2P_{\varepsilon}f_{r}}(y_n)|\\
  &+\left|P^n_t\overline{(G^{\alpha})^{\circ 2}(\alpha-\Delta)^2P_{\varepsilon}f_{r}}(y_n)-\left(\int \overline{P_{\varepsilon}f_{r}}d\mu_n\right)\overline{p_n}(t,x,y_n)\right|\\
  =:&I+II+III.
    \end{aligned}
  \end{equation}
Using $2_{\kappa,p,q}$ and \cref{sg2} with $f=(\alpha-\Delta)^2P_{\varepsilon}f_{r}$, we observe
\begin{equation}\label{II}
  \begin{aligned}
    II
  \lesssim&\g{c}{sg2,n}\|(\alpha-\Delta)^2P_{\varepsilon}f_{r}\|_{\BL^{\kappa}}
  \Bigl\{ d_{\rm{H}}(F,F_n)+d_{\rm{H}}(F,F_n)^{\kappa}+d(y,y_n)+d_{\BL^{\kappa}}(\mu,\mu_n)\Bigr\}\\
  \leq&\g{c}{sg2,n}\Bigl\{ c_2(\alpha,\varepsilon,\kappa,p)\|f_{r}\|_{L^p(F,\mu)}+c_3(\alpha,\varepsilon,\kappa,q) \|f_r\|_{L^q(F,\mu)} \Bigr\}\\
  &\times \Bigl\{ d_{\rm{H}}(F,F_n)+d_{\rm{H}}(F,F_n)^{\kappa}+d(y,y_n)+d_{\BL^{\kappa}}(\mu,\mu_n)\Bigr\}\\
  =&\g{c}{sg2,n}\Bigl\{c_2(\alpha,\varepsilon,\kappa,p)\mu(B(x,r))^{1/p} +c_3(\alpha,\varepsilon,\kappa,q) \mu(B(x,r))^{1/q}\Bigr\}\\
  &\times \Bigl\{ d_{\rm{H}}(F,F_n)+d_{\rm{H}}(F,F_n)^{\kappa}+d(y,y_n)+d_{\BL^{\kappa}}(\mu,\mu_n)\Bigr\}.
  \end{aligned}
\end{equation} 

We next estimate $I$.
For $z\in F$ with $d(x,z)>3r$ and $\delta\in (0,r)\subseteq (0,d(x,z)-2r)$,
it follows from \cref{4.2-a} that
\begin{equation}\label{exitprob}
  \begin{aligned}
  P_z(X_{\varepsilon}\in B(x,r))
  \leq &P_z(\sigma_{\overline{B(x,r)}}\leq\varepsilon)\\
  \leq &4\left[ \frac{\delta}{d(x,z)-2r}+\frac{\varepsilon}{\mu(B(x,\delta))(d(x,z)-2r-\delta)} \right]\\
  \leq &4\left[ \frac{\delta}{r}+\frac{\varepsilon}{m(\delta)(r-\delta)} \right],
  \end{aligned}
\end{equation}
where $m(\delta):=\inf_{w\in F}\mu(B(w,\delta))\in (0,\infty)$. Note that, since $\mu$ is of full support, and $F$ is compact, $m(\delta)$ is positive.
Thus we deduce that
\begin{align*}
    I
    \leq&\int_{\{d(x,z)\leq 3r\}} |p(t,x,y)-p(t,z,y)|P_{z}\left(X_{\varepsilon}\in B\left(x,r\right)\right)\mu(dz) \\
    &+\int_{\{d(x,z)>3r\}} |p(t,x,y)-p(t,z,y)|P_{z}\left(X_{\varepsilon}\in B\left(x,r\right)\right)\mu(dz) \\
    \lesssim&L(t)\mu\left( B\left(x,4r\right)\right)r+M(t)\left\{ \frac{\delta}{r}+\frac{\varepsilon}{m(\delta)(r-\delta)} \right\}.
\end{align*}

Finally we bound $III$. To begin with, we show the following inequality.
\begin{align}\label{barineq}
\sup_{z\in F_n\setminus D(x,4r)}\overline{P_{\varepsilon}f_{r}}(z)
\leq c_1(\varepsilon,\kappa,p_0)\mu(B(x,r))^{1/p_0}d_{\rm{H}}(F,F_n)^{\kappa}+4\left[ \frac{\delta}{r}+\frac{\varepsilon}{m(\delta)(r-\delta)} \right]
\end{align}
Fix $z\in F_n\setminus D(x,4r)$ and take $w\in F$ satisfying $d(z,w)\leq d_{\rm{H}}(F,F_n)\leq r$.
Since it holds that $d(x,w)\geq d(x,z)-d(z,w)>3r$,
it follows from \eqref{exitprob} that
\begin{align*}
  0
  \leq&\overline{P_{\varepsilon}f_{r}}(z)\\
  \leq&|\overline{P_{\varepsilon}f_{r}}(z)-P_{\varepsilon}f_{r}(w)|+P_{\varepsilon}f_{r}(w)\\
  \leq&c_1(\varepsilon,\kappa,p_0)\mu(B(x,r))^{1/p_0}d_{\rm{H}}(F,F_n)^{\kappa}+4\left[ \frac{\delta}{r}+\frac{\varepsilon}{m(\delta)(r-\delta)} \right].
\end{align*}
Thus we obtain \eqref{barineq}.
Therefore, similarly for $I$, we deduce that
\begin{align*}
  III
  \leq&\int_{\{d(x,z)\leq 4r\}}|p_n(t,z,y_n)-\overline{p_n}(t,x,y_n)|\overline{P_{\varepsilon}f_{r}}(z)\mu_n(dz)\\
  &+ \int_{\{d(x,z)>4r\}}|p_n(t,z,y_n)-\overline{p_n}(t,x,y_n)|\overline{P_{\varepsilon}f_{r}}(z)\mu_n(dz)\\
  \lesssim&L(t)\mu_n(B(x,4r))r\\
  &+M(t)\left\{ c_1(\varepsilon,\kappa,p_0)\mu(B(x,r))^{1/p_0}d_{\rm{H}}(F,F_n)^{\kappa}+4\left[ \frac{\delta}{r}+\frac{\varepsilon}{m(\delta)(r-\delta)} \right] \right\}\\
  \lesssim&L(t)\Bigl\{\mu(B(x,4r+\rho))+(1+\rho^{-\kappa})d_{\BL^{\kappa}}(\mu,\mu_n)\Bigr\}r\\
  &+M(t)\left\{ c_1(\varepsilon,\kappa,p_0)\mu(B(x,r))^{1/p_0}d_{\rm{H}}(F,F_n)^{\kappa}+4\left[ \frac{\delta}{r}+\frac{\varepsilon}{m(\delta)(r-\delta)} \right] \right\}.
\end{align*}
Here, we use \cref{ballineq} in the last inequality.
Combining the above estimates, we obtain the desired inequality.
\end{proof}

\begin{proof}[Proof of $\rm{(ii)}$]
  Similarly as in the proof of (i), we have \eqref{decomp} and \eqref{II}.
  We first estimate $I$. Since $P_z(X_{\varepsilon}\in B(x,r))\leq P_z(X_{\varepsilon}\not\in B(z,\eta-r))$ for $z\in F$ with $R(x,z)\ge\eta$,
  we deduce from \cref{summ}(v) that
  \begin{align*}
    I
    \leq&\int_{\{R(x,z)<\eta\}} |p(t,x,y)-p(t,z,y)|P_{z}\left(X_{\varepsilon}\in B\left(x,r\right)\right)\mu(dz)\\
    &+\int_{\{R(x,z)\ge\eta\}} |p(t,x,y)-p(t,z,y)|P_{z}\left(X_{\varepsilon}\in B\left(x,r\right)\right)\mu(dz)\\
    \lesssim&L(t)\mu\left( B\left(x,\eta\right)\right)\eta+M(t)\sup_{z\in B(x,\eta)^c}P_z(X_{\varepsilon}\not\in B(z,\eta-r))\\
    \lesssim&L(t)\mu\left( B\left(x,\eta\right)\right)\eta+M(t)\exp[-c(\eta-r)^{1+\beta}/\varepsilon^{\beta}]. 
  \end{align*}
  It follows from the argument in \eqref{barineq} that
  \begin{align*}
    \sup_{\substack{z\in F_n\\R(x,z)\ge \eta+\xi}}\overline{P_{\varepsilon}f_r}(z)\leq c_1(\varepsilon,\kappa,p_0)\mu(B(x,r))^{1/p_0}d_{\rm{H}}(F,F_n)^{\kappa}+ \exp[-c(\eta-r)^{1+\beta}/\varepsilon^{\beta}].
  \end{align*}
  Thus we obtain
  \begin{align*}
    III
  \leq&\int_{\{R(x,z)< \eta+\xi\}}|p_n(t,z,y_n)-\overline{p_n}(t,x,y_n)|\overline{P_{\varepsilon}f_{r}}(z)\mu_n(dz)\\
  &+ \int_{\{R(x,z)\ge\eta+\xi\}}|p_n(t,z,y_n)-\overline{p_n}(t,x,y_n)|\overline{P_{\varepsilon}f_{r}}(z)\mu_n(dz)\\
  \lesssim&L(t)\mu_n(B(x,\eta+\xi))(\eta+\xi)\\
  &+M(t)\left\{ c_1(\varepsilon,\kappa,p_0)\mu(B(x,r))^{1/p_0}d_{\rm{H}}(F,F_n)^{\kappa}+ \exp[-c(\eta-r)^{1+\beta}/\varepsilon^{\beta}] \right\}\\
  \lesssim&L(t)\Bigl\{\mu(B(x,\eta+\xi+\rho))+(1+\rho^{-\kappa})d_{\BL^{\kappa}}(\mu,\mu_n)\Bigr\}(\eta+\xi)\\
  &+M(t)\left\{ c_1(\varepsilon,\kappa,p_0)\mu(B(x,r))^{1/p_0}d_{\rm{H}}(F,F_n)^{\kappa}+ \exp[-c(\eta-r)^{1+\beta}/\varepsilon^{\beta}] \right\}.
  \end{align*}
  Altogether, we obtain the result.
\end{proof}

As an immediate consequence of the lemma above, 
we obtain the following result, 
which provides a convergence rate estimate for the heat kernel with parameters. 
We denote the right-hand sides of \cref{totyuu}(i) and (ii) 
by $\g{I}{totyuu}$ and $\g{II}{totyuu}$, respectively.

\begin{lem}\label{hkpara}
Fix $x,y\in F,\;y_n\in F_n,\;t>0,\;\kappa\in [1/2,1],\;p_0,p,q\in [1,\infty],$ and $\alpha\in (0,\alpha_0)$.
\begin{itemize}
  \item [$\rm{(i)}$]Under the conditions $1_{\kappa,p_0}$ and $2_{\kappa,p,q}$, we have for 
  $\varepsilon\in (0,1],\;\rho\in (0,\infty),\;r\in [d_{\rm{H}}(F,F_n),\infty),\;\delta\in (0,r)$ that
\begin{align*}
  |p(t,x,y)-p_n(t,x_n,y_n)|
  \lesssim&
  \frac{1}{\mu(B(x,r))}\g{I}{totyuu}\\
  &+M(t)(c_1(\varepsilon,\kappa,p_0)\mu(B(x,r))^{\frac{1}{p_0}-1}+\mu(B(x,r))^{-1})d_{\BL^{\kappa}}(\mu,\mu_n)\\
  &+L(t)d(x,x_n).
\end{align*}
\item[$\rm{(ii)}$]Moreover, if the assumption $\rm{(A3)}$ is additionally satisfied, then it holds 
  for $\varepsilon,\rho\in (0,1),\;\xi\ge d_{\rm{H}}(F,F_n),$ and $0<r<\eta<1$ that
  \begin{align*}
   |p(t,x,y)-p_n(t,x_n,y_n)|
  \lesssim&
  \frac{1}{\mu(B(x,r))}\g{II}{totyuu}\\
  &+M(t)(c_1(\varepsilon,\kappa,p_0)\mu(B(x,r))^{\frac{1}{p_0}-1}+\mu(B(x,r))^{-1})d_{\BL^{\kappa}}(\mu,\mu_n)\\
  &+L(t)d(x,x_n).
  \end{align*}
\end{itemize}
\end{lem}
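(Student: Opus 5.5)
The plan is to divide the estimate of \cref{totyuu} by the mass $\int P_{\varepsilon}f_r\,d\mu$, which is an approximate identity at $x$. By symmetry of $P_\varepsilon$, this mass equals $\int f_r\,d\mu=\mu(B(x,r))$, since $P_\varepsilon 1=1$ on the compact (hence conservative, recurrent) space $F$. We then compare it with the corresponding mass $\int\overline{P_{\varepsilon}f_r}\,d\mu_n$ on the $n$-side. Concretely, we write
\begin{align*}
p(t,x,y)-p_n(t,x_n,y_n)
=&\frac{1}{\mu(B(x,r))}\Bigl[\Bigl(\int P_\varepsilon f_r d\mu\Bigr)p(t,x,y)-\Bigl(\int \overline{P_\varepsilon f_r}d\mu_n\Bigr)\overline{p_n}(t,x,y_n)\Bigr]\\
&+\frac{1}{\mu(B(x,r))}\Bigl(\int \overline{P_\varepsilon f_r}d\mu_n-\int P_\varepsilon f_r d\mu\Bigr)\overline{p_n}(t,x,y_n)\\
&+\overline{p_n}(t,x,y_n)-p_n(t,x_n,y_n).
\end{align*}
The first bracket is bounded by $\g{I}{totyuu}$ (respectively $\g{II}{totyuu}$ under (A3)) directly from \cref{totyuu}, which produces the first term of the claimed bound.

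For the third term, we use the fact that $\overline{p_n}(t,\cdot,\cdot)$ is a McShane extension with exponent $1$ of $p_n(t,\cdot,\cdot)$ with respect to $d_1$. Its Lipschitz constant is therefore at most $L(t)$ (after a harmless factor), and since $x_n\in F_n$ we get $|\overline{p_n}(t,x,y_n)-p_n(t,x_n,y_n)|\le L(t)d(x,x_n)$.

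For the second term, we first bound $|\overline{p_n}(t,x,y_n)|\le M(t)$, since the McShane extension preserves the sup norm. We then estimate the difference of masses by the $\BL^\kappa$ distance. Because $\overline{P_\varepsilon f_r}$ is an extension of $P_\varepsilon f_r$ from $F$, the difference equals $\int \overline{P_\varepsilon f_r}\,d(\mu_n-\mu)$, which is at most $\|\overline{P_\varepsilon f_r}\|_{\BL^\kappa(M)}d_{\BL^\kappa}(\mu,\mu_n)$. The McShane norm equals $\|P_\varepsilon f_r\|_F+\|P_\varepsilon f_r\|_{\Lip^\kappa(F)}$. The first part is at most $1$, and by condition $1_{\kappa,p_0}$ the second is at most $c_1(\varepsilon,\kappa,p_0)\|f_r\|_{L^{p_0}}=c_1(\varepsilon,\kappa,p_0)\mu(B(x,r))^{1/p_0}$. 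Dividing by $\mu(B(x,r))$ gives exactly $M(t)\bigl(c_1\mu(B(x,r))^{1/p_0-1}+\mu(B(x,r))^{-1}\bigr)d_{\BL^\kappa}(\mu,\mu_n)$.

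The only delicate point is the identity $\int P_\varepsilon f_r\,d\mu=\mu(B(x,r))$, which relies on conservativeness of $X$ on compact $F$; this is standard via the recurrence of compact resistance forms. The rest is bookkeeping, and part (ii) is identical with $\g{II}{totyuu}$ in place of $\g{I}{totyuu}$.
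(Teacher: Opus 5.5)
Your proposal is correct and follows the paper's proof essentially verbatim. It uses the same three-term split, normalized by the mass $\int P_{\varepsilon}f_r\,d\mu=\mu(B(x,r))$ (from invariance of $\mu$), bounds the main term by \cref{totyuu}, the mass discrepancy by $M(t)\|P_{\varepsilon}f_r\|_{\BL^{\kappa}}d_{\BL^{\kappa}}(\mu,\mu_n)$ together with condition $1_{\kappa,p_0}$, and the remaining term by $L(t)d(x,x_n)$ via the Lipschitz McShane extension $\overline{p_n}$.
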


\begin{proof}
  We observe that
  \begin{align*}
    |p(t,x,y)-p_n(t,x_n,y_n)|
    \leq& |p(t,x,y)-\overline{p_n}(t,x,y_n)|+|\overline{p_n}(t,x,y_n)-p_n(t,x_n,y_n)|\\
    \leq& \frac{1}{\int P_{\varepsilon}f_{r}d\mu}\left|\left(\int P_{\varepsilon}f_{r}d\mu\right)p(t,x,y)-\left(\int \overline{P_{\varepsilon}f_{r}}d\mu_n\right)\overline{p_n}(t,x,y_n)\right|\\
    &+\frac{1}{\int P_{\varepsilon}f_{r}d\mu}\left| \int P_{\varepsilon}f_{r}d\mu-\int \overline{P_{\varepsilon}f_{r}}d\mu_n \right||\overline{p_n}(t,x,y_n)|+L(t)d(x,x_n).
  \end{align*}
  We have estimated the first term in \cref{totyuu}.
  The second term is bounded by
  \begin{align*}
    \left| \int P_{\varepsilon}f_{r}d\mu-\int \overline{P_{\varepsilon}f_{r}}d\mu_n \right||\overline{p_n}(t,x,y_n)|
    \leq& M(t)\|P_{\varepsilon}f\|_{\BL^{\kappa}}d_{\BL^{\kappa}}(\mu,\mu_n)\\
    \leq& M(t)(c_1(\varepsilon,\kappa,p_0)\mu(B(x,r))^{1/p_0}+1)d_{\BL^{\kappa}}(\mu,\mu_n).
  \end{align*}
  Combining these estimates and $\int P_{\varepsilon}f_rd\mu=\mu(B(x,r))$ using invariance of $\mu$, we obtain the result.
\end{proof}

Now, we can complete the proof of \cref{hkest} by optimizing parameters in \cref{hkpara}.
In the following proofs, we write $A\lesssim_t B$ if there exists a time-dependent constant (not necessarily universal) 
$C=C(t)\in (0,\infty)$ such that $A\leq CB$.

\begin{proof}[Proof of \cref{hkest}$\mathrm{(i)}$]
  Using our assumptions, \cref{suff}(ii), and 
  \cref{hkpara} with $\kappa\in [1/2,2/3),\;p=q=1,\;p_0=2,\;\varepsilon,\rho\in (0,1],\;r\in [h,1],$
  and $\delta\in (0,r)$, we obtain
  \begin{align*}
    |p(t,x,y)-p_n(t,x_n,y_n)|
    \lesssim_t&\Bigl\{ r^{s_1}+\rho^{s_1}+\rho^{-\kappa}h \Bigr\}r^{1-s_0}\\
    &+\left\{ \frac{\delta}{r}+\frac{\varepsilon}{\delta^{s_0}(r-\delta)} \right\}r^{-s_0}
    +\varepsilon^{-\vartheta_1}h+\varepsilon^{-\left(2+\frac{s_0}{1+s_0}\right)}h\\
    &+\varepsilon^{-(3\kappa-1)}r^{-\frac{s_0}{2}}h
    +r^{-s_0}h,
  \end{align*}
  where $\vartheta_1:=\kappa+2+\frac{s_0}{2(1+s_0)}=\frac{(2\kappa+5)s_0+2\kappa+4}{2(1+s_0)}$, and 
  the time-dependent constant in the inequality is $t^{-2},\;t\in (0,1]$.
  Note that, $t^{-2}$ is from $L(t)$.
  Substituting $\varepsilon=h^a,\;r=h^b,\;\delta=h^{b+c},$ and $\rho=h^d,$ where $a,\;c,\;d\in (0,\infty)$ and $b\in (0,1)$, 
  into the above inequality,
  we deduce that 
  \begin{align*}
    |p(t,x,y)-p_n(t,x_n,y_n)|\lesssim_t\frac{1}{1-h^c}\sum_{k=1}^{9}h^{E_k},
  \end{align*}
  where $E_k=E_k(a,b,c,d)$ are defined by the following.
  \begin{equation*}
    E_1=(s_1-s_0+1)b,
    \qquad E_2=(1-s_0)b+s_1d,
    \qquad E_3=(1-s_0)b-\kappa d+1,
    \qquad E_4=-s_0b+c,
  \end{equation*}
  \begin{equation*}
    E_5=a-(2s_0+1)b-s_0c,
    \qquad E_6=-\vartheta_1 a+1,
    \qquad E_7=-(3\kappa-1)a-\frac{s_0}{2}b+1,
    \qquad E_8=-s_0b+1,
  \end{equation*}
  and $E_9=-(2+\frac{s_0}{1+s_0})a+1$.
  Now, it suffices to compute 
  \begin{align*}
   E:= \sup_{\substack{a,c,d\in (0,\infty)\\b\in (0,1)}}\min_{1\leq k\leq 9}E_k.
  \end{align*}
  Define $D,\;E_0,\;a_0,\;b_0,\;c_0,$ and $d_0$ by 
  \begin{align*}
    D=\Bigl\{ (2\kappa+5)s_0+2\kappa+4 \Bigr\}(s_1+2)+2(s_1-s_0+1)=2\vartheta_1 (1+s_0)(s_1+2)+2(s_1-s_0+1),
  \end{align*} 
  \begin{align*}
    E_0=\frac{2(s_1-s_0+1)}{D},
  \quad  a_0=\frac{2(1+s_0)(s_1+2)}{D},\quad b_0=\frac{2}{D},\quad c_0=\frac{2(s_1+1)}{D},\;\;\mathrm{and}\;\;d_0=\frac{1}{s_1+\kappa}.
  \end{align*}
  Note that, since $D\geq 8$, $b_0$ lies in $(0,1)$.
  We will show that $E=E_0$.
  First, we observe that substituting $(a,b,c,d)=(a_0,b_0,c_0,d_0)$ into $E_k$ 
  yields that
  \begin{align*}
    E_1=E_4=E_5=E_6=E_0,\;\;\mathrm{and}\;\; E_2,E_3,E_7,E_8,E_9\geq E_0.
  \end{align*}
  Thus it follows that $E_0\leq E$.
  We next show the converse.
  Set $p_1=\frac{\vartheta_1 (s_0+1)^2}{s_1-s_0+1},\;p_4=s_0\vartheta_1,\;p_5=\vartheta_1,\;p_6=1,$ and $p_k=0,\;k=2,3,7,8,9$.
  Then, by direct computation, we have  $\sum_k p_k=1/E_0,$ and $\sum p_kE_k=1$ for an arbitrary $(a,b,c,d)$.
  Therefore it holds for any $(a,b,c,d)$ that
  \begin{align*}
    \min_{1\leq k\leq 9}E_k\leq \sum_{k=1}^{9}\frac{p_k}{\sum_{j=1}^{9}p_j}E_k=E_0.
  \end{align*}
  This implies $E\leq E_0$.
\end{proof}
\begin{proof}[Proof of \cref{hkest}$\mathrm{(ii)}$]
  Similarly to above, 
  using \cref{suff}(iii) with $p_0=p=q=2$, we deduce that
  \begin{align*}
    |p(t,x,y)-p_n(t,x_n,y_n)|\lesssim_t\frac{1}{1-h^c}\sum_{k=1}^{9}h^{E_k},
  \end{align*}
  where $E_k=E_k(a,b,c,d)$ are defined by the following.
  \begin{equation*}
    E_1=(s_1-s_0+1)b,
    \qquad E_2=(1-s_0)b+s_1d,
    \qquad E_3=(1-s_0)b-\kappa d+1,
    \qquad E_4=-s_0b+c,
  \end{equation*}
  \begin{equation*}
    E_5=a-(2s_0+1)b-s_0c,
    \qquad E_6=-(\kappa+2)a-\frac{s_0}{2}b+1,
    \qquad E_7=-\left[\kappa-\frac{s_1}{2(1+s_0)\theta}(2\kappa-1)\right] a-\frac{s_0}{2}b+1,
  \end{equation*}
  \begin{align*}
    E_8=-s_0b+1,\qquad \mathrm{and}\qquad E_9=-\left[2+\frac{s_0}{2(1+s_0)}\right]a-\frac{s_0}{2}b+1.
  \end{align*}
  Thus it is enough to compute
  \begin{align*}
    E:= \sup_{\substack{a,c,d\in (0,\infty)\\b\in (0,1)}}\min_{1\leq k\leq 9}E_k.
  \end{align*}
  Define $D,E_0,a_0,b_0,c_0,$ and $d_0$ by 
  \begin{align*}
    D=[(\kappa+2)(1+s_0)(2+s_1)+s_0/2]+(s_1-s_0+1),
  \end{align*} 
  \begin{align*}
    E_0=\frac{s_1-s_0+1}{D},
  \quad  a_0=\frac{(1+s_0)(s_1+2)}{D},\quad b_0=\frac{1}{D},\quad c_0=\frac{s_1+1}{D},\;\;\mathrm{and}\;\;d_0=\frac{1}{s_1+\kappa}.
  \end{align*}
  Note that, since $D\geq 8$, $b_0$ lies in $(0,1)$.
  We will show that $E=E_0$.
  First, we observe that substituting $(a,b,c,d)=(a_0,b_0,c_0,d_0)$ into $E_k$ 
  yields that
  \begin{align*}
    E_1=E_4=E_5=E_6=E_0,\;\;\mathrm{and}\;\; E_2,E_3,E_7,E_8,E_9\geq E_0.
  \end{align*}
  Thus it follows that $E_0\leq E$.
  We next show the converse.
  Set $p_1=\frac{(\kappa+2)(s_0+1)^2+s_0/2}{s_1-s_0+1},\;p_4=s_0(\kappa+2),\;p_5=\kappa+2,\;p_6=1$ and $p_k=0,\;k=2,3,7,8,9$.
  Then, by direct computation, we have  $\sum_k p_k=1/E_0,$ and $\sum p_kE_k=1$ for an arbitrary $(a,b,c,d)$.
  Therefore it holds for any $(a,b,c,d)$ that
  \begin{align*}
    \min_{1\leq k\leq 9}E_k\leq \sum_{k=1}^{9}\frac{p_k}{\sum_{j=1}^{9}p_j}E_k=E_0.
  \end{align*}
  This implies $E\leq E_0$.
\end{proof}
\begin{proof}[Proof of $\rm{(iii)}$]
  Similarly as in the proof of (i), by \cref{hkpara} and \cref{summ}(iv) with $p_0=p=q=2$, we first have that
  \begin{align*}
    \MoveEqLeft |p(t,x,y)-p_n(t,x_n,y_n)|\\
    \lesssim_t 
    &r^{-s_0}\eta^{1+s_1}+r^{-s_0}\eta\xi^{s_1}+r^{-s_0}\eta\rho^{s_1}
    +r^{-s_0}\eta^{s_1}\xi+r^{-s_0}\xi^{1+s_1}+r^{-s_0}\xi\rho^{s_1}\\
    &+r^{-s_0}\eta\rho^{-\kappa}h+r^{-s_0}\xi\rho^{-\kappa}h
    +r^{-s_0}\exp\left[-c\frac{(\eta-r)^{1+\beta}}{\varepsilon^{\beta}}\right]
    +\varepsilon^{-(2+\kappa)}r^{-s_0/2}h
    +r^{-s_0}h
  \end{align*}
  for $\varepsilon,\rho\in (0,1),\;\xi\in [h,1),$ and $0<r<\eta<1$.
  Here,
  the time-dependent constant in the inequality is $t^{-2}+t$.
  Substituting
  \begin{equation*}
    \varepsilon=h^{a(1+\beta^{-1})}(M\log(1/h))^{-1/\beta},\quad M=\frac{2^{1+\beta}}{c}a(1+s_1),\qquad a=\frac{1}{1+s_1-\frac{s_0}{2}+(2+\kappa)(1+\beta^{-1})},
  \end{equation*}
  \begin{equation*}
    \eta=2r=\xi=\rho=r^a,
  \end{equation*}
  and using the formula $1+\beta^{-1}=(1+s_0)\Theta,\;\Theta=\frac{1+s_0-s_1}{1-(s_0-s_1)(2+s_0)}$,
  we obtain the result.
\end{proof}

\begin{proof}[Proof of \cref{sgrw}]
  We only provide the proof under the assumption $\rm{(A1)}$ or $\rm{(A2)}$. 
  The proof under the assumption $\rm{(A3)}$ is entirely analogous.
  Recall the definition of $J_n^r$ from \eqref{defjn}. 
  Then we have
  \begin{align*}
    |P_tf(x)-P^n_tf(x_n)|
    \leq&\left| \int p(t,x,y)f(y)\mu(dy)-\int \overline{p}(t,x,y)f(y)\mu_n(dy)\right|\\
    &+\left| \int (\overline{p}(t,x,y)-J_n^rp^{t,x}(y))f(y)\mu_n(dy) \right|\\
    &+\left| \int (J_n^rp^{t,x}(y)-p_n(t,x_n,y))f(y) \mu_n(dy)\right|\\
    =&:I+II+III.
  \end{align*}
  Here, $\overline{p}$ is the function on $M$ defined at the beginning of this section. 

  By the submultiplicativity of the $\BL^{\kappa}$ norm (see \cref{submult}), we have that
\begin{align*}
  I\leq \|p^{t,x}f\|_{\BL^{\kappa}}d_{\BL^{\kappa}}(\mu,\mu_n)\lesssim_t \|f\|_{\BL^{\kappa}}d_{\BL^{\kappa}}(\mu,\mu_n).
\end{align*}
Since $\overline{p}(t,\cdot,\cdot)$ is Lipschitz continuous,
we deduce that
\begin{align*}
  |\overline{p}(t,x,y)-J_n^rp^{t,x}(y)|
  \leq \frac{1}{\mu(B(y,r))}\int_{B(y,r)}|\overline{p}(t,x,y)-p(t,x,z)|\mu(dz)\lesssim_t r.
\end{align*}
Thus it holds that
\begin{align*}
  II\lesssim \|f\|_{\BL^{\kappa}}r.
\end{align*}
Finally, by \cref{hkest}, we observe that
\begin{align*}
  |J_n^rp^{t,x}(y)-p_n(t,x_n,y)|
  \leq& 
  \frac{1}{\mu(B(y,r))}\int_{B(y,r)}|p(t,x,z)-p_n(t,x_n,y)|\mu(dy)\\
  \lesssim&(d_{\rm{H}}(F,F_n)^{\kappa}+d_{\BL^{\kappa}}(\mu,\mu_n)+d(x,x_n))^E
\end{align*}
and, so we may conclude that
\begin{align*}
  III\lesssim \|f\|_{\BL^{\kappa}}(d_{\rm{H}}(F,F_n)^{\kappa}+d_{\BL^{\kappa}}(\mu,\mu_n)+d(x,x_n))^E.
\end{align*}
Altogether, we obtain the result from $E<1$.
\end{proof}

\section{Non-compact case}\label{ncpt}
In this section, we consider convergence rate estimates for
 non-compact cases as an application of the compact cases. 
 The main result in this section is the following.

\begin{thm}\label{noncpt}
  Suppose that \cref{ncptas} is satisfied. 
  We regard $F$ and $F_n$ as subsets of a common $\bcm$ space $(M,d)$.
  Let $x\in F$ and $x_n\in F_n$. Take $r_{\infty}\ge r_0$ satisfying 
  $x\in B(\rho,r_{\infty})$ and $x_n\in B(\rho_n,r_{\infty})$.
  Then, for any $f\in\BL^{\kappa}(M),\kappa\in [1/2,1]$ and $t>0$
  \begin{align*}
    |P_{t}f(x)-P^n_tf(x_n)|
    \lesssim\|f\|_{\BL^{\kappa}}
    \inf\Bigl\{  \Psi_1(n,r)+C_n(r)\Psi_2(n,r)^E : r\in [r_{\infty},\infty), \Psi_2(n,r)\le \frac{1}{2}\Bigr\},
  \end{align*}
  where
  \begin{equation*}
    \Psi_1(n,r)=P_{x}^n(\sigma_{B(\rho,r)^c}\le t)\vee P_{x_n}^n(\sigma^n_{B(\rho_n,r)^c}\le t),
  \end{equation*}
  \begin{equation*}
    \Psi_2(n,r)=d_{\rm{H}}(F^{(r)},F_n^{(r)})^{\kappa}+d_{\BL^{\kappa}}(\mu^{(r)},\mu^{(r)}_n)+d(x,x_n),
  \end{equation*}
  \begin{equation*}
    \begin{aligned}
      C_n(r)=&(D_n^{(r)})^{4+2(1+\frac{s}{2(1+s)\theta})(2\kappa-1)}\\
    &\quad\times \left[(\diam K_n^{(r)}+\mu_n(F_n^{(r)}))\diam K_n^{(r)}\mu_n(F_n^{(r)})\mu(F^{(r)})^{3/2}\diam F^{(r)}(\mu(F^{(r)})^{1/2}+\mu_n(F_n^{(r)}))\right]\\
    \lesssim& r^{6+\frac{3}{2}s+2(1+\frac{s}{2(1+s)\theta})(2\kappa-1)}(r+\mu_n(F^{(r)}))(r^{s/2}+\mu_n(F^{(r)}))(r^s+\mu_n(F^{(r)}))^{4+2(1+\frac{s}{2(1+s)\theta})(2\kappa-1)},
    \end{aligned}
  \end{equation*}
  \begin{equation*}
    K_n^{(r)}=F^{(r)}\cup F_n^{(r)},\qquad D_n^{(r)}=\mu(F^{(r)})\diam F^{(r)}\vee \mu_n(F^{(r)}_n)\diam F^{(r)}_n,
  \end{equation*}
  and
  \begin{equation*}
    E=\frac{\kappa}{\kappa+(1+s)(2+\kappa)(\kappa+\theta)}.
  \end{equation*}
  Moreover, the time-dependence of the constant in $\lesssim$ is $t^{[\kappa-\frac{s}{2(1+s)\theta}(2\kappa-1)]},\;t\in (0,1]$.
\end{thm}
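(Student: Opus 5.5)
We reduce the non-compact estimate to the compact one, \cref{gen}(i), applied to the truncated spaces $(F^{(r)},R^{(r)},\mu^{(r)})$ and $(F_n^{(r)},R_n^{(r)},\mu_n^{(r)})$ for a fixed $r\ge r_\infty$ with $\Psi_2(n,r)\le 1/2$. We then take the infimum over such $r$.

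\textbf{Step 1 (localisation).} By recurrence, the process $X^{(r)}$ on $F^{(r)}$ is the time change of $X$ by the additive functional $A^{(r)}_t=\int_0^t 1_{\{X_s\in F^{(r)}\}}ds$ (see \cite[Lemma 2.6]{CHK17}), and similarly for $X^{n,(r)}$. On the event $\{\sigma_{B(\rho,r)^c}>t\}$ we have $A^{(r)}_s=s$ for $s\le t$, so $X_t=X^{(r)}_t$. Hence
\[
|P_tf(x)-E_x[f(X^{(r)}_t)]|\le 2\|f\|_{F}P_x(\sigma_{B(\rho,r)^c}\le t),
\]
and the same bound holds on the $n$-side. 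This produces the $\Psi_1(n,r)$ term.

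\textbf{Step 2 (compact estimate).} Next we verify that $(F^{(r)},R^{(r)},\mu^{(r)})$ satisfies (A2) with $s_0=s_1=s$, for $r>r_0$, with constants independent of $r$.
\begin{itemize}
\item The volume bounds $\mathrm{Reg}_{s,s}$ come directly from 2-a of \cref{ncptas}.
\item The lower resistance estimate passes to $F^{(r)}$ because trace resistance dominates. More precisely, $R^{(r)}$ is the restriction of $R$, and the trace form only lowers energy. Hence effective resistances to ball complements in $F^{(r)}$ are at least those computed in $F$ for the corresponding sets, which gives LRES$(\theta)$ with the same $c_{\rm LR}$.
\item Instead of deriving the lower on-diagonal heat kernel bound from \cref{summ}(iii), we use assumption 2-c directly. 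It feeds into \cref{continuity}(iii) and \cref{uuu}(ii) with $\theta_l=s/((1+s)\theta)<1$, which is the standing condition on $\theta$.
\end{itemize}
With these in place, \cref{gen}(i) for the truncated pair gives the bound
\[
\lesssim c^{(r)}_{\mathrm{gen},n}\,\|f\|_{\BL^\kappa}\,\Psi_2(n,r)^E,
\]
where $E$ is as stated with $s_0=s_1=s$.

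\textbf{Step 3 (tracking constants; main obstacle).} The real work is making $c^{(r)}_{\mathrm{gen},n}$ explicit, polynomial in $\mu(F^{(r)})$, $\mu_n(F_n^{(r)})$, $\diam K_n^{(r)}$ and $D_n^{(r)}$, so as to obtain $C_n(r)$. This means unwinding the chain $c_{\mathrm{gen},n}\leftarrow c_{\mathrm{sg2},n}\leftarrow c_{\mathrm{sg1},n}\leftarrow c_{\mathrm{estres},n}\leftarrow c_{\rm itkill},\,c_{\mathrm{sle},n}$, with $\alpha=\alpha_n/2\sim (D_n^{(r)})^{-2}$. The powers $\alpha^{-1}$, $\alpha^{-2}$ and $\alpha^{-(1+\frac{s}{2(1+s)\theta})(2\kappa-1)}$ then produce the exponent $4+2(1+\frac{s}{2(1+s)\theta})(2\kappa-1)$ on $D_n^{(r)}$.

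The delicate constants are the following.
\begin{itemize}
\item $c_{\mathrm{sle},n}(\delta_n)$, which involves $m_n(\gamma)$. By \cref{slermk}, this is bounded below by $c_l(\gamma\delta)^s$ from Ahlfors regularity. We choose the reference points $x_0,x_1$ at distance of order one, so this is $r$-independent.
\item $c_{\mathrm{continuity},1}$. Via \eqref{rescont} and the commute time identity \cref{cti}, and using Ahlfors regularity together with LRES, we bound $1-E_{x_0}[e^{-\sigma_{x_0,x_1}}]$ below uniformly. Where this is not explicit, we absorb it into $\lesssim$.
\end{itemize}
Finally, the second displayed bound on $C_n(r)$ follows by substituting $\mu(F^{(r)})\lesssim r^s$ and $\diam F^{(r)}\le 2r$. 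The time dependence is inherited from \cref{gen}(i).
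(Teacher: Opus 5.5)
Your route is the paper's. You localise through the time-change representation to get $\Psi_1$, apply \cref{gen}(i) to the truncated pair after checking (A2) from 2-a, the trace argument of \cref{7.3}(ii) and 2-c, and then unwind $c_{\mathrm{gen},n}$. The step that fails as written is uniformity in $n$ of the constants coming from the approximating spaces. Parts 2-a to 2-c of \cref{ncptas} concern only the limit $(F,R,\mu)$; nothing gives Ahlfors regularity or a lower resistance estimate for $(F_n,R_n,\mu_n)$. Yet $m_n(\gamma)$ in $c_{\mathrm{sle},n}$ is a minimum over balls in $F^{(r)}$ \emph{and} in $F_n^{(r)}$, so your bound $m_n(\gamma)\ge c_l(\gamma\delta)^s$ ``from Ahlfors regularity'' covers only the $F$ half. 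Likewise $c_{\mathrm{sg2},n}$ contains $c_{\mathrm{continuity},1.n}(r)=\bigl(1-E^n_{x_0^n}[e^{-\sigma^{n,(r)}_{x_0^n,x_1^n}}]\bigr)^{-1}$, which your Step 3 does not address. Unless both are bounded uniformly in $n$ and in $r\ge r_\infty$, they cannot be absorbed into $\lesssim$. You would then only have an estimate with an $n$-dependent constant, which defeats the purpose of the explicit $C_n(r)$.

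The missing input is Assumption 1 (GH-vague convergence). The paper asserts $\inf_n\inf_{b\in B(\rho,r_*)\cap F_n}\mu_n(B(b,\frac{2}{3}\delta))>0$ as a consequence of this convergence, keeping the reference points in a fixed bounded region (\cref{bdd}, \cref{slermk}). One way to see it: for large $n$ use \cref{ballineq}, Hausdorff closeness of the truncated sets and the Ahlfors lower bound on $F$; for the finitely many remaining $n$ use compactness and full support. For the second constant, $\sup_{n,\,r\ge r_\infty}c_{\mathrm{continuity},1.n}(r)<\infty$ follows from \cref{1_1.1} (which rests on \cref{hitwkconv}). It is combined with $\sigma^{n,(r_\infty)}_{x_0^n,x_1^n}\le\sigma^{n,(r)}_{x_0^n,x_1^n}$, which holds because $X^{n,(r_\infty)}$ is a time change of $X^{n,(r)}$. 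Alternatively, bound $1-E[e^{-\sigma_{x_0,x_1}}]$ from below directly by the \cref{4.2-a}(i) computation in the proof of \cref{sle} with $\alpha=1$. That needs only masses of small balls around the reference points, so it reduces to the same convergence input. Drop the commute-time identity here: \cref{cti} gives $E_{x_0}[\sigma_{x_0,x_1}]=R(x_0,x_1)\mu(F^{(r)})$, an upper bound on a mean that grows with $r$. It cannot bound $1-E[e^{-\sigma}]$ from below.
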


Recall the definition of $\bbF_r$ from \eqref{bbfr}.
 Let $(F,R,\mu,\rho),\;(F_n,R_n,\mu_n,\rho_n)\in \bbF_r$.
 We assume that $F$ is unbounded, and each $F_n$ has at least two distinct points.
 As in the previous sections, we define $F^{(r)}:=\overline{B(\rho,r)},\;R^{(r)}:=R|_{F^{(r)}\times F^{(r)}},$ and $\mu^{(r)}:=\mu|_{F^{(r)}\times F^{(r)}}$ for $r>0$.
 We also define $F^{(r)}_n,\;R^{(r)}_n,$ and $\mu_n^{(r)}$ similarly.

 We consider 
 the following assumptions:
 \begin{assume}\label{ncptas}\quad\par
 \begin{itemize}
  \item[1.] The sequence $(F_n,R_n,\mu_n,\rho_n)$ converges to $(F,R,\mu,\rho)$ in the Gromov-Hausdorff-vague topology.
  \item[2.]There exists an $r_0> 0$ such that the following hold.
 
\begin{enumerate}[align=left, labelwidth=\widthof{2-a.}, 
    labelsep=0.5em, 
    leftmargin=!]
  \item [2-a.] There exists an $s>0$ such that $\mu^{(r)}$ is uniformly $s$-Ahlfors regular for $r>r_0$. That is,
   there exist $c_u,c_l,s>0$ such that for any $r\in (r_0,\infty),a\in [0,\diam F^{(r)})$ and $x\in F^{(r)}$ 
   it holds that $c_l a^s\leq \mu^{(r)}(B(x,a))\leq c_u a^s$.
  \item [2-b.]The sequence $(F,R)$ satisfies the lower resistance estimate with an exponent $\theta>\frac{1\vee s}{1+s}$.
   That is, there exists a constant $c_{\rm{LR}}>0$ such that 
   \begin{equation}
    c_{\rm{LR}}r^{\theta}\leq R(x,B(x,r)^c),\qquad \forall x\in F,\;r>0.
   \end{equation}
  \item [2-c.]There exists a constant $c_{\rm{LHK}}>0$ such that 
  \begin{align*}
    c_{\rm{LHK}}t^{-\frac{s}{(1+s)\theta}}\leq p^{(r)}(t,x,x),\quad \forall r>r_0,\;x\in F^{(r)},\;t>0,
  \end{align*}
  where $p^{(r)}$ is the heat kernel associated with $(F^{(r)},R^{(r)},\mu^{(r)})$.
  \end{enumerate}
\end{itemize}
 \end{assume}

 \begin{rmk}\label{cork}
  Suppose that $\mu$ is $s$-Ahlfors regular and let $r>0$.
  Then one sufficient condition that makes $\mu^{(r)}$ $s$-Ahlfors regular is the following:
  \begin{itemize}
    \item [$\rm{(C)}$]There
exists an $\varepsilon>0$ such that for all $x\in F^{(r)}$ and  $a\in (0,\diam F^{(r)}]$,
the set  $B(x,a)\cap B(\rho,r)$ contains a ball with radius $\varepsilon a$.
  \end{itemize}
  This requirement $\rm{(C)}$ is known as the corkscrew condition.
  We note that it is known that every $A$-uniform domain (see \cite[Definition 2.3]{Mur24} for the definition) 
  satisfies the corkscrew condition with $\varepsilon=1/3A$ (see \cite[Lemma 3.4]{Mur24}).
  Therefore, one sufficient condition that the Assumption $\rm{2}$-$\rm{a}$ is satisfied
  is that there exists a constant $A$ such that $B(\rho,r)$ is an $A$-uniform domain for $r\ge r_0$.
 \end{rmk}

\begin{lem}\label{7.3}\quad\par 
\begin{itemize}
  \item [$\rm{(i)}$]If $(F,R)$ is uniformly perfect, 
 then Assumption $\rm{2}$-$\rm{a}$ implies Assumption $\rm{2}$-$\rm{b}$ with $\theta=1$.
  \item [$\rm{(ii)}$]Under Assumption $\rm{2}$-$\rm{b}$, for any $r>0$, $F^{(r)}$ satisfies $\rm{LRES}(\theta)$ with the same constant $c_{\rm{LR}}$.
  \item [$\rm{(iii)}$]Suppose that there exists a constant $C\in (0,\infty)$ such that $Cr\leq \diam F^{(r)}$ for $r>r_0$.
  If Assumptions $\rm{2}$-$\rm{a}$ and $\rm{2}$-$\rm{b}$ are satisfied with $\theta=1$, then Assumption $\rm{2}$-$\rm{c}$ also holds.
\end{itemize}
\end{lem}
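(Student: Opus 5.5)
The three parts of \cref{7.3} are independent, so I will prove them one at a time, reusing \cref{summ} wherever possible.

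For (i), I will repeat the argument of \cref{summ}(i) with $s_0=s_1=s$. That proof bounds $R(x,B(x,r)^c)$ from below using only three facts: uniform perfectness gives a point $y\in B(x,r)\setminus B(x,c_{\rm{UP}}r)$; $\rm{Reg}_{s,s}$ controls the measures of balls of radius comparable to $r$ around $x$ and $y$; and a commute-time/capacity comparison (via \cref{cti} applied on suitable compact pieces) converts those measures into a resistance lower bound $\gtrsim r^{1+s_0-s_1}=r$. Assumption 2-a provides Ahlfors regularity only for $\mu^{(r')}$ on $F^{(r')}$ with $r'>r_0$. To handle this, given $x$ and $r$ I choose $r'$ so large that $B(x,4r)\subseteq F^{(r')}$. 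Then $\mu$ and $\mu^{(r')}$ agree on all the balls that appear, and the constant obtained is uniform in $r$.

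For (ii), I use monotonicity of effective resistance under trace. The resistance form of $(F^{(r)},R^{(r)})$ is the trace of $(\calE,\calF)$ on $F^{(r)}$. For $A\subseteq F^{(r)}$ and $x\in F^{(r)}$, the effective resistance from $x$ to $A$ computed in the trace equals the one in $F$, since the trace preserves energy minimizers for boundary data on $F^{(r)}$. Balls in $F^{(r)}$ are $B(x,a)\cap F^{(r)}$, so the complement of a ball in $F^{(r)}$ is $B(x,a)^c\cap F^{(r)}\subseteq B(x,a)^c$. Hence
\[
R^{(r)}(x,B^{(r)}(x,a)^c)=R(x,B(x,a)^c\cap F^{(r)})\ge R(x,B(x,a)^c)\ge c_{\rm{LR}}a^{\theta},
\]
where the first inequality holds because shrinking the target set increases the resistance. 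This gives $\rm{LRES}(\theta)$ for $F^{(r)}$ with the same constant.

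For (iii), I apply \cref{summ}(iii) to each compact space $(F^{(r)},R^{(r)},\mu^{(r)})$ with $s_0=s_1=s$ and $\theta=1$. The hypothesis $(1+s)\theta>1\vee s$ holds, and (A1) and (A2) hold with constants $c_l$, $c_u$, $c_{\rm{LR}}$ independent of $r$, by 2-a and part (ii). \cref{summ}(iii) then yields $p^{(r)}(t,x,x)\ge c_{\rm{A2}}t^{-s/(1+s)}$ for $t<D_{F^{(r)}}^{1+s}/(5c_{\rm{Exit}})$. The constant $c_{\rm{A2}}$ depends only on $c_u$ and $c_{\rm{Exit}}$, and hence is uniform in $r$.

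The main obstacle is large times $t$, beyond this threshold. Here I use $\diam F^{(r)}\ge Cr$, which gives $D_{F^{(r)}}\gtrsim r$, so the threshold grows like $r^{1+s}$. For $t$ above the threshold, I bound $p^{(r)}(t,x,x)$ below by its stationary value $\mu(F^{(r)})^{-1}\ge (c_u(2r)^s)^{-1}$. This follows from spectral monotonicity: $p^{(r)}(t,x,x)$ is nonincreasing in $t$ and converges to $1/\mu(F^{(r)})$. Since $t\gtrsim r^{1+s}$ means $r^{-s}\gtrsim t^{-s/(1+s)}$, this gives $p^{(r)}(t,x,x)\gtrsim t^{-s/(1+s)}$ up to a constant. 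That settles large $t$ provided the thresholds match up to constants. If instead the threshold lies below $r^{1+s}$, the intermediate range of $t$ is covered by monotonicity together with the value of the bound at the threshold, which again costs only a constant.
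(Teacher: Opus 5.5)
Your proposal is correct and follows the paper's proof in all three parts: (i) reduces to \cref{summ}(i) with $s_0=s_1=s$ once you note that 2-a gives Ahlfors regularity of $\mu$ on the relevant balls of $F$; (ii) is the same trace/harmonic-extension monotonicity argument; and (iii) combines \cref{summ}(iii) on $F^{(r)}$ for $t\lesssim r^{1+s}$ with the stationary bound $p^{(r)}(t,x,x)\ge \mu(F^{(r)})^{-1}\gtrsim r^{-s}$ for larger $t$, exactly as the paper does via the eigenfunction expansion. One small correction: the proof of \cref{summ}(i) (see \cref{A3}) does not use the commute time identity; it covers the annulus $B(x,r)\setminus B(x,c_{\rm{UP}}r)$ by $\lesssim r^{s_1-s_0}$ balls via volume counting and uses a truncated minimum of the harmonic functions $h_{z_i}$ as a test function, but since you only invoke that result, this does not affect your argument.
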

\begin{proof}[Proof of $\rm{(i)}$]
  This is an immediate consequence of \cref{summ}(i).
\end{proof}
\begin{proof}[Proof of $\rm{(ii)}$]
  Let $(\calE,\calF)$ and $(\calE^{(r)},\calF^{(r)})$ be the resistance forms corresponding to $(F,R),(F^{(r)},R^{(r)})$.
  Note that $\calF^{(r)}$ is given by $\calF^{(r)}:=\{u|_{F^{(r)}}:u\in\calF\}$ (see \cite[Theorem 8.4]{Kig12}).
  We write $h^{(r)}:\calF^{(r)}\to \calF$ for the harmonic extension map i.e., 
  $h^{(r)}u$ is the unique element in $\calF$ satisfying $(h^{(r)}u)|_{F^{(r)}}=u$ and 
  \begin{align*}
    \calE(h^{(r)}u,h^{(r)}u)=\inf\{\calE(v,v):v\in \calF,v|_{F^{(r)}}=u\},
  \end{align*}
  for any $u\in\calF^{(r)}$ (see \cite[Lemma 8.2 and Definition 8.3]{Kig12}). Then it holds that $\calE^{(r)}(u,u)=\calE(h^{(r)}u,h^{(r)}u)$.
  Therefore, we deduce for $x\in F^{(r)}$ and $B(x,a)\neq F^{(r)}$ that
  \begin{align*}
    R^{(r)}(x,F^{(r)}\setminus B(x,a))^{-1}
   =&\inf\{\calE^{(r)}(u,u):u\in\calF^{(r)},u(x)=1,u|_{F^{(r)}\setminus B(x,a)}=0\}\\
   =&\inf\{\calE(h^{(r)}u,h^{(r)}u):u\in\calF^{(r)},u(x)=1,u|_{F^{(r)}\setminus B(x,a)}=0\}\\
   =&\inf\{\calE(v,v):v\in\calF,v(x)=1,v|_{F^{(r)}\setminus B(x,a)}=0\}\\
   \leq&\inf\{\calE(v,v):v\in\calF,v(x)=1,v|_{F\setminus B(x,a)}=0\}\\
   =&R(x,F\setminus B(x,a))^{-1}\\
   \leq&\frac{1}{c_{\rm{LR}}a^{\theta}},
  \end{align*}
  which completes the proof.
\end{proof}
\begin{proof}[Proof of $\rm{(iii)}$]
  By our assumptions and \cref{summ}(ii), there exist constants $c_0,c_1\in (0,\infty)$ such that
  \begin{align*}
    c_0 t^{-\frac{s}{1+s}}\leq p^{(r)}(t,x,x),\qquad r>r_0,\;x\in F^{(r)},\;t<t_0^{(r)},
  \end{align*}
  where $t_0^{(r)}:=c_1r^{1+s}$.
  Let $\lambda_i^{(r)}$ and $\varphi^{(r)}_i,i\geq 0$ be the eigenvalues and eigenfunctions of
  $-\Delta^{(r)}$ satisfying $-\Delta^{(r)}\varphi^{(r)}_i=\lambda_i^{(r)}\varphi^{(r)}_i$ and $0\leq \lambda^{(r)}_i\leq \lambda^{(r)}_{i+1}$.
  Here, $\Delta^{(r)}$ is the generator corresponding to $(F^{(r)},R^{(r)},\mu^{(r)})$. 
  Note that the existence of such eigenvalues and eigenfunctions is guaranteed by 
  the fact that $-\Delta^{(r)}$ has a compact resolvent (see \cite[Lemma 9.7]{Kig12}).
  Then we have $\lambda^{(r)}_0=0$ and $\varphi_0^{(r)}=\frac{1}{\sqrt{\mu(F^{(r)})}}$.
  Therefore, it holds that, for $t\geq t_0^{(r)}$ and $r>r_0$,
  \begin{align*}
    p^{(r)}(t,x,x)
    =\sum_{i\geq 0}e^{-\lambda_i^{(r)}t}(\varphi_i^{(r)}(x))^2
    \geq\frac{1}{\mu(F^{(r)})}
    \geq c_l^{-1}r^{-s}
    =\frac{c_1^{\frac{s}{1+s}}}{c_l}t_0^{-\frac{s}{1+s}}
    \geq \frac{c_1^{\frac{s}{1+s}}}{c_l}t^{-\frac{s}{1+s}}.
  \end{align*}
  See \cite[Proof of Lemma 10.7]{Kig12} for the first equality.
  This implies the result.
\end{proof}

To complete the estimate for the non-compact case, we prepare an elementary lemma. 
\begin{lem}\label{bdd}\quad\par
\begin{itemize}
  \item [$\rm{(i)}$]Let $A,\;B$ be closed subsets of a metric space $(M,d)$.
Suppose that $A$ is unbounded and $B$ has at least two distinct elements.
Taking $a\in A,\;b\in B$, set $A^{(r)}=A\cap \overline{B(a,r)},\;B^{(r)}=B\cap \overline{B(b,r)}$ for $r>0$.
Then there exists an $\tilde{r}_1>0$ such that there exist
$a_0^{r},a_1^{r}\in A^{(r)},\;b_0^{r},b_1^{r}\in B^{(r)}$ for any $r>\tilde{r}_1$ satisfying
the following:
\begin{itemize}
  \item $d(a_0^{r},b_0^{r}),\;d(a_1^{r},b_1^{r})\leq d_{\rm{H}}(A^{(r)},B^{(r)})$;
  \item $\displaystyle  \inf_{r>\tilde{r}_1}d(a_0^{r},a_1^{r})\wedge d(b_0^{r},b_1^{r})>0$;
  \item the set $\{a_0^r,a_1^r,b_0^r,b_1^r:r>\tilde{r}_1 \}$ is bounded.
\end{itemize}
\item[$\rm{(ii)}$]Suppose Assumption 1, and we regard $F,\;F_n,\;n\in\N$ as subsets of a common $\bcm$ space $(M,d)$.
Then there exists an $r_1>0$ such that, for any $r>r_1$ and $n\in\N$ there exist
$a_0^{n,r},a_1^{n,r}\in F^{(r)},\;b_0^{n,r},b_1^{n,r}\in F_n^{(r)}$ satisfying
the following:
\begin{itemize}
  \item $d(a_0^{n,r},b_0^{n,r}),\;d(a_1^{n,r},b_1^{n,r})\leq d_{\rm{H}}(F^{(r)},F_n^{(r)})$;
  \item $\displaystyle \delta:= \inf_{\substack{n\in\N\\r>r_1}}d(a_0^{n,r},a_1^{n,r})\wedge d(b_0^{n,r},b_1^{n,r})>0$;
  \item there exists an $r_{\ast}$ such that the set $\{a_0^{n,r},a_1^{n,r},b_0^{n,r},b_1^{n,r}:r>r_1,n\in\N \}$ is a subset of $B(\rho,r_{\ast})$ and $B(\rho_n,r_{\ast}),\;n\in\N$.
\end{itemize}
\end{itemize}
\end{lem}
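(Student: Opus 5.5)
For (i), I would adapt the proof of \cref{eee}(i)--(ii) and make the choice of points independent of $r$. Since $B$ has at least two points, I fix distinct $y_1,y_2\in B$ and choose $\tilde r_1$ so large that $y_1,y_2\in B^{(r)}$ for every $r>\tilde r_1$, i.e.\ $\tilde r_1> d(b,y_1)\vee d(b,y_2)$. For each such $r$, the characterization $d_{\rm H}(A^{(r)},B^{(r)})=\max\{\max_{a'}d(a',B^{(r)}),\max_{b'}d(b',A^{(r)})\}$ gives points $x_i^r\in A^{(r)}$ with $d(x_i^r,y_i)\le d_{\rm H}(A^{(r)},B^{(r)})$. (If $d_{\rm H}=\infty$ the first bullet is vacuous, and since $A^{(r)}$ is nonempty I can still pick points.) The danger is that $x_1^r=x_2^r$, or that these points are close, or that they drift off to infinity. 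To avoid this I would treat two regimes separately.

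\textbf{Small Hausdorff distance.} Put $\varepsilon=d(y_1,y_2)$. On the set of $r$ with $d_{\rm H}(A^{(r)},B^{(r)})<\varepsilon/3$, the triangle inequality gives $d(x_1^r,x_2^r)\ge\varepsilon/3$. Both points lie within $\varepsilon/3$ of the fixed points $y_1,y_2$, so they stay bounded, and I take $a_i^r=x_i^r$, $b_i^r=y_i$.

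\textbf{Large Hausdorff distance.} On the set of $r$ with $d_{\rm H}\ge \varepsilon/3$, I do not need closeness beyond $d_{\rm H}$. Using that $A$ is unbounded, I fix two distinct points $a',a''\in A$ near the root $a$, with $r$ large enough that both lie in $A^{(r)}$. I then want to pick $b_i^r\in\{y_1,y_2\}$ so that $d(a_i,b_i^r)\le d_{\rm H}$ holds, but this is the weak point: $d_{\rm H}\ge\varepsilon/3$ need not dominate $d(a',y_1)$. The fallback is to define $x_i^r$ as the nearest points to $y_i$ exactly as before. Boundedness then needs $d_{\rm H}(A^{(r)},B^{(r)})$ to stay bounded as $r\to\infty$, which is not automatic in the pure metric setting of (i). I expect this uniform boundedness, together with the uniform separation of the $a$'s, to be the main obstacle. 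I would handle it by restricting $\tilde r_1$ via (ii)'s convergence, or by an $r$-independent selection using $\liminf$ arguments as in \cref{eee}(ii).

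For (ii), I would use Assumption 1 via \cref{232} and the Fell-topology characterization: for all but countably many $r$, $F_n^{(r)}\to F^{(r)}$ in Hausdorff distance, with roots converging. I fix a good radius $r'$, distinct $x,y\in F^{(r')}$ with $\varepsilon=d(x,y)$, and for large $r$ and large $n$ I take $b_i\in F_n^{(r)}$ nearest to $x,y$. For $r$ beyond $r'$ plus a margin, $d_{\rm H}(F^{(r)},F_n^{(r)})$ is small for $n\ge N(r)$ along good radii, and the $b$'s are then near $x,y$ and hence bounded, and separated by $\varepsilon/3$. For the finitely many $n<N$ I apply (i) to each pair $(F,F_n)$, which gives finitely many positive infima and bounded sets. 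Combining the two ranges of $n$ gives $\delta>0$ and a radius $r_\ast$, where boundedness of $\{b\}$ relative to $\rho_n$ uses $\rho_n\to\rho$.

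The step I expect to be hardest is uniformity in $r$, since $N(r)$ may grow with $r$. My first attempt would be to use monotonicity: points near $x,y$ in $F_n^{(r')}$ remain admissible candidates in $F_n^{(r)}$, so controlling $d(x,F_n)$ at the single radius $r'$ should suffice.
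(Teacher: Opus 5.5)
\textbf{Verdict: genuine gap in (i).} Your argument for (ii) is fine for $n\ge N$, but it relies on (i) for the finitely many $n<N$, and (i) is not proved.

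\textbf{Part (ii), large $n$.} Take $b_i^{n,r}$ nearest to $x,y$ in $F_n^{(r)}$. The monotonicity you point to gives
$d(x,b_0^{n,r})=d(x,F_n^{(r)})\le d(x,F_n^{(r')})\le d_{\mathrm{H}}(F^{(r')},F_n^{(r')})$ for all $r\ge r'$. So a single $N$ chosen at the radius $r'$ works, and the first bullet is automatic because $x\in F^{(r)}$. This is a slightly different route from the paper. The paper fixes $b_i^n\in F_n^{(r_1')}$ once, with $r_1'=2(r'+\sup_n d(\rho,\rho_n))$, and shows by a margin estimate that these stay nearest points in every $F_n^{(r)}$. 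Your version avoids that estimate.

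\textbf{Where (i) fails.} In the regime $d_{\mathrm{H}}(A^{(r)},B^{(r)})\ge\varepsilon/3$ you have no valid selection, and the real obstacle is separation rather than boundedness. With nearest points, boundedness is free, since $d(y_i,A^{(r)})$ is nonincreasing in $r$. But both $y_i$ can have the same nearest point. Take $A=[0,\infty)\times\{0\}$ and $B=\{(-10,\pm1)\}$ in $\R^2$: the origin is nearest to both for every large $r$.

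Your proposed remedies do not apply either. Statement (i) is purely metric, so there is no convergence with which to restrict $\tilde r_1$; indeed (ii) invokes it exactly for the $n$ where no convergence estimate helps. And \cref{eee}(ii) requires $d_{\mathrm{H}}\to 0$, whereas here $d_{\mathrm{H}}(A^{(r)},B^{(r)})$ may tend to infinity.

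\textbf{The missing idea.} Put the threshold where fixed pairs work, and use the unboundedness of $A$ in the complementary regime; your argument never actually invokes it. Fix distinct $a_0,a_1\in A$ and distinct $b_0,b_1\in B$, and set $\delta_0=d(a_0,a_1)\wedge d(b_0,b_1)$ and $\delta_1=d(a_0,b_0)\vee d(a_1,b_1)$.
\begin{itemize}
\item If $d_{\mathrm{H}}(A^{(r)},B^{(r)})\ge\delta_1$, the fixed pairs $(a_i,b_i)$ satisfy all three bullets, once $r$ is large enough that they lie in $A^{(r)}$ and $B^{(r)}$.
\item If $d_{\mathrm{H}}(A^{(r)},B^{(r)})<\delta_1$, pick $a_2\in A$ with $d(a,a_2)>2\delta_1+\delta_0$; this is where unboundedness is needed. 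Set $a_0^r=a$, $a_1^r=a_2$, and take $b_0^r,b_1^r\in B^{(r)}$ within $d_{\mathrm{H}}(A^{(r)},B^{(r)})$ of $a$ and $a_2$ respectively. Then $d(b_0^r,b_1^r)\ge d(a,a_2)-2\delta_1>\delta_0$, and all four points lie within $\delta_1$ of the fixed points $a,a_2$.
\end{itemize}

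The roles are reversed relative to your small-distance regime: the far-apart anchors must come from $A$. The reason is that $B$ may consist of only two points at small mutual distance while $\delta_1$ is large, so no threshold built from $d(y_1,y_2)$ can work in general. With (i) repaired this way, your proof of (ii) goes through.
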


\begin{proof}[Proof of $\rm{(i)}$]
  Take distinct points $a_0,a_1\in A,\;b_0,b_1\in B$ and set $\delta_0=d(a_0,a_1)\wedge d(b_0,b_1)>0,\delta_1=d(a_0,b_0)\vee d(a_1,b_1)\in [0,\infty)$.
  Since $A$ is unbounded, we may find $a_2\in A$ such that $d(a,a_2)>2\delta_1+\delta_0$.
  Define $\tilde{r}_1=2d(a,a_2)$ and take arbitrary $r>\tilde{r}_1$.
  If $d_{\rm{H}}(A^{(r)},B^{(r)})\geq \delta_1$, it is enough to set $a_i^{r}=a_i,b_i^{r}=b_i$.
  Suppose that $d_{\rm{H}}(A^{(r)},B^{(r)})<\delta_1$. 
  Then there exist $b_0^r,b_1^r\in B^{(r)}$ such that $d(a,b^{r}_0),d(a_2,b_1^{r})\leq d_{\rm{H}}(A^{(r)},B^{(r)})<\delta_1$,
  and it holds that
\begin{align*}
  d(a,b^{r}_0)\geq d(a,a_2)-d(a,b^{r}_0)-d(a_2,b^r_1)\geq (2\delta_1+\delta_0)-2d_{\rm{H}}(A^{(r)},B^{(r)})\geq \delta_0.
\end{align*}
Therefore we may take $a_0^r=a,\;a_1^r=a_2$.
\end{proof}

\begin{proof}[Proof of $\rm{(ii)}$]
  Take $r'>0$ such that $F^{(r')}$ contains at least two distinct elements $a_0$ and $a_1$. 
  Set $r'_1=2(r'+\sup_{n\in\N}d(\rho,\rho_n))$.
  Then there exists $N\in\N$ such that $\frac{1}{2}d(a_0,a_1)>2d_{\rm{H}}(F^{(r'_1)},F^{(r'_1)}_n)$ for any $n>N$. 
  First we deal with $n>N$ and fix such an $n\in\N$. 
  Let $b_0^n,b_1^n\in F^{(r'_1)}$ be points satisfying
  $d(b_i^n,a_i)=\inf_{x\in F^{(r'_1)}_n}d(x,a_i)\leq d_{\rm{H}}(F^{(r'_1)},F^{(r'_1)}_n)$ for $i=0,1$.
   Then we have
   \begin{align*}
    d(b_0^n,b_1^n)\geq d(a_0,a_1)-d(a_0,b_0^r)-d(a_1,b_1^r)\geq d(a_0,a_1)-2d_{\rm{H}}(F^{(r'_1)},F^{(r'_1)}_n)>\frac{1}{2}d(a_0,a_1).
   \end{align*}
   Also, it holds that 
   \begin{align}\label{kg}
   d(a_i,b^n_i)\leq d_{\rm{H}}(F^{(r)},F^{(r)}_n),\qquad r>r'_1. 
   \end{align}
   We prove this.
   To begin with, we observe that
   \begin{equation}\label{kg1}
   \begin{aligned}
    d_{\rm{H}}(F^{(r)},F^{(r)}_n)
    \geq&\inf_{x\in F_n^{(r)}}d(x,a_i)\\
    =&\inf_{x\in F_n^{(r'_1)}}d(x,a_i)\wedge \inf_{x\in F_n^{(r)}\setminus  F_n^{(r'_1)}}d(x,a_i)\\
    =&d(b_i^n,a_i)\wedge \inf_{x\in F_n^{(r)}\setminus F_n^{(r'_1)}}d(x,a_i).
   \end{aligned}
   \end{equation}
   On the other hand, we have 
   \begin{align}\label{kg2}
    d(x,a_i)
    \geq d(\rho_n,x)-d(\rho_n,\rho)-d(\rho,a_i)
    \geq r'_1-d(\rho,\rho_n)-r'\geq r'+d(\rho,\rho_n)
   \end{align}
   for $x\in F_n^{(r)}\setminus x\in F_n^{(r'_1)}$.
   Also, by our choice of $b_i^n$, it follows that 
   \begin{align}\label{kg3}
    d(b_i^n,a_i)\leq d(\rho_n,a_i)\leq d(\rho,\rho_n)+d(\rho,a_i)\le d(\rho,\rho_n)+r'.
   \end{align}
   Combining \eqref{kg1},\eqref{kg2}, and \eqref{kg3}, we deduce \eqref{kg}.
   Thus, we may take $a_i^{n,r}=a_i,\;b_i^{n,r}=b_i^n$ for $r>r'_1$ and $n>N$.

   We next consider $n=1,\dots,N$. For such $n$, (i) implies the desired points.
\end{proof}

\begin{proof}
  We denote the processes corresponding to 
  $(F^{(r)},R^{(r)},\mu^{(r)},\rho)$ and $(F^{(r)}_n,R^{(r)}_n,\mu^{(r)}_n,\rho_n)$
  by $X^{(r)}$ and $X^{n,(r)}$, respectively. 
  Since $(F,R)$ is recurrent, $X^{(r)}$ is the time change of the original process $X$
  with respect to $\mu^{(r)}$. In particular, the two processes can be constructed on same probability space.
  The same thing holds for $X^{n,(r)}$.
  Thus it holds
  \begin{align*}
    |P_{t}f(x)-P^{(r)}_tf(x)|
    =\Big|E_x[f(X_t)-f(X^{(r)}_t):\sigma_{B(\rho,r)^c}\le t]\Big|
    \lesssim \|f\|_M P_{x}( \sigma_{B(\rho,r)^c}\le t ),
  \end{align*}
  for $r\in [r_{\infty},\infty)$ satisfying $\Psi_2(n,r)\le 1/2$.
  Hereafter we fix this $r$.
  Then we have
 \begin{align*}
  |P_{t}f(x)-P^n_tf(x_n)|
  =&|P_{t}f(x)-P^{(r)}_tf(x)|
  +|P_{t}^{(r)}f(x)-P^{n,(r)}_tf(x_n)|
  +|P_{t}^{n,(r)}f(x_n)-P^n_tf(x_n)|\\
  \lesssim&\|f\|_{M}P_{x}(\sigma_{B(\rho,r)^c}\le t)
  +|P_{t}^{(r)}f(x)-P^{n,(r)}_tf(x_n)|
  +\|f\|_{M}P_{x_n}^n(\sigma^n_{B(\rho_n,r)^c}\le t).
 \end{align*}
 By \cref{gen}(i), \cref{7.3}(i) and our assumptions, it holds that
 \begin{align*}
  |P_{t}^{(r)}f(x)-P^{n,(r)}_tf(x_n)|\lesssim \g{c}{gen,n}\|f\|_{\BL^{\kappa}}\Psi_2(n,r)^{E},
 \end{align*}
 where $E=\frac{\kappa}{\kappa+(1+s)(2+\kappa)(\kappa+\theta)}$.
 Combining the arguments in the previous sections (in particular, see \cref{estresrmk}),
  we may check the following
 (recall the definition of $\delta$ and $r_{\ast}$ from \cref{bdd}).
 \begin{align*}
  \g{c}{gen,n}\lesssim &
   \g{c}{12q2}
    +\g{c}{sg2,n}\mu(F^{(r)})^{1/2}\Bigl\{ c_{\rm{A1}} +\g{c}{suffcond1,3}(D_n^{(r)})^{2(1+\frac{s}{2(1+s)\theta})(2\kappa-1)}\Bigr\}
    +\g{c}{suffcond1,3}\mu(F^{(r)})^{1/2},\\
      \g{c}{12q2}=&1+\frac{1}{c_l \left[ \frac{c_{\rm{LR}}(1\wedge \frac{1}{2}\diam F)^{\theta}}{1+(1\vee \frac{1}{\diam F})c_{\rm{LR}}(1\wedge \frac{1}{2}\diam F)^{\theta}} \right]^{1+s_0}}\lesssim 1,\\
      \g{c}{sg2,n}=&\g{c}{sg1,n}+(D_n^{(r)})^{2}\g{c}{estres,n}(1+\mu_n(F_n^{(r)})\g{c}{continuity,1.n})+(D_n^{(r)})^{4}\Big[\mu(F^{(r)})\g{c}{continuity,1}+\mu_n(F_n^{(r)})\g{c}{continuity,1.n}\Big],\\
      \g{c}{sg1,n}=&\g{c}{estres,n}(t+(D_n^{(r)})^{2}+(D_n^{(r)})^{2}c_1(t,\kappa,2)\mu(F^{(r)})^{1/2})+(D_n^{(r)})^{2}\mu(F^{(r)})^{1/2}( c_1(t,\kappa,2)+c_1([0,t],\kappa,2))\\
      \lesssim &\g{c}{estres,n}(1+(D_n^{(r)})^{2}+(D_n^{(r)})^{2}\g{c}{suffcond1,3}\mu(F^{(r)})^{1/2})+(D_n^{(r)})^{2}\mu(F^{(r)})^{1/2}\g{c}{suffcond1,3},\\
      \g{c}{estres,n}=&\g{c}{itkill}\left( 1+\frac{\mu(F^{(r)})\diam F^{(r)}}{\g{c}{sle,n}(\delta)} \right)\left(1+\frac{(D_n^{(r)})^2}{\g{c}{sle,n}(\delta)}\right),\\
      \g{c}{itkill}=&(\diam K_n^{(r)}+\mu_n(F_n^{(r)}))\diam K_n^{(r)}\mu_n(F_n^{(r)}),\\
      \g{c}{sle,n}(\delta)=&\int_{0}^{T}(Ae^{-Bs}-1)^2e^{-\frac{s}{(D_n^{(r)})^2}}ds,\quad T=\frac{1}{3}m(2/3)\delta\log(5/2),\quad A=2/5,\quad B=\frac{3}{m(2/3)\delta},\\
      m(2/3)=&\inf_{b\in B(\rho,r_{\ast})\cap F}\mu(B(b,\frac{2}{3}\delta))\wedge \inf_{n\in\N}\inf_{b\in B(\rho,r_{\ast})\cap F_n}\mu_n(B(b,\frac{2}{3}\delta))>0,\\
       \g{c}{suffcond1,3}=&\left\{ \g{c}{continuity,1}\left[ c_{\rm{A2}}\Gamma\left(1-\frac{s}{(1+s)\theta}\right) \right]^{-1/2} \right\}^{2\kappa-1},
 \end{align*}
 where
 \begin{itemize}
  \item $D_n^{(r)}=\mu(F^{(r)})\diam F^{(r)}\vee \mu_n(F^{(r)}_n)\diam F^{(r)}_n$,
  \item $K_n^{(r)}=F^{(r)}\cup F^{(r)}_n$,
  \item $c_{\rm{A1}}=1+c_l^{-1}$ (see \cref{summ}(ii)),
  \item the constant $\g{c}{continuity,1}= \g{c}{continuity,1}(r)$ is a positive number satisfying $\|g^{(r)}_{\alpha}(x,\cdot)\|_{\Lip^1}\leq  \g{c}{continuity,1}(1+\alpha^{-1})$ for $x\in F^{(r)},\alpha>0$
  (Here, $g^{(r)}_{\alpha}$ is the alpha resolvent density associated with $X^{(r)}$. The constant $\g{c}{continuity,1.n}= \g{c}{continuity,1.n}(r)$ is that for $F_n^{(r)}$.), 
  \item the constant $c_{\rm{A2}}=c_{\rm{A2}}(r)$ is a positive number satisfying $c_{\rm{A2}}t^{-\frac{s}{(1+s)\theta}}\lesssim p^{(r)}(t,x,x)$ for $x\in F^{(r)},\;t>0$
  (see \cref{summ}(iii)), where $p^{(r)}$ is the heat kernel associated with $X^{(r)}$,
  \item the time-dependence of the constant in $\lesssim$ of $\g{c}{sg1,n}$ is $t^{-[\kappa-\frac{s}{2(1+s)\theta}(2\kappa-1)]},\;t\in (0,1]$.
 \end{itemize}
 We write $C(t)$ for the time-dependent constant in $\lesssim$ of $\g{c}{sg1,n}$.
Note the following:
\begin{itemize}
  \item By Assumption 2-c and \eqref{rescont}, $c_{\rm{A2}}=c_{\rm{A2}}(r)$ and $\g{c}{continuity,1}=\g{c}{continuity,1}(r)$ can be chosen so that 
  $\inf_{r\geq r_{\infty}}c_{\rm{A2}}(r)\ge c_{\rm{LHK}}>0$ and $\sup_{r\ge r_{\infty}}\g{c}{continuity,1}(r)\le \g{c}{continuity,1}(r_{\infty})<\infty$.
  Thus $\g{c}{suffcond1,3}$ is bounded and we obtain 
  \begin{align*}
     \g{c}{sg1,n}
      \lesssim C(t)\g{c}{estres,n}(D_n^{(r)})^{2}\mu(F^{(r)})^{1/2}.
  \end{align*}
  \item By \cref{1_1.1} and its proof, $\g{c}{continuity,1}= \g{c}{continuity,1}(r)$ is bounded above as a function of $r\geq r_{\infty}$.
  \item Since $d:=\inf_{n\in\N,r\ge r_{\infty}}D_n^{(r)}>0,$ we have 
  \begin{align*}
    \inf_{n\in\N,r\ge r_{\infty}}\g{c}{sle,n}(\delta)\ge \int_{0}^{T}(Ae^{-Bs}-1)^2e^{-\frac{s}{d^2}}ds>0.
  \end{align*}
  Hence we deduce $\g{c}{estres,n}\lesssim\g{c}{itkill}\mu(F^{(r)})\diam F^{(r)}(D_n^{(r)})^2$.
\end{itemize}
Combining the above observations, 
we conclude that
\begin{equation}\label{5.4}
  \begin{aligned}
    \g{c}{sg2,n}
  \lesssim&\g{c}{sg1,n}+(D_n^{(r)})^{2}\g{c}{estres,n}\mu_n(F_n^{(r)})+(D_n^{(r)})^{4}\Big[\mu(F^{(r)})+\mu_n(F_n^{(r)})\Big]\\
  \lesssim&C(t)
  (D_n^{(r)})^2 \left[\g{c}{estres,n}(\mu(F^{(r)})^{1/2}+\mu_n(F_n^{(r)}))+(D_n^{(r)})^2 (\mu(F^{(r)})+\mu_n(F_n^{(r)}))\right]\\
  \lesssim&C(t)
  (D_n^{(r)})^4 \left[\g{c}{itkill}\mu(F^{(r)})\diam F^{(r)}(\mu(F^{(r)})^{1/2}+\mu_n(F_n^{(r)}))\right]\\
  =&C(t)
  (D_n^{(r)})^4\\
  &\quad \times \left[(\diam K_n^{(r)}+\mu_n(F_n^{(r)}))\diam K_n^{(r)}\mu_n(F_n^{(r)})\mu(F^{(r)})\diam F^{(r)}(\mu(F^{(r)})^{1/2}+\mu_n(F_n^{(r)}))\right],
  \end{aligned}
\end{equation}
and
\begin{align*}
  \g{c}{gen,n}
\lesssim&\g{c}{sg2,n}\mu(F^{(r)})^{1/2}(D_n^{(r)})^{2(1+\frac{s}{2(1+s)\theta})(2\kappa-1)}\\
\lesssim&C(t)
  (D_n^{(r)})^{4+2(1+\frac{s}{2(1+s)\theta})(2\kappa-1)}\\
  &\quad \times \left[(\diam K_n^{(r)}+\mu_n(F_n^{(r)}))\diam K_n^{(r)}\mu_n(F_n^{(r)})\mu(F^{(r)})^{3/2}\diam F^{(r)}(\mu(F^{(r)})^{1/2}+\mu_n(F_n^{(r)}))\right],
\end{align*}
which completes the proof.
\end{proof}

\section{Examples}\label{app}

\subsection{Real trees coded by functions}
In this section, we provide an estimates for $d_{\frK}^{\tau,\BL^{\kappa}}(\calT^f,\calT^g)$
in terms of the supremum of $f-g$, where $\calT^f$ is the real tree coded by a function $f$.
Recall from the definition of $d_{\frK}^{\tau,\BL^{\kappa}}$ from \cref{nrca}, and
see \cref{rtreewcode} and succeeding paragraph for the precise definition of $\calT^f$.

Real trees frequently arise in probability theory and remain a central topic of research. 
For more background on random trees, the reader is referred to \cite{le2006random}.

\begin{dfn}[Real tree coded by a function, {\cite[Section 3]{abraham2013note}}]\label{rtreewcode}
  Let $f:[0,\infty)\to [0,\infty)$ be a compactly supported continuous function and set 
  \[ \sigma_f=\sup\{t>0:f(t)>0\}<\infty.\]
  Define a  semi-metric $\bar{d}^f$ on $[0,\sigma_f]$ by setting
  \begin{align*}
    \bar{d}^f(s,t):=f(t)+f(s)-2\inf_{u\in [s,t]}f(u),\qquad 0\le s\le t\le\sigma_f.
  \end{align*}
  We define a real tree coded by $f$ by 
  \begin{align*}
    T^f:=[0,\sigma_f]/\sim,
  \end{align*}
  where $s\sim t$ if and only if $\bar{d}^f(s,t)=0$.
\end{dfn}
Let $T^f$ be the real tree coded by a function $f \in C_c^{+}(\R_{\ge 0})$,
where
\begin{align*}
  C_c^{+}(\R_{\ge 0}) := \{ f: [0,\infty) \to [0,\infty) : \text{$f$ is a compactly supported continuous function} \}.
\end{align*}
We define a metric $d^f$ on $T^f$ by
\begin{align*}
  d^f(p^f(s), p^f(t)) := \bar{d}^f(s,t), \qquad 0 \le s, t \le \sigma_f,
\end{align*} 
where $p^f: [0,\sigma_f] \to T^f$ is the canonical projection.
Thus, $T^f$ is equipped with the structure of a metric space.
Note that since $p^f$ is continuous, $(T^f, d^f)$ is compact.
Furthermore, we define a finite Borel measure $m^f$ on $T^f$ as the pushforward of the Lebesgue measure on $[0,\sigma_f]$ under $p^f$.
By abuse of notation, we also refer to the triplet $\calT^f:=(T^f, d^f, m^f)$ as the real tree coded by $f$.
It is known that a real tree coded by a function $(T^f,d^f)$ is a resistance metric space. See \cite[Proposition 5.1]{kigami1995harmonic} for its proof.

We now provide an estimate for $d_{\frK}^{\tau,\BL^{\kappa}}(\calT^f,\calT^g)$.
Note that similar results are shown in \cite[Proposition 3.3]{abraham2013note} and \cite[Proposition 8.3]{Ndloc}.

\begin{prp}
  For $f,\;g\in C_c^{+}(\R_{\ge 0})$, we have 
  \begin{align*}
    d_{\frK}^{\tau,\BL^{\kappa}}(\calT^f,\calT^g)\leq 2\|f-g\|_{[0,\sigma_f\wedge\sigma_g]}+2^{\kappa}(\sigma_f\wedge\sigma_g)\|f-g\|_{[0,\sigma_f\wedge\sigma_g]}^{\kappa}+|\sigma_f-\sigma_g|,
  \end{align*}
  where
  \begin{align*}
    \|f-g\|_{[0,\sigma_f\wedge\sigma_g]}:=\sup_{u\in [0,\sigma_f\wedge\sigma_g]}|f(u)-g(u)|.
  \end{align*}
\end{prp}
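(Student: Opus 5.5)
The plan is to build an explicit coupling of $\calT^f$ and $\calT^g$ from the common time parametrization. Write $\sigma:=\sigma_f\wedge\sigma_g$ and $\varepsilon:=\|f-g\|_{[0,\sigma]}$. By symmetry I will assume $\sigma_f\le\sigma_g$, so $\sigma=\sigma_f$.

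\textbf{Step 1: the correspondence.} Set
\[
\calR:=\{(p^f(s),p^g(s)):s\in[0,\sigma]\}\cup\{(p^f(\sigma),p^g(u)):u\in(\sigma,\sigma_g]\}.
\]
This is a correspondence between $T^f$ and $T^g$. For $s,t\in[0,\sigma]$ the definition of $\bar d$ gives
\[
|\bar d^f(s,t)-\bar d^g(s,t)|\le|f(s)-g(s)|+|f(t)-g(t)|+2\Bigl|\inf_{[s,t]}f-\inf_{[s,t]}g\Bigr|\le4\varepsilon .
\]
So on the common part the distortion is at most $4\varepsilon$. The standard gluing (put $d^Z(x,y)=\inf_{(a,b)\in\calR}\{d^f(x,a)+\tfrac12\mathrm{dis}\,\calR+d^g(b,y)\}$ across the two copies) then yields a compact $Z$ in which the Hausdorff distance is at most $\tfrac12\mathrm{dis}\,\calR$ and each coupled pair is at distance at most $\tfrac12\mathrm{dis}\,\calR$. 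This is the source of the term $2\varepsilon$.

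\textbf{Step 2: the measure term.} I would bound it by coupling Lebesgue measure: $m^f\circ\iota_f^{-1}$ is the pushforward of $\mathrm{Leb}|_{[0,\sigma_f]}$ under $s\mapsto p^f(s)$, and similarly for $g$. For $\|\phi\|_{\BL^\kappa(Z)}\le1$,
\[
\Bigl|\int\phi\,dm^f-\int\phi\,dm^g\Bigr|\le\int_0^{\sigma}|\phi(p^f(s))-\phi(p^g(s))|\,ds+\int_\sigma^{\sigma_g}|\phi(p^g(u))|\,du .
\]
The first integral is at most $\sigma\,(\tfrac12\mathrm{dis}\,\calR)^\kappa\le\sigma(2\varepsilon)^\kappa=2^\kappa\sigma\varepsilon^\kappa$. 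The second is at most $\|\phi\|_Z\,|\sigma_f-\sigma_g|\le|\sigma_f-\sigma_g|$. Together these give exactly the remaining two terms of the claimed bound, once the Hausdorff part is settled.

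\textbf{Step 3: the tail, which is the main obstacle.} The pairs $(p^f(\sigma),p^g(u))$ with $u\in(\sigma,\sigma_g]$ must not increase the distortion beyond $4\varepsilon$. Here one only knows $f(\sigma)=0$ and $g(\sigma)\le\varepsilon$. I would first try to show that the tail of $T^g$ stays within $O(\varepsilon)$ of $p^g(\sigma)$, using $\bar d^g(\sigma,u)=g(u)+g(\sigma)-2\inf_{[\sigma,u]}g$ and the fact that $g$ vanishes at both ends of $[\sigma,\sigma_g]$. If that fails, I would charge the tail excursion to the $|\sigma_f-\sigma_g|$ term instead.

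I must flag honestly that the stated hypothesis only controls $g$ on $[0,\sigma]$. A tall, thin excursion of $g$ inside $(\sigma,\sigma_g]$ would make $d^Z_{\mathrm H}$ large while $\varepsilon$ and $|\sigma_f-\sigma_g|$ stay small. This step therefore needs an additional argument, or a reading of the norm as taken over $[0,\sigma_f\vee\sigma_g]$ with $f$ extended by $0$. I expect the proof to succeed or fail precisely here.
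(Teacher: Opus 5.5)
Your Steps 1 and 2 are exactly the paper's argument. It uses the same time-coupling correspondence $\calR$, the same bound $|\bar d^f(s,t)-\bar d^g(s,t)|\le 4\|f-g\|$, the same gluing, and the same Lebesgue coupling of $m^f$ and $m^g$. The paper adds a slack $\varepsilon>0$ to the gluing and lets it tend to $0$; you also need this, or a metric quotient, since $\tfrac12\mathrm{dis}\,\calR$ may vanish.

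Your worry in Step 3 is justified, and it cannot be overcome, because the statement as written is false. Take $f(u)=u(1-u)$ on $[0,1]$ and $f=0$ on $[1,\infty)$, so $\sigma_f=1$. Let $g=f$ on $[0,1]$, let $g$ be a tent of height $H$ on $[1,1+\delta]$ with peak at $1+\delta/2$, and let $g=0$ afterwards. Then $\sigma_g=1+\delta$ and $\|f-g\|_{[0,1]}=0$, so the right-hand side equals $\delta$.

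On the other hand, $d^g(p^g(0),p^g(1+\delta/2))=H$, so $\diam T^g\ge H$, while $\diam T^f\le 2\sup f=\tfrac12$. For every admissible $Z$ one has $d^Z_{\mathrm H}(\iota_f(T^f),\iota_g(T^g))\ge\tfrac12|\diam T^g-\diam T^f|$. Hence the left-hand side is at least $\tfrac12(H-\tfrac12)$; for $H=1$ and $\delta=\tfrac1{10}$ this gives $\tfrac14>\tfrac1{10}$. Both of your proposed rescues therefore fail. The tail of $T^g$ need not lie within $O(\varepsilon)$ of $p^g(\sigma)$. A spike of height $H$ over a base of width $\delta$ also cannot be charged to $|\sigma_f-\sigma_g|=\delta$.

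The paper's proof has exactly the hole you located. It asserts that both projections of $\calR$ are surjective, which forces the tail pairs $(p^f(\sigma_f),p^g(u))$, $u\in(\sigma_f,\sigma_g]$, into $\calR$. Yet it estimates the distortion only for $0\le s\le t\le\sigma_f\wedge\sigma_g$.

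Your second suggested reading is the right repair. Replace $\|f-g\|_{[0,\sigma_f\wedge\sigma_g]}$ by $\|f-g\|_{[0,\sigma_f\vee\sigma_g]}=\sup_{u\ge0}|f(u)-g(u)|$ in the first two terms. Set $p^f(u):=p^f(\sigma_f)$ for $u\in[\sigma_f,\sigma_g]$. This is consistent, because with $f$ extended by $0$ one has $\bar d^f(s,u)=\bar d^f(s,\sigma_f)$ for all such $u$ and all $s\in[0,\sigma_f]$. Your Step 1 computation then holds verbatim for all $s,t\in[0,\sigma_g]$, giving $\mathrm{dis}\,\calR\le 4\sup_{u\ge0}|f(u)-g(u)|$. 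Step 2 goes through unchanged, which proves the corrected inequality.
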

\begin{rmk}
  In some cases, we let $\rho^f:=p^f(0)$ be the root of $T^f$, 
  and define the real tree coded by $f$ as $\calT^f=(T^f,d^f,m^f,\rho^f)$. 
  In this case, by using the distance $d_{\frK_{\bullet}}^{\tau,\BL^{\kappa}}$ defined in \cref{rcop}, 
  we obtain the following:
  \begin{align*}
    d_{\frK}^{\tau,\BL^{\kappa}}(\calT^f,\calT^g)\leq 4\|f-g\|_{[0,\sigma_f\wedge\sigma_g]}+2^{\kappa}(\sigma_f\wedge\sigma_g)\|f-g\|_{[0,\sigma_f\wedge\sigma_g]}^{\kappa}+|\sigma_f-\sigma_g|.
  \end{align*}
\end{rmk}

\begin{proof}
  First we set 
  \begin{equation*}
    \calR:=\{(x^f,x^g)\in T^f\times T^g:\exists t\in [0,\infty),\;x^f=p^f(t),\;x^g=p^g(t)\},
  \end{equation*}
  and
  \begin{equation*}
    \rm{dis}(\calR):=\sup\{|d^f(x^f,y^f)-d^g(x^g,y^g)|: (x^f,x^g),\;(y^f,y^g)\in\calR\}.
  \end{equation*}
  The quantity $\rm{dis}(\calR)$ is called a distortion of $\calR$.
  Note that the canonical projections $\calR\to T^f$ and $\calR\to T^g$ are surjection.
  Then we have
  \begin{align}\label{bhy}
    \rm{dis}(\calR)\le 4\|f-g\|_{[0,\sigma_f\wedge\sigma_g]}.
  \end{align}
  In fact, if we take $(x^f,x^g),\;(y^f,y^g)\in \calR$, and 
  set $x^f=p^f(s),\;x^g=p^g(s),\;y^f=p^f(t),$ and $y^g=p^g(t)$ for 
  $0\le s\le t\le\sigma_f\wedge\sigma_g$,
  it holds that
  \begin{align*}
    |d^f(x^f,y^f)-d^g(x^g,y^g)|
    =&\left|f(s)-g(s)+f(t)-g(t)+2\inf_{u\in [s,t]}g(u)-2\inf_{u\in [s,t]}f(u)\right|\\
    \leq&2\|f-g\|_{[0,\sigma_f\wedge\sigma_g]}+2\sup_{u\in [s,t]}|f(u)-g(u)|\\
    \leq&4\|f-g\|_{[0,\sigma_f\wedge\sigma_g]}.
  \end{align*}
  For $Z:=T^f \sqcup T^g$ and $\varepsilon>0$,
  we define a function $d^\varepsilon:Z\times Z\to [0,\infty)$ by setting
  \begin{align*}
    d^{\varepsilon}(x,y):=
    \begin{dcases}
      d^f(x,y), & x,y\in T^f,\\
      \inf\left\{ d^f(x,x')+\frac{1}{2}\rm{dis}(\calR)+d^g(y,y')+\varepsilon:(x',y')\in\calR \right\}, & x\in T^f,\;y\in T^g,\\
      d^g(x,y), & x,y\in T^g.
    \end{dcases}
  \end{align*}
  Note that $d^{\varepsilon}(x,y)=\frac{1}{2}\rm{dis}(\calR)+\varepsilon$ for $(x,y)\in\calR$.
  It is not difficult to confirm that $(Z,d^{\varepsilon})$ is a compact metric space,
   and $T^f$ and $T^g$ are isometrically embedded into $(Z,d^{\varepsilon})$.
  We next evaluate the Hausdorff distance $d^{\varepsilon}_{\rm{H}}(T^f,T^g)$.
  Fix an arbitrary $x\in T^f$. Then we may find $y\in T^g$ such that $(x,y)\in \calR$,
  and we observe that
  \begin{align*}
    d^{\varepsilon}(x,y)=
      \inf\left\{ d^f(x,x')+\frac{1}{2}\rm{dis}(\calR)+d^g(y,y')+\varepsilon:(x',y')\in\calR \right\}
      =\frac{1}{2}\rm{dis}(\calR)+\varepsilon.
  \end{align*}
  Thus it holds that 
  \begin{align*}
    d^{\varepsilon}_{\rm{H}}(T^f,T^g)\le \frac{1}{2}\rm{dis}(\calR)+\varepsilon\le 2\|f-g\|_{[0,\sigma_f\wedge\sigma_g]}+\varepsilon
  \end{align*}
  by \eqref{bhy}.
  We finally bound $d_{\BL^{\kappa}}^{\varepsilon}(m^f,m^g)$.
  Take a function $h:Z\to \R$ satisfying $\|h\|_{\BL^{\kappa}(Z,d^{\varepsilon})}\le 1$.
  Assuming $\sigma_f\le\sigma_g$, we deduce from 
  $(p^f(t),p^g(t))\in\calR$ and \eqref{bhy} that
  \begin{align*}
    \left|\int_Z hdm^f-\int_Z hdm^g\right|
    =&\left|\int_{0}^{\sigma_f}h(p^f(t))dt-\int_{0}^{\sigma_g}h(p^g(t))dt\right|\\
    \leq&\left|\int_{0}^{\sigma_f}h(p^f(t))-h(p^g(t))dt\right|+\left| \int_{\sigma_f}^{\sigma_g} h(p^g(t))dt\right|\\
    \leq&\int_{0}^{\sigma_f} d^{\varepsilon}(p^f(t),p^g(t))^{\kappa} dt+|\sigma_f-\sigma_g|\\
    \leq&\sigma_f\left( \frac{1}{2}\mathrm{dis}(\mathcal{R})+\varepsilon \right)^{\kappa}+|\sigma_f-\sigma_g|\\
    \leq&\sigma_f\left( 2\|f-g\|_{[0,\sigma_f\wedge\sigma_g]}+\varepsilon \right)^{\kappa}+|\sigma_f-\sigma_g|.
  \end{align*}
Altogether, we obtain the desired result.
\end{proof}

Finally, we establish a sufficient condition for $m^f$ to be regular.

\begin{prp}
  Let $f\in C_c^{+}(\R_{\ge 0})$ and set 
  \begin{align*}
    \omega(h):=\sup_{\substack{s,t\in [0,\sigma_f]\\|s-t|\le h}}|f(s)-f(t)|,\;h\ge 0,\qquad
  \omega^{-1}(r):=\sup\{h>0:\omega(h)<r\},\;r>0.
  \end{align*}
  \begin{itemize}
    \item [$\rm{(i)}$]For any $x\in T^f$ and $r>0$, we have
    \begin{align*}
      \omega^{-1}\left(\frac{r}{2}\right)\wedge\frac{\sigma_f}{2}\le m^f(B(x,r)).
    \end{align*}
    \item [$\rm{(ii)}$]For any $t\in [0,\sigma_f]$ and $r>0$, let $I(t,r)$ be the connected component of $\{s\in[0,\sigma_f]:f(s)>f(t)-r\}$ containing $t$.
    Suppose that there exists a function $V:[0,\infty)\to[0,\infty)$ such that
    \begin{align*}
      \rm{Leb}(I(t,r)\cap \{|f-f(t)|<r\})\le V(r),\qquad\forall t\in [0,\sigma_f],\;r>0,
    \end{align*}
where $\rm{Leb}$ is the one dimensional Lebesgue measure.
    Then it holds that 
    \begin{align*}
      m^f(B(x,r))\le V(r),\qquad \forall x\in T^f,\;r>0.
    \end{align*}
  \end{itemize}
\end{prp}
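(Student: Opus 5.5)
Both parts will be argued directly in the tree $T^f$, using the projection $p^f:[0,\sigma_f]\to T^f$ together with the identity $m^f(A)=\mathrm{Leb}((p^f)^{-1}(A))$. Throughout, fix $x\in T^f$ and choose $t\in[0,\sigma_f]$ with $x=p^f(t)$.

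For (i), the key observation is that a short time interval around $t$ maps into the ball. Let $h<\omega^{-1}(r/2)$, so that $\omega(h)<r/2$ by the definition of $\omega^{-1}$ (monotonicity of $\omega$ handles the supremum). Suppose $s\in[0,\sigma_f]$ satisfies $|s-t|\le h$. Then every $u$ between $s$ and $t$ satisfies $|f(u)-f(t)|\le\omega(h)$ and $|f(s)-f(t)|\le \omega(h)$. Consequently
\[
\bar d^f(s,t)=f(s)-f(t)+2\bigl(f(t)-\inf f\bigr)\le 3\omega(h).
\]
This gives $3\omega(h)$, not $r$, so I will tighten the estimate. Writing $m:=\inf_{[s,t]}f$, we have $\bar d^f=(f(s)-m)+(f(t)-m)$, and each of the two terms is at most $\omega(h)<r/2$. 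Hence $\bar d^f(s,t)<r$. It follows that $(p^f)^{-1}(B(x,r))$ contains $[t-h,t+h]\cap[0,\sigma_f]$. The Lebesgue measure of this set is at least $h\wedge\sigma_f$; it is at least $h\wedge(\sigma_f/2)$ after accounting for the boundary cases, since one of the two sides of $t$ has length $\ge\sigma_f/2$. Letting $h\uparrow\omega^{-1}(r/2)$ then yields (i).

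For (ii), I will show that $(p^f)^{-1}(B(x,r))\subseteq I(t,r)\cap\{|f-f(t)|<r\}$, after which the assumed bound gives $m^f(B(x,r))\le V(r)$. Take $s$ with $\bar d^f(s,t)<r$. Since $\bar d^f(s,t)\ge |f(s)-f(t)|$, we get $|f(s)-f(t)|<r$. Since $\bar d^f(s,t)\ge f(t)-\inf_{[s,t]}f$, we also get $\inf_{[s,t]} f>f(t)-r$. The infimum over a compact interval is attained, so the whole segment between $s$ and $t$ lies in $\{f>f(t)-r\}$. That segment is a connected set containing $t$, so $s\in I(t,r)$.

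The main obstacle is bookkeeping rather than substance. In (i) one must track the strict versus non-strict inequalities in the definition of $\omega^{-1}$ and handle the truncation of $[t-h,t+h]$ at the endpoints $0$ and $\sigma_f$. In (ii) one must confirm that $\bar d^f$ dominates both quantities used.
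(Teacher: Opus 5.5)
Your proposal is correct and follows essentially the same route as the paper: in (i) you show that an interval of times within $h<\omega^{-1}(r/2)$ of $t$ lies in $(p^f)^{-1}(B(x,r))$ because $\bar d^f\le 2\omega(h)<r$ there, and in (ii) you show $(p^f)^{-1}(B(x,r))\subseteq I(t,r)\cap\{|f-f(t)|<r\}$. The only difference is minor and in your favour: you bound $|f(s)-f(t)|\le\bar d^f(s,t)$ directly, whereas the paper passes through the root $p^f(0)$, which tacitly uses $f(0)=0$.
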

\begin{proof}[Proof of $\rm{(i)}$]
  Fix $x=p^f(t_0)\in T^f,\;t_0\in [0,\sigma_f]$ and $r>0$ arbitrarily.
  If we set $\delta_{\varepsilon}:=\omega^{-1}(r/2)-\varepsilon>0$ for small $\varepsilon>0$,
  we have $\omega(\delta_{\varepsilon})<r/2$.
  Define an interval $J_{\varepsilon}\subseteq [0,\sigma_f]$ by setting
  \begin{align*}
    J_{\varepsilon}:= 
\begin{dcases} 
\left[t_0,t_0+\left(\delta_{\varepsilon}\wedge\frac{\sigma_f}{2}\right)\right], & t_0\le\frac{\sigma_f}{2}, \\ 
\left[t_0-\left(\delta_{\varepsilon}\wedge\frac{\sigma_f}{2}\right), t_0\right], &  t_0>\frac{\sigma_f}{2}.
\end{dcases}
  \end{align*}
  Then $J_{\varepsilon}$ satisfies
  \begin{align*}
  t_0 \in J_{\varepsilon}, \quad \mathrm{Leb}(J_{\varepsilon})=\delta_{\varepsilon}\wedge\frac{\sigma_f}{2},\quad |t-t_0|\le\delta_{\varepsilon},\;\forall t\in J_{\varepsilon}.
  \end{align*}
  We will prove 
  \begin{align}\label{olki}
    J_{\varepsilon}\subseteq\{t\in[0,\sigma_f]:d^f(p^f(t),x)<r\}.
  \end{align}
  Fix an arbitrary $t\in J_{\varepsilon}$.
  Since $f$ is continuous, there exists $u\in [t\wedge t_0,t\vee t_0]$
  such that $f(u)=\inf_{s\in[t\wedge t_0,t\vee t_0]}f(s)$ and $|t-u|,\;|t_0-u|\le \delta_{\varepsilon}$.
  Thus we may conclude that
  \begin{align*}
    d^f(p^f(t),x) 
   =&f(t)+f(t_0)-2f(u)\\
   \le&\omega(|t-u|)+\omega(|t_0-u|)\\
   \le&2\omega(\delta_{\varepsilon})\\
   <&r,
\end{align*}
which implies \eqref{olki}.
Therefore we deduce that 
\begin{align*}
m^f(B(x,r)) 
=&\mathrm{Leb}\Big(\{t\in[0,\sigma_f]:p^f(t)\in B(x,r)\}\Big)\\
=&\mathrm{Leb}\Big(\{t\in[0,\sigma_f]:d^f(p^f(t),x)<r\}\Big)\\
\ge&\mathrm{Leb}(J_{\varepsilon})\\
=&\delta_{\varepsilon}\wedge \frac{\sigma_f}{2}.
\end{align*}
Letting $\varepsilon\to 0$, we obtain the result.
\end{proof}
\begin{proof}[Proof of $\rm{(ii)}$]
  Fix arbitrary $x=p^f(t_0)\in T^f,\;t_0\in [0,\sigma_f]$ and $r>0$.
 We will prove
 \begin{align}\label{kjuh}
  A_r\subseteq I(t_0,r)\cap\{|f-f(t_0)|<r\},\quad A_r:=\{s\in [0,\sigma_f]:d^f(p^f(s),x)<r\}.
 \end{align}
 Take $t\in A_r$ arbitrary. Then we have 
 $f(t)>f(t_0)-r$ from \[ |f(t)-f(t_0)|=|d^f(p^f(0),p^f(t))-d^f(p^f(0),p^f(t_0))|\le d^f(p^f(t),p^f(t_0))<r.\]
 Thus we obtain from $d^f(p^f(t),p^f(t_0))<r$ that
 \begin{align*}
   2\inf_{u\in[t\wedge t_0,t\vee t_0]}f(u)>f(t)+f(t_0)-r>2(f(t_0)-r),
 \end{align*}
 which implies $f(u)>f(t_0)-r$ for any $u\in [t\wedge t_0,t\vee t_0]$.
 In particular, it holds that $t\in [t\wedge t_0,t\vee t_0]\subseteq I(t_0,r)$.
 Therefore we conclude \eqref{kjuh}.
 Now, the desired results follows from the following:
\begin{align*}
  m^f(B(x,r))
  =\mathrm{Leb}(A_r)
  \le\mathrm{Leb}\Big(I(t_0,r)\cap\{|f-f(t_0)|<r\}\Big)\le V(r).
\end{align*}
\end{proof}

\begin{rmk}
Previous studies \cite{aldous1993continuum,duquesne2003limit} have demonstrated that, 
for Galton-Watson trees $T^{S_n}$ whose offspring distributions have finite or possibly infinite variance, 
the sequence of scaled contour functions $S_n$ converges in a suitable sense to twice the Brownian excursion $2W$. 
Furthermore, convergence of random walks on $T_n$ to the natural stochastic process on $T^{2W}$ 
has been established in \cite{Crtree}, 
and weak convergence of $\calT^{S_n}$ to $\calT^{2W}$ in the Gromov-Hausdorff-Prokhorov topology was demonstrated in \cite[Corollary 8.7]{Ndloc}. 
Consequently, as suggested by our main results and the arguments presented in this section, 
constructing a suitable coupling between $S_n$ and $2W$ would allow us 
to quantify the convergence obtained in \cite{Crtree}.
\end{rmk}

\subsection{Sierpinski gasket}
In this section, we derive estimates for the semigroup and heat kernel on the Sierpinski gasket.
The study of Brownian motion on the Sierpinski gasket, including its rigorous construction, 
began about forty years ago. In \cite{BP88, Gol87, Kus87}, 
this process was constructed as a scaling limit of random walks on approximating finite graphs.
Also, rate estimates for the generators and resolvents were demonstrated in \cite{PS18}.
Although this approximation has been known for several decades, 
quantitative convergence rates for semigroups and heat kernels have remained unknown. 
In \cref{sgrate}, we establish explicit estimates for these rates.

To deal with the Sierpinski gasket, we begin slightly more general setting.
Let $f_i:\R^k\to\R^k,i=1,\dots, n$ be similitudes (with respect to the Euclidean metric).
Here, we say a function $f:\R^k\to\R^k$ is a similitude or an $r-$similitude if there exists a constant $r\in (0,1)$
such that $|f(x)-f(y)|=r|x-y|$ for any $x,\;y\in\R^k$.
Then there exists a unique non-empty compact set $K\subseteq \R^k$ satisfying $K=\cup_{i=1}^n f_i(K)$, and 
the set $K$ is called the self-similar set with respect to $(f_i)_{i=1}^n$ (see \cite[Theorem 1.1.4]{Kig01}).
If we set $f_i(x)=\frac{1}{2}(x+p_i),\;x\in\R^2,\;i=1,2,3$ for vertices $p_i$ of a regular triangle,
then $K$ will be the Sierpinski gasket.
We return to the general setting. 
Suppose that each $f_i$ is an $r_i$-similitude with respect to the Euclidean metric $d_E$.
We say $(f_i)_{i=1}^n$ satisfies the open set condition if there exists a non-empty bounded open set $O\subseteq \R^k$
such that $\cup_{i=1}^n f_i(O)\subseteq O$ and $f_i(O)\cap f_j(O)=\emptyset$ for $i\neq j$.
It is known that then the Hausdorff dimension $\dim_{\rm{H}} K$ of $K$ with respect to $d_E$ is
given by a unique positive number $\alpha$ satisfying $\sum_{i=1}^n r_i^{\alpha}=1$ 
if the open set condition holds
(see \cite[Corollary 1.5.9]{Kig01}). 
In the setting above, we may confirm the regularity of the Hausdorff measure on $K$.
For a proof of the following lemma, see a proof of \cite[Section 5.3 Theorem 1]{MR625600}.

\begin{lem}\label{ahl}
Let $(f_i)_{i=1}^n$ be a family of similitudes, and $\alpha$ be the Hausdorff dimension of 
the self-similar set $K\subseteq \R^k$ with respect to $d_E$.
If $(f_i)_{i=1}^{n}$ satisfies the open set condition, then the 
$\alpha$-dimensional Hausdorff measure $\calH^{\alpha}$ is $\alpha$-Ahlfors regular i.e.
it satisfies the following for some constants $c,C\in (0,\infty)$.
\begin{align*}
  cr^{\alpha}\leq \calH^{\alpha}(B(x,r))\leq Cr^{\alpha},\qquad x\in K,r\in[0,\diam K).
\end{align*}
\end{lem}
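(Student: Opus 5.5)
The statement to prove is \cref{ahl}: under the open set condition, the $\alpha$-dimensional Hausdorff measure $\calH^{\alpha}$ restricted to $K$ is $\alpha$-Ahlfors regular. I would follow Hutchinson's classical route and handle the two inequalities separately. The first step in both is to produce a natural measure on $K$ and show that $0<\calH^{\alpha}(K)<\infty$ under the open set condition.

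\emph{Self-similar measure and upper bound.} Write $K_w=f_w(K)$ for words $w=w_1\cdots w_m$, where $f_w=f_{w_1}\circ\cdots\circ f_{w_m}$ and $r_w=r_{w_1}\cdots r_{w_m}$. Since $\sum_i r_i^{\alpha}=1$, there is a unique Borel probability measure $\nu$ on $K$ with $\nu(K_w)\ge r_w^{\alpha}$; in fact equality holds once overlaps are shown to be $\nu$-null. For $x\in K$ and $r<\diam K$, consider the stopping family
\begin{align*}
\Lambda_r=\{w: r_w\le r<r_{w^-}\},
\end{align*}
where $w^-$ denotes $w$ with its last letter removed. The sets $K_w$, $w\in\Lambda_r$, cover $K$ and each has diameter $r_w\diam K\le r\diam K$. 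The main obstacle is to show that only boundedly many of them meet $B(x,r)$. For this I use the open set condition: the sets $f_w(O)$, $w\in\Lambda_r$, are pairwise disjoint. Each contains a ball of radius $c_1 r_w\ge c_1 r_{\min}r$ and lies in a ball of radius $c_2 r$. A volume comparison in $\R^k$ then bounds the number of $w\in\Lambda_r$ with $K_w\cap B(x,r)\ne\emptyset$ by a constant $N$ depending only on $k$, $O$ and $r_{\min}=\min_i r_i$. (One needs $\overline{K}\subseteq \overline{O}$, which follows by iterating $f_i(\overline O)\subseteq\overline O$.) Consequently $\nu(B(x,r))\le\sum_{w} r_w^{\alpha}\le N r^{\alpha}$.

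\emph{Comparison with $\calH^{\alpha}$ and lower bound.} The covers by $\{K_w:w\in\Lambda_r\}$ give $\calH^{\alpha}(K)\le(\diam K)^{\alpha}<\infty$. The uniform upper bound $\nu(B(x,r))\le Nr^{\alpha}$, fed into the mass distribution principle, gives $\calH^{\alpha}(K)\ge \nu(K)/N'>0$. Both $\calH^{\alpha}|_K$ and $\nu$ are then invariant under the self-similar rescaling $\mu\mapsto\sum_i r_i^{\alpha}\,\mu\circ f_i^{-1}$. For $\calH^{\alpha}$ this uses that $\calH^{\alpha}(K_i\cap K_j)=0$ for $i\ne j$, which follows from the equality $\sum_i\calH^{\alpha}(K_i)=\calH^{\alpha}(K)$ combined with the scaling $\calH^{\alpha}(K_i)=r_i^{\alpha}\calH^{\alpha}(K)$. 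By uniqueness of the invariant probability (the contraction argument), $\calH^{\alpha}|_K=\calH^{\alpha}(K)\,\nu$, so the upper bound transfers to $\calH^{\alpha}$.

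For the lower bound, given $x\in K$ and $r<\diam K$, choose $w$ with $x\in K_w$, $w\in\Lambda_{r/\diam K}$. Then $K_w\subseteq B(x,r)$ up to a constant adjustment of the radius, so
\begin{align*}
\calH^{\alpha}(B(x,r))\ge \calH^{\alpha}(K)\,r_w^{\alpha}\ge \calH^{\alpha}(K)\,\bigl(r_{\min} r/\diam K\bigr)^{\alpha}.
\end{align*}
Adjusting the constant handles the strict versus non-strict radius. The counting step in the previous paragraph is the only nontrivial one.
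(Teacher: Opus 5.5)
Your proposal is correct and is essentially Hutchinson's proof of \cite[Section 5.3 Theorem 1]{MR625600}, which the paper cites in place of a proof. The ingredients match: the self-similar measure, the open-set-condition volume-counting lemma for the upper bound, the mass distribution principle plus invariance to identify $\calH^{\alpha}|_K$ with $\calH^{\alpha}(K)\,\nu$, and $K_w\subseteq B(x,r)$ for the lower bound.

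One step needs tidying: the bound $\nu(B(x,r))\le\sum_{w}r_w^{\alpha}$. As written, it leans on the equality $\nu(K_w)=r_w^{\alpha}$, which you have not yet established. Instead, realise $\nu$ as the image under the coding map of the Bernoulli measure on $\{1,\dots,n\}^{\N}$ with weights $r_i^{\alpha}$. Then the cylinder over $w$ has mass exactly $r_w^{\alpha}$ and maps into $K_w$, which gives the bound directly.
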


\begin{figure}[h]
  \centering
  \begin{minipage}[b]{0.4\textwidth}
    \centering
    \includegraphics[width=40mm]{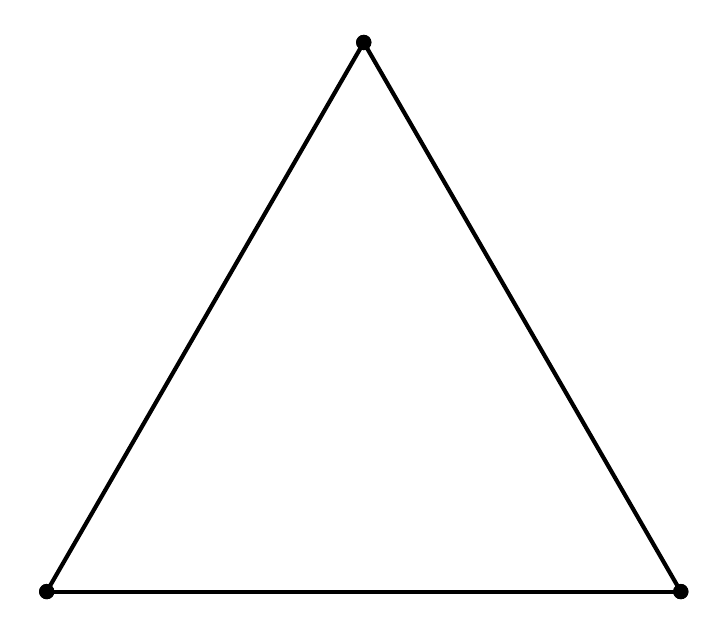}
    \par \vspace{1pt} $V_0$
  \end{minipage}
  \hspace{10mm}
  \begin{minipage}[b]{0.4\textwidth}
    \centering
    \includegraphics[width=40mm]{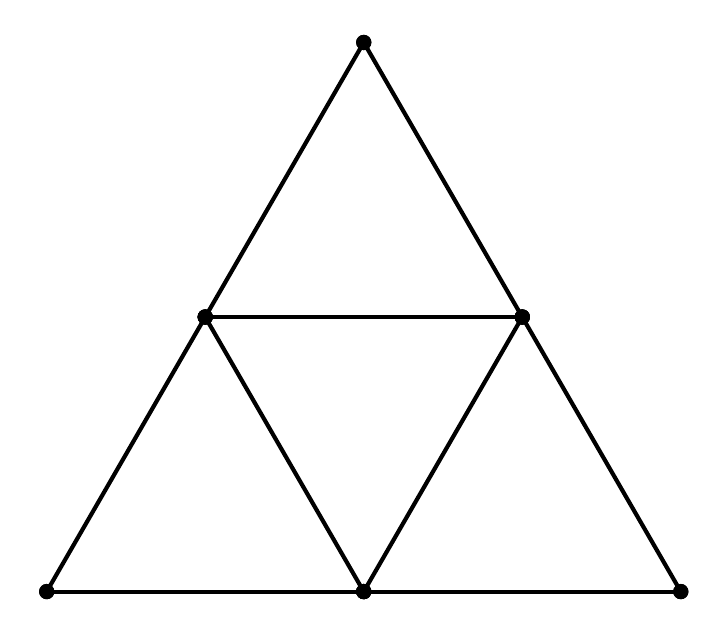}
    \par \vspace{1pt} $V_1$
  \end{minipage}

  \vspace{5mm}
  \begin{minipage}[b]{0.4\textwidth}
    \centering
    \includegraphics[width=40mm]{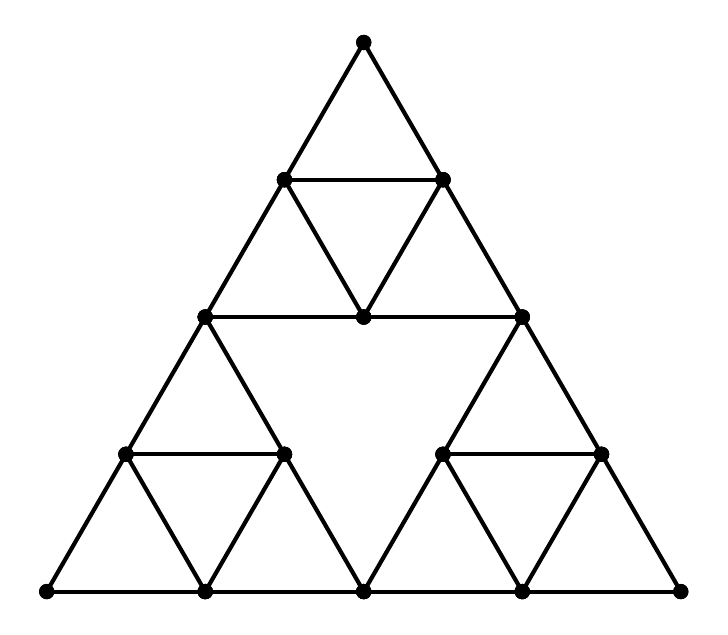}
    \par \vspace{1pt} $V_2$
  \end{minipage}
  \hspace{10mm}
  \begin{minipage}[b]{0.4\textwidth}
    \centering
    \includegraphics[width=40mm]{pdf_image/SG6_filled.pdf}
    \par \vspace{1pt} Sierpinski Gasket
  \end{minipage} 

  \vspace{5mm} 
  \caption{Let $V_0$ be the set of vertices of a regular triangle. 
  We define $V_n$ as $V_{n}:=\cup_{i=1}^{3}f_i(V_{n-1})$, 
  where the maps $f_i,\;i=1,2,3$ are defined at the beginning of this section.} 
  \label{fig:sg} 
  
\end{figure}

We now concentrate on the Sierpinski gasket.
Recall the definitions of $V_n$ and the Sierpinski gasket
from the beginning of this section (also, see \Cref{fig:sg}).
We write $K$ for the Sierpinski gasket.
We attach the nearest-neighbor network structure on each $V_n$ and regard the Sierpinski gasket
as a resistance metric space, assigning $(5/3)^n$ to every edge of $V_n$ as a conductance.

If we write
$R(\cdot,\cdot)$ for the resistance metric on the Sierpinski gasket, 
it is known (see \cite[Remark 3.7]{MR1301625}) that
\begin{align}\label{maa3}
  c d_E(x,y)^{\log_2\frac{5}{3}}\leq R(x,y)\leq C d_E(x,y)^{\log_2\frac{5}{3}},\qquad x,y\in K.
\end{align} 
The following is an immediate consequence of \cref{ahl} and \eqref{maa3}.

\begin{cor}\label{sgreg}
The $\log_2 3$-dimensional Hausdorff measure on $K$ is 
$\log_{\frac{5}{3}}3$-Ahlfors regular with respect to the resistance metric $R$.
\end{cor}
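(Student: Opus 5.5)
The corollary has two ingredients, \cref{ahl} and the resistance--Euclidean comparison \eqref{maa3}, and we simply chain them.

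\emph{Step 1: Euclidean regularity.} The three maps $f_i(x)=\frac12(x+p_i)$ are $\frac12$-similitudes. They satisfy the open set condition with $O$ the interior of the triangle with vertices $p_1,p_2,p_3$: each $f_i(O)$ is one of the three open corner subtriangles, and these are pairwise disjoint and contained in $O$. Hence $\dim_{\rm H}K=\alpha$, where $3\cdot 2^{-\alpha}=1$, i.e.\ $\alpha=\log_2 3$. By \cref{ahl},
\[
  c r^{\log_2 3}\le \calH^{\log_2 3}(B_E(x,r))\le C r^{\log_2 3},\qquad x\in K,\ r\in[0,\diam_E K).
\]

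\emph{Step 2: comparing balls.} Set $\gamma=\log_2\frac53$. By \eqref{maa3}, $R(x,y)<r$ implies $d_E(x,y)<(r/c)^{1/\gamma}$. Conversely, $d_E(x,y)<(r/C)^{1/\gamma}$ implies $R(x,y)<r$. Therefore
\[
  B_E\bigl(x,(r/C)^{1/\gamma}\bigr)\cap K\subseteq B_R(x,r)\subseteq B_E\bigl(x,(r/c)^{1/\gamma}\bigr)\cap K.
\]
Apply Step 1 to the outer ball and use $\log_2 3/\gamma=\log_{5/3}3$. This gives $\calH^{\log_2 3}(B_R(x,r))\lesssim r^{\log_{5/3}3}$. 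The inner ball gives the matching lower bound in the same way.

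\emph{Step 3: range of radii.} This is the only point that needs care. The Euclidean radii $(r/c)^{1/\gamma}$ or $(r/C)^{1/\gamma}$ produced in Step 2 may exceed $\diam_E K$ even though $r<\diam_R K$. In that case we argue directly.
\begin{itemize}
  \item For the upper bound, once the Euclidean radius exceeds $\diam_E K$ the ball contains all of $K$, so $\calH^{\log_2 3}(B_R(x,r))\le\calH^{\log_2 3}(K)<\infty$. Since $r$ is then comparable to $\diam_R K$, this is still $\lesssim r^{\log_{5/3}3}$.
  \item For the lower bound, if the inner Euclidean radius is too large we use monotonicity in $r$. We have $\calH^{\log_2 3}(B_R(x,r))\ge \calH^{\log_2 3}(B_R(x,r'))$ for any smaller admissible $r'$, and $r\le\diam_R K$ is bounded.
\end{itemize}
Both adjustments only enlarge or shrink the constants. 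This yields $\log_{5/3}3$-Ahlfors regularity with respect to $R$ for all $r\in[0,\diam_R K)$.
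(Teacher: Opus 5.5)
Your proposal is correct and follows the paper's approach: the paper says only that the corollary is an immediate consequence of \cref{ahl} and \eqref{maa3}, which is exactly the ball-inclusion argument in your Steps 1--2. Your Step 3 spells out the bookkeeping for radii near $\diam_R K$, which the paper leaves implicit; it only changes the constants.
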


We define notation by
\begin{align*}
  W_n=\{1,2,3\}^n,\qquad f_I=f_{i_1}\circ\cdots\circ f_{i_n},\qquad K_I=f_I(K),\qquad V_I=f_I(V_0),\;\;\;I=i_1\cdots i_n\in W_n. 
\end{align*}
Note that $V_I$ is the set corners of $K_I$. 
Then the invariant measure $\mu_n$ of the simple random walk on $V_n$ is
given by 
\begin{align*}
  \mu_n=\frac{1}{3^{n+1}}\sum_{I\in W_n}\sum_{x\in V_I}\delta_x.
\end{align*}
Note that $\mu_n$ can be expressed as 
\begin{align*}
  \mu_n(\{x\})=\frac{\deg_{V_n}(x)}{\displaystyle\sum_{y\in V_n}\deg_{V_n}(y)},\qquad x\in V_n.
\end{align*}
Here, we denote the degree of a vertex $x\in V_n$ in the graph $V_n$ by $\deg_{V_n}(x)$.
We denote the $\log_2 3$-dimensional probability Hausdorff measure on $K$ by $\mu$.
We write $R_{\rm{H}}$ and $R_{\BL^{\kappa}}$ for the Hausdorff distance 
and $\BL^{\kappa}$ metric on $K$ 
with respect to $R$, respectively.
\begin{lem}
  It holds that
  \begin{align*}
    R_H(K,V_n)\lesssim \left(\frac{3}{5}\right)^{n},\qquad R_{\BL^{\kappa}}(\mu,\mu_n)\lesssim \left(\frac{3}{5}\right)^{\kappa n}.
  \end{align*}
\end{lem}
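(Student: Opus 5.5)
We have to prove $R_{\rm H}(K,V_n)\lesssim (3/5)^n$ and $R_{\BL^{\kappa}}(\mu,\mu_n)\lesssim (3/5)^{\kappa n}$. Both follow from the cell structure $K=\bigcup_{I\in W_n}K_I$, provided we know that the resistance diameter of an $n$-cell is of order $(3/5)^n$.

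\emph{Cell diameters.} By \eqref{maa3} and the fact that $K_I$ has Euclidean diameter $2^{-n}\diam_E K$,
\[
\diam_R K_I\le C\bigl(2^{-n}\bigr)^{\log_2\frac53}\diam_E(K)^{\log_2\frac53}\lesssim (3/5)^n .
\]
Alternatively, one can use the self-similarity of the resistance form: $R(f_I(x),f_I(y))\le (3/5)^n R(x,y)$ by restricting functions to the cell. Either route gives this bound uniformly in $I\in W_n$.

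\emph{Hausdorff distance.} Since $V_n\subseteq K$, we only need to show that every $x\in K$ is close to $V_n$. Any $x\in K$ lies in some $K_I$ with $I\in W_n$, and $V_I\subseteq K_I\cap V_n$. Hence
\[
R(x,V_n)\le \diam_R K_I\lesssim (3/5)^n,
\]
which gives the first estimate.

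\emph{$\BL^{\kappa}$ distance.} We decompose both measures cellwise. Because $\mu$ is the normalized Hausdorff measure, each cell has $\mu(K_I)=3^{-n}$; cells overlap only at finitely many points, and these are $\mu$-null. For $\mu_n$, assign to each cell $I$ the measure
\[
\nu_I:=3^{-(n+1)}\sum_{x\in V_I}\delta_x ,
\]
which has total mass $3\cdot 3^{-(n+1)}=3^{-n}$. Then $\mu_n=\sum_I\nu_I$, exactly by the formula for $\mu_n$. Now fix $g$ with $\|g\|_{\BL^{\kappa}}\le1$. Then
\[
\Bigl|\int g\,d\mu-\int g\,d\mu_n\Bigr|\le \sum_{I\in W_n}\Bigl|\int_{K_I}g\,d\mu-\int g\,d\nu_I\Bigr| .
\]
Each summand compares two measures of equal mass $3^{-n}$ supported in $K_I$. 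Subtracting $g(p)$ for a fixed $p\in K_I$ from both integrals, each summand is at most
\[
2\cdot 3^{-n}\sup_{y\in K_I}|g(y)-g(p)|\le 2\cdot 3^{-n}(\diam_R K_I)^{\kappa}.
\]
Summing over the $3^n$ cells gives $\lesssim (3/5)^{\kappa n}$. Taking the supremum over $g$ yields the second estimate.

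\emph{Main obstacle.} The only point needing care is the uniform cell-diameter bound in the resistance metric. It is handled by \eqref{maa3}, or by the standard self-similar renormalization of the gasket's resistance form with factor $3/5$. Everything else is bookkeeping.
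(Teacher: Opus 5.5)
Your proposal is correct and is essentially the paper's proof: the paper also splits $\mu_n$ into the three-point averages over the corners $V_I$ of each $n$-cell, bounds each cell's contribution by $3^{-n}(\diam_R K_I)^{\kappa}$, and controls $\diam_R K_I\lesssim (3/5)^n$ via \eqref{maa3}. The only difference is cosmetic: the paper bounds the Hausdorff distance first in the Euclidean metric ($\lesssim 2^{-n}$) and then transfers it through \eqref{maa3}, whereas you argue cellwise directly in $R$; the extra factor $2$ in your per-cell bound is harmless.
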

\begin{proof}
We write $d_{\rm{H}}$ for the Hausdorff distance on $\R^2$ 
with respect to the Euclidean metric.
Then it is clear that $d_{\rm{H}}(K,V_n)\lesssim 2^{-n}$.
The inequality for $R_H$ follows from this and \eqref{maa3}.
We next show the inequality for the $\BL^{\kappa}$ metric.
We write $V_I=\{x_I^1,x_I^2,x_I^3\}$.
Let $f$ be a $\kappa$-H\"older continuous function with respect to $R$ satisfying
$\|f\|_{\BL^{\kappa}}\leq 1$.
Then, since $\mu(K_I)=3^{-n}$ for $I\in W_n$, we have that
\begin{align*}
  \left| \int_K f d(\mu-\mu_n) \right|
  \leq&\sum_{I\in W_n}\left|\int_{K_I} f(x)-\frac{1}{3}(f(x_I^1)+f(x_I^2)+f(x_I^3))\mu(dx)\right|\\
  \leq&\sum_{I\in W_n}\int_{K_I}\frac{1}{3}\sum_{j=1}^3 |f(x)-f(x_I^j)|\mu(dx)\\
  \leq&\sum_{I\in W_n}\int_{K_I}(\diam_R K_I)^{\kappa}\mu(dx)\\
  \lesssim&\sum_{I\in W_n}\int_{K_I}(\diam_{d_E} K_I)^{\kappa \log_2\frac{5}{3}}\mu(dx)\\
  \lesssim&\sum_{I\in W_n}\int_{K_I}\left(\frac{3}{5}\right)^{\kappa n}\mu(dx)\\
  =&\left(\frac{3}{5}\right)^{\kappa n},
\end{align*}
which completes the proof.
\end{proof}

Since the Sierpinski gasket is connected, and the corresponding process is continuous (see \cite[Theorems 1.6.2 and 3.4.6]{Kig01}),
it satisfies Assumption (A3).
Combining the lemma above, \cref{gen}, and \cref{hkest}, we conclude the following.
\begin{thm}\label{sgrate}
Take $x,y\in K$ and $x_n,y_n\in V_n$ satisfying $R(y,y_n),R(x,x_n)\lesssim R_H(K,V_n)\lesssim (3/5)^n$ 
and set
\begin{equation*}
  E_{\rm{sg},1}=\frac{\kappa^2}{\kappa+(2+\kappa)\log_{\frac{5}{3}}5},
\qquad
  E_{\rm{sg},2}=\frac{\kappa (2+\kappa)\log_{\frac{5}{3}}3}{\kappa+(2+\kappa)\log_{\frac{5}{3}}5},
\end{equation*}
\begin{equation*}
  E_{\rm{hk},1}=\frac{\kappa}{(2+\kappa)\log_{\frac{5}{3}}5+\frac{1}{2}\log_{\frac{5}{3}}3+1},
  \;\;\text{and}\;\; E_{\rm{hk},2}=(2+\kappa)\log_{\frac{5}{3}}5
\end{equation*}
Then, for any $f\in\BL^{\kappa}$, it holds that
\begin{align*}
  |P_tf(x)-P^n_tf(x_n)|\lesssim \|f\|_{\BL^{\kappa}} \left(\frac{3}{5}\right)^{nE_{\rm{sg},1}}n^{E_{\rm{sg},2}},\;\;\rm{and}\;\;
  |p(t,x,y)-p_n(t,x_n,y_n)|\lesssim \left(\frac{3}{5}\right)^{nE_{\rm{hk},1}}n^{E_{\rm{hk},2}},
\end{align*}
for any $t>0$ and $\kappa\in [1/2,1]$.
\end{thm}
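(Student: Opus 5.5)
The plan is to treat \cref{sgrate} as a direct specialization of \cref{gen}(ii) and \cref{hkest}(iii) to the Sierpinski gasket. I will compute the parameters and plug in $h\asymp(3/5)^{\kappa n}$. All distances are taken in the common compact space $(K,R)$: $V_n\subseteq K$, and we embed via the identity.

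\emph{Step 1: verify (A3).} By \cref{sgreg}, $\mu$ is $s$-Ahlfors regular with respect to $R$, with $s_0=s_1=s=\log_{5/3}3$. The gasket is connected, hence uniformly perfect, and the associated form is local (the process is a diffusion). So (A3) holds, and $(s_0-s_1)(2+s_0)=0<1$. With $s_0=s_1$ we get $\Theta=1$ and $\beta=1/s$, hence $1+\beta^{-1}=1+s=(1+s_0)\Theta$. Note that $1+s=\log_{5/3}5$.

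\emph{Step 2: bound $h$.} By the preceding lemma, $R_H(K,V_n)^\kappa\lesssim(3/5)^{\kappa n}$ and $R_{\BL^\kappa}(\mu,\mu_n)\lesssim(3/5)^{\kappa n}$. The hypothesis gives $R(x,x_n),R(y,y_n)\lesssim(3/5)^n\le(3/5)^{\kappa n}$. Hence $h\lesssim(3/5)^{\kappa n}$ and $\log(1/h)\lesssim n$. Since $h\mapsto h^{E_1}(\log 1/h)^{E_2}$ is increasing for small $h$, we may substitute these bounds. For the finitely many $n$ for which $h$ is not small enough (the $\varepsilon<1$ condition of \cref{su}, or $h\le1/2$), the left-hand sides are bounded trivially. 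The semigroup difference is at most $2\|f\|_{\BL^\kappa}$, and the heat kernel difference is at most $2M(t)$, so these $n$ are absorbed into the constant.

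\emph{Step 3: evaluate the exponents.} For the semigroup, \cref{gen}(ii) gives $E_1=\kappa/(\kappa+(2+\kappa)(1+s))$. Multiplying by the $\kappa$ from $h\approx(3/5)^{\kappa n}$ yields $E_{\rm sg,1}=\kappa^2/(\kappa+(2+\kappa)\log_{5/3}5)$. Similarly $E_2=\kappa(2+\kappa)s/(\kappa+(2+\kappa)(1+s))=E_{\rm sg,2}$. For the heat kernel, \cref{hkest}(iii) with $s_1-s_0+1=1$ gives $E_1=1/((2+\kappa)(1+s)+s/2+1)$, and the factor $\kappa$ yields $E_{\rm hk,1}$. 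For the logarithmic exponent, $E_2=(2+\kappa)s_1=(2+\kappa)s$, which does not literally match the stated $(2+\kappa)\log_{5/3}5$. I would therefore recheck it against the proof of \cref{hkest}(iii), where $\varepsilon$ carries a factor $(\log 1/h)^{-1/\beta}$ and the term $\varepsilon^{-(2+\kappa)}$ produces $(\log)^{(2+\kappa)/\beta}=(\log)^{(2+\kappa)s}$. Since the claimed power $(2+\kappa)(1+s)$ is larger and $\log(1/h)\ge1$, the stated bound follows in any case.

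\emph{Main obstacle.} The substantive points are confirming locality and uniform perfectness, so that (A3) applies and not merely (A2), and tracking the $\kappa$-rescaling between $h$ and $(3/5)^n$ consistently. The time-dependence of the constants is inherited unchanged from the cited theorems.
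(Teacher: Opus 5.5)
Your proposal is correct and follows the paper's route: verify (A3) for the gasket (connectedness gives uniform perfectness, the diffusion gives locality, and $s_0=s_1=\log_{5/3}3$ gives $\Theta=1$ and $1+s=\log_{5/3}5$), use the preceding lemma to bound $h\lesssim(3/5)^{\kappa n}$, and substitute into \cref{gen}(ii) and \cref{hkest}(iii). Your remark on the heat-kernel logarithmic exponent is also accurate: with $s_0=s_1$, \cref{hkest}(iii) yields $(2+\kappa)\log_{5/3}3$, which is smaller than the stated $E_{\mathrm{hk},2}=(2+\kappa)\log_{5/3}5$, so the weaker stated bound follows because $n\ge 1$.
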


\begin{rmk}
  The exponents $E_{\rm{sg},1}$ and $E_{\rm{hk},1}$ are maximized by taking $\kappa=1$, and 
  in that case, each exponent will be
  \begin{align*}
    E_{\rm{sg},1}=0.0956756\cdots\;\;\rm{and}\;\;
    E_{\rm{hk},1}=0.0867505\cdots .
  \end{align*}
\end{rmk}  

\begin{rmk}
  Suppose that $f:\R^2\to\R$ is an Lipschitz continuous function with respect to the Euclidean distance $d_E$.
  Then, by \eqref{maa3}, it holds that
  \begin{align*}
    |f(x)-f(y)|\lesssim d_E(x,y)\lesssim R(x,y)^{\log_{\frac{5}{3}}2},
  \end{align*}
  and thus $f|_K$ is $\log_{\frac{5}{3}}2$-H\"older continuous with respect to $R$.
  Noting $\log_{\frac{5}{3}}2>1$, we obtain \eqref{ohsugo}.
  In particular, we may take any Lipschitz continuous function on $(\R^2,d_E)$ as 
  a test function in \cref{sgrate}.
\end{rmk}

\subsection{One dimensional Bouchaud trap model}\label{btm}
In physics, aging refers to a phenomenon in which a system's dynamics progressively slow down 
without reaching equilibrium on laboratory time scales, 
so that its current state carries information about its age. 
Such behavior is typical in disordered media such as spin glasses at low temperatures.
Originally, the model that is nowadays called the Bouchaud trap model (BTM), was introduced in \cite{Bou92} to understand aging phenomena,
and the one-dimensional version considered in this section was firstly studied in \cite{MR1725402}.
More mathematical details on the BTM can be found in \cite{AC06}.

To describe the (scaled) one-dimensional BTM precisely, we introduce
a family of i.i.d. random variables $(\tau_x)_{x\in\Z}$ on a probability space
$(\Omega,\calF,\bbP)$. 
For a given $\alpha>0$, we suppose that $\tau_x$ satisfies $\bbP(\tau_0\geq u)=u^{-\alpha},\;u>1$.
In this section, however, we focus on the regime where the mean trapping time is finite, namely $\alpha>1$.
The (scaled) dynamics of the BTM are given by an $n^{-1}\Z$-valued Markov process
$X^n=(X^n_t)$ whose generator is given by
\begin{align*}
  \Delta_n f(x/n)=\frac{n^2}{2\tau_x}\sum_{y:|y-x|=1}(f(y/n)-f(x/n)).
\end{align*}
In the $n$-th model, the random variable $\tau_x$ can be interpreted as the depth of a trap at location $x/n$.
As the value of $\tau_x$ increases, the process $X^n$ is more likely to stay trapped in the state $x/n$ for an extended period.
In terms of the resistance metric, this can be described as follows.
Let $F=\R$ and $F_n=n^{-1}\Z$ and equip these spaces with the usual Euclidean metric.
Note that the one-dimensional Euclidean metric is a resistance metric.
We take $0$ as a root of $F$ and $F_n$.
Set $\mu=\bbE[\tau_0]dx$ and 
\begin{align*}
  \mu_n=\frac{1}{n}\sum_{x\in \Z}\tau_x \delta_{\frac{x}{n}},
\end{align*}
where $\bbE[\cdot]$ denotes the expectation with respect to $\bbP$.
Then $X^n$ is the process associated with $(F_n,\mu_n)$. 
We write $X$ for the process corresponding to $(F,\mu)$.
Note that $X$ is a time-scaled Brownian motion.
Under this setting, the following quantified estimates were known.
Let $p$ and $p_n$ be the heat kernels associated with $(\R,\mu)$ and $(F_n,\mu_n)$, respectively.
\begin{thm}[{\cite[Theorems 1.5(ii) and 1.7]{BTM24}}]
Let $\alpha>2$ and $E\in \left(0,\frac{\alpha-2}{10(2\alpha-1)}\right)$.
Then it holds for any $K\in (0,\infty)$ and $0<T_1<T_2<\infty$ that
\begin{equation*}
  \sup_{x\in \R}|P_0(X_1\le x)-P^n_0(X^n_1\le x)|=O(n^{-\theta}),
\end{equation*}
and
\begin{equation*}
  \lim_{n\to\infty}n^{\frac{2}{3}E}\sup_{\substack{|x|\le K\\t\in [T_1,T_2]}}\left| p(t,0,x)-p_n\left( t,0,\frac{\floor{nx}}{n} \right) \right|=0,
\end{equation*}
$P$-almost surely.
\end{thm}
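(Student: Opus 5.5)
The plan is to deduce both assertions from this paper's machinery (\cref{maincpt} for the distribution function and a localized \cref{hkest} for the heat kernel), fed with an almost-sure bound on the $\BL^{\kappa}$ distance between $\mu_n^{(r)}$ and $\mu^{(r)}$. The exponents we obtain exceed $\frac{\alpha-2}{10(2\alpha-1)}$ (resp.\ $\frac23$ of it) for $\alpha>2$, so the displayed rates follow as a weaker statement.

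\emph{Step 1: the geometric data.} Here $F=\R$, $F_n=n^{-1}\Z$ and $M=\R$, with $d^M_{\rm H}(F^{(r)},F_n^{(r)})\le n^{-1}$ and $d^M(0,0)=0$. The restrictions $\mu^{(r)}=\bbE[\tau_0]\,dx|_{[-r,r]}$ are uniformly $1$-Ahlfors regular, and $\R$ satisfies $\mathrm{LRES}(1)$. By \cref{7.3}(iii), Assumption 2-c then holds with $s=\theta=1$, so \cref{mainassump} is verified and $E=\frac{\kappa}{\kappa+2(2+\kappa)(\kappa+1)}$. For the exit term, \cref{4.2-a}(ii) gives $\Psi_1(n,r)\lesssim r^{-1}$ uniformly in $n$. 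The point is that $R(0,B(0,r)^c)=r/2$ on both spaces, while $\mu_n(B(0,\delta))\gtrsim\delta$ for $\delta\ge n^{-1}$ almost surely, since $\tau_x\ge1$.

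\emph{Step 2: the measure distance.} Set $S(y)=n^{-1}\sum_{0\le x\le ny}(\tau_x-\bbE\tau_0)$ for $y\ge0$, and symmetrically for $y<0$. For $\|f\|_{\BL^{\kappa}}\le1$, I would write $\int f\,d(\mu_n^{(r)}-\mu^{(r)})$ as a Riemann--Stieltjes integral against $S$ plus a discretization error. The discretization error is at most $(2r+1)\bbE[\tau_0]n^{-\kappa}$. The Stieltjes part is handled by Abel summation over blocks of length $n^{-1}$; the $\kappa$-Hölder control of $f$ bounds it by a multiple of $r\,\sup_{|y|\le r}|S(y)|$, up to boundary terms of the same order. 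Since $\alpha>2$, $\tau_0$ has finite variance, and the law of the iterated logarithm gives
\[
\sup_{|y|\le r}|S(y)|=O\bigl(n^{-1/2}\sqrt{r\log\log n}\bigr)
\]
almost surely. Hence $\Psi_2(n,r)\lesssim r^{3/2}n^{-1/2+\epsilon}$ almost surely, with $\kappa$ chosen close to $1$.

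\emph{Step 3: distribution function.} Fix $x$ and $\eta\in(0,1)$. Take Lipschitz functions $f_-\le 1_{(-\infty,x]}\le f_+$ that agree with the indicator outside $[x-\eta,x+\eta]$, so that $\|f_\pm\|_{\BL^{\kappa}}\lesssim\eta^{-1}$. Sandwiching gives
\[
|P_0(X_1\le x)-P^n_0(X^n_1\le x)|\le \max_{\pm}|E_0f_\pm(X_1)-E^n_0f_\pm(X^n_1)|+\sup_z P_0\bigl(X_1\in[z-\eta,z+\eta]\bigr).
\]
The last term is $\lesssim\eta$, because the Gaussian density is bounded. This sandwich is the step I expect to be most delicate: it is what lets us avoid any density bound for $X^n$. \Cref{maincpt} bounds the first term by $\eta^{-1}\bigl(r^{-1}+C_n(r)\,r^{3E/2}n^{-E/2+\epsilon}\bigr)$, where $C_n(r)$ is polynomial in $r$ because $\mu_n(F_n^{(r)})\lesssim r$ almost surely. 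Choosing $r$ and $\eta$ as suitable small negative powers of $n$ balances the terms and yields a polynomial rate uniform in $x$.

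\emph{Step 4: heat kernel, and the main difficulty.} There is no non-compact heat kernel theorem in the paper, so I would localize. On $[-r,r]$, apply \cref{hkest}(iii): the limit space $[-r,r]$ is connected and the process is a diffusion, so (A3) holds with $s_0=s_1=1$, giving a polynomial rate in $\Psi_2$ that is uniform for $t\in[T_1,T_2]$ and $|x|\le K$. Then control $|p-p^{(r)}|$ and $|p_n-p_n^{(r)}|$ by the exit probability; this uses Chapman--Kolmogorov and the time-change representation, as in the proof of \cref{noncpt}. The obstacle is this last comparison. It needs a pointwise rather than integrated exit estimate: bound $p(t,0,y)-p^{(r)}(t,0,y)$ by $P_0(\sigma_{B(0,r)^c}\le t)\cdot\sup_{s\ge T_1/2}\sup p(s,\cdot,\cdot)$, via the strong Markov property at the exit time, together with the on-diagonal bound \eqref{kig104}. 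Finally, the lattice discretization $\floor{nx}/n$ costs $L(t)n^{-1}$ by \cref{unihk}. Optimizing $r$ again gives a rate faster than $n^{-\frac23E}$, hence the limit zero.
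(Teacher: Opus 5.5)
\textbf{Gap: the rate you obtain is not the stated rate.} I read the undefined $\theta$ in the first display as $E$, as you do. Your opening claim, that the exponents you obtain exceed $\frac{\alpha-2}{10(2\alpha-1)}$, is never checked. For the route you describe it is false for most $\alpha>2$, because two losses compound in Step 3.
\begin{itemize}
\item First, you control the exit term only polynomially, $\Psi_1(n,r)\lesssim r^{-1}$, so $r$ must be a fixed positive power of $n$. But the prefactor $C_n(r)$ in \cref{maincpt} is a polynomial in $r$ of degree roughly $19$ when $s=\theta=\kappa=1$; $D_n^{(r)}\asymp r^2$ alone enters to the power $6.5$. Balancing $\eta$, $\eta^{-1}r^{-1}$ and $\eta^{-1}C_n(r)(r^{3/2}n^{-1/2})^{1/13}$ then gives an exponent of order $10^{-3}$.
\item Second, suppose $r=n^{o(1)}$ were admissible. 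Going through \cref{maincpt} still costs the exponent $E=\frac{\kappa}{\kappa+2(2+\kappa)(1+\kappa)}=\frac1{13}$ at $\kappa=1$, and your sandwich trades $\eta$ against $\eta^{-1}n^{-1/26}$. So the best you get is $n^{-1/52}$, and other $\kappa$ do not change the order of magnitude.
\end{itemize}
Since $\frac{\alpha-2}{10(2\alpha-1)}$ exceeds $\frac1{52}$ for $\alpha>\frac{47}{16}$ and tends to $\frac1{20}$, Step 3 yields some polynomial rate but not the stated one.

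Step 4 has two further problems. The constants in \cref{hkest} are explicitly not tracked in terms of the space (see the beginning of \Cref{esthk}), so the $r$-dependence you would need in order to optimize $r\to\infty$ is not available. Also, in your exit comparison the residual time $t-\sigma$ after leaving $B(0,r)$ can be arbitrarily small. Hence $\sup_{s\ge T_1/2}\sup p(s,\cdot,\cdot)$ does not bound $p(t-\sigma,X_\sigma,y)$, and an off-diagonal estimate is needed there.

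\textbf{What the paper does instead.} The paper does not reprove this cited result. Its own \cref{btmrate} (any $E<\frac1{11}$ when $\alpha>2$) implies it, and that proof supplies the missing ingredients.
\begin{itemize}
\item The exit term is handled by the Gaussian bound \eqref{btmrw}. It is obtained by realizing $X^n$ and the simple random walk as time changes of one Brownian motion and using $\tau_x\ge1$. This gives $\Psi_1\lesssim e^{-cn^{2\gamma}}$ for $r_n=n^{\gamma}$, so $\gamma\downarrow0$ is allowed and $C_n(r_n)\le n^{A\gamma}$ is harmless.
\item \cref{maincpt} is bypassed. \cref{sg2} is applied directly to a smooth cutoff $g=\varphi((\cdot-x)/\varepsilon)$, viewed as $(G^{\alpha})^{\circ2}(\alpha-\Delta)^2g$. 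This is linear in $d_{\BL^{\kappa}}$ at the price $\varepsilon^{-4-\kappa}$, with no loss from the smoothing optimization inside \cref{gen}.
\end{itemize}
Your sandwich is a good substitute for the paper's use of \cref{taihenn}, since it avoids any density bound for $X^n$. Your Abel-summation/LIL bound on $d_{\BL^1}$ also suffices when $\alpha>2$. Feed them into \cref{sg2} with smooth $f_\pm$ rather than into \cref{maincpt} with Lipschitz ones. You then get $\eta^{6}\approx n^{-1/2}$, i.e.\ any exponent below $\frac1{12}>\frac1{20}$. For the heat kernel, the paper does not use \cref{hkest}. It derives \eqref{hkbtm} from the distribution-function rate via \cite[Theorem 4.8]{BTM24} and the argument of \cite[Theorem 1.7]{BTM24}.
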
 
Since the argument of \cite{BTM24} used an argument involving second moment, the parameter $\alpha$ needed to be strictly more than $2$.
In \cref{btmrate}, we extend the regime of $\alpha$ to $(1,\infty)$ and give better quenched estimates for the distribution function and heat kernel.
Also, we prove quenched and annealed estimates for semigroup in \cref{btmsgQ,btmsgA}, 
and an annealed estimate for the heat kernel in \cref{annhk}.
\begin{rmk}
  Among the six possible annealed and quenched estimates for semigroups, distribution functions, and heat kernels,
  we do not include the annealed estimate for the distribution function in this paper.
  Although this estimate can be derived using our method, 
  the resulting rate $n^{-\frac{1}{11}+\varepsilon}$, is worse than the rate $n^{-\frac{1}{10}}(\log n)^{\frac{1}{5}}$ obtained by \cite[Theorem 1.5(i)]{BTM24}. 
\end{rmk}
Let $X$ be the process corresponding to $(\R,\mu)$.
Note that it is a Brownian motion, up to a deterministic constant time change.
We write $p$ and $p_n$ for the heat kernels of $X$ and $X^n$ respectively.

\subsubsection{Quenched estimates}
Our aim in this section is to prove the following quenched estimates.
\begin{thm}\label{btmrate}
Let $\alpha>1$. Then, for any $\displaystyle E\in \left(0,\frac{2}{11}\left[(1-\alpha^{-1})\wedge \frac{1}{2}\right]\right),K\in (0,\infty)$, and $0<T_1<T_2<\infty$,
it holds that
\begin{equation}\label{distfunc}
  \sup_{x\in\R}\Big|P_0(X_t\leq x)-P_0^n(X_t^n\le x)\Big|=O(n^{-E}),
\end{equation}
and
\begin{equation}\label{hkbtm}
  \lim_{n\to\infty}n^{\frac{2}{3}E}\sup_{\substack{|x|\le K\\t\in [T_1,T_2]}}\left| p(t,0,x)-p_n\left( t,0,\frac{\floor{nx}}{n} \right) \right|=0,
\end{equation}
$\bbP$-almost surely.
\end{thm}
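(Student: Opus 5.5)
The plan is to reduce everything to the non-compact semigroup estimate \cref{noncpt} (equivalently \cref{maincpt}), applied on a random but typical realisation of the trap environment, and then to pass from test functions to distribution functions and heat kernels by smoothing.

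\textbf{Checking the assumptions.} The setting is $F=\R$ and $F_n=n^{-1}\Z$ with Euclidean (resistance) metric. \cref{ncptas} holds with $s=1$ and $\theta=1$:
\begin{itemize}
\item Lebesgue measure is uniformly $1$-Ahlfors regular on the intervals $F^{(r)}=[-r,r]$.
\item $R(x,B(x,r)^c)\ge r/2$.
\item Assumption 2-c follows from \cref{7.3}(iii).
\end{itemize}
Moreover $\Psi_1(n,r)\lesssim_t r^{-1}$ by \cref{4.2-a}(ii), applied to both $X$ and $X^n$. For the lower bound on $\mu_n(B(\cdot,\delta))$ one uses $\tau_x\ge1$.

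\textbf{The key quenched input.} We need an almost sure bound on
\[
\Psi_2(n,r)\lesssim d_{\rm H}^\kappa + d_{\BL^{\kappa}}(\mu^{(r)},\mu_n^{(r)}).
\]
The Hausdorff term is $\le n^{-\kappa}$. The main obstacle is the $\BL^{\kappa}$ distance between $\mu_n^{(r)}=\frac1n\sum_{|x|\le nr}\tau_x\delta_{x/n}$ and $\bbE[\tau_0]dx|_{[-r,r]}$ when $\tau$ has only $\alpha>1$ moments.

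The first attempt is to take $\kappa=1$ and, in one dimension, use the representation
\[
\int f\,d(\mu-\mu_n)=\int f'(u)\,\bigl(G_n(u)-G(u)\bigr)\,du + \text{boundary terms},
\]
where $G_n,G$ are the cumulative distribution functions on $[-r,r]$. This gives
\[
d_{\BL^{1}}\lesssim r\sup_{|u|\le r}|G_n(u)-G(u)| + n^{-1}\max_{|x|\le nr}\tau_x .
\]
Both pieces are controlled almost surely:
\begin{itemize}
\item The supremum is a maximal deviation of partial sums of centred i.i.d.\ variables with $\alpha$ moments. The Marcinkiewicz--Zygmund strong law (a maximal version, combined with truncation and Borel--Cantelli) gives $O(n^{-(1-1/\alpha)\wedge\frac12+\varepsilon})$ for polynomially growing $r$; the $\frac12$ cap comes from the CLT regime $\alpha\ge2$.
\item The maximum term is $O(n^{-(1-1/\alpha)+\varepsilon})$ by Borel--Cantelli.
\end{itemize}
So $\Psi_2\lesssim n^{-\gamma+\varepsilon}$ with $\gamma=(1-\alpha^{-1})\wedge\frac12$, up to polynomial factors in $r$. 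The constant $C_n(r)$ is polynomial in $r$ almost surely, since $\mu_n(F_n^{(r)})\lesssim r$ by the strong law.

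\textbf{Optimising the truncation radius.} Set $E_0=\frac{1}{1+2\cdot3\cdot2}=\frac1{13}$, the value of $E$ in \cref{noncpt} at $s=\theta=\kappa=1$. Then \cref{noncpt} yields
\[
|P_tf(0)-P_t^nf(0)|\lesssim\|f\|_{\BL^1}\bigl(r^{-1}+r^{c}n^{-E_0\gamma+\varepsilon}\bigr),
\]
and choosing $r=n^{\epsilon'}$ gives a rate of $n^{-E_0\gamma+\varepsilon}$.

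The stated exponent $\frac2{11}\gamma$ is larger than this. I expect it to come from a sharper route: redo \cref{gen}(i) with $s_0=s_1=1$ and $\theta=1$, but smooth only up to the second resolvent power and exploit the one-dimensional Gaussian exit bound (\cref{12q2}(ii) with $\beta=1$). This trades $\varepsilon^\kappa$ against $\delta^{-(2+\kappa)}h$ with $\varepsilon\approx\delta^{1/2}$, giving exponent $\frac{1/2}{1/2+(2+1)}\cdot\ldots$. I would tune this optimisation until it produces $\frac{2}{11}$, and I flag this bookkeeping as the uncertain step.

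\textbf{From test functions to the stated quantities.} For \eqref{distfunc}, approximate $1_{(-\infty,x]}$ by Lipschitz functions $f_\eta$ of slope $\eta^{-1}$. Because $p(t,0,\cdot)$ is bounded, the smoothing error is $O(\eta)$ uniformly in $x$. For $X^n$ the same bound holds up to an error controlled by the semigroup estimate itself. Choosing $\eta$ to balance $\eta^{-1}n^{-E}$ against $\eta$ costs the exponent only the loss absorbed into the strict inequality on $E$.

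For \eqref{hkbtm}, use \cref{hkest}-type delta approximation: average over a ball of radius $r_1$ and use Lipschitz continuity of both heat kernels. The Lipschitz property holds uniformly in $n$ almost surely, by \cref{continuity}(iii) on the truncated spaces. This gives
\[
|p-p_n|\lesssim r_1+r_1^{-1}\cdot r_1^{-1}n^{-E},
\]
and optimising $r_1=n^{-E/3}$ yields the factor $n^{-\frac23E}$. Uniformity over $|x|\le K$ and $t\in[T_1,T_2]$ follows because all constants are uniform there. A countable $\varepsilon$-net together with Borel--Cantelli upgrades the $O$ bound to the almost sure $o$ statement.
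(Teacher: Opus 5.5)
There is a genuine gap: your plan never reaches the exponent $\frac{2}{11}\gamma$, $\gamma=(1-\alpha^{-1})\wedge\frac12$, and neither route you describe can.

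\textbf{The exponents your routes actually give.} Through \cref{noncpt} at $s=\theta=\kappa=1$ you get $\gamma/13$ for H\"older test functions. Even that needs more than $\Psi_1\lesssim r^{-1}$. The available bound on $C_n(r)$ grows like a high power $r^{m}$, so taking $r=n^{\epsilon'}$ leaves an $n^{-\epsilon'}$ term, and optimising $r$ gives only $n^{-\gamma/(13(1+m))}$. The paper instead proves the uniform-in-$n$ Gaussian exit bound \eqref{btmrw}. It realises $X^n$ and the simple random walk $Y^n$ as time changes of one Brownian motion and uses $\tau_x\ge1$. The truncation error is then $e^{-cn^{2\theta}}$, so $r_n=n^{\theta}$ with $\theta$ arbitrarily small is admissible. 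The ``sharper route'' you sketch is \cref{gen}(ii) with $s_0=s_1=\Theta=\beta=1$. Its exponent is $\frac{\kappa}{3\kappa+4}\le\frac17<\frac{2}{11}$, which is exactly the rate of \cref{btmsgQ}, so no tuning of that optimisation yields $\frac{2}{11}$. Your passage from test functions to $1_{(-\infty,x]}$ is also not free: balancing $\eta$ against $\eta^{-1}n^{-E}$ gives $n^{-E/2}$, halving the exponent rather than losing an arbitrarily small amount.

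\textbf{The missing idea.} Smooth the indicator itself and apply \cref{sg2}, whose bound is \emph{linear} in the distances (no power $E$). Take $g_\varepsilon(y)=\varphi((y-x)/\varepsilon)$ with $\varphi$ smooth, $\varphi=1$ on $(-\infty,-1)$ and $\varphi=0$ on $(1,\infty)$. Then $g_\varepsilon=(G^{\alpha})^{\circ 2}f$ on $[-r_n,r_n]$ with $f=(\alpha-\Delta)^2g_\varepsilon$, $\|f\|_{\infty}\lesssim\varepsilon^{-4}$ and $\|f\|_{\Lip^{\kappa}}\lesssim\varepsilon^{-4-\kappa}$. Hence the smoothed difference is at most $c(n)\bigl(\varepsilon^{-4}n^{-\kappa}+\varepsilon^{-(4+\kappa)}d_{\BL^{\kappa}}(\mu^{(r_n)},\mu_n^{(r_n)})\bigr)$, with $c(n)$ polynomial in $r_n$. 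Replacing $g_\varepsilon$ by the indicator costs $\lesssim\varepsilon$ for $X$ and $\lesssim\mu_n((x-\varepsilon,x+\varepsilon))$ for $X^n$. The latter needs the uniform short-interval bound \cref{taihenn}, which your plan does not have. Balancing gives $d_{\BL^{\kappa}}^{1/(5+\kappa)}$, so $\frac{2}{11}$ appears only as $\kappa\downarrow\frac12$.

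\textbf{Why your measure estimate is not enough.} The limit $\kappa\downarrow\frac12$ requires $d_{\BL^{\kappa}}(\mu^{(r_n)},\mu_n^{(r_n)})\lesssim n^{-\gamma+o(1)}$ for $\kappa$ near $\frac12$. This is \cref{dblbtm}, proved by truncation, symmetrisation, Dudley's integral with entropy $\varepsilon^{-1/\kappa}$, and a Bousquet-type concentration inequality. Your integration-by-parts bound via distribution functions is correct and more elementary, but it controls only $d_{\BL^{1}}$. At $\kappa=1$ the smoothing argument gives only $\gamma/6$, and interpolating via $d_{\BL^{\kappa}}\lesssim d_{\BL^{1}}^{\kappa}$ is worse.

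\textbf{The heat kernel step.} For \eqref{hkbtm} your optimisation is off: $r_1+r_1^{-2}n^{-E}$ at $r_1=n^{-E/3}$ is of order $n^{-E/3}$, not $n^{-\frac23E}$. Also, \cref{continuity}(iii) cannot give uniform Lipschitz bounds on $n^{-1}\Z$, since $p_n(t,x,x)\le n/\tau_x$ violates its required lower bound as $t\to0$. The paper derives \eqref{hkbtm} from \eqref{distfunc} via \cite[Theorem 4.8]{BTM24} and the argument of \cite[Theorem 1.7]{BTM24}.
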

\begin{thm}\label{btmsgQ}\quad\par
  For any $\alpha>1,\;\kappa\in (1/2,1],\;t>0,\;f\in\BL^{\kappa},\;K\in (0,\infty)$ and $E<\frac{\kappa}{3\kappa+4}\Big[ (1-\alpha^{-1})\wedge\frac{1}{2} \Big]$, 
  we have
  \begin{align*}
    \sup_{|x|\le K}\Big|E_{x}[f(X_t)]-E_{\frac{\floor{nx}}{n}}^n[f(X_t^n)]\Big|\lesssim \|f\|_{\BL^{\kappa}} n^{-E},\qquad \bbP\rm{-a.s}.
  \end{align*}
\end{thm}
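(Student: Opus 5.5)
We will deduce the statement from the non-compact result \cref{noncpt}, applied with $F=\R$, $F_n=n^{-1}\Z$ (both with the Euclidean resistance metric), $\mu=\bbE[\tau_0]dx$, $\mu_n=\frac1n\sum_x\tau_x\delta_{x/n}$, and roots $0$. All spaces sit inside $M=\R$.

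\textbf{Checking \cref{ncptas}.} For $r>r_0$ we have $F^{(r)}=[-r,r]$ with $\mu^{(r)}$ a multiple of Lebesgue measure, so 2-a holds with $s=1$. By \cref{7.3}(i), $\R$ being connected (hence uniformly perfect), 2-b holds with $\theta=1$, and $1>\frac{1\vee s}{1+s}=\frac12$. Since $\diam F^{(r)}=2r$, \cref{7.3}(iii) gives 2-c. Assumption 1 (Gromov--Hausdorff-vague convergence a.s.) follows from the strong law of large numbers, since $\alpha>1$ gives $\bbE\tau_0<\infty$. With $s=\theta=1$, the exponent in \cref{noncpt} is $E_0=\frac{\kappa}{\kappa+2(2+\kappa)(\kappa+1)}$.

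\textbf{Quantitative ingredients.} Take $r=r_n=n^{\gamma}$ for a small $\gamma>0$.
\begin{itemize}
\item Since $F$ and $F_n$ satisfy $\rm{LRES}(1)$, the remark after \cref{maincpt} together with \cref{4.2-a}(ii) gives $\Psi_1(n,r)\lesssim r^{-1}$. To beat polynomial rates we will instead use \cref{4.2-a}(ii) with an optimized $\delta$, or an exit-time tail from \cref{summ}(v)-type Gaussian bounds for the limit. This bound must hold uniformly over starting points $|x|\le K$ and over the random environment; we expect a Gaussian-type decay $e^{-cr^2/t}$, with $\mu_n$ controlled by the LLN.
\item $d_{\rm H}(F^{(r)},F_n^{(r)})\le 1/n$, and $d(x,\floor{nx}/n)\le 1/n$.
\item The main term is $d_{\BL^{\kappa}}(\mu^{(r)},\mu_n^{(r)})$. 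Partition $[-r,r]$ into blocks of length $\ell=n^{-b}$. On each block we compare $\mu_n$ with $\bbE\tau_0\cdot\mathrm{Leb}$, using Hölder continuity for the within-block error (of order $\ell^\kappa\mu(F^{(r)})$) and the block masses $\frac1n\sum_{x\in \text{block}}(\tau_x-\bbE\tau_0)$ for the rest. By a Marcinkiewicz--Zygmund strong law for i.i.d.\ variables with moments of order $p<\alpha$ (we take $p\le 2$), the partial sums $S_m=\sum_{x\le m}(\tau_x-\bbE\tau_0)$ satisfy $\max_{|m|\le nr}|S_m|=o((nr)^{1/p+\varepsilon})$ almost surely. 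Summation by parts against $f$ (bounded, with $\kappa$-Hölder increments) then yields
$$d_{\BL^{\kappa}}(\mu^{(r)},\mu_n^{(r)})\lesssim r\,n^{-\kappa}+n^{-1}(nr)^{1/p+\varepsilon}\bigl(1+r\cdot(\text{block term})\bigr),$$
which gives a rate $n^{-[(1-\alpha^{-1})\wedge \frac12]+\varepsilon}$ up to powers of $r$.
\end{itemize}

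\textbf{Assembly.} Insert these bounds into \cref{noncpt}. The constant $C_n(r)$ is polynomial in $r$, using $\mu_n(F_n^{(r)})\lesssim r$ a.s.\ by the LLN. With $r=n^\gamma$ and $\gamma\downarrow0$ we obtain a rate $n^{-E_0[(1-\alpha^{-1})\wedge\frac12]+\varepsilon'}$.

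\textbf{Main obstacle.} The statement's exponent $\frac{\kappa}{3\kappa+4}$ is larger than $E_0$ computed with $\theta=1$. So the proof must instead use the one-dimensional structure to obtain sharper inputs, e.g.\ the true Hölder constants of $P_t h$ for Brownian motion, in place of the general \cref{suff} bounds in the optimization of \cref{gen}(i). We would redo the final optimization of \cref{gen}(i) with the improved smoothing bound of \cref{12q2} for Brownian motion (Gaussian tails, so the $\eta,\varepsilon$ terms collapse) and the actual exponents $\delta^{-(2+\kappa)}h$ versus $\varepsilon^\kappa$ with $\varepsilon\sim\delta^{1/2}$. Balancing $\delta^{\kappa/2}=\delta^{-(2+\kappa)}h$ gives $h^{\kappa/(3\kappa+4)}$, which matches the claimed exponent. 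We expect this re-optimization, uniformly in $|x|\le K$ and almost surely in the environment, to be the delicate step.
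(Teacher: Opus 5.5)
Your overall architecture matches the paper's: cut off at $r_n=n^{\gamma}$ with $\gamma\downarrow 0$, and run the proof of \cref{noncpt} with the (A3)-version of the semigroup estimate. The re-optimization you flag as the delicate step is already \cref{gen}(ii). Indeed, $[-r,r]$ with Lebesgue measure satisfies (A3) with $s_0=s_1=1$, so $\Theta=\beta=1$. Then \cref{gen}(ii), which rests on the Gaussian smoothing bound \cref{12q2}(ii), gives exactly $h^{\kappa/(3\kappa+4)}(\log(1/h))^{\kappa(2+\kappa)/(3\kappa+4)}$. The problems are in two of the quantitative inputs.

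\textbf{The $\BL^{\kappa}$ rate does not follow from your argument when $\kappa<1$.} Your bound uses only $\max_{|m|\le nr}|S_m|$ and the total mass. With blocks of length $\ell$, the within-block error is of order $r\ell^{\kappa}$. Abel summation over the $r/\ell$ blocks costs $n^{-1}\max|S|\cdot(r/\ell)\,\ell^{\kappa}\approx n^{1/p-1}r^{1+1/p}\ell^{\kappa-1}$. These balance at $\ell\approx n^{1/p-1}$, which gives only $d_{\BL^{\kappa}}(\mu^{(r)},\mu_n^{(r)})\lesssim n^{-\kappa[(1-\alpha^{-1})\wedge\frac12]+\varepsilon}$; taking $\ell=1/n$ is worse, $n^{1/p-\kappa}$. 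Plugged into \cref{gen}(ii), this yields the exponent $\frac{\kappa^2}{3\kappa+4}[(1-\alpha^{-1})\wedge\frac12]$, which is the stated one only for $\kappa=1$.

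What is missing is a bound on the supremum over the H\"older ball that exploits cancellation uniformly in $f$. In \cref{dblbtm} the paper proceeds as follows:
\begin{enumerate}
\item It truncates at $b_n=n^{1/\alpha}(\log n)^{\zeta}$; the part above $b_n$ vanishes eventually by Borel--Cantelli.
\item It symmetrizes and uses Hoeffding to get sub-Gaussian increments in $\|\cdot\|_{\sup}$.
\item It applies Dudley's integral with $\log N\lesssim r\varepsilon^{-1/\kappa}$ from \cref{metent}. This is integrable exactly because $\kappa>1/2$, which is why the statement excludes $\kappa=1/2$.
\item Bousquet's concentration inequality and Borel--Cantelli then make the bound almost sure.
\end{enumerate}
A multiscale, Young-type summation by parts that uses the H\"older modulus of the partial-sum process, rather than its maximum, could be an alternative.

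\textbf{The exit-time term.} You rightly note that $\Psi_1$ must decay faster than any power of $r$ to absorb $C_n(r)$. Neither route you suggest gives this for $X^n$. \cref{4.2-a}(ii), with any $\delta$, gives at best order $\sqrt{t}/r$ uniformly in the environment. \cref{summ}(v) does not apply to $(n^{-1}\Z,\mu_n)$, which is non-local and has no upper volume bound at small scales. The LLN is also not the relevant input.

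The paper's \eqref{btmrw} rests on the deterministic bound $\tau_y\ge 1$. Realizing $X^n$ and the unit-weight walk $Y^n$ as time changes of a single Brownian motion through its local times gives $\sigma_{B(x,r)^c}(Y^n)\le\sigma_{B(x,r)^c}(X^n)$ pathwise. By reflection and a Chernoff bound, $Y^n$ satisfies $P_x(\sigma_{B(x,r)^c}(Y^n)\le t)\le 4e^{-r^2/(8t)}$ uniformly in $n$ for $r\le 4nt$. An alternative would be to chain \cref{4.2-a}(i) at scale $r/k$ through the Barlow--Bass argument of \cref{summ}(v). That needs only $\mu_n(B(y,\delta))\ge\delta$ from $\tau\ge1$, and works because the nearest-neighbour walk is skip-free.
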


For the reader's convenience, we summarize some preliminary results used in the proof of \cref{dblbtm}.
For a metric space $(X,d)$ and $\varepsilon>0$, let $N(X,d,\varepsilon)$ be the smallest number of balls od radius $\varepsilon$
in the metric $d$ needed to cover $X$.
\begin{thm}[{\cite[Theorem 2.7.1]{VWJ96}}]\label{metent}
  Let $I\subseteq \R$ be a bounded interval with Lebesgue measure $|I|>0$ and $\kappa\in (0,1]$.
  Then there exists a constant $C=C(\kappa)$ such that
  \begin{align*}
    \log N(C^{\kappa}_1(I),\|\cdot\|_{\sup},\varepsilon)\leq C(|I|+2)\varepsilon^{-\frac{1}{\kappa}},\qquad\varepsilon>0,
  \end{align*} 
  where 
  \begin{align*}
    C^{\kappa}_1(I)=\{f:I\to\R:\|f\|_{\BL^{\kappa}}\le 1\}.
  \end{align*}
\end{thm}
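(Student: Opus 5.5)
This is a classical result, so I do not re-derive it from scratch. The plan is to reduce to the case $\kappa=1$ via the snowflake metric and then use Kolmogorov's entropy bound for bounded Lipschitz functions on an interval.

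First, observe that $\|f\|_{\BL^{\kappa}(I)}\le 1$ with respect to $|\cdot|$ is the same as $\|f\|_{\BL^{1}(I)}\le 1$ with respect to the metric $d_\kappa(x,y)=|x-y|^{\kappa}$. So it suffices to cover the unit ball of bounded Lipschitz functions on a totally bounded metric space whose covering numbers satisfy $N(I,d_\kappa,\eta)\lesssim (|I|+1)\eta^{-1/\kappa}$.

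The covering follows the standard Kolmogorov--Tikhomirov construction. Fix $\varepsilon\in(0,1]$ and a maximal $\varepsilon$-separated set $x_1<\dots<x_m$ in $(I,d_\kappa)$, so $m\lesssim (|I|+2)\varepsilon^{-1/\kappa}$ and consecutive points satisfy $d_\kappa(x_i,x_{i+1})\le 2\varepsilon$. To each $f$ associate the discretized vector $(\lfloor f(x_i)/\varepsilon\rfloor)_{i}$. If two functions share the same vector, then at any $x$, choosing $x_i$ with $d_\kappa(x,x_i)\le\varepsilon$, we get $|f(x)-g(x)|\le 2\varepsilon+\varepsilon\lesssim\varepsilon$. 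Hence the number of $O(\varepsilon)$-balls needed is at most the number of admissible vectors.

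To count the vectors, the first coordinate has at most $2/\varepsilon+1$ choices since $\|f\|\le1$. Because $|f(x_{i+1})-f(x_i)|\le 2\varepsilon$, each increment $\lfloor f(x_{i+1})/\varepsilon\rfloor-\lfloor f(x_i)/\varepsilon\rfloor$ lies in $\{-3,\dots,3\}$. The total count is therefore at most $(2/\varepsilon+1)\,7^{m-1}$. Taking logarithms gives $\log N\lesssim \log(1/\varepsilon)+m\lesssim (|I|+2)\varepsilon^{-1/\kappa}$, where $\log(1/\varepsilon)$ is absorbed because $\varepsilon^{-1/\kappa}\ge\varepsilon^{-1}$. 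For $\varepsilon>1$ a single ball suffices, and rescaling $\varepsilon$ by a constant finishes the proof with $C=C(\kappa)$.

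The main obstacle is the increment control in the counting step, which is where the $7^m$ (rather than $(1/\varepsilon)^m$) bound comes from and hence where the exponent $\varepsilon^{-1/\kappa}$ without a logarithmic factor arises. The rest of the argument is routine.
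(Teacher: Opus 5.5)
Your argument is correct and is essentially the standard proof: the paper gives no proof of its own but cites Theorem 2.7.1 of van der Vaart and Wellner, which likewise discretizes function values on an $\varepsilon^{1/\kappa}$-net and counts the admissible vectors by bounding the number of choices at each net point given a nearby earlier one, exactly as you do (for $d=1$ and $\kappa\le 1$ no Taylor expansion is needed, and your snowflake-metric reformulation is only a notational convenience). The only slips are cosmetic and do not affect the bound: the first coordinate can take up to $2/\varepsilon+2$ values, and the increments actually lie in $\{-2,\dots,2\}$.
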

\begin{dfn}[{\cite[Definitions 2.5.6 and 8.1.1]{HDP18}}]\label{SGincr}\quad\par
  \begin{itemize}
    \item [$\rm{(i)}$]For a real-valued random variable $X$, we define its sub-Gaussian norm 
  $\|X\|_{\psi_2}$ by setting
  \begin{align*}
    \|X\|_{\psi_2}:=\inf\{t>0:E[\exp(X^2/t^2)]\le 2\}.
  \end{align*}
   \item [$\rm{(ii)}$]Let $(X_t)_{t\in T}$ be a real-valued stochastic process on a metric space $(T,d)$.
   We say the process has a sub-Gaussian increments if there exists a constant $K\in (0,\infty)$
   such that
   \begin{align}\label{SGinc}
    \|X_t-X_s\|_{\psi_2}\le Kd(s,t),\qquad s,\;t\in T.
   \end{align}
  \end{itemize}
\end{dfn}
The following theorem, known as the Dudley's integral inequality gives a bound on the expectation of a process with sub-Gaussian increments.
\begin{thm}[Dudley's integral inequality, {\cite[Theorem 8.13 and Remark 8.1.9]{HDP18}}]\label{dudint}
  Let $(X_t)_{t\in T}$ be a mean zero stochastic process on a metric space $(X,d)$ 
  with sub-Gaussian increments as in \eqref{SGinc}.
  Then
  \begin{align*}
    E\left[\sup_{t\in T}X_t\right]\leq CK\int_{0}^{\diam (T,d)}\sqrt{\log N(T,d,\varepsilon)}d\varepsilon.
  \end{align*}
\end{thm}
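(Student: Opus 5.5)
Since \cref{dudint} is quoted from \cite{HDP18}, I would only recall the standard chaining argument rather than produce anything new. First I would reduce to the case where $T$ is finite. The expectation of the supremum is interpreted as $\sup\{E[\sup_{t\in T_0}X_t]: T_0\subseteq T \text{ finite}\}$, which is the usual convention and coincides with the true expectation for separable processes. Moreover, $N(T_0,d,\varepsilon)\le N(T,d,\varepsilon/2)$, so the integral changes only by a constant factor after the substitution $\varepsilon\mapsto\varepsilon/2$. We may also assume $\diam(T,d)<\infty$ and that the integral is finite, since otherwise there is nothing to prove. Fix $t_0\in T$. Because the process is mean zero, $E[\sup_t X_t]=E[\sup_t (X_t-X_{t_0})]$.

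Next I would set up the chain. Put $\varepsilon_k:=2^{-k}\diam(T,d)$ for $k\ge 0$, and let $T_k\subseteq T$ be an $\varepsilon_k$-net of cardinality $N(T,d,\varepsilon_k)$, with $T_0=\{t_0\}$. Since $T$ is finite, $T_k=T$ for all large $k$, say $k\ge\kappa$. Let $\pi_k(t)\in T_k$ be a point with $d(t,\pi_k(t))\le\varepsilon_k$. Then we have the telescoping identity
\[
X_t-X_{t_0}=\sum_{k=1}^{\kappa}\bigl(X_{\pi_k(t)}-X_{\pi_{k-1}(t)}\bigr),
\]
together with the distance bound
\[
d(\pi_k(t),\pi_{k-1}(t))\le \varepsilon_k+\varepsilon_{k-1}=3\varepsilon_k .
\]
Taking $\sup_t$ inside each summand gives
\[
E\Bigl[\sup_t(X_t-X_{t_0})\Bigr]\le\sum_{k=1}^{\kappa}E\Bigl[\max_{t}\bigl(X_{\pi_k(t)}-X_{\pi_{k-1}(t)}\bigr)\Bigr].
\]
In the $k$-th term the maximum runs over at most $|T_k||T_{k-1}|\le N(T,d,\varepsilon_k)^2$ distinct random variables. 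By \eqref{SGinc}, each of them has $\psi_2$-norm at most $3K\varepsilon_k$.

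Then I would apply the elementary maximal inequality: if $Y_1,\dots,Y_M$ satisfy $\|Y_i\|_{\psi_2}\le\sigma$, then $E[\max_i Y_i]\lesssim\sigma\sqrt{\log M}$ (for $M\ge2$). This follows from Jensen's inequality applied to $\exp(\lambda\max_i Y_i)$ together with the bound $E[e^{\lambda Y_i}]\le e^{C\lambda^2\sigma^2}$, optimizing in $\lambda$. The resulting bound for the $k$-th term is
\[
CK\varepsilon_k\sqrt{\log N(T,d,\varepsilon_k)} .
\]
Finally, I would compare the sum with the integral. Since $N(T,d,\cdot)$ is non-increasing,
\[
\varepsilon_k\sqrt{\log N(T,d,\varepsilon_k)}\le 2\int_{\varepsilon_{k+1}}^{\varepsilon_k}\sqrt{\log N(T,d,\varepsilon)}\,d\varepsilon .
\]
Summing over $k$ yields the integral over $(0,\diam(T,d)]$, which is the claim.

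The step needing the most care is the reduction to finite (or separable) index sets and the meaning of $E[\sup_t X_t]$ when the supremum need not be measurable. I would handle this exactly as \cite{HDP18} does, by defining the expectation as the supremum over finite subsets. The remaining computations are routine.
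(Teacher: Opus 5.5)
Your chaining argument is correct and is the standard proof from the cited source \cite{HDP18}: dyadic $\varepsilon_k$-nets, telescoping along $\pi_k(t)$, the sub-Gaussian maximal inequality at each scale, and comparison of the dyadic sum with the integral. The paper itself gives no proof and only quotes the result. One point is worth stating explicitly: the bound $E[e^{\lambda Y}]\le e^{C\lambda^2\sigma^2}$ requires centered variables. It applies here because the increments $X_s-X_t$ have mean zero, since the process does, and this also makes the terms with a single variable ($M=1$) vanish.
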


We begin by considering a quenched estimate for the $\BL^{\kappa}$ metric.
\begin{prp}\label{dblbtm}
Set $r_n=n^{\theta},\;\theta\geq 0$ and suppose that $\kappa\in (1/2,1]$ and $\alpha>1$.
Then, for any $\zeta>\alpha^{-1}$, it holds that
\begin{align*}
  d_{\BL^{\kappa}}(\mu^{(r_n)},\mu_n^{(r_n)})
  \lesssim
  \begin{dcases}
    n^{-\frac{\alpha-1}{\alpha}}(\log n)^{1+\zeta}, & \theta=0,\;\alpha\in (1,2),\\
    n^{-\frac{\alpha-1}{\alpha}+\frac{2+\alpha}{2\alpha}\theta}(\log n)^{\zeta(1-\alpha/2)}, & \theta>0,\;\alpha\in (1,2),\\
    n^{-\frac{1}{2}}(\log n)^{1+\zeta},& \theta=0,\;\alpha=2,\\
    n^{-\frac{1}{2}+\theta}(\log n)^{\frac{1}{2}}, & \theta>0,\;\alpha=2,\\
    n^{-\frac{1}{2}}(\log n)^{\frac{1}{2}}, & \theta=0,\;\alpha\in (2,\infty),\\
    n^{-\frac{1}{2}+\theta}, & \theta>0,\;\alpha\in (2,\infty),
  \end{dcases}
  \qquad\qquad
  \bbP\mathrm{-a.s.}
\end{align*}
\end{prp}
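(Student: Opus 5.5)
We have to bound $\sup_{\|f\|_{\BL^{\kappa}([-r_n,r_n])}\le 1}|\int f\,d(\mu^{(r_n)}-\mu^{(r_n)}_n)|$. The plan is to split this into a deterministic discretisation error and a random fluctuation. Write $m=\bbE[\tau_0]$ and $\mu_n^{(r_n)}=\frac1n\sum_{|x|\le nr_n}\tau_x\delta_{x/n}$. Then
\[
\int f\,d\mu_n^{(r_n)}-\int f\,d\mu^{(r_n)}=\frac1n\sum_{|x|\le nr_n}(\tau_x-m)f(x/n)+\Bigl(\frac mn\sum_{|x|\le nr_n}f(x/n)-m\int_{-r_n}^{r_n}f\,dy\Bigr).
\]
The second bracket is a Riemann-sum error. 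It is at most $\lesssim r_n n^{-\kappa}+n^{-1}$, because $f$ is $\kappa$-Hölder with constant $\le1$, and this is negligible in every case listed. The task therefore reduces to the random sum $S_n(f):=\frac1n\sum_{|x|\le nr_n}(\tau_x-m)f(x/n)$, bounded uniformly over the class $C^\kappa_1([-r_n,r_n])$.

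The heavy tails are handled by truncation at a level $b_n$. We split $\tau_x-m=(\tau_x1_{\{\tau_x\le b_n\}}-\bbE[\tau_01_{\{\tau_0\le b_n\}}])+(\tau_x1_{\{\tau_x>b_n\}}-\bbE[\tau_01_{\{\tau_0>b_n\}}])$.

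For the large part, we use $|f|\le 1$. It contributes at most $\frac1n\sum_{|x|\le nr_n}\tau_x1_{\{\tau_x>b_n\}}+2r_n\bbE[\tau_01_{\{\tau_0>b_n\}}]$, and the expectation term is $\asymp r_nb_n^{1-\alpha}$. We choose $b_n$ so that a.s. no $\tau_x$ with $|x|\le nr_n$ exceeds $b_n$ for all large $n$. Taking $b_n=(nr_n)^{1/\alpha}(\log n)^{\zeta}$ with $\zeta\alpha>1$ makes $\sum_n nr_n b_n^{-\alpha}$ summable along a dyadic subsequence; combined with monotonicity in $n$, Borel–Cantelli then applies. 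This is where the hypothesis $\zeta>\alpha^{-1}$ enters. On that event the first term vanishes, and the deterministic term gives $r_n(nr_n)^{(1-\alpha)/\alpha}(\log n)^{\zeta(1-\alpha)}$. When $\alpha>2$ we need no truncation at all.

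For the bounded part, the centred variables $\xi_x$ are bounded by $b_n$ and have variance $\sigma_n^2\lesssim b_n^{2-\alpha}$ (for $\alpha<2$; $\log b_n$ for $\alpha=2$; $O(1)$ for $\alpha>2$). For fixed $f,g$, Hoeffding's inequality gives the increment bound
\[
\|S_n(f)-S_n(g)\|_{\psi_2}\lesssim \frac{b_n}{n}(nr_n)^{1/2}\|f-g\|_{\sup}.
\]
The Hölder class has entropy $\log N\lesssim (r_n+2)\varepsilon^{-1/\kappa}$ by \cref{metent}, and $\kappa>1/2$ makes Dudley's integral (\cref{dudint}) $\int_0^2\varepsilon^{-1/(2\kappa)}d\varepsilon$ finite. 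This yields
\[
\bbE\sup_f|S_n(f)|\lesssim \frac{b_n}{n}(nr_n)^{1/2}(r_n+2)^{1/2}.
\]
When the variance matters, we would replace Hoeffding by Bernstein, or by symmetrisation with the variance $\sigma_n^2$, to get the sharper $\frac{\sigma_n}{\sqrt n}r_n^{1/2}(r_n+2)^{1/2}$ plus a $b_n$-term. To pass from expectation to an a.s. bound we use a concentration inequality for suprema, such as Talagrand or bounded differences with increments $2b_n/n$, together with Borel–Cantelli. This costs an extra $\sqrt{\log n}$, which produces the $(\log n)^{1/2}$ factors in the table.

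Finally, we balance the three contributions in each regime of $(\theta,\alpha)$ to match the stated exponents. For $\alpha\in(1,2)$, $\theta=0$, the truncation bias $(n)^{(1-\alpha)/\alpha}(\log n)^{\zeta(1-\alpha)}$ and the fluctuation $n^{-1/2}b_n^{1-\alpha/2}\sqrt{\log n}\approx n^{-(\alpha-1)/\alpha}$ both give the rate $n^{-(\alpha-1)/\alpha}$ up to logarithms.

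The main obstacle is this fluctuation estimate for $\alpha\le2$. We must check that the variance-based chaining combined with truncation exactly yields the exponent $-\frac{\alpha-1}\alpha+\frac{2+\alpha}{2\alpha}\theta$ and the logarithmic powers. The careful bookkeeping of the $r_n$-dependence in both the entropy and the number of summands is where errors would most likely occur. If Bernstein-type chaining is too lossy, we would instead handle the sum via summation by parts: $S_n(f)$ is controlled by the Hölder seminorm of $f$ against partial sums of $\xi_x$, and maximal inequalities for those partial sums would replace the chaining.
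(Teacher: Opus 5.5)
Your plan is essentially the paper's proof. It uses the same Riemann-sum/fluctuation split and truncation at level $b\approx(nr_n)^{1/\alpha}(\log n)^{\zeta}$ with Borel--Cantelli; the paper runs this site by site ($\tau_x\le b_{|x|}$ eventually) rather than over dyadic blocks. It then applies symmetrisation, conditional Hoeffding and Dudley with the entropy bound of \cref{metent} to get an expected supremum $\lesssim r_n\sigma_n/\sqrt n$ for the truncated part, and finishes with a Talagrand-type inequality (Bousquet's form, weak variance $v_0\lesssim nr_n\sigma_n^2$) plus Borel--Cantelli.

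Of the alternatives you hedge between, only the variance-sensitive one works, and this fixes the logarithmic powers. Bounded differences with increments $2b_n/n$ only gives deviations of order $b_n\sqrt{r_n\log n/n}$, which does not tend to zero for $\alpha\le 2$. Bousquet's bound instead contributes $\frac1n\bigl(\sqrt{v_0\log n}+b_n\log n\bigr)$. It is the $b_n\log n/n$ term, not an extra $\sqrt{\log n}$, that produces the factor $(\log n)^{1+\zeta}$ in the $\theta=0$, $\alpha\le 2$ rows. For the same reason you must keep the truncation when $\alpha>2$ too, since that inequality requires bounded summands.
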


\begin{proof}
  We define $\scrF_n:=\{f:[-r_n,r_n]\to\R:\|f\|_{\BL^{\kappa}}\leq 1\}$.
  Firstly, we have 
  \begin{equation}\label{iti}
    \begin{aligned}
      \MoveEqLeft d_{\BL^{\kappa}}(\mu^{(r_n)},\mu_n^{(r_n)})\\
    =&\sup_{f\in \scrF_n}\left| E[\tau_0]\int_{-r_n}^{r_n}f(y)dy-\frac{1}{n}\sum_{\substack{x\in\Z\\|\frac{x}{n}|\leq r_n}}\tau_x f(x/n) \right|\\
    \leq&\bbE[\tau_0]\sup_{f\in \scrF_n}\left| \int_{-r_n}^{r_n}f(y)dy-\frac{1}{n}\sum_{\substack{x\in\Z\\|\frac{x}{n}|\leq r_n}}f(x/n) \right|
    +\sup_{f\in \scrF_n}\left| \frac{1}{n}\sum_{\substack{x\in\Z\\|\frac{x}{n}|\leq r_n}}(\tau_x-E[\tau_0])f(x/n) \right|\\
    =&:\bbE[\tau_0]I+II.
    \end{aligned}
  \end{equation}
  The first term $I$ is bounded as follows.
  \begin{equation}\label{Iest}
    \begin{aligned}
      I
    =&\sup_{f\in \scrF_n}\left| \int_{-r_n}^{-\frac{\floor{nr_n}-1/2}{n}}f(y)dy+\sum_{x=-\floor{nr_n}+1}^{\floor{nr_n}-1}\int_{\frac{x-1/2}{n}}^{\frac{x+1/2}{n}}f(y)dy\right.\\
    &\left.\qquad\qquad\qquad\qquad\qquad\;\;\;\;\; + \int_{\frac{\floor{nr_n}-1/2}{n}}^{r_n}f(y)dy-\frac{1}{n}\sum_{x=-\floor{nr_n}}^{\floor{nr_n}} f(x/n) \right|\\
    \leq&\sup_{f\in \scrF_n}\left| \int_{-r_n}^{-\frac{\floor{nr_n}-1/2}{n}}f(y)dy+ \int_{\frac{\floor{nr_n}-1/2}{n}}^{r_n}f(y)dy-\frac{1}{n}f(\floor{nr_n}/n)-\frac{1}{n}f(-\floor{nr_n}/n) \right|\\
    &+\sup_{f\in \scrF_n}\sum_{x=-\floor{nr_n}+1}^{\floor{nr_n}-1}\int_{\frac{x-1/2}{n}}^{\frac{x+1/2}{n}}|f(y)-f(x/n)|dy\\
    \lesssim& n^{-1}+n^{-\kappa}r_n
    \end{aligned}
  \end{equation}
  We next deal with $II$. Take a sequence $b_n>1,\;n\in\N\cup\{0\}$ that is increasing and divergent satisfying $\sum_{n} b_n^{-\alpha}<\infty$, and define
  $\tau_{x,n}^{+}=\tau_x1_{\{\tau_x>b_{\floor{nr_n}}\}},\;\tau_{x,n}^{-}=\tau_x1_{\{\tau_x\le b_{\floor{nr_n}}\}}$,
  \begin{align}\label{anpm}
    A_n^{+}=\sup_{f\in \scrF_n}\left|\frac{1}{n}\sum_{\substack{x\in\Z\\|\frac{x}{n}|\leq r_n}}(\tau_{x,n}^{+}-\bbE[\tau_{0,n}^{+}])f(x/n)\right|,
    \;\;\rm{and}\;\;
    A_n^{-}=\sup_{f\in \scrF_n}\left|\frac{1}{n}\sum_{\substack{x\in\Z\\|\frac{x}{n}|\leq r_n}}(\tau_{x,n}^{-}-\bbE[\tau_{0,n}^{-}])f(x/n)\right|.
  \end{align}
  Then we have 
  \begin{align*}
    II\le A_n^{+}+A_n^{-}.
  \end{align*}

  We first show that 
  $A_n^{+}=O(r_n b_{\floor{nr_n}}^{-\alpha+1})$ almost surely.
  By $\sum_{n\ge 0}\bbP(\tau_n\vee \tau_{-n}>b_n)\leq 2\sum_{n}b_n^{-\alpha}<\infty$ and the 
  Borel-Cantelli lemma,
  it holds that $\bbP(\liminf \{\tau_n,\tau_{-n}\le b_n\})=1$. Thus, for almost all $\omega\in\Omega$,
  there exists $N(\omega)\in\N$ such that 
  \begin{align}\label{eb0}
    \tau_n(\omega)\le b_n,\qquad \forall |n|>N(\omega).
  \end{align}
  On the other hand, since $b_n$ diverges to $\infty$, there exists $N'(\omega)\in\N$ such that
  \begin{align}\label{eb1}
    \sup_{|n|\le N(\omega)}\tau_n(\omega)<b_{N'(\omega)}.
  \end{align}
  Thus, if $\floor{nr_n}>N(\omega)\vee N'(\omega)$, then it is the case that
  $\tau_x(\omega)\le b_{\floor{nr_n}}$ for $|x|=0,1,\dots,\floor{nr_n}$.
  In fact, if $|x|\leq N(\omega)$, we have 
  $\tau_x(\omega)\leq \sup_{|j|\le N(\omega)}<b_{N'(\omega)}\le b_{\floor{nr_n}}$
  from \eqref{eb1} and the monotonicity of $(b_n)$, and if $N(\omega)<|x|\le \floor{nr_n}$,
  it holds that $\tau_x(\omega)\le b_x\le b_{\floor{nr_n}}$ from \eqref{eb0}.
  Therefore, for sufficiently large $n$, $\tau_{x,n}^{+}=0$ for $|x|\le \floor{nr_n}$,
  and so
  we deduce that
  \begin{align}\label{anplus}
    A_n^{+}
    =\sup_{f\in \scrF_n}\left|\frac{1}{n}\sum_{\substack{x\in\Z\\|\frac{x}{n}|\leq r_n}}\bbE[\tau_{0,n}^{+}]f(x/n)\right|
    \lesssim r_n\int_{b_{\floor{nr_n}}}^{\infty}u^{-\alpha}du\lesssim r_n b_{\floor{nr_n}}^{-\alpha+1},
  \end{align}
  which completes the estimates for $A_n^{+}$.

  We next bound $A_n^{-}$.
  Set $T_{x,n}=\tau_{x,n}^{-}-\bbE[\tau^{-}_{0,n}]$ and $Z_n=nA_n^{-}=\sup_{f\in \scrF_n}\left|\sum_{\substack{x\in\Z\\|\frac{x}{n}|\leq r_n}}T_{x,n}f(x/n)\right|$.
  We begin by estimating $\bbE[Z_n]$.
  Let $(\varepsilon_x)_x$ be i.i.d random variables on $\Omega$ that satisfy $\bbP(\varepsilon_x=1)=\bbP(\varepsilon_x=-1)=1/2$, and are independent of $(\tau_{x})_x$.
  Then, by taking an independent copy $(T'_{x,n})_x$ of $(T_{x,n})_x$ on $\Omega$, we observe that  
 \begin{align*}
  \bbE[Z_n]
  =\bbE\left[ \sup_{f\in\scrF_n}\left|\sum_{|x/n|\leq r_n}(T_{x,n}-E[T_{x,n}'])f(x/n)\right| \right]
  \leq \bbE\left[ \sup_{f\in\scrF_n}\left|\sum_{|x/n|\leq r_n}(T_{x,n}-T_{x,n}')f(x/n)\right| \right].
 \end{align*}
 Since $(T_{x,n}-T'_{x,n})_x$ is a sequence of independent random variables with a symmetric distribution, we deduce that
 \begin{equation}\label{zn1}
  \begin{aligned}
    \bbE[Z_n]
  \leq &\bbE\left[ \sup_{f\in\scrF_n}\left|\sum_{|x/n|\leq r_n}(T_{x,n}-T_{x,n}')f(x/n)\right| \right]\\
  =&\bbE\left[ \sup_{f\in\scrF_n}\left|\sum_{|x/n|\leq r_n}\varepsilon_x(T_{x,n}-T_{x,n}')f(x/n)\right| \right]\\
  \leq& \bbE\left[ \sup_{f\in\scrF_n}\left|\sum_{|x/n|\leq r_n}\varepsilon_xT_{x,n}f(x/n)\right| \right]
  +\bbE\left[ \sup_{f\in\scrF_n}\left|\sum_{|x/n|\leq r_n}\varepsilon_xT_{x,n}'f(x/n)\right| \right]\\
  \lesssim&\bbE\left[ \sup_{f\in\scrF_n}\left|\sum_{|x/n|\leq r_n}\varepsilon_xT_{x,n}f(x/n)\right| \right]\\
  =&\bbE\left[ \bbE\left.\left[\sup_{f\in\scrF_n}\left|\sum_{|x/n|\leq r_n}\varepsilon_xt_{x,n}f(x/n)\right| \right]\right|_{t_{x,n}=T_{x,n}}\right].
  \end{aligned}
 \end{equation}
 Note that it follows from $f\in\scrF_n$ if and only if $-f\in\scrF_n$ that
 \begin{align}\label{abs}
   \bbE\left[\sup_{f\in\scrF_n}\left|\sum_{|x/n|\leq r_n}\varepsilon_xt_{x,n}f(x/n)\right| \right]
   = \bbE\left[\sup_{f\in\scrF_n}\sum_{|x/n|\leq r_n}\varepsilon_xt_{x,n}f(x/n)\right].
 \end{align}
 We define $Z^{\varepsilon}_f=\sum_{|x/n|\leq r_n}\varepsilon_xt_{x,n}f(x/n)$.
 Note that $Z^{\varepsilon}_f$ is a mean-zero random variable.
 Then it follows from
the Hoeffding inequality 
(see \cite[Theorem 2.8]{Con13})
 that
\begin{align*}
  \bbP(|Z^{\varepsilon}_f-Z^{\varepsilon}_g|>t)
  =\bbP\left( \left| \sum_{|x/n|\leq r_n}\varepsilon_xt_{x,n}(f(x/n)-g(x/n)) \right|>t \right)
  \leq 2\exp\left(-\frac{2t^2}{\sigma^2}\right),
\end{align*}
where, $\sigma^2:=\sum_{|x/n|\leq r_n}[t_{x,n}(f(x/n)-g(x/n))]^2$.
Therefore we deduce that
  \begin{align*}
    \bbE\left[\exp\left(\frac{|Z^{\varepsilon}_f-Z^{\varepsilon}_g|^2}{4\sigma^2}\right)\right]
  \leq& 1+\int_{1}^{\infty}\bbP\left(\exp\left(\frac{|Z^{\varepsilon}_f-Z^{\varepsilon}_g|^2}{4\sigma^2}\right)>u\right)du\\
  =&1+\int_1^{\infty}\bbP(|Z^{\varepsilon}_f-Z^{\varepsilon}_g|>2\sigma\sqrt{\log u})du\\
  \leq&1+2\int_{1}^{\infty}u^{-8}du\\
  =&1+\frac{2}{7}\\
  <&2.
  \end{align*}
This implies
\begin{align*}
  \inf\left\{K>0:E\left[\exp\left(\frac{|Z^{\varepsilon}_f-Z^{\varepsilon}_g|^2}{K^2}\right)\right]\le 2\right\}\le 2\sigma \le 2\left( \sum_{|x/n|\le r_n}t_{x,n}^2 \right)^{1/2}\|f-g\|_{\sup},
\end{align*}
and thus $(Z^{\varepsilon}_f)_{f\in\scrF_n}$ has sub-Gaussian increments as a stochastic process on $(\scrF_n,\|\cdot\|_{\sup})$ (recall the definition from \cref{SGincr}).
As a consequence of \cref{metent} and Dudley's integral inequality (recall the statement from \cref{dudint}),
we conclude that
\begin{equation*}\label{zn2}
  \begin{aligned}
     \bbE\left[\sup_{f\in\scrF_n}Z^{\varepsilon}_f\right]
  \lesssim &\left( \sum_{|x/n|\le r_n}t_{x,n}^2 \right)^{1/2}\int_{0}^{2} \sqrt{\log N(\scrF_n,\|\cdot\|_{\sup},a)}da\\
  \lesssim &\left( \sum_{|x/n|\le r_n}t_{x,n}^2 \right)^{1/2}r_n^{1/2}\int_{0}^{2} a^{-\frac{1}{2\kappa}}da\\
  \lesssim &\left( \sum_{|x/n|\le r_n}t_{x,n}^2 \right)^{1/2}r_n^{1/2}.
  \end{aligned}
\end{equation*}
Here we used $\kappa>1/2$.
Therefore, it follows from \eqref{zn1},\;\eqref{abs}, and \eqref{zn2} that
\begin{align}\label{EZn}
  \bbE[Z_n]\lesssim r_n^{1/2}\bbE\left[ \left( \sum_{|x/n|\le r_n}T_{x,n}^2 \right)^{1/2} \right]
  \leq r_n^{1/2} \bbE\left[\sum_{|x/n|\le r_n}T_{x,n}^2\right]^{1/2}\lesssim n^{1/2}r_n c_n^{1/2},
\end{align}
where $c_n$ is defined by 
\begin{align*}
  \bbE[T_{0,n}^2]
  \leq \bbE[(\tau_{0,n}^{-})^2]
  \lesssim \int_{1}^{b_{\floor{nr_n}}}u^{-\alpha+1}
  \lesssim c_n:=
  \begin{dcases}
    b_{\floor{nr_n}}^{2-\alpha}, & \alpha<2,\\
    \log b_{\floor{nr_n}}, & \alpha=2,\\
    1, & \alpha>2. 
  \end{dcases}
 \end{align*}
We next give a tail bound for $Z_n$ using \cite[Theorem 2.1]{Bou02}.
Set
\begin{align*}
  T^{\ast}_{x,n}=\frac{T_{x,n}}{b_{\floor{nr_n}}},\quad S=\frac{Z_n}{b_{\floor{nr_n}}},\quad S^{(-x)}=\sup_{f\in\scrF_n}\sum_{\substack{|y/n|\le r_n\\y\neq x}}T^{\ast}_{y,n}f(y/n),\;\;\rm{and}\;\; S_x=S-S^{(-x)}.
\end{align*}
for $|\frac{x}{n}|\le r_n$.
Note that $|T^{\ast}_{x,n}|\le 1$.
Our aim is to show the following:
\begin{itemize}
  \item [$\rm{(a)}$]$S_x=S-S^{(-x)}\le 1$,
  \item [$\rm{(b)}$]$\bbE[S_x|T_{y,n},y\neq x,|\frac{y}{n}\le r_n|]\ge 0$,
  \item [$\rm{(c)}$]$\sum_{|\frac{x}{n}|\le r_n}S_x\le S$,
  \item [$\rm{(d)}$]$\sum_{|\frac{x}{n}|\le r_n} \bbE[S_x^2|T_{y,n},y\neq x,|\frac{y}{n}|\le r_n]\leq \frac{1}{b_{\floor{nr_n}}^2}v_0$, where $v_0:=\sum_{|\frac{x}{n}|\le r_n} \bbE[T_{x,n}^2]$.
\end{itemize}
Note that 
\begin{align}\label{v0}
  v_0\lesssim nr_n c_n.
\end{align}

\begin{enumerate}[align=left, labelwidth=\widthof{Proof of $\rm{(a)}$}, 
    labelsep=0.5em, 
    leftmargin=!]
  \item [Proof of $\rm{(a)}$:]
  For any $\varepsilon>0$, there exists $f_{\varepsilon}\in\scrF_n$ satisfying 
$\sum_{|y/n|\le r_n}T^{\ast}_{y,n}f_{\varepsilon}(y/n)\ge S-\varepsilon$ and $S^{(-x)}\ge \sum_{y\neq x,|y/n|\le r_n}T^{\ast}_{y,n}f_{\varepsilon}(y/n)$.
Thus we deduce that
\begin{align}\label{fep}
  S-S^{(-x)}\le \varepsilon+T^{\ast}_{x,n}f_{\varepsilon}(x/n)\le \varepsilon+1.
\end{align}
Letting $\varepsilon\to 0$, we obtain the result.
\item [Proof of $\rm{(b)}$:]
Since $S=\sup_{f\in\scrF_n}\left[ \sum_{y\neq x}T^{\ast}_{y,n}f(y/n)+T^{\ast}_{x,n}f(x/n)\right]$ and $\bbE[T^{\ast}_{x,n}]=0$,
we deduce that
\begin{align*}
  \bbE[S|T_{y,n},y\neq x,|\frac{y}{n}|\le r_n]
  \ge& \sup_{f\in \scrF_n}\bbE\left[\left.  \sum_{y\neq x}T^{\ast}_{y,n}f(y/n)+T^{\ast}_{x,n}f(x/n)\right|T_{y,n},y\neq x,\left|\frac{y}{n}\right|\le r_n\right]\\
  =& \sup_{f\in \scrF_n}\left[\sum_{y\neq x}T^{\ast}_{y,n}f(y/n)+\bbE[T^{\ast}_{x,n}f(x/n)]\right]\\
  =&S^{(-x)},
\end{align*}
which shows $\rm{(b)}$.
\item[Proof of $\rm{(c)}$:]
Take the function $f_{\varepsilon}$ appearing in the proof of $\rm{(a)}$.
Then we have that
\begin{align*}
S^{(-x)}
=\sup_{f\in\scrF_n} \sum_{y\neq x}T^{\ast}_{y,n}f(y/n)
\ge\sum_{y\neq x}T^{\ast}_{y,n}f_{\varepsilon}(y/n)
\end{align*} 
and
\begin{align*}
\sum_{|y/n| \le r_n}T^{\ast}_{y,n}f_{\varepsilon}(y/n)
\ge S-\varepsilon.
\end{align*}
Now it follows from these observations that
\begin{align*}
S_x\le \left(\sum_{y}T^{\ast}_{y,n}f_{\varepsilon}(y/n) + \varepsilon \right)-\sum_{y\neq x} T^{\ast}_{y,n}f_{\varepsilon}(y/n)
=T^{\ast}_{x,n}f_{\varepsilon}(x/n)+\varepsilon.
\end{align*}
Therefore we deduce that 
\begin{align*}
  \sum_{x}S_x
  \le&\sum_{x}\left(T^{\ast}_{x,n}f_{\varepsilon}(x/n)+\varepsilon \right)\\
  =&\sum_{x}T^{\ast}_{x,n}f_{\varepsilon}(x/n)+ (2\floor{nr_n}+1)\varepsilon\\
  \le& S+(2\floor{nr_n}+1)\varepsilon,
\end{align*}
and letting $\varepsilon\to 0$, we complete the proof.
\item [Proof of $\rm{(d)}$:]
Since $T^{\ast}_{x,n}$ is independent of $(T_{y,n})_{y\neq x}$, it is enough to prove $|S_x|\leq |T^{\ast}_{x,n}|$.
The upper bound $S_x\le |T^{\ast}_{x,n}|$ is shown by the first inequality in \eqref{fep} 
and $\|f_{\varepsilon}\|_{\sup}\le 1$. We next show the lower bound $S_x\ge -|T^{\ast}_{x,n}|$.
For an arbitrary $\varepsilon>0$, there exists $g_{\varepsilon}\in\scrF_n$ satisfying
$S^{(-x)} \le \sum_{y \neq x} T^{\ast}_{y,n} g_{\varepsilon}(y/n)  + \varepsilon$
and
$S \ge  \sum_y T^{\ast}_{y,n} g_{\varepsilon}(y/n)$.
Thus we deduce that
\begin{align*}
S-S^{(-x)}\ge \sum_y T^{\ast}_{y,n}g_{\varepsilon}(y/n)-\left(\sum_{y \neq x}T^{\ast}_{y,n}g_{\varepsilon}(y/n)+\varepsilon\right)\ge-|T^{\ast}_{x,n} g_\varepsilon(x/n)|-\varepsilon\ge-|T^{\ast}_{x,n}|-\varepsilon, 
\end{align*}
and letting $\varepsilon\to 0$, we obtain the lower bound.
\end{enumerate}
By (a),\;(b),\;(c),\;(d) and \cite[Theorem 2.1]{Bou02}, it holds for $x\ge 0$ that
\begin{align*}
  \MoveEqLeft \bbP\left(Z_n\geq \bbE[Z_n]+\sqrt{(4b_{\floor{nr_n}}\bbE[Z_n]+2v_0)x}+\frac{1}{3}b_{\floor{nr_n}} x\right)\\
  =&\bbP\left(S\geq \bbE[S] +\sqrt{2(2\bbE[S]+b_{\floor{nr_n}}^{-2}v_0)x}+\frac{1}{3}x\right)\\
  \le&e^{-x}.
\end{align*}
Using $\sqrt{A+B}\le \sqrt{A}+\sqrt{B}$ and taking $x=2\log n$, we deduce that
\begin{align*}
  \bbP\left(Z_n\ge \bbE[Z_n]+2\sqrt{2b_{\floor{nr_n}} \bbE[Z_n]\log n}+2\sqrt{v_0\log n}+\frac{2}{3}b_{\floor{nr_n}}\log n\right)\le n^{-2}.
\end{align*}
Thus it follows from $2\sqrt{2b_{\floor{nr_n}} \bbE[Z_n]\log n}\le 2\bbE[Z_n]+b_{\floor{nr_n}}\log n$
that
\begin{align*}
  \bbP\left(Z_n\ge 3\bbE[Z_n]+2\sqrt{v_0\log n}+\frac{5}{3}b_{\floor{nr_n}}\log n\right)\le n^{-2}.
\end{align*}
By the Borel-Cantelli lemma, \eqref{EZn}, and \eqref{v0},
we conclude that
\begin{equation}\label{An-}
  \begin{aligned}
   A_n^{-}
  =&\frac{1}{n}Z_n\\
  \lesssim&\frac{1}{n}\left[\bbE[Z_n]+\sqrt{v_0\log n}+b_{\floor{nr_n}}\log n\right]
  \lesssim r_n\sqrt{\frac{c_n}{n}}+\sqrt{\frac{r_n c_n\log n}{n}}+\frac{b_{\floor{nr_n}}\log n}{n} 
  \end{aligned}
\end{equation}
for large $n$, almost surely.
 Combining \eqref{Iest},\;\eqref{anplus} and \eqref{An-}, we obtain
 \begin{align*}
  d_{\BL^{\kappa}}(\mu^{(r_n)},\mu_n^{(r_n)})\lesssim n^{-1}+n^{-\kappa}r_n+r_n b_{\floor{nr_n}}^{1-\alpha}+r_n\sqrt{\frac{c_n}{n}}+\sqrt{\frac{r_n c_n\log n}{n}}+\frac{b_{\floor{nr_n}}\log n}{n}
 \end{align*}
 for large $n$, almost surely. Taking $b_n=n^{\frac{1}{\alpha}}(\log n)^{\zeta}$ for an arbitrary $\zeta>\alpha^{-1}$, we complete the proof.
 \end{proof}

Define a measure $\nu_n$ on $n^{-1}\Z$ by $\nu_n=\frac{1}{n}\sum_{x\in\Z}\delta_{x/n}$ 
 and let $Y^n=(Y^n_t)$ be the process associated with the triplet $(n^{-1}\Z,d_E,\nu_n)$.
 Here, we write $d_E$ for the Euclidean metric.
 Also, we define $\sigma_A(f)$ for a function $f:[0,\infty)\to \R$ and a set $A\subseteq\R$ by setting
 \begin{align}\label{hitt}
  \sigma_A(f):=\inf\{t>0:f(t)\in A\}\in [0,\infty].
 \end{align}
 Similar to the hitting time, we abbreviate $\sigma_{\{x\}}(f)$ to $\sigma_x(f)$.
 The following estimates are essential to bound $\Psi_1$ in \cref{noncpt}.
 \begin{lem}
  It holds that
 \begin{align}\label{srwexp}
  \sup_{n\in\N}\sup_{\substack{x\in n^{-1}\Z\\r\leq 4nt}}P_x(\sigma_{B(x,r)^c}(Y^n)\le t)\le 4e^{-\frac{r^2}{8t}},
 \end{align}
 and 
  \begin{align}\label{btmrw}
  \sup_{n\in\N}\sup_{\substack{x\in n^{-1}\Z\\r\leq 4nt}}P_x^n(\sigma_{B(x,r)^c}(X^n)\le t)\le 4e^{-\frac{r^2}{8t}},\qquad \bbP\rm{-a.s.}
 \end{align}
 \end{lem}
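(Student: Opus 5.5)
The plan is to treat $Y^n$ first, via a martingale exponential bound, and then transfer the bound to $X^n$ by a time change. The two processes share the same jump chain, the simple random walk on $n^{-1}\Z$, and differ only in their holding rates.

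\textbf{Step 1: the walk $Y^n$.} The generator of $Y^n$ is $\frac{n^2}{2}\sum_{|y-x|=1}(f(y/n)-f(x/n))$. So $Y^n$ jumps at total rate $n^2$ to a uniformly chosen neighbour, with steps of size $\pm 1/n$. For $\lambda>0$ the process
\[
\exp\bigl(\lambda(Y^n_t-x)-n^2 t(\cosh(\lambda/n)-1)\bigr)
\]
is a martingale. I will apply Doob's maximal inequality to it on $[0,t]$, which gives
\[
P_x\Bigl(\sup_{s\le t}(Y^n_s-x)\ge r\Bigr)\le \exp\bigl(-\lambda r+n^2t(\cosh(\lambda/n)-1)\bigr).
\]
Next I use $\cosh u-1\le u^2$, which holds for $u\le 1$ and in fact on a slightly larger range, and choose $\lambda=r/(2t)$. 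This gives the bound $\exp(-r^2/(2t)+r^2/(4t))=e^{-r^2/(4t)}$. The constraint $r\le 4nt$ is used exactly here: it keeps $u=\lambda/n=r/(2nt)\le 2$. On $u\le 2$ one has $\cosh u-1\le c u^2$ with $c=(\cosh 2-1)/4\approx 0.69$. With $\lambda=r/(2t)$ this yields an exponent of at most $-(1-c)r^2/(2t)$, which is weaker than the target $r^2/(8t)$. So I will instead take $\lambda=r/(4t)$. Then $u\le 1$, $\cosh u-1\le 0.55u^2$, and the exponent is $-r^2/(4t)+0.55\,r^2/(16t)\le -r^2/(8t)$. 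Running the same argument for $-(Y^n-x)$ and adding the two one-sided bounds handles exit to the left. Since $\{\sigma_{B(x,r)^c}\le t\}$ is contained in the union of the two one-sided events, the prefactor is $2$, and the stated $4$ leaves room to spare.

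\textbf{Step 2: transfer to $X^n$.} $X^n$ is a time change of $Y^n$ via the additive functional $A_t=\int_0^t n\tau_{nY^n_s}\,\nu_n$-density$\,ds$, that is, with speed measure $\mu_n$ in place of $\nu_n$. Concretely, $X^n_t=Y^n_{A^{-1}_t}$ with $A_t=\int_0^t\tau_{nY^n_s}\,ds$. Since $\tau_x\ge1$ almost surely (as $\bbP(\tau_0\ge u)=u^{-\alpha}$ for $u>1$), we get $A_t\ge t$ and hence $A^{-1}_t\le t$. The two processes visit the same sequence of points, so exit times satisfy $\sigma_{B(x,r)^c}(X^n)=A(\sigma_{B(x,r)^c}(Y^n))\ge\sigma_{B(x,r)^c}(Y^n)$. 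Therefore
\[
P^n_x(\sigma_{B(x,r)^c}(X^n)\le t)\le P_x(\sigma_{B(x,r)^c}(Y^n)\le t).
\]
The bound from Step 1 then applies to every environment with all $\tau_x\ge1$, which is a $\bbP$-a.s.\ event. This gives \eqref{btmrw}.

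\textbf{Main obstacle.} The real work is the constant bookkeeping in Step 1: choosing $\lambda$ and justifying the $\cosh$ bound uniformly on the range $r\le 4nt$, so that both the exponent $r^2/(8t)$ and the prefactor $4$ come out. A secondary point is to justify the time-change identification. This is the same correspondence already used in the proof of \cref{1_1.1} via \cite[Lemma 2.6]{CHK17}. Here it can also be checked directly from the generators, since $\Delta_n=\tau^{-1}\cdot(\text{generator of }Y^n)$.
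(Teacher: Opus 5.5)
Your proof is correct, and it differs from the paper's in both steps. For the bound on $Y^n$, the paper writes $Y^n$ as $\frac1n S_{N_t}$ with $N$ a Poisson process of intensity $2n^2$. It uses the reflection principle to get $P_0(\sigma_{B(0,r)^c}\le t)\le 2P_0(\sup_{s\le t}Z^n_s\ge r)\le 4P_0(Z^n_t\ge r)$. It then applies a Chernoff bound at the single time $t$, with $\theta=r/(4t)$ and the crude estimate $\cosh u-1\le u^2$ for $u\le 1$. You apply Doob's maximal inequality directly to the exponential martingale. This avoids the reflection principle and gives prefactor $2$ instead of $4$.

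For the transfer to $X^n$, the paper realises both processes as time changes of one Brownian motion $B$ through its local time, $X^n=B_{(A^{\mu_n})^{-1}}$ and $Y^n=B_{(A^{\nu_n})^{-1}}$. It then compares $A^{\nu_n}\le A^{\mu_n}$ (from $\tau_y\ge 1$) at the hitting time of $\{\pm r_n\}$ by $B$. You time-change $Y^n$ itself by $A_t=\int_0^t\tau_{nY^n_s}\,ds$. Your route is more elementary: it needs only the standard fact that this time change of a bounded-rate chain divides the holding rates by $\tau$. The paper's coupling instead places all the processes on a single Brownian motion, consistent with the time-change framework it uses elsewhere.

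On constants: the paper's representation uses total jump rate $2n^2$ for $Y^n$, twice the rate $n^2$ you read off from the displayed generator. Your bound survives either normalization. With $\lambda=r/(4t)$ and rate $2n^2$, the exponent is $-r^2/(4t)+2\cdot 0.55\,r^2/(16t)\le -r^2/(8t)$.

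Your discarded choice $\lambda=r/(2t)$ was also fine. At rate $n^2$ the exponent there is $-(1-c/2)\,r^2/(2t)$, not $-(1-c)\,r^2/(2t)$; at rate $2n^2$ it is $-(1-c)\,r^2/(2t)$. Both are at most $-r^2/(8t)$ because $c\approx 0.69<3/4$.
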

 \begin{rmk}
Although an inequality similar to \eqref{srwexp} is established for each fixed $n$ in \cite[Lemma 5.21]{MR3616731},
it cannot be applied directly here because it does not provide uniform estimates with respect to $n$, 
and the ranges of $t$ and $r$ are limited. 
This lemma resolves these issues.
\end{rmk}
\begin{proof}
  We first prove \eqref{srwexp}.
  We may assume $x=0$.
  For a Poisson process $(N_t)_t$ with an intensity $2n^2$ and a discrete-time simple random walk $(S_j)_j$ on $\Z$,
  the process $Z^n_t:=\frac{1}{n}S_{N_t}$ has the same distribution as $Y^n$. Thus, using the reflection principle,
  we observe that 
  \begin{align*}
   P_0(\sigma_{B(0,r)^c}(Y^n)\le t)
   =P_0(\sigma_{B(0,r)^c}(Z^n)\le t)
   \leq 2 P_0\left( \sup_{s\leq t}Z^n_s\ge r \right)
   \le 4P_0(Z^n_t\ge r).
  \end{align*}
  Set $\theta=r/4t$. Then it follows from $\theta/n\le 1$ that $\cosh(\theta/n)-1=\frac{e^{\theta/n}+e^{-\theta/n}}{2}-1\le (\theta/n)^2$.
  By virtue of this inequality, we deduce that
  \begin{align*}
    P_0(Z^n_t\ge r)
    =&\sum_{k\ge 0}P_0\left(\frac{1}{n}S_k\ge r\right)P(N_t=k)\\
    =&\sum_{k\ge 0}P_0\left(e^{\frac{\theta}{n}S_k}\ge e^{\theta r}\right)\frac{1}{k!}(2n^2t)^ke^{-2n^2 t}\\
    \le&\sum_{k\ge 0}e^{-\theta r}E_0[e^{\frac{\theta}{n}S_k}]\frac{1}{k!}(2n^2t)^ke^{-2n^2 t}\\
    =&\sum_{k\ge 0}e^{-\theta r}  (\cosh(\theta/n))^k  \frac{1}{k!}(2n^2t)^ke^{-2n^2 t}\\
    =&\exp\left(  -\theta r+2n^2 t(\cosh(\theta/n)-1)  \right)\\
    \leq&\exp\left(  -\theta r+2n^2 t(\theta/n)^2\right)\\
    =&\exp\left(-\frac{r^2}{8t}\right),
  \end{align*}
  which completes the proof of \eqref{srwexp}.

  We next show \eqref{btmrw}.
  From \eqref{srwexp}, it suffices to prove $\sigma_{B(x,r)^c}(Y^n)\le \sigma_{B(x,r)^c}(X^n)$ $\bbP$-almost surely.
  We may assume $x=0$.
  Let $(L(y,t))_{y\in\R,t\ge 0}$ be the local time of a Brownian motion $B=(B_t)_t$ on $\R$ and set 
  $A^{\nu_n}_t=\int_{\R}L(y,t)\nu_n(dy)$ and $A^{\mu_n}_t=\int_{\R}L(y,t)\mu_n(dy)$.
  Then we have $A^{\nu_n}_t\le A^{\mu_n}_t$ since $\tau_y\ge 1$ $\bbP$-almost surely.
  Moreover, we may construct the processes $X^n$ and $Y^n$ by $B_{( A^{\mu_n})^{-1}_t}$ and $B_{(A^{\nu_n})_t^{-1}}$, respectively
  (see \cite[Theorem 3.34]{Nd25}).
  Here, we write $(A^{\mu_n})^{-1}_t$ for the left inverse.
  Now, if we set $r_n:=\min\{a\in n^{-1}\Z:a\ge r\}$, then it holds that
  $\sigma_{B(0,r)^c}(X^n)=\sigma_{\{\pm r_n\}}(X^n)=\sigma_{-r_n}(X^n)\wedge \sigma_{r_n}(X^n)$.
  And, it is not difficult to check that $\sigma_{r_n}(X^n)=A^{\mu_n}_{\sigma_{r_n}(B)}$.
  Combining these observations, we deduce that $\sigma_{B(0,r)^c}(Y^n)\le \sigma_{B(0,r)^c}(X^n)$,
  and the result follows from \eqref{srwexp}.
\end{proof}

To complete the proof of \cref{btmrate}, 
we finally need the following uniform estimate for the measures of short intervals.
\begin{lem}\label{taihenn}
  Set $r_n=n^{\theta},\theta>0$.
  Let $\varepsilon_n>0$ be a sequence satisfying $\varepsilon_n\to 0$ and $n\varepsilon_n\to \infty$.
  Then, for an arbitrary $\delta>0$, we have
  \begin{align*}
    \sup_{x\in \R}\mu_n^{(r_n)}((x-\varepsilon_n,x+\varepsilon_n))\lesssim \varepsilon_n+\varepsilon_n n^{\left(\frac{1+\theta}{\alpha}+\delta\right)(1-\alpha)}+\sqrt{\frac{\varepsilon_n v_n\log n}{n}}+n^{\frac{1+\theta}{\alpha}-1+\delta}\log n,\qquad \bbP\rm{-a.s.}
  \end{align*}
  where
  \begin{align*}
    v_n=
    \begin{dcases}
      n^{\left(\frac{1+\theta}{\alpha}+\delta\right)(2-\alpha)}, & \alpha\in (1,2),\\
      \log n, & \alpha=2,\\
      1, & \alpha>2. 
    \end{dcases}
  \end{align*}
\end{lem}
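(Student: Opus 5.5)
The plan is to split the deterministic mean from the random fluctuation, as in the proof of \cref{dblbtm}, but now with a supremum over the countable family of intervals $(x-\varepsilon_n,x+\varepsilon_n)$ rather than over H\"older functions. First, it suffices to control $x$ in the grid $n^{-1}\Z\cap[-r_n-1,r_n+1]$: every interval of length $2\varepsilon_n$ is covered by one of at most $O(nr_n)$ grid-centred intervals of radius $2\varepsilon_n$ (since $n\varepsilon_n\to\infty$), and points outside $[-r_n,r_n]$ carry no mass. Fix such an interval $I$. It contains $N_I\lesssim n\varepsilon_n$ lattice points, so
\[
\mu_n^{(r_n)}(I)\le \frac1n\sum_{x/n\in I}\tau_x .
\]

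Next, truncate at level $b:=n^{\frac{1+\theta}{\alpha}+\delta}$ and write $\tau_x=\tau_x^{-}+\tau_x^{+}$, where $\tau_x^{-}=\tau_x1_{\{\tau_x\le b\}}$. Since $\bbP(\tau_0>b)=b^{-\alpha}=n^{-(1+\theta)-\alpha\delta}$ and there are $\lesssim nr_n=n^{1+\theta}$ sites in play, a union bound combined with Borel--Cantelli (as in \eqref{eb0}--\eqref{eb1}) gives that almost surely, for all large $n$, $\tau_x\le b$ for every $|x|\le nr_n$. Hence the $\tau^{+}$ part vanishes, apart from its mean. That mean contributes $\frac1n N_I\,\bbE[\tau_0^{+}]\lesssim \varepsilon_n b^{1-\alpha}$, which is the second term of the bound. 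The mean $\bbE[\tau_0^{-}]\le\bbE[\tau_0]$ contributes the first term, $\varepsilon_n$.

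The centered part $\frac1n\sum_{x/n\in I}(\tau_x^{-}-\bbE\tau_0^{-})$ is a sum of at most $n\varepsilon_n$ independent centered variables. Each is bounded by $b$ and has variance at most $\bbE[(\tau_0^-)^2]\lesssim v_n$, computed exactly as $c_n$ in the proof of \cref{dblbtm}. Bernstein's inequality with deviation level $u=C(\sqrt{n\varepsilon_n v_n\log n}+b\log n)$ gives a probability at most $n^{-3-\theta}$ for each interval. A union bound over the $O(n^{1+\theta})$ grid intervals, followed by Borel--Cantelli, then shows that almost surely, for large $n$, every interval's fluctuation is at most $\frac{u}{n}\lesssim\sqrt{\varepsilon_n v_n\log n/n}+n^{\frac{1+\theta}\alpha-1+\delta}\log n$.

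The remaining contributions come from the finitely many small $n$ and from the enlargement $\varepsilon_n\to2\varepsilon_n$, and both are absorbed into the constant. The main obstacle is making the uniformity over $x$ cheap. This is the reason for discretising to polynomially many intervals, so that the Bernstein tail can afford a $\log n$ union-bound cost. It is also why the truncation exponent carries the extra $\theta$, so that the truncation still holds for all sites in the growing window $[-r_n,r_n]$.
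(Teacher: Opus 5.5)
Your argument is essentially the paper's proof. You reduce to $O(nr_n)$ windows of $\lesssim n\varepsilon_n$ consecutive lattice sites. You show that truncation at $a_n=n^{\frac{1+\theta}{\alpha}+\delta}$ is almost surely inactive for large $n$, and bound the mean by $\varepsilon_n+\varepsilon_n a_n^{1-\alpha}$. Finally you apply Bernstein's inequality at level $\sqrt{n\varepsilon_n v_n\log n}+a_n\log n$, then a union bound over windows and Borel--Cantelli.

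One step fails as you wrote it. In the truncation step, the union bound over the $\lesssim n^{1+\theta}$ sites gives
$\mathbb{P}\bigl(\max_{|x|\le nr_n}\tau_x>a_n\bigr)\lesssim n^{1+\theta}\cdot n^{-(1+\theta)-\alpha\delta}=n^{-\alpha\delta}$.
This is not summable in $n$ when $\delta\le 1/\alpha$. Since $\delta$ is arbitrary, small $\delta$ is exactly the regime that matters, so Borel--Cantelli over $n$ cannot be applied to this estimate.

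You need the site-indexed argument that your citation of \eqref{eb0}--\eqref{eb1} points to, which is what the paper does. Since $\sum_i\mathbb{P}(\tau_i\vee\tau_{-i}>i^{1/\alpha+h})\lesssim\sum_i i^{-1-\alpha h}<\infty$, almost surely $\tau_{\pm i}\le i^{1/\alpha+h}$ for all $i>I_0(\omega)$. Hence $\max_{|i|\lesssim nr_n}\tau_i\lesssim n^{\frac{1+\theta}{\alpha}+(1+\theta)h}$, and choosing $(1+\theta)h<\delta$ makes this at most $a_n$ for large $n$.

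Alternatively, block $n$ dyadically and use that the window grows with $n$. For $2^k\le n<2^{k+1}$ the bad event is contained in one whose probability is $\lesssim 2^{-k\alpha\delta}$, which is summable in $k$.

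Your Bernstein step is fine as written. There the per-window probability $n^{-3-\theta}$ leaves a summable $n^{-2}$ after the union bound.
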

\begin{proof}
  It suffices to consider
  $\sup_{|x|\le r_n}\mu_n^{(r_n)}((x-\varepsilon_n,x+\varepsilon_n))$.
  Then we have
  \begin{align*}
    \sup_{|x|\le r_n}\mu_n^{(r_n)}((x-\varepsilon_n,x+\varepsilon_n))
    \leq \max_{|j|\le \ceil{nr_n}}\frac{1}{n}\sum_{i=j}^{j+\ceil{2n\varepsilon_n}}\tau_i.
  \end{align*}
  Take $h>0$ arbitrarily. 
  By $\sum_{i\ge 0}\bbP(\tau_i\vee\tau_{-i}>i^{\frac{1}{\alpha}+h})<\infty$ and the Borel-Cantelli lemma,
  there exists a random variable $I_0:\Omega\to \N$ such that $\tau_i,\tau_{-i}\le i^{\frac{1}{\alpha}+h}$ for $i>I_0$.
  Thus, if $n$ is large enough so that $\ceil{nr_n}>I_0$, we have 
  \begin{align}\label{sore}
    \max_{|i|\le \ceil{2nr_n}}\tau_i= 
    \left(\max_{|i|\le I_0}\tau_i\right)\vee \ceil{2nr_n}^{\frac{1}{\alpha}+h}\lesssim n^{\frac{1+\theta}{\alpha}+(1+\theta)h}.
  \end{align} 
  In what follows, we consider such large $n$.
  We define $\delta=(1+\theta)h$. Note that $\delta$ can be arbitrarily small.
  Therefore, 
it holds that
\begin{align}\label{tosore}
  \max_{|j|\le \ceil{nr_n}}\frac{1}{n}\sum_{i=j}^{j+\ceil{2n\varepsilon_n}}\tau_i
  =\max_{|j|\le \ceil{nr_n}}\frac{1}{n}\sum_{i=j}^{j+\ceil{2n\varepsilon_n}}\tau_i1_{\{\tau_i\le n^{\frac{1+\theta}{\alpha}+\delta}\}}.
\end{align}
We write $a_n$ for $n^{\frac{1+\theta}{\alpha}+\delta}$.
Since 
\begin{align*}
  0\le \bbE[\tau_0]-\bbE\left[\tau_01_{\{\tau_i\le a_n\}}\right]
 =\int_{a_n}^{\infty}u^{-\alpha}du+a_n\bbP(\tau_0>a_n)
=\frac{\alpha}{\alpha-1}a_n^{1-\alpha},
\end{align*}
it holds that
\begin{align*}
  \frac{1}{n}\sum_{i=j}^{j+\ceil{2n\varepsilon_n}}\bbE\left[\tau_i1_{\{\tau_i\le a_n\}}\right]
  =\frac{\ceil{2n\varepsilon_n}+1}{n}\bbE[\tau_0]-\frac{\ceil{2n\varepsilon_n}+1}{n}\frac{\alpha}{\alpha-1}a_n^{1-\alpha}
  \lesssim \varepsilon_n+\varepsilon_n a_n^{1-\alpha},
\end{align*}
and thus
\begin{equation}\label{kore}
  \begin{aligned}
    \MoveEqLeft \max_{|j|\le \ceil{nr_n}}\frac{1}{n}\sum_{i=j}^{j+\ceil{2n\varepsilon_n}}\tau_i1_{\{\tau_i\le n^{\frac{1+\theta}{\alpha}+\delta}\}}\\
  \lesssim&\frac{1}{n}\max_{|j|\le \ceil{nr_n}}\left|\sum_{i=j}^{j+\ceil{2n\varepsilon_n}}\tau_i1_{\{\tau_i\le a_n\}}-E\left[\tau_i1_{\{\tau_i\le a_n\}}\right]\right| +\varepsilon_n+\varepsilon_n a_n^{1-\alpha}.
  \end{aligned}
\end{equation}
Define $Y_{i,n}:=\tau_i1_{\{\tau_i\le a_n\}}-\bbE\left[\tau_i1_{\{\tau_i\le a_n\}}\right]$.
Note that $(Y_{i,n})_i$ is a family of independent random variables satisfying $|Y_{i,n}|\le a_n$.
By virtue of Bernstein's inequality (see \cite[Lemma 2.2.9]{VWJ96}),
we deduce that
\begin{align*}
  \bbP\left( \max_{|j|\le \ceil{nr_n}}\left|\sum_{i=j}^{j+\ceil{2n\varepsilon_n}}Y_{i,n}\right| >t\right)
  \leq&\sum_{|j|\le \ceil{nr_n}}\bbP\left(\left|\sum_{i=j}^{j+\ceil{2n\varepsilon_n}}Y_{i,n}\right| >t\right)\\
  \lesssim&nr_n \max_{|j|\le \ceil{nr_n}}\bbP\left(\left|\sum_{i=j}^{j+\ceil{2n\varepsilon_n}}Y_{i,n}\right| >t\right)\\
  \lesssim&n^{1+\theta}\exp\left( -c\frac{t^2}{n\varepsilon_n v_n+a_nt} \right),
\end{align*}
where $v_n$ is defined by
\begin{align*}
  \bbE[Y_{i,n}^2]\leq \bbE\left[ \left(\tau_01_{\{\tau_0\le a_n\}}\right)^2 \right]
  \lesssim v_n:=
  \begin{dcases}
    a_n^{2-\alpha}, & \alpha<2,\\
    \log n, & \alpha=2,\\
    1, & \alpha>2. 
  \end{dcases}
\end{align*}
Taking $t=t_n:=M(\sqrt{n\varepsilon_n v_n\log n}+a_n \log n)$ for sufficiently large $M>0$,
it follows that
\begin{align*}
  \bbP\left( \max_{|j|\le \ceil{nr_n}}\left|\sum_{i=j}^{j+\ceil{2n\varepsilon_n}}Y_{i,n}\right| >t_n\right)\le n^{-2}.
\end{align*}
Therefore we deduce that
\begin{align}\label{tokore}
  \max_{|j|\le \ceil{nr_n}}\left|\sum_{i=j}^{j+\ceil{2n\varepsilon_n}}Y_{i,n}\right|
  \leq \sqrt{n\varepsilon_n v_n\log n}+a_n \log n
\end{align}
for sufficiently large $n$ by the Borel-Cantelli lemma.
Combining \eqref{sore},\;\eqref{tosore},\;\eqref{kore} and \eqref{tokore}, we obtain the result.
\end{proof}

We are now ready to prove \cref{btmrate} and \cref{btmsgQ}.

\begin{proof}[Proof of \cref{btmrate}]
  We first show \eqref{distfunc}, and
  denote $X^{(r)}$ and $X^{n,(r)}$ for the process associated with
  $(F^{(r)},\mu^{(r)})$ and $(F^{(r)}_n,\mu_n^{(r)})$, respectively.
  We may assume $t=1$. In the following proof, the constant in $\lesssim$ does not depend on $x$.
  Note that we may define $X^{(r)}$ and $X$ (resp. $X^{n,(r)}$ and $X^n$)
  on a same probability space. 
  More precisely, we define $X^{(r)}$ to be the time change of $X$ by $\mu^{(r)}$, 
  and define $X^{n,(r)}$ analogously.
  Take sufficiently small $\theta\in (0,1)$ and set $r_n=n^{\theta}$.
  To begin with, we observe
  \begin{equation}\label{zxc}
    \begin{aligned}
  \MoveEqLeft |P_0(X_1\leq x)-P_0^n(X_1^n\le x)|\\
    \leq&|P_0(X_1\leq x)-P_0(X_1^{(r_n)}\le x)|
    +|P_0(X_1^{(r_n)}\leq x)-P_0^n(X_1^{n,(r_n)}\le x)|\\
    &+|P_0^n(X_1^{(r_n)}\leq x)-P_0^n(X_1^n\le x)|.    
    \end{aligned}
  \end{equation}
  Since $X$ is a scaled Brownian motion, and we have \eqref{btmrw},
  we deduce that 
  \begin{align*}
    |P_0(X_1\leq x)-P_0(X_1^{(r_n)}\le x)|
    =&|P_0(X_1\leq x,\sigma_{B(0,r_n)^c}(X)>1)+P_0(X_1\leq x,\sigma_{B(0,r_n)^c}(X)\le 1)\\
    &-P_0(X_1^{(r_n)}\le x,\sigma_{B(0,r_n)^c}(X)>1)-P_0(X_1^{(r_n)}\le x,\sigma_{B(0,r_n)^c}(X)\le 1)|\\
    =&|P_0(X_1\leq x,\sigma_{B(0,r_n)^c}(X)\le 1)-P_0(X_1^{(r_n)}\le x,\sigma_{B(0,r_n)^c}(X)\le 1)|\\
    \le&P_0(\sigma_{B(0,r_n)^c}(X)\le 1)\\
    \lesssim&\exp\left(-cn^{2\theta}\right),
  \end{align*}
  and 
  \begin{align*}
    |P_0^n(X_1^{n,(r_n)}\leq x)-P_0^n(X_1^n\le x)|
    \leq P_0(\sigma_{B(0,r_n)^c}(X^n)\le 1)\lesssim \exp\left(-cn^{2\theta}\right).
  \end{align*}
  It remains to estimate the second term of \eqref{zxc}, $|P_0(X_1^{(r_n)}\leq x)-P_0^n(X_1^{n,(r_n)}\le x)|$.
  Let $\varepsilon_n>0$ be a sequence satisfying $\varepsilon_n\to 0$ and $n\varepsilon_n\to\infty$.
  Take a smooth function $\varphi:\R\to [0,1]$ with $\varphi|_{(-\infty,-1)}=1$ and $\varphi|_{(1,\infty)}=0$ 
  and set $g_n(y)=\varphi(\frac{y-x}{\varepsilon_n}),\;y\in\R$. 
  Note that $g_n$ satisfies $0\le g_n\le 1,\;g_n|_{(-\infty,x-\varepsilon_n)}=1,$ 
  and $g_n|_{(x+\varepsilon_n,\infty)}=0$.
  Then we have 
  \begin{align*}
    \MoveEqLeft |P_0(X_1^{(r_n)}\leq x)-P_0^n(X_1^{n,(r_n)}\le x)|\\
    \leq&|P_1^{(r_n)}1_{(-\infty,x]}(0)-P_1^{(r_n)}g_n(0)|
    +|P_1^{(r_n)}g_n(0)-P_1^{n,(r_n)}g_n(0)|
    +|P_1^{n,(r_n)}g_n(0)-P_1^{(r_n)}1_{(-\infty,x]}(0)|,
  \end{align*} 
 and the terms, excluding the middle one, are bounded as follows:
 \begin{equation*}
  |P_1^{(r_n)}1_{(-\infty,x]}(0)-P_1^{(r_n)}g_n(0)|
  =\left|\int_{(x-\varepsilon_n,x+\varepsilon_n)} (1_{(-\infty,x]}-g_n)(y)p^{(r_n)}(1,0,y)\mu(dy) \right| 
  \lesssim \varepsilon_n, 
 \end{equation*}
 \begin{equation*}
  |P_1^{n,(r_n)}g_n(0)-P_1^{(r_n)}1_{(-\infty,x]}(0)|
  \lesssim \mu_n((x-\varepsilon_n,x+\varepsilon_n)).
 \end{equation*}
 Here we used the bound $\sup_n\sup_y p_n^{(r_n)}(1,0,y)<\infty$, 
 which follows from $\frac{1}{4}a\le \mu^{(r_n)}_n ((y-a,y+a))$ and \cref{summ}(ii).
 For $q>\alpha^{-1},\;\alpha>1,$ and $a>0$, set
 \begin{equation}\label{mnalpha}
  \begin{aligned}
    M_n(\alpha,a)=
\begin{dcases}
a+a^{1/\alpha}n^{1/\alpha-1}(\log(na))^{1/\alpha}(\log\log(na))^q, & \alpha\in (1,2),\\
a+a^{1/2}n^{-1/2}(\log(na))^{1/2}(\log\log(na))^q, & \alpha=2,\\
a+a^{1/2}n^{-1/2}(\log\log(na))^{1/2}, & \alpha>2.
\end{dcases}
  \end{aligned}
 \end{equation}
Then it follows from \cite[Theorem 2.1]{BTM24} and \cref{taihenn} that 
\begin{align*}
  \mu_n(F_n^{(r_n)})\lesssim M_n(\alpha,r_n),\qquad \mu_n((x-\varepsilon_n,x+\varepsilon_n))\lesssim \varepsilon_n+a_n,
\end{align*}
where $a_n=\varepsilon_n n^{\left(\frac{1+\theta}{\alpha}+\delta\right)(1-\alpha)}+\sqrt{\frac{\varepsilon_n v_n\log n}{n}}+n^{\frac{1+\theta}{\alpha}-1+\delta}\log n$.
For the middle term, by \cref{sg2}, it holds that
\begin{align*}
  |P_1^{(r_n)}g_n(0)-P_1^{n,(r_n)}g_n(0)|
  \lesssim c(n)\left( \varepsilon^{-4}n^{-\kappa}+\varepsilon^{-(4+\kappa)}d_{\BL^{\kappa}}(\mu^{(r_n)},\mu_n^{(r_n)})\right),
\end{align*}
where $c(n)=(r_nM_n(\alpha,r_n))^{\frac{9}{2}+5\kappa}r_n^{1/2}=n^{5(1+\kappa)\theta}M_n(\alpha,r_n)^{\frac{9}{2}+5\kappa}$ (see \eqref{5.4}).
Now, since $M_n(\alpha,r_n)\lesssim r_n$, we have $c(n)\lesssim n^{10(1+\kappa)\theta}$.
Therefore, we conclude that
\begin{align*}
  \MoveEqLeft \sup_{x\in\R}|P_0(X_1\leq x)-P_0^n(X_1^n\le x)|\\
  \lesssim& 
  \exp(-cn^{2\theta})+\varepsilon_n+a_n+n^{10(1+\kappa)\theta}\left( \varepsilon_n^{-4}n^{-\kappa}+\varepsilon_n^{-(4+\kappa)}d_{\BL^{\kappa}}(\mu^{(r_n)},\mu_n^{(r_n)})\right)
\end{align*}
Substituting $\varepsilon_n=(n^{10(1+\kappa)\theta}d_{\BL^{\kappa}}(\mu^{(r_n),\mu_n^{(r_n)}}))^{\frac{1}{5+\kappa}}$
 and taking sufficiently small $\theta,\delta$, we may verify that
 $\exp(-cn^{2\theta}),\;a_n,\;n^{10(1+\kappa)\theta-\kappa}\varepsilon_n^{-4}=o(\varepsilon_n)$.
 Thus it holds that
 \begin{align*}
  \sup_{x\in\R}|P_0(X_1\leq x)-P_0^n(X_1^n\le x)|
  \lesssim& (n^{10(1+\kappa)\theta}d_{\BL^{\kappa}}(\mu^{(r_n),\mu_n^{(r_n)}}))^{\frac{1}{5+\kappa}}\\
  \lesssim&
  \begin{dcases}
    n^{-\frac{\alpha-1}{\alpha}\frac{1}{5+\kappa}+\left[ \frac{2+\alpha}{2\alpha}+10(1+\kappa) \right]\frac{\theta}{5+\kappa}}(\log n)^{\frac{p(2-\alpha)}{2(5+\kappa)}}, & \alpha\in (1,2),\\
    n^{-\frac{1}{2(5+\kappa)}+\left[11+10\kappa\right]\frac{\theta}{5+\kappa}}(\log n)^{\frac{1}{2(5+\kappa)}}, & \alpha=2,\\
    n^{-\frac{1}{2(5+\kappa)}+\left[11+10\kappa\right]\frac{\theta}{5+\kappa}}, & \alpha\in (2,\infty),
  \end{dcases}
 \end{align*}
 for an arbitrary $p>1/\alpha$.
 Since $\theta>0$ can be taken sufficiently small, we deduce that 
 \begin{align*}
  \sup_{x\in\R}|P_0(X_t\leq x)-P_0^n(X_t^n\le x)|
  \lesssim n^{-E}
 \end{align*}
 for $E<\frac{1-\alpha^{-1}}{5+\kappa}\wedge \frac{1}{2(5+\kappa)}$.
 Taking $\kappa$ arbitrarily close to $1/2$, we complete the proof of \eqref{distfunc}.
 The second assertion \eqref{hkbtm} is shown by \cite[Theorem 4.8]{BTM24}
 and an argument similar to the proof of \cite[Theorem 1.7]{BTM24}.
\end{proof}

\begin{proof}[Proof of \cref{btmsgQ}]
  Set $r_n=n^{\theta}$ for sufficiently small $\theta\in (0,1)$.
  Note that, since $(F^{(r)},d_E,\mu^{(r)})$ satisfies the assumption $\rm{(A3)}$,
  we may use \cref{gen}(ii) instead of (i) in the proof of \cref{noncpt}.
  Then, by the argument in \cref{noncpt} and \eqref{btmrw},
  we have that
  \begin{align*}
    \frac{1}{\|f\|_{\BL^{\kappa}}}\sup_{|x|\le K}\Big|E_{x}[f(X_t)]-E_{\frac{\floor{nx}}{n}}^n[f(X_t^n)]\Big|\lesssim \exp(-cn^{2\theta}/t)+C_n(r_n)d_{\BL^{\kappa}}(\mu^{(r_n)},\mu_n^{(r_n)})^{E_1}(\log n)^{E_2},
  \end{align*}
  where $E_1=\frac{\kappa}{3\kappa+4}$ and $E_2=\frac{\kappa (2+\kappa)}{3\kappa+4}$.
  Here, the constant $C_n(r_n)$ is bounded above by $n^{A\theta}$ for some $A>0$,
  by the formula for $\g{C}{gen,n}$ in \cref{gen}(ii) and the fact $M_n(\alpha,r_n)\lesssim r_n$ (see \eqref{mnalpha} for the definition).
  Therefore, we may take $\theta>0$ arbitrarily small, and obtain the result from 
  \cref{dblbtm}.
\end{proof}

\subsubsection{Annealed estimates}
In this section, we prove the following annealed estimates.
\begin{thm}\label{btmsgA}\quad\par
  Suppose that $\alpha$ and $\kappa\in (1/2,1]$ satisfy $\alpha>\frac{\kappa}{3\kappa+4}+5\kappa+\frac{7}{2}$.
  Then it holds for any $f\in\BL^{\kappa},K\in(0,\infty),K\in (0,\infty)$ and $E<\frac{\kappa}{2(3\kappa+4)}$ that
  \begin{align*}
    \bbE\left[\sup_{|x|\le K}\Big|E_{x}[f(X_t)]-E_{\frac{\floor{nx}}{n}}^n[f(X_t^n)]\Big|\right]\lesssim \|f\|_{\BL^{\kappa}}n^{-E}.
  \end{align*}
\end{thm}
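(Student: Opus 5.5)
The plan is to take expectations in the quenched bound behind \cref{btmsgQ}, rather than its almost-sure conclusion. As there, truncate at $r_n=n^{\theta}$ for a small $\theta>0$ and run the argument of \cref{noncpt} with \cref{gen}(ii), since $(F^{(r)},d_E,\mu^{(r)})$ satisfies (A3). For every realisation of $(\tau_x)$ with $\Psi_2(n,r_n)\le 1/2$ this gives
\begin{align*}
\frac{1}{\|f\|_{\BL^{\kappa}}}\sup_{|x|\le K}\Big|E_{x}[f(X_t)]-E_{\frac{\floor{nx}}{n}}^n[f(X_t^n)]\Big|\lesssim e^{-cn^{2\theta}/t}+C_n(r_n)\,d_{\BL^{\kappa}}(\mu^{(r_n)},\mu_n^{(r_n)})^{E_1}(\log n)^{E_2},
\end{align*}
with $E_1=\frac{\kappa}{3\kappa+4}$. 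The exit-time term comes from \eqref{btmrw}, which holds deterministically since $\tau_y\ge1$. On the complementary event I use the trivial bound $2\|f\|_{F}$.

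Two random quantities must then be controlled in expectation. The first is $d_{\BL^{\kappa}}(\mu^{(r_n)},\mu_n^{(r_n)})$. Rather than the truncation and Borel--Cantelli argument of \cref{dblbtm}, I use its mean bound: for $\alpha>2$ the variance is finite, so the symmetrization and Dudley step \eqref{EZn} gives $\bbE[d_{\BL^{\kappa}}]\lesssim n^{-1/2}r_n+n^{-\kappa}r_n$, with no truncation needed. Jensen's inequality then gives $\bbE[d^{E_1}]\le(\bbE d)^{E_1}\lesssim n^{-E_1/2+O(\theta)}$, which matches the target $E<\frac{\kappa}{2(3\kappa+4)}$ once $\theta$ is small. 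The second quantity is $C_n(r_n)$, a polynomial in $\mu_n(F_n^{(r_n)})$ of degree roughly $\frac{9}{2}+5\kappa$ (see \eqref{5.4}), together with the inverse quantities hidden in $\g{c}{sle,n}$ and $m(2/3)$. Since $\tau\ge1$ gives $\mu_n(B(b,a))\gtrsim a$, the inverse quantities are deterministic. The polynomial factor, however, is random.

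To separate the two random factors I apply H\"older: $\bbE[C_n d^{E_1}]\le \bbE[C_n^{p'}]^{1/p'}\,\bbE[d^{E_1p}]^{1/p}$ with $p=1/E_1$, so that $\bbE[d^{E_1p}]=\bbE d$ and only its mean is needed. This forces moments of order $p'(\frac{9}{2}+5\kappa)$ with $p'=\frac{1}{1-E_1}$ of $\mu_n(F_n^{(r_n)})=\frac1n\sum_{|x|\le nr_n}\tau_x$. Such moments are finite, and of size $r_n^{O(1)}$, provided $\alpha$ exceeds that order. Since $\frac{1}{1-E_1}(\frac92+5\kappa)$ is close to $\frac{\kappa}{3\kappa+4}+5\kappa+\frac72$ plus lower-order terms, this is where the hypothesis on $\alpha$ should enter.

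The bad event $\{\Psi_2>1/2\}$ has probability bounded by Markov's inequality, $\lesssim n^{-1/2+O(\theta)}$, which is negligible. The main obstacle is this moment bookkeeping: one must check that the exponent of $C_n$ and the H\"older exponents combine exactly to the stated threshold on $\alpha$. If they do not, I would instead split on the event $\{\max_{|x|\le nr_n}\tau_x\le n^{\gamma}\}$ and bound its complement by $nr_n\,n^{-\gamma\alpha}$.
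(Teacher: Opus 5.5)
Your overall structure (truncate at $r_n=n^{\theta}$, apply the quenched bound on a good event, use the trivial bound on the bad event) matches the paper's. The gap is the step you flag yourself, and it does fail, not by lower-order terms.

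Let $\eta$ be the degree of $C_n(r_n)$ in $\mu_n(F_n^{(r_n)})$. In the paper's bookkeeping for this proof, $C(n)=r_n^{10\kappa+17/2}+r_n^{5(1+\kappa)}\mu_n([-r_n,r_n])^{5\kappa+7/2}$, so $\eta=5\kappa+\frac{7}{2}$. With $p=1/E_1$ you need $\bbE[\mu_n(F_n^{(r_n)})^{\eta/(1-E_1)}]<\infty$. Since $\frac{\eta}{1-E_1}-(\eta+E_1)=\frac{E_1(\eta-1+E_1)}{1-E_1}>0$, this is strictly stronger than the hypothesis $\alpha>\eta+E_1$ for every $\kappa\in(1/2,1]$. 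At $\kappa=1$ it asks for $\alpha>9.92$ roughly, instead of $\alpha>8.64$.

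Your fallback is worse. To make $n^{1+\theta-\gamma\alpha}$ negligible you need $\gamma>1/\alpha$. On the good event the only bound available is $\mu_n(F_n^{(r_n)})\lesssim r_n n^{\gamma}$, so $C_n$ costs $n^{\gamma\eta}\ge n^{\eta/\alpha}$. Near the threshold this is almost $n^{1}$ and swamps the gain $n^{-E_1/2}$; it would only close for $\alpha$ of order $2\eta/E_1$.

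The missing idea is that the total mass is itself controlled by the $\BL^{\kappa}$ distance. The constant function $1$ has $\|1\|_{\BL^{\kappa}}=1$, so $\mu_n(F_n^{(r_n)})\le \mu(F^{(r_n)})+d_{\BL^{\kappa}}(\mu^{(r_n)},\mu_n^{(r_n)})$.

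The paper uses this in \cref{8.9} in the form $\mu_n([-r_n,r_n])^{\eta}\lesssim r_n^{\eta}+d_{\BL^{\kappa}}^{\eta}$. This reduces $\bbE[d^{\zeta}\mu_n^{\eta}]$ to $r_n^{\eta}\bbE[d^{\zeta}]+\bbE[d^{\zeta+\eta}]$. These are then bounded by the higher-moment estimates of \cref{blp}, which hold for exponents $<\alpha$ and need truncation and moment inequalities, not just the mean. The condition $\zeta+\eta<\alpha$ with $\zeta\approx E_1$ is exactly the stated threshold. Equivalently, your H\"older step with $p=1+\eta/E_1$ instead of $p=1/E_1$, fed by \cref{blp}, reproduces that threshold. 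Restricting yourself to $\bbE[d]$ is what costs you.

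In your own setup there is an even shorter fix. You only invoke the quenched bound on the event where $h$ lies below a deterministic level. This should be the ``sufficiently small'' level $h_0$ from \cref{gen}(ii), not merely $\Psi_2\le 1/2$. On that event $d_{\BL^{\kappa}}\le 1$, so $\mu_n(F_n^{(r_n)})\le 2\bbE[\tau_0]r_n+1$. Hence the random part of $C_n(r_n)$ is bounded by $r_n^{O(1)}$ deterministically. Your mean bound with Jensen, together with Markov on the bad event, then finishes the proof with no H\"older step at all.
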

\begin{thm}\label{annhk}
  Suppose that $\alpha$ and $\kappa\in (1/2,1]$ satisfy $\alpha>\frac{\kappa}{3\kappa+4}+5\kappa+\frac{7}{2}$.
  Then it holds for any $K\in (0,\infty)$ and $E<\frac{\kappa}{2(3\kappa+4)(\kappa+2)}$ that
  \begin{align*}
    \bbE\left[\sup_{|x|\le K}\Big|p(t,0,x)-p_n(t,0,\floor{nx}/n)\Big|\right]\lesssim n^{-E}.
  \end{align*}
  In particular, if $\alpha>\frac{121}{14}$, we have 
  \begin{align*}
     \bbE\left[\sup_{|x|\le K}\Big|p(t,0,x)-p_n(t,0,\floor{nx}/n)\Big|\right]\lesssim n^{-E},
  \end{align*}
  for any $K\in (0,\infty)$ and $E<\frac{1}{42}$.
 \end{thm}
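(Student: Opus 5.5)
The plan is to deduce the annealed heat kernel bound from the annealed semigroup bound of \cref{btmsgA} by smoothing the Dirac mass, exactly as in \cref{hkest}. The smoothing costs one extra factor $(\kappa+2)$ in the exponent, which matches the target $E<\frac{\kappa}{2(3\kappa+4)(\kappa+2)}$.

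\textbf{Step 1 (reduction to a smoothed test function).} Fix $|x|\le K$, write $x_n=\floor{nx}/n$, and let $\varepsilon=\varepsilon_n\downarrow 0$. Put $\varphi_\varepsilon(z)=\psi((z-x)/\varepsilon)/\int\psi((w-x)/\varepsilon)\mu(dw)$ for a fixed Lipschitz bump $\psi\ge 0$ supported in $[-1,1]$, so that $\|\varphi_\varepsilon\|_{\BL^{\kappa}}\lesssim \varepsilon^{-(1+\kappa)}$. By symmetry $p(t,0,x)=p(t,x,0)$, and I write
\begin{align*}
|p(t,0,x)-p_n(t,0,x_n)|\le{}& \Big|p(t,0,x)-P_t\varphi_\varepsilon(0)\Big| + \Big|P_t\varphi_\varepsilon(0)-P^n_t\varphi_\varepsilon(0)\Big|\\
&+\Big|P^n_t\varphi_\varepsilon(0)-p_n(t,0,x_n)\Big|.
\end{align*}
The first term is $O(\varepsilon)$ because $X$ is a scaled Brownian motion, so $p(t,0,\cdot)$ is Lipschitz. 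For the middle term I apply \cref{btmsgA}, uniformly over $|x|\le K$ since its constants do not depend on the test function beyond $\|\cdot\|_{\BL^{\kappa}}$. In expectation this term is $\lesssim \varepsilon^{-(1+\kappa)}n^{-E'}$ for any $E'<\frac{\kappa}{2(3\kappa+4)}$.

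For the third term, write $P^n_t\varphi_\varepsilon(0)=\int p_n(t,0,z)\varphi_\varepsilon(z)\mu_n(dz)$. I then split the error into two pieces:
\begin{itemize}
\item the oscillation of $p_n(t,0,\cdot)$ on $(x-\varepsilon,x+\varepsilon)$. Since $R_n$ is Euclidean, the $\Lip^{1/2}$ bound from \eqref{hfholhk} is uniform given $\mu_n(B(y,a))\ge a/4$ (recall $\tau\ge 1$), and it yields $O(\varepsilon^{1/2})$; with the Lipschitz bound of \cref{continuity}(iii) on the truncated spaces it should yield $O(\varepsilon)$.
\item the mismatch between $\int\varphi_\varepsilon\,d\mu_n$ and $1$. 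This is controlled by $\varepsilon^{-(1+\kappa)}d_{\BL^{\kappa}}(\mu^{(r_n)},\mu_n^{(r_n)})$, which is of the same order as the middle term.
\end{itemize}

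\textbf{Step 2 (optimization).} Taking expectations, the bound is $\varepsilon+\varepsilon^{-(1+\kappa)}n^{-E'}$. Choosing $\varepsilon=n^{-E'/(2+\kappa)}$ gives the exponent $E'/(\kappa+2)$, which is the claim. For the particular case I take $\kappa=1$: then the moment condition $\alpha>\frac{1}{7}+5+\frac72=\frac{121}{14}$ and the exponent $\frac{1}{2\cdot 7\cdot 3}=\frac{1}{42}$ both follow by substitution.

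\textbf{Main obstacle.} The hard part is making the annealed semigroup input valid inside the expectation, that is, controlling random constants. The bound from \cref{noncpt} (with \cref{gen}(ii)) has the form $\exp(-cn^{2\theta})+C_n(r_n)\,d_{\BL^{\kappa}}^{E_1}(\log n)^{E_2}$, where $C_n$ is polynomial in $\mu_n(F_n^{(r_n)})$ of degree roughly $5\kappa+\frac72$. I would handle this by Hölder's inequality,
\[
\bbE\big[C_n\,d^{E_1}\big]\le \bbE[C_n^{p}]^{1/p}\,\bbE[d^{E_1q}]^{1/q}.
\]
The first factor is finite and of order $n^{A\theta}$ precisely when $\tau$ has enough moments; this is where $\alpha>\frac{\kappa}{3\kappa+4}+5\kappa+\frac72$ enters, with $p$ close to $1$. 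For the second factor I would use Jensen's inequality together with the Dudley and symmetrization estimate \eqref{EZn}, now without truncation since $\alpha>2$, to get $\bbE[d_{\BL^{\kappa}}]\lesssim n^{-1/2+\theta}$. The lower bound $\tau\ge1$ keeps $\g{c}{sle,n}$ and the Hölder constants deterministic. Finally, $\theta$ is sent to $0$.
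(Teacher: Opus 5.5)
Your route is the paper's. You smooth the Dirac mass with a bump of width $\varepsilon$, feed it to \cref{btmsgA}, control the mass mismatch by $\varepsilon^{-(1+\kappa)}d_{\BL^{\kappa}}(\mu^{(r_n)},\mu_n^{(r_n)})$ together with \cref{blp}, and optimize $\varepsilon$. The one difference is where you put the bump: you centre it at $x$ and start both processes at $0$, whereas the paper centres $g_\varepsilon$ at $0$ and lets $x$ be the starting point.
\begin{itemize}
\item The paper's arrangement uses the $\sup_{|x|\le K}$ in \cref{btmsgA} verbatim. Yours needs a fact visible only in its proof: the pathwise bound there is uniform over $\|f\|_{\BL^{\kappa}}\le L$. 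That fact is true, so this is fine.
\item Your ``main obstacle'' paragraph is unnecessary, since \cref{btmsgA} already absorbs the random constants. As sketched, it would also not reproduce the stated threshold on $\alpha$. Jensen from the first-moment bound \eqref{EZn} controls $\bbE[d_{\BL^{\kappa}}^{E_1 q}]$ only for $E_1q\le 1$, which forces $p\ge\frac{3\kappa+4}{2\kappa+4}$ rather than $p$ close to $1$. The paper gets $\frac{\kappa}{3\kappa+4}+5\kappa+\frac72$ from the joint moments in \cref{8.9}.
\end{itemize}

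The genuine gap is the oscillation part of your third term. What you actually justify is the uniform $\Lip^{1/2}$ bound, which gives $O(\varepsilon^{1/2})$, but Step~2 uses $O(\varepsilon)$. With $\varepsilon^{1/2}$, minimizing $\varepsilon^{1/2}+\varepsilon^{-(1+\kappa)}n^{-E'}$ gives $\varepsilon=n^{-2E'/(2\kappa+3)}$ and only the rate $n^{-E'/(2\kappa+3)}$. That means exponents below $\frac{\kappa}{2(3\kappa+4)(2\kappa+3)}$ (below $\frac{1}{70}$ for $\kappa=1$), not the claimed ones.

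Appealing to \cref{continuity}(iii) does not close this. Its hypothesis $p(t,x,x)\ge c_{\mathrm{LHK}}t^{-\theta_l}$ for all $t>0$ fails on the discrete spaces and their truncations, since $p_n(t,x,x)\le 1/\mu_n(\{x\})=n/\tau_x$. Even a modified version would carry the random constant of \cref{continuity}(ii), with no control in $n$.

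You should know that the paper's own proof has the same weak spot. It bounds the corresponding term $V$ only through $\Lip^{1/2}$, and its final display contains $\varepsilon^{-1}(\varepsilon^{1/2}+n^{-1/2})(\varepsilon+\cdots)\ge\varepsilon^{1/2}$. The choice $\varepsilon=n^{-E/(2+\kappa)}$ does not absorb this term.

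In this one-dimensional setting the missing Lipschitz bound is elementary.
\begin{itemize}
\item For $u=p_n(t,0,\cdot)$, the discrete gradient $D_z=n\bigl(u(\tfrac{z+1}{n})-u(\tfrac{z}{n})\bigr)$ satisfies $D_z-D_{z-1}=\tfrac{2\tau_z}{n}\,\partial_t u(\tfrac{z}{n})$.
\item Since $\tau\ge 1$, both $\sup|u|$ and $\sup|\partial_t u|$ are bounded by deterministic constants depending on $t$; the latter follows from the spectral argument of \cref{derhk}.
\item Averaging over edges, some edge in $I=[-K-2,K+2]$ has $|D_z|\lesssim_t 1$. Hence $\mathrm{Lip}_I(u)\lesssim_t 1+\mu_n(I)$, which has a second moment because $\alpha>2$.
\item Combine this with $\sup_{|x|\le K}\int\varphi_\varepsilon\,d\mu_n\lesssim 1+\varepsilon^{-(1+\kappa)}d_{\BL^{\kappa}}(\mu^{(r_n)},\mu_n^{(r_n)})$, Cauchy--Schwarz, and \cref{blp} with $\zeta=2$. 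This gives the $O(\varepsilon)$ bound in expectation.
\end{itemize}
With that bound in hand, your optimization $\varepsilon=n^{-E'/(2+\kappa)}$ does yield the stated exponent.
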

 
\begin{rmk}
  The restriction on the range of $\alpha$ arises from the integrability condition of $C_n(r)$ in \cref{noncpt}.
Therefore, it is worthwhile to weaken the integrability requirement of $C_n(r)$.
\end{rmk}

Similarly as in quenched estimates, we begin by 
considering an annealed estimate for $\BL^{\kappa}$ metric.
 \begin{prp}\label{blp}
Set $r_n=n^{\theta},\;\theta\geq 0$ and suppose that $\kappa\in (1/2,1]$ and $\alpha>1$.
Then, for any $\zeta\in (0,\alpha)$,
it holds that
\begin{align*}
  \bbE[d_{\BL^{\kappa}}(\mu^{(r_n)},\mu_n^{(r_n)})^{\zeta}]
  \lesssim  
  \begin{dcases}
    n^{-\zeta(1-\frac{1}{\alpha}-\theta)}, & \alpha\in (1,2),\\
    n^{-\zeta(\frac{1}{2}-\theta)}(\log n)^{\zeta/2}, & \alpha=2,\\
    n^{-\zeta(\frac{1}{2}-\theta)}, & \alpha\in (2,\infty).
    \end{dcases}  
 \end{align*}
 \end{prp}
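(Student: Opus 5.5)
The approach is to reuse the decomposition from the proof of \cref{dblbtm}, namely \eqref{iti}, $d_{\BL^{\kappa}}(\mu^{(r_n)},\mu_n^{(r_n)})\le \bbE[\tau_0]I+A_n^{+}+A_n^{-}$, and to bound the $\zeta$-th moment of each piece in expectation instead of almost surely. Since $\zeta$ may exceed $1$, we use $(a+b+c)^\zeta\lesssim a^\zeta+b^\zeta+c^\zeta$ and treat the three terms separately. The deterministic term satisfies $I\lesssim n^{-1}+n^{-\kappa}r_n$ by \eqref{Iest}. Because $\kappa>1/2$, this is at most $n^{-1/2+\theta}$, which is dominated by each of the claimed rates; indeed $1-\frac1\alpha<\frac12$ when $\alpha<2$.

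For the random part we do not truncate at a Borel–Cantelli level. Instead we truncate at a deterministic level $b=b_n$, chosen so that $\tau_x$ is split at $b$ and contributions are balanced. Write $N=2\floor{nr_n}+1$.

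\begin{itemize}
\item The centred bounded part $A_n^-$ is handled by the symmetrisation, Hoeffding and Dudley argument that led to \eqref{EZn}. It gives $\bbE[A_n^-]\lesssim n^{-1/2}r_n c_n^{1/2}$, with $c_n$ as before ($b^{2-\alpha}$, $\log b$, or $1$). Higher moments of order $\zeta$ follow from the Bousquet tail bound derived in that proof, integrated against $x$. That bound controls the deviation above $3\bbE[Z_n]$ by $\sqrt{v_0x}+bx$, so $\bbE[(A_n^-)^\zeta]\lesssim (n^{-1/2}r_nc_n^{1/2})^\zeta+(b/n)^\zeta$.
\item For the heavy part, $A_n^+\le \frac1n\sum_{|x|\le nr_n}(\tau_x1_{\{\tau_x>b\}}+\bbE[\tau_01_{\{\tau_0>b\}}])$. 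Its mean part is $\lesssim r_nb^{1-\alpha}$. For the random part with $\zeta\le1$, subadditivity gives $\bbE[(\sum \tau_x1_{\{\tau_x>b\}})^\zeta]\le N\bbE[\tau_0^\zeta 1_{\{\tau_0>b\}}]\lesssim Nb^{\zeta-\alpha}$. For $\zeta>1$, a Rosenthal-type inequality (or convexity together with the bound on the number of exceedances) gives the analogous estimate. Since $\zeta<\alpha$, all these moments are finite.
\end{itemize}

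Next we choose $b$. For $\alpha\in(1,2)$ take $b=n^{1/\alpha}$ (up to $r_n$ factors). Then $c_n=b^{2-\alpha}$, so $n^{-1/2}r_nc_n^{1/2}=r_nn^{-1/2+(2-\alpha)/(2\alpha)}=r_nn^{-(1-1/\alpha)}$. Also $b/n=n^{1/\alpha-1}$, and the heavy part gives $(1/n)^\zeta Nb^{\zeta-\alpha}\lesssim n^{-\zeta(1-1/\alpha)}r_n$. Bounding all $r_n$ powers crudely by $r_n^\zeta=n^{\zeta\theta}$ yields $n^{-\zeta(1-1/\alpha-\theta)}$.

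For $\alpha\ge2$ take $b=n^{1/2}$. Then $c_n=\log n$ (when $\alpha=2$) or $1$ (when $\alpha>2$), which gives $n^{-\zeta(1/2-\theta)}$, with the $(\log n)^{\zeta/2}$ factor appearing only at $\alpha=2$. In both cases the heavy-tail contribution $n^{-\zeta}Nb^{\zeta-\alpha}$ is no larger than the main term.

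The main obstacle is the heavy part when $\zeta>1$. There we must show that the $\zeta$-th moment of the sum of exceedances is dominated by the single largest term rather than by $N^\zeta$. The plan is to use the moment inequality $\bbE|\sum Y_x|^\zeta\lesssim \sum\bbE|Y_x|^\zeta+(\sum\bbE|Y_x|)^\zeta$ for independent nonnegative $Y_x$. Its second term, $(Nb^{1-\alpha})^\zeta$, equals $n^\zeta$ times the mean bound, so it matches the rates above. The $r_n$ bookkeeping is deliberately crude, absorbed into the single $n^{\zeta\theta}$ factor.
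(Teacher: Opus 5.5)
There is one step that fails as written; apart from it your route works, and it differs from the paper's in how the moments of $A_n^-$ are obtained.

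\textbf{The gap.} The problem is the heavy part $A_n^+$ when $\zeta<1$ and $\theta>0$. Subadditivity gives
\[
\bbE\Big[\Big(\tfrac1n\sum\tau_x1_{\{\tau_x>b\}}\Big)^\zeta\Big]\lesssim n^{-\zeta}Nb^{\zeta-\alpha}.
\]
For $b=n^{1/\alpha}$ this is $\asymp r_n\,n^{-\zeta(1-1/\alpha)}$, and for $\alpha=2$, $b=n^{1/2}$ it is $r_n n^{-\zeta/2}$. The $r_n$ appears to the first power, because it comes from the $N\asymp nr_n$ summands. Since $\zeta<1$, you have $r_n\ge r_n^\zeta$, so ``bounding all $r_n$ powers crudely by $r_n^\zeta$'' goes the wrong way. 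You only get $n^{\theta-\zeta(1-1/\alpha)}$, which is strictly weaker than the claimed $n^{-\zeta(1-1/\alpha-\theta)}$.

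Retuning $b$ does not rescue this when $\alpha<2$. Removing the extra $r_n^{1-\zeta}$ requires raising $b$ polynomially above $n^{1/\alpha}$. That inflates both $c_n=b^{2-\alpha}$ and $b/n$ in your $A_n^-$ bound, and the stated rate is exactly balanced at $b=n^{1/\alpha}$.

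The fix is what the paper does. For $\zeta\le1$, use Jensen: $\bbE[(A_n^+)^\zeta]\le(\bbE A_n^+)^\zeta\lesssim(r_nb^{1-\alpha})^\zeta$. This equals $n^{-\zeta(1-1/\alpha-\theta)}$ for $b=n^{1/\alpha}$, and is at most $n^{-\zeta(1/2-\theta)}$ for $b=n^{1/2}$, $\alpha\ge2$. In fact, for $\zeta\le1$ you may apply Jensen to the whole of $d_{\BL^{\kappa}}$ and use only first moments, via \eqref{EZn} and $\bbE A_n^+\lesssim r_nb^{1-\alpha}$. With this change your argument goes through.

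\textbf{Comparison with the paper.} For $A_n^-$ the paper does not use the Bousquet inequality. It symmetrises, applies (conditionally on the $T_{x,n}$) a tail bound for the supremum of a sub-Gaussian process, and obtains
\[
\bbE[(A_n^-)^\zeta]\lesssim n^{-\zeta}r_n^{\zeta/2}\,\bbE\Big[\Big(\sum T_{x,n}^2\Big)^{\zeta/2}\Big].
\]
It then needs Jensen for $\zeta\le2$ and Lata\l{}a's moment inequality for $\zeta\in(2,\alpha)$.

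Your idea of integrating the Bousquet tail from the proof of \cref{dblbtm} is valid with a deterministic truncation, since the conditions (a)--(d) only use $|T_{x,n}|\le b$. It yields $\bbE[Z_n^\zeta]\lesssim(\bbE Z_n)^\zeta+v_0^{\zeta/2}+b^\zeta$ for all $\zeta$ at once, with no case split at $\zeta=2$. The price is the extra term $(b/n)^\zeta$, which is harmless for your choices of $b$.

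For $A_n^+$ with $\zeta\ge1$, your Rosenthal inequality for nonnegative summands plays the role of the paper's Marcinkiewicz--Zygmund bound for the centred sum; the two are interchangeable here. The paper uses the single truncation $d_n=n^{1/\alpha}$ for all $\alpha$. Your choice $b=n^{1/2}$ for $\alpha\ge2$ also works.
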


\begin{proof}
  Take a sequence $d_n>1$ and set 
  $\tau_{x,n}^{+}=\tau_x1_{\{\tau_x>d_n\}},\;\tau_{x,n}^{-}=\tau_x1_{\{\tau_x\le d_n\}},$
  \begin{equation*}
  I:=\sup_{f\in \scrF_n}\left| \int_{-r_n}^{r_n}f(y)dy-\frac{1}{n}\sum_{\substack{x\in\Z\\|\frac{x}{n}|\leq r_n}}f(x/n) \right|,
  \end{equation*}
  \begin{equation*}
    A_n^{+}:=\sup_{f\in \scrF_n}\left|\frac{1}{n}\sum_{\substack{x\in\Z\\|\frac{x}{n}|\leq r_n}}(\tau_{x,n}^{+}-\bbE[\tau_{0,n}^{+}])f(x/n)\right|,
    \;\;\rm{and}\;\;
    A_n^{-}:=\sup_{f\in \scrF_n}\left|\frac{1}{n}\sum_{\substack{x\in\Z\\|\frac{x}{n}|\leq r_n}}(\tau_{x,n}^{-}-\bbE[\tau_{0,n}^{-}])f(x/n)\right|.
  \end{equation*}
  In the following proof, we write $A\lesssim B$ if there exists a deterministic constant $C\in (0,\infty)$ such that 
  $A\leq CB$.
  Then we have 
$d_{\BL^{\kappa}}(\mu^{(r_n)},\mu_n^{(r_n)})^{\zeta}\lesssim I^{\zeta} +(A_n^{+})^{\zeta}+(A_n^{-})^{\zeta}$
  and $I^{\zeta}\lesssim n^{-\zeta(\kappa-\theta)}$ from \eqref{Iest}.
  We first estimate $\bbE[(A_n^{+})^{\zeta}]$.
  Note that it holds that
  \begin{align*}
    0\le A_n^{+} 
    \lesssim\frac{1}{n}\sum_{\substack{x\in\Z\\|\frac{x}{n}|\le r_n}}(\tau_{x,n}^{+}-\bbE[\tau_{0,n}^{+}]+\bbE[\tau_{0,n}^{+}])+r_n\bbE[\tau_{0,n}^{+}]
    \lesssim\frac{1}{n}\sum_{\substack{x\in\Z\\|\frac{x}{n}|\le r_n}}(\tau_{x,n}^{+}-\bbE[\tau_{0,n}^{+}])+r_n\bbE[\tau_{0,n}^{+}].
  \end{align*} 
  Since $(\tau_{x,n}^{+}-\bbE[\tau_{0,n}^{+}])_x$ is i.i.d. and with mean $0$,
  it follows from
  a corollary of the Marcinkiewicz-Zygmund inequality given at
  \cite[Corollary 8.2]{Gut} that
  \begin{align*}
    \bbE[(A_n^{+})^{\zeta}]
    \lesssim&n^{-\zeta}\bbE\left[  \left| \sum_{\substack{x\in\Z\\|\frac{x}{n}|\le r_n}}(\tau_{x,n}^{+}-\bbE[\tau_{0,n}^{+}]) \right|^\zeta  \right]+r_n^{\zeta}\bbE[\tau_{0,n}^{+}]^{\zeta}\\
    \lesssim&
    n^{-\zeta}(nr_n)^{1\vee\frac{\zeta}{2}} \bbE[|\tau_{0,n}^{+}|^{\zeta}]+r_n^{\zeta}\bbE[\tau_{0,n}^{+}]^{\zeta}\\
    \lesssim&
      n^{-\zeta+(1+\theta)(1\vee\frac{p}{2})}d_n^{-\alpha+\zeta}+n^{\theta \zeta}d_n^{-\zeta(\alpha-1)},
  \end{align*}
  for $\zeta\ge 1$.
  Here we used
  \begin{align*}
    \bbE[|\tau_{0,n}^{+}|^{\zeta}]\lesssim
    \int_{d_n}^{\infty}u^{-\alpha+\zeta-1}\lesssim d_n^{-\alpha+\zeta}.
  \end{align*}
  If $\zeta\in (0,1)$, using Jensen's inequality and the estimate for $p=1$, we obtain that
  \begin{align*}
    \bbE[(A_n^{+})^{\zeta}]\le \bbE[A_n^{+}]^{\zeta}\le n^{\theta \zeta}d_n^{-\zeta(\alpha-1)}.
  \end{align*}
  We next bound $\bbE[(A_n^{-})^{\zeta}]$.
  For $t_{x,n}\in\R$, we define $T_{x,n}$ and $Z_{f}^{\varepsilon}$ by setting
  \begin{align*}
    T_{x,n}=\tau_{x,n}^{-}-\bbE[\tau^{-}_{0,n}],\qquad Z^{\varepsilon}_f=\sum_{|x/n|\leq r_n}\varepsilon_xt_{x,n}f(x/n),
  \end{align*}
where $(\varepsilon_x)_x$ are i.i.d random variables on $\Omega$, which satisfy $\bbP(\varepsilon_x=1)=\bbP(\varepsilon_x=-1)=1/2$, and are independent of $(\tau_{x})_x$.
 Then arguing as in \eqref{zn1}, we deduce that
  \begin{align*}
    \bbE[(A_n^{-})^{\zeta}]
    =&n^{-\zeta}\bbE[Z_n^{\zeta}]\\
    \lesssim&n^{-\zeta} \bbE\left[ \bbE\left.\left[\sup_{f\in\scrF_n}\left|\sum_{|x/n|\leq r_n}\varepsilon_xt_{x,n}f(x/n)\right|^{\zeta} \right]\right|_{t_{x,n}=T_{x,n}}\right]\\
    =&n^{-\zeta} \bbE\left[ \bbE\left.\left[\left(\sup_{f\in\scrF_n}Z_f^{\varepsilon}\right)^{\zeta} \right]\right|_{t_{x,n}=T_{x,n}}\right].
  \end{align*}
   Set 
\begin{align*}
  M:=\int_{0}^{\infty}\sqrt{\log N\left(\scrF_n,t_{\ast}\|\cdot\|_{\rm{sup}},\varepsilon\right)}d\varepsilon,
  \qquad t_{\ast}:=\left(\sum_{|\frac{x}{n}|\le r_n}t_{x,n}^2\right)^{1/2}.
\end{align*}
Then, by  \cite[Corollary 3.2]{subgauss}, there exists a universal constant $C\in (0,\infty)$ such that
\begin{align*}
  \bbP\left(\sup_{f\in\scrF_n}Z_f^{\varepsilon}\ge CMs\right)\leq 2e^{-\frac{s^2}{2}},\qquad s\ge 1.
\end{align*} 
Therefore we conclude that
\begin{align*}
  \bbE\left[\left(\sup_{f\in\scrF_n}\sum_{|x/n|\leq r_n}Z_f^{\varepsilon}\right)^{\zeta} \right]
  =&\int_{0}^{CM}\zeta u^{\zeta-1}\bbP\left(\sup_{f\in\scrF_n}Z_f^{\varepsilon}\ge u\right)du
  +\int_{CM}^{\infty}\zeta u^{\zeta-1}\bbP\left(\sup_{f\in\scrF_n}Z_f^{\varepsilon}\ge u\right)du\\
  \lesssim&M^{\zeta}+\int_{1}^{\infty}(Ms)^{\zeta-1}\bbP\left(\sup_{f\in\scrF_n}Z_f^{\varepsilon}\ge CMs\right)Mds\\
  \lesssim&M^{\zeta}+M^{\zeta}\int_{0}^{\infty}s^{\zeta-1}e^{-\frac{s^2}{2}}ds\\
  \lesssim&M^{\zeta}.
\end{align*}
Moreover, by virtue of \cref{metent}, it holds that
\begin{align*}
  M=&\int_{0}^{2t_{\ast}}\sqrt{\log N\left(\scrF_n,t_{\ast}\|\cdot\|_{\rm{sup}},\varepsilon\right)}d\varepsilon\\
  =&\int_{0}^{2t_{\ast}}\sqrt{\log N\left(\scrF_n,\|\cdot\|_{\rm{sup}},\frac{\varepsilon}{t_{\ast}}\right)}d\varepsilon\\
  =&t_{\ast}\int_{0}^{2}\sqrt{\log N\left(\scrF_n,\|\cdot\|_{\rm{sup}},\delta\right)}d\delta\\
  \lesssim&t_{\ast}r_n^{1/2}\int_{0}^{2}\delta^{-\frac{1}{2\kappa}}d\delta\\
  \lesssim&r_n^{1/2}\left(\sum_{|\frac{x}{n}|\le r_n}t_{x,n}^2\right)^{1/2}.
\end{align*}
Combining arguments above, we deduce for $\zeta\le 2$ that
\begin{align}\label{anp}
  \bbE[(A_n^{-})^{\zeta}]\lesssim 
  n^{-\zeta}r_n^{\zeta/2}\bbE\left[\left(\sum_{|\frac{x}{n}|\le r_n}T_{x,n}^2\right)^{\zeta/2}\right]
  \leq n^{-\zeta}r_n^{\zeta/2}\bbE\left[\sum_{|\frac{x}{n}|\le r_n}T_{x,n}^2\right]^{\zeta/2}
  \lesssim n^{\zeta(\theta-1/2)}c_n^{\zeta/2}.
\end{align}
Here we used $\zeta\le 2$ and Jensen's inequality, and 
$c_n$ is defined by 
\begin{align*}
  \bbE[T_{0,n}^2]
  \leq \bbE[(\tau_{0,n}^{-})^2]
  \lesssim \int_{1}^{d_n}u^{-\alpha+1}
  \lesssim c_n:=
  \begin{dcases}
    d_{n}^{2-\alpha}, & \alpha<2,\\
    \log d_{n}, & \alpha=2,\\
    1, & \alpha>2. 
  \end{dcases}
 \end{align*}
 Assuming $\alpha>2$, we finally consider the case where $\zeta\in (2,\alpha)$.
 By \cite[Corollary 1]{latala}, we obtain
 \begin{align*}
 \left\| \sum_{|\frac{x}{n}|\le r_n}T_{x,n}^2 \right\|_{\frac{\zeta}{2}}\lesssim \sup_{1\le s\le \frac{\zeta}{2}}n^{\frac{1+\theta}{s}}\|T_{0,n}^2\|_{s}
 \lesssim\sup_{1\le s\le \frac{\zeta}{2}}n^{\frac{1+\theta}{s}}\bbE[(\tau_{0,n}^{-})^{2s}]^{1/s}.
\end{align*}
Since $2s\le \zeta<\alpha$, we have 
$\bbE[(\tau_{0,n}^{-})^{2s}]^{1/s}\le \bbE[\tau_{0}^{\zeta}]^{2/\zeta}<\infty$, and thus 
$\left\| \sum_{|\frac{x}{n}|\le r_n}T_{x,n}^2 \right\|_{\frac{\zeta}{2}}\lesssim n^{1+\theta}$.
Noting that the first inequality in \eqref{anp} holds regardless of $\zeta$,
we conclude that
\begin{align*}
  \bbE[(A_n^{-})^{\zeta}]\lesssim 
  n^{-\zeta}r_n^{\zeta/2}\bbE\left[\left(\sum_{|\frac{x}{n}|\le r_n}T_{x,n}^2\right)^{\zeta/2}\right]
  \lesssim n^{-\zeta}r_n^{\zeta/2}n^{\frac{\zeta}{2}(1+\theta)}=n^{\zeta(\theta-1/2)},
\end{align*}
for $\zeta\in (2,\alpha)$.
Since $c_n=1$ if $\alpha>2$, we may establish the inequality  
$\bbE[(A_n^{-})^{\zeta}]\lesssim n^{\zeta(\theta-1/2)}c_n^{\zeta/2}$
for any $\alpha>1$ and $\zeta\in (0,\alpha)$.
 Altogether, we deduce 
 \begin{align*}
  \MoveEqLeft \bbE[d_{\BL^{\kappa}}(\mu^{(r_n)},\mu_n^{(r_n)})^{\zeta}]\\
  \lesssim&
  \begin{dcases}
     n^{-\zeta(\kappa-\theta)}+n^{\theta \zeta}d_n^{-\zeta(\alpha-1)}+n^{\zeta(\theta-1/2)}c_n^{\zeta/2}, & \zeta\in (0,1),\\
   n^{-\zeta(\kappa-\theta)}+n^{-\zeta+(1+\theta)(1\vee\frac{\zeta}{2})}d_n^{-\alpha+\zeta}+n^{\theta \zeta}d_n^{-\zeta(\alpha-1)}+n^{\zeta(\theta-1/2)}c_n^{\zeta/2}, & \zeta\in [1,\alpha).
  \end{dcases}
 \end{align*}
 Taking $d_n=n^{1/\alpha}$,
 we obtain the result.
\end{proof}

By virtue of \cref{blp}, we 
obtain the following estimate for the joint moment.

\begin{cor}\label{8.9}
  Let $\alpha>1,\;\kappa\in (1/2,1]$ and set $r_n=n^{\theta},\;\theta\ge 0$.
  For $\zeta,\;\eta\ge 0$ satisfying $\zeta+\eta<\alpha$, we have that
  \begin{align*}
    \bbE[d_{\BL^{\kappa}}(\mu^{(r_n)},\mu_n^{(r_n)})^{\zeta}\mu_n([-r_n,r_n])^{\eta}]
    \lesssim
     \begin{dcases}
    n^{-\zeta(1-\frac{1}{\alpha}-\theta)+\eta\theta}, & \alpha\in (1,2),\\
    n^{-\zeta(\frac{1}{2}-\theta)+\eta\theta}(\log n)^{\zeta/2}, & \alpha=2,\\
    n^{-\zeta(\frac{1}{2}-\theta)+\eta\theta}, & \alpha\in (2,\infty).
    \end{dcases}  
  \end{align*}
\end{cor}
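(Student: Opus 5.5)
The natural tool is Hölder's inequality, splitting the product into the two factors whose moments are already controlled: $d_{\BL^{\kappa}}(\mu^{(r_n)},\mu_n^{(r_n)})$, handled by \cref{blp}, and $\mu_n([-r_n,r_n])$, which is a normalized sum of $2\floor{nr_n}+1$ i.i.d.\ variables with tail $u^{-\alpha}$. Since $\zeta+\eta<\alpha$, I will choose conjugate exponents $p,q>1$ with $1/p+1/q=1$ so that $\zeta p<\alpha$ and $\eta q<\alpha$. Concretely, take $p=(\zeta+\eta+\epsilon)/\zeta$ and $q=(\zeta+\eta+\epsilon)/\eta$, with $\epsilon>0$ small enough that $\zeta+\eta+\epsilon<\alpha$. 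Then $\zeta p=\eta q=\zeta+\eta+\epsilon<\alpha$ and $1/p+1/q=1$. The degenerate cases $\zeta=0$ or $\eta=0$ are handled directly by the corresponding single-factor bound. Hölder then gives
\begin{align*}
\bbE[d^{\zeta}\mu_n([-r_n,r_n])^{\eta}]\le \bbE[d^{\zeta p}]^{1/p}\,\bbE[\mu_n([-r_n,r_n])^{\eta q}]^{1/q}.
\end{align*}

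For the first factor, \cref{blp} applies with exponent $\zeta p\in(0,\alpha)$. Raising its bound to the power $1/p$ returns exactly $n^{-\zeta(1-\frac1\alpha-\theta)}$, $n^{-\zeta(\frac12-\theta)}(\log n)^{\zeta/2}$ or $n^{-\zeta(\frac12-\theta)}$ in the three regimes. This is because the bound in \cref{blp} is homogeneous of degree $\zeta$ in the exponent, so the $p$ cancels.

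For the second factor, I will show $\bbE[\mu_n([-r_n,r_n])^{s}]\lesssim r_n^{s}=n^{s\theta}$ for every $s<\alpha$, and in particular for $s=\eta q$. The constant $r_n\ge1$ is harmless when $\theta=0$. Write $\mu_n([-r_n,r_n])=\frac1n\sum_{|x|\le nr_n}\tau_x$, which has $N\asymp nr_n$ terms. For $s\ge1$ I apply Minkowski's inequality in $L^s$:
\begin{align*}
\Big\|\tfrac1n\textstyle\sum\tau_x\Big\|_s\le \tfrac{N}{n}\|\tau_0\|_s\lesssim r_n,
\end{align*}
which is finite because $s<\alpha$ gives $\bbE[\tau_0^s]<\infty$. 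For $s<1$ I use Jensen's inequality, $\bbE[X^s]\le\bbE[X]^s\lesssim r_n^s$, with $\alpha>1$ ensuring $\bbE[\tau_0]<\infty$. Raising to the power $1/q$ gives $n^{\eta\theta}$. Multiplying the two factors produces the claimed bounds.

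The only delicate point is the choice of Hölder exponents, which must keep both moments strictly below $\alpha$ so that the two moment estimates apply. This is exactly where the hypothesis $\zeta+\eta<\alpha$ enters. The rest follows directly from \cref{blp} and Minkowski's inequality.
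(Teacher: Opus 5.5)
Your proof is correct, with one bookkeeping slip. With $p=(\zeta+\eta+\epsilon)/\zeta$ and $q=(\zeta+\eta+\epsilon)/\eta$ you get $1/p+1/q=(\zeta+\eta)/(\zeta+\eta+\epsilon)<1$, not $=1$. This is harmless, because on a probability space H\"older's inequality $\bbE[XY]\le\|X\|_p\|Y\|_q$ holds whenever $1/p+1/q\le 1$, by monotonicity of $L^r$ norms. The $\epsilon$ is also unnecessary. Since $\zeta+\eta<\alpha$ is already strict, $p=(\zeta+\eta)/\zeta$ and $q=(\zeta+\eta)/\eta$ are genuinely conjugate, with $\zeta p=\eta q=\zeta+\eta<\alpha$.

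The paper argues differently and never splits the product.
\begin{itemize}
\item \emph{Paper's route.} The constant function $1$ has $\BL^{\kappa}$ norm $1$, so $\mu_n([-r_n,r_n])=\int 1\,d\mu_n^{(r_n)}\le\mu([-r_n,r_n])+d_{\BL^{\kappa}}(\mu^{(r_n)},\mu_n^{(r_n)})\lesssim r_n+d_{\BL^{\kappa}}(\mu^{(r_n)},\mu_n^{(r_n)})$. This reduces the joint moment to $r_n^{\eta}\,\bbE[d_{\BL^{\kappa}}(\mu^{(r_n)},\mu_n^{(r_n)})^{\zeta}]+\bbE[d_{\BL^{\kappa}}(\mu^{(r_n)},\mu_n^{(r_n)})^{\zeta+\eta}]$. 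Both terms come from \cref{blp}, and the second (at exponent $\zeta+\eta<\alpha$) is dominated by the first. The paper thus recycles the BL-distance moment bound and needs no separate control of the i.i.d.\ sum.
\item \emph{Your route.} You need an extra, elementary Minkowski/Jensen estimate $\bbE[\mu_n([-r_n,r_n])^{s}]\lesssim r_n^{s}$ for $s<\alpha$. In return you land exactly on the stated rate without comparing two terms. The argument is also modular: it works for any second factor whose $s$-th moments are of order $r_n^{s}$.
\end{itemize}
In both proofs the hypothesis $\zeta+\eta<\alpha$ enters at the same place, namely to invoke \cref{blp} at total exponent $\zeta+\eta$.
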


\begin{proof}
  By $\mu_n([-r_n,r_n])=\int 1d\mu^{(r_n)}_n$, it holds that
  \begin{align*}
  \mu_n([-r_n,r_n])^{\eta}
  \leq\Big[\mu([-r_n,r_n])+d_{\BL^{\kappa}}(\mu^{(r_n)},\mu_n^{(r_n)})\Big]^{\eta}
  \lesssim r_n^{\eta}+d_{\BL^{\kappa}}(\mu^{(r_n)},\mu_n^{(r_n)})^{\eta}.
  \end{align*}
  Thus we deduce
  \begin{align*}
    \bbE[d_{\BL^{\kappa}}(\mu^{(r_n)},\mu_n^{(r_n)})^{\zeta}\mu_n([-r_n,r_n])^{\eta}]
    \lesssim r_n^{\eta} \bbE[d_{\BL^{\kappa}}(\mu^{(r_n)},\mu_n^{(r_n)})^{\zeta}]+
    \bbE[d_{\BL^{\kappa}}(\mu^{(r_n)},\mu_n^{(r_n)})^{\zeta+\eta}].
  \end{align*}
  Now the result is immediate from \cref{blp}.
\end{proof}

Now we are able to prove \cref{btmsgA}.
\begin{proof}[Proof of \cref{btmsgA}]
  Similarly as in the proof of \cref{btmsgQ},
   we have
  \begin{align*}
    \MoveEqLeft \bbE\left[\sup_{|x|\le K}\Big|E_{x}[f(X_t)]-E_{\frac{\floor{nx}}{n}}^n[f(X_t^n)]\Big|\right]\\
    \leq&\|f\|_{\BL^{\kappa}}\exp(-cn^{2\theta}/t)
    +\bbE\left[\sup_{|x|\le K}\Big|E_{x}[f(X_t^{(r_n)})]-E_{\frac{\ceil{nx}}{n}}^n[f(X_t^{(n,r_n)})]\Big|\right],
  \end{align*}
  where $r_n=n^{\theta}$, and $X^{(r_n)}$ and $X^{n,(r_n)}$ are the processes defined in the proof of \cref{btmrate}.
  Set $h=n^{-\kappa}+d_{\BL^{\kappa}}(\mu^{(r_n)},\mu_n^{(r_n)})$.
  Then, by 
  \cref{gen,su},
   there exists a deterministic number $h_0\in (0,1)$ such that 
  \begin{align*}
    \MoveEqLeft \bbE\left[\sup_{|x|\le K}\Big|E_{x}[f(X_t^{(r_n)})]-E_{\frac{\ceil{nx}}{n}}^n[f(X_t^{(n,r_n)})]\Big|\right]\\
    \leq &
    \bbE\left[\sup_{|x|\le K}\Big|E_{x}[f(X_t^{(r_n)})]-E_{\frac{\ceil{nx}}{n}}^n[f(X_t^{(n,r_n)})]\Big|:h\le h_0\right]\\
    &+\bbE\left[\sup_{|x|\le K}\Big|E_{x}[f(X_t^{(r_n)})]-E_{\frac{\ceil{nx}}{n}}^n[f(X_t^{(n,r_n)})]\Big|:h>h_0\right]\\
    \lesssim&\|f\|_{\BL^{\kappa}}\bbE[C(n)h^{\frac{\kappa}{3\kappa+4}}(\log (1/h))^{\frac{\kappa(2+\kappa)}{3\kappa+4}}]+\|f\|_{\BL^{\kappa}}\bbP(h>h_0)\\
    \lesssim&\|f\|_{\BL^{\kappa}}\bbE[C(n)h^{\frac{\kappa}{3\kappa+4}-\varepsilon}]+\|f\|_{\BL^{\kappa}}\bbE[h^p]\\
    \lesssim&\|f\|_{\BL^{\kappa}}\bbE[C(n)h^{\frac{\kappa}{3\kappa+4}-\varepsilon}]+\|f\|_{\BL^{\kappa}}\bbE[d_{\BL^{\kappa}}(\mu^{(r_n)},\mu_n^{(r_n)})^p]+\|f\|_{\BL^{\kappa}}n^{-p\kappa },
  \end{align*} 
  for $p\in (0,\alpha)$ and small $\varepsilon>0$.
  Here, $C(n)$ is $C(n)=r_n^{10\kappa+\frac{17}{2}}+r_n^{5(1+\kappa)}\mu_n([-r_n,r_n])^{5\kappa+\frac{7}{2}}$. See \cref{gen} and \eqref{5.4}.
 Thus we obtain
 \begin{align*}
  \MoveEqLeft \bbE[C(n)h^{\frac{\kappa}{3\kappa+4}-\varepsilon}]\\
  \lesssim&n^{-\frac{\kappa^2}{3\kappa+4}+\kappa\varepsilon}r_n^{10\kappa+\frac{17}{2}}
  +n^{-\frac{\kappa^2}{3\kappa+4}+\kappa\varepsilon}r_n^{5(1+\kappa)}\bbE[\mu_n([-r_n,r_n])^{5\kappa+\frac{7}{2}}]\\
  &+r_n^{10\kappa+\frac{17}{2}}\bbE[d_{\BL^{\kappa}}(\mu^{(r_n)},\mu_n^{(r_n)})^{\frac{\kappa}{3\kappa+4}-\varepsilon}]
  +r_n^{5(1+\kappa)}\bbE[d_{\BL^{\kappa}}(\mu^{(r_n)},\mu_n^{(r_n)})^{\frac{\kappa}{3\kappa+4}-\varepsilon}\mu_n([-r_n,r_n])^{5\kappa+\frac{7}{2}}],
 \end{align*}
 Then it follows from \cref{8.9} and $\kappa>1/2$ that
 \begin{align*}
  \frac{1}{\|f\|_{\BL^{\kappa}}}\bbE\left[\sup_{|x|\le K}\Big|E_{x}[f(X_t)]-E_{\frac{\floor{nx}}{n}}^n[f(X_t^n)]\Big|\right]
    \lesssim \exp(-cn^{2\theta}/t)+n^{A\theta}( n^{-\frac{\kappa}{2(3\kappa+4)}+\varepsilon}+n^{-\frac{p}{2}}),
 \end{align*}
 for some $A>0$. Taking $\theta,\;\varepsilon>0$ arbitrarily close to $0$ and $p=\frac{\kappa}{3\kappa+4}$, we obtain the result.
 \end{proof}

 To obtain the annealed estimate for the heat kernel, we need
 the following inequality, which is an annealed version of \cref{taihenn}.
 \begin{lem}\label{annmeas}
  For any $\varepsilon,K>0$ and $\alpha>2$, it holds that
  \begin{align*}
    \bbE\left[\sup_{|x|\le K}\mu_n([x-\varepsilon,x+\varepsilon])\right]\lesssim \varepsilon+(\varepsilon+K)^{1/2}n^{-1/2}.
  \end{align*}
 \end{lem}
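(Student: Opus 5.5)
The plan is to split $\mu_n([x-\varepsilon,x+\varepsilon])$ into its mean and a centred fluctuation, and to control the supremum of the fluctuation over $|x|\le K$ by a maximal inequality. Since $\alpha>2$, $\tau_0$ has finite variance, which makes everything elementary.

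First, the mean part. For any $x$ we have
\[
\mu_n([x-\varepsilon,x+\varepsilon])=\frac1n\sum_{i:\,|i/n-x|\le\varepsilon}\tau_i
=\frac1n\sum_{i:\,|i/n-x|\le\varepsilon}\bbE[\tau_0]+\frac1n\sum_{i:\,|i/n-x|\le\varepsilon}(\tau_i-\bbE[\tau_0]).
\]
The deterministic part is at most $\bbE[\tau_0](2\varepsilon+1/n)$, uniformly in $x$. Since $1/n\le n^{-1/2}\le (\varepsilon+K)^{1/2}n^{-1/2}$ up to constants (for $\varepsilon+K\gtrsim 1$; otherwise we absorb it by a trivial modification), this gives the $\varepsilon$ term.

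Second, the fluctuation. Set $S_k=\sum_{i=-\ceil{n(K+\varepsilon)}}^{k}(\tau_i-\bbE[\tau_0])$ for $|k|\le\ceil{n(K+\varepsilon)}$. Every window sum over $\{i:|i/n-x|\le\varepsilon\}$ with $|x|\le K$ is a difference $S_b-S_{a-1}$ of two such partial sums, so
\[
\sup_{|x|\le K}\Big|\frac1n\sum_{|i/n-x|\le\varepsilon}(\tau_i-\bbE\tau_0)\Big|\le \frac2n\max_{|k|\le \ceil{n(K+\varepsilon)}}|S_k|.
\]
By Cauchy--Schwarz and Doob's (or Kolmogorov's) $L^2$ maximal inequality for the martingale $S_k$,
\[
\bbE\big[\max_k|S_k|\big]\le 2\,\bbE[S_{\max}^2]^{1/2}\lesssim \big(n(K+\varepsilon)\,\mathrm{Var}(\tau_0)\big)^{1/2}.
\]
Here $\mathrm{Var}(\tau_0)<\infty$ precisely because $\alpha>2$. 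Dividing by $n$ gives $(\varepsilon+K)^{1/2}n^{-1/2}$.

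Combining the two parts yields the claim. The only point needing care is the reduction of the supremum over the continuum of windows to a maximum over finitely many partial sums, which is handled by the difference-of-partial-sums representation above. I do not expect a real obstacle here; the lemma is easier than \cref{taihenn} because no truncation is needed when $\alpha>2$.
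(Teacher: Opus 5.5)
Your proposal is correct and is essentially the paper's argument. You split off the mean, which gives the $\varepsilon$ term. You then write every centred window sum over $|x|\le K$ as a difference of partial sums of a single martingale indexed by a range of length $\asymp n(K+\varepsilon)$. Finally you apply Doob's $L^2$ maximal inequality, using $\mathrm{Var}(\tau_0)<\infty$ for $\alpha>2$. The paper first enlarges each window to the fixed length $\floor{2n\varepsilon}+1$ using $\tau\ge 0$, which is only cosmetic. As in the paper, absorbing the stray $1/n$ terms relies on the implied constant being allowed to depend on the fixed $K$, which is how the lemma is applied.
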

 \begin{rmk}
  We can prove similar estimates for $\alpha\in (1,2]$.
  See the proof for the details.
 \end{rmk}

\begin{proof}
  Set $A_n=\floor{2n\varepsilon}+1,\;B_n=\floor{-n(K+\varepsilon)},$ and $C_n=\ceil{n(K-\varepsilon)}$.
  Then we have
  \begin{align*}
    \sup_{|x|\le K}\mu_n([x-\varepsilon,x+\varepsilon])
    \lesssim \max_{B_n\le j\le C_n}\frac{1}{n}\sum_{z=j}^{j+A_n}\tau_z
    \lesssim \frac{A_n}{n}+\frac{1}{n}\max_{B_n\le j\le C_n}\left|\sum_{z=j}^{j+A_n}T_z\right|,
  \end{align*}
  where $T_z=\tau_z-\bbE[\tau_z]$.
  If we define $S_k=\sum_{i=0}^{k}T_{B_n+i}$, 
  then 
  $(S_k)$ forms a martingale, and satisfies
  $\sum_{z=j}^{j+A_n}T_z=S_{A_n+j-B_n}-S_{j-B_n}$ for $j\in [B_n,C_n]$.
  Thus we may deduce that
  \begin{align*}
    \max_{B_n\le j\le C_n}\left|\sum_{z=j}^{j+A_n}T_z\right|\lesssim \max_{k\le D_n}|S_k|,
  \end{align*}
  where $D_n=A_n-B_n+C_n$.
  Using Doob's maximal inequality, we obtain
  \begin{align*}
    \bbE\left[ \max_{B_n\le j\le C_n}\left|\sum_{z=j}^{j+A_n}T_z\right| \right]
    \leq \left\| \max_{B_n\le j\le C_n}\left|\sum_{z=j}^{j+A_n}T_z\right|\right\|_2
    \lesssim \left\| \max_{k\le D_n}|S_k|\right\|_2
    \lesssim \|S_{D_n}\|_2
    \lesssim D_n^{\frac{1}{2}}.
  \end{align*} 
  Here, we used \cite[Corollary 8.2]{Gut}.
  Therefore, we conclude that
  \begin{align*}
    \bbE\left[\sup_{|x|\le K}\mu_n([x-\varepsilon,x+\varepsilon])\right]\lesssim
    \frac{A_n}{n}+D_n^{\frac{1}{2}}
    \lesssim \varepsilon+(\varepsilon+K)^{1/2}n^{-1/2},
  \end{align*}
  which completes the proof.
\end{proof}

 Finally we derive an annealed estimate for the heat kernel, namely, \cref{annhk}.
 
\begin{proof}[Proof of \cref{annhk}]
  We use the same method as in \cref{totyuu,hkest}.
  Set $x_n=\floor{nx}/n,\;f_r=1_{(x-r,x+r)},\;\psi(y)=1_{(-1,1)}\exp(-\frac{1}{1-y^2}),$ and 
  $g_{\varepsilon}(y)=\psi(\frac{y}{\varepsilon})$.
  Note that it holds that
  \begin{align}\label{dfgh}
    \sup_y|g_{\varepsilon}(y)|\le 1,\quad \|g_{\varepsilon}\|_{\Lip^{\kappa}}=\varepsilon^{-\kappa}\sup_{x\neq y}\frac{|\varphi(\frac{x}{\varepsilon})-\varphi(\frac{y}{\varepsilon})|}{|\frac{x}{\varepsilon}-\frac{x}{\varepsilon}|^{\kappa}}\le \|\varphi\|_{\Lip^{\kappa}}\varepsilon^{\kappa}, 
  \end{align}
  and
  \begin{align}\label{dfg}
    \int g_{\varepsilon}(y)\mu(dy)
    =\bbE[\tau_0]\int \varphi\left(\frac{y}{\varepsilon}\right)dy
    =\varepsilon\bbE[\tau_0]\int \varphi\left(y\right)dy.
  \end{align}
First, we have 
\begin{equation}\label{1q2}
  \begin{aligned}
    |p(t,0,x)-p_n(t,0,x_n)|
  \leq&
  \frac{1}{\int g_{\varepsilon}d\mu}\left|\left(\int g_{\varepsilon}d\mu\right)p(t,0,x)-\left(\int g_{\varepsilon}d\mu_n\right)p_n(t,0,x_n)\right|\\
  &+\frac{1}{\int g_{\varepsilon}d\mu}\left|  \int g_{\varepsilon}d(\mu-\mu_n)\right|p_n(t,0,x_n)\\
  =&:\frac{1}{\int g_{\varepsilon}d\mu}I+II,
  \end{aligned}
\end{equation}
By \cref{summ}(ii), there exists a deterministic constant $C=C(t)>0$ such that
$\sup_{\substack{n\in\N\\z\in n^{-1}\Z}}p_n(t,0,z)\le C$.
Thus it holds for large $n$ that
\begin{equation}\label{1q2w}
  \begin{aligned}
  II
  =&\frac{1}{\int g_{\varepsilon}d\mu}\left|  \int g_{\varepsilon}d(\mu^{(r_n)}-\mu_n^{(r_n)})\right|p_n(t,0,x_n)\\
  \lesssim&\varepsilon^{-1}\|g_{\varepsilon}\|_{\BL^{\kappa}}d_{\BL^{\kappa}}(\mu^{(r_n)},\mu^{(r_n)}_n)\\
  \lesssim&\varepsilon^{-(1+\kappa)}d_{\BL^{\kappa}}(\mu^{(r_n)},\mu^{(r_n)}_n),
  \end{aligned}
\end{equation}
where $r_n=n^{\theta},\theta>0$. Here we used \eqref{dfgh} and \eqref{dfg}.

We next bound $I$. We have the following:
\begin{align*}
  I
  \leq&\left|\left(\int g_{\varepsilon}d\mu\right)p(t,0,x)-P_tg_{\varepsilon}(x)\right|
  +|P_tg_{\varepsilon}(x)-P^n_tg_{\varepsilon}(x_n)|
  +\left|P^n_tg_{\varepsilon}(x_n)-\left(\int g_{\varepsilon}d\mu_n\right)p_n(t,0,x_n)\right|\\
  =&:III+IV+V.
\end{align*}
The first term $III$ is deterministic, and satisfies
\begin{align}\label{1q2w3}
  III
  =\left|\int_{-\varepsilon}^{\varepsilon} g_{\varepsilon}(y)(p(t,0,x)-p(t,y,x))\mu(dy)\right|\lesssim \varepsilon^2,
\end{align} 
since $p(t,\cdot,z)$ is Lipschitz continuous uniformly in $z\in\R$.
By \cref{btmsgA}, we have
\begin{align}\label{1q2w3e}
  \bbE\left[\sup_{|x|\le K}IV\right]\lesssim \|g_{\varepsilon}\|_{\BL^{\kappa}}n^{-E}\lesssim \varepsilon^{-\kappa}n^{-E},
\end{align}
for $E<\frac{\kappa}{2(3\kappa+4)}$.
We finally consider $V$.
By \cref{summ}(ii),\cref{suffcond2}(i), and \cref{arigato}, there exists a deterministic
constant $C=C(t)>0$ such that $\sup_{\substack{n\in\N\\z\in n^{-1}\Z}}\|p_n(t,z,\cdot)\|_{\Lip^{1/2}}\leq C$.
Thus it follows from \cref{annmeas} that
\begin{equation}\label{1qw232}
\begin{aligned}
   \bbE\left[\sup_{|x|\le K}V\right]
  =&\bbE\left[\sup_{|x|\le K}\left|\int_{-\varepsilon}^{\varepsilon}g_{\varepsilon}(y)(p_n(t,y,x_n)-p_n(t,0,x_n))\mu_n(dy)\right|\right]\\
  \lesssim&(\varepsilon+n^{-1})^{1/2}\bbE\left[ \sup_{|x|\le K}\mu_n([x-\varepsilon,x+\varepsilon])\right]\\
  \lesssim&(\varepsilon^{1/2}+n^{-1/2})(\varepsilon+\varepsilon^{1/2}n^{-1/2}+n^{-1/2}).
\end{aligned}  
\end{equation}
Combining 
\eqref{1q2},\;\eqref{1q2w},\;\eqref{1q2w3},\;\eqref{1q2w3e},\;\eqref{1qw232}, and \cref{blp},
we conclude that
\begin{align*}
   \MoveEqLeft E\left[\sup_{|x|\le K}\Big|p(t,0,x)-p_n(t,0,\floor{nx}/n)\Big|\right]\\
   \lesssim &\varepsilon+\varepsilon^{-(1+\kappa)}n^{-E}+\varepsilon^{-1}(\varepsilon^{1/2}+n^{-1/2})(\varepsilon+\varepsilon^{1/2}n^{-1/2}+n^{-1/2})+\varepsilon^{-(1+\kappa)}n^{-(\frac{1}{2}-\theta)}.
\end{align*}
Taking $\varepsilon=n^{-\frac{E}{2+\kappa}}$ and $\theta$ arbitrarily small,
we complete the proof.
\end{proof}

\appendix
\crefalias{section}{appendix}

\section{Proof of \cref{summ}}\label{appB}
In this appendix, we give a proof of \cref{summ}.
Recall the conditions and assumptions
$\rm{Reg}_{s_0,s_1},\;\rm{LRES}(\theta),$ and $\rm{UP}$
from the beginning of \Cref{cont}.
The argument in this section are mainly informed by 
\cite{Cr07,Kum04}.

We first prove \cref{summ}(ii)
\begin{proof}[Proof of \cref{summ}$\rm{(ii)}$]
  It is known that we have 
  \begin{align*}
    p(t,x,x)\leq \frac{2\sup_{y\in A}R(x,y)}{t}+\frac{\sqrt{2}}{\mu(A)},
  \end{align*}
  for any $t>0,x\in F,$ and a Borel subset $A$ of $X$ satisfying $0<\mu(A)<\infty$ (see \cite[Theorem 10.4]{Kig12}).
  Taking $A=B(x,t^{\frac{1}{1+s_0}})$, we obtain the result.
\end{proof}

We further consider the following for a resistance metric space $(F,R)$:
\begin{enumerate}[align=left, labelwidth=\widthof{$\rm{URES}$ }, 
    labelsep=0.5em, 
    leftmargin=!]
  \item [$\rm{URES}$]There exists $c_{\rm{UR}}>0$ such that $R(y,B(x,r)^c)\leq c_{\rm{UR}}r$ for all $y\in F$ and $B(x,r)\neq F$.
\end{enumerate}

\begin{lem}\label{B1}
$\rm{UP}$ is equivalent to the following.
\begin{itemize}
  \item $\exists a_{\rm{UP}}>1,\;\forall B(x,r)\neq F,\;B(x,a_{\rm{UP}}r)\setminus B(x,r)\neq \emptyset$
\end{itemize}
Furthermore, we may take the constants to satisfy $a_{\rm{UP}}=c_{\rm{UP}}^{-1}$.
\end{lem}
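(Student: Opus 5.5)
The proof will be a direct check of both implications, with the constant in each direction read off as the reciprocal of the other.

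\emph{UP implies the annulus condition.} Assume UP with constant $c_{\rm{UP}}\in(0,1)$ and set $a_{\rm{UP}}:=c_{\rm{UP}}^{-1}>1$. Take a ball with $B(x,r)\neq F$. The natural move is to apply UP to the enlarged ball $B(x,a_{\rm{UP}}r)$, since this gives $B(x,a_{\rm{UP}}r)\setminus B(x,c_{\rm{UP}}a_{\rm{UP}}r)=B(x,a_{\rm{UP}}r)\setminus B(x,r)\neq\emptyset$. To use UP this way we need $B(x,a_{\rm{UP}}r)\neq F$, so there are two cases.
\begin{itemize}
\item If $B(x,a_{\rm{UP}}r)\neq F$, we apply UP as above and are done.
\item If $B(x,a_{\rm{UP}}r)=F$, then $B(x,a_{\rm{UP}}r)\setminus B(x,r)=F\setminus B(x,r)$. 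This set is nonempty precisely because $B(x,r)\neq F$.
\end{itemize}
So the annulus condition holds in either case.

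\emph{The annulus condition implies UP.} Assume the annulus condition with $a_{\rm{UP}}>1$ and set $c_{\rm{UP}}:=a_{\rm{UP}}^{-1}\in(0,1)$. Take $B(x,r)\neq F$. Then $B(x,c_{\rm{UP}}r)\subseteq B(x,r)\neq F$, so the hypothesis applies to the radius $c_{\rm{UP}}r$. It gives $B(x,a_{\rm{UP}}c_{\rm{UP}}r)\setminus B(x,c_{\rm{UP}}r)=B(x,r)\setminus B(x,c_{\rm{UP}}r)\neq\emptyset$, which is exactly UP.

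The only subtle point in the whole argument is the case $B(x,a_{\rm{UP}}r)=F$ in the first direction, where UP cannot be invoked. As shown above, it is handled trivially by the standing hypothesis $B(x,r)\neq F$. The constants satisfy $a_{\rm{UP}}=c_{\rm{UP}}^{-1}$ by construction in both directions.
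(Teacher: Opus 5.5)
Your proof is correct and matches the paper's. The forward direction uses the same case split on whether $B(x,c_{\mathrm{UP}}^{-1}r)=F$. Your converse, which applies the annulus condition at radius $c_{\mathrm{UP}}r$ (legitimate since $B(x,c_{\mathrm{UP}}r)\subseteq B(x,r)\neq F$), fills in exactly the step the paper dismisses as ``easily shown.''
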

\begin{proof}
  Suppose that $\rm{UP}$ holds and take $B(x,r)\neq F$.
  If $B(x,c_{\rm{UP}}^{-1}r)=F$, then obviously we have $B(x,c_{\rm{UP}}^{-1}r)\setminus B(x,r)\neq \emptyset$.
  If $B(x,a_{\rm{UP}}^{-1}r)\neq F$, then we may use our assumption to deduce that 
  $B(x,c_{\rm{UP}}^{-1}r)\setminus B(x,r)=B(x,c_{\rm{UP}}^{-1}r)\setminus B(x,c_{\rm{UP}}c_{\rm{UP}}^{-1}r)\neq \emptyset$.
  The converse can be shown easily.
\end{proof}

\cref{summ}(i) is a corollary of the following proposition.

\begin{prp}\label{A3}
  Suppose that there exists a Borel measure $\nu$ satisfying $\rm{Reg}_{s_0,s_1}$.
Then $\rm{UP}$ implies $\rm{LRES}(1+s_0-s_1)$ and $\rm{URES}$ with $c_{\rm{LR}}=\left[ 4\frac{c_u}{c_l}\frac{(\frac{1}{4}c_{\rm{UP}}+1)^{s_1}}{(\frac{1}{16}c_{\rm{UP}})^{s_0}}\frac{1}{c_{\rm{UP}}} \right]^{-1}$ and $c_{\rm{UR}}=1+c_{\rm{UP}}^{-1}$.
\end{prp}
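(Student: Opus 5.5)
The plan is to prove the two estimates separately, starting with URES since it is elementary.

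\textbf{URES.} Fix $B(x,r)\neq F$ and $y\in F$. By \cref{B1} there is $z\in B(x,c_{\rm{UP}}^{-1}r)\setminus B(x,r)$, so $z\in B(x,r)^c$. Hence
\[
R(y,B(x,r)^c)\le R(y,z)\le R(y,x)+R(x,z)<R(y,x)+c_{\rm{UP}}^{-1}r .
\]
This is bounded by $(1+c_{\rm{UP}}^{-1})r$ once $y\in B(x,r)$. If $y\notin B(x,r)$, then $R(y,B(x,r)^c)=0$ trivially. This gives $c_{\rm{UR}}=1+c_{\rm{UP}}^{-1}$.

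\textbf{LRES.} I would follow Kumagai's argument and test the effective resistance against a measure. The starting point is the Green function of the process killed outside $B(x,r)$. By \cref{killedgreen} and \eqref{1lip}, $g:=g_{B(x,r)^c}(x,\cdot)$ satisfies
\[
g(x)=R(x,B(x,r)^c)=:R_0,\qquad g(y)\ge R_0-R(x,y).
\]
So on $B(x,R_0/2)$ we have $g\ge R_0/2$. Suppose, for contradiction, that $R_0\le \varepsilon r$ with $\varepsilon$ small.

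A cleaner route uses the standard inequality: for a measure $\nu$ supported in $B(x,r/4)$ with total mass $m$,
\[
R(x,B(x,r)^c)\gtrsim \frac{\bigl(\inf\text{ of mean exit-type quantity}\bigr)}{\cdots}.
\]
Concretely, I would instead use the capacity/energy bound through a cutoff. Uniform perfectness gives many disjoint balls. Pick points $x_i\in B(x,r/2)\setminus B(x,c_{\rm{UP}}r/4)$ and compare resistances between well-separated points, $R(x_i,x_j)\le R(x_i,B)+\dots$. Alternatively, the approach I will commit to is Kumagai's Lemma: for $u$ with $u(x)=1$ and $u|_{B(x,r)^c}=0$, Hölder \eqref{resishol} gives $|u(y)-u(x)|^2\le \calE(u,u)R(x,y)$. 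By UP there is a point $z$ with $R(x,z)\in[c_{\rm{UP}}r/4,\,r/4)$. The function $u$ must drop from $1$ to $0$, so I split into two cases.

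\emph{Case 1:} $u\ge 1/2$ on $B(z,c_{\rm{UP}}r/16)$. Then I compare with the resistance from this ball to the complement, using $\nu$-measures of balls to obtain annuli counts. Via Reg, the number of disjoint $c_{\rm{UP}}r/16$-balls in $B(x,r)$ is at most $(c_u/c_l)(r(1+c_{\rm{UP}}/4))^{s_1}/(c_{\rm{UP}}r/16)^{s_0}$. A pigeonhole/chaining argument along these balls then yields
\[
\calE(u,u)\le N\cdot 4/(c_{\rm{UP}}r),
\]
hence $R_0\ge (N\cdot 4/c_{\rm{UP}})^{-1}r^{1+s_0-s_1}$. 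This matches the claimed $c_{\rm{LR}}$ exactly, so this is the intended mechanism: $N\sim r^{s_1-s_0}$ times the cost $4/(c_{\rm{UP}} r)$ per ball.

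\emph{Case 2} is symmetric.

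\textbf{Main obstacle.} The hard step is converting the ball count into an energy lower bound. The bound needs a genuine chain of points at mutual distance $\ge c_{\rm{UP}}r/16$ joining $x$ to $B^c$ with at most $N$ steps. Then $1\le\sum|u(x_{i+1})-u(x_i)|$ together with Cauchy–Schwarz gives $1\le \calE(u,u)\sum R(x_i,x_{i+1})$. Making that bound point in the right direction, a lower bound on $R_0$ rather than an upper bound, is subtle. I expect to instead bound $R_0$ from below by the series law on a maximal separated net. The net has cardinality at most $N$ by Reg, and I would use $R\ge$ a fixed multiple of the net scale together with the triangle inequality for resistance metrics.
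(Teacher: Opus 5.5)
Your URES argument is correct and is essentially the paper's. The LRES part has a genuine gap: you never produce an admissible function. Since $R(x,B(x,r)^c)^{-1}=\inf\{\calE(u,u):u\in\calF,\ u(x)=1,\ u|_{B(x,r)^c}=0\}$, a \emph{lower} bound on $R(x,B(x,r)^c)$ requires exhibiting one such $u$ with small energy.

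Your Case~1 instead asserts $\calE(u,u)\le 4N/(c_{\mathrm{UP}}r)$ for an \emph{arbitrary} admissible $u$. That is false, since admissible functions can have arbitrarily large energy. ``Case~2 is symmetric'' has no content.

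Your fallback goes the wrong way. Chaining with Cauchy--Schwarz, $1\le\calE(u,u)\sum_iR(x_i,x_{i+1})$, or using the series law and triangle inequality on a net, gives a lower bound on $\calE(u,u)$ for every admissible $u$. That is an \emph{upper} bound $R(x,B(x,r)^c)\le\sum_iR(x_i,x_{i+1})$, and it cannot be reversed. You have matched the right constant, but the mechanism that produces it is the parallel law, not the series law.

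The missing construction, which is what the paper does, runs as follows.
\begin{itemize}
\item By UP the annulus $A:=B(x,r)\setminus B(x,c_{\mathrm{UP}}r)$ is non-empty. For $z\in A$ let $h_z\in\calF$ be the harmonic extension of $1_{\{x\}}$ from $\{x,z\}$. Then $\calE(h_z,h_z)=R(x,z)^{-1}\le(c_{\mathrm{UP}}r)^{-1}$.
\item By \eqref{resishol}, $h_z(y)^2\le R(y,z)/R(x,z)\le\frac14$ for $y\in B(z,\frac14c_{\mathrm{UP}}r)$.
\item Take a maximal $\frac14c_{\mathrm{UP}}r$-separated set $z_0,\dots,z_N$ \emph{in the annulus}. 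The disjoint balls $B(z_i,\frac1{16}c_{\mathrm{UP}}r)$ lie in $B(x,(1+\frac14c_{\mathrm{UP}})r)$, so $\mathrm{Reg}_{s_0,s_1}$ gives your count $N+1\le\frac{c_u}{c_l}(1+\frac14c_{\mathrm{UP}})^{s_1}(\frac1{16}c_{\mathrm{UP}})^{-s_0}r^{s_1-s_0}$.
\item Set $h:=2(\min_ih_{z_i}-\tfrac12)^+1_{B(x,r)}$. Then $h(x)=1$, and $h$ vanishes on $A$ (covered by the balls $B(z_i,\frac14c_{\mathrm{UP}}r)$) and off $B(x,r)$.
\item Using $\calE(u\wedge v,u\wedge v)\le\calE(u,u)+\calE(v,v)$ and the Markov property (RF5), $\calE(h,h)\le4\sum_i\calE(h_{z_i},h_{z_i})\le4(N+1)/(c_{\mathrm{UP}}r)$, which is exactly the stated $c_{\mathrm{LR}}$.
\end{itemize}

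The centres must lie in the annulus rather than anywhere in $B(x,r)$, so that each $R(x,z_i)\ge c_{\mathrm{UP}}r$. Points near $x$ would make the per-point cost $R(x,z_i)^{-1}$ blow up.

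You should also check that multiplying by $1_{B(x,r)}$ does not increase the energy. This is where it matters that the function already vanishes on the whole annulus separating $x$ from $B(x,r)^c$. The paper passes over this step without comment; it is immediate when the form is local.
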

\begin{proof}
  The following proof is based on the argument in \cite[Lemma 4.1]{Kum04}.
  We first prove $\rm{URES}$. Take $y\in F$ and $B(x,r)\neq F$. 
  Since our goal is to obtain $R(y,B(x,r)^c)\leq (1+c_{\rm{UP}}^{-1})r$, we may assume $y\in B(x,r)$.
  The condition $\rm{UP}$ and \cref{B1} ensure the existence of $z\in F$ such that $r\leq R(x,z)\leq c_{\rm{UP}}^{-1}r$.
  Then it holds that $ R(y,z)\leq R(y,x)+R(x,z)\leq (1+c_{\rm{UP}}^{-1})r$. Therefore we obtain the following:
  \begin{align*}
    R(y,B(x,r)^c)
    =&\inf\{\calE(f,f):f\in\calF,f(y)=1,f|_{B(x,r)^c}=0\}^{-1}\\
    \leq&\inf\{\calE(f,f):f\in\calF,f(y)=1,f(z)=0\}^{-1}\\
    =&R(y,z)\\
    \leq&(1+c_{\rm{UP}}^{-1})r,
  \end{align*}
  which shows $\rm{URES}$.
  
  We next show $\rm{LRES}(1+s_0-s_1)$.
  Take $B(x,r)\neq F$ and $z\in B(x,r)\setminus B(x,c_{\rm{UP}}r)$. Let $h_z\in\calF$ be the harmonic extension
  of $1_{\{x\}}:\{x,z\}\to \R$. Then we have $\calE(h_z,h_z)=R(z,x)^{-1}\leq (c_{\rm{UP}}r)^{-1}$.
  Thus, if we put $\lambda_1=\frac{1}{4}c_{\rm{UP}}$,
  it follows for $y\in B(z,\lambda_1 r)$ that
  \begin{align*}
    |h_z(y)|^2=|h_z(y)-h_z(z)|^2\leq \frac{R(y,z)}{R(x,z)}\leq \frac{1}{4},
  \end{align*}
  which implies $|h_z|\leq \frac{1}{2}$ on the ball.
  Set $A:=B(x,r)\setminus B(x,c_{\rm{UP}}r)$ and take $z_0\in A$.
  When $z_0,\dots,z_n\in A$ are chosen and $A\setminus \cup_{i=0}^n B(z_i,\lambda_1 r)$ is not empty,
  we choose $z_{n+1}$ from $A\setminus \cup_{i=0}^n B(z_i,\lambda_1 r)$.
  We prove that
  this procedure terminates in at most $N$ steps, and it holds that 
  $N+1\leq \frac{c_u}{c_l}\frac{(\lambda_1+1)^{s_1}}{(\frac{1}{4}\lambda_1)^{s_0}}r^{s_1-s_0}$.
    Note that $(B(z_i,\frac{1}{4}\lambda_1 r))_i$ are disjoint subsets of $B(x,(\lambda_1+1)r)$.
    Thus we obtain
    \begin{align*}
      (n+1)c_l\left(\frac{1}{4}\lambda_1 r\right)^{s_0}\leq \sum_{i=0}^{n}\nu(B(z_i,\frac{1}{4}\lambda_1 r))
      \leq \nu(B(x,(\lambda_1+1)r))\leq c_u(\lambda_1+1)^{s_1}r^{s_1},
    \end{align*}
    and deduce $n+1\leq \frac{c_u}{c_l}\frac{(\lambda_1+1)^{s_1}}{(\frac{1}{4}\lambda_1)^{s_0}}r^{s_1-s_0}$.
  This shows the desired bound for $N+1$.

  By this procedure, we obtain $N+1$ points $(z_i)_{i=0}^N$
  which satisfy $B(x,r)\setminus B(x,c_{\rm{UP}}r)\subseteq\cup_{i=0}^N B(z_i,\lambda_1 r)$ and $c_{\rm{UP}}r\leq R(x,z_i)\leq r$.
  Set $g=\min_{i\leq N}h_{z_i}$ and $h=2(g-\frac{1}{2})^{+}1_{B(x,r)}\in\calF$.
  Then we deduce that
  \begin{align*}
    R(x,B(x,r)^c)^{-1}\leq \calE(h,h)\leq 4\sum_{i=0}^N \calE(h_{z_i},h_{z_i})
    \leq 4(N+1)(\min_i R(z_i,x))^{-1}.
  \end{align*}
  Combining this and the estimate for $N+1$, we complete the proof.
\end{proof}

Under the assumption $\rm{(A2)}$, set $D_F=\frac{1}{2}\diam F\wedge (2c_{\rm{LR}}^{-1}\diam F)^{1/\theta}$.
\begin{lem}\label{A2}
Assume $\rm{(A2)}$. If $r<D_F$, then 
\[
P_x(\sigma_{B(x,r)^c}\leq t)\leq \frac{4}{5}+c_{\rm{Exit}}t r^{-(1+s_0)\theta},
\]
where $c_{\rm{Exit}}=\frac{16}{5}\frac{1}{c_l c_{\rm{LR}} (\frac{1}{4}c_{\rm{LR}})^{s_0}}$.
\end{lem}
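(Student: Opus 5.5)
The approach is to bound the exit probability by splitting on the exit time against a target time, using \cref{4.2-a}(ii) as in Kumagai's argument. Fix $x\in F$ and $r<D_F$. The condition $r<\frac12\diam F$ guarantees $B(x,r)\neq F$, so $\rm{LRES}(\theta)$ applies and gives $R(x,B(x,r)^c)\ge c_{\rm{LR}}r^{\theta}$. I would apply \cref{4.2-a}(ii) with $\rho=x$ and a well-chosen $\delta\in(0,R(x,B(x,r)^c))$:
\[
P_x(\sigma_{B(x,r)^c}\le t)\le 4\Bigl[\frac{\delta}{R(x,B(x,r)^c)}+\frac{t}{\mu(B(x,\delta))(R(x,B(x,r)^c)-\delta)}\Bigr].
\]

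The natural choice is $\delta=\frac{1}{5}R(x,B(x,r)^c)$. Then the first bracket term contributes exactly $\frac{4}{5}$, and the second becomes
\[
\frac{4t}{\mu(B(x,\delta))\cdot\frac45 R(x,B(x,r)^c)}=\frac{5t}{\mu(B(x,\delta))R(x,B(x,r)^c)}.
\]
The denominator is then bounded below using (A1): $\mu(B(x,\delta))\ge c_l\delta^{s_0}$, valid provided $\delta<\diam F$. Combined with $R\ge c_{\rm{LR}}r^\theta$, this gives the form $C t\, r^{-(1+s_0)\theta}$.

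The constant in the statement, $\frac{16}{5}\frac{1}{c_lc_{\rm{LR}}(c_{\rm{LR}}/4)^{s_0}}$, suggests a slightly different normalization. I would instead pick $\delta=\frac14 c_{\rm{LR}}r^\theta$, which is less than $R(x,B(x,r)^c)$. Then $\delta/R\le\frac14$, and the remaining factor satisfies $R-\delta\ge\frac34 c_{\rm{LR}}r^\theta$. Feeding these in produces a bound of the form $1+\frac{16}{3}\,t/(c_l(c_{\rm{LR}}/4)^{s_0}c_{\rm{LR}}r^{(1+s_0)\theta})$. The prefactor $4$ is the obstacle, since it prevents the first term from being small. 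To reach $\frac45$, I would apply the sharper exponential form (i) of \cref{4.2-a}, whose leading factor is $2$ rather than $4$. Expanding it with $1-e^{-u}\le u$ gives
\[
2\Bigl[1-\tfrac{R-\delta}{R+\delta}\Bigr]+\tfrac{4t}{\mu(B(x,\delta))(R+\delta)}\le \tfrac{4\delta}{R+\delta}+\tfrac{4t}{\mu(B(x,\delta))R}.
\]
Choosing $\delta=R/4$ makes the first term exactly $\frac45$ and the second at most $16t/(c_l(R/4)^{s_0}R)$. With $R\ge c_{\rm{LR}}r^\theta$, this becomes $16t/(c_l(c_{\rm{LR}}/4)^{s_0}c_{\rm{LR}}r^{(1+s_0)\theta})$, which matches the claimed constant up to the stated factor. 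I would then tune $\delta=\lambda R$ to recover exactly $\frac{16}{5}$.

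The remaining technical check is that $\delta=R/4$ lies in the range where (A1) applies, i.e.\ $\delta<\diam F$. Here I would use the upper bound $R(x,B(x,r)^c)\le\diam F$ when the complement is nonempty, which holds because $R(x,A)\le R(x,a)$ for any $a\in A$. This gives $R/4<\diam F$. The condition $r<(2c_{\rm{LR}}^{-1}\diam F)^{1/\theta}$ in $D_F$ is exactly what ensures $c_{\rm{LR}}r^\theta/4<\diam F$ if one instead uses the $r$-dependent choice of $\delta$. So I expect the main obstacle to be bookkeeping of constants rather than any substantive estimate.
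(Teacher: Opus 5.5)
Your final route (\cref{4.2-a}(i) with $\delta=\frac14 R(x,B(x,r)^c)$, the linearization $e^{-u}\ge 1-u$, then the lower volume bound in (A1) together with $\mathrm{LRES}(\theta)$) is exactly the paper's proof, and your check that $\delta<\diam F$ via $R(x,A)\le R(x,a)\le \diam F$ is fine. The only slip is in the constant: you should keep the exact denominator $R+\delta=\frac54 R$ rather than weakening it to $R$, which gives $\frac{4t}{\frac54 R\,\mu(B(x,R/4))}=\frac{16}{5}\,\frac{t}{R\,\mu(B(x,R/4))}$ and hence precisely $c_{\mathrm{Exit}}$; ``tuning $\delta=\lambda R$'' is not available, since $\lambda=\frac14$ is the only value with $\frac{4\lambda}{1+\lambda}=\frac45$.
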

\begin{proof}
  Set $A:=B(x,r)^c$. Note that $r,\;\frac{1}{4}R(x,A)<\frac{1}{2}\diam F$.
  Thus it follows from \cref{4.2-a}(i) that
  \begin{align*}
    P_x(\sigma_A\leq t)
    \leq& 2\left[ 1-\frac{1-\frac{1}{4}}{1+\frac{1}{4}}\exp\left( -\frac{2t}{\mu(B(x,\frac{1}{4}R(x,A)))(1-\frac{1}{4})R(x,A)} \right) \right]\\
    \leq& 2\left[ 1-\frac{3}{5}\exp\left( -\frac{2t}{\mu(B(x, \frac{1}{4}c_{\rm{LR}}r^{\theta} ))\frac{3}{4}c_{\rm{LR}}r^{\theta}} \right) \right]\\
    \leq&2\left[ 1-\frac{3}{5}\left( 1-\frac{2t}{c_l(\frac{1}{4}c_{\rm{LR}})^{s_0}\frac{3}{4}c_{\rm{LR}} r^{(1+s_0)\theta} }\right)\right]\\
    =&\frac{4}{5}+\frac{16}{5}\frac{t}{c_l c_{\rm{LR}} (\frac{1}{4}c_{\rm{LR}})^{s_0}r^{(1+s_0)\theta}},
  \end{align*}
which shows the desired inequality.
\end{proof}

The following proves \cref{summ}(iii).
\begin{prp}\label{007}
If Assumption $\rm{(A2)}$ holds,
then it holds that 
\[
\frac{1}{100}\left[c_u(5c_{\rm{Exit}})^{\frac{s_1}{(1+s_0)\theta}} \right]^{-1}t^{-\frac{s_1}{(1+s_0)\theta}}\leq p(t,x,x),\qquad t<\frac{D_F^{(1+s_0)\theta}}{5c_{\rm{Exit}}}.
\]
\end{prp}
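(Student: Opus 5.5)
The plan is to derive the on-diagonal lower bound from the exit-time estimate of \cref{A2}, using the standard argument via Chapman--Kolmogorov and Cauchy--Schwarz. Fix $x\in F$ and $t<D_F^{(1+s_0)\theta}/(5c_{\rm{Exit}})$, and set
\[
r:=(5c_{\rm{Exit}}t)^{\frac{1}{(1+s_0)\theta}}.
\]
The constraint on $t$ is exactly what guarantees $r<D_F$. Hence \cref{A2} applies and gives
\[
P_x(\sigma_{B(x,r)^c}\le t)\le \tfrac45+c_{\rm{Exit}}t\,r^{-(1+s_0)\theta}=\tfrac45+\tfrac15 .
\]
This equals $1$, so the choice of $r$ has to be refined.

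Concretely, I would take $r$ slightly larger so that $c_{\rm{Exit}}t\,r^{-(1+s_0)\theta}\le \tfrac1{10}$, for instance $r=(10c_{\rm{Exit}}t)^{1/((1+s_0)\theta)}$. If the range of $t$ in the statement does not then keep $r<D_F$, I would instead enlarge the slack by a constant factor and absorb it into the prefactor $\frac1{100}$. With such an $r$ we get
\[
P_x(X_t\in B(x,r))\ge P_x(\sigma_{B(x,r)^c}>t)\ge \tfrac1{10}.
\]
The constants $\frac1{100}$ and $5c_{\rm{Exit}}$ in the statement suggest the author balances these choices differently, perhaps by applying \cref{A2} at time $t/2$. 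I would adjust the bookkeeping until the stated constants appear, but the structure of the argument is unchanged.

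Next, Chapman--Kolmogorov \eqref{chap-kol} together with Cauchy--Schwarz gives
\[
P_x(X_t\in B)^2=\Big(\int_B p(t,x,y)\,\mu(dy)\Big)^2\le \mu(B)\int_F p(t,x,y)^2\,\mu(dy)=\mu(B)\,p(2t,x,x).
\]
Applying this with $B=B(x,r)$ and the upper volume bound $\mu(B(x,r))\le c_u r^{s_1}$ from (A1) yields
\[
p(2t,x,x)\ge \frac{c}{c_u}\,(c_{\rm{Exit}}t)^{-\frac{s_1}{(1+s_0)\theta}}.
\]
Replacing $t$ by $t/2$ (rescaling the admissible range of $t$) and collecting constants then gives the bound in the stated form, with $t^{-s_1/((1+s_0)\theta)}$.

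The main obstacle is purely quantitative. The exit probability bound of \cref{A2} only gives an $O(1)$ margin below $1$, so $r$ and the time split must be chosen so that simultaneously $r<D_F$ and the survival probability is bounded below by an explicit constant; this is what produces the precise constants $\frac1{100}$ and $(5c_{\rm{Exit}})^{s_1/((1+s_0)\theta)}$. A secondary point is to note that the diagonal of the continuous heat kernel from \cref{104} is well defined, so that the Cauchy--Schwarz step holds pointwise rather than merely almost everywhere.
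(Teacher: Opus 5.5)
Your proposal is correct and is essentially the paper's proof. The paper takes $r_t=(10c_{\mathrm{Exit}}t)^{1/((1+s_0)\theta)}$ for $t<D_F^{(1+s_0)\theta}/(10c_{\mathrm{Exit}})$, obtains $P_x(X_t\in B(x,r_t))\ge 1/10$ from \cref{A2}, and then uses Cauchy--Schwarz with Chapman--Kolmogorov and $\mu(B(x,r_t))\le c_u r_t^{s_1}$ to bound $p(2t,x,x)$ from below. Your hedging is unnecessary: with this choice $r_t<D_F$ holds exactly on that range, and substituting $t\mapsto t/2$ turns $10c_{\mathrm{Exit}}t$ into $5c_{\mathrm{Exit}}t$ and the range into $t<D_F^{(1+s_0)\theta}/(5c_{\mathrm{Exit}})$, which gives the stated constants verbatim.
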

\begin{proof}
  The following argument builds on \cite[Proposition 4.3]{Kum04}.
  First, we observe $P_x(X_t\not\in B(x,r))\leq P_x(\sigma_{B(x,r)^c}\leq t)\leq\frac{4}{5}+c_{\rm{Exit}}t r^{-(1+s_0)\theta}$ for $r<D_F$
  from \cref{A2}.
  Set $t_1=\frac{D_F^{(1+s_0)\theta}}{10c_{\rm{Exit}}}$ and fix $t\in (0,t_1)$.
  Define $r_t=(10 c_{\rm{Exit}}t)^{\frac{1}{(1+s_0)\theta}}$. Then, since the map $0<a\mapsto \frac{a^{(1+s_0)\theta}}{10c_{\rm{Exit}}}$ is an increasing bijection,
  we can verify $r_t<D_F$ and $P_x(X_t\not\in B(x,r_t))\leq 9/10$.
  Therefore we conclude that
  \begin{align*}
    \frac{1}{100}
    =&(1-9/10)^2\\
    \leq& P_x(X_t\in B(x,r_t))^2\\
    =&\left(\int_{B(x,r_t)}p(t,x,y)\mu(dy)\right)^2\\
    \leq&\mu(B(x,r_t))p(2t,x,x)\\
    \leq&c_u (10 c_{\rm{Exit}}t)^{\frac{s_1}{(1+s_0)\theta}}p(2t,x,x).
  \end{align*}
  This completes the proof.
\end{proof}

Next we prove estimate for the expectation of the exist time
under Assumption $\rm{(A1)}$.
This estimate is essential to prove \cref{summ}(v).

\begin{lem}\label{exittime}
  Suppose that $\mu$ satisfies $\rm{(A1)}$ and
take $x\in F$ and $B(x_0,r)\neq F$. Then the following hold.
\begin{itemize}
  \item Under $\rm{LRES}(\theta)$, we have $E_x[\sigma_{B(x_0,r)^c}]\geq \left(\frac{1}{4}c_{\rm{LR}}\right)^{1+s_0}c_l r^{(1+s_0)\theta}.$
  \item Under $\rm{URES}$, we have $E_x[\sigma_{B(x_0,r)^c}]\leq  c_u c_{\rm{UR}}r^{1+s_1}$.
\end{itemize}
In particular, if both of those hold, we have $(1+s_0)\theta\geq 1+s_1$.
\end{lem}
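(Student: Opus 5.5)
We have to prove \cref{exittime}: the two bounds on $E_x[\sigma_{B(x_0,r)^c}]$, and the consequence $(1+s_0)\theta\ge 1+s_1$. The tool is the Green function of the process killed on exiting the ball. Write $A:=B(x_0,r)^c$, which is nonempty since $B(x_0,r)\neq F$. By \cref{killedgreen} applied with $f\equiv 1$,
\begin{align*}
E_x[\sigma_A]=G_A1(x)=\int_F g_A(x,z)\,\mu(dz),
\end{align*}
where $0\le g_A(x,z)\le g_A(x,x)=R(x,A)$, $g_A(x,\cdot)=0$ on $A$, and $g_A(x,\cdot)$ is $1$-Lipschitz in $R$ by \eqref{1lip}. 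Both bounds come from estimating this integral.

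\emph{Upper bound under $\rm{URES}$.} Since $g_A(x,\cdot)$ vanishes on $A$, the integral runs only over $B(x_0,r)$, and the integrand is at most $R(x,A)$. Hence
\begin{align*}
E_x[\sigma_A]\le R(x,A)\,\mu(B(x_0,r))\le c_{\rm{UR}}r\cdot c_u r^{s_1}=c_uc_{\rm{UR}}r^{1+s_1}.
\end{align*}
Here $\rm{URES}$ bounds $R(x,A)\le c_{\rm{UR}}r$, and (A1) gives $\mu(B(x_0,r))\le c_u r^{s_1}$. If $r\ge\diam F$ then $B(x_0,r)=F$, which is excluded, so (A1) applies.

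\emph{Lower bound under $\rm{LRES}(\theta)$.} This is the main step, and it needs a choice of centre.
\begin{itemize}
\item If $x\notin B(x_0,r)$, then $\sigma_A$ is $0$ or the bound is trivially checked. I expect the intended statement to be $x=x_0$ or $x\in B(x_0,r/2)$; I will prove it for $x=x_0$, which is what \cref{summ}(v) needs.
\item Set $\rho:=R(x_0,A)\ge c_{\rm{LR}}r^\theta$. On $B(x_0,\rho/2)$, the Lipschitz bound gives $g_A(x_0,z)\ge \rho-R(x_0,z)\ge\rho/2$.
\item The measure bound requires radius $<\diam F$. Since $\rho\le R(x_0,y)$ for $y\in A$, we have $\rho/4\le\rho\le \diam F$. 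Using the ball $B(x_0,\rho/4)$, the lower bound in (A1) is usable; in the edge case, shrink slightly and pass to a limit.
\end{itemize}
Then
\begin{align*}
E_{x_0}[\sigma_A]\ge \tfrac{\rho}{2}\,\mu(B(x_0,\rho/4))\ge \tfrac{\rho}{2}\,c_l(\rho/4)^{s_0}\ge \left(\tfrac14 c_{\rm{LR}}\right)^{1+s_0}c_l\,r^{(1+s_0)\theta}.
\end{align*}
The last inequality needs $\rho/2\ge\rho/4$ and $c_{\rm{LR}}\le 4$ or similar bookkeeping; the constant $(1/4)^{1+s_0}$ absorbs the factor $1/2$ after rewriting $\rho/2\ge(\rho/4)$.

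\emph{Consequence.} When both bounds hold, combine them at $x=x_0$ for all small $r$:
\begin{align*}
c\,r^{(1+s_0)\theta}\le C\,r^{1+s_1}.
\end{align*}
Letting $r\to0$ gives $(1+s_0)\theta\ge 1+s_1$. This requires that arbitrarily small balls are proper, which holds because $F$ has at least two points.

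The main obstacle is the lower bound: controlling where $g_A(x,\cdot)$ stays large, and handling starting points $x\ne x_0$. For a general $x$ in the ball I would replace $x_0$ by $x$ using $B(x,r-R(x,x_0))\subseteq B(x_0,r)$, though this degrades the radius.
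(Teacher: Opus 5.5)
This is correct and is essentially the paper's proof. The upper bound comes from the representation $E_x[\sigma_A]=\int_F g_A(x,z)\,\mu(dz)$, with $0\le g_A(x,\cdot)\le R(x,A)$ and $g_A(x,\cdot)$ vanishing off $B(x_0,r)$. You prove the lower bound, as the paper does, only at the centre $x=x_0$; this is the only case in which it is true (for instance $E_x[\sigma_A]=0$ for $x\in A$, so your aside calling that case ``trivially checked'' is wrong) and the only case used in \cref{summ}(v). The exponent comparison then follows by letting $r\to0$.

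The one difference is how you get the near-diagonal bound $g_A(x_0,\cdot)\ge\frac12R(x_0,A)$ on a small ball. You use the $1$-Lipschitz estimate \eqref{1lip}, whereas the paper uses the energy bound $|g_A(x_0,x_0)-g_A(x_0,y)|^2\le g_A(x_0,x_0)R(x_0,y)$; both work. Also, no condition such as $c_{\mathrm{LR}}\le 4$ is needed, since $\frac{\rho}{2}(\rho/4)^{s_0}\ge(\rho/4)^{1+s_0}$.
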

\begin{proof}
  In the following argument, we follow \cite[Proposition 4.2]{Kum04}.
  Set $B:=B(x_0,r)$ and let $g_B$ be the Green function with the boundary $B^c$ (i.e. the Green function corresponding to the process killed upon hitting $B^c$).
  Then we have $g_B(x,y)\leq g_B(x,x)\leq R(x,B^c)\leq c_{\rm{UR}}r$ (cf. \cite[Theorem 4.1]{Kig12}).
  Thus, we deduce
  \begin{align*}
    E_x[\sigma_B]=\int_F g_B(x,y)\mu(dy)=\int_B g_B(x,y)\mu(dy)\leq c_u c_{\rm{UR}}r^{1+s_1}
  \end{align*}
  from \cite[Corollary 10.11]{Kig12}, which shows the first assertion.

  We next show the lower bound.
  Since it holds that 
  \begin{align*}
    |g_B(x_0,x_0)-g_B(x_0,y)|^2\leq g_B(x_0,x_0)R(x_0,y),\qquad y\in F,
  \end{align*}
  we have
  \begin{align*}
    \left|1-\frac{g_B(x_0,y)}{g_B(x_0,x_0)}\right|^2\leq \frac{R(x_0,y)}{g_B(x_0,x_0)}=\frac{R(x_0,y)}{R(x_0,B^c)}\leq \frac{R(x_0,y)}{c_{\rm{LR}}r^{\theta}}.
  \end{align*}
  This implies $\frac{g_B(x_0,y)}{g_B(x_0,x_0)}\geq 1/2$ for $y\in B(x_0,\frac{1}{4}c_{\rm{LR}}r^{\theta})$.
  Therefore we deduce that
  \begin{align*}
    E_{x_0}[\sigma_B]=\int_F g_B(x_0,y)\mu(dy)
    \geq \frac{1}{2}g_B(x_0,x_0)\mu( B(x_0,\frac{1}{4}c_{\rm{LR}}r^{\theta}) )
    \geq \left(\frac{1}{4}c_{\rm{LR}}\right)^{1+s_0}c_l r^{(1+s_0)\theta}.
  \end{align*}
  This gives us the desired inequality.
  For the final assertion, consider the limit $r\to 0$.
\end{proof}

We are now ready to prove \cref{summ}(v).

\begin{proof}[Proof of \cref{summ}$\rm{(v)}$]
  The following argument is based on \cite[Lemma 12]{Cr07}.
   Note that the assumption $\rm{(A3)}$ implies $\rm{LRES}(1+s_0-s_1)$ and $\rm{URES}$ by \cref{A3}. 
  Set $\theta=1+s_0-s_1$. 
For $r<\frac{1}{2}\diam F$, we have from \cref{exittime} that 
  \begin{align*}
    c_l(c_{\rm{LR}}/4)^{1+s_0}r^{(1+s_0)\theta}
    \leq&E_x[\sigma_{B(x,r)^c}]\\
    \leq&t+E_x[ E_{X_t}[\sigma_{B(x,r)^c}] : \sigma_{B(x,r)^c}>t]\\
    \leq&t+(1-P_x(\sigma_{B(x,r)^c}\leq t))c_u c_{\rm{UR}}r^{1+s_1},
  \end{align*}
  and thus, it holds that
  \begin{align}\label{haa}
    P_x(\sigma_{B(x,r)^c}\leq t)\leq 1-\frac{c_l(c_{\rm{LR}}/4)^{1+s_0}r^{(1+s_0)\theta}}{c_u c_{\rm{UR}}r^{1+s_1}}+\frac{t}{c_u c_{\rm{UR}}r^{1+s_1}}.
  \end{align}
  Note that, by \cref{exittime}, $\frac{c_l(c_{\rm{LR}}/4)^{1+s_0}r^{(1+s_0)\theta}}{c_u c_{\rm{UR}}r^{1+s_1}}\in [0,1]$ for $r<\frac{1}{2}\diam F$.
  For $n\in\N$, define
  \begin{align*}
    \sigma_0=0,\qquad\sigma_{i+1}=\inf\{t\geq \sigma_i:R(X_{\sigma_i},X_t)\geq r/n\}.
  \end{align*}
  Since the Dirichlet form is local under Assumption 
  $\rm{(A3)}$, the process is a diffusion (see \cite[Theorem 7.2.2]{FOT94}), 
  which implies 
  $\sum_{i=1}^n (\sigma_i-\sigma_{i-1})\leq \sigma_{B(x,r)^c}$.
  Also, it follows from the strong Markov property and \eqref{haa} that
  \begin{align*}
    P_x(\sigma_{i+1}-\sigma_i\leq t|X_s,s\leq \sigma_i)
    =&P_{X_{\sigma_i}}(\sigma_{B(X_0,r/n)^c}\leq t)\\
    \leq& 1-\frac{1}{2}\frac{c_l(c_{\rm{LR}}/4)^{1+s_0}(r/n)^{(1+s_0)\theta}}{c_u c_{\rm{UR}}(r/n)^{1+s_1}}+\frac{t}{c_u c_{\rm{UR}}(r/n)^{1+s_1}}\\
    =&1-\frac{1}{2}\frac{c_l(c_{\rm{LR}}/4)^{1+s_0}}{c_u c_{\rm{UR}}}\left(\frac{r}{n}\right)^{(2+s_0)(s_0-s_1)}+\frac{t}{c_u c_{\rm{UR}}(r/n)^{1+s_1}}.
  \end{align*}
  This observation, \cite[Lemma 1.1]{BarlowBass}, and $\log(1-x)\leq-x$ imply that
  \begin{align*}
    \log\Psi_t(x,r)
    \leq&2\left(  \frac{n \frac{t}{c_u c_{\rm{UR}}(r/n)^{1+s_1}} }{1-\frac{1}{2}\frac{c_l(c_{\rm{LR}}/4)^{1+s_0}}{c_u c_{\rm{UR}}}\left(\frac{r}{n}\right)^{(2+s_0)(s_0-s_1)}}  \right)^{1/2}
    +n\log\left( 1-\frac{1}{2}\frac{c_l(c_{\rm{LR}}/4)^{1+s_0}}{c_u c_{\rm{UR}}}\left(\frac{r}{n}\right)^{(2+s_0)(s_0-s_1)} \right)\\
    \leq&4\left(\frac{nt}{c_u c_{\rm{UR}}(r/n)^{1+s_1}}\right)^{1/2}
    -\frac{n}{2}\frac{c_l(c_{\rm{LR}}/4)^{1+s_0}}{c_u c_{\rm{UR}}}\left(\frac{r}{n}\right)^{(2+s_0)(s_0-s_1)}.
  \end{align*}
Now, we consider the following inequality:
\begin{align*}
  4\left(\frac{nt}{c_u c_{\rm{UR}}(r/n)^{1+s_1}}\right)^{1/2}
    \leq \frac{n}{4}\frac{c_l(c_{\rm{LR}}/4)^{1+s_0}}{c_u c_{\rm{UR}}}\left(\frac{r}{n}\right)^{(2+s_0)(s_0-s_1)},
\end{align*}
which is equivalent to $na(t/r)\leq r$, where $a(\xi)=\left\{ \frac{1}{256 \xi}\frac{[c_l(c_{\rm{LR}}/4)^{1+s_0}]^2}{c_u c_{\rm{UR}}} \right\}^{-\frac{1}{s_1+2(2+s_0)(s_0-s_1)}}$.
We first consider the case $a(t/r)\leq r$. Then
\begin{align*}
  n_0=&\sup\left\{ n\in\N: 4\left(\frac{nt}{c_u c_{\rm{UR}}(r/n)^{1+s_1}}\right)^{1/2}
    \leq \frac{n}{4}\frac{c_l(c_{\rm{LR}}/4)^{1+s_0}}{c_u c_{\rm{UR}}}\left(\frac{r}{n}\right)^{(2+s_0)(s_0-s_1)}\right\}\\
    =&\sup\{n\in\N:na(t/r)\leq r\}
\end{align*}
is well-defined, and lies in $\N$.
By definition, $n_0$ satisfies $n_0\leq r/a(t/r)<n_0+1$. Thus we deduce by above argument and $a(t/r)\leq r$ that
\begin{align*}
  \log\Psi_t(x,r)
  \leq&-\frac{1}{4}n_0\frac{c_l(c_{\rm{LR}}/4)^{1+s_0}}{c_u c_{\rm{UR}}}\left(\frac{r}{n_0}\right)^{(2+s_0)(s_0-s_1)}\\
  \leq&-\frac{1}{4}\left(\frac{r}{a(t/r)}-1\right)\frac{c_l(c_{\rm{LR}}/4)^{1+s_0}}{c_u c_{\rm{UR}}}a(t/r)^{(2+s_0)(s_0-s_1)}\\
  \leq&-\frac{1}{4}\left[\frac{c_l(c_{\rm{LR}}/4)^{1+s_0}}{c_u c_{\rm{UR}}}r a(t/r)^{(2+s_0)(s_0-s_1)-1}+\frac{c_l(c_{\rm{LR}}/4)^{1+s_0}}{c_u c_{\rm{UR}}} r^{(2+s_0)(s_0-s_1)}\right]\\
  \leq&-\frac{1}{4}\frac{c_l(c_{\rm{LR}}/4)^{1+s_0}}{c_u c_{\rm{UR}}}r a(t/r)^{(2+s_0)(s_0-s_1)-1}+\frac{1}{4}.
\end{align*}
If $r<a(t/r)$, then by $(2+s_0)(s_0-s_1)-1<0$, we have
\begin{align*}
  \frac{1}{4}\frac{c_l(c_{\rm{LR}}/4)^{1+s_0}}{c_u c_{\rm{UR}}}r a(t/r)^{(2+s_0)(s_0-s_1)-1}
  \leq \frac{1}{4}\frac{c_l(c_{\rm{LR}}/4)^{1+s_0}}{c_u c_{\rm{UR}}}r^{(2+s_0)(s_0-s_1)}
  \leq \frac{1}{4}.
\end{align*}
Combining above arguments, we conclude that
\begin{align*}
  \Psi_t(x,r)
  \leq&e^{1/4} \exp\left[ -\frac{1}{4}\frac{c_l(c_{\rm{LR}}/4)^{1+s_0}}{c_u c_{\rm{UR}}}r a(t/r)^{(2+s_0)(s_0-s_1)-1} \right]\\
  \leq&2 \exp\left[ -\frac{1}{4}\frac{c_l(c_{\rm{LR}}/4)^{1+s_0}}{c_u c_{\rm{UR}}} \left( \frac{1}{256}\frac{[c_l(c_{\rm{LR}}/4)^{1+s_0}]^2}{c_u c_{\rm{UR}}} \right)^{\beta}\frac{r^{1+\beta}}{t^{\beta}}\right],
\end{align*}
which is what we desired.
\end{proof}

Finally, we are able to complete the proof of \cref{summ}.
\begin{proof}[Proof of \cref{summ}$\rm{(iv)}$]
 The following argument builds on \cite[Proposition 4.3]{Kum04}.
   Write $C$ for $\frac{1}{4}\frac{c_l(c_{\rm{LR}}/4)^{1+s_0}}{c_u c_{\rm{UR}}} \left( \frac{1}{256}\frac{[c_l(c_{\rm{LR}}/4)^{1+s_0}]^2}{c_u c_{\rm{UR}}} \right)^{\beta}$.
  Then we have $P_x(X_t\not\in B(x,r))\leq P_x(\sigma_{B(x,r)^c}\leq t)\leq 2\exp[-Cr^{1+\beta}/t^{\beta}]$ for $r<\frac{1}{2}\diam F$.
  Set $t_1=\left[ \frac{C(\frac{1}{2}\diam F)^{1+\beta}}{\log 4} \right]^{1/\beta}$.
  For $t\in (0,t_1)$, define $r_t=\left[ \frac{t^{\beta}\log 4}{C} \right]^{1/(1+\beta)}$.
  Then we may verify that $r_t<\frac{1}{2}\diam F$ and 
  $P_x(X_t\not\in B(x,r_t))\leq 1/2$.
  Therefore we conclude that 
  \begin{align*}
    \frac{1}{4}
    =&(1-1/2)^2\\
    \leq&P_x(X_t\in B(x,r_t))^2\\
    =&\left(\int_{B(x,r_t)}p(t,x,y)\mu(dy)\right)^2\\
    \leq&\mu(B(x,r_t))p(2t,x,x)\\
    \leq&c_u \left[ \frac{t^{\beta}\log 4}{C} \right]^{\frac{s_1}{1+\beta}}p(2t,x,x)
  \end{align*} 
  for $t<t_1$.
  It is then an immediate consequence that
  \begin{align*}
    \frac{1}{4c_u}\left[ \frac{\log 4}{2^{\beta}C} \right]^{-\frac{s_1}{1+\beta}} t^{-\frac{\beta s_1}{1+\beta}} \leq p(t,x,x),\qquad t<2t_1.
  \end{align*}
\end{proof}

\section{Proof of \cref{hitwkconv}}\label{appA}
In this appendix, we give a proof of \cref{hitwkconv}.
Let $(F,R,\mu,\rho),\;(F_n,R_n,\mu_n,\rho_n),\;A,\;A_n,$ and $(M,d^M)$ be as in 
\cref{hitwkconv}.
We denote the processes corresponding to $(F,R,\mu)$ and $(F_n,R_n,\mu_n)$ 
by $(X=(X_t)_t,(P_x)_{x\in F})$ and $(X^n=(X^n_t)_t,(P_x^n)_{x\in F_n})$, respectively.

We regard the hitting time of $A\subseteq M$ as a function on $D:=D(\R_{\geq 0},M):=\{x:\R_{\geq 0}\to M:\textrm{$x$ is c\`adl\`ag.}\}$.
Precisely, we define $\sigma_A:D\to[0,\infty]$ for $A\subseteq M$ by setting
\begin{align*}
  \sigma_A(x)=\inf\{t>0:x(t)\in A\},\quad x\in D.
\end{align*}
We equip $D$ with the $J_1$ topology.
We refer the reader to  \cite[Chapter 3]{EK86}
for details on the $J_1$ topology.

We denote the set of the continuity points of $x\in D$ by $C_x$.
\begin{lem}\label{upsemicont}
For any open subset $U\subseteq M$, $\sigma_U$ is upper semicontinuous.
In particular, it is measurable.
\end{lem}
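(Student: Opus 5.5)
To show that $\sigma_U$ is upper semicontinuous on $D$ with the $J_1$ topology, I will prove that for every $c\in(0,\infty]$ the set $\{x\in D:\sigma_U(x)<c\}$ is open. Fix $x\in D$ with $\sigma_U(x)<c$. By the definition of the infimum there is a time $s\in(0,c)$ with $x(s)\in U$. I will first upgrade $s$ to a continuity point of $x$. Right-continuity gives $x(u)\to x(s)$ as $u\downarrow s$, and $U$ is open, so $x(u)\in U$ for all $u\in[s,s+\eta)$ for some $\eta>0$. The discontinuity set of a c\`adl\`ag path is at most countable, so $C_x$ is co-countable. Hence we can choose $s'\in[s,s+\eta)\cap C_x$ with $s'<c$, and then $x(s')\in U$.

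Next I will use the standard fact about $J_1$ convergence (see \cite[Chapter 3]{EK86}): if $x_n\to x$ in $J_1$ and $t\in C_x$, then $x_n(t)\to x(t)$. More strongly, $x_n(t_n)\to x(t)$ whenever $t_n\to t$. Now suppose, for contradiction, that $\{\sigma_U<c\}$ is not a neighbourhood of $x$. Since $D$ with $J_1$ is metrizable, this produces a sequence $x_n\to x$ with $\sigma_U(x_n)\ge c$. But $x_n(s')\to x(s')\in U$ and $U$ is open, so $x_n(s')\in U$ for large $n$. That forces $\sigma_U(x_n)\le s'<c$, a contradiction. This establishes upper semicontinuity.

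Measurability then follows because $\{\sigma_U<c\}$ is open, hence Borel, for every $c$. As a result $\sigma_U$ is Borel measurable as a $[0,\infty]$-valued map.

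The only delicate step is the continuity-point reduction, together with citing the correct form of the $J_1$ evaluation-continuity result. I expect no genuine obstacle here: the countability of jump times and the openness of $U$ handle it directly.
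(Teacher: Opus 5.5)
Your argument is correct and is essentially the paper's proof: both move to a continuity point $s'$ of $x$ (the paper's $t_j$) with $x(s')\in U$, and then use that $J_1$ convergence gives $x_n(s')\to x(s')$, so $x_n(s')\in U$ for large $n$. The only difference is cosmetic: you show that $\{\sigma_U<c\}$ is open via a contradiction sequence, whereas the paper proves $\limsup_{n}\sigma_U(x_n)\le\sigma_U(x)$ directly by letting $t_j\downarrow t_0$ and then $\varepsilon\to 0$.
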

\begin{proof}
  It suffices to show that $\limsup_{n\to\infty}\sigma_U(x_n)\leq \sigma_U(x)$ for any convergent sequence $(x_n)\subseteq D$ and its limit $x\in D$.
  Thus, we may assume that $\sigma_U(x)<\infty$.
  Fix an arbitrary $\varepsilon>0$. Then we can find $t_0\in [\tau_U(x),\tau_U(x)+\varepsilon)$
  such that $x(t_0)\in U$. Since $U$ is open and $x$ lies in $D$,
  we can verify that for any sufficiently large $j\in \N$, there exists $t_j\in [t_0,t_0+1/j]\cap C_x$ such that $x(t_j)\in U$.
  The $J_1$ convergence yields that $\lim_{n\to\infty}x_n(t_j)=x(t_j)\in U$ for each $j$, since $t_j$ is in $C_x$.
  Therefore, for each $j$, there exists $N_j \in \N$ such that $x_n(t_j) \in U$ whenever $n>N_j$.
  This implies that $\limsup_{n\to\infty}\sigma_U(x_n)\leq t_j$ for any $j$. 
  Letting $j\to\infty$ and then $\varepsilon\to 0$, we obtain the result.
\end{proof}
Note that since the space is recurrent, $P_{\rho}(\sigma_A(X)<\infty)=1$ by \cite[Theorem 5.2.16(i)]{ChF12}  and \cite[Theorem 4.7.1(iii)]{FOT94}.
\begin{lem}\label{quasi}
Let $U_k:=\cup_{a\in A}B(a,1/k)$. Then $\sigma_{U_k}(X)$ converges to $\sigma_A(X)$ almost surely with respect to $P_{\rho}$.
\end{lem}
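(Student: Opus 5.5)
The plan is to show monotonicity and a two-sided bound. Since $U_k\supseteq U_{k+1}\supseteq A$, the hitting times satisfy $\sigma_{U_k}(X)\le\sigma_{U_{k+1}}(X)\le\sigma_A(X)$ pathwise. Hence $\tau:=\lim_{k}\sigma_{U_k}(X)$ exists and $\tau\le\sigma_A(X)$. It remains to prove $\tau\ge\sigma_A(X)$ $P_\rho$-a.s. On the event $\{\tau=\infty\}$ there is nothing to show, so we work on $\{\tau<\infty\}$.

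The key step is to show that $X_\tau\in A$ a.s. on this event. For this we use the quasi-left continuity of the Hunt process $X$. The $\sigma_{U_k}$ are stopping times increasing to $\tau$, so quasi-left continuity gives $X_{\sigma_{U_k}}\to X_\tau$ a.s. on $\{\tau<\infty\}$. This holds both when $\sigma_{U_k}<\tau$ for all $k$ and, trivially, when the sequence is eventually constant equal to $\tau$.

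Next we locate $X_{\sigma_{U_k}}$. By right continuity of paths, $X_{\sigma_{U_k}}\in\overline{U_k}$, so $d^M(X_{\sigma_{U_k}},A)\le 1/k$. Letting $k\to\infty$ gives $d^M(X_\tau,A)=0$, and since $A$ is closed, $X_\tau\in A$.

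From $X_\tau\in A$ we get $\sigma_A\le\tau$, provided either $\tau>0$, or $\tau=0$ and the path enters $A$ at arbitrarily small positive times. The case $\tau>0$ is immediate, because $\sigma_A=\inf\{t>0:X_t\in A\}\le\tau$.

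The main obstacle is the degenerate case $\tau=0$, where the definition $\sigma_A=\inf\{t>0:\dots\}$ requires strict positivity. Here I would use the strong Markov property at each $\sigma_{U_k}$ together with Blumenthal's 0-1 law. Since every point has positive capacity and the resistance metric makes singletons regular, for $a\in A$ we have $P_a(\sigma_a=0)=1$, hence $P_a(\sigma_A=0)=1$. So if $X_0=\rho\in A$, then $\sigma_A=0$ a.s. and the claim holds.

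If $\rho\notin A$, then $\rho$ lies outside $\overline{U_k}$ for large $k$, because $A$ is closed. Right continuity then forces $\sigma_{U_k}>0$, and these times are bounded below by the first exit time from a small ball around $\rho$, which is positive a.s. Thus $\tau>0$ and the degenerate case does not occur.

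Combining the cases yields $\tau=\sigma_A(X)$ $P_\rho$-a.s. Recurrence ensures $\sigma_A<\infty$, which is consistent with the convergence but is not needed beyond the null-event bookkeeping.
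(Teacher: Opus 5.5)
Your proof is correct and follows the same route as the paper: you take the monotone limit of $\sigma_{U_k}(X)$, use quasi-left continuity of the Hunt process to get $X_{\sigma_{U_k}}\to X_\tau$, and combine $X_{\sigma_{U_k}}\in\overline{U_k}$ with closedness of $A$ to conclude $X_\tau\in A$. Your separate treatment of the case $\tau=0$ fills a detail the paper passes over silently; it needs $\rho$ to be regular for itself when $\rho\in A$, which follows, for example, from $E_x[e^{-\alpha\sigma_x}]=g^{\alpha}(x,x)/g^{\alpha}(x,x)=1$.
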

\begin{proof}
  Since $\sigma_{U_k}(X)$ is increasing in $k$, it has a pointwise limit $\sigma_{\infty}$.
  By $\sigma_{U_k}(X)\leq \sigma_{A}(X)$, we have $\sigma_{\infty}\leq \sigma_A(X)<\infty$ $P_{\rho}$-a.s.
  The quasi-left-continuity of $X$ (see \cite[(A.2.4)]{FOT94} for the definition of quasi-left-continuity) 
  yields that
  \begin{align*}
    P_{\rho}\left(\lim_{k\to\infty}X_{\sigma_{U_k}(X)}=X_{\sigma_{\infty}},\sigma_{\infty}<\infty\right)
    =P_{\rho}\left(\sigma_{\infty}<\infty\right)=1.
  \end{align*}
  Since $X_{\sigma_{U_k}(X)}\in\overline{U_k}$, and $A$ is closed, we deduce $\sigma_{\infty}\geq \sigma_A(X)$ $P_{\rho}$-a.s.
\end{proof}
Now we can prove the result in the compact case.
\begin{lem}\label{cptcase}
If $F_n$ and $F$ are compact, then $P^n_{\rho_n}(\sigma_{A_n}^n\in \cdot)$ converges weakly to  $P_{\rho}(\sigma_{A}\in \cdot)$.
\end{lem}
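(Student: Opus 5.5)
The plan is to combine the known convergence of the processes in the compact case with the semicontinuity properties of hitting times from \cref{upsemicont} and \cref{quasi}.

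\textbf{Step 1: convergence of laws.} In the compact setting, Gromov--Hausdorff--Prokhorov convergence of $(F_n,R_n,\mu_n,\rho_n)$ to $(F,R,\mu,\rho)$ implies that $P^n_{\rho_n}(X^n\in\cdot)$ converges weakly to $P_\rho(X\in\cdot)$ on $D$ with the $J_1$ topology. This is the result of \cite{Cr18}, and the non-explosion condition is trivially satisfied because the spaces are compact. By Skorokhod's representation theorem, since $M$ is separable and so is $D$, we may then assume that $X^n\to X$ almost surely in $J_1$.

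\textbf{Step 2: upper bound (limsup).} Fix $k$ and set $U_k=\bigcup_{a\in A}B(a,1/k)$. Fell (here Hausdorff) convergence $A_n\to A$ gives $A\subseteq U_k$. It also gives that every point of $A_n$ is eventually within $1/(2k)$ of $A$; however, that is not needed for this inclusion direction.

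For the upper bound I will use open neighbourhoods of $A_n$. Let $\varepsilon_n=d_{\mathrm H}(A_n,A)\to0$. If $X$ hits $U_k$, I want $X^n$ to hit $A_n$ soon afterwards. Hitting an open set is not enough for $X^n$ to hit $A_n$ itself, so I will use the resistance estimate \cref{4.2-a}(i), in its hitting form, together with the strong Markov property.

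When $X^n$ enters $\bigcup_{a\in A_n}B(a,2/k)$, it is within resistance distance $2/k$ of some $a\in A_n$. The commute time identity \cref{cti} bounds the expected time to then hit $a$ by $(2/k)\sup_n\mu_n(F_n)$. Markov's inequality converts this into a hitting probability bound.

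Combining this with the almost-sure bound $\limsup_n\sigma_{V}(X^n)\le\sigma_V(X)$ for open $V$ from \cref{upsemicont}, we obtain, for each $t$ and $\delta>0$,
\[
\limsup_n P^n_{\rho_n}(\sigma^n_{A_n}>t+\delta)\le P_\rho(\sigma_{U_{k}}\ge t)+o_k(1).
\]

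\textbf{Step 3: lower bound (liminf).} For the reverse inequality, note that $A_n\subseteq U_k$ for large $n$, so $\sigma^n_{A_n}\ge\sigma_{U_k}(X^n)$. The functional $x\mapsto\sigma_{\overline{U_k}}(x)$ is lower semicontinuous on paths for which the relevant times are continuity points; this is the symmetric counterpart of \cref{upsemicont}, via closedness. Hence $\liminf_n\sigma_{\overline{U_k}}(X^n)\ge\sigma_{\overline{U_k}}(X)$ almost surely, where I use $\overline{U_k}\subseteq U_{k'}$ for suitable $k'$ to pass between open and closed neighbourhoods.

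\textbf{Step 4: conclusion.} Let $k\to\infty$ and apply \cref{quasi}, which gives $\sigma_{U_k}(X)\uparrow\sigma_A(X)$ almost surely. This shows that the distribution functions of $\sigma^n_{A_n}$ converge to that of $\sigma_A$ at every continuity point, which is the claimed weak convergence.

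\textbf{Main obstacle.} The delicate step is Step 2: showing that $X^n$, once it is near $A_n$, actually hits $A_n$ within a small time, uniformly in $n$. The plan is to rely on uniform bounds $\sup_n\mu_n(F_n)<\infty$ and a point-hitting estimate via \cref{cti}. If the time to hit a single point is too crude, I will fall back on \cref{4.2-a}(i) applied to the set $A_n$ directly.
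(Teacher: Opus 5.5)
Your proposal is correct and follows essentially the paper's route. Step 2 is the paper's proof of \eqref{low}: once $X^n$ enters a neighbourhood of $A$, the commute time identity, Markov's inequality and the strong Markov property show it hits $A_n$ quickly, combined with the upper semicontinuity of $\sigma_U$ for open $U$ from \cref{upsemicont}. Step 3 together with \cref{quasi} is the paper's proof of \eqref{upp} via the Skorokhod coupling.

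The one place to tighten is Step 3. The functional $\sigma_{\overline{U_k}}$ is not lower semicontinuous at jump times of the limit path, and you cannot ensure that the random time $s=\liminf_n\sigma_{\overline{U_k}}(X^n)$ is a continuity point. The correct deterministic statement is that $J_1$ convergence forces the limit of $X^n$ at its hitting times to be $X_s$ or $X_{s-}$. Both lie in $\overline{U_k}\subseteq U_{k'}$, so $\liminf_n\sigma_{\overline{U_k}}(X^n)\ge\sigma_{U_{k'}}(X)$, exactly as in the paper's proof of \eqref{eq4}.
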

\begin{proof}
  We denote the set of continuity points of $\R\ni t\mapsto P_{\rho}(\sigma_A(X)\leq t)\in\R$ by $C$.
  It suffices to show the following inequalities.
  \begin{equation}\label{low}
    \liminf_{n\to\infty}P^n_{\rho_n}(\sigma_{A_n}(X^n)\leq t)\geq P_{\rho}(\sigma_{A}(X)\leq t),\qquad t\in C,
  \end{equation}
\begin{equation}\label{upp}
    \limsup_{n\to\infty}P^n_{\rho_n}(\sigma_{A_n}(X^n)\leq t)\leq P_{\rho}(\sigma_{A}(X)\leq t),\qquad t\in C
\end{equation}
We first prove \eqref{low}. We may assume that $t>0$.
Take sufficiently small $\varepsilon\in (0,t)$ and set $A^{\varepsilon}=\cup_{a\in A}B(a,\varepsilon)$.
By our assumption, it holds that $A_n\subseteq A^{\varepsilon}$ and $d^M_{\mathrm{H}}(A,A_n)<\varepsilon$ for large $n\in\N$.
We consider such $n\in\N$.
Fix $y\in \overline{A^{\varepsilon}}\cap F$. Then we may find $a_n\in A_n$ satisfying $d^M(y,a_n)\leq 2\varepsilon$.
Now, the commute time identity (see \cref{cti}) yields that
\begin{align*}
  P_y^n(\sigma_{A_n}> \sqrt{\varepsilon})
  \leq P_y^n(\sigma_{a_n}(X^n)> \sqrt{\varepsilon})
  \leq \varepsilon^{-1/2}E_y^n[\sigma_{a_n}(X^n)]
  \leq c\sqrt{\varepsilon}.
\end{align*}
Define $X^{n,\sigma_{A^{\varepsilon}}}=(X^{n,\sigma_{A^{\varepsilon}}}_t)_t$ by setting $X^{n,\sigma_{A^{\varepsilon}}}_t=X^{n}_{\sigma_{A^{\varepsilon}}(X^n)+t}$.
Then we deduce that 
\begin{align*}
  P^n_{\rho_n}(\sigma_{A_n}(X^n)\leq t)
  \geq& P^n_{\rho_n}\left( \sigma_{A_n}\left( X^{n,\sigma_{A^{\varepsilon}}} \right)\leq \sqrt{\varepsilon},\sigma_{A^{\varepsilon}}(X^n)\leq t-\sqrt{\varepsilon} \right)\\
  =&E^n_{\rho_n}\left[ P^n_{\sigma_{A^{\varepsilon}}(X^n)}\left( \sigma_{A_n}(X^n)\leq \sqrt{\varepsilon} \right) :\sigma_{A^{\varepsilon}}(X^n)\leq t-\sqrt{\varepsilon} \right]\\
  \geq&(1-c\sqrt{\varepsilon})P^n_{\rho_n}\left(\sigma_{A^{\varepsilon}}(X^n)< t-\sqrt{\varepsilon} \right).
\end{align*}
Since $\{\sigma_{A^{\varepsilon}}<s\}$ is an open subset of $D$ for any $s\in\R_{\geq 0}$ by \cref{upsemicont},
and $P_{\rho_n}^n(X^n\in\cdot)$ converges weakly to $P_{\rho}(X\in\cdot)$ as a probability measure on $D$
(see \cite[Theorem 1.2]{Cr18}),
we conclude that
\begin{align*}
  \liminf_{n\to\infty}P^n_{\rho_n}(\sigma_{A_n}(X^n)\leq t)
  \geq(1-c\sqrt{\varepsilon})\liminf_{n\to\infty}P^n_{\rho_n}\left(\sigma_{A^{\varepsilon}}(X^n)< t-\sqrt{\varepsilon} \right)
  \geq(1-c\sqrt{\varepsilon})P_{\rho}(\sigma_{A^{\varepsilon}}(X)<t-\sqrt{\varepsilon}).
\end{align*}
Letting $\varepsilon\to 0$, we obtain \eqref{low} by $t\in C$ and \cref{quasi}.

We next prove \eqref{upp}. We may assume that $t>0$.
Using Skorokhod's representation theorem, we may construct 
a probability space $(\Omega,\calF,P)$ and $D$-valued random elements $Y^n=(Y_t^n)_t,Y=(Y_t)_t$ on $\Omega$
satisfying the following:
\begin{itemize}
  \item $P^n_{\rho_n}(X^n\in\cdot)=P(Y^n\in\cdot)$
  \item $P_{\rho}(X\in\cdot)=P(Y\in\cdot)$
  \item $Y^n$ converges to $Y$, $P$-a.s.\ in $D$ with respect to $J_1$ topology.
\end{itemize} 
We will show that
\begin{align}\label{eq4}
  \limsup_{n\to\infty}\{\sigma_{\widetilde{A}_n}(Y^n)\leq t\}\subseteq\{ \sigma_{A^{\varepsilon}}\leq t \},\qquad t,\varepsilon>0,
\end{align}
where $A^{\varepsilon}:=\cup_{a\in A}B(a,\varepsilon)$ and $\widetilde{A}_n:=\cup_{a\in A_n}B(a,1/n)$.
Fix $\omega$ in the left-hand side of \eqref{eq4}.
Then we may find a sequence $n_k\uparrow \infty$ such that $s_k:=\sigma_{\widetilde{A}_{n_k}}(Y^{n_k}(\omega))\leq t$.
Hereafter, we omit $\omega$ from the notation.
If necessary, by taking a further subsequence, we may assume that $s_k$ converges to $s\in [0,t]$.
The $J_1$ convergence implies the existence of an increasing homeomorphism $\lambda_k:\R_{\geq 0}\to\R_{\geq 0}$ 
satisfying
\begin{equation*}
  \lim_{k\to\infty}\sup_{u\geq 0}|\lambda_k(u)-u|=0,
\end{equation*}
and
\begin{equation*}
\lim_{k\to\infty}\sup_{u\leq T}d^M(Y_{\lambda_k(u)},Y^{n_k}_u)=0,\qquad T>0.
\end{equation*}
We deduce that
\begin{align*}
  \lim_{k\to\infty}d^M(Y_{\lambda_k(s_k)},Y^{n_k}_{s_k})=0.
\end{align*}
Combining $d^M(Y^{n_k}_{s_k},A_{n_k})\leq 1/{n_k},\lim_{k\to\infty}d^M_{\mathrm{H}}(A,A_{n_k})=0,$ and $\lim_{k\to\infty}\lambda_k(s_k)=s$,
we have either $Y_s\in A$ or $Y_{s-}\in A$.
In both cases, it holds that $\sigma_{A^{\varepsilon}}(Y)\leq s\leq t$.
This completes the proof of \eqref{eq4}.

Note that , we have $P_{\rho_n}^n(X^n\in\cdot)=P(Y^n\in\cdot)$ and, 
by \cref{upsemicont},
$P_{\rho_n}^n(\sigma_{\widetilde{A}_n}(X^n)\in\cdot)=P(\sigma_{\widetilde{A}_n}(Y^n)\in\cdot)$.
Similarly, we obtain $P_{\rho}(\sigma_{A^{\varepsilon}}(X)\in\cdot)=P(\sigma_{A^{\varepsilon}}(Y)\in\cdot)$.
Combining these observations and \eqref{eq4}, we conclude that 
\begin{equation}\label{last}
  \begin{aligned}
     \limsup_{n\to\infty} P^n_{\rho_n}(\sigma_{A_n}(X^n)\leq t)
     \leq&\limsup_{n\to\infty} P^n_{\rho_n}(\sigma_{\widetilde{A}_n}(X^n)\leq t)\\
=&\limsup_{n\to\infty} P(\sigma_{\widetilde{A}_n}(Y^n)\leq t)\\
\leq&P\left(\limsup_{n\to\infty}\{\sigma_{\widetilde{A}_n}(Y^n)\leq t\}\right)\\
\leq&P(\sigma_{A^{\varepsilon}}(Y)\leq t)\\
=&P_{\rho}(\sigma_{A^{\varepsilon}}(X)\leq t).
  \end{aligned}
\end{equation}
Letting $\varepsilon\to 0$, we obtain \eqref{upp} by $t\in C$ and \cref{quasi}.
\end{proof}

\begin{proof}[Proof of \cref{hitwkconv}]
  Set $A_n^{(r)}:=\overline{B(\rho_n,r)}\cap A_n$ and $A^{(r)}:=\overline{B(\rho,r)}\cap A$ for $r>0$.
  Then we may find a sequence $r_k\uparrow\infty$ such that it holds that 
  $A_n^{(r_k)},A^{(r_k)}\neq\emptyset$ and $\lim_{n\to\infty}d^M_{\mathrm{H}}(A^{(r_k)},A^{(r_k)}_n)=0$
  for all $k\in\N$.
  Define sets $C,\;S\subseteq \R$ as follows:
  \begin{equation*}
    C:=\{t: \textrm{$t$ is a continuity point of $s\mapsto P_{\rho}(\sigma_A(X)\leq s)$}\},
  \end{equation*}
  \begin{equation*}
     S:=C\setminus\bigcup_{k\in\N}\{t: \textrm{$t$ is a discontinuity point of $s\mapsto P_{\rho}(\sigma_{A}(X^{(r_k)})\leq s)$}\}.
  \end{equation*}
  Since $S$ is a dense subset of $C$, it is enough to show that 
  $\lim_{n\to\infty}P_{\rho_n}^n (\sigma_{A_n}(X^n)\leq t)=P_{\rho}(\sigma_{A}(X)\leq t)$ for any $t\in S$.
  We fix $t\in S$.
  Note that $X^{(r)}$ and $X^{n,(r)}$ are the time changes of $X$ and $X^{n}$ with respect to $\mu^{(r)}$ and $\mu_n^{(r)}$, respectively
  (see \cite[Lemma 2.6]{CHK17}, and recall the definitions of above notation from \eqref{rversion}). 
  In particular, we may define the original process and the $r$-version process on the common probability space.
  Since $\sigma_{A}(X^{(r)})=\sigma_{A^{(r)}}(X^{(r)})\leq \sigma_{A^{(r)}}(X)$, and $F$ is recurrent,
  it follows from \cref{4.2-a} that
  \begin{align*}
    0
    \leq&P_{\rho}(\sigma_{A}(X^{(r)})\leq t)-P_{\rho}(\sigma_{A^{(r)}}(X)\leq t)\\
    =&P_{\rho}(\sigma_{A^{(r)}}(X^{(r)})\leq t,\sigma_{A^{(r)}}(X)> t)\\
    \leq&P_{\rho}(\sigma_{B(\rho,r)^c}(X)\leq t)\\
    \leq&4\left[ \frac{1}{R(\rho,B(\rho,r)^c)}+\frac{t}{\mu(B(\rho,1))(R(\rho,B(\rho,r)^c)-1)} \right]
  \end{align*} 
  for any $t\geq 0$ and large $r>0$.
  Similarly, we have that
  \begin{align*}
    0
    \leq&P_{\rho}(\sigma_{A^{(r)}}(X)\leq t)-P_{\rho}(\sigma_{A}(X)\leq t)\\
    =&P_{\rho}(\sigma_{A^{(r)}}(X)\leq t,\sigma_{A}(X)>t)\\
    \leq&P_{\rho}(\sigma_{B(\rho,r)^c}(X)\leq t)\\
    \leq&4\left[ \frac{1}{R(\rho,B(\rho,r)^c)}+\frac{t}{\mu(B(\rho,1))(R(\rho,B(\rho,r)^c)-1)} \right].
  \end{align*}
  Thus we obtain 
  \begin{align*}
    \MoveEqLeft |P_{\rho}(\sigma_{A}(X)\leq t)-P_{\rho_n}^n(\sigma_{A_n}(X^n)\leq t)|\\
    \leq& |P_{\rho}(\sigma_{A}(X)\leq t)-P_{\rho}(\sigma_{A^{(r_k)}}(X)\leq t)|\\
   &+|P_{\rho}(\sigma_{A^{(r_k)}}(X)\leq t)-P_{\rho}(\sigma_{A}(X^{(r_k)})\leq t)|\\
   &+|P_{\rho}(\sigma_{A}(X^{(r_k)})\leq t)-P_{\rho_n}^n(\sigma_{A_n}(X^{n,(r_k)})\leq t)|\\
   &+|P_{\rho_n}^n(\sigma_{A_n}(X^{n,(r_k)})\leq t)-P_{\rho_n}^n(\sigma_{A_n^{(r_k)}}(X^n)\leq t)|\\
   &+ |P_{\rho_n}^n(\sigma_{A_n^{(r_k)}}(X^n)\leq t)-P_{\rho_n}^n(\sigma_{A_n}(X^n)\leq t)|\\
   \leq&8\left[ \frac{1}{R(\rho,B(\rho,r_k)^c)}+\frac{t}{\mu(B(\rho,1))(R(\rho,B(\rho,r_k)^c)-1)} \right]\\
   &+|P_{\rho}(\sigma_{A}(X^{(r_k)})\leq t)-P_{\rho_n}^n(\sigma_{A_n}(X^{n,(r_k)})\leq t)|\\
   &+8\left[ \frac{1}{R_n(\rho,B(\rho_n,r_k)^c)}+\frac{t}{\mu_n(B(\rho_n,1))(R_n(\rho_n,B(\rho_n,r_k)^c)-1)} \right].
  \end{align*}
  Using $t\in S$, $\liminf_{n\to\infty}\mu_n(B(\rho_n,1))\geq \mu(B(\rho,1))>0$, \eqref{reccc}, the non-explosion condition \eqref{non-exp},
  and the compact case \cref{cptcase},
  we complete the proof.
\end{proof}

\section*{Acknowledgements}
The author would like to express his sincere gratitude to  Dr David Croydon for his constant
guidance, encouragement, and many fruitful discussions throughout this work.

\nocite{*}
\bibliographystyle{abbrv}
\bibliography{rate2}
\end{document}